\def\calA{\mathcal{A}}
\def\calC{\mathcal{C}}
\def\calD{\mathcal{D}}
\def\calE{\mathcal{E}}
\def\calF{\mathcal{F}}
\def\calG{\mathcal{G}}
\def\calH{\mathcal{H}}
\def\calL{\mathcal{L}}
\def\calM{\mathcal{M}}
\def\calO{\mathcal{O}}
\def\calP{\mathcal{P}}
\def\calX{\mathcal{X}}
\def\calZ{\mathcal{Z}}
\def\frb{\mathfrak{b}}
\def\frg{\mathfrak{g}}
\def\frh{\mathfrak{h}}
\def\frm{\mathfrak{m}}
\def\frn{\mathfrak{n}}
\def\ffrb{\mathfrak{B}}
\def\ffrm{\mathfrak{M}}
\def\ffru{\mathfrak{U}}
\def\ffrx{\mathfrak{X}}
\def\bbA{\mathbb{A}}
\def\bbB{\mathbb{B}}
\def\bbC{\mathbb{C}}
\def\bbF{\mathbb{F}}
\def\bbG{\mathbb{G}}
\def\bbH{\mathbb{H}}
\def\bbN{\mathbb{N}}
\def\bbP{\mathbb{P}}
\def\bbQ{\mathbb{Q}}
\def\bbR{\mathbb{R}}
\def\bbT{\mathbb{T}}
\def\bbZ{\mathbb{Z}}
\DeclareFontFamily{U}{wncy}{}
\DeclareFontShape{U}{wncy}{m}{n}{<->wncyr10}{}
\DeclareSymbolFont{mcy}{U}{wncy}{m}{n}
\DeclareMathSymbol{\Sha}{\mathord}{mcy}{"58} 
\def\vphi{\varphi}
\def\bs{\backslash}
\DeclareMathOperator{\ox}{\otimes}
\def\comp{\circ}
\def\inj{\hookrightarrow}
\def\surj{\twoheadrightarrow}
\def\isom{\cong}
\def\aisom{\xrightarrow{\sim}}
\def\ot{\leftarrow}
\DeclareMathOperator{\im}{im}
\DeclareMathOperator{\coker}{coker}
\DeclareMathOperator{\id}{id}
\DeclareMathOperator{\op}{op}
\DeclareMathOperator{\pr}{pr}
\newcommand{\dlim}{\varinjlim}
\newcommand{\ilim}{\varprojlim}
\DeclareMathOperator{\Fil}{Fil}
\DeclareMathOperator{\Hom}{Hom}
\DeclareMathOperator{\Ext}{Ext}
\DeclareMathOperator{\Ind}{Ind}
\DeclareMathOperator{\Res}{Res}
\DeclareMathOperator{\Gal}{Gal}
\DeclareMathOperator{\Tr}{Tr}
\renewcommand{\mod}{{\text{ }\mathrm{mod}\text{ }}}
\DeclareMathOperator{\GL}{GL}
\DeclareMathOperator{\PGL}{PGL}
\DeclareMathOperator{\End}{End}
\DeclareMathOperator{\Mat}{Mat}
\DeclareMathOperator{\Lie}{Lie}
\DeclareMathOperator{\ord}{ord}
\DeclareMathOperator{\gl}{\mathfrak{gl}}
\DeclareMathOperator{\Rep}{Rep}
\DeclareMathOperator{\ev}{ev}
\def\bb1{\mathbbm{1}}
\DeclareMathOperator{\DR}{DR}
\DeclareMathOperator{\gr}{gr}
\DeclareMathOperator{\ur}{ur}
\DeclareMathOperator{\cont}{cont}
\DeclareMathOperator{\HT}{HT}
\DeclareMathOperator{\dR}{dR}
\DeclareMathOperator{\cris}{cris}
\DeclareMathOperator{\st}{st}
\DeclareMathOperator{\Spf}{Spf}
\DeclareMathOperator{\Spa}{Spa}
\def\LT{\mathrm{LT}}
\def\et{{\text{\'et}} }
\def\an{\mathrm{an}}
\def\lan{\mathrm{la}}
\def\sm{\mathrm{sm}}
\def\alg{\mathrm{alg}}
\def\lalg{\mathrm{lalg}}
\def\hat{\widehat}
\def\bar{\overline}
\def\tilde{\widetilde}
\def\Coh{\mathrm{Coh}}
\def\St{\mathrm{St}}
\def\rig{\mathrm{rig}}
\def\-{\text{-}}
\def\pst{\mathrm{pst}}
\def\WD{\mathrm{WD}}
\def\ss{\mathrm{ss}}
\def\pro{\mathrm{pro}}
\def\-{\text{-}}
\def\fl{\mathscr{F}\ell}
\def\Sen{\mathrm{Sen}}
\def\Sym{\mathrm{Sym}}
\def\GM{\mathrm{GM}}
\def\Ig{\mathrm{Ig}}
\def\Nilp{\mathrm{Nilp}}
\def\KS{\mathrm{KS}}
\def\Dr{\mathrm{Dr}}
\def\ORD{\mathrm{ORD}}
\def\SS{\mathrm{SS}}
\def\Nrd{\mathrm{Nrd}}
\def\tr{\mathrm{tr}}
\def\VB{\mathrm{VB}}
\def\ul{\underline}
\def\ov{\stackrel}
\newcommand{\Q}{\mathbb{Q}}
\newcommand{\Z}{\mathbb{Z}}
\newcommand{\N}{\mathbb{N}}
\newcommand{\X}{\mathcal{X}}
\newcommand{\B}{\mathbb{B}}
\newcommand{\ra}{\rightarrow}
\newcommand{\T}{\mathbb{T}}
\theoremstyle{definition}
\newtheorem{thm}{Theorem}[subsection]
\newtheorem{prop}[thm]{Proposition}
\newtheorem{lem}[thm]{Lemma}
\newtheorem{conj}[thm]{Conjecture}
\newtheorem{lemma}[thm]{Lemma}
\newtheorem{proposition}[thm]{Proposition}
\newtheorem{theorem}[thm]{Theorem}
\newtheorem{corollary}[thm]{Corollary}
\newtheorem{remark}[thm]{Remark}
\theoremstyle{definition}
\newtheorem{defn}[thm]{Definition}
\newtheorem{definition}[thm]{Definition}
\let\c@equation\c@thm
\numberwithin{equation}{section}
\title{Locally analytic vectors in the completed cohomology of unitary Shimura curves}
\author{Tian Qiu, Benchao Su}
\date{}
\begin{document}	

\maketitle
\begin{abstract}
We use the methods introduced by Lue Pan to study the locally analytic vectors in the completed cohomology of unitary Shimura curves. As an  application, we prove a classicality result on two-dimensional regular $\sigma$-de Rham representations of $\text{Gal}(\bar L/L)$ appearing in the locally $\sigma$-analytic vectors of the completed cohomology, where $L$ is a finite extension of $\Q_p$ and $\sigma:L\hookrightarrow E$ is an embedding of $L$ into a sufficiently large finite extension $E$ of $\Q_p$. We also prove that if a two-dimensional representation of $\text{Gal}(\bar L/L)$ appears in the locally $\sigma$-algebraic vectors of the completed cohomology then it is $\sigma$-de Rham. Finally, we give a geometric realization of some locally $\sigma$-analytic representations of $\GL_2(L)$. This realization has some applications to the $p$-adic local Langlands program for $\GL_2(L)$, including a locality theorem for Galois representations arising from classical automorphic forms, an admissibility result for coherent cohomology of Drinfeld towers of dimension $1$ over $L$ of arbitrary level, and some special cases of the Breuil's locally analytic Ext$^1$-conjecture for $\GL_2(L)$.
\end{abstract}

\tableofcontents

\section{Introduction}
\subsection{Statement of main results}
Let $L$ be a finite extension of $\bbQ_p$ and let $E$ be a sufficiently large finite extension of $\bbQ_p$. The $p$-adic local Langlands program for the group $\GL_2(L)$ aims to construct a correspondence between certain $2$-dimensional $p$-adic representations of $\Gal(\bar L/L)$ over $E$ and certain $p$-adic representations of $\GL_2(L)$ over $E$, suitably matching properties on both sides. Although such a correspondence is well-established when $L=\bbQ_p$ (cf.\cite{breuil2010emerging, colmez2010representations, emerton2011local}), the situation for $L\neq\bbQ_p$ remains largely mysterious. 

However, when the $p$-adic representation $\rho_p$ of $\Gal(\bar L/L)$ comes from a global Galois representation $\rho$ that appears in the completed cohomology \cite{Eme06} of certain unitary Shimura curves (which we will define later), one can associate to $\rho$ an admissible Banach representation $\Pi(\rho)$ of $\GL_2(L)$. When $L=\bbQ_p$ and the curve is the modular curve, the remarkable papers of Lue Pan \cite{Pan22, PanII} establish a result regarding the de Rhamness and the classicality of $\rho$, and give a detailed description of the locally analytic vectors $\Pi(\rho)^{\lan}$ inside $\Pi(\rho)$ when $\rho$ arising from classical modular forms, whose proofs are largely independent of the known $p$-adic local Langlands for $\GL_2(\bbQ_p)$. In this paper, we aim to generalize Pan's results to the case of unitary Shimura curves, and give some applications to the still conjectural $p$-adic local Langlands for the group $\GL_2(L)$.

First, we introduce some notation. Let $F$ be a CM field, with $F^+$ its maximal totally real subfield. Suppose $v$ is a prime of $F^+$ over $p$ that splits in $F$ into two primes, $w$ and $w^c$. Assume the local completions $F_w \cong F^+_v$ are isomorphic to $L$. Let $(G,X)$ be the Shimura datum, with reflex field $F$, such that for a sufficiently small open compact subgroup $K\subset G(\bbA^\infty)$, we get the unitary Shimura curve $X_{K}$ considered in, for example, \cite{Car83,HT01,Din17}. In particular, $G$ is a unitary similitude group over $\bbQ$ such that $G(\bbQ_p)$ has a factor isomorphic to $\GL_2(L)$. We write $G(\bbA^\infty)\isom G_vG^v$ with $G_v:=\GL_2(L)$ and $G^v$ is the product of $G(\bbA^{\infty,p})$ together with factors at $p$ but away from $v$. Fix a sufficiently small open compact subgroup $K^v\subset G^v$. Define 
\begin{align*}
    \tilde{H}^1(K^v,E):=(\ilim_n\dlim_{K_v\subset \GL_2(L)}H^1(X_{K^vK_v}(\bbC),\calO_E/p^n\calO_E))\ox_{\calO_E}E
\end{align*}
This is a $p$-adic Banach space over $E$, equipped with natural continuous actions of $\GL_2(L)$ and the Galois group $\Gal_F$. It is also equipped with an action of the abstract spherical Hecke algebra $\mathbb{T}^S:=\bbZ[K^S\bs G(\bbA^{\infty,S})/K^S]$, where $S$ is a finite set of primes containing $p$, such that $K^v=K^{S}K^v_S$ with $K^S\subset G(\bbA^{\infty,S})$ a product of hyperspecial subgroups. 

We also need to introduce certain conditions on $p$-adic representations of $\Gal_L$ and $p$-adic representations of $\GL_2(L)$ following \cite{Din14}. Let $V$ be a $2$-dimensional continuous $E$-linear representation of $\Gal_L$. Let $B_{\dR}$ denote the de Rham period ring constructed from a fixed algebraic closure $\bar L$ of $L$. Then the usual filtered $L$-vector space $D_{\dR}(V):=(B_{\dR}\ox_{\bbQ_p}V)^{\Gal_L}$ carries an action of $L\ox_{\bbQ_p}E$, so that we can decompose $D_{\dR}(V)\isom \bigoplus_{\sigma\in \Sigma}D_{\dR,\sigma}(V)$ with respect to the decomposition $L\ox_{\bbQ_p}E\isom \prod_{\sigma\in \Sigma}E$, where $\Sigma$ is the set of $\bbQ_p$-embeddings of $L$ into $E$. We assume $E$ contains all Galois conjugates of $L$, so that $|\Sigma|=[L:\bbQ_p]$. For an embedding $\sigma\in \Sigma$, we say $V$ is \emph{$\sigma$-de Rham} if $\dim_ED_{\dR,\sigma}(V)=2$. In general, if $J\subset\Sigma$ is a subset, we say $V$ is \emph{$J$-de Rham} if $V$ is $\sigma$-de Rham for each $\sigma\in J$. The Hodge filtration on $D_{\dR}(V)$ also decomposes and induces natural filtration on $D_{\dR,\sigma}(V)$ for $\sigma\in \Sigma$. We call the filtration on $D_{\dR,\sigma}(V)$ the \emph{$\sigma$-Hodge filtration}. Similarly we can also define the \emph{$J$-Hodge--Tate representation} by demanding $\dim_E D_{\HT,\sigma}(V)=2$ for each $\sigma\in J$. 

For an $E$-Banach space $\Pi$ with a continuous action of $\mathrm{GL}_2(L)$, let $\Pi^{\mathrm{la}}$ be the subspace of locally $\bbQ_p$-analytic vectors in $\Pi$. It has an action of $\mathfrak{gl}_2(L)\otimes_{\bbQ_p}E\cong \prod_{\sigma\in \Sigma}\mathfrak{gl}_2(E)$, where $\mathfrak{gl}_2(L)$ is the Lie algebra of $\mathrm{GL}_2(L)$. For $J$ a subset of embeddings of $L$ into $E$, let $\Pi^{J\-\mathrm{la}}$ be the subspace of $\Pi^\mathrm{la}$ on which the action of $\prod_{\sigma\in \Sigma}\mathfrak{gl}_2(E)$ factors through $\prod_{\sigma\in J}\mathfrak{gl}_2(E)$, and $\Pi^{J\-\mathrm{lalg}}$ be the subspace of $\Pi^{J\-\mathrm{la}}$ generated by finite-dimensional $\prod_{\sigma\in J}\mathfrak{gl}_2(E)$-stable subspaces. For $J_1,J_2$ two disjoint subsets of embeddings of $L$ into $E$, let $\Pi^{J_1\-\mathrm{la},J_2\-\mathrm{lalg}}$ be the subspace of $\Pi^{(J_1\cup J_2)\-\mathrm{la}}$ generated by finite-dimensional $\prod_{\sigma\in J_2}\mathfrak{gl}_2(E)$-stable subspaces. In particular, we get $\GL_2(L)$-equivariant inclusions $\Pi^{J_1\-\mathrm{la},J_2\-\mathrm{lalg}}\subset \Pi^{(J_1\cup J_2)\-\mathrm{la}}\subset \Pi^{\lan}$. For $\sigma\in \Sigma$ an embedding, we denote by $\sigma^c:=\Sigma\bs\{\sigma\}$ the set of other embeddings. Here we note that if $\Pi$ is an admissible Banach representation of $\GL_2(L)$, then $\Pi^{\lan}\neq 0$ by \cite{ST03}, but $\Pi^{J\-\lan}$ could be zero if $J\neq\Sigma$. (An easy example is that when $J=\varnothing$ then $\Pi^{\varnothing\-\lan}$ is the smooth vectors in $\Pi$.)

Now let $\rho$ be a $2$-dimensional continuous $E$-linear representation of $\Gal_F$. We can associate to $\rho$ a unitary Banach representation of $\GL_2(L)$ over $E$:
\begin{align*}
    \Pi(\rho):=\Hom_{E[\Gal_F]}(\rho,\tilde{H}^1(K^v,E)).
\end{align*}
The following is our main result. (We will discuss its applications to the $p$-adic local Langlands program in \S 1.3.) For simplicity, we assume there is no extra multiplicities coming from the tame part. In particular, $\Pi(\rho)^{\lalg}$ is irreducible as $\GL_2(L)$-representations.
\begin{theorem}\label{1}
Let $\rho$ be a $2$-dimensional continuous $E$-linear absolutely irreducible representation of $\Gal_F$. Let $\sigma\in \Sigma$ be an embedding. Suppose that 
\begin{enumerate}[(1)]
    \item $\Pi(\rho)^{\sigma\-\lan,\sigma^c\-\lalg}\neq 0$,
    \item $\rho|_{\Gal_L}$ is $\sigma$-de Rham of regular $\sigma$-Hodge--Tate weights.
\end{enumerate}
Then $\rho$ arises from a classical automorphic form.
\end{theorem}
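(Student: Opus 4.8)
The plan is to follow the strategy of Pan \cite{Pan22, PanII}, adapting it to the $L$-analytic setting via the Hodge--Tate period map and the intertwining operator. First I would use Scholze's perfectoid Shimura curve $\calX_{K^v}$ over $C$ with its Hodge--Tate period map $\pi_{\HT}:\calX_{K^v}\to \fl=\bbP^1$, constructed from the $\iota$-part of the Hodge--Tate exact sequence for the universal abelian scheme, and the primitive comparison isomorphism $\tilde H^1(K^v,C)\isom H^1(\fl,\pi_{\HT*}\calO_{\calX_{K^v}})$. Writing $\calO=\pi_{\HT*}\calO_{\calX_{K^v}}$, the first task is to analyze the sheaf $\calO^{\lan}$ of locally analytic vectors and its isotypic components $\calO^{\lan,\chi}$ for an integral weight $\chi$ of the horizontal torus. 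The key point, exploiting that the moduli problem decomposes the Hodge--Tate sequence along $\Sigma$, is that only the $\iota$-component behaves as in the modular curve case while the other components give isomorphisms; so on $\calO^{\iota\-\lan,\chi}$ one defines operators $d$ and $\bar d$ exactly as in \cite{PanII}, whereas on the $(\Sigma\setminus\iota)$-analytic part one sets $\bar d=0$. This produces an intertwining operator $I=d\circ\bar d$ on the whole of $\calO^{\lan,\chi}$.

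Next I would establish the $p$-adic Hodge-theoretic interpretation of $I$: for a $\Gal_F$-eigenclass in $\tilde H^1(K^v,E)[\rho]$ lying in $\Pi(\rho)^{\sigma\-\lan,\sigma^c\-\lalg}$, the vanishing or nonvanishing of $I$ on the corresponding section of $\calO^{\lan,\chi}$ is controlled by whether $\rho|_{\Gal_L}$ is $\sigma$-de Rham, and more precisely by the $\sigma$-Hodge filtration. This is the analogue of Pan's main computation relating the intertwining operator to the Sen operator / the obstruction to de Rhamness. Given hypothesis (2), that $\rho|_{\Gal_L}$ is $\sigma$-de Rham of regular $\sigma$-Hodge--Tate weights $\{0,k\}$, one deduces that the relevant class lies in $\ker I$. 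The crucial step is then an explicit computation of $\ker I$ on $\calO^{\lan,\chi}$: using the explicit local coordinates coming from the flag variety, $\ker \bar d$ together with the structure of $\ker d$ on the $\iota$-part identifies the relevant kernel with a space built from the coherent cohomology of automorphic vector bundles on $\calX_{K^v K_v}$, i.e. from classical modular forms of weight $k+1$ on the unitary Shimura curve. Here one invokes the classicality of the rigid (overconvergent) cohomology of the Igusa tower from \cite{Joh16}, which promotes the a priori overconvergent / $p$-adic-analytic classes to genuine classical automorphic forms.

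Finally, to conclude that $\rho$ itself arises from a classical automorphic form (rather than merely that the associated $p$-adic representation is attached to one after restriction), I would combine the previous step with a Hecke-eigenvalue / multiplicity argument: the nonvanishing of $\Pi(\rho)^{\sigma\-\lan,\sigma^c\-\lalg}$ forces the corresponding system of Hecke eigenvalues under $\bbT^S$ to occur in the coherent cohomology of $X_{K^vK_v}$ in weight $k+1$, hence to come from a cuspidal automorphic representation $\pi$ of $G$; then the Jacquet--Langlands correspondence transports $\pi$ to an automorphic representation on the relevant $\GL_2$-type group over $F^+$, and a standard argument (using absolute irreducibility of $\rho$ and compatibility of Galois representations attached to automorphic forms on unitary groups, as in \cite{HT01,Car83}) identifies $\rho$ with the Galois representation attached to $\pi$, so that $\rho$ is automorphic.

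The main obstacle I anticipate is the explicit computation of $\ker I$ together with the $p$-adic Hodge-theoretic interpretation of $I$ in the $L\neq\bbQ_p$ setting: one must carefully keep track of the $\Sigma$-decomposition throughout, ensure that defining $\bar d=0$ on the $(\Sigma\setminus\iota)$-analytic part is compatible with the global geometry (in particular that the $(\Sigma\setminus\iota)$-part of the Hodge--Tate sequence really is an isomorphism here, which uses the precise signature/PEL datum of the unitary group), and then match the resulting kernel with classical coherent cohomology in a way that feeds cleanly into \cite{Joh16}. A secondary technical difficulty is controlling finite-generation of $\calO^{\lan}$ (or of $\Pi(\rho)^{\lan}$) over the locally algebraic vectors after passing to $K_v$-analytic vectors for a suitable $K_v\subset \GL_2(L)$, which is needed to reduce the cohomological computation to a finite-dimensional one.
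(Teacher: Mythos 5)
Your proposal matches the paper's own proof strategy essentially step for step: the primitive comparison along $\pi_{\HT}$, the decomposition of $\calO^{\lan}$ into $\iota$- and $\iota^c$-analytic parts via geometric Sen theory (the paper's $\mathrm{VB}$ functor), the two differential operators with $\bar d$ acting $\calO^{\iota^c\-\lan}$-linearly (your informal ``$\bar d=0$ on the $\iota^c$ part''), the identification of $I$ with the Fontaine operator (Theorem \ref{N=dd}), the explicit computation of $\ker I$ using \cite{Joh16} and Jacquet--Langlands on the supersingular locus, and finally the Eichler--Shimura/Hecke-eigenvalue bookkeeping (Corollary \ref{classical}) to conclude classicality. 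The only minor imprecision is that the paper's $I_k$ is $\bar d'^{k+1}\circ d^{k+1}=d'^{k+1}\circ\bar d^{k+1}$ where the primes denote twists by $(\Omega^1_{\fl})^{\ox k+1}$ and $(\Omega^{1,\sm}_{K^v})^{\ox k+1}$ respectively; these twists are needed to make the target the correct weight, but this does not change the substance of your argument.
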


For the condition (1),  conjecturally this is equivalent to $\rho$ is $\sigma^c$-de Rham of different $\sigma^c$-Hodge--Tate weights. In fact, we make the following conjecture. We are grateful to Yiwen Ding for insightful discussions on this conjecture.
\begin{conj}\label{conj:sigmaalgsigmadR}
    Let $E$ be a finite extension of $L$  containing all embeddings of $L$ into $\overline{\Q}_p$ and 
		$$\rho:\mathrm{Gal}_F\ra \mathrm{GL}_2(E)$$ be a two-dimensional continuous absolutely irreducible representation of $\mathrm{Gal}_F$. Assume that $\Pi(\rho)\neq 0$. Then for any subset $J$ of embeddings of $L$ into $E$,  $\Pi(\rho)^{J^c\textup{-la}, J\textup{-lalg}}\neq 0$ if and only if  $\rho$ is $J$-de Rham of different $J$-Hodge--Tate weight.
\end{conj}

If this is true, combining this with Theorem \ref{1} one can prove the classicality for every two-dimensional continuous absolutely irreducible representation of $\mathrm{Gal}_F$ appearing the completed cohomology which is de Rham of different Hodge--Tate weights. For this conjecture, we can prove the ``only if" direction.

\begin{theorem}\label{2}
Let $\rho:\mathrm{Gal}_F\rightarrow \mathrm{GL}_2(E)$ be a two-dimensional continuous absolutely irreducible representation of $\mathrm{Gal}_F$. Suppose that $\Pi(\rho)^{\sigma^c\-\lan , \sigma\text{-}\mathrm{lalg}}\neq 0$ for some embedding $\sigma\in \Sigma$. Then $\rho|_{\mathrm{Gal}_L}$ is $\sigma$-de Rham of different $\sigma$-Hodge--Tate weights.
\end{theorem}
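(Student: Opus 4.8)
The plan is to deduce Theorem~\ref{2} from the $p$-adic Hodge-theoretic interpretation of the intertwining operator, in the spirit of the one-line argument indicated in the introduction. First I would fix an embedding $E\hookrightarrow C:=\widehat{\bar L}$ and, crucially, take the auxiliary embedding $\iota\colon L\hookrightarrow C$ entering the construction of the adic Shimura curve $\mathcal{X}_K$ and the Hodge--Tate period map $\pi_{\mathrm{HT}}\colon \mathcal{X}_{K^v}\to\mathscr{F}\ell=\bbP^1$ to be the composite $L\xrightarrow{\sigma}E\hookrightarrow C$; this makes $\sigma$ the distinguished direction in which the Hodge--Tate filtration is non-split and in which the operator $\overline{d}$ of \S\ref{intertw} is nontrivial. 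Writing $\mathcal{O}=\pi_{\mathrm{HT}*}\mathcal{O}_{\mathcal{X}_{K^v}}$ and using the primitive comparison isomorphism $\tilde{H}^1(K^v,C)\cong H^1(\mathscr{F}\ell,\mathcal{O})$ together with the local description of $\mathcal{O}^{\lan}$ from \S\ref{uni}, the hypothesis $\Pi(\rho)^{\sigma^c\-\lan,\sigma\-\lalg}\neq 0$ (with the absolute irreducibility of $\rho$, so that $\tilde{H}^1(K^v,C)[\rho]$ is $\rho$-isotypic) produces a nonzero class $c\in H^1(\mathscr{F}\ell,\mathcal{O}^{\lan})[\rho]$ that is locally $\sigma$-algebraic and locally $\sigma^c$-analytic. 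Decomposing the locally $\sigma$-algebraic part into $\GL_2(E)$-isotypic components and projecting onto a single integral horizontal weight $\chi$, I may assume $c$ carries a fixed algebraic $\GL_2$-type of highest weight $k\ge 0$ in the $\sigma$-factor and lies in $\mathcal{O}^{\lan,\chi}$.

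Next I would record the key vanishing. By its very construction from the non-split $\iota=\sigma$-component of the Hodge--Tate exact sequence, the operator $\overline{d}$ vanishes identically on the $\sigma^c$-locally analytic part and annihilates every locally $\sigma$-algebraic vector; this is precisely the statement that ``the $\iota$-algebraic vectors are killed by $\overline{d}$.'' Hence $\overline{d}(c)=0$, so $I(c)=d(\overline{d}(c))=0$, and $c$ is a nonzero class in $H^1(\mathscr{F}\ell,\ker I)[\rho]$ of weight $\chi$. At this point I would invoke the $p$-adic Hodge-theoretic interpretation of $I$ established in \S\ref{padic} (the analogue for the $\iota$-direction of the main computation of \cite{PanII}): on the $\rho$-isotypic, weight-$\chi$ part of $H^1(\mathscr{F}\ell,\mathcal{O}^{\lan})$ the operator $I$ is governed by the arithmetic Sen operator of $\rho|_{\Gal_L}$ in the $\sigma$-direction together with $\chi$, so that the existence of a nonzero class in $\ker I$ forces $\rho|_{\Gal_L}$ to be $\sigma$-de Rham, with $\sigma$-Hodge--Tate weights of the shape $\{a,\,a-k-1\}$, where $a$ is read off from $\chi$ and $k\ge 0$ is the highest weight of the algebraic type in the $\sigma$-factor. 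Since $k+1\ge 1$ the two weights are distinct; equivalently, were they equal the weight $\chi$ would be central in the $\sigma$-direction and the corresponding summand of $\ker I$ would carry no cohomology over $\bbP^1$ (the analogue of $H^1(\bbP^1,\mathcal{O}_{\bbP^1})=0$), contradicting $c\neq 0$. This gives that $\rho|_{\Gal_L}$ is $\sigma$-de Rham of distinct $\sigma$-Hodge--Tate weights.

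The substance of the proof lies entirely in the two inputs I would be assuming from the earlier sections: the construction of $d,\overline{d},I$ on $\mathcal{O}^{\lan}$ in the $L\neq\bbQ_p$ setting, which requires isolating the $\sigma$-component of the multi-embedding local structure of $\mathcal{O}^{\lan}$; and, more seriously, the $p$-adic Hodge-theoretic interpretation of $I$, whose proof adapts Pan's Sen-theoretic comparison between the arithmetic and geometric actions. Within the present argument itself the only delicate points are (i) verifying that ``locally $\sigma$-algebraic'' is exactly the finiteness condition placing a vector into $\ker\overline{d}$ — this uses the explicit local form of $\overline{d}$, and is precisely where the hypothesis of Theorem~\ref{2} (algebraic in the $\sigma$-direction, rather than merely analytic of integral weight as in Theorem~\ref{1}) is used — and (ii) the weight bookkeeping in the final step needed to upgrade ``$\sigma$-de Rham'' to ``$\sigma$-de Rham of distinct weights'', for which the nonvanishing of $c$ is essential. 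Everything else is formal.
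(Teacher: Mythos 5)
Your proposal is correct and follows essentially the same route as the paper's proof (Theorem~\ref{thm:sigmaalgimpliessigmadeRham}): choose $\iota = u\circ\sigma$, observe that $\overline{d}$ annihilates the $\sigma$-lalg, $\sigma^c$-la subsheaf (Theorem~\ref{thm:dbar}), so $I$ vanishes on the corresponding cohomology class, and then invoke the identification of $I$ with the Fontaine operator (Theorem~\ref{N=dd}) together with Theorem~\ref{N=0} to conclude $\sigma$-de Rhamness. The one place the paper is more careful than your sketch is the logical ordering: it first establishes $\sigma$-Hodge--Tate-ness of weights $0,k$ (via the Galois-equivariant exact sequence of Theorem~\ref{thm:JlalgcommutesH1} and the center/Sen operator computation) \emph{before} appealing to the Fontaine operator machinery, since the latter presupposes $\sigma$-Hodge--Tate-ness; and the final step is phrased as ``Fontaine operator not injective on a $1$-dimensional source, hence zero,'' which is the precise content of your ``nonzero class in $\ker I$ forces $\sigma$-de Rham.'' (Also a minor notational slip: $I(c) = d'(\overline{d}(c))$ rather than $d(\overline{d}(c))$, since $I = d'\circ\overline d = \overline d'\circ d$, but this does not affect the argument.)
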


\subsection{Overview of the proof of main results}
Our proof is motivated by Pan's work. The starting point is to identify locally analytic vectors in the completed cohomology group $\tilde{H}^1(K^v,E)^{\lan}$ as the cohomology of the sheaf of locally analytic sections on the completed unitary Shimura curve. Let $C$ be the completion of an algebraic closure of $\Q_p$. Fix an embedding $\iota:L\to C$. Let $\calX_K$ be the adic space associated to $X_K\times_{F}C$, where $C$ is a viewed as an $F$-algebra via $F\to F_w\isom L\ov{\iota}\to C$. By the work of Scholze \cite{Sch15}, there is a perfectoid space $\calX_{K^v}$ over $C$ such that $\calX_{K^v}\sim\ilim_{K_v\subset \GL_2(L)}\calX_{K^vK_v}$, with Hodge--Tate period map $\pi_{\HT}:\calX_{K^v}\to \fl$, where $\fl$ is the adic space associated to $\bbP^1_L\times_{L,\iota}C$. The primitive comparison theorem \cite{Sch13} gives
\begin{align*}
    \tilde{H}^1(K^v,C)\cong H^1(\mathcal{X}_{K^v},\mathcal{O}_{\mathcal{X}_{K^v}})\cong H^1(\mathscr{F}\ell,\pi_{\mathrm{HT}*}\mathcal{O}_{\mathcal{X}_{K^v}}).
\end{align*}
Let $\mathcal{O}_{K^v}:=\pi_{\mathrm{HT}*}\mathcal{O}_{\mathcal{X}_{K^v}}$. Using the geometric Sen theory \cite{RC1}, we show that if $J$ is a subset of embeddings of $L$ into $C$ that contains $\iota$, then (Theorem \ref{thm:JlacommutesH1})
\begin{align*}
    \tilde{H}^1(K^v,E)^{J\-\lan}\cong H^1(\mathscr{F}\ell,\calO_{K^v}^{J\-\lan}).
\end{align*} 
Therefore, in order to study the locally $J$-analytic vectors in the completed cohomology group, it suffices to consider cohomology of the sheaf $\calO_{K^v}^{J\-\lan}$. It turns out that there is a tensor product decomposition of the sheaf
\begin{align*}
    \calO_{K^v}^{J\-\lan}\isom \hat\ox_{\calO_{K^v}^{\sm},\eta\in J}    \calO_{K^v}^{\eta\-\lan}
\end{align*}
where $\calO_{K^v}^{\sm}:=\calO_{K^v}^{\varnothing\-\lan}$ is the subsheaf of $\calO_{K^v}^{\lan}$ consisting of smooth sections. See Proposition \ref{prop:RVBiotala} for more information about the completed tensor product over $\calO_{K^v}^{\sm}$. Roughly speaking, it is given by taking completed tensor product over all finite levels $\calO_{K^vK_v}$ and then taking colimits on $K_v$.
\begin{itemize}
    \item When $\eta=\iota$, the structure of $\calO_{K^v}^{\iota\-\lan}$ is very similar to the  modular curve case considered in \cite{Pan22}.
    \item When $\eta\neq\iota$, as the $\eta$-Hodge filtration for the universal abelian scheme over $\calX_{K^v}$ degenerates, so that non-rigorously speaking the structure of $\calO_{K^v}^{\eta\-\lan}$ is very similar to a completed tensor product of $\calO_{K^v}^{\sm}$ with $\calC^{\eta\-\lan}(\frg,C)$ over $C$. Here, $\calC^{\eta\-\lan}(\frg,C)$ is the germ of locally $\eta$-analytic functions on $\GL_2(L)$ and the completed tensor product over $C$ is computed for LB spaces.
\end{itemize}
The geometric Sen theory \cite{RC1, Pil22, camargo2024locallyanalyticcompletedcohomology,boxer2025modularitytheoremsabeliansurfaces} also produces an action $\theta_{\frh}$ of the Cartan subalgebra $\frh=\left(\begin{matrix}*&0\\0&*\end{matrix}  \right)\subset \Lie\GL_2(L)\ox_{L,\iota}C$ on $\calO_{K^v}^{\lan}$. After fixing an integral infinitesimal character, we can decompose the sheaf with respect to this $\theta_{\frh}$-action. For example, we have
\begin{align*}
    \calO_{K^v}^{\lan,\tilde{\chi}_0}\isom \calO_{K^v}^{\lan,(0,0)}\oplus \calO_{K^v}^{\lan,(-1,1)},
\end{align*}
where $\tilde{\chi}_0$ is the trivial infinitesimal character, and $\calO_{K^v}^{\lan,(a,b)}$ denotes the isotypic space for the weight $(a,b)\in \bbZ^2$ of the $\theta_{\frh}$-action. It turns out that there is an intertwining operator 
\begin{align*}
    I:\calO_{K^v}^{\lan,(0,0)}\to \calO_{K^v}^{\lan,(-1,1)}(1),
\end{align*}
where $(1)$ denotes the Tate twist. On $\calO_{K^v}^{\iota\-\lan,(0,0)}$, the construction of $I$ is basically the same as in \cite{PanII}, which are compositions of differential operators along unitary Shimura curves of finite level and differential operators along the flag variety. Using our detailed description of $\calO_{K^v}^{\iota^c\-\lan}$, we can also extend the definition of these differential operators, and hence the intertwining operator, to the whole locally $\bbQ_p$-analytic part $\calO_{K^v}^{\lan,(0,0)}$. Here we note that these constructions can be naturally generalized to the case of regular algebraic weights.

As in \cite{PanII}, we also give a $p$-adic Hodge-theoretic interpretation of $I$ in terms of a geometric version of the Fontaine operator \cite{fontaine2004arithmetique}. A key new point in our proof is that in order to compare the intertwining operator with the Fontaine operator, we need to extend the lifting of $\calO_{K^v}^{\sm,\Gal_L\-\sm}$ (subsheaf of $\calO_{K^v}^{\sm}$ consisting of $\Gal_L$-smooth sections) in $\calO\bbB_{\dR}^+$ to a lifting of $\calO_{K^v}^{\iota^c\-\lan,\Gal_L\-\sm}$ in $\calO\bbB_{\dR}^+$, which is done naturally by the relative de Rham comparison theorem for the universal abelian scheme over $\{\calX_{K^vK_v,\bar L}\}_{K_v\subset \GL_2(L)}$.

Once we have identify the intertwining operator with the Fontaine operator, together with some calculation on the cohomology of the intertwining operator, we can prove Theorem \ref{1}. For Theorem \ref{2}, as $\calO_{K^v}^{\iota^c\-\lan}$ is killed by the intertwining operator from our construction, the Fontaine operator acts trivially on it. This gives our partially de Rham result.

\subsection{Applications to the $p$-adic local Langlands program}
There are also some applications to the $p$-adic local Langlands program for $\GL_2(L)$. 
\subsubsection*{Structure of $\Pi(\rho)^{\sigma\-\lan,\sigma^c\-\lalg}$}
Our first application is to study the structure of $\Pi(\rho)^{\sigma\-\lan,\sigma^c\-\lalg}$ in detail, when $\rho$ arises from a classical automorphic form. As a byproduct, we also show that in this case, $\Pi(\rho)^{\sigma\-\lan,\sigma^c\-\lalg}$ only depends on the local Galois representation $\rho_w:=\rho|_{\Gal_L}$.

For simplicity we assume $\rho_w$ is de Rham of parallel Hodge--Tate weights $0,1$. Here our convention is that the Hodge--Tate weights of the $p$-adic cyclotomic character is $-1$. The case with general regular Hodge--Tate weights can be treated similarly. Starting from $\rho$, it restricts to a local Galois representation $\rho_w$, and then we can associate a Weil--Deligne representation $r_w$ with coefficients in $C$. By the local Langlands correspondence, to $r_w$ we can attach a smooth irreducible representation $\pi_v$ of $\GL_2(L)$ over $C$. Fix an embedding $u:E\to C$ such that $u\comp \sigma=\iota$ as embeddings of $L$ into $C$, and we define 
\begin{align*}
    \pi_\sigma(\rho):=\Pi(\rho)^{\sigma\-\lan}\hat\ox_{E,u}C.
\end{align*}
It turns out that all these structural results we described below about $\pi_\sigma(\rho)$ follows a similar pattern as in the $\GL_2(\bbQ_p)$-case. 

When $\pi_v$ is supercuspidal, we can realize $\pi_\sigma(\rho)$ on the supersingular locus of the unitary Shimura curves, which is related to coverings of Drinfeld's upper half plane (via flip-flopping). Let $\Omega=\bbP^1\bs \bbP^1(L)$ be the Drinfeld's upper half plane, which is viewed as an adic space over $C$ where $L$ acts via $\iota$. It has a tower of \'etale coverings $\{\calM_{\Dr,n}\}_{n\ge 0}$, such that $\calM_{\Dr,n}$ is a $D_L^\times/(1+\varpi^n\calO_{D_L})$-torsor over $\Omega$, where $D_L$ is the non-split quaternion algebra over $L$ and $\varpi$ is a fixed uniformizer of $L$. Let $\tau_v$ be the smooth finite dimensional irreducible representation of $D_L^\times$ over $C$ associated to $\pi_v$ via the classical Jacquet--Langlands correspondence. Up to unramified twists, we may assume $\varpi$ (viewed as an element in the center of $D_L^\times$) acts trivially on $\tau_v$. Then:
\begin{theorem}
There is a $\GL_2(L)$-equivariant isomorphism:
\begin{align*}
    0\to \pi_v\to (\tau_v\ox_C\dlim_n H^1_c(\calM_{\Dr,n},\calO_{\calM_{\Dr,n}}))^{D_{L}^\times}\to \pi_\sigma(\rho)\to 0.
\end{align*}
\end{theorem}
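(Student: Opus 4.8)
The plan is to prove the exact sequence in two stages. First, keeping the hypotheses of Theorem~\ref{1} and assuming (for the displayed normalisation) that $\rho_w:=\rho|_{\Gal_L}$ is de Rham of Hodge--Tate weights $0,1$, I would record the presentation $0\to\pi_w\to\tilde\pi\to\pi_\sigma(\rho)\to0$, where $\tilde\pi$ is the locally analytic representation of $\GL_2(L)$ produced by the computation of the intertwining operator (the same computation that underlies Theorem~\ref{1}); this step is valid for all such $\rho$. Second, under the extra assumption that $\pi_w$ is supercuspidal (equivalently $\WD(\rho_w)$ is irreducible), I would identify $\tilde\pi$ geometrically with $(\tau_w\ox_C\dlim_nH^1_c(\calM_{\Dr,n},\calO_{\calM_{\Dr,n}}))^{D_L^\times}$ by means of the $p$-adic uniformization of the supersingular locus. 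Plugging the second into the first yields the theorem.

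For the first stage: after base change along $u\colon E\to C$, and using that hypotheses (1)--(2) force the infinitesimal character of $\Pi(\rho)^{\sigma\-\lan}$ to be $\tilde\chi_0$, the representation $\pi_\sigma(\rho)$ is computed by $H^1(\fl,-)$ of the $\rho$-isotypic part of the intertwining complex $[\,\calO_{K^v}^{\iota\-\lan,(0,0)}\xrightarrow{I}\calO_{K^v}^{\iota\-\lan,(1,-1)}(1)\,]$. I would then stratify $\fl=\bbP^1$ as $\Omega\sqcup\bbP^1(L)$ with $\Omega$ the Drinfeld upper half plane, so that $\pi_{\mathrm{HT}}^{-1}(\bbP^1(L))$ is the ordinary locus and $\pi_{\mathrm{HT}}^{-1}(\Omega)$ the supersingular locus of $\calX_{K^v}$. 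Since $\bbP^1(L)$ is profinite, the excision long exact sequence for $j\colon\Omega\hookrightarrow\bbP^1$ and $i\colon\bbP^1(L)\hookrightarrow\bbP^1$ only involves the open part in degree $1$; the $p$-adic Hodge-theoretic interpretation of $I$ together with the classicality of the rigid cohomology of the Igusa tower \cite{Joh16} and the Jacquet--Langlands correspondence identifies the degree-$0$ term over $\bbP^1(L)$, on the $\rho$-part, with the smooth representation $\pi_w$, and — after checking $H^0(\bbP^1,-)[\rho]=0$ — the sequence becomes $0\to\pi_w\to H^1_c(\Omega,-)[\rho]\to H^1(\bbP^1,-)[\rho]\to0$, i.e.\ $0\to\pi_w\to\tilde\pi\to\pi_\sigma(\rho)\to0$ with $\tilde\pi:=H^1_c(\Omega,-)[\rho]$; along the way one sees $\tilde\pi$ depends only on $\WD(\rho_w)$.

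For the second stage, I would compute $\tilde\pi$ directly. Over $\Omega$ the operator $\overline{d}$, hence $I$, essentially vanishes in the $\iota$-direction, exactly as in the $\GL_2(\bbQ_p)$-picture of \cite{PanII}, so the sheaf entering $H^1_c(\Omega,-)$ over $\pi_{\mathrm{HT}}^{-1}(\Omega)$ is just the structure sheaf. Now invoke the $p$-adic uniformization of the supersingular locus of the unitary Shimura curve (Rapoport--Zink, in the PEL form of \cite{HT01}): at finite level $K_v$ it is a finite disjoint union, indexed by a double-coset set for the relevant inner form $G'$ of $G$, of quotients of the Lubin--Tate space $\calM_{\LT,K_v}$ by discrete, essentially $S$-arithmetic, subgroups; passing to $K_v\to\{1\}$ and then to the perfectoid limit realises $\pi_{\mathrm{HT}}^{-1}(\Omega)$ in terms of $\calM_{\LT,\infty}$ with its $\GL_2(L)\times D_L^\times$-action, with $\pi_{\mathrm{HT}}$ restricting to the Lubin--Tate Hodge--Tate period map, whose image is $\Omega$. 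Applying the Faltings--Fargues isomorphism $\calM_{\LT,\infty}\isom\calM_{\Dr,\infty}$, which is $\GL_2(L)\times D_L^\times\times W_L$-equivariant and carries the Lubin--Tate Hodge--Tate period map to the Drinfeld period map $\calM_{\Dr,\infty}\to\Omega$, turns the pushforward of $\calO$ over $\Omega$ into the pushforward of $\calO$ along the Drinfeld tower $\{\calM_{\Dr,n}\}$; as $\Omega$ is Stein and each $\calM_{\Dr,n}$ is a rigid space over $L$, the locally $\iota$-analytic condition becomes honest coherent cohomology and $H^1_c(\Omega,-)$ becomes $\dlim_nH^1_c(\calM_{\Dr,n},\calO_{\calM_{\Dr,n}})$. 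Finally, extracting the $\rho$-isotypic part via the prime-to-$v$ Hecke action and the classical Jacquet--Langlands correspondence (which matches $\pi_w$ with $\tau_w=\JL(\pi_w)$, the $v$-component of the corresponding automorphic representation of $G'$) reorganises the finite double-coset sum and the discrete subgroups into the functor $(\tau_w\ox_C(-))^{D_L^\times}$, giving $\tilde\pi\isom(\tau_w\ox_C\dlim_nH^1_c(\calM_{\Dr,n},\calO_{\calM_{\Dr,n}}))^{D_L^\times}$. Combined with the first stage, this is the claimed exact sequence.

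The main obstacle I anticipate is the bookkeeping of all the extra structure through the uniformization and the duality: one must verify that the Hodge--Tate period map of $\calX_{K^v}$ restricted to the supersingular tubes matches, with the correct orientation and normalisation, the Lubin--Tate Hodge--Tate period map, and under Faltings the Drinfeld period map onto $\Omega$; that the infinitesimal-character decomposition, the weight-$(0,0)$ projection and the operator $I$ are compatible with these identifications, so that ``$\ker I$ over $\pi_{\mathrm{HT}}^{-1}(\Omega)$'' genuinely becomes ``$\calO$ on the Drinfeld tower over $\Omega$''; that $H^0(\bbP^1,-)[\rho]$ vanishes while the degree-$0$ term over $\bbP^1(L)$ is exactly $\pi_w$ (which rests on \cite{Joh16} and Jacquet--Langlands); and that the analytic manipulations — completed tensor products of LB-spaces and $H^1_c$ versus $H^1$ on the non-quasicompact space $\Omega$ — are legitimate. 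The $\GL_2(L)\times\Gal_L$-equivariance of the rational structures, needed for the later admissibility statement, is an additional delicate point but is not required for the exact sequence itself.
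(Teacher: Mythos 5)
Your overall strategy---record the presentation $0\to\pi_w\to\tilde\pi\to\pi_\sigma(\rho)\to0$ from the intertwining-operator computation, then identify $\tilde\pi$ with the Drinfeld coherent cohomology via the $p$-adic uniformization of the supersingular locus and the Scholze--Weinstein duality---is exactly what the paper does (Theorems \ref{thm:kerI1}, \ref{thm:kerI2}, the supercuspidal example in the discussion following Theorem \ref{thm:H0Ig}, and the proof of Theorem \ref{thm:admissible}). Your second stage (uniformization, duality, Jacquet--Langlands, the identification $\bigoplus_{i\in\bbZ}H^1(j_!\calO_{\Dr}^{\sm})\isom\dlim_nH^1_c(\calM_{\Dr,n},\calO_{\calM_{\Dr,n}})$ via the Steinness of the $\calM_{\Dr,n}$) is correct.

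Your re-derivation of the first stage via excision, however, misidentifies the source of $\pi_w$ and this is a real error. You claim that the classicality of the rigid cohomology of the Igusa tower identifies ``the degree-$0$ term over $\bbP^1(L)$'' with $\pi_w$. This is backwards for the supercuspidal case: by Theorem \ref{thm:H0Ig}, since $J_{N(L)}(\pi_w)=0$ when $\pi_w$ is supercuspidal, the $\lambda$-isotypic parts of $H^0_{\rig}(\Ig_{K^v})$ and $H^1_{\rig}(\Ig_{K^v})$ vanish, so the ordinary-locus / $\bbP^1(L)$ contribution disappears entirely ($H^0(\coker d)[\lambda]=0$, $\ker d|_{\bbP^1(L)}[\lambda]=0$, and $\bbH^1(\DR)[\lambda]=H^1(\ker d)[\lambda]$ is supported on $\Omega$). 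The subrepresentation $\pi_w\subset\tilde\pi$ arises instead from $H^0(\calO\ox\Omega)[\lambda]\isom H^0(\fl,\Omega^{1,\sm}_{K^v})[\lambda]$---the classical cuspforms---mapping into $\bbH^1(\DR^{\sm})[\lambda]\subset\bbH^1(\DR)[\lambda]$ via the Hodge filtration; this is precisely the content of the diagram chase in Theorem \ref{thm:kerI1}, and after the de Rham comparison it records the position of $\Fil^1 D_{\dR,\sigma}(\rho_w)$ in $D_{\dR,\sigma}(\rho_w)$. Working directly with excision on the single two-term complex $[\calO\xrightarrow{I}\calO^s]$ also does not cleanly produce $\ker I^1$ (as opposed to the hypercohomology $\bbH^1$ of that complex, which differs by $H^0(\calO^s)/\mathrm{im}H^0(\calO)$); the paper instead passes through the factored complexes $\DR=[\calO\to\calO\ox\Omega]$, $\DR'$ and their Hodge and conjugate filtrations (Propositions \ref{prop:DRcoh}--\ref{prop:DRcohpppp}). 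So you should keep your first-sentence formulation (cite Theorem \ref{thm:kerI1}/\ref{thm:kerI2} for the presentation) and discard the alternative excision derivation of $\pi_w\hookrightarrow\tilde\pi$.
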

Recall that we have an isomorphism $(\tau_v\ox_C \dlim_n H^1_{\dR,c}(\calM_{\Dr,n}))^{D_L^\times}\isom \pi_v\ox_C r_w$ by \cite{CDN20}. Then the first map in above is induced by the $\sigma$-Hodge filtration of $\rho|_{\Gal_L}$ together with the natural map $H^1_{\dR,c}(\calM_{\Dr,n})\to H^1_c(\calM_{\Dr,n},\calO_{\calM_{\Dr,n}})$. 
\medskip

When $\pi_v$ is a principal series, we can realize $\pi_\sigma(\rho)$ on the ordinary locus of the unitary Shimura curves, which is related to the (non-perfect) Igusa curves. Let $\calX_{K^v,n}^{\text{multi}}$ be a component of the ordinary locus of $\calX_{K^v,n}:=\calX_{K^v(1+\varpi^n \Mat_2(\calO_L))}$, such that they are mapped to the point $\infty=[0:1]\in\fl$ via the infinite level Hodge--Tate period map $\pi_{\HT}$. By restricting the de Rham complex $\calO_{K^v}^{\sm}\to \Omega^{1,\sm}_{K^v}$ (where $\Omega^{1,\sm}_{K^v}:=\pi_{\HT,*}(\dlim_{K_v}\pi_{K_v}^{-1}\Omega^1_{\calX_{K^vK_v}})$ with $\pi_{K_v}:\calX_{K^v}\to \calX_{K^vK_v}$ the natural projection map) to an overconvergent neighbourhood of $\calX_{K^v,n}^{\text{multi}}$, we get an overconvergent $F$-isocrystal and its cohomology calculates the rigid cohomology of the Igusa curves $\Ig_{K^v,n}$ (which is an irreducible component of the special fiber of a suitable integral model of $\calX_{K^v,n}$). We denote this cohomology group by $H^i_{\rig}(\Ig_{K^v,n})$, $i\ge 0$. After taking colimits on $n$, the cohomology group $H^i_{\rig}(\Ig_{K^v,\infty}):= \dlim_n H^i_{\rig}(\Ig_{K^v,n})$ carries a natural smooth admissible action of $\bar B(L)$ and the Hecke algebra $\mathbb{T}^S$. Let $\lambda:\bbT^S\to C$ be the Hecke eigenvalue given by the values of $\rho$ on the Frobenii.
\begin{theorem}
There is a $\GL_2(L)$-equivariant isomorphism:
\begin{align*}
    0\to \pi_v\to \left( \Ind^{\GL_2(L)}_{\bar B(L)}H^1_{\rig}(\Ig_{K^v,\infty})[\lambda] \right)^{\iota\-\lan}\to \pi_\sigma(\rho)\to 0.
\end{align*}
\end{theorem}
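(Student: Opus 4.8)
The plan is to run the argument for the Drinfeld‑case theorem above, with the Igusa tower and the ordinary locus in place of the Drinfeld tower and the supersingular locus. Fix $u\colon E\to C$ with $u\comp\sigma=\iota$. Combining the comparison between locally $\iota$-analytic vectors in completed cohomology and cohomology of $\calO_{K^v}^{\iota\-\lan}$ with the decomposition along the $\theta_{\frh}$-action at the trivial infinitesimal character, one has
\begin{align*}
\pi_\sigma(\rho)=\Pi(\rho)^{\sigma\-\lan}\hat\ox_{E,u}C\isom \Hom_{\Gal_F}(\rho,\,H^1(\fl,\calO_{K^v}^{\iota\-\lan,(0,0)}))
\end{align*}
up to the harmless $(1,-1)$-twist. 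Because $\rho_w=\rho|_{\Gal_L}$ is de Rham, the $p$-adic Hodge-theoretic description of the intertwining operator $I\colon \calO_{K^v}^{\iota\-\lan,(0,0)}\to\calO_{K^v}^{\iota\-\lan,(1,-1)}(1)$ as a geometric Fontaine operator shows that the induced map on $H^1(\fl,-)$ annihilates the $\rho$-part; so the first step is to replace $\pi_\sigma(\rho)$ by $\Hom_{\Gal_F}(\rho,H^1(\fl,\ker I))$, keeping track of a correction coming from $H^0$ of $\coker I$, which is of coherent (i.e.\ classical) nature and will become the smooth subrepresentation $\pi_w$.

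The main step is the local analysis of $\ker I$ near the ordinary locus. The Hodge--Tate period map $\pi_{\HT}$ carries the ordinary part of $\calX_{K^v}$, by a locally constant map, onto $\bbP^1(L)\subset\fl$, which is the $\GL_2(L)$-orbit of $\infty=[1:0]$ with stabilizer $B(L)$. Over a quasi-Stein overconvergent neighbourhood $V$ of $\infty$ the operator $I$ degenerates --- its ``flag-variety direction'' becomes trivial since $\pi_{\HT}$ is locally constant there --- so $\ker I|_V$ is controlled by the restriction to $V$ of the de Rham complex $\calO_{K^v}^{\sm}\to\Omega^{1,\sm}_{K^v}$, that is, by the overconvergent $F$-isocrystal whose cohomology computes $H^*_{\rig}(\Ig_{K^v,n})$. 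Thus $H^1(V,\ker I)\isom H^1_{\rig}(\Ig_{K^v,\bullet})$, and letting $V$ shrink and $K_v$ grow identifies the germ of $\ker I$ at $\infty$ with $H^1_{\rig}(\Ig_{K^v,\infty})$ as a smooth $B(L)$- and $\bbT^S$-module (with its natural weight-$(0,1)$ torus action). Since $\bbP^1(L)$ is a profinite homogeneous $\GL_2(L)$-space, hence has no higher sheaf cohomology, and $\ker I$ is $\GL_2(L)$-equivariant, a Leray/orbit argument expresses the cohomology of $\ker I$ over a neighbourhood of $\bbP^1(L)$ as $\Ind_{B(L)}^{\GL_2(L)}H^1_{\rig}(\Ig_{K^v,\infty})$; taking locally $\iota$-analytic vectors and localizing at the Hecke eigenvalue $\lambda$ of $\rho$ yields $M:=(\Ind_{B(L)}^{\GL_2(L)}H^1_{\rig}(\Ig_{K^v,\infty})[\lambda])^{\iota\-\lan}$.

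For the global assembly, the map $M\to\pi_\sigma(\rho)$ is the locally analytic period map: a class of $H^1_{\rig}(\Ig_{K^v,\infty})$, pulled back near $\infty$ and ``integrated'', is a locally $\iota$-analytic multivalued function and hence a class in $H^1(\fl,\calO_{K^v}^{\iota\-\lan})$; inducing from $B(L)$ over $\bbP^1(L)$ and passing to the $\rho$-part gives $M\to\pi_\sigma(\rho)$. Its kernel is the image of the classical holomorphic part $H^0(\Omega^1)[\lambda]\subset H^1_{\rig}(\Ig_{K^v,\infty})[\lambda]$, which maps to zero because a classical weight-$2$ form acquires a locally analytic primitive; this part, induced from $B(L)$ and localized at $\lambda$, is exactly $\pi_w$ by the classical local Langlands correspondence, and the resulting inclusion $\pi_w\hookrightarrow M$ depends on which line of $H^1_{\rig}$ is ``classical'', i.e.\ on the $\sigma$-Hodge filtration of $\rho_w$, just as in the Drinfeld statement with $H^1_{\rig}(\Ig_{K^v,n})$ in the role of $H^1_{\dR,c}(\calM_{\Dr,n})$. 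Surjectivity of $M\to\pi_\sigma(\rho)$ --- the assertion that for $\pi_w$ a principal series $\pi_\sigma(\rho)$ is generated by ordinary classes --- is the remaining input, supplied by the explicit computation of the cohomology of $I$ together with the vanishing of the supersingular contribution (dual to the realization of supercuspidals on the Drinfeld tower). All identifications above are $\GL_2(L)$-equivariant by construction.

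The hardest part will be this local computation: proving that over an overconvergent neighbourhood of the ordinary locus the sheaf $\ker I$ --- equivalently, the locally $\iota$-analytic structure sheaf cut down by $I$ --- degenerates \emph{exactly} to the de Rham complex of the non-perfect Igusa curve, with no spurious locally analytic sections, and doing this compatibly in the level $K_v$ and with the full $\GL_2(L)$-action, which permutes the components $\calX_{K^v,n}^{\text{multi}}$ over the various points of $\bbP^1(L)$. Setting up the overconvergent $F$-isocrystal on $\Ig_{K^v,n}$, identifying its cohomology uniformly in $n$, and organizing everything into a clean induced-representation statement absorb most of the effort; a secondary technical point is bookkeeping the Tate twist and the descent between $C$-coefficients and the $E$-structure when matching $M$, $\pi_w$, and $\lambda$.
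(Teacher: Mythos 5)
Your overall architecture — reduce to $\ker I^1$ via the Fontaine-operator interpretation of $I$, analyze the ordinary locus through Igusa rigid cohomology, and use vanishing of the supersingular contribution when $\pi_w$ is a principal series — is the same as the paper's. But the key local claim is not correct as stated, and the route you build on it would misidentify the middle term.

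You assert that over an overconvergent neighbourhood $V$ of $\infty$ ``the flag-variety direction becomes trivial since $\pi_{\HT}$ is locally constant there,'' so that $\ker I|_V$ is the de Rham complex of the Igusa tower and $H^1(V,\ker I)\isom H^1_{\rig}(\Ig_{K^v,\bullet})$. This is false: $\bar d$ (the $u^+$-direction along $\fl$) is nowhere trivial on $\calO_{K^v}^{\iota\-\lan}$ near $\bbP^1(L)$; its kernel is exactly the smooth subsheaf $\calO_{K^v}^{\sm}$, and the paper uses both the non-vanishing and the \emph{surjectivity} of $\bar d$ (Theorem \ref{thm:dbar}) in an essential way. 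Moreover $\ker I$ is strictly larger than $\ker d$ near $\bbP^1(L)$ — it also contains the $d$-preimage of $\ker\bar d'$ — so $H^1(V,\ker I)$ does not equal Igusa rigid cohomology; the object that does recover $H^1_{\rig}(\Ig_{K^v,\infty})$ as you shrink $V$ and grow $K_v$ is $H^0(V,\coker d)$ (Proposition \ref{prop:dord} and Lemma \ref{lem:overconvergentmodularforms}), i.e.\ the failure of the Gauss–Manin direction to be surjective, not the kernel of $I$.

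The paper's actual route is the hypercohomology spectral-sequence bookkeeping for the two-term complex $\DR=[\calO\ov{d}\to\calO\ox\Omega]$ together with the twin factorizations $I=\bar d'\circ d=d'\circ\bar d$: from Proposition \ref{prop:DRcoh} one has the exact sequence $0\to H^0(\calO\ox\Omega)\to\bbH^1(\DR)\to\ker I^1\to 0$ (this is where your ``correction from $H^0(\coker I)$'' and ``integration'' heuristics land, and they are essentially the connecting map of the spectral sequence), and from Proposition \ref{prop:dord} together with the vanishing $H^1(\ker d)[\lambda]=0$ for principal series (since $\tau_w=0$) one gets $\bbH^1(\DR)[\lambda]=H^0(\coker d)[\lambda]=(\Ind^{\GL_2(L)}_{B(L)}H^1_{\rig}(\Ig_{K^v,\infty})[\lambda])^{\iota\-\lan}$. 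Identifying $H^0(\calO\ox\Omega)[\lambda]$ with $\pi_w$ is by the automorphic decomposition of classical forms (adelically), with local Langlands only entering to name $\pi_w$. So the exact sequence you want to prove falls out of Theorem \ref{thm:kerI1}/\ref{thm:kerI2}, not out of a local degeneration of $I$; you should replace the ``$I$ degenerates'' step by the $\coker d$-analysis and the $\bbH^1(\DR)$ spectral sequence.
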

Here, $(-)^{\iota\-\lan}$ denotes the locally $\iota$-analytic induction. Also, after semisimplification for the $\bar B(L)$-action, the cohomology group $H^1_{\rig}(\Ig_{K^v,\infty})[\lambda]$ (as a $\bar B(L)$-representation) is exactly given by the Jacquet module $J_{\bar N(L)}(\pi_v)$ of $\pi_v$, as $\bar B(L)$-representations. For the proof, we use \cite[Theorem V.5.4]{HT01}, \cite[Corollary 19]{Joh16}. For simplicity we assume the $\bar B(L)$-action on $J_{\bar N(L)}(\pi_v)$ is already semisimple. Then $\left( \Ind^{\GL_2(L)}_{\bar B(L)}H^1_{\rig}(\Ig_{K^v,\infty})[\lambda] \right)^{\sm}\isom \pi_v^{\oplus 2}$ and the first map also comes from the position of the $\sigma$-Hodge filtration of $\rho|_{\Gal_L}$. Here $(-)^{\sm}$ denotes the smooth induction functor.
\medskip

Finally, when $\pi_v$ is a (twist of) Steinberg representation, both the ordinary locus and the supersingular locus enter into the picture. In this case, the Jacquet--Langlands transfer of $\pi_v$ is a character, and the Jacquet module of $\pi_v$ is the modulus character of $\bar B(L)$ up to twist. Then:
\begin{theorem}
There is a $\GL_2(L)$-equivariant isomorphism:
\begin{align*}
    0\to \St_2^\infty\to [H^1_c(\Omega,\calO_\Omega)  -(\Ind^{\GL_2(L)}_{B(L)}|\cdot|\ox |\cdot|^{-1})^{\iota\-\lan}]\to \pi_\sigma(\rho)\ox \chi\to 0,
\end{align*}
where $\chi$ is some character and $\St_2^\infty$ is the smooth Steinberg representation for $\GL_2(L)$.
\end{theorem}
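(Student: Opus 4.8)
The plan is to follow the same template as in the supercuspidal and principal-series cases treated above: relate $\pi_\sigma(\rho)\ox\chi$, via a short exact sequence, to the cohomology over $\fl=\bbP^1$ of the kernel of the intertwining operator $I\colon\calO_{K^v}^{\iota\-\lan,(0,0)}\to\calO_{K^v}^{\iota\-\lan,(1,-1)}(1)$, and then compute that cohomology geometrically by decomposing $\bbP^1$ according to the Hodge--Tate period map $\pi_{\HT}$. The essential new feature of the Steinberg case is that, unlike the two previous cases where only one locus carries a nonzero $\rho$-part, here \emph{both} the supersingular locus and the ordinary locus of the unitary Shimura curve contribute, so the answer is a gluing of the two.

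First I would recall, from the $p$-adic Hodge-theoretic interpretation of $I$ and from the proof of Theorem \ref{1}, the general presentation $0\to \pi_w\to \tilde{\pi}\to \pi_\sigma(\rho)\ox\chi\to 0$, in which $\tilde{\pi}$ is the $\rho$-isotypic part of the cohomology of $\ker I$ over $\fl$, the left-hand map encodes the position of the $\sigma$-Hodge filtration of $\rho_w$, and $\tilde{\pi}$ depends only on $\WD(\rho_w)$. In the present situation $\WD(\rho_w)$ is a twist of the special two-dimensional Weil--Deligne representation, so $\pi_w$ is a twist of the Steinberg representation, its Jacquet--Langlands transfer $\tau_w$ is a \emph{character} of $D_L^\times$, and the Jacquet module $J_{N(L)}(\pi_w)$ equals $|\cdot|\ox|\cdot|^{-1}$ up to the relevant twist and shift. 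So the whole problem reduces to computing $\tilde{\pi}$.

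Next I would localize on $\fl=\bbP^1$: write $\bbP^1=\Omega\sqcup\bbP^1(L)$ with $\Omega=\bbP^1\bs\bbP^1(L)$ open and $\bbP^1(L)$ the profinite closed subset of classical points, whose $\pi_{\HT}$-preimages are respectively the supersingular and the ordinary loci of $\calX_{K^v}$. Applying the excision triangle to $\ker I$, taking the $\rho$-part and passing to $\iota$-analytic vectors, produces a long exact sequence relating $\tilde{\pi}$ to two pieces. The supersingular piece is the compactly supported cohomology of $\ker I$ over $\pi_{\HT}^{-1}(\Omega)$; by the $p$-adic (Rapoport--Zink) uniformization of the supersingular locus by the Lubin--Tate, equivalently Drinfeld, tower and by the computation underlying the supercuspidal case, this is $(\tau_w\ox_C\dlim_n H^1_c(\calM_{\Dr,n},\calO_{\calM_{\Dr,n}}))^{D_L^\times}$, which --- since $\tau_w$ is a character, so that after the unramified twist making $\varpi$ act trivially on $\tau_w$ no positive Drinfeld level is needed --- collapses to $H^1_c(\Omega,\calO_\Omega)$. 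The ordinary piece is the cohomology of $\ker I$ over a strict neighbourhood of $\pi_{\HT}^{-1}(\bbP^1(L))$; by the identification of the relevant cohomology with the rigid cohomology of the Igusa tower, using \cite[Theorem V.5.4]{HT01} and \cite[Corollary 19]{Joh16} exactly as in the principal-series case, this is $(\Ind^{\GL_2(L)}_{B(L)}H^1_{\rig}(\Ig_{K^v,\infty})[\lambda])^{\iota\-\lan}\isom(\Ind^{\GL_2(L)}_{B(L)}|\cdot|\ox|\cdot|^{-1})^{\iota\-\lan}$, up to twist. One then checks that the remaining terms of the long exact sequence --- the degree-zero cohomology and the cohomology of $\ker I$ along the overlap of the two strict-neighbourhood covers --- vanish, so that $\tilde{\pi}$ is precisely the two-term object $[H^1_c(\Omega,\calO_\Omega)-(\Ind^{\GL_2(L)}_{B(L)}|\cdot|\ox|\cdot|^{-1})^{\iota\-\lan}]$, i.e. (as in the $\GL_2(\bbQ_p)$ case of \cite{PanII}) the gluing of the two contributions along their common smooth Steinberg constituent; all maps are $\GL_2(L)$-equivariant. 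Substituting this value of $\tilde{\pi}$ into the presentation of the previous step yields the asserted exact sequence, with $\chi$ the character bookkeeping the normalizations (the unramified twist on $\tau_w$, the twist/shift in $J_{N(L)}(\pi_w)$, and the Tate twist in $I$).

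The hard part will be the gluing. Since both loci carry a $\rho$-part, one must (i) verify that the excision long exact sequence degenerates to a genuine two-term object, i.e. that the degree-zero and overlap cohomology of $\ker I$ vanish in the relevant degrees; (ii) identify the transition map defining $[\,\cdot-\cdot\,]$ with the natural one, compatibly with the copy of the smooth Steinberg $\pi_w$ sitting inside both $H^1_c(\Omega,\calO_\Omega)$ and the $\iota$-analytic principal series; and (iii) reconcile all the normalizations into the single character $\chi$. A further technical point is controlling compact supports: the supersingular locus is open in $\calX_{K^v}$, so its contribution appears naturally as the compactly supported cohomology $H^1_c(\Omega,\calO_\Omega)$, and one has to check that this is compatible with the non-compactly-supported cohomology on the ordinary side and assembles correctly into $H^1(\bbP^1,\ker I)$.
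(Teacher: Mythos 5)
Your geometric picture is in the right spirit --- in the special-series case both the supersingular and ordinary loci contribute, the former giving $H^1_c(\Omega,\calO_\Omega)$ (the Drinfeld tower with $\tau_w$ the trivial character of $D_L^\times$) and the latter giving $(\Ind^{\GL_2(L)}_{B(L)}H^1_{\rig}(\Ig_{K^v})[\lambda])^{\iota\-\lan}\isom(\Ind^{\GL_2(L)}_{B(L)}|\cdot|\ox|\cdot|^{-1})^{\iota\-\lan}$ via the Igusa rigid cohomology. But the mechanism by which these two pieces fit together is not right, and this is where the actual content of the proof lies. First, $\tilde{\pi}$ is not ``the cohomology of $\ker I$ over $\fl$''; it is $\bbH^1(\DR)[\lambda]$, the hypercohomology of the complex $\DR=[\calO\to\calO\ox\Omega]$. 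The two-piece decomposition does not come from an excision triangle for the sheaf $\ker I$ (nor from a Mayer--Vietoris argument with strict neighbourhoods and an ``overlap''), but from the conjugate spectral sequence for $\DR$, which gives $0\to H^1(\ker d)\to\bbH^1(\DR)\to H^0(\coker d)\to 0$ (Proposition~\ref{prop:DRcoh}); excision is then used only inside $\ker d$ (the sheaf $\coker d$ is already supported on $\bbP^1(L)$), yielding $H^1(\ker d)[\lambda]\isom H^1(j_!\calO_\Omega)$.

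More importantly, you have not addressed the actual mathematical crux: once $\tilde{\pi}$ is known to be an extension of $\tilde{I}=(\Ind^{\GL_2(L)}_{B(L)}|\cdot|\ox|\cdot|^{-1})^{\iota\-\lan}$ by $\St_2^{\iota\-\an}\isom H^1_c(\Omega,\calO_\Omega)$, one must show this extension is the unique non-split one. That is what the notation $[A-B]$ means in the paper, and it is \emph{not} a ``gluing of the two contributions along their common smooth Steinberg constituent'' --- an amalgamated sum over $\St_2^\infty$ would have one fewer Jordan--H\"older copy of $\St_2^\infty$ than the non-split extension does, so that interpretation would give the wrong object. The paper establishes non-splitness via a computation of extension groups: Proposition~\ref{universalSteinberg} shows $\dim_C\Ext^1_{\GL_2(L),\iota}(\tilde{I},\St_2^{\iota\-\an})=1$ using Schneider--Teitelbaum duality for locally $L$-analytic principal series and the Schraen spectral sequence, after which an indirect argument (if the extension were split then $\bbH^1(\DR')[\lambda]=\bbH^1(\DR)[\lambda]/\bbH^1(\DR^{\lalg})[\lambda]$ would fail to have the correct shape $I_s-1-\tilde{I}_s$) rules out the split case. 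Your list (i)--(iii) of ``hard parts'' records the normalizations but omits this Ext calculation entirely, which is the step that actually pins down the middle term.
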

Here, by some calculation of extension groups, the middle term $H^1_c(\Omega,\calO_\Omega)  -(\Ind^{\GL_2(L)}_{B(L)}|\cdot|\ox |\cdot|^{-1})^{\iota\-\lan}$ is the unique non-split extension of $(\Ind^{\GL_2(L)}_{B(L)}|\cdot|\ox |\cdot|^{-1})^{\iota\-\lan}$ by $H^1_c(\Omega,\calO_\Omega)$.

In fact, all these results can be formulated in a uniform way. Recall that $\calO_{K^v}^{\lan,(0,0)}$ is a subsheaf of $\calO_{K^v}^{\lan}$ such that the action of $\theta_{\frh}$ is trivial. Let $\calO_{K^v}^{\iota\-\lan,(0,0)}\subset \calO_{K^v}^{\lan,(0,0)}$ be the subsheaf consisting of locally $\iota$-analytic sections. It contains $\calO_{K^v}^{\sm}$ as a subsheaf. Then we can suitably extend the differential operator on $\calO_{K^v}^{\sm}$ to $\calO_{K^v}^{\iota\-\lan,(0,0)}$, resulting a de Rham complex 
\begin{align*}
    \DR:=[\calO_{K^v}^{\iota\-\lan,(0,0)}\to \calO_{K^v}^{\iota\-\lan,(0,0)}\ox_{\calO_{K^v}^{\sm}}\Omega^{1,\sm}_{K^v}].
\end{align*}
Let $\DR^{\sm}:=[\calO_{K^v}^{\sm}\to \Omega^{1,\sm}_{K^v}]$ and we denote $\tilde{\pi}:=\bbH^1(\DR)[\lambda]$. Then we have the following.
\begin{theorem}\label{thm:introlocality}
There is a $\GL_2(L)$-equivariant isomorphism:
\begin{align*}
    0\to \pi_v\to \tilde{\pi}\to \pi_\sigma(\rho)\to 0.
\end{align*}
\end{theorem}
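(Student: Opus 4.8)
The plan is to compute all three representations as ($\rho$-isotypic, Hecke-localised) cohomology of explicit sheaves on the flag variety $\fl$, and to obtain the short exact sequence from the two-step filtration of the de Rham complex $\DR$ by its smooth subcomplex $\DR^{\sm}=[\calO_{K^v}^{\sm}\to\Omega^{1,\sm}_{K^v}]$, using the $p$-adic Hodge-theoretic description of the intertwining operator $I$ (the geometric Fontaine operator) and the de Rhamness of $\rho_w$ to collapse the resulting long exact sequence. First I would translate. Fix $u:E\to C$ with $u\comp\sigma=\iota$; base change along $u$ and the geometric Sen theory comparison $\tilde H^1(K^v,C)^{\iota\-\lan}\isom H^1(\fl,\calO_{K^v}^{\iota\-\lan})$ identify $\pi_\sigma(\rho)=\Pi(\rho)^{\sigma\-\lan}\hat\ox_{E,u}C$ with $\Hom_{\Gal_F}(\rho,H^1(\fl,\calO_{K^v}^{\iota\-\lan}))$. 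Since $\rho|_{\Gal_L}$ is Hodge--Tate of weights $\{0,1\}$, only the trivial infinitesimal character block contributes to the $\rho$-isotypic part, so
\begin{align*}
\pi_\sigma(\rho)\isom\Hom_{\Gal_F}(\rho,H^1(\fl,\calO_{K^v}^{\iota\-\lan,\tilde{\chi}_0}))[\lambda],\qquad \calO_{K^v}^{\iota\-\lan,\tilde{\chi}_0}=\calO_{K^v}^{\iota\-\lan,(0,0)}\oplus\calO_{K^v}^{\iota\-\lan,(1,-1)}.
\end{align*}
Absolute irreducibility of $\rho$ together with the no-extra-multiplicity hypothesis make $\Hom_{\Gal_F}(\rho,-)$ and the Hecke localisation $[\lambda]$ exact on the (admissible, by geometric Sen theory) cohomology groups below and compatible with all long exact sequences, so I henceforth work with $\rho$-isotypic, $\lambda$-localised cohomology; in particular $\tilde\pi=\bbH^1(\DR)[\lambda]$ is such a group.

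\textbf{The filtration $\DR^{\sm}\subset\DR$.} The inclusion $\calO_{K^v}^{\sm}\subset\calO_{K^v}^{\iota\-\lan,(0,0)}$, together with flatness of the line bundle $\Omega^{1,\sm}_{K^v}$ over $\calO_{K^v}^{\sm}$, gives a short exact sequence of two-term complexes $0\to\DR^{\sm}\to\DR\to\DR/\DR^{\sm}\to 0$. Recall that on $\calO_{K^v}^{\iota\-\lan,(0,0)}$ the intertwining operator factors as $I=d_{\fl}\comp d$, where $d$ is the differential of $\DR$ (the lift of the de Rham differential along the Shimura curves) and $d_{\fl}:\calO_{K^v}^{\iota\-\lan,(0,0)}\ox_{\calO_{K^v}^{\sm}}\Omega^{1,\sm}_{K^v}\to\calO_{K^v}^{\iota\-\lan,(1,-1)}(1)$ is the differential along $\fl$. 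The main structural point, which I would establish from the local description of $\calO_{K^v}^{\iota\-\lan,(0,0)}$ over $\calO_{K^v}^{\sm}$, is that $I$ identifies the quotient complex $\DR/\DR^{\sm}$ with a two-term complex built from $\calO_{K^v}^{\iota\-\lan,(1,-1)}(1)$; concretely I expect $d_{\fl}$ to induce an isomorphism $(\calO_{K^v}^{\iota\-\lan,(0,0)}/\calO_{K^v}^{\sm})\ox_{\calO_{K^v}^{\sm}}\Omega^{1,\sm}_{K^v}\aisom\calO_{K^v}^{\iota\-\lan,(1,-1)}(1)$, so that $\DR/\DR^{\sm}\isom[\calO_{K^v}^{\iota\-\lan,(0,0)}/\calO_{K^v}^{\sm}\xrightarrow{\bar I}\calO_{K^v}^{\iota\-\lan,(1,-1)}(1)]$; its hypercohomology (after $[\rho][\lambda]$) then computes the genuinely locally analytic part $\pi_\sigma(\rho)/\Pi(\rho)^{\lalg}$.

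\textbf{The long exact sequence and the Fontaine operator.} Take the hypercohomology long exact sequence of $0\to\DR^{\sm}\to\DR\to\DR/\DR^{\sm}\to 0$ and pass to $\rho$-isotypic $\lambda$-localised parts. Here $\bbH^\bullet(\DR^{\sm})$ is the infinite-level de Rham complex of the structure sheaf of the (proper) unitary Shimura curves; by the relative de Rham comparison for the universal abelian scheme — already used in the excerpt to lift $\calO_{K^v}^{\iota^c\-\lan,\Gal_L\-\sm}$ into $\calO\bbB_{\dR}^+$ — together with the classicality of the rigid cohomology of the Igusa curves and the Jacquet--Langlands correspondence, $\bbH^1(\DR^{\sm})[\rho][\lambda]\isom\pi_w\ox_C D$ where $D\isom D_{\dR,\sigma}(\rho_w)\ox_{E,u}C$ is the two-dimensional $\sigma$-de Rham realisation with its Hodge filtration, and $\bbH^2(\DR^{\sm})[\rho][\lambda]=0$ because $\rho$ is absolutely irreducible of dimension $2$ and does not occur in $H^2$ of a proper curve (a Tate twist of the trivial representation on $\pi_0$). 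Now the $p$-adic Hodge-theoretic interpretation of $I$ enters: since $\rho_w$ is de Rham, the geometric Fontaine operator — hence $I$ — degenerates on the relevant $\rho$-isotypic $\lambda$-localised cohomology exactly as in the proofs of Theorems \ref{1} and \ref{2}, and the explicit evaluation of $\ker I$ and $\coker I$ there pins down the images and kernels of the connecting maps. Assembling the long exact sequence with this information yields that the image of $\bbH^1(\DR^{\sm})[\rho][\lambda]$ in $\tilde\pi$ is the holomorphic-forms line $\pi_w\isom\pi_w\ox_C\Fil^1 D$ — this is the map $\pi_w\to\tilde\pi$ of the statement — and that the quotient $\tilde\pi/\pi_w\isom\bbH^1(\DR/\DR^{\sm})[\rho][\lambda]$ is precisely $\pi_\sigma(\rho)$ (with its locally algebraic socle $\Pi(\rho)^{\lalg}\isom\pi_w\ox_C\gr^0 D$). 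This gives the $\GL_2(L)$-equivariant sequence $0\to\pi_w\to\tilde\pi\to\pi_\sigma(\rho)\to 0$; the general regular-weight case follows by replacing $\tilde{\chi}_0$ and $D$ by their algebraic-weight analogues.

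\textbf{Main obstacle.} The crux is the structural analysis of $\calO_{K^v}^{\iota\-\lan,(0,0)}$ needed in the second paragraph: the precise identification of the quotient complex $\DR/\DR^{\sm}$ through the intertwining operator $I$ (including the exact rôle of $\ker I$ and of the flag-variety differential $d_{\fl}$), which demands the full force of the Pan-type local expansions over $\calO_{K^v}^{\sm}$ and careful control of how the Shimura-curve and flag-variety differentials interact with the $\theta_{\frh}$-weight decomposition. A second difficulty is making the Fontaine-operator comparison sharp enough to determine the connecting maps in the long exact sequence — and hence $\tilde\pi$ as an honest $\GL_2(L)$-representation with the correct extension class, not merely up to semisimplification — and maintaining $\GL_2(L)$-equivariance and the exactness of $\Hom_{\Gal_F}(\rho,-)[\lambda]$ throughout.
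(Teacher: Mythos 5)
Your general framework (hypercohomology of $\DR$ on $\fl$, $\rho$-isotypic $\lambda$-localisation, input from the Fontaine operator) matches the paper's, but the filtration you use is the wrong one and the LES you write down does not produce the stated sequence.

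\textbf{The gap.} You propose to take the long exact sequence of $0\to\DR^{\sm}\to\DR\to\DR/\DR^{\sm}\to 0$ and conclude that the image of $\bbH^1(\DR^{\sm})[\rho][\lambda]$ in $\tilde\pi$ is $\pi_w\ox_C\Fil^1 D$ with quotient $\bbH^1(\DR/\DR^{\sm})[\rho][\lambda]\isom\pi_\sigma(\rho)$. Both halves of this are false. Because $\bbH^0(\DR/\DR^{\sm})[\lambda]=0$ (this uses exactly the vanishing $H^0_{\rig}(\Ig_{K^v})\tilde{[\lambda]}=0$ of Theorem \ref{thm:H0Ig}, and in the supercuspidal case is immediate from $\bbH^0(\DR')[\lambda]=H^0(\ker d')[\lambda]=0$), the map $\bbH^1(\DR^{\sm})[\lambda]\to\bbH^1(\DR)[\lambda]$ is \emph{injective}; its image is therefore the whole two-dimensional $\pi_w\ox_C D_{\dR,\sigma}(\rho_w)\isom\pi_w^{\oplus 2}$, not the Hodge line $\pi_w\ox\Fil^1 D$. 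Consequently your LES yields
\[
0\to\pi_w^{\oplus 2}\to\tilde\pi\to\bbH^1(\DR/\DR^{\sm})[\lambda]\to 0,\qquad\bbH^1(\DR/\DR^{\sm})[\lambda]\isom\bbH^1(\DR')[\lambda]=\pi_c,
\]
which is one step coarser than the statement: $\pi_c$ is the \emph{quotient} of $\pi_\sigma(\rho)$ by a further copy of $\pi_w$, i.e.\ there are two short exact sequences $0\to\pi_w\to\tilde\pi\to\pi_\sigma(\rho)\to 0$ and $0\to\pi_w\to\pi_\sigma(\rho)\to\pi_c\to 0$, and your LES only recovers their composite. The filtration $\DR^{\sm}\subset\DR$ alone cannot distinguish the two copies of $\pi_w$ inside $\tilde\pi$, so it cannot isolate the first sequence.

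\textbf{What is actually needed.} The single copy of $\pi_w$ in the statement is produced by the naive (stupid/Hodge) filtration of $\DR$ itself, i.e.\ the subcomplex $[0\to\calO\ox\Omega]\subset\DR$: Proposition \ref{prop:DRcoh}(ii) gives
\[
0\to H^0(\calO\ox\Omega)[\lambda]\to\bbH^1(\DR)[\lambda]\to H^1(\calO)[\lambda]\to H^1(\calO\ox\Omega)[\lambda]\to 0,
\]
and $H^0(\calO\ox\Omega)[\lambda]\isom\pi_w$ is precisely the weight-$(1,0)$ modular forms, which maps into $\bbH^1(\DR^{\sm})[\lambda]$ as the Hodge line $\pi_w\ox\Fil^1 D$. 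Identifying the cokernel of $\pi_w\inj\bbH^1(\DR)[\lambda]$ with $\ker I^1\tilde{[\lambda]}$ is then the content of the $3\times 3$ diagram in Theorem \ref{thm:kerI1}, which requires the auxiliary complexes $\bar\DR$, $\bar\DR'$, $\DR^{\lalg}$ (Propositions \ref{prop:DSJIJ}–\ref{prop:DRcohpppp}) and the Hecke semisimplicity statements together with the Igusa vanishing; finally $\ker I^1\tilde{[\lambda]}\isom\pi_\sigma(\rho)$ follows from the Fontaine-operator comparison of Theorem \ref{kerI}. So the Fontaine operator does not just ``pin down the connecting maps'' of the $\DR^{\sm}$-LES (those are all forced to vanish anyway); it is used at a different place, to identify the target $\pi_\sigma(\rho)$ with $\ker I^1$, after which the surjection $\tilde\pi\surj\ker I^1$ comes from the naive filtration of $\DR$, not the smooth filtration. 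Your proof would need to be rewritten around the naive filtration and the commutative diagram of differential operators to be correct.
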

Here, the first map comes from 
\begin{align*}
    \pi_v\isom H^0(\Omega^{1,\sm}_{K^v})[\lambda]\to \bbH^1(\DR^{\sm})[\lambda]\to \bbH^1(\DR)[\lambda]=\tilde{\pi}
\end{align*}
with $H^0(\Omega^{1,\sm}_{K^v})[\lambda]\to \bbH^1(\DR^{\sm})$ induced by the Hodge filtration $[0\to \Omega^{1,\sm}_{K^v}]\to \DR^{\sm}$. See Theorem \ref{thm:kerI2} for another presentation of $\pi_\sigma(\rho)$. We can show that $\bbH^1(\DR)[\lambda]$ only depends on $\pi_v$. Therefore, from the above presentation, we see $\pi_\sigma(\rho)$ is determined by $\pi_v$ and the $\sigma$-Hodge filtration of $\rho|_{\Gal_L}$. This gives the following locality result.
\begin{theorem}[Theorem \ref{thm:locality}]
Let $\rho$ be a $2$-dimensional $E$-linear continuous representation of $\Gal_F$, which arises from a classical automorphic form on $G$. Then the $\GL_2(L)$-representation $\pi_\sigma(\rho)$ only depends on the local Galois representation $\rho|_{\Gal_L}$. 
\end{theorem}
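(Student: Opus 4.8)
The plan is to deduce the locality of $\pi_\sigma(\rho)$ from the uniform presentation
\begin{align*}
    0\to \pi_w\to \tilde{\pi}\to \pi_\sigma(\rho)\to 0
\end{align*}
established in the preceding theorem, so the statement is essentially a repackaging of that exact sequence together with an analysis of what data enters each term. First I would recall that $\tilde\pi = \bbH^1(\DR)[\lambda]$, where $\DR = [\calO_{K^v}^{\iota\-\lan,(0,0)}\to \calO_{K^v}^{\iota\-\lan,(0,0)}\ox_{\calO_{K^v}^{\sm}}\Omega^{1,\sm}_{K^v}]$, and argue that this hypercohomology is intrinsic to the completed cohomology: the sheaves $\calO_{K^v}^{\iota\-\lan,(0,0)}$, $\Omega^{1,\sm}_{K^v}$ and the connecting differential are all built from $\calX_{K^v}$, $\pi_{\HT}$ and the geometric Sen operator, none of which depends on $\rho$. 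Hence $\tilde\pi[\lambda]$ depends only on the Hecke eigensystem $\lambda$, and one shows (via the classical local-global compatibility at $w$, using \cite[Theorem V.5.4]{HT01} and the Jacquet--Langlands correspondence as indicated) that $\lambda$ is determined by the smooth representation $\pi_w$, which in turn is determined by $\rho|_{\Gal_L}$ through the local Langlands correspondence. Thus both $\pi_w$ and $\tilde\pi$ in the sequence depend only on $\rho_w:=\rho|_{\Gal_L}$.

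The remaining point is the extension class, i.e.\ the map $\pi_w\to\tilde\pi$. Here I would invoke the explicit description given just above the statement: this map factors as
\begin{align*}
    \pi_w\isom H^0(\Omega^{1,\sm}_{K^v})[\lambda]\to \bbH^1(\DR^{\sm})[\lambda]\to \bbH^1(\DR)[\lambda]=\tilde\pi,
\end{align*}
with the first arrow induced by the Hodge filtration inclusion $[0\to\Omega^{1,\sm}_{K^v}]\to\DR^{\sm}$ and the second by the inclusion $\DR^{\sm}\hookrightarrow\DR$. The key observation is that the only $\rho$-dependent ingredient in identifying this composite is the position of the $\sigma$-Hodge filtration of $\rho_w$ inside $D_{\dR,\sigma}(\rho_w)$: this is precisely what the $p$-adic Hodge-theoretic interpretation of the intertwining operator $I$ (the comparison with the Fontaine operator) feeds into the boundary map. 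Since the $\sigma$-Hodge filtration is part of the datum of $\rho_w$, the entire short exact sequence — terms and maps — is functorially determined by $\rho_w$, and therefore so is the quotient $\pi_\sigma(\rho)$.

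Concretely the steps are: (i) establish that $\DR$, $\DR^{\sm}$ and the natural map between them are canonically associated to the perfectoid tower and hence independent of $\rho$; (ii) show $\bbH^1(\DR)[\lambda]$, $\bbH^1(\DR^{\sm})[\lambda]$ and $H^0(\Omega^{1,\sm}_{K^v})[\lambda]$ depend only on $\lambda$, and that $\lambda\leftrightarrow\pi_w$ is a local datum recovered from $\rho_w$; (iii) use the $p$-adic Hodge-theoretic interpretation of $I$ to show the connecting map $\pi_w\to\tilde\pi$ is governed exactly by the $\sigma$-Hodge filtration of $\rho_w$; (iv) conclude that $\pi_\sigma(\rho)=\operatorname{coker}(\pi_w\to\tilde\pi)$ is determined by $\rho_w$. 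The main obstacle, and the place where real work is hidden rather than in the formal extraction above, is step (iii): one must verify that no global information beyond $\lambda$ and the local Hodge filtration sneaks into the boundary map. This is where the comparison of the intertwining operator with a geometric Fontaine operator — in particular the extension of the lifting of $\calO_{K^v}^{\iota^c\-\lan,\Gal_L\-\sm}$ into $\calO\bbB_{\dR}^+$ via relative de Rham comparison, as described in \S1.2 — is essential, since it is exactly this comparison that pins the class down in purely local terms. Once that input is granted from the earlier sections, the locality statement follows formally.
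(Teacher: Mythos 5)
Your overall strategy — read off locality from the presentation $0\to\pi_w\to\tilde\pi\to\pi_\sigma(\rho)\to 0$ — is the same as the paper's, and your step (iii) (the identification of the map $\pi_w\to\tilde\pi$ with the position of the $\sigma$-Hodge filtration via the de Rham comparison) is correct in spirit and matches what the paper does. The problem lies in your step (ii), which contains a false claim and hides the real content of the argument.

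You assert that "$\lambda$ is determined by the smooth representation $\pi_w$." This is wrong: $\lambda$ is a global Hecke eigensystem away from $S$, so it determines the automorphic form globally, and there are many $\lambda$, $\lambda'$ with the same local component $\pi_w$ at $v$. The statement that actually needs proving is that the $\GL_2(L)$-representation $\bbH^1(\DR)[\lambda]$, regarded up to the multiplicity factor $(\pi_f^v)^{K^v}$, depends on $\lambda$ only through $\pi_w$ — and this does \emph{not} follow formally from the naturality of the complexes $\DR$, $\DR^{\sm}$. It requires the explicit Hecke-isotypic computations of Theorem \ref{thm:kerI2}, which express each of $H^1(\ker d)[\lambda]$, $H^0(\coker d)[\lambda]$, and the connecting map $\SS$, $\ORD$ in terms of $\pi_w$, $\tau_w$, and coherent cohomology of Drinfeld curves and Igusa curves.

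The more serious omission is in the special (Steinberg) case. There both $H^1(\ker d)[\lambda]$ and $H^0(\coker d)[\lambda]$ are nonzero, so $\bbH^1(\DR)[\lambda]$ sits in an extension
\begin{align*}
0\to H^1(\ker d)[\lambda]\to\bbH^1(\DR)[\lambda]\to H^0(\coker d)[\lambda]\to 0,
\end{align*}
and a priori the extension class could encode more global information than $\pi_w$. The paper closes this gap via Proposition \ref{universalSteinberg}, showing $\dim_C\Ext^1_{\GL_2(L),\iota}(\tilde I,\St_2^{\iota\text{-}\an})=1$ and that the extension is non-split, hence unique. Your formal argument gives no way to rule out a dependence of the extension class on global data, so the locality of $\tilde\pi$ in the special case does not follow from your steps (i) and (ii) as written. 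Once the $\Ext^1$ computation is supplied (and the case split in Theorem \ref{thm:kerI2} is invoked for the principal-series and supercuspidal cases), the rest of your argument goes through.
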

Therefore, in this case, we can rename $\pi_\sigma(\rho)$ by $\pi_\sigma(\rho_w)$. From now on until the end of the introduction, we always assume that $\rho$ arises from a classical automorphic form. We also note that the above results can be generalized to the case of arbitary regular algebraic weights.

\subsubsection*{Admissibility of coherent cohomology of Drinfeld towers of dimension $1$ over $L$}
Our second application is that we obtain admissiblilty of certain locally analytic representation of $\GL_2(L)$ defined using the coherent cohomology of Drinfeld towers of dimension $1$ over $L$. Recall that by our definition, $\pi_\sigma(\rho)$ is the base change of $\Pi(\rho)^{\sigma\-\lan}$ from $E$ to $C$. When $\pi_v$ is supercuspidal, we present $\pi_\sigma(\rho)$ as a quotient of $\tilde{\pi}$ by a smooth admissible representation $\pi_v$. As $\tilde{\pi}$ is a locally analytic $\GL_2(L)$-representation defined using coherent cohomology of Drinfeld tower, using the $E$-structure of the Drinfeld tower $\calM_{\Dr,n}$ (after quotient by the action of $\varpi$), we can find a locally analytic $\GL_2(L)$-representation $\tilde{\pi}_E$ over $E$, such that $\tilde{\pi}_E\hat\ox_{E,u}C\isom \tilde{\pi}$. Then we have the following result.
\begin{theorem}[Theorem \ref{thm:admissible}]\label{admissibility}
The locally $\sigma$-analytic representation $\tilde{\pi}_E$ is admissible.
\end{theorem}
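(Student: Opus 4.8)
The plan is to descend the presentation $0\to\pi_w\to\tilde\pi\to\pi_\sigma(\rho)\to0$ to a short exact sequence of locally analytic $E$-representations of $\GL_2(L)$, and then to observe that both outer terms are admissible. Since a locally analytic representation is admissible exactly when its strong dual is a coadmissible module over the Fr\'echet--Stein algebra $D(\GL_2(L),E)$, and since coadmissible modules are stable under extensions, this forces $\tilde\pi_E$ to be admissible.

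\textbf{The outer terms are admissible over $E$.} First I would record that $\tilde H^1(K^v,E)$ is an admissible Banach representation of $\GL_2(L)$ over $E$, by \cite{Eme06}. As $\rho$ is finite-dimensional, $\Pi(\rho)=\Hom_{E[\Gal_F]}(\rho,\tilde H^1(K^v,E))$ is a closed $\GL_2(L)$-stable subspace of $\tilde H^1(K^v,E)^{\oplus 2}$, hence is again an admissible Banach representation, so that $\Pi(\rho)^{\lan}$ is admissible locally analytic by \cite{ST03}. The subspace $\Pi(\rho)^{\sigma\-\lan}$ is cut out inside $\Pi(\rho)^{\lan}$ by the vanishing of the $\tau$-components of the $\gl_2(L)\ox_{\bbQ_p}E$-action for all $\tau\neq\sigma$; it is therefore closed and $\GL_2(L)$-stable, hence admissible. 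On the other side, $\pi_w$ is the image under local Langlands of the Weil--Deligne representation attached to $\rho|_{\Gal_L}$, and the latter already has coefficients in $E$; so there is a smooth admissible $E$-representation $\pi_{w,E}$ of $\GL_2(L)$ with $\pi_{w,E}\ox_{E,u}C\isom\pi_w$, which is in particular admissible locally analytic.

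\textbf{The presentation descends to $E$.} The remaining task --- which I expect to carry the real difficulty --- is to show that the presentation is defined over $E$ for these $E$-structures, where $\tilde\pi_E$ is the $E$-form obtained from the natural models over $L$ of the Drinfeld curves $\calM_{\Dr,n}/\varpi^{\bbZ}$ (so $\tilde\pi_E\hat\ox_{E,u}C\isom\tilde\pi$) and $\pi_\sigma(\rho)=\Pi(\rho)^{\sigma\-\lan}\hat\ox_{E,u}C$. The first map $\pi_w\to\tilde\pi$ is assembled from the isomorphism $(\tau_w\ox_C\dlim_nH^1_{\dR,c}(\calM_{\Dr,n}))^{D_L^\times}\isom\pi_w\ox_Cr_w$ of \cite{CDN20}, which is already rational over $L$, the $\sigma$-Hodge filtration of $D_{\dR,\sigma}(\rho|_{\Gal_L})$ (an $E$-rational datum), and the natural map $H^1_{\dR,c}(\calM_{\Dr,n})\to H^1_c(\calM_{\Dr,n},\calO_{\calM_{\Dr,n}})$; the second map $\tilde\pi\to\pi_\sigma(\rho)$ comes from comparing the de Rham complex $\DR$ on $\fl$ with the completed cohomology of the unitary Shimura curve, via the Cherednik--Drinfeld $p$-adic uniformization of the supersingular locus. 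The one point that genuinely requires proof is that, under this uniformization and the duality isomorphism between the Lubin--Tate and Drinfeld towers, the fixed rational models match up, so that the identification $\tilde\pi_E\hat\ox_{E,u}C\isom\tilde\pi$ intertwines the $\Gal(\bar L/E)$-action coming from the $L$-model of the Drinfeld tower with the $\Gal(\bar L/E)$-action that $\pi_\sigma(\rho)$ inherits from the natural action of $\Gal_F\supset\Gal_L$ on $\tilde H^1(K^v,E)$. Following these rational models through the uniformization and the tower duality is delicate precisely because $C$ is not discretely valued, so that the various completed tensor products must be compared with care; compare \cite[Remark 7.3.5]{PanII}.

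\textbf{Conclusion.} Granting this compatibility, both maps in the presentation are $\Gal(\bar L/E)$-equivariant for the (effective, $E$-structure induced) descent data on all three terms; since $C^{\Gal(\bar L/E)}=E$ by the Ax--Sen--Tate theorem, descent along $E\to C$ produces a short exact sequence $0\to\pi_{w,E}\to\tilde\pi_E\to\Pi(\rho)^{\sigma\-\lan}\to0$ of locally analytic $E$-representations of $\GL_2(L)$. Passing to strong duals, the dual of $\tilde\pi_E$ is an extension of the coadmissible $D(\GL_2(L),E)$-module dual to $\pi_{w,E}$ by the coadmissible module dual to $\Pi(\rho)^{\sigma\-\lan}$, hence is itself coadmissible; equivalently, $\tilde\pi_E$ is admissible.
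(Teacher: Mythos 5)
Your proposal follows the same route the paper takes: descend the presentation $0\to\pi_w\to\tilde\pi\to\pi_\sigma(\rho)\to 0$ to a short exact sequence of locally $\sigma$-analytic $E$-representations, then conclude by noting that the outer terms are admissible and that admissible representations are stable under extensions (the paper cites \cite[Lemme 2.1.1]{Bre19}; your coadmissible-module phrasing is equivalent). You correctly single out the crux, namely matching the $E$-structure of the Drinfeld tower (transported through the Scholze--Weinstein duality) and the Lubin--Tate uniformization with the natural $E$-structure of the unitary Shimura curve, but you explicitly defer it (``Granting this compatibility...''). That deferred verification is the bulk of the paper's argument: it invokes \cite[Theorem~III]{RZ96} to see that the $p$-adic uniformization of the supersingular locus is defined over $L$, and \cite[Theorem~7.2.3]{SW13} to see that the Lubin--Tate/Drinfeld duality is already defined over $\breve{L}_0$, and only then concludes that the identification $\tilde\pi_E\hat\otimes_{E,u}C\isom\tilde\pi$ is $\Gal(\bar L/E)$-equivariant for the descent datum that $\pi_\sigma(\rho)$ inherits from $\tilde H^1(K^v,E)$. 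One further point you gloss over: after taking $\Gal(\bar L/E)$-invariants of the $C$-linear sequence, exactness of the resulting $E$-linear sequence is not automatic (right-exactness must be checked); the paper handles this by observing that the connecting map lands in $H^1(\Gal(\bar L/E),\pi_{w,E}\otimes_EC)$, which injects into $H^1(\Gal(\bar L/E),\tilde\pi_E\hat\otimes_EC)$ because it is the base change of the inclusion $\pi_{w,E}\hookrightarrow\tilde\pi_E$, so the boundary map vanishes. With these two points supplied, your outline becomes the paper's proof.
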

We note that the above result also holds for compactly supported cohomology groups of general equivariant line bundles on Drinfeld tower (via translation functors). In fact, the idea is simple once we established the result Theorem \ref{thm:introlocality}. As $\Pi(\rho)^{\sigma\-\lan}$ is a closed subrepresentation of the completed cohomology of the unitary Shimura curve, it is admissible. Hence after handling those $E$-structures, we can deduce the admissibility of $\tilde{\pi}_E$. See \cite[Remark 7.3.5]{PanII} for a related discussion. We also note that in \cite{Patel_Schmidt_Strauch_2019, ardakov2023globalsectionsequivariantline}, they give some coadmissibility results for global sections of some equivariant line bundles on the first covering of $\Omega$, based on explicit calculation of some equivariant $\wideparen\calD$-modules.

\subsubsection*{Wall-crossing and the Breuil's locally analytic $\Ext^1$-conjecture for $\GL_2(L)$}
Our final application is to prove the Breuil's locally analytic $\Ext^1$-conjecture for $\GL_2(L)$ when the Weil--Deligne representation $r_w$ is irreducible. Motivated by the work \cite{Din24}, in \cite{WConDrinfeld} we use the wall-crossing technique to study global sections of equivariant line bundles on Drinfeld tower and prove the Breuil's locally analytic $\Ext^1$-conjecture for $\GL_2(L)$ in the setting of pro-\'etale cohomology of Drinfeld tower \cite{CDN20}. Now we apply the same method in this setting. Recall that the wall-crossing functor for locally analytic representations is the natural generalization of the classical wall-crossing functor for representations of semisimple Lie algebras \cite{JLS22,Din24}. In particular, we can consider the wall-crossing of the sheaf $\calO_{K^v}^{\iota\-\lan,(0,0)}$, and also the de Rham complex $\DR=[\calO_{K^v}^{\iota\-\lan,(0,0)}\to \calO_{K^v}^{\iota\-\lan,(0,0)}\ox_{\calO_{K^v}^{\sm}}\Omega^{1,\sm}_{K^v}]$ associated to it. Let $\Theta_s\bbH^1(\DR)$ denote the wall-crossing of the first hypercohomology group of $\DR$. It also carries an action of $\bbT^S$ as the action of $\GL_2(L)$ and $\bbT^S$ commute. Then we have the following result.
\begin{theorem}[Theorem \ref{thm:WCdeRham}]
Let $\lambda:\bbT^S\to C$ be the Hecke eigenvalue associated to $\rho$. There is a $\GL_2(L)$-equivariant exact sequence 
\begin{align*}
    0\to \bbH^1(\DR_c)[\lambda]\to \Theta_s\bbH^1(\DR)[\lambda]\to \bbH^1(\DR)[\lambda]\to 0.
\end{align*}
\end{theorem}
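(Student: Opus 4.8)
The plan is to obtain the asserted sequence as part of the long exact hypercohomology sequence of a short exact sequence of complexes of sheaves on $\fl$, gotten by transporting the classical category-$\calO$ wall-crossing exact sequence into the geometric picture of the previous sections. Translating the regular integral infinitesimal character attached to the weight $(0,0)$ to the $s$-wall and back produces, functorially, a two-step filtration whose graded pieces sit at the two dot-linked weights $(0,0)$ and $(1,-1)$; on locally analytic representations this is the wall-crossing short exact sequence of \cite{JLS22,Din24}, and its sheaf-theoretic avatar is already implicit in the decomposition $\calO_{K^v}^{\lan,\tilde{\chi}_0}\isom \calO_{K^v}^{\lan,(0,0)}\oplus\calO_{K^v}^{\lan,(1,-1)}$ and in the intertwining operator $I\colon \calO_{K^v}^{\lan,(0,0)}\to\calO_{K^v}^{\lan,(1,-1)}(1)$. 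Concretely, I would first establish a $\GL_2(L)$- and $\bbT^S$-equivariant short exact sequence of sheaves on $\fl$
\[
0\longrightarrow \calO_{K^v,c}^{\iota\-\lan,(0,0)}\longrightarrow \Theta_s\,\calO_{K^v}^{\iota\-\lan,(0,0)}\longrightarrow \calO_{K^v}^{\iota\-\lan,(0,0)}\longrightarrow 0,
\]
where $\calO_{K^v,c}^{\iota\-\lan,(0,0)}$ is the compactly supported variant (along the supersingular locus $\Omega\hookrightarrow\fl$) whose de Rham complex is $\DR_c$. The identification of the sub-object is the geometric heart of the matter: it follows from an explicit description of the translation functors in Hodge--Tate period coordinates (tensoring with the $B$-equivariant line bundles $\calO_{\fl}(\pm1)$ and extracting $\theta_{\frh}$-weight spaces), together with Serre duality on $\bbP^1$, which turns the cohomology of the $(1,-1)$-twisted sheaf into the compactly supported cohomology of an $(0,0)$-type sheaf after absorbing the Tate twist carried by $I$ — exactly parallel to the pro-\'etale computation of \cite{WConDrinfeld}.

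Next I would check that this short exact sequence is compatible with the differential $\nabla$ defining $\DR=[\calO_{K^v}^{\iota\-\lan,(0,0)}\to\calO_{K^v}^{\iota\-\lan,(0,0)}\ox_{\calO_{K^v}^{\sm}}\Omega^{1,\sm}_{K^v}]$. This is essentially formal: $\nabla$ is $\GL_2(L)$-equivariant, $\bbT^S$-equivariant, and intertwines the $\theta_{\frh}$-actions on source and target in the way required to define $\DR$, hence commutes with the translation functors out of which $\Theta_s$ is built; applying the previous display in each cohomological degree upgrades it to a short exact sequence of complexes of sheaves on $\fl$,
\[
0\longrightarrow \DR_c\longrightarrow \Theta_s\DR\longrightarrow \DR\longrightarrow 0.
\]

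I would then apply $R\Gamma(\fl,-)$, using that wall-crossing commutes with the formation of these hypercohomology groups, $\bbH^i(\fl,\Theta_s\DR)\isom \Theta_s\bbH^i(\fl,\DR)$: the translation functors are $\ox_E V$ for a finite-dimensional $V$ followed by projection onto an infinitesimal-character block, both of which commute with $R\Gamma(\fl,-)$, and the relevant higher cohomology is controlled by the concentration of $R\Gamma(\fl,\calO_{K^v}^{\lan})$ in degrees $\le 1$ coming from the primitive comparison theorem. The long exact sequence then reads
\[
\cdots\to\bbH^0(\DR)\to\bbH^1(\DR_c)\to\Theta_s\bbH^1(\DR)\to\bbH^1(\DR)\to\bbH^2(\DR_c)\to\cdots,
\]
and, since $\bbT^S$ commutes with the $\GL_2(L)$-action and with $\Theta_s$, it stays exact after passing to the generalized $\lambda$-eigenspace $[\lambda]$. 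It then remains to kill the two outer connecting maps after localizing at $\lambda$: for the left-hand one it suffices that $\bbH^0(\DR)[\lambda]=0$, which holds because a nonzero horizontal section would produce a copy of the absolutely irreducible $2$-dimensional $\rho$ in $\tilde{H}^0(K^v,E)$, a sum of characters; for the right-hand one one uses the same cohomological bound together with the explicit identification of $\DR_c$ with the compactly supported de Rham complex of the Drinfeld curves, which is concentrated in degree one on the relevant $\lambda$-part. Combined with the presentation of $\tilde{\pi}=\bbH^1(\DR)[\lambda]$ recorded in Theorem~\ref{thm:kerI2}, this yields the asserted sequence $0\to\bbH^1(\DR_c)[\lambda]\to\Theta_s\bbH^1(\DR)[\lambda]\to\bbH^1(\DR)[\lambda]\to0$.

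The main obstacle is the first step. Transporting the abstract wall-crossing exact sequence to the sheaf level requires a careful local analysis of the translation functors in the Hodge--Tate period coordinates, a precise account of the Tate twists and of the $B$-equivariant line bundles $\calO_{\fl}(\pm1)$ on $\bbP^1$, and the verification — via Serre duality — that the ``translated-to-the-wall-and-back'' sheaf is genuinely a non-split extension of $\calO_{K^v}^{\iota\-\lan,(0,0)}$ by its compactly supported variant rather than a direct sum; conceptually this is the same mechanism as in \cite{WConDrinfeld}, but carrying out this bookkeeping equivariantly for both $\GL_2(L)$ and $\bbT^S$ is the technical core of the argument.
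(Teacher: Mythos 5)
Your structural plan — exhibit a short exact sequence of complexes
\[
0\longrightarrow \DR_c\longrightarrow \Theta_s\DR\longrightarrow \DR\longrightarrow 0
\]
coming from a wall-crossing filtration of $\Theta_s\calO_{K^v}^{\iota\-\lan,(0,0)}$, take hypercohomology, and kill the two outer connecting maps using $\bbH^0(\DR)[\lambda]=0$ and $\bbH^2(\DR_c)=0$ — is exactly the paper's argument.  However, two of the details in your first step are mis-stated in a way that would actively mislead you if carried out literally.

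First, the subsheaf appearing in the wall-crossing filtration of $\Theta_s\calO_{K^v}^{\iota\-\lan,(0,0)}$ is \emph{not} a ``compactly supported variant along $\Omega\hookrightarrow\fl$'': it is simply the twist $\calO_{K^v}^{\iota\-\lan,(0,0)}\ox_{\calO_{\fl}}\Omega^1_{\fl}$ (a sheaf on all of $\fl$, not on $\Omega$).  This is obtained, as in Proposition~\ref{prop:wallcrossingofOsmoxOfl}, by tensoring $T_{(0,0)}^{(-1,0)}\calO_{K^v}^{\lan,(0,0)}\isom\omega_{K^v}^{(-1,0),\lan,(0,-1)}(1)$ with the Euler sequence
\[
0\to\omega_{\fl}^{(0,1)}\to V_{\iota}^{(1,0)}\ox_C\calO_{\fl}\to\omega_{\fl}^{(1,0)}\to 0
\]
over $\calO_{\fl}$ and projecting to the infinitesimal-character block; no Serre duality enters.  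Since the smooth Gauss--Manin differential $d$ commutes with $\ox_{\calO_{\fl}}(-)$, the same filtration on the target $\calO_{K^v}^{\iota\-\lan,(0,0)}\ox_{\calO_{K^v}^{\sm}}\Omega^{1,\sm}_{K^v}$ assembles this into the short exact sequence of complexes with subcomplex $\DR'=[\calO\ox\bar\Omega\to\calO^s]$.  This $\DR'$ is what the introduction calls $\DR_c$: it is the quotient $\DR/\DR^{\sm}$, where $\DR^{\sm}=[\ker\bar d\to\ker\bar d']$, using that $\bar d$ and $\bar d'$ are surjective.  The identification of $\bbH^1(\DR')[\lambda]$ with compactly supported coherent cohomology of Drinfeld curves is an output of Theorem~\ref{thm:kerI2} at a supercuspidal $\lambda$, not an a priori description of the sheaf.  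If you try to prove the displayed sequence with your interpretation of the left term, the local computation will not close.

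Second, the vanishing $\bbH^2(\DR_c)=0$ is immediate from the fact that $\DR'$ is a two-term complex of sheaves on a $1$-dimensional adic space (Proposition~\ref{prop:DRcohprime}(v)); it holds before any Hecke localization and does not need the Drinfeld-curve identification.  Once these two points are corrected, your argument coincides with the paper's: apply $R\Gamma(\fl,-)$ to the sheaf-level short exact sequence, note that $\Theta_s$ commutes with hypercohomology (Proposition~\ref{prop:translationcommuteswithcohomology}, via the \v{C}ech computation and exactness of translation functors), take the long exact sequence, use $\bbH^2(\DR')=0$ and $\bbH^0(\DR)[\lambda]=0$ (Proposition~\ref{prop:DRcoh} for $\pi_w$ generic), and conclude.
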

Here, $\DR_c$ is the quotient of $\DR$ by $\DR^{\sm}$. This also gives us some structural results about the wall-crossing of $\pi_\sigma(\rho_w)$, see \S \ref{ss:wc} for more details. 

Next, we can use the above theorem to get information of some extension groups. From now on we assume $r_w$ is irreducible. In particular, $\pi_v$ is supercuspidal. Write $\pi_{c}:=\bbH^1(\DR_c)[\lambda]$, which is a locally $\iota$-analytic representation of $\GL_2(L)$. Let $r_{w,E}$, $\pi_{v,E}$, $\pi_{c,E}$ be $E$-structures of $r_w$, $\pi_v$, $\pi_{c}$ respectively. From Theorem \ref{admissibility} we deduce $\pi_{c,E}$ is also admissible. Define 
\begin{align*}
    \Ext^1_{\GL_2(L)}(\pi_{c,E},\pi_{v,E})\isom \Ext^1_{D(G,E)}(\pi_{v,E}',\pi_{c,E}')
\end{align*}
where $D(G,E)$ is the $E$-valued locally $\sigma$-analytic distribution algebra over $\GL_2(L)$, and $(-)'$ denotes the strong dual. The following theorem (a general form was conjectured by Breuil \cite{Bre19}) establishes a link between this extension group and the Weil--Deligne representation $r_{w,E}$, which enables us to describe the extension class inside $\Pi(\rho)^{\sigma\-\lan}$. When $L=\bbQ_p$, this theorem was proved by Yiwen Ding \cite{Din22} using $p$-adic local Langlands correspondence for $\GL_2(\bbQ_p)$ and explicit calculation of $(\vphi,\Gamma)$-modules.
\begin{theorem}[Theorem \ref{thm:MAIN}]\label{thm:introext1}
There exists an isomorphism of $E$-vector spaces
\[
    r_{w,E}\aisom \Ext^1_{\GL_2(L)}(\pi_{c,E},\pi_{v,E})
\]
which only depends on the Weil--Deligne representation $r_{w,E}$ and a choice of $\sigma$. For any $2$-dimensional $E$-linear continuous representation $\rho_w:\Gal(\bar L/L)\to \GL_2(E)$ satisfies the following conditions:
\begin{enumerate}[(i)]
    \item $\rho_w$ is de Rham of parallel Hodge--Tate weight $0,1$ and the underlying Weil--Deligne representation is isomorphic to $r_{w,E}$,
    \item there exists a $2$-dimensional continuous  $E$-linear representation $\rho:\Gal(\bar F/F)\to \GL_2(E)$ arising from a classical automorphic form, such that $\rho|_{\Gal(\bar L/L)}\isom \rho_w$,
\end{enumerate}
under the natural isomorphisms
\[
    D_{\dR,\sigma}(\rho_w)\aisom r_{w,E}\aisom \Ext^1_{\GL_2(L)}(\pi_{c,E},\pi_{v,E}),
\]
it sends $\Fil^1D_{\dR,\sigma}(\rho_w)$ to the $E$-line generated by the extension class given by $\Pi(\rho)^{\sigma\-\lan}$.
\end{theorem}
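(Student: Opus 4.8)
The plan is to separate an \emph{abstract} statement — the existence of an isomorphism $r_{w,E}\aisom \Ext^1_{\GL_2(L)}(\pi_{c,E},\pi_{w,E})$ depending only on $r_{w,E}$ and on $\sigma$ — from the \emph{global} statement that this isomorphism carries $\Fil^1 D_{\dR,\sigma}(\rho_w)$ onto the extension class attached to $\Pi(\rho)^{\sigma\-\lan}$. Everything takes place over $E$: by Theorem \ref{thm:admissible} the representations $\tilde\pi_E$ and $\pi_{c,E}$ are admissible locally $\sigma$-analytic representations of $\GL_2(L)$, so all $\Ext$-groups below are finite-dimensional over $E$ and the duality $\Ext^1_{\GL_2(L)}(\pi_{c,E},\pi_{w,E})\isom \Ext^1_{D(G,E)}(\pi_{w,E}',\pi_{c,E}')$ is available; the wall-crossing short exact sequence $0\to \pi_{c,E}\to \Theta_s\tilde\pi_E\to \tilde\pi_E\to 0$ of Theorem \ref{thm:WCdeRham}, and the presentation $0\to \pi_{w,E}\to \tilde\pi_E\to \Pi(\rho)^{\sigma\-\lan}\to 0$, both descend to $E$ as well.

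\emph{The abstract isomorphism.} Here I would run the wall-crossing argument of \cite{WConDrinfeld}, now with Theorem \ref{thm:WCdeRham} as its input. Applying $\Hom_{\GL_2(L)}(-,\pi_{w,E})$ to the wall-crossing sequence produces a long exact sequence; the self-adjointness of $\Theta_s$ (up to the evident twist) identifies $\Ext^i_{\GL_2(L)}(\Theta_s\tilde\pi_E,\pi_{w,E})\isom \Ext^i_{\GL_2(L)}(\tilde\pi_E,\Theta_s\pi_{w,E})$, and since $\pi_{w,E}$ is smooth supercuspidal (we are in the case $r_w$ irreducible) one computes $\Theta_s\pi_{w,E}$ and the relevant groups $\Ext^i_{\GL_2(L)}(\pi_{w,E},\pi_{w,E})$ explicitly. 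Combining these with the presentation $0\to \pi_{w,E}\to \tilde\pi_E\to \Pi(\rho)^{\sigma\-\lan}\to 0$, one reads off that $\dim_E\Ext^1_{\GL_2(L)}(\pi_{c,E},\pi_{w,E})=2$. To make the isomorphism with $r_{w,E}$ canonical I would use the geometric realization $\tilde\pi\isom (\tau_w\ox_C\dlim_n H^1_c(\calM_{\Dr,n},\calO_{\calM_{\Dr,n}}))^{D_L^\times}$ together with the de Rham comparison $(\tau_w\ox_C\dlim_n H^1_{\dR,c}(\calM_{\Dr,n}))^{D_L^\times}\isom \pi_w\ox_C r_w$ of \cite{CDN20}: the Weil--Deligne representation $r_w$ appears as the multiplicity space of $\pi_w$ inside the de Rham cohomology of the Drinfeld tower, hence naturally as (a subquotient of) the relevant $\Ext$-group, and since every object in the construction is functorially attached to $r_w$, the resulting isomorphism depends only on $r_{w,E}$ and on $\sigma$ (equivalently on the choice of $u$).

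\emph{Identifying the extension class.} Let $e(\rho)\in \Ext^1_{\GL_2(L)}(\pi_{c,E},\pi_{w,E})$ be the extension class attached to $\Pi(\rho)^{\sigma\-\lan}$; it is extracted — via $\Theta_s$ and the duality above, cf.\ Theorem \ref{thm:kerI2} — from the inclusion $\pi_{w,E}\hookrightarrow\tilde\pi_E$ in the descended presentation, equivalently from the composite $H^0(\Omega^{1,\sm}_{K^v})[\lambda]\to \bbH^1(\DR^{\sm})[\lambda]\to \bbH^1(\DR)[\lambda]$. The crucial input is that this map is governed by the $\sigma$-Hodge filtration of $\rho_w$: by the $p$-adic Hodge-theoretic interpretation of the intertwining operator $I=d\comp\bar d$ as a geometric version of Fontaine's operator (the content of \S\ref{padic}), on the $\rho$-isotypic part of the completed cohomology the operator $I$ is computed by the classical Fontaine operator attached to $\rho_w$, and the line it cuts out inside $D_{\dR,\sigma}(\rho_w)$ is precisely $\Fil^1 D_{\dR,\sigma}(\rho_w)$. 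Tracing this through the identifications of the previous step shows that the $E$-line spanned by $e(\rho)$ is the image of $\Fil^1 D_{\dR,\sigma}(\rho_w)$ under $D_{\dR,\sigma}(\rho_w)\aisom r_{w,E}\aisom \Ext^1_{\GL_2(L)}(\pi_{c,E},\pi_{w,E})$, which is the assertion. Independence of the tame level $K^v$ and of the particular classical automorphic form realizing $\rho$ follows from the locality theorem (Theorem \ref{thm:locality}) together with the canonicity established in the previous step.

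\emph{Main obstacle.} The genuinely hard part is this last compatibility: pushing the $p$-adic Hodge-theoretic interpretation of $I$ beyond de Rhamness to the level of extension classes, i.e.\ establishing the commutativity of the large diagram relating the geometric Fontaine operator on $\calO_{K^v}^{\lan}$, the classical Fontaine operator of $\rho_w$, the de Rham-complex description of $\tilde\pi$, the wall-crossing sequence of Theorem \ref{thm:WCdeRham}, and the Drinfeld-tower realization used above — all while keeping the Tate twists and weight normalizations consistent (for instance the twist in $I\colon \calO_{K^v}^{\lan,(0,0)}\to \calO_{K^v}^{\lan,(1,-1)}(1)$, and the convention that the cyclotomic character has Hodge--Tate weight $-1$). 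A secondary difficulty is the explicit control of $\Theta_s\pi_{w,E}$ and of the low-degree $\Ext$-groups of smooth supercuspidal representations inside the locally $\sigma$-analytic category, on which both the dimension count $\dim_E\Ext^1_{\GL_2(L)}(\pi_{c,E},\pi_{w,E})=2$ and the canonical identification with $r_{w,E}$ rest.
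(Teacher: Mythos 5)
Your high-level decomposition — first construct a canonical isomorphism $r_{w,E}\aisom\Ext^1_{\GL_2(L)}(\pi_{c,E},\pi_{w,E})$, then identify $\Fil^1 D_{\dR,\sigma}(\rho_w)$ with the line spanned by the extension class — is the right one, and your extraction of the extension class from the composite $H^0(\Omega^{1,\sm}_{K^v})[\lambda]\to\bbH^1(\DR^{\sm})[\lambda]\to\bbH^1(\DR)[\lambda]$ matches the paper. But there are two genuine gaps in the mechanism.

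First, the step making the isomorphism with $r_{w,E}$ \emph{canonical and purely local} is not established by appealing to the fact that ``every object in the construction is functorially attached to $r_w$.'' The de Rham comparison $(\tau_w\ox_C\dlim_n H^1_{\dR,c}(\calM_{\Dr,n}))^{D_L^\times}\isom\pi_w\ox_C r_w$ of \cite{CDN20} that you invoke is $C$-linear, so it does not by itself produce an $E$-rational identification with the Weil--Deligne representation, and $H^1_{\dR,c}$ over $C$ carries no residual $(\varphi,N,\Gal)$-structure. The paper's construction instead passes through the \emph{Hyodo--Kato} cohomology $H^1_{\mathrm{HK},c}(\calM_{\LT,\infty})$, which lives over $L_0^{\ur}$ and carries a genuine $(\varphi,N,\Gal(\bar L/L))$-module structure; one then has to chase a commutative diagram linking this to $D_{\pst}(\rho_w)_\sigma$ via the $p$-adic uniformization of the supersingular locus, the semi-stable comparison theorem for $\dlim_n H^1_{\et}(\calX_{K^vK_v},E)$, and the Lubin--Tate/Drinfeld de Rham comparison $\iota_{\LT,\Dr}$ (Proposition \ref{prop:pvcw0} and the diagram in the proof of Theorem \ref{thm:MAIN}). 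Without the Hyodo--Kato step, your isomorphism $r_{w,E}\aisom\Ext^1$ has no a priori reason to be independent of the global $\rho$ and $K^v$ that went into its construction, which is the whole content of the statement ``depends only on $r_{w,E}$ and $\sigma$.''

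Second, the mechanism you give for the $\Fil^1$-identification is not the one that works. You say the intertwining operator $I$, interpreted as a geometric Fontaine operator, ``cuts out the line $\Fil^1 D_{\dR,\sigma}(\rho_w)$ inside $D_{\dR,\sigma}(\rho_w)$.'' This is not what the Fontaine-operator interpretation gives: the content of \S\ref{padic} is that $N_k$ vanishes iff $\rho_w$ is $\sigma$-de Rham, which is an \emph{assumption} here, and $N_k=0$ produces no distinguished line. The Fontaine/intertwining operator is only needed to establish the presentation $0\to\pi_w\to\tilde\pi\to\Pi(\rho)^{\sigma\-\lan}\to 0$ in the first place (Theorems \ref{kerI} and \ref{thm:kerI2}). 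The identification of $\Fil^1$ is then a separate, more classical step: the $p$-adic de Rham comparison theorem for the unitary Shimura curve $\calX_{K^vK_v}$ gives $D_{\dR,\sigma}(\rho_w)\ox_{E,u}C\aisom\Hom_{\GL_2(L)}(\pi_w,\bbH^1(\DR^{\sm}))$, the de Rham filtration on the right-hand side is the geometric Hodge filtration $H^0(\Omega^{1,\sm}_{K^v})[\lambda]\subset\bbH^1(\DR^{\sm})[\lambda]$, and (using $\Hom_{\GL_2(L)}(\pi_w,\bbH^1(\DR')[\lambda])=0$) this carries over to $\Hom_{\GL_2(L)}(\pi_{w,E},\tilde\pi_E)\aisom\Ext^1_{\GL_2(L)}(\pi_{c,E},\pi_{w,E})$. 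This is Proposition \ref{prop:FilExt1}, and it is a de Rham comparison argument, not a Fontaine-operator argument. You should replace the paragraph on the Fontaine operator by this de Rham comparison step; otherwise the proof of the $\Fil^1$-compatibility does not close.

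A smaller remark: your wall-crossing route to $\dim_E\Ext^1=2$ (via self-adjointness of $\Theta_s$, the sequence of Theorem \ref{thm:WCdeRham}, and $\Ext$-groups of supercuspidals) is plausible and differs slightly from the paper's, which instead gets the dimension for free once $\Hom(\pi_{w,E},\tilde\pi_E)\aisom\Ext^1$ is established (the left side is $D_{\dR,\sigma}(\rho_w)$, which has dimension $2$). Both ultimately lean on the companion paper \cite{WConDrinfeld}; neither is a significant departure.
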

Here the assumption that $\rho_w$ is of parallel Hodge--Tate weight $0,1$ is not crucial in the proof, only due to the lack of Hyodo--Kato cohomology with non-trivial coefficients. The proof is basically the same as \cite{WConDrinfeld}, but we also need the isomorphism between compactly supported de Rham cohomology of Lubin--Tate towers and Drinfeld towers to establish the relation between cohomology of Drinfeld tower and the completed cohomology of unitary Shimura curves (whose supersingular locus is uniformized by the Lubin--Tate tower). 

Finally, as an application of the above result, we show that there is a locally $\bbQ_p$-analytic subrepresentation of $\Pi(\rho)^{\lan}$ which determines $\rho_w$. Define 
\begin{align*}
    \Pi_1(\rho_w):=\bigoplus_{\pi_{v,E},\sigma\in \Sigma}\Pi(\rho)^{\sigma\-\lan}\subset \Pi(\rho)^{\lan},
\end{align*}
where the amalgamated sum is over $\pi_{v,E}$, and varies in $\sigma\in \Sigma$. 
\begin{theorem}[Theorem \ref{thm:gal}]
Suppose $\rho_w$ satisfies the condition in Theorem \ref{thm:introext1}, then the $E$-linear
isomorphism class of the locally $\bbQ_p$-analytic $\GL_2(L)$-representation $\Pi_1(\rho_w)$ determines $\rho_w$.
\end{theorem}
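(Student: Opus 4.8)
The plan is to reconstruct, from the $E$-linear $\GL_2(L)$-isomorphism class of $\Pi_1(\rho_w)$, the filtered $(\vphi,N,\Gal_L)$-module $D_{\pst}(\rho_w)$, which by $p$-adic Hodge theory determines $\rho_w$. This runs in two stages: first recover the Weil--Deligne representation $r_{w,E}$, then recover, for every $\sigma\in\Sigma$, the $\sigma$-Hodge line $\Fil^1 D_{\dR,\sigma}(\rho_w)$.

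The first step is to pin down the part of the structure of $\Pi_1(\rho_w)$ that is preserved by any $E$-linear $\GL_2(L)$-equivariant isomorphism. Since $\rho_w$ has parallel Hodge--Tate weights $0,1$, the locally algebraic vectors of $\Pi(\rho)$ coincide with the smooth vectors $\Pi(\rho)^{\sm}$, which equals $\pi_{w,E}$ under our standing assumption that $\Pi(\rho)^{\lalg}$ is irreducible. For $\sigma\neq\sigma'$, a vector lies in $\Pi(\rho)^{\sigma\-\lan}\cap\Pi(\rho)^{\sigma'\-\lan}$ iff it is killed by every factor of $\gl_2(L)\ox_{\bbQ_p}E\cong\prod_{\tau\in\Sigma}\gl_2(E)$, i.e.\ iff it is smooth; hence $\Pi(\rho)^{\sigma\-\lan}\cap\Pi(\rho)^{\sigma'\-\lan}=\pi_{w,E}$, and $\Pi_1(\rho_w)=\sum_{\sigma\in\Sigma}\Pi(\rho)^{\sigma\-\lan}$ is the amalgamated sum of the $\Pi(\rho)^{\sigma\-\lan}$ along $\pi_{w,E}$. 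Any $E$-linear $\GL_2(L)$-equivariant map is automatically equivariant for $\prod_{\tau\in\Sigma}\gl_2(E)$, hence preserves, for each $\sigma$, the subspace killed by all $\tau$-factors with $\tau\neq\sigma$; so an isomorphism $\phi\colon\Pi_1(\rho_w)\aisom\Pi_1(\rho_w')$ carries $\Pi(\rho)^{\sigma\-\lan}$ isomorphically onto $\Pi(\rho')^{\sigma\-\lan}$ for \emph{each} $\sigma$ --- no permutation of the embeddings can occur, precisely because we demand $E$-linearity --- and it carries $\pi_{w,E}$ onto $\pi_{w,E}'$.

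Next I would feed this into the structure theory. From $\pi_{w,E}\isom\pi_{w,E}'$ and the local Langlands correspondence over $E$ (using that $r_w,r_w'$ are irreducible, so the $\pi_w$ are supercuspidal) we obtain $r_{w,E}\isom r_{w,E}'$, and since $\pi_{c,E}$ depends only on $\pi_w$, hence only on $r_{w,E}$ (Theorem \ref{thm:kerI2}), a compatible isomorphism $\pi_{c,E}\isom\pi_{c,E}'$ follows. By Theorem \ref{thm:introext1}, $\Pi(\rho)^{\sigma\-\lan}$ determines a nonzero class $e_\sigma(\rho)\in\Ext^1_{\GL_2(L)}(\pi_{c,E},\pi_{w,E})$ spanning $\Fil^1 D_{\dR,\sigma}(\rho_w)$ under the isomorphism $\Ext^1_{\GL_2(L)}(\pi_{c,E},\pi_{w,E})\aisom D_{\dR,\sigma}(\rho_w)$ that depends only on $r_{w,E}$ and $\sigma$. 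As $\phi$ simultaneously identifies $\Pi(\rho)^{\sigma\-\lan}$ with $\Pi(\rho')^{\sigma\-\lan}$, $\pi_{w,E}$ with $\pi_{w,E}'$, and (by functoriality of these constructions) $\pi_{c,E}$ with $\pi_{c,E}'$, it sends $e_\sigma(\rho)$ to a nonzero multiple of $e_\sigma(\rho')$; hence, identifying $r_{w,E}$ with $r_{w,E}'$, the lines $\Fil^1 D_{\dR,\sigma}(\rho_w)$ and $\Fil^1 D_{\dR,\sigma}(\rho_w')$ agree for every $\sigma\in\Sigma$.

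Finally, since $r_w$ is irreducible we have $N=0$ and $\rho_w$ is potentially crystalline; its filtered $(\vphi,N,\Gal_L)$-module $D_{\pst}(\rho_w)$ is determined by the $(\vphi,N,\Gal_L)$-module underlying $r_{w,E}$ together with the Hodge filtration on $D_{\dR}(\rho_w)=\bigoplus_{\sigma\in\Sigma}D_{\dR,\sigma}(\rho_w)$, i.e.\ the family of lines $\{\Fil^1 D_{\dR,\sigma}(\rho_w)\}_{\sigma\in\Sigma}$. The two stages thus produce an isomorphism $D_{\pst}(\rho_w)\isom D_{\pst}(\rho_w')$ of filtered $(\vphi,N,\Gal_L)$-modules, and since $D_{\pst}$ is fully faithful on de Rham representations (Colmez--Fontaine, together with the $p$-adic monodromy theorem) we conclude $\rho_w\isom\rho_w'$. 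The hard part is the middle stage: one must check that the assignment $\Pi(\rho)^{\sigma\-\lan}\mapsto e_\sigma(\rho)$ is functorial for isomorphisms, and that the identifications of $\pi_{w,E}$, of $\pi_{c,E}$ and of the ambient $\sigma$-analytic representations induced by a single $\phi$ are mutually compatible, so that the extension class --- equivalently the Hodge line --- is transported correctly; this is where one leans on the explicit structural descriptions of $\Pi(\rho)^{\sigma\-\lan}$ and of the isomorphism $r_{w,E}\aisom\Ext^1_{\GL_2(L)}(\pi_{c,E},\pi_{w,E})$ established in Theorems \ref{thm:kerI2} and \ref{thm:introext1}.
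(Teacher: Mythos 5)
Your proof is correct and follows essentially the same route as the paper's: recover each $\Pi(\rho)^{\sigma\text{-}\lan}$ from $\Pi_1(\rho_w)$ as the $\sigma$-analytic part (noting that $E$-linearity rules out a permutation of embeddings), read off the Weil--Deligne representation from the smooth socle, then use the $\Ext^1$-theorem to recover each $\sigma$-Hodge line, and finally invoke full faithfulness of $D_{\pst}$ on de Rham representations together with $N=0$ (irreducibility of $r_w$) to rebuild $\rho_w$ from these data.

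The only difference is one of granularity. You flag as the "hard part" the functoriality of the assignment $\Pi(\rho)^{\sigma\text{-}\lan}\mapsto e_\sigma(\rho)$ under a single $E$-linear $\GL_2(L)$-isomorphism; the paper dispatches precisely this point by citing \cite[Proposition 2.6.3]{WConDrinfeld}, which shows that the continuous $E$-linear isomorphism class of the extension $\pi_{w,E}-\pi_{c,E}$ alone suffices to recover the line in $\Ext^1_{\GL_2(L)}(\pi_{c,E},\pi_{w,E})$ (the key being that $\pi_{w,E}$ sits inside $\pi_\sigma(\rho_w)$ as a well-defined subrepresentation with no other occurrences of $\pi_{w,E}$ as a Jordan--H\"older constituent, so the extension class is well defined up to scalar and transported correctly). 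Two small additions you make that the paper leaves implicit are (i) the identification $\Pi(\rho)^{\sigma\text{-}\lan}\cap\Pi(\rho)^{\sigma'\text{-}\lan}=\pi_{w,E}$ for $\sigma\neq\sigma'$, confirming the amalgamated-sum structure, and (ii) the explicit recovery of $r_{w,E}$ from $\pi_{w,E}$ via the local Langlands correspondence; the latter is needed if one insists on determining $\rho_w$ without treating $r_{w,E}$ as a priori fixed, which is the reading your proof and the paper's both implicitly take.
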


\subsubsection*{Organization of the paper}
The structure of this paper is organized as follows. In \S \ref{ladef} we recall some facts about locally analytic vectors. The main difference with \cite{Pan22} is that we need  to deal with the $L$-analytic group instead of $\Q_p$.

In \S \ref{uni} we introduce basic properties of unitary Shimura curves. We also study the local structure of the sheaf of locally analytic sections on the unitary Shimura curve at infinite level. 

In \S \ref{intertw} we construct the differential operators $d$ and $\overline{d}$, and study their basic properties on both ordinary locus and supersingular locus.
We explicitly compute the cohomology of the intertwining operator and deduce its classicality. The calculation also shed some lights on the structure of $\Pi(\rho)$ for some Galois representation $\rho$ which is de Rham at $p$. We also study translations and wall-crossings of the cohomology of the intertwining operators. 

In \S \ref{padic} we first prove the $p$-adic Hodge-theoretic interpretation of the intertwining operator acting on $\mathcal{O}^{\text{la}}_{K^v}$. Combining this with the previous computations, we prove our main results.

In \S \ref{p-LLC} we discuss various applications to the $p$-adic local Langlands program for $\GL_2(L)$.

\subsubsection*{Notations and conventions}
Let $L$ be a finite extension of $\bbQ_p$ (this is the base field), and let $E$ be a finite extension of $\bbQ_p$ (this is the coefficient field) such that $[L:\bbQ_p]=|\Hom_{\bbQ_p}(L,E)|$. We always assume that $E$ is sufficiently large. Let $L_0\subset L$ be the maximal subextension unramified over $\bbQ_p$. Let $L_0^{\ur}$ be the maximal unramified extension of $L_0$, and let $\breve{L}_0$ be the completion of $L_0^{\ur}$. We always view $L_0$ as a subfield of $L$ via the natural embedding. Fix an algebraic closure $\bar L$ of $L$.  Let $\text{W}_L$ be the Weil group of $L$, and let $\WD_L$ be the Weil--Deligne group of $L$. Let $C:=\hat{\bar L}$ be the completion of $\bar L$. Let $B_{\dR}$ be the field of de Rham periods constructed from $\bar L$. Let $B_{\cris}\subset B_{\st}\subset B_{\dR}$ be the rings of crystalline (resp. semi-stable) periods, where we fix a uniformizer $\varpi$ of $L$, and we fix a branch of the $p$-adic logarithm $\log_\varpi$ such that $\log_\varpi(\varpi)=0$. Let $\N$ denote the set of non-negative integers.

Our convention is that the Hodge--Tate weight of the $p$-adic cyclotomic character of is $-1$.

For $\pi_1,\dots,\pi_n$ a list of locally $\bbQ_p$-analytic representations of $\GL_2(L)$, we write $\pi_1-\pi_2-\cdots-\pi_n$ to denote a successive extension of $\pi_1,\dots,\pi_n$ such that for each $k$, $\pi_k-\pi_{k+1}-\cdots-\pi_n$ is a non-split extension of $\pi_{k+1}-\cdots-\pi_n$ by $\pi_k$.

\subsubsection*{Acknowledgments}
We are deeply grateful to Liang Xiao and Yiwen Ding, our respective academic advisors, for their invaluable guidance, critical feedback, and sustained support during this research program. We also thank Yuanyang Jiang, Ruochuan Liu and Lue Pan for their constructive discussions and suggestions.

\section{Locally analytic functions on locally $L$-analytic groups}\label{ladef}
In this section, we define locally $J$-analytic vectors for continuous representations of some locally $L$-analytic groups, introduce the corresponding derived functor, and discuss their basic properties.
\subsection{Locally $L$-analytic groups and locally $J$-analytic functions}
Let $E$ be a sufficiently large $p$-adically complete extension of $\bbQ_p$ such that $\#\Hom_{\bbQ_p\-\alg}(L,E)=[L:\bbQ_p]$, which will be the coefficient field for representations. Put $\Sigma:=\Hom_{\bbQ_p\-\alg}(L,E)$. In this subsection we recall some basic facts about locally $L$-analytic groups and define locally $J$-analytic functions on them, where $J\subset \Sigma$ is a subset of embeddings. Our main references are \cite{Ser64, Sch11, Eme17}.

Let $G$ be a locally $L$-analytic group of $L$-dimension $N$. Let $\frg$ be the Lie algebra of $G$, a Lie algebra over $L$ of dimension $N$. Let $\frg_0$ be an $\calO_L$-Lie subalgebra of $\frg$ such that $L\ox_{\calO_L}\frg_0=\frg$, and $[\frg_0,\frg_0]\subset \alpha\frg_0$ with a $p$-adically sufficiently small element $\alpha\in \mathcal{O}_L$. We then use the Baker--Campbell--Hausdorff formula to define a group structure on $\frg_0$, making it a locally $L$-analytic group, together with an open embedding of locally $L$-analytic groups 
\[
    \exp_L:\frg_0\to G
\] 
such that $\Lie(\exp_L):\Lie(\frg_0)\to\frg$ is the identity map. For each $n\in\bbZ_{\ge 0}$, set $\frg_n=p^n\frg_0$, and let $G_n$ denote the image of $\frg_n$ under $\exp_L$. Then $\{G_n\}_{n\ge 0}$ forms a cofinal sequence of open subgroups of $G$. We will refer to such a sequence of open compact  locally $L$-analytic subgroups $\{G_n\}_{n\ge 0}$ as a sequence of standard open compact subgroups of $G$.

Fix a basis $X_1,\dots,X_N$  of $\frg_0$ over $\calO_L$. Define 
\[
    \log_L:=\exp_L^{-1}:G_0\to\frg_0\isom\calO_L^N.
\] 
For any $n\in\bbZ_{\ge 0}$, $\log_L$ restricts to an isomorphism of locally $L$-analytic manifolds from $G_n$ to $p^n\calO_L^N$, and we refer to these as the canonical charts of $G_n$. Moreover, the formal group law on $\frg_n$ not only makes them into locally $L$-analytic groups but also rigid analytic groups over $L$. Let $\bbG_n$ be the rigid analytic group over $L$ defined by the group law on $G_n$; then we have $\bbG_n(L)=G_n$. By abuse of notation, we use the underlying $L$-rational points $G_n$ to denote the rigid analytic group $\bbG_n$ over $L$. 

We now define certain analytic functions on $G$. First, by restriction of scalars, we view $G_n$ as a rigid analytic group over $\bbQ_p$, which we denote by $\Res_{L/\bbQ_p}G_n$. Fix a basis of $\calO_L$ over $\bbZ_p$, say $\alpha_1,\dots,\alpha_d$. Then, if we let $\ul x=(x_1,\dots,x_{dN})$ denote the coordinates on $\frg_0$ with respect to the $\bbZ_p$-basis $(X_i\alpha_j)_{i=1,\dots,N,j=1,\dots,d}$ of $\frg_0$, we have:
\[
    \calC^{\rig}(\Res_{L/\bbQ_p}G_n,E)=\{f=\sum_{k\in\bbN^{dN}} a_{\ul k}\ul x^{\ul k}:a_{\ul k}\in E,a_{\ul k}p^{n\ul k}\to 0\}
\]
where $\ul x^{\ul k}=x_1^{k_1}x_2^{k_2}\cdots x_{dN}^{k_{dN}}$ for $\ul k=(k_1,k_2,\dots,k_{dN})$. This is a Banach algebra over $E$ with norm $||f||_{G_n}:=\sup_{\ul k\in\bbN^{dN}}|a_{\ul k}p^{n\ul k}|$. As usual, we define the space of locally analytic functions on $\Res_{L/\bbQ_p}G$ to be 
\[
    \calC^{\lan}(\Res_{L/\bbQ_p}G,E):=\dlim_{\calP}\prod_{P\in\calP}\calC^{\rig}(\Res_{L/\bbQ_p}P,E),
\]
where $\calP$ is a partition of $G$ by cosets of $G_n$ for $n\ge 0$ and we define $\Res_{L/\bbQ_p}P$ for $P\in\calP$ using the rigid structure of $\bbG_n$ over $L$. Equipped with the product norm, this is a Hausdorff LB-space \cite[Definition 1.1.16]{Eme17} over $E$. The topology is independent of choice of the system $\{G_n\}_{n\ge 0}$.

Now we want to construct another coordinate system on $G$ so that the $L$-linear structure on $\frg$ enters in. Let $J\subset\Sigma$ be a subset of embeddings. Let $\ul y=(y_1,\dots,y_N)$ denote the $L$-coordinates on $\frg_0$ with respect to the $\calO_L$-basis $X_1,\dots,X_N$ of $\frg_0$, we define 
\[
    \calC^{J\-\an}(G_n,E):=\{f=\sum_{\ul k_{\ul\eta}\in(\bbN^N)^J}c_{\ul k_{\ul \eta}}\ul y^{\ul k_{\ul \eta}}:c_{\ul k_{\ul \eta}}\in E,c_{\ul k_{\ul \eta}}p^{n\ul k_{\ul \eta}}\to 0\},
\]
where $\ul y^{\ul k_{\ul\eta}}=\prod\limits_{\eta\in J}\prod_{i=1}^N\eta(y_i)^{k_{i,\eta}}$. It is a Banach algebra with norm defined by 
\[
    ||f||_{G_n,J}:=\sup_{\ul k_{\ul \eta}\in(\bbN^N)^J}|c_{\ul k_{\ul \eta}}p^{n\ul k_{\ul \eta}}|.
\]
Similarly, we define the space of locally $J$-analytic functions on $G$ to be 
\[
    \calC^{J\-\lan}(G,E):=\dlim_{\calP}\prod_{P\in\calP}\calC^{J\-\an}(P,E)
\]
which is a Hausdorff LB-space over $E$. As usual, the topology is independent of the choice of the system $\{G_n\}_{n\ge 0}$.

We note that if $J=\varnothing$, then for each $n\in\bbZ_{\ge 0}$, $\calC^{\varnothing\-\an}(G_n,E)$ consists of constant functions on $G_n$, and $\calC^{\varnothing\-\lan}(G,E)$ consists of locally constant functions on $G$, i.e., 
\[
    \calC^{\varnothing\-\lan}(G,E)=\calC^{\infty}(G,E).
\]

\begin{proposition}\label{prop:lafunc}
There exists a $G$-equivariant isomorphism of LB-spaces over $E$:
\[
    \calC^{\Sigma\-\lan}(G,E)\isom\calC^{\lan}(\Res_{L/\bbQ_p}G,E).
\]
\begin{proof}
Note that 
\begin{align*}
    L\otimes_{\Q_p}E&\ra \prod_{\eta\in \Sigma}E\\
    a\otimes b&\mapsto (\eta(a)b)_\eta
\end{align*}
is an isomorphism, so there is an invertible matrix representing the change of basis from the coordinate system $\{x_i\}_{i=1,\dots,dN}$ to the coordinate system  $\{\eta(y_i)\}_{i=1,\dots,N,\eta\in\Sigma}$. The proposition is clear from this.

\end{proof}
\end{proposition}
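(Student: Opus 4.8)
The plan is to reduce the claimed $G$-equivariant isomorphism $\calC^{\Sigma\-\lan}(G,E)\isom \calC^{\lan}(\Res_{L/\bbQ_p}G,E)$ to a purely linear-algebraic comparison of two coordinate systems on the canonical chart $\frg_0\isom\calO_L^N$, and then to propagate that comparison through the inductive-limit-of-products construction of both sides. First I would recall that both spaces are defined as $\dlim_{\calP}\prod_{P\in\calP}(\text{Banach algebra on }P)$ over the \emph{same} directed set of partitions $\calP$ of $G$ into cosets of the standard subgroups $G_n$; so it suffices to produce, for each $n$, a topological isomorphism $\calC^{\rig}(\Res_{L/\bbQ_p}G_n,E)\isom \calC^{\Sigma\-\an}(G_n,E)$ compatible with restriction to cosets, and then observe that the $G$-action on both sides is by left translation, which on the level of power-series expansions is induced by the group law on $\frg_0$ — the same on both sides once the isomorphism is exhibited.

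The key step is the linear-algebra input. On $\frg_0$ we have the $\bbZ_p$-coordinates $\ul x=(x_1,\dots,x_{dN})$ attached to the basis $(X_i\alpha_j)$, and the "$L$-coordinates after all embeddings" $(\eta(y_i))_{i=1,\dots,N,\,\eta\in\Sigma}$. The point is that the map $L\ox_{\bbQ_p}E\to \prod_{\eta\in\Sigma}E$, $a\ox b\mapsto(\eta(a)b)_\eta$, is an isomorphism of $E$-algebras (this uses $E$ sufficiently large and $|\Sigma|=[L:\bbQ_p]$, and separability of $L/\bbQ_p$). Tensoring the identification $\frg_0\ox_{\bbZ_p}\bbZ_p = \bigoplus_{i} \calO_L X_i$ up to $E$ and chasing through, the two families $\{x_j\}_{j=1}^{dN}$ and $\{\eta(y_i)\}_{i,\eta}$ are related by an \emph{invertible} $E$-linear change of variables, with the matrix and its inverse having entries of bounded absolute value independent of $n$ (it is a fixed matrix built from $\eta(\alpha_j)$). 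Hence the linear substitution carries the monomials $\ul x^{\ul k}$ to $E$-linear combinations of the monomials $\ul y^{\ul k_{\ul\eta}}$ of the same total degree, and the Gauss norms $||\cdot||_{G_n}$ and $||\cdot||_{G_n,\Sigma}$ are equivalent with constants uniform in $n$. This gives the isomorphism of Banach algebras on each $G_n$, and since a linear change of coordinates of $\frg_0$ preserves cosets and commutes with translating power-series expansions, it is compatible across the partitions $\calP$ and equivariant.

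Passing to the colimit: applying $\prod_{P\in\calP}(-)$ and then $\dlim_\calP$ to the compatible family of Banach-algebra isomorphisms yields the isomorphism of Hausdorff LB-spaces $\calC^{\Sigma\-\lan}(G,E)\isom\calC^{\lan}(\Res_{L/\bbQ_p}G,E)$; the topology on each side is independent of $\{G_n\}_{n\ge0}$, as already noted in the text, so the statement is well-posed. The main obstacle — really the only nontrivial content — is bookkeeping the norm equivalence: one must check that the fixed transition matrix over $L\ox_{\bbQ_p}E$ genuinely has bounded inverse so that the estimate $a_{\ul k}p^{n\ul k}\to 0$ translates to $c_{\ul k_{\ul\eta}}p^{n\ul k_{\ul\eta}}\to 0$ with the right scaling in $n$, i.e. that the reindexing of multi-degrees $\ul k\leftrightarrow\ul k_{\ul\eta}$ respects the weighting by $p^n$; this is where separability of $L/\bbQ_p$ (non-degeneracy of the trace form, equivalently invertibility of $(\eta_i(\alpha_j))$) is used, exactly as sketched in the commented-out argument. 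Everything else is formal.
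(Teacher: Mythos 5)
Your overall strategy — compare the two coordinate systems on $\frg_0$, observe the transition matrix is a fixed invertible matrix coming from $(\eta(\alpha_j))$, and propagate through the colimit — is exactly the route the paper takes, and the equivariance observation at the end is correct. However, the middle of your argument contains a genuine error. You assert that for each fixed $n$ there is a topological isomorphism $\calC^{\rig}(\Res_{L/\bbQ_p}G_n,E)\isom \calC^{\Sigma\-\an}(G_n,E)$ and that the Gauss norms $||\cdot||_{G_n}$ and $||\cdot||_{G_n,\Sigma}$ are equivalent with constants uniform in $n$. This is false in general, and the paper's Remark immediately after the proposition says so explicitly: $\calC^{\Sigma\-\an}(G_n,E)$ need not be all of $\calC^{\rig}(\Res_{L/\bbQ_p}G_n,E)$, only $\calC^{\rig}(\Res_{L/\bbQ_p}G_{n_1},E)\subset\calC^{\Sigma\-\an}(G_n,E)\subset \calC^{\rig}(\Res_{L/\bbQ_p}G_{n_2},E)$ for some $n_1 < n_2$.

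The source of the error is the slip from ``$T$ and $T^{-1}$ have bounded entries'' to ``the Gauss norms are equivalent''. A fixed linear substitution $\ul x\mapsto T\ul x$ is degree-preserving, so on a monomial of total degree $m$ it scales coefficients by at most $||T||^m$. Consequently the Gauss norm with convergence weight $p^{n\ul k}$ is preserved if and only if $T\in\GL_{dN}(\calO_C)$, i.e. if and only if \emph{both} $T$ and $T^{-1}$ have all entries of absolute value $\le 1$. Here $T=(\eta(\alpha_j))$ has integral entries, but $T^{-1}$ involves the inverse different of $L/\bbQ_p$ and is not integral whenever $L/\bbQ_p$ is ramified; in that case $||T^{-1}||=p^m$ for some $m>0$, and the substitution only gives a continuous inclusion after shifting the radius by $m$. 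Your own closing sentence (``with the right scaling in $n$'') gestures at this but contradicts the earlier uniformity claim. The fix is to drop the levelwise isomorphism and instead argue cofinality: the fixed bounds $||T||$, $||T^{-1}||$ give continuous inclusions $\calC^{\Sigma\-\an}(G_n,E)\hookrightarrow \calC^{\rig}(\Res_{L/\bbQ_p}G_{n},E)$ and $\calC^{\rig}(\Res_{L/\bbQ_p}G_{n},E)\hookrightarrow\calC^{\Sigma\-\an}(G_{n+m},E)$ compatible with cosets, so the two inductive systems interleave and the colimits agree as LB-spaces.
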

This shows that the set of functions $\calC^{\Sigma\-\lan}(G,E)$ we defined by hand, is the set of all $E$-valued locally analytic functions on $\Res_{L/\bbQ_p}G$.

Let $J\subset \Sigma$ be a subset of embeddings of $L$ into $E$. Let $\calC^{J\-\lan}(\frg,E)=\dlim_{n}\calC^{J\-\an}(G_n,E)$ denote the algebra of the germ of locally $J$-analytic functions on $\frg$ at $1\in G$, where $\{G_n\}_{n\ge 0}$ is a sequence of standard open subgroups of $G$. 

\begin{proposition}\label{prop:Jgerm}
We have the following tensor product decomposition: 
\begin{align*}
    \calC^{J\-\lan}(\frg,E)\isom \dlim_n \hat\ox_{E,\eta\in J}\calC^{\eta\-\an}(G_n,E).
\end{align*}
Here the completed tensor product refers to the projective tensor product defined in \cite[$\S$17 B]{Sch02}.
\begin{proof}
By definition, $\calC^{J\-\lan}(\frg,E)\isom \dlim_n\calC^{J\-\an}(G_n,E)$, and for each $n$, we have an explicit description
\begin{align*}
\calC^{J\-\an}(G_n,E)=\{f=\sum_{\ul k_{\ul\eta}\in(\bbN^N)^J}c_{\ul k_{\ul \eta}}\ul y^{\ul k_{\ul \eta}}:c_{\ul k_{\ul \eta}}\in E,c_{\ul k_{\ul \eta}}p^{n\ul k_{\ul \eta}}\to 0\}.
\end{align*}
This shows 
\begin{align*}
    \calC^{J\-\an}(G_n,E)\isom \hat\ox_{E,\eta\in J}\calC^{\eta\-\an}(G_n,E)
\end{align*}
and the result follows by taking colimits on $n$.
\end{proof}
\end{proposition}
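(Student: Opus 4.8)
The plan is to prove the assertion level by level and then take the colimit, since the completed tensor product already sits inside $\dlim_n$ in the statement and $\calC^{J\-\lan}(\frg,E)$ is by definition $\dlim_n\calC^{J\-\an}(G_n,E)$. So I would fix $n\ge 0$ and produce an isomorphism of $E$-Banach algebras
\[
    \calC^{J\-\an}(G_n,E)\isom \hat\ox_{E,\eta\in J}\calC^{\eta\-\an}(G_n,E),
\]
natural in $n$ with respect to the obvious transition maps: on the left the inclusions of power-series algebras coming from restriction along $G_{n+1}\subset G_n$, and on the right the maps induced on projective tensor products by the analogous inclusions $\calC^{\eta\-\an}(G_n,E)\hookrightarrow\calC^{\eta\-\an}(G_{n+1},E)$. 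Passing to $\dlim_n$ then gives the proposition, since a filtered colimit sends a compatible family of isomorphisms to an isomorphism; note in particular that this route never requires commuting $\dlim$ with the completed tensor product.

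For the level-$n$ isomorphism the first step is to read off, from the explicit coordinate description recalled just before the statement, that each $\calC^{\eta\-\an}(G_n,E)$ is an $E$-Banach space with orthogonal Schauder basis the monomials $\{\prod_{i=1}^N\eta(y_i)^{k_i}\}_{\ul k\in\bbN^N}$, the basis vector indexed by $\ul k$ having norm $|p^{n\ul k}|$; equivalently it is a weighted $c_0$-space on the index set $\bbN^N$. The next step is to invoke the structure of the projective tensor product of such spaces: over the discretely valued (hence spherically complete) field $E$, the completed projective tensor product of finitely many weighted $c_0$-spaces is again a weighted $c_0$-space, with orthogonal Schauder basis the pure tensors of the given basis vectors and with the norm of each pure tensor equal to the product of the norms of its factors — this is the content of \cite[$\S$17 B]{Sch02}. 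Applying this identifies $\hat\ox_{E,\eta\in J}\calC^{\eta\-\an}(G_n,E)$ with the space of series $\sum_{\ul k_{\ul\eta}\in(\bbN^N)^J}c_{\ul k_{\ul\eta}}\bigotimes_{\eta\in J}\prod_i\eta(y_i)^{k_{i,\eta}}$ with $c_{\ul k_{\ul\eta}}p^{n\ul k_{\ul\eta}}\to 0$, carrying the norm $\sup_{\ul k_{\ul\eta}}|c_{\ul k_{\ul\eta}}p^{n\ul k_{\ul\eta}}|$.

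Comparing with the definition of $\calC^{J\-\an}(G_n,E)$, I would then send the pure tensor $\bigotimes_{\eta\in J}\prod_i\eta(y_i)^{k_{i,\eta}}$ to the honest monomial $\ul y^{\ul k_{\ul\eta}}$; since the index sets $(\bbN^N)^J$, the convergence conditions, and the norms $\|\cdot\|_{G_n,J}$ all agree term by term, this is an isometric $E$-linear bijection, and it is automatically multiplicative because both bases are stable under multiplication with multi-indices adding up, so it is an isomorphism of $E$-Banach algebras. These isomorphisms are compatible with the transition maps in $n$ by naturality of the projective tensor product, and taking the colimit over $n$ finishes the argument.

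The only genuinely non-formal ingredient is the tensor-product structure theorem for weighted $c_0$-spaces — namely that the projective tensor norm of a pure tensor of basis vectors is exactly the product of their norms (no cancellation occurs) and that the completion is the naive weighted $c_0$-space on the product index set. This is where discrete valuedness of $E$ is used, and it is precisely what \cite[$\S$17 B]{Sch02} provides. Everything else, namely identifying the orthogonal bases, matching monomials, and the colimit step, is routine bookkeeping with multi-indices, so I do not expect a serious obstacle.
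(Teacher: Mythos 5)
Your proof takes essentially the same route as the paper's: identify $\calC^{J\-\an}(G_n,E)$ with $\hat\ox_{E,\eta\in J}\calC^{\eta\-\an}(G_n,E)$ at each level $n$ via the explicit monomial/weighted-$c_0$ description and the structure of projective tensor products of such spaces, then pass to the colimit. The paper states the level-$n$ isomorphism tersely and leaves the $c_0$-bookkeeping implicit; you fill it in correctly, including the naturality-in-$n$ point needed for the colimit step. One small inaccuracy: you attribute the tensor-product structure theorem for weighted $c_0$-spaces to discrete valuedness (hence spherical completeness) of $E$, but the relevant statement in \cite[\S17\,B]{Sch02} — that the projective completed tensor product of Banach spaces with orthonormal bases again has orthonormal basis the pure tensors, with multiplicative norm — holds over an arbitrary complete non-archimedean valued field, and indeed the paper occasionally takes $E$ to be merely a $p$-adically complete extension of $\bbQ_p$ in this section. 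So the hypothesis you flag is not actually used; the argument goes through regardless.
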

By \cite[Proposition 1.1.32]{Eme17}, the completed tensor product commutes with colimits with injective and compact transition maps. Hence we also write 
\begin{align*}
    \calC^{J\-\lan}(\frg,E)\isom \hat\ox_{E,\eta\in J}\calC^{\eta\-\lan}(\frg,E).
\end{align*}

\subsection{Locally $J$-analytic vectors}
Let $G$ be a locally $L$-analytic subgroup with Lie algebra $\frg$. Let $G_0\subset G$ be a compact open subgroup of $G$ as before, and let $\{G_n\}_{n\ge 0}$ be a cofinal sequence of open subgroups of $G_0$. 

Let $V$ be a vector space over $E$ such that $\frg$ acts $E$-linearly on $V$. Then we have a natural morphism of $\bbQ_p$-Lie algebras $\frg\to\End_E(V)$, which $E$-linearly extends to a map $E\ox_{\bbQ_p}\frg\to \End_E(V)$. Now, as $E$ contains all Galois conjugations of $L$, we have a natural decomposition 
\[
    E\ox_{\bbQ_p}\frg=:\frg_\Sigma=\prod_{\eta\in\Sigma}\frg_\eta
\]
over $E\ox_{\bbQ_p}L\aisom \prod_{\eta\in \Sigma} E_{\eta},1\ox a\mapsto (\eta(a))_\eta$. Write $1=\sum_\eta e_\eta$ with $e_\eta\in E\ox_{\bbQ_p}L$ and $e_\eta e_{\eta'}=\delta_{\eta\eta'}e_\eta$ such that $E_\eta=e_\eta(E\ox_{\bbQ_p}L)$, then we see that $\frg_\eta=e_\eta(E\ox_{\bbQ_p}\frg)$, and this 
is a decomposition of Lie algebras over $E\ox_{\bbQ_p}L$. In other words, we have $[\frg_\eta,\frg_{\eta'}]=0$ if $\eta\neq\eta'\in\Sigma$. Indeed, for any $X,Y\in\frg_\Sigma$, we have $[e_\eta X,e_\iota Y]=e_\eta e_\iota[X,Y]=0$ as the Lie bracket is $E\ox_{\bbQ_p}L$-linear. For any subset $J\subset \Sigma$, set $\frg_J:=\prod_{\eta\in J}\frg_\eta$, which is a Lie algebra over $E\ox_{\bbQ_p}L$, where $E\ox_{\bbQ_p}L$ acts on each $\frg_\eta$ via the projection defined by $e_\eta$.

\begin{proposition}\label{prop:123}
For any $J_1,J_2\subset \Sigma$, we have
\[
    \calC^{J_1\-\an}(G_n,E)^{\frg_{J_2}}=\calC^{J_1\bs J_2\-\an}(G_n,E),
\]
where the action of $\frg_{J_2}$ on $\calC^{J_1\-\an}(G_n,E)$ is induced from the action of $\frg$ on $\calC^{\Sigma\-\an}(G,E)$. In particular, we see that for any subset $J\subset\Sigma$, 
\[
    \calC^{\Sigma\-\an}(G_n,E)^{\frg_{\Sigma\bs J}}=\calC^{J\-\an}(G_n,E).
\]
\begin{proof}
From our description of $J$-analytic functions on $G_n$, we see that the action of $\frg_\Sigma$ on $\calC^{\eta\-\an}(G_n,E)$ factors through $\frg_\eta$. This shows that if we take $J_1$-analytic functions on $G_n$ killed by $\frg_{J_2}$, only the terms along $J_1\bs J_2$ survive. More precisely, by Proposition \ref{prop:Jgerm}, we have $\calC^{J_1\-\an}(G_n,E)\isom \hat\ox_{E,\eta\in J_1}\calC^{\eta\-\an}(G_n,E)$. As $\calC^{\eta\-\an}(G_n,E)^{\frg_{J_2}}=0$ if $\eta\in J_2$ and $\frg_{J_2}$ acts trivially on $\calC^{\eta\-\an}(G_n,E)$ if $\eta\not\in J_2$, we see 
\begin{align*}
    \calC^{J_1\-\an}(G_n,E)^{\frg_{J_2}}\isom (\hat\ox_{E,\eta\in J_1}\calC^{\eta\-\an}(G_n,E))^{\frg_{J_2}}\isom \hat\ox_{E,\eta\in J_1\bs J_2}\calC^{\eta\-\an}(G_n,E)\isom \calC^{J_1\bs J_2\-\an}(G_n,E).
\end{align*}
\end{proof}
\end{proposition}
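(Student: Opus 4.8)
The plan is to reduce the statement to the case of a single embedding by means of the tensor product decomposition $\calC^{J_1\-\an}(G_n,E)\isom\hat\ox_{E,\eta\in J_1}\calC^{\eta\-\an}(G_n,E)$ from Proposition~\ref{prop:Jgerm}. Thus the real content is two facts about the one-variable factor $\calC^{\eta\-\an}(G_n,E)$: (a) the $\frg_\Sigma$-action on it factors through the $E$-Lie algebra $\frg_\eta$, with $\frg_\mu$ acting as zero for $\mu\neq\eta$; and (b) $\calC^{\eta\-\an}(G_n,E)^{\frg_\eta}=E$, the constants.

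For (a) I would argue as follows. The factor $\calC^{\eta\-\an}(G_n,E)$ is the ring of functions on the rigid analytic $E$-group $\bbG_n\times_{L,\eta}E$ obtained from the $L$-rigid group $\bbG_n$ by base change along $\eta$, so that $L$ acts on the coefficients through $\eta$. The Lie algebra $\frg=\Lie\bbG_n$ acts $L$-linearly on $\calO(\bbG_n)$ by invariant derivations, so after base change $\frg_\eta=\frg\ox_{L,\eta}E$ acts $E$-linearly on $\calC^{\eta\-\an}(G_n,E)$; hence, when the $\bbQ_p$-linear $\frg$-action is extended $E$-linearly to $E\ox_{\bbQ_p}\frg=\frg_\Sigma=\prod_\mu\frg_\mu$, the subalgebra $E\ox_{\bbQ_p}L=\prod_\mu E_\mu$ acts on $\calC^{\eta\-\an}(G_n,E)$ through its projection onto the $E_\eta$-component, so the idempotent $e_\mu$---and therefore $\frg_\mu=e_\mu\frg_\Sigma$---kills $\calC^{\eta\-\an}(G_n,E)$ whenever $\mu\neq\eta$. (Concretely: in the coordinates $\eta(y_1),\dots,\eta(y_N)$ the basis vector $X_i\in\frg_0$ acts as $\partial/\partial\eta(y_i)$ plus higher-order terms, whose $\frg_\mu$-component for $\mu\neq\eta$ vanishes identically.) For (b), $\bbG_n\times_{L,\eta}E$ is connected, so its $\frg_\eta$-invariant functions are the constants; equivalently, a power series annihilated by all $\partial/\partial\eta(y_i)+(\text{h.o.t.})$ is constant, by examining its lowest-degree part and inducting.

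Granting (a) and (b), I would write $\frg_{J_2}=\prod_{\mu\in J_2}\frg_\mu$, so that taking $\frg_{J_2}$-invariants means intersecting the $\frg_\mu$-invariants over $\mu\in J_2$. For $\mu\in J_1\cap J_2$, the operator $\frg_\mu$ acts only on the $\mu$-th tensor factor of $\hat\ox_{E,\eta\in J_1}\calC^{\eta\-\an}(G_n,E)$ and annihilates the rest, so passing to its kernel collapses that factor to $E$ by (b) and deletes it; for $\mu\in J_2\bs J_1$ nothing happens, since $\frg_\mu$ already acts as zero by (a). Hence exactly the factors indexed by $J_1\bs J_2$ survive, giving $\calC^{J_1\-\an}(G_n,E)^{\frg_{J_2}}\isom\hat\ox_{E,\eta\in J_1\bs J_2}\calC^{\eta\-\an}(G_n,E)=\calC^{J_1\bs J_2\-\an}(G_n,E)$; the displayed special case is $J_1=\Sigma$, $J_2=\Sigma\bs J$.

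The one step that needs genuine care---the main obstacle, such as it is---is the interchange of kernels with the completed tensor product: for $\frg_\mu$ acting only on the first factor one must check $\bigl(\calC^{\mu\-\an}(G_n,E)\hat\ox_E M\bigr)^{\frg_\mu}=\calC^{\mu\-\an}(G_n,E)^{\frg_\mu}\hat\ox_E M$, and completed projective tensor products of Banach spaces are not left exact in general. Rather than argue abstractly, I would use the explicit orthonormalizable structure underlying Proposition~\ref{prop:Jgerm}: $\calC^{J_1\-\an}(G_n,E)$ is the sup-norm Banach space of convergent series $\sum_{\ul k_{\ul\eta}\in(\bbN^N)^{J_1}}c_{\ul k_{\ul\eta}}\ul y^{\ul k_{\ul\eta}}$ with the monomials as an orthonormal basis, so $\calC^{\mu\-\an}(G_n,E)\hat\ox_E M$ identifies with the space of double series in the two monomial bases under the evident convergence condition, in which every element has a unique expansion and $\frg_\mu$ operates through the first basis alone. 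A series is then killed by $\frg_\mu$ precisely when each ``$M$-slice'' lies in $\calC^{\mu\-\an}(G_n,E)^{\frg_\mu}=E$, which forces the first monomial to be the constant one; this identifies $\ker(\frg_\mu)$ with $\calC^{J_1\bs\{\mu\}\-\an}(G_n,E)$, and iterating over $\mu\in J_2$ finishes the argument with no appeal to exactness.
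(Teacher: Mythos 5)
Your proof follows the same route as the paper: reduce to single embeddings via the tensor decomposition $\calC^{J_1\-\an}(G_n,E)\isom\hat\ox_{E,\eta\in J_1}\calC^{\eta\-\an}(G_n,E)$ of Proposition~\ref{prop:Jgerm}, observe that $\frg_\Sigma$ acts on the $\eta$-factor through $\frg_\eta$ alone, and compute invariants factor by factor. Two points are worth flagging. First, you correctly state that $\calC^{\eta\-\an}(G_n,E)^{\frg_\eta}=E$ (the constants): the paper's proof asserts ``$\calC^{\eta\-\an}(G_n,E)^{\frg_{J_2}}=0$ if $\eta\in J_2$,'' which if taken literally would annihilate the whole tensor product; what is actually meant (and what makes $E\hat\ox_E M\isom M$ delete that factor) is that the invariants are the scalars, as you say, so your version is the correct one. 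Second, you explicitly justify the interchange $(\calC^{\mu\-\an}(G_n,E)\hat\ox_E M)^{\frg_\mu}=\calC^{\mu\-\an}(G_n,E)^{\frg_\mu}\hat\ox_E M$ using the orthonormalizable-Banach structure, which the paper elides; since completed projective tensor products are not left exact in general, this is a genuine (if routine) point, and your argument via unique double-series expansion with $\frg_\mu$ acting only through the first factor handles it cleanly. Both are improvements on the written proof, but the underlying strategy is identical.
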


Now we define $J$-analytic vectors in continuous representations of $G$.
\begin{definition}
Let $V$ be a Banach space over $E$ equipped with a continuous representation of $G$. For any $J\subset \Sigma$, we say a vector $v\in V$ is \emph{$(G_n,J)$-analytic}, if the orbit map $g\mapsto g.v$ is a $V$-valued $J$-analytic function on $G_n$. Let $V^{(G_n,J)\-\an}\subset V$ denote the subspace of $V$ consisting of $(G_n,J)$-analytic vectors. Then we have the following basic properties: 
\begin{itemize}
    \item $V^{(G_n,J)\-\an}\isom (\calC^{J\-\an}(G_n,E)\hat\ox_E V)^{G_n}$, where $G_n$ acts as $g.f:=gf(g^{-1}.)$. This follows from a similar argument of \cite[Prop 3.2.9]{Eme17}. Note that we equip $V^{(G_n,J)\-\an}$ with the norm induced from $\calC^{J\-\an}(G_n,V)$, which makes it a Banach space over $E$. We denote this norm by $||\cdot||_{G_n,J}$.
    \item Let $V^{G_n\-\an}$ denote the usual $G_n$-analytic vectors in $V$ for the locally $\bbQ_p$-analytic group $\Res_{L/\bbQ_p}G_n$, then $V^{G_n\-\an}=V^{(G_n,\Sigma)\-\an}$. 
    \item $V^{(G_n,J)\-\an}=(V^{(G_n,\Sigma)\-\an})^{\frg_{\Sigma\bs J}}$. Indeed, the action of $\frg$ on $V^{(G_n,\Sigma)\-\an}$ is induced from the right translation on $\calC^{\Sigma\-\an}(G_n,E)$ inside $(\calC^{\Sigma\-\an}(G_n,E)\hat\ox_E V)^{G_n}$. Hence by Proposition \ref{prop:123},
    \[
        (V^{(G_n,\Sigma)\-\an})^{\frg_{\Sigma\bs J}}\isom (\calC^{J\-\an}(G_n,E)\hat\ox_E V)^{G_n}\isom V^{(G_n,J)\-\an}.
    \] 
    \item The inclusion map $V^{(G_n,J)\-\an}\to V$ is $G_n$-equivariant. To see this, it suffices to show that if $\frg_J.v=0$, then for any $g\in G_n$, $\frg_J.gv=0$. The latter is equivalent to $g^{-1}\frg_Jg.v=0$. However, as $\frg_J$ is a Lie ideal of $\frg$, we have $g^{-1}\frg_Jg\subset\frg_J$, and thus $g^{-1}\frg_Jg.v=0$.
\end{itemize}
\end{definition}

\begin{definition}
Let $V$ be a Banach space over $E$ equipped with a continuous representation of $G$. For any $J\subset \Sigma$, we say that a vector $v\in V$ is \emph{locally $J$-analytic} if the orbit map $g\mapsto g.v$ is a $V$-valued $J$-analytic function on $G_n$ for some $n\in \N$. Let $V^{J\-\lan}\subset V$ denote the subspace of $V$ consisting of locally $J$-analytic vectors. Then: 
\begin{itemize}
    \item $V^{J\-\lan}=\dlim_n V^{(G_n,J)\-\an}$. This is an LB-space over $E$.
    \item $V^{\Sigma\-\lan}$ consists of locally analytic vectors of $V$ under the action of $\Res_{L/\bbQ_p}G$, i.e., $V^{\Sigma\-\lan}=V^{\lan}$ with the usual definition.
    \item $V^{J\-\lan}=(V^{\Sigma\-\lan})^{\frg_{\Sigma\bs J}\-\lan}$.
    \item The inclusion map $V^{J\-\lan}\to V$ is $G$-equivariant.
\end{itemize}
\end{definition}

\begin{remark}
The above definition can be  generalized to Hausdorff LB-spaces as in \cite[2.1.6]{PanII}, and we will have similar results.
\end{remark}

We need some lemmas on locally $J$-analytic vectors.
\begin{lemma}
For any integer $n\in\bbZ_{\ge 0}$ and any subset $J\subset \Sigma$, the action (either left translation or right translation) of $G_{n+1}$ on $\calC^{J\-\an}(G_n,E)^{\circ}/p$ is trivial, where $\calC^{J\-\an}(G_n,E)^{\circ}$ is the unit ball in $\calC^{J\-\an}(G_n,E)$.
\begin{proof}
We may assume that $n=0$. Let $f\in \calC^{J\-\an}(G_0,E)^{\circ}$. For any $x\in G_0$, $y\in G_1$, we can write $xy=x+py'$ for some $y'\in G_0$. Hence $f(xy)=f(x+py')\equiv f(x)\mod p$.
\end{proof}
\end{lemma}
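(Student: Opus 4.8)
The plan is to make rigorous the two-line computation: reduce to $n=0$, pass to the canonical chart of $G_0$, and read off the translation action from the Baker--Campbell--Hausdorff group law, which in this chart is a power series with $\calO_L$-integral coefficients that is congruent to the identity modulo $p$ along $G_1$. First I would reduce to $n=0$: rescaling the canonical chart by $p^n$ (which transports the $\calO_L$-Lie lattice $\frg_0$ to $\frg_n=p^n\frg_0$) identifies $\calC^{J\-\an}(G_n,E)$, together with its norm $||\cdot||_{G_n,J}$, isometrically with $\calC^{J\-\an}(G_0,E)$, and carries the $G_{n+1}$-action to the $G_1$-action. So it suffices to show $G_1$ acts trivially on $\calC^{J\-\an}(G_0,E)^{\circ}/p$.

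Next I would analyse the group law in the chart $\log_L:G_0\aisom\frg_0\isom\calO_L^N$, writing $f=\sum_{\ul k_{\ul\eta}}c_{\ul k_{\ul\eta}}\ul y^{\ul k_{\ul\eta}}$ for an element of $\calC^{J\-\an}(G_0,E)$, so that $f\in\calC^{J\-\an}(G_0,E)^{\circ}$ iff all $|c_{\ul k_{\ul\eta}}|\le 1$. The key input is that, because $\frg_0$ was chosen with $[\frg_0,\frg_0]\subset\alpha\frg_0$ for a $p$-adically small $\alpha$, the multiplication $G_0\times_L G_0\to G_0$ is given in the $y$-coordinates by the BCH series $\Phi(x,y)=x+y+\tfrac12[x,y]+\cdots$, every term past $x+y$ being an iterated bracket involving both $x$ and $y$ with $p$-integral rational coefficient, hence landing in $\calO_L^N$ when $x,y\in\calO_L^N$; and inversion is $x\mapsto-x$. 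Consequently, for $g\in G_1$ with $w:=\log_L(g)\in p\calO_L^N$, we get $\Phi(x,w)=x+p\,\delta_w(x)$ where $\delta_w=(\delta_{w,1},\dots,\delta_{w,N})$ is a tuple of power series with $\calO_L$-coefficients convergent on $G_0$ (the factor $p$ coming from the fact that every nonlinear term of $\Phi$ carries a factor of $w\in p\calO_L^N$), and likewise for $g^{-1}$ since $\log_L(g^{-1})=-w\in p\calO_L^N$.

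Then I would conclude as follows. For $g\in G_1$, the translation action on $\calC^{J\-\an}(G_0,E)$ is the substitution $\eta(y_i)\mapsto \eta(y_i)+p\,\eta(\delta_{w,i})$ for each $\eta\in J$; here $\eta(\delta_{w,i})$ lies in $\calC^{\eta\-\an}(G_0,E)^{\circ}\subset\calC^{J\-\an}(G_0,E)^{\circ}$ because $\delta_{w,i}$ has $\calO_L$-coefficients and converges on $G_0$. Expanding a monomial,
\[
    \prod_{\eta\in J}\prod_{i=1}^{N}\bigl(\eta(y_i)+p\,\eta(\delta_{w,i})\bigr)^{k_{i,\eta}}=\ul y^{\ul k_{\ul\eta}}+p\cdot(\text{element of }\calC^{J\-\an}(G_0,E)^{\circ}),
\]
since all binomial coefficients and all cross-terms have norm $\le 1$; summing over $\ul k_{\ul\eta}$ with $|c_{\ul k_{\ul\eta}}|\le 1$ (the sum converging as $c_{\ul k_{\ul\eta}}\to 0$) gives $g.f\equiv f\pmod{p\,\calC^{J\-\an}(G_0,E)^{\circ}}$. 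The identical computation with $-w$ in place of $w$ handles the other (left vs.\ right) translation.

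The main obstacle to watch is exactly the gap between the naive pointwise statement and what is needed: the bare congruence $f(xg)\equiv f(x)\bmod p$ on $\calO_L$-points does \emph{not} bound the Gauss-type norm $||\cdot||_{G_0,J}$ of $g.f-f$, since the coordinates $\eta(y_i)$ range only over $\eta(\calO_L)$, not over the full unit disc of $E$ (e.g.\ $\prod_{a=0}^{p-1}(y-a)$ is everywhere $\equiv 0\bmod p$ on $\calO_L$ yet has unit Gauss norm). What rescues the argument is that $xg$ is obtained from $x$ by an \emph{analytic} change of variables which is the identity modulo $p\cdot\calC^{J\-\an}(G_0,E)^{\circ}$ with $\calO_L$-integral correction $\delta_w$; establishing this --- i.e.\ the integrality of the BCH group law in the canonical chart, which is precisely where the hypothesis $[\frg_0,\frg_0]\subset\alpha\frg_0$ enters --- is the step that deserves to be spelled out rather than invoked.
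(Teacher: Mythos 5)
Your proposal follows essentially the same route as the paper --- reduce to $n=0$, read the translation off the Baker--Campbell--Hausdorff (BCH) group law in the canonical chart, and exploit $\calO_L$-integrality of the group law --- but it spells out the step the paper's two-line proof leaves implicit, and you are right to insist on it. Read literally, "$f(xy)=f(x+py')\equiv f(x)\bmod p$" only establishes a pointwise congruence on $\calO_L$-points, which does \emph{not} by itself bound the Gauss norm $||g.f-f||_{G_0,J}$; your cautionary example illustrates precisely this (it is correct as stated for $L=\bbQ_p$; for ramified $L$ with residue field larger than $\bbF_p$ one would instead take a product over residue-field representatives raised to the ramification index). The substance of your argument --- that the translation is an analytic substitution $\ul y\mapsto\ul y+p\,\ul\delta_w(\ul y)$ with $\ul\delta_w$ an $\calO_L^N$-valued power series (this is where the hypothesis $[\frg_0,\frg_0]\subset\alpha\frg_0$ with $\alpha$ small enters, to absorb the BCH denominators), that this property persists after applying each embedding $\eta\in J$ to coefficients, and that expanding monomial-by-monomial with integral binomial coefficients and submultiplicativity of the Gauss norm gives $||g.f-f||_{G_0,J}\le|p|\,||f||_{G_0,J}$ --- is correct and genuinely fills the gap.

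One small correction. Your reduction to $n=0$ asserts that the rescaling $\ul y=p^n\ul z$ "carries the $G_{n+1}$-action to the $G_1$-action." It does give an isometry of Banach spaces $\calC^{J\-\an}(G_n,E)\aisom\calC^{J\-\an}(G_0,E)$, but it does \emph{not} intertwine the translation actions: the BCH series does not scale linearly (the degree-$m$ bracket term acquires a factor $p^{n(m-1)}$ after rescaling), so the pulled-back $G_{n+1}$-action on $\calC^{J\-\an}(G_0,E)$ is translation by the group law of the rescaled lattice $\frg_n$, which differs from the $G_1$-action for $\frg_0$. This turns out to be harmless, since the rescaled translation is still of the form $\ul z\mapsto\ul z+p\,\ul\delta(\ul z)$ with $\ul\delta$ integral-valued (indeed the higher BCH terms only become smaller, gaining extra powers of $p^n$), so the $n=0$ computation applies verbatim; but the correct justification is "the same estimate applies," not "the statements are literally intertwined." Alternatively, skip the reduction and run the estimate directly at level $n$: for $g\in G_{n+1}$ the BCH correction $\ul\delta$ takes values in $p^{n+1}\calO_L^N$, while each variable $\eta(y_i)$ has $G_n$-Gauss norm $|p|^n$, so substituting $\eta(y_i)\mapsto\eta(y_i)+\eta(\delta_i)$ changes each monomial $\ul y^{\ul k_{\ul\eta}}$ by an element of $G_n$-norm at most $|p|\cdot|p|^{n|\ul k_{\ul\eta}|}$, which is exactly the bound required on $\calC^{J\-\an}(G_n,E)^\circ$.
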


\begin{lemma}\label{lem:GNnormusualnorm}
Let $n\in\bbZ_{\ge 0}$ and $J\subset \Sigma$ be a subset. Let $V$ be a Banach representation of $G_n$ with norm given by $||\cdot||_V$. If $v\in V^{(G_n,J)\-\an}$, then 
\begin{enumerate}[(i)]
    \item $v\in V^{(G_{n+1},J)\-\an}$.
    \item $||v||_{G_{n+1},J}\le ||v||_{G_n,J}$.
    \item $||v||_{G_m,J}=||v||_V$ for $m$ sufficiently large.
\end{enumerate}
\begin{proof}
The first two statements are easy to verify. Now we prove the last one. As $v$ is $(G_n,J)$-analytic, the orbit function $o_v(g)$ can be written as a $V$-valued rigid analytic function on $G_n$. In particular, the norm of the coefficients of $o_v(g)$ are bounded above. Since $o_v(e)=v$, the constant term of $o_v(g)$ is $v$. Therefore, if we take a sufficiently large $m$, which amounts to shrinking the disk sufficiently small, then $||v||_{G_m,J}=\sup_{\ul k_{\ul \eta}\in(\bbN^{N})^J}||v_{\ul k_{\ul \eta}}p^{m\ul k_{\ul \eta}}||=||v_{\ul 0_{\ul \eta}}||=||v||$, as desired.
\end{proof}
\end{lemma}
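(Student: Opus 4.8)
The plan is to obtain (i) and (ii) essentially for free by restricting the orbit map to a smaller polydisc, and to deduce (iii) from the $p$-adic decay of the Taylor coefficients of the orbit map. The defining isomorphism $V^{(G_n,J)\-\an}\isom(\calC^{J\-\an}(G_n,E)\hat\ox_E V)^{G_n}$ together with the explicit description of $\calC^{J\-\an}(G_n,E)$ says that $v\in V^{(G_n,J)\-\an}$ exactly when, in the canonical chart identifying $G_n$ with $p^n\calO_L^N$, the orbit map $o_v:g\mapsto g.v$ is represented by a series $o_v(\ul y)=\sum_{\ul k_{\ul \eta}\in(\bbN^N)^J}v_{\ul k_{\ul \eta}}\,\ul y^{\ul k_{\ul \eta}}$ with $v_{\ul k_{\ul \eta}}\in V$ and $||v_{\ul k_{\ul \eta}}p^{n\ul k_{\ul \eta}}||_V\to 0$, and then $||v||_{G_n,J}=\sup_{\ul k_{\ul \eta}}||v_{\ul k_{\ul \eta}}p^{n\ul k_{\ul \eta}}||_V$. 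Since $G_{n+1}\subset G_n$ corresponds to a smaller polydisc, we have a fortiori $||v_{\ul k_{\ul \eta}}p^{(n+1)\ul k_{\ul \eta}}||_V\le ||v_{\ul k_{\ul \eta}}p^{n\ul k_{\ul \eta}}||_V\to 0$, so the same series represents $o_v$ over $G_{n+1}$; this gives (i). Moreover $|p^{(n+1)\ul k_{\ul \eta}}|\le|p^{n\ul k_{\ul \eta}}|$ for every $\ul k_{\ul \eta}$, so each summand of $||v||_{G_{n+1},J}$ is bounded by the corresponding summand of $||v||_{G_n,J}$; this gives (ii). Hence the first two assertions are indeed ``trivial''.

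For (iii): we may assume $v\neq 0$, so $||v||_V>0$. Evaluating $o_v$ at $g=e$, i.e., at $\ul y=0$, shows that the constant coefficient of the series is $v_{\ul 0_{\ul \eta}}=o_v(e)=v$; consequently the summand of $||v||_{G_m,J}$ indexed by $\ul k_{\ul \eta}=\ul 0$ equals $||v||_V$, and therefore $||v||_{G_m,J}\ge||v||_V$ for all $m\ge n$. For the reverse inequality, convergence on $G_n$ provides a constant $C$ with $||v_{\ul k_{\ul \eta}}p^{n\ul k_{\ul \eta}}||_V\le C$ for all $\ul k_{\ul \eta}$, and for $\ul k_{\ul \eta}\neq\ul 0$, letting $|\ul k_{\ul \eta}|\ge 1$ denote its total degree,
\[
    ||v_{\ul k_{\ul \eta}}p^{m\ul k_{\ul \eta}}||_V=|p|^{(m-n)|\ul k_{\ul \eta}|}\,||v_{\ul k_{\ul \eta}}p^{n\ul k_{\ul \eta}}||_V\le|p|^{\,m-n}\,C.
\]
As $|p|<1$, once $m$ is large enough that $|p|^{\,m-n}C\le||v||_V$, every summand of $||v||_{G_m,J}$ with $\ul k_{\ul \eta}\neq\ul 0$ is $\le||v||_V$ while the one with $\ul k_{\ul \eta}=\ul 0$ equals $||v||_V$; hence $||v||_{G_m,J}=||v||_V$, as wanted.

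I do not expect a genuine obstacle here. The only point that needs care is that ``$(G_n,J)$-analytic'' really does produce a convergent power-series representative of $o_v$ with coefficients decaying as stated — which is precisely the content of the isomorphism $V^{(G_n,J)\-\an}\isom(\calC^{J\-\an}(G_n,E)\hat\ox_E V)^{G_n}$ combined with the explicit form of $\calC^{J\-\an}(G_n,E)$ — after which the statement reduces to the elementary arithmetic above, carried out simultaneously in all the $J$-directions.
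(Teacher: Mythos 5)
Your proof is correct and follows essentially the same route as the paper's: (i) and (ii) are immediate from restricting the convergent series to a smaller polydisc, and (iii) is proved by combining the uniform bound on the coefficients (from convergence on $G_n$) with the observation that the constant coefficient equals $v$, so that for $m$ large the constant term dominates the Gauss norm. You have merely made the paper's terse phrase ``the norm of the coefficients\dots are bounded above\dots take a sufficiently large $m$'' fully explicit by introducing the constant $C$ and carrying out the comparison $|p|^{(m-n)|\ul k_{\ul\eta}|}C\le\|v\|_V$.
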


\begin{lemma}\label{lem:Dercont}
Let $V$ be a Banach representation of $G_0$. Let $D\in\Lie G$ and $n\ge 1$. Then there exists a constant $C_{D,J,n}$ such that $||D(x)||_{G_n,J}\le C_{D,J,n}||x||_{G_n,J}$ for any $x\in V^{(G_n,J)\-\an}$.
\begin{proof}
Note that if $v\in V^{(G_n,J)\-\an}$, then $||v||_{G_n,J}=||v||_{G_n,\Sigma}$. Hence this theorem follows from \cite[Lemme 2.6]{BC16}.
\end{proof}
\end{lemma}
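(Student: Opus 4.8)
The plan is to reduce the estimate to the analogous statement for the locally $\bbQ_p$-analytic group $\Res_{L/\bbQ_p}G_n$ — namely \cite[Lemme 2.6]{BC16} — by exploiting that on $J$-analytic vectors the norm $\|\cdot\|_{G_n,J}$ coincides with the full $\bbQ_p$-analytic norm $\|\cdot\|_{G_n,\Sigma}$. Concretely, the map $\calC^{J\-\an}(G_n,E)\hookrightarrow\calC^{\Sigma\-\an}(G_n,E)$ is an isometric embedding: it sends a power series in the variables $\{\eta(y_i)\}_{\eta\in J}$ to the same series viewed as a series in $\{\eta(y_i)\}_{\eta\in\Sigma}$ with vanishing coefficients off $J$, and both Banach norms are the supremum of the quantities $|c_{\ul k_{\ul\eta}}p^{n\ul k_{\ul\eta}}|$ over the monomials actually appearing (the absent variables contribute $p^{n\cdot 0}=1$); since the norm on $V^{(G_n,?)\-\an}\isom(\calC^{?\-\an}(G_n,E)\hat\ox_E V)^{G_n}$ is the induced one, it follows that $\|v\|_{G_n,J}=\|v\|_{G_n,\Sigma}$ for all $v\in V^{(G_n,J)\-\an}$.

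Next I would check that $D$ actually preserves $V^{(G_n,J)\-\an}$, so that $\|D(x)\|_{G_n,J}$ makes sense. Writing $V^{(G_n,J)\-\an}=(V^{(G_n,\Sigma)\-\an})^{\frg_{\Sigma\bs J}}$, it is enough to observe that $\frg_{\Sigma\bs J}=\prod_{\eta\in\Sigma\bs J}\frg_\eta$ is a Lie ideal of $\frg_\Sigma=E\ox_{\bbQ_p}\frg$ (because $[\frg_\eta,\frg_{\eta'}]=0$ for $\eta\neq\eta'$), hence is normalised by the image of $\frg$ in $\frg_\Sigma$; so if $\frg_{\Sigma\bs J}.x=0$ then for $Y\in\frg_{\Sigma\bs J}$ we get $Y.(D.x)=D.(Y.x)+[Y,D].x=[Y,D].x=0$, the last equality since $[Y,D]\in\frg_{\Sigma\bs J}$ and $\frg_{\Sigma\bs J}.x=0$. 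Since moreover $V^{(G_n,\Sigma)\-\an}=V^{G_n\-\an}$ is stable under $\Lie(\Res_{L/\bbQ_p}G)\supseteq\Lie G$ for $n\geq 1$, this gives $D.x\in V^{(G_n,J)\-\an}$.

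It then remains to apply \cite[Lemme 2.6]{BC16} to the Banach representation $V$ of $\Res_{L/\bbQ_p}G_0$: for $n\geq 1$ and $D\in\Lie(\Res_{L/\bbQ_p}G)$, which contains $\Lie G$ as a $\bbQ_p$-Lie subalgebra, the operator $D$ is bounded on $(V^{G_n\-\an},\|\cdot\|_{G_n})$ with operator norm at most some constant $C_{D,n}$. Combining this with the previous two points, for $x\in V^{(G_n,J)\-\an}$ we find $D(x)\in V^{(G_n,J)\-\an}$ and $\|D(x)\|_{G_n,J}=\|D(x)\|_{G_n,\Sigma}\leq C_{D,n}\|x\|_{G_n,\Sigma}=C_{D,n}\|x\|_{G_n,J}$, so one may take $C_{D,J,n}:=C_{D,n}$.

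The only real input is \cite[Lemme 2.6]{BC16}; the rest is bookkeeping, and the step I would single out is the norm identification, which is exactly what lets the locally $J$-analytic question collapse onto the locally $\bbQ_p$-analytic one. If one wanted to avoid citing BC16, the genuine work would be its quantitative part: in the canonical coordinates of $G_n$ one must bound the operator norm of a left-invariant vector field on a polydisc of radius $p^{-n}$, which reduces to the elementary bound $\|\partial_{x_i}f\|_{G_n}\leq p^{n}\|f\|_{G_n}$ for $n\geq 1$ on $\calC^{\rig}(\Res_{L/\bbQ_p}G_n,E)$ together with continuity of $V^{G_n\-\an}\hookrightarrow\calC^{\rig}(\Res_{L/\bbQ_p}G_n,V)$; a general $D$ is then a finite $E\ox_{\bbQ_p}L$-linear combination of the $\partial_{x_i}$ after the linear change of coordinates of Proposition \ref{prop:lafunc}, which only enlarges the constant. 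The mild mismatch of radii pointed out in the Remark (there exist $n_1<n_2$ with $\calC^{\rig}(\Res_{L/\bbQ_p}G_{n_1},E)\subseteq\calC^{\Sigma\-\an}(G_n,E)\subseteq\calC^{\rig}(\Res_{L/\bbQ_p}G_{n_2},E)$) is harmless since only existence of a constant is claimed.
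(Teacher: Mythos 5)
Your proof is correct and takes essentially the same route as the paper: identify $\|\cdot\|_{G_n,J}$ with $\|\cdot\|_{G_n,\Sigma}$ on $J$-analytic vectors and then invoke \cite[Lemme 2.6]{BC16} for the locally $\bbQ_p$-analytic group $\Res_{L/\bbQ_p}G_n$. The extra verifications you supply — that the embedding $\calC^{J\-\an}(G_n,E)\hookrightarrow\calC^{\Sigma\-\an}(G_n,E)$ is isometric, that $D$ preserves $V^{(G_n,J)\-\an}$ because $\frg_{\Sigma\setminus J}$ is an ideal, and that the norm discrepancy between the $\Sigma$-coordinates and the $\bbQ_p$-coordinates is harmless — are all correct bookkeeping which the paper's two-line proof leaves implicit.
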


Next, we define derived functors for taking locally $J$-analytic vectors.
\begin{definition}
Let $V$ be a Banach space over $E$ equipped with a continuous action of $G$. Define
\[
    R^iJ\-\lan(V):=\dlim_n H^i_{\cont}(G_n,V\hat\ox_E \calC^{J\-\an}(G_n,E)).
\]
Here we equip $V\hat\ox_E \calC^{J\-\an}(G_n,E)$ with the diagonal $G_n$-action with $G_n$ acts on  $\calC^{J\-\an}(G_n,E)$ via the left translation. We also put 
\begin{align*}
    RJ\-\lan(V):=\dlim_n R\Gamma_{\cont}(G_n,V\hat\ox_E \calC^{J\-\an}(G_n,E)).
\end{align*}
Here, we use the solid formalism described in \cite[\S 2.2]{boxer2025modularitytheoremsabeliansurfaces}.
\end{definition}
By definition, we see that $R^0J\-\lan(V)=V^{J\-\lan}$.

\begin{remark}
In some special cases, we have the following results:
\begin{itemize}
    \item If $J=\Sigma$, we recover the definition given in \cite[Definition 2.2.1]{Pan22}.
    \item If $J=\varnothing$, then $\calC^{\varnothing\-\an}(G_n,E)\isom E$ are trivial representations of $G_n$, so that 
    \[
        R^i\varnothing\-\lan(V)=\dlim_n H^i_{\cont}(G_n,V)=H^i_{\st}(G,V)
    \]
    is the stable cohomology used in \cite[Definition 1.1.5]{Eme06}, as $\{G_n\}_{n\ge 0}$ form a cofinal system of open compact subgroups of $G$.
\end{itemize}
\end{remark}

One can define the notion of $J\-\lan$-acyclic and strongly $J\-\lan$-acyclic as in \cite[Def 2.2.1]{Pan22}, and we will have similar results as in \cite[Lem 2.2.2]{Pan22} by a similar proof. However, we note that admissible Banach representations are not $J\-\lan$-acyclic in general for $J\neq \Sigma$. But in some special cases, we still have the acyclicity results.
\begin{proposition}
Let $J\subset\Sigma$ be a subset. Then $\calC^{\cont}(G,E)$ is $J\-\lan$-acyclic.
\begin{proof}
Recall that if $V$ is a continuous Banach representation of $G$, then there is a $G$-equivariant isomorphism 
\begin{align*}
    \calC^{\cont}(G,E)\hat\ox_E{V}\isom \calC^{\cont}(G,E)\hat\ox_E{V},
\end{align*}
where $G$ acts on the LHS by the diagonal action, and $G$ acts on the RHS by acting only on the first factor (but not on $V$). Explicitly, via the identification $\calC^{\cont}(G,E)\hat\ox_E{V}\isom \calC^{\cont}(G,V)$, this isomorphism is given by $f\mapsto gf(g)$. Therefore, 
\begin{align*}
    H^i_{\cont}(G_n,\calC^{\cont}(G,E)\hat\ox_E \calC^{J\-\an}(G_n,E))\isom H^i_{\cont}(G_n,\calC^{\cont}(G,E))\hat\ox_E \calC^{J\-\an}(G_n,E).
\end{align*}
As $H^i_{\cont}(G_n,\calC^{\cont}(G,E))=0$ for $i\ge 1$, we deduce that $R^iJ\-\lan(\calC^{\cont}(G,E))=0$ for $i\ge 1$. Therefore, $\calC^{\cont}(G,E)$ is $J\-\lan$-acyclic. 
\end{proof}
\end{proposition}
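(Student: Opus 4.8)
The plan is to deduce the acyclicity from the classical fact that coinduced representations have vanishing higher continuous cohomology, exploiting the ``shearing'' isomorphism already alluded to in the statement. So the first step is to make that isomorphism precise: for any continuous $E$-Banach representation $W$ of $G$ there is a topological isomorphism $\calC^{\cont}(G,E)\hat\ox_E W\isom\calC^{\cont}(G,W)$ under which the diagonal $G$-action on the source corresponds to the action on the $G$-variable alone on the target, given explicitly by $f\mapsto\big(g\mapsto g\cdot f(g)\big)$, with inverse $f\mapsto\big(g\mapsto g^{-1}\cdot f(g)\big)$; continuity of both maps and the equivariance are a direct check. Taking $W=\calC^{J\-\an}(G_n,E)$ with its left-translation $G_n$-action, this yields a $G_n$-equivariant identification of $\calC^{\cont}(G,E)\hat\ox_E\calC^{J\-\an}(G_n,E)$ carrying the diagonal action with the same Banach space on which $G_n$ acts only through the first tensor factor.

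Next I would pull the second factor out of the cohomology. Since $\calC^{J\-\an}(G_n,E)$ is an orthonormalizable $E$-Banach space, the functor $(-)\,\hat\ox_E\,\calC^{J\-\an}(G_n,E)$ is exact and satisfies $\calC^{\cont}(X,M)\hat\ox_E\calC^{J\-\an}(G_n,E)\isom\calC^{\cont}\big(X,M\hat\ox_E\calC^{J\-\an}(G_n,E)\big)$ for $X$ compact profinite and $M$ a Banach space. Applying this to the homogeneous continuous cochain complex $\calC^{\cont}(G_n^{\bullet+1},\calC^{\cont}(G,E))$ computing $H^\bullet_{\cont}(G_n,\calC^{\cont}(G,E))$, and combining with the previous paragraph, one obtains
\[
    H^i_{\cont}\!\big(G_n,\calC^{\cont}(G,E)\hat\ox_E\calC^{J\-\an}(G_n,E)\big)\isom H^i_{\cont}\!\big(G_n,\calC^{\cont}(G,E)\big)\hat\ox_E\calC^{J\-\an}(G_n,E)
\]
for every $i$. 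It then remains to show $H^i_{\cont}(G_n,\calC^{\cont}(G,E))=0$ for $i\ge1$. As $G_n$ is open in the locally profinite group $G$, the quotient $G_n\bs G$ is discrete, and a choice of coset representatives gives a $G_n$-equivariant homeomorphism $G\isom G_n\times(G_n\bs G)$ with $G_n$ acting by left translation on the first factor; hence $\calC^{\cont}(G,E)$ is, as a $G_n$-representation, coinduced from the trivial subgroup. The homogeneous continuous cochain complex of a coinduced module carries the standard contracting homotopy ``evaluate at an extra variable,'' so its cohomology vanishes in positive degrees. Passing to the colimit over $n$ then gives $R^iJ\-\lan(\calC^{\cont}(G,E))=0$ for $i\ge1$, i.e.\ $\calC^{\cont}(G,E)$ is $J\-\lan$-acyclic.

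The homological skeleton above is essentially formal, so the part that needs genuine care — and which I expect to be the main obstacle — is the functional-analytic bookkeeping: checking that the shearing map is a homeomorphism and $G_n$-equivariant for precisely the stated actions, that $\hat\ox_E\calC^{J\-\an}(G_n,E)$ is exact and commutes with $\calC^{\cont}(X,-)$ on the profinite sets $X=G_n^{j+1}$, and that the contracting homotopy on the coinduced cochain complex is continuous. These facts are all available in the literature (compare the treatments in \cite{Pan22} and \cite{Eme06}), but verifying them in the present $L$-analytic, completed-tensor-product setting is where the real work lies.
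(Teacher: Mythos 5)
Your proposal is correct and follows essentially the same route as the paper: the shearing isomorphism untwists the diagonal $G_n$-action so that the $\calC^{J\-\an}(G_n,E)$ factor carries no action, the factor is pulled out of continuous cohomology, and the $G_n$-acyclicity of $\calC^{\cont}(G,E)$ finishes the argument. You simply make explicit the standard facts the paper leaves tacit (equivariance and invertibility of the shearing map, exactness of $\hat\ox_E\calC^{J\-\an}(G_n,E)$ via the contracting homotopy on the coinduced cochain complex, and the coinduction argument for $H^i_{\cont}(G_n,\calC^{\cont}(G,E))=0$, $i\ge1$).
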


\begin{corollary}\label{cor:Jlala}
Let $J\subset\Sigma$ be a subset. Let $V$ be an admissible Banach representation of $G$ over $E$. Then we have 
\[
    RJ\-\lan(V)=R\Gamma(\frg_{\Sigma\bs J},R\lan(V)).
\]
\begin{proof}
This follows from the same proof of \cite[Thm 1.1.13]{Eme06}, where he proved the case when $J=\varnothing$. In fact, by \cite[Lemma 1.1.1]{Eme06}, since $V$ is admissible, $V$ has a resolution by a complex each of whose terms is a finite direct sum of $\calC^{\cont}(G,E)$. Since we know that $\calC^{\cont}(G,E)$ is $J$-la-cyclic, we can just carry out the same argument.
\end{proof}
\end{corollary}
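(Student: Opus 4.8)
The plan is to run Emerton's proof of the case $J=\varnothing$ (\cite[Thm 1.1.13]{Eme06}), substituting for his acyclicity input the $J\-\lan$-acyclicity of $\calC^{\cont}(G,E)$ established in the previous proposition. By \cite[Lemma 1.1.1]{Eme06}, admissibility of $V$ furnishes a resolution $V\to C^{\bullet}$ in which each $C^{i}$ is a finite direct sum of copies of $\calC^{\cont}(G,E)$. Since $\calC^{\cont}(G,E)$ is $J\-\lan$-acyclic for every $J\subset\Sigma$, applying $J\-\lan$ termwise to this resolution computes the hyper-derived functor, so $RJ\-\lan(V)\simeq (C^{\bullet})^{J\-\lan}$; taking $J=\Sigma$ (where $\calC^{\Sigma\-\lan}=\calC^{\lan}$) also gives $R\lan(V)\simeq (C^{\bullet})^{\lan}$. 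It therefore remains to identify $R\Gamma(\frg_{\Sigma\bs J},(C^{\bullet})^{\lan})$ with $(C^{\bullet})^{J\-\lan}$.

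The key analytic ingredient is a relative Poincaré lemma: $\calC^{\lan}(G,E)$ — hence each $(C^{i})^{\lan}$, being a finite direct sum of copies of it — is acyclic for the Chevalley--Eilenberg functor $\Gamma(\frg_{\Sigma\bs J},-)$, and $H^{0}(\frg_{\Sigma\bs J},\calC^{\lan}(G,E))=\calC^{\lan}(G,E)^{\frg_{\Sigma\bs J}}=\calC^{J\-\lan}(G,E)$. To prove this, one uses that $H^{\bullet}(\frg_{\Sigma\bs J},-)$ commutes with the filtered colimit and the finite products in $\calC^{\lan}(G,E)=\dlim_{\calP}\prod_{P\in\calP}\calC^{\Sigma\-\an}(P,E)$, reducing to the computation of $H^{\bullet}(\frg_{\Sigma\bs J},\calC^{\Sigma\-\an}(G_{n},E))$. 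By Proposition \ref{prop:Jgerm} there are decompositions $\calC^{\Sigma\-\an}(G_{n},E)\isom \calC^{J\-\an}(G_{n},E)\hat\ox_{E}\calC^{(\Sigma\bs J)\-\an}(G_{n},E)$ and $\calC^{(\Sigma\bs J)\-\an}(G_{n},E)\isom \hat\ox_{E,\eta\in\Sigma\bs J}\calC^{\eta\-\an}(G_{n},E)$, on which, by Proposition \ref{prop:123}, $\frg_{\Sigma\bs J}=\prod_{\eta\in\Sigma\bs J}\frg_{\eta}$ acts trivially on the first factor $\calC^{J\-\an}(G_{n},E)$ and through $\frg_{\eta}$ on the $\eta$-th factor of $\calC^{(\Sigma\bs J)\-\an}(G_{n},E)$. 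As the $\frg_{\eta}$ are mutually commuting ideals, a Künneth argument reduces the claim to: $H^{\bullet}(\frg_{\eta},\calC^{\eta\-\an}(G_{n},E))$ is $E$ in degree $0$ and zero in positive degrees. But $\calC^{\eta\-\an}(G_{n},E)$ is the Banach algebra of functions on the $N$-dimensional affinoid polydisk over $E$ of radius $p^{-n}$, and the Chevalley--Eilenberg complex of $\frg_{\eta}$ acting by left-invariant derivations is its algebraic de Rham complex in the left-invariant frame, which is acyclic by term-by-term integration of power series, the $p$-adic denominators being harmless since $|k+1|_{p}^{-1}\le k+1$.

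Granting this, the two sides assemble. Since each $(C^{i})^{\lan}$ is $\Gamma(\frg_{\Sigma\bs J},-)$-acyclic, the hypercohomology $R\Gamma(\frg_{\Sigma\bs J},(C^{\bullet})^{\lan})$ is computed termwise and equals $((C^{\bullet})^{\lan})^{\frg_{\Sigma\bs J}}$. On the other hand $((C^{i})^{\lan})^{\frg_{\Sigma\bs J}}=(C^{i})^{J\-\lan}$: this follows levelwise from $V^{(G_{n},J)\-\an}=(V^{(G_{n},\Sigma)\-\an})^{\frg_{\Sigma\bs J}}$ (one of the basic properties listed after the definition of $(G_{n},J)$-analytic vectors) by passing to the filtered colimit over $n$, since invariants under a finite-dimensional Lie algebra commute with such colimits. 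Hence $R\Gamma(\frg_{\Sigma\bs J},R\lan(V))\simeq R\Gamma(\frg_{\Sigma\bs J},(C^{\bullet})^{\lan})\simeq (C^{\bullet})^{J\-\lan}\simeq RJ\-\lan(V)$, which is the asserted identity.

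I expect the only genuine subtlety — exactly as in Emerton's original argument — to be the topological bookkeeping: carrying out the above in the appropriate category of LB-spaces (resp. spaces of compact type) over $E$, so that the completed tensor products behave correctly, the filtered colimits defining $\calC^{\lan}$ and $RJ\-\lan$ have injective compact transition maps, and the hyper-derived functors of $J\-\lan$ and of $\Gamma(\frg_{\Sigma\bs J},-)$ are genuinely computed by the resolution $C^{\bullet}$. These points are handled as in \cite{Eme06, Eme17, Pan22} and require no new ideas beyond the $J$-relative tensor decomposition of Proposition \ref{prop:Jgerm} and the polydisk Poincaré lemma above.
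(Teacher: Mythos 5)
Your strategy is the same as the paper's: resolve $V$ by finite direct sums of $\calC^{\cont}(G,E)$, compute both $RJ\-\lan(V)$ and $R\lan(V)$ termwise on this resolution using the $J\-\lan$-acyclicity of $\calC^{\cont}(G,E)$, and then identify $R\Gamma(\frg_{\Sigma\bs J},(C^\bullet)^{\lan})$ with $(C^\bullet)^{J\-\lan}$ via a Poincar\'e lemma for the $\frg_{\Sigma\bs J}$-action on locally analytic functions. This is the right plan and is what the paper's appeal to Emerton's argument amounts to. However, the Poincar\'e lemma you invoke is false as stated, and the justification you give is exactly where the argument breaks.

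You claim that $H^\bullet(\frg_\eta,\calC^{\eta\-\an}(G_n,E))$ is concentrated in degree $0$ because the de Rham complex of the closed affinoid polydisk of radius $p^{-n}$ is acyclic, with term-by-term integration converging because $|k+1|_p^{-1}\le k+1$. That inequality is correct but not harmless: it permits the $p$-adic norm of a primitive's coefficients to be multiplied by a factor of order $k+1$, which can destroy convergence on the closed polydisk. For instance, in one variable the element $g=\sum_{m\ge 1} p^m p^{-n(p^m-1)} T^{p^m-1}$ lies in $E\langle p^{-n}T\rangle$ (the normalized coefficient norms $|b_kp^{nk}|=p^{-m}\to 0$), but its primitive $\sum_m p^{-n(p^m-1)}T^{p^m}$ has $|a_kp^{nk}|=p^{-n}$ at every nonzero index, which does not tend to $0$; so $d/dT$ is not surjective on $E\langle p^{-n}T\rangle$, and $H^1(\frg_\eta,\calC^{\eta\-\an}(G_n,E))$ is nonzero (indeed infinite-dimensional). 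What is actually true, and what is needed, is the colimit statement: a primitive of a $(G_n,\eta)$-analytic function does converge on the strictly smaller polydisk $G_{n+1}$, so the transition maps in $\dlim_n H^\bullet(\frg_\eta,\calC^{\eta\-\an}(G_n,E))$ kill $H^{\ge 1}$ and the colimit vanishes in positive degrees — the overconvergent Poincar\'e lemma at the level of germs, consistent with Proposition~\ref{prop:Jgerm} being stated for $\calC^{\eta\-\lan}(\frg,E)$ rather than for fixed radius. Your reduction to ``the computation of $H^\bullet(\frg_{\Sigma\bs J},\calC^{\Sigma\-\an}(G_n,E))$'' should therefore keep the filtered colimit in play rather than dropping to a fixed $n$; once that is fixed, the K\"unneth argument over the commuting ideals $\frg_\eta$, the commutation of Chevalley--Eilenberg cohomology with filtered colimits and finite products, and the identification $(V^{(G_n,\Sigma)\-\an})^{\frg_{\Sigma\bs J}}=V^{(G_n,J)\-\an}$ all go through and the rest of the proof is sound.
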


\section{The unitary Shimura curve at infinite level}\label{uni}
This section is organized as follows. First, we recall the definition of unitary Shimura curves of finite and infinite level. Second, we apply geometric Sen theory to study the pro-\'etale morphisms from infinite level unitary Shimura curves to finite level unitary Shimura curves. Finally, we provide a detailed description of locally analytic sections on the unitary Shimura curves of infinite level and discuss the relation to the completed cohomology groups of the unitary Shimura curves.

\subsection{Unitary Shimura curves}\label{sh}
We recall the definition of unitary Shimura curves. Our main references are \cite{Car83,HT01,NY14,Din17}. Let $L$ be a finite extension of $\bbQ_p$. This will be our base field. Let $\calO_L$ be the ring of integers of $L$ and let $\varpi$ be a uniformizer of $L$. Let $k$ be the residue field of $L$. Suppose that $L/\bbQ_p$ has ramification index $e$ and inertia degree $f$, and put $q=p^f$. Choose a totally real number field $F^+$ of degree $d$, with primes $v=v_1,v_2,\dots,v_r$ lying over $p$, such that the completion of $F^+$ at $v$ is isomorphic to $L$. We fix an isomorphism $F_v^+\isom L$ from now on. Let $\varpi_i$ be a uniformizer of $F^+_{v_i}$, and assume that $\varpi_1$ maps to $\varpi\in L$ under the fixed isomorphism. Choose an imaginary quadratic field $\calE$ in which $p$ splits as $p=uu^c$, where $c$ denotes complex conjugation. Here we fix a prime $u$ of $\calE$ over $p$, which amounts to fixing an embedding of $\calE$ into $\bbC$. Let $F = \calE F^+$, which is a CM field. Let $w_i,i=1,\dots,r$ denote the unique primes of $F$ above $u$ and $v_i$. Set $w=w_1$.

Let $D$ be a quaternion algebra over $F^+$, which is split at $v$ and exactly one  archimedean place $\infty$. The norm map $N_{F/F^+}$ defines a morphism of tori $N_{F/F^+}:\Res_{F/\bbQ}(\bbG_m)\to \Res_{F^+/\bbQ}(\bbG_m)$. From $D$, we define an algebraic group $G$ over $\bbQ$, whose $R$-point for any $\bbQ$-algebra $R$ is given by  
\begin{small}
\begin{align*}
    G(R)=\{(g,z)\in (D\ox_{\bbQ}R)^\times\times(F\ox_{\bbQ}R)^\times:N_{D/F^+}(g)=N_{F/F^+}(z),N_{D/F^+}(g)\cdot N_{F/F^+}(z)\in R^\times\},
\end{align*}
\end{small}where $N_{D/F^+}$ denotes the reduced norm map for $D$. Then the map $(g,z)\mapsto N_{D/F^+}(g)\cdot N_{F/F^+}(z)$ defines a morphism of algebraic groups $\nu:G\to \bbG_{m}$. This is the same as the method used in \cite[2.1.2]{Car83}, and it was taken from \cite{NY14}.

We can view $G$ as a unitary similitude group. Set $B:=D\ox_{F^+}F$, which is a quaternion algebra over $F$. Then, we can choose appropriate local Hasse invariants of $D$ away from $v,\infty$ so that its base change to $F$, i.e., $B$, satisfies the following properties:
\begin{itemize}
    \item the opposite algebra $B^{\op}$ is isomorphic to $B\ox_{F,c}F$.
    \item $B$ is split at $w$.
    \item If $x$ is a place of $F$ whose restriction to $F^+$ does not split in $F/F^+$, then $B_x$ is split.
    \item the number of finite places of $F^+$ above which $B$ is ramified is congruent to $d-1$ modulo $2$.
\end{itemize}
Choose an involution $*$ of the second kind on $B$. Let $V = B$ and consider it as a $B\ox_F B^{\op}$-module. Fix $\beta\in B$ with $\beta^*=-\beta$, and define an alternating $*$-Hermitian pairing $V\times V\to\bbQ$ by
\[
    (x,y)=\tr_{F/\bbQ}\tr_{B/F}(x\beta y^*),
\]
where $\tr_{B/F}$ is the reduced norm map for $B/F$. Define another involution $^\#$ of the second kind on $V$ by $x^\#=\beta x^*\beta^{-1}$. Then for any $\bbQ$-algebra $R$, we also have
\[
    G(R)=\{(g,\lambda)\in (B^{\op}\ox_{\bbQ}R)^\times \times R^\times:gg^\#=\lambda\}.
\]
The map $(g,\lambda)\mapsto \lambda$ defines a homomorphism $\nu:G\to\bbG_m$, which coincides with the map $\nu$ before. If $x$ is a prime in $\bbQ$ which splits as $x=yy^c$ in $\calE$, then $y$ induces an isomorphism 
\[
    G(\bbQ_x)\isom (B_y^{\op})^\times\times \bbQ_x^\times.
\]
In particular, $u$ induces an
isomorphism
\[
    G(\bbQ_p)\isom (B_u^{\op})^\times\times\bbQ_p^\times\isom\bbQ_p^\times\times\prod_{i=1}^r(B_{w_i}^{\op})^\times.
\]
One can choose such $\beta$ so that the following holds:
\begin{itemize}
    \item if $x$ is a prime in $\bbQ$ that does not split in $\calE$, then $G$ is quasi-split at $x$;
    \item the Hermitian pairing $(-,-)$ on $V\ox_{\bbQ}\bbR$ has invariants $(1,1)$ at one embedding $F^+\inj \bbR$ and $(0, 2)$ at all other embeddings.
\end{itemize}

For each $i=1,\dots, r$, fix a maximal order $\Lambda_i=\calO_{B,w_i}\subset B_{w_i}$. The pairing $(-,-)$ induces a perfect duality between $V_{w_i}:=V\ox_F F_{w_i}$ and $V_{w_i^c}$. We will then identify $V_{w_i^c}$ with $ V_{w_i}^{\vee}$. Let $\Lambda_i^{\vee}\subset V_{w_i^c}$ denote the dual of $\Lambda_i$. We get a $\bbZ_p$-lattice
\[
    \Lambda=\bigoplus_{i=1}^r \Lambda_i\oplus\bigoplus_{i=1}^r\Lambda_i^{\vee}\subset V\ox_{\bbQ}\bbQ_p
\]
and $(-,-)$ restricts to a perfect pairing $\Lambda\times\Lambda\to\bbZ_p$. Note that this decomposition of $\Lambda$ is with respect to the decomposition of $F\ox_{\bbQ}\bbQ_p$, namely $F\ox_{\bbQ}\bbQ_p$ acts on $\Lambda_i$ via $F_{w_i}$, and on $\Lambda_i^{\vee}$ via $F_{w_i^c}$. We fix an isomorphism $\calO_{B,w}\isom\Mat_2(\calO_L)$, and compose it with the transpose map to get an isomorphism $\calO_{B,w}^{\op}\isom\Mat_2(\calO_L)$. If $\epsilon\in \Mat_2(\calO_{L})$ is the idempotent which has $1$ in the $(1,1)$-entry and $0$ everywhere else, $\epsilon$ induces an isomorphism
\[
    \Lambda_{11}:=\epsilon\calO_{B,w}\isom(\calO^2_L)^{\vee}.
\]
Here, the action of an element $g\in \Mat_2(\calO_{F,w})\isom(\calO_{B_w}^{\op})$ is via right multiplication by $g^t$. Finally, we define $\calO_B$ as the unique maximal $\bbZ_{(p)}$-order in $B$ which localizes to $\calO_{B,w_i}$ for each $i$ and satisfies $\calO_B^*=\calO_B$.

Let $K\subset G(\bbA^\infty)$ be a sufficiently small open compact subgroup. An open compact subgroup $K\subset G(\bbA^\infty)$ is called \emph{sufficiently small} if there exists a prime $x$ of $\bbQ$ such that the projection of $K$ in $G(\bbQ_x)$ contains no nontrivial finite order subgroup. Recall that $G(\bbQ_p)\isom \bbQ_p^\times \times\prod_{i=1}^r (B_{w_i}^{\op})^\times$. Write $G(\bbA^\infty)=G_vG^v$, with $G_v=(B_w^{\op})^\times\isom \GL_2(L)$, and $G^v= G(\bbA^{\infty,p})\cdot\bbQ_p^\times\cdot \prod_{i=2}^r (B_{w_i}^{\op})^\times$. For each $i=2,\dots,r$, fix $m_2,\dots,m_r\in\bbZ_{\ge 0}$, and set $K_{v_i}=1+\varpi_i^{m_i}\calO_{B,w_i}^{\op}\subset (\calO_{B,w_i}^{\op})^\times$ if $m_i\ge 1$, and $K_{v_i}=(\calO_{B,w_i}^{\op})^\times$ if $m_i=0$. Let $ K^p\subset G(\bbA^{\infty,p})$ be a fixed compact open subgroup. Set $K^v:= K^p\cdot\bbZ_p^\times\cdot K_{v_2}\cdots K_{v_r}$. This will be our tame level subgroup. For  a sufficiently small open compact subgroup $K\subset G(\bbA^\infty)$, define a contravariant functor $X_K$ from locally Noetherian $F$-schemes to sets as follows. If $S$ is a connected locally Noetherian $F$-scheme and $s$ is a geometric point of $S$, we define $X_K(S,s)$ to be the set of equivalence classes of $(r + 4)$-tuples $(A, \lambda, i , \bar \eta^p, \bar \eta_i )$ where
\begin{itemize}
    \item $A$ is an abelian scheme over $S$ of dimension $4d$;
    \item $\lambda:A\to A^{\vee}$ is a polarization;
    \item $i:B\inj \End_S(A)\ox_{\bbZ}\bbQ$ is an injective homomorphism such that $(A, i )$ is compatible (see below) and $\lambda\comp i(b)=i(b^*)^{\vee}\comp\lambda$ for all $b\in B$;
    \item $\bar \eta^p$ is a $\pi_1(S,s)$-invariant $ K^v$-orbit of isomorphisms of $B\ox_{\bbQ}\bbA^{\infty,p}$-modules $\eta^p:B\ox_{\bbQ}\bbA^{\infty,p}\to V^pA_s$ which takes the standard pairing $(-,-)$ on $V$ to a $(\bbA^{\infty,p})^\times$-multiple of the $\lambda$-Weil pairing on $V^pA_s$;
    \item $\bar\eta_1$ is $\pi_1(S,s)$-invariant $K_{v_1}$-orbit of isomorphisms $\eta_{1}:\Lambda_{11}\ox_{\bbZ_p}\bbQ_p\to \epsilon V_{w_1}A_s$ of $L$-vector spaces;
    \item for $i = 2, . . . , r$ , $\eta_i$ is a $\pi_1(S,s)$-invariant $K_{v_i}$-orbit of isomorphisms of $B_{w_i}$-modules $\eta_i: \Lambda_i\ox_{\bbZ_p}\bbQ_p\to V_{w_i}A_s$.
\end{itemize}
Here two $(r +4)$-tuples $(A, \lambda, i , \bar\eta^p, \bar \eta_i )$ and $(A', \lambda', i' , (\bar\eta^p)', (\bar \eta_i)' )$ are equivalent if there is an isogeny $\alpha:A\to A'$ which takes $\lambda$ to a $\bbQ^\times$-multiple of $\lambda'$, takes $i$ to $i'$, and takes $\bar \eta$ to $\bar \eta'$. Here the compatibility means the following: The map $i$ induces an action of $\calE$ on $\Lie A$, and we let $\Lie^+ A$ (resp. $\Lie^-A$) denote the summand of $\Lie A$ where $\calE$ acts in the same way as via the structure morphism $\calE\to \calO_S$ (resp. as the conjugation of the structure morphism $\calE\ov{c}\to \calE\to\calO_S$). So that $\Lie A=\Lie^+A\oplus \Lie ^-A$. We say that $(A, i )$ is \emph{compatible} if $\Lie^+ A$ has rank $2$ over $\calO_S$ and the action of $F^+$ on $\Lie^+ A$ via $i$ agrees with the action via the structure morphism $F^+\to \calO_S$.

The set $X_K(S, s)$ is canonically independent of the choice of $s$, giving $X_K$ on connected $S$, and one extends to disconnected $S$ via $X_K(\coprod_{i\in I} S_i):=\prod_{i\in I} X_K(S_i)$. This functor is representable by a smooth projective $F$-scheme of pure dimension $1$, which we will also denote by $X_K$. If $\calA$ is the universal abelian scheme over $X_K$, we let $\calG_{\calA}:=\epsilon\calA[\varpi^\infty]$ denote the Barsotti--Tate $\calO_L$-module, and we call $\calG:=\calG_{\calA}$ the associated Barsotti--Tate $\calO_L$-module to $\calA$. 

Let $\iota:L\to C$ be a fixed $\bbQ_p$-embedding of $L$ into $C$. Define $\calX_K$ as the adic space associated to $X_K\times_{F}C$, where we view $C$ as an $F$-algebra via $F\to F_w\isom L\ov{\iota}\to C$. For open compact subgroups $K_v' \subset K_v$ within $G_v = \GL_2(L)$, there exists a natural étale Galois covering $\calX_{K^vK_v'} \to \calX_{K^vK_v}$. Taking the inverse image along these covering maps yields a perfectoid space at the infinite level.
\begin{theorem}(\cite[Theorem I.6]{Sch15}, \cite[Theorem 3.3.3]{JLH21})
For any sufficiently small open compact subgroup $K^{v}\subset G^v$, there exists a unique perfectoid space $\calX_{K^{v}}$ over $C$, such that 
\[
    \calX_{K^{v}}\sim\ilim_{K_{v}\subset\GL_2(L)}\calX_{K^{v}K_{v}}
\]
where $K_{v}$ runs through all open compact subgroups of $\GL_2(L)$. Moreover, $\calX_{K^v}$ is a pro-\'etale $K_v$-torsor over $\calX_{K^vK_v}$.
\end{theorem}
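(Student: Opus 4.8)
\quad The plan is to recall Scholze's construction of perfectoid Shimura varieties; the statement is \cite[Theorem I.6]{Sch15} combined with \cite[Theorem 3.3.3]{JLH21}, and I would organise the argument as follows. First, the uniqueness is formal: a perfectoid space realising a given cofiltered inverse system of adic spaces as a tilde-limit is determined up to unique isomorphism, being characterised by the compatible family of maps to the $\calX_{K^vK_v}$ which induces a homeomorphism $|\calX_{K^v}|\aisom\ilim_{K_v}|\calX_{K^vK_v}|$, together with the requirement that $\calO_{\calX_{K^v}}$ be the completed colimit of the pullbacks of the $\calO_{\calX_{K^vK_v}}$. So the real content is existence and perfectoidness, for which I would use the Scholze--Weinstein criterion: perfectoidness of a tilde-limit of adic spaces can be tested affinoid-locally, and a tilde-limit $\Spa(A_\infty,A_\infty^+)$ of affinoids $\Spa(A_i,A_i^+)$ is perfectoid provided $A_\infty$ is the completed colimit of the $A_i$ and Frobenius is surjective on $A_\infty^+/p$.

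To construct $\calX_{K^v}$ I would follow Scholze, exploiting that $X_K$ is of PEL, hence Hodge, type. A choice of symplectic embedding and auxiliary rigidification yields at each finite level a closed immersion $\calX_{K^vK_v}\inj\calA_g$ of the (proper) unitary Shimura curve into the adic space attached to a Siegel modular variety of genus $g=4d$ and its minimal compactification, compatibly as $K_v$ shrinks, with the level at $p$ on $\calA_g$ running through the tower $\Gamma(p^n)$. By \cite{Sch15} the Siegel variety admits a perfectoid space $\calA_{g,\infty}$ at infinite level at $p$; here the crucial input is that the anticanonical tower over a strict neighbourhood of the ordinary locus is perfectoid --- via the theory of the canonical subgroup this is reduced to the perfectoidness of Lubin--Tate type towers --- after which the $\GL_{2g}(\bbQ_p)$-Hecke translates of the anticanonical locus cover all of $\calA_g$ and one glues. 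I would then take the Zariski closure of $\{\calX_{K^vK_v}\}$ inside $\calA_{g,\infty}$; by \cite{Sch15} the Zariski closure of such a compatible tower is again perfectoid, which gives $\calX_{K^v}$ with $\calX_{K^v}\sim\ilim_{K_v}\calX_{K^vK_v}$, independently of the auxiliary choices. Alternatively, as carried out directly for unitary Shimura curves in \cite{JLH21}, one works on the curve itself with the Barsotti--Tate $\calO_L$-module $\calG=\epsilon\calA[\varpi^\infty]$, of height $2$ and $\calO_L$-dimension $1$: over the ordinary locus the ``anticanonical'' tower trivialising $\calG$ compatibly with its connected--\'etale filtration is perfectoid since it is built from Lubin--Tate towers, over the supersingular tubes one uses the perfectoid infinite-level Lubin--Tate space of \cite{Sch15}, and the $\GL_2(L)$-translates of these loci cover $\calX_{K^vK_v}$, so gluing produces $\calX_{K^v}$.

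Finally, for the torsor assertion: since $K^vK_v$ is sufficiently small, for an open normal subgroup $K_v'\triangleleft K_v$ the transition map $\calX_{K^vK_v'}\to\calX_{K^vK_v}$ is finite \'etale and Galois with group $K_v/K_v'$, being the covering which parametrises refinements of the level structure at $w$, on which $K_v/K_v'$ acts freely and simply transitively on geometric fibres. Passing to the limit over $K_v'\subset K_v$, the map $\calX_{K^v}\to\calX_{K^vK_v}$ together with the right translation action of $K_v$ exhibits $\calX_{K^v}$ as a pro-\'etale $K_v$-torsor, i.e.\ the natural map $\calX_{K^v}\times\ul{K_v}\aisom\calX_{K^v}\times_{\calX_{K^vK_v}}\calX_{K^v}$ is an isomorphism, being the limit of the analogous isomorphisms at finite level. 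The step I expect to be the real obstacle is the perfectoidness assertion itself --- the construction of the perfectoid Siegel modular variety at $\Gamma(p^\infty)$-level and the fact that Zariski closures of compatible towers inside it remain perfectoid (or, in the direct approach, the perfectoidness of the anticanonical and Lubin--Tate towers over the curve); once that is granted, the uniqueness, the tilde-limit property, and the torsor structure all follow formally.
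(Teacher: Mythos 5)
The paper does not prove this theorem; it cites \cite[Theorem I.6]{Sch15} and \cite[Theorem 3.3.3]{JLH21} and immediately notes it will follow the more direct approach of \cite{JLH21}. Your proposal correctly reconstructs both arguments from those references: the Scholze route (embed each finite level into a Siegel modular variety, invoke perfectoidness of the anticanonical tower over a strict neighbourhood of the ordinary locus, cover by Hecke translates, glue, and pass to Zariski closures, which remain perfectoid), and the direct route of \cite{JLH21} working with the Barsotti--Tate $\calO_L$-module $\calG$, with anticanonical/Igusa towers over the ordinary locus and Lubin--Tate towers over the supersingular tubes. Your uniqueness and pro-\'etale torsor arguments are the standard formal ones. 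So the approach is essentially the same as that of the cited references. One small imprecision: in Scholze's covering argument the Hecke translates that cover $\calA_g$ are by $\GSp_{2g}(\bbQ_p)$ (acting on the symplectic level structure), not $\GL_{2g}(\bbQ_p)$; this is cosmetic and does not affect the structure of your argument.
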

In order to work on $\calX_{K^v}$, we find that the more direct approach used in \cite{JLH21} is convenient. 

\subsection{Automorphic sheaves on unitary Shimura curves}
We recall the definition of certain automorphic sheaves on the unitary Shimura curves. Let us first introduce some notation. Let $\bar L$ be an algebraic closure of $L$ with the canonical injection given by $\iota$. Let $C$ be the $p$-adic completion of $\bar L$. Later we will need to decompose an $\calO_B\ox_{\bbZ} C$-module with respect to the decomposition of $\calO_B\ox_{\bbZ} C$. Consider an $\calO_B\ox_{\bbZ}C$-module $M$. Firstly, $M$ decomposes with respect to the $\calO_F\ox_{\bbZ}\bbQ_p$-action: 
\[
    M\isom \prod_{v|p}M_v^+\oplus M_v^-,
\]
where $M_v^+:=M\ox_{\calO_B\ox_{\bbZ}\bbQ_p}\calO_{B_{w}}$. Secondly, define $\epsilon=\epsilon_1$, $\epsilon_2\in \calO_{B,w}$ corresponding to $\left( \begin{matrix} 1&0\\0&0\end{matrix} \right)$ and $\left( \begin{matrix} 0&0\\0&1\end{matrix} \right)$, respectively, under the fixed isomorphism $\calO_{B,w}\isom\Mat_2(\calO_L)$. We further decompose $M_v^{+}$ with respect to the idempotents $\epsilon_1,\epsilon_2$ in $\calO_{B,w}\isom \Mat_2(L)$:
\[
    M_v^+=\epsilon_1 M_v^+\oplus \epsilon_2 M_v^+=M_v^{+,1}\oplus M_v^{+,2}
\]
and an analogous decomposition holds for $M_v^-$. Finally, each $M_v^{+,1}$ is an $L\ox_{\bbQ_p}C$-module, and therefore decomposes as 
\[
    M_v^{+,1}=\bigoplus_{\eta\in\Hom_{\bbQ_p\-\alg}(L,C)}M_{v,\eta}^{+,1}.
\]

\subsubsection*{De Rham cohomology of universal abelian varieties}
Fix a sufficiently small open compact subgroup $K_v\subset \GL_2(L)$, and set  $\calX=\calX_{K^vK_v}$. Let $\pi:\calA\to\calX$ be the universal abelian variety over $\calX$, and let $\Lie\calA$ denote the Lie algebra of $\calA/\calX$, which is a locally free $\calO_{\calX}$-module carrying an $\calO_B$-action. Therefore, we can decompose $\Lie\calA$ with respect to the decomposition of $\calO_B\ox_{\bbZ}C$. For each $\eta\in\Hom_{\bbQ_p\-\alg}(L,C)$, Let $(\Lie\calA)^{+}_{v,\eta}$ be the direct summand of $\Lie\calA$ such that $\calO_B\ox_{\bbZ}\calO_{\calX}$ acts via $\calO_{B,w}\ox_{L,\eta}\calO_{\calX}$. Since $\calX$ is viewed as an adic space over $L$ via the embedding $\iota$, the compatibility condition implies that $(\Lie\calA)^{+,1}_{v,\iota}$ is locally free of rank $1$, while $(\Lie\calA)^{+,1}_{v,\eta}=0$ for $\eta\neq\iota$. By duality, $(R^1\pi_*\calO_{\calA})^{+,1}_{v,\iota}$ is locally free of rank $1$, while $(R^1\pi_*\calO_{\calA})^{+,1}_{v,\eta}$ is locally free of rank $2$ for $\eta\neq\iota$. See \cite[Rmk 3.3.1]{Din17} for more details. Note that $(\Lie\calA)^{+,1}_{v}$ coincides with the Lie algebra of the universal Barsotti--Tate $\calO_L$-module $\calG_{\calA}$.

Next, we recall the structure of the de Rham cohomology of the universal abelian variety. Let $R^1\pi_*\Omega^{\bullet}_{\calA/\calX}$ be the first relative de Rham cohomology of $\pi:\calA\to \calX$. Since it carries an action of $\calO_B\ox_{\bbZ}C$, we can also decompose $R^1\pi_*\Omega^{\bullet}_{\calA/\calX}$ with respect to this action. It turns out that $(R^1\pi_*\Omega^{\bullet}_{\calA/\calX})^{+,1}_{v,\eta}$ is a locally free $\calO_{\calX}$-module of rank $2$ for each $\eta\in \Hom_{\bbQ_p\-\alg}(L,C)$, and there is an exact sequence of locally free $\calO_{\calX}$-modules
\[
    0\to (\pi_*\Omega^1_{\calA/\calX})^{+,1}_{v,\eta} \to (R^1\pi_*\Omega^{\bullet}_{\calA/\calX})^{+,1}_{v,\eta}\to (R^1\pi_*\calO_{\calA})^{+,1}_{v,\eta}\to 0.
\]
This defines the Hodge filtration on $D_{\calX,\eta}:=(R^1\pi_*\Omega^{\bullet}_{\calA/\calX})^{+,1}_{v,\eta}$, given by $\Fil^0D_{\calX,\eta}=D_{\calX,\eta}$, $\Fil^1D_{\calX,\eta}=(\pi_*\Omega^1_{\calA/\calX})^{+,1}_{v,\eta}$ and $\Fil^2 D_{\calX,\eta}=0$. The compatibility condition ensures that when $\eta=\iota$, the Hodge filtration on $D_{\calX,\eta}$ is non-trivial, while for the other cases the Hodge filtration is trivial, i.e, $\Fil^1 D_{\calX,\eta}=0$. In the non-trivial case, we set $\omega_{\calX}:=\Fil^1D_{\calX,\eta}$. Then $\gr^0D=(\Lie\calA^{\vee})^{+,1}_{v,\iota}$ is canonically isomorphic to $\omega_{\calX}^{-1}\ox \wedge^2 (R^1\pi_*\Omega^{\bullet}_{\calA/\calX})^{+,1}_{v,\iota}$. Note that $\wedge^2 (R^1\pi_*\Omega^{\bullet}_{\calA/\calX})^{+,1}_{v,\iota}$ is a trivial line bundle, but with a non-trivial $G(\bbA^\infty)$-action. 


Let $\nabla_{\calX}$ be the Gauss--Manin connection on $R^1\pi_*\Omega_{\calA/\calX}^{\bullet}$. As this connection is functorial, it preserves the decomposition of $R^1\pi_*\Omega_{\calA/\calX}^{\bullet}$ by the action of $\calO_B\ox_{\bbZ}C$. Specifically, it defines an integrable connection on each $D_{\calX,\eta}$, satisfying the Griffith transversality property. For $D_{\calX,\iota}$, the Kodaira--Spencer isomorphism states that the composition 
\[
    \Fil^1D_{\calX,\iota}\ov{\nabla}\to \Fil^0D_{\calX,\iota}\ox_{\calO_\calX}\Omega^1_{\calX/C}\to \gr^0D_{\calX,\iota}\ox_{\calO_{\calX}}\Omega^1_{\calX/C}
\]
is an isomorphism.

Finally, we introduce some notation relevant to the formulation of de Rham torsors, which will later be more compatible with normalizations on local Shimura varieties. For $\eta\in \Hom_{\bbQ_p\-\alg}(L,C)$ and integers $a\ge b$, define 
\[
    D_{\calX,\eta}^{(a,b)}:=\Sym^{a-b}_{\calO_{\calX}}D_{\calX,\eta}^*\ox_{\calO_{\calX}} {\det}^b_{\calO_{\calX}}D_{\calX,\eta}^*
\]
where $D_{\calX,\eta}^*$ is the dual of $D_{\calX,\eta}$. In particular, $D_{\calX,\eta}^{(1,0)}=D_{\calX,\eta}^*$ and $D_{\calX,\eta}^{(1,1)}=\det D_{\calX,\eta}^*$. Each $D_{\calX,\eta}^{(a,b)}$ is a filtered vector bundle with integrable connection of rank $a-b+1$. When $\eta\neq\iota$, the Hodge filtration on $D_{\calX,\eta}^{(a,b)}$ is trivial. However, the Hodge filtration on $D_{\calX,\iota}^{(a,b)}$ is non-trivial and is generated by 
\[
    0\to \omega^{(0,1)}_{\calX}\to D_{\calX,\iota}^{(1,0)}\to \omega_{\calX}^{(1,0)}\to 0
\]
where $\omega_{\calX}^{(0,1)}:=\Fil^1 D_{\calX,\iota}^{(1,0)}$ and $\omega_{\calX}^{(1,0)}:=\gr^0D_{\calX,\iota}^{(1,0)}$. For $a,b$ integers, define 
\[
    \omega_{\calX}^{(a,b)}:=(\omega_{\calX}^{(1,0)})^{\ox a}\ox_{\calO_{\calX}}(\omega_{\calX}^{(0,1)})^{\ox b}.
\]
The graded pieces of $D_{\calX,\iota}^{(a,b)}$ with respect to the Hodge filtration are of the form $\omega_{\calX}^{(a,b)}$ for some $(a,b)\in\bbZ^2$. The Kodaira--Spencer isomorphism is 
\[
    \omega_{\calX}^{(0,1)}\aisom \omega_{\calX}^{(1,0)}\ox \Omega_{\calX}^1
\]
and we have $\Omega_{\calX/C}^1\isom \omega_{\calX}^{(-1,1)}$.

\subsubsection*{\'Etale cohomology of universal abelian varieties}
Let $\pi:\calA\to \calX$ be the universal abelian variety, and let $\calG_{\calA}=\epsilon\calA[\varpi^\infty]$ be the associated Barsotti--Tate $\calO_L$-module. We denote $\calG=\calG_{\calA}$ as the universal Barsotti--Tate $\calO_L$-module over $\calX$. Then $R^1\pi_*\bbQ_p$ is a local system on $\calX_{K}$ endowed with an $\calO_B\ox_{\bbZ}\bbQ_p$-action. Let $(R^1\pi_*\bbQ_p)^{+,1}_{v}$ be the local system on $\calX$ such that the action of $\calO_B\ox_{\bbZ}\bbQ_p$ factors through $\epsilon\Mat_2(\calO_{B,w})$. At geometric points $\bar s$ of $\calX$, we identify the fiber $(R^1\pi_*\bbQ_p)^{+,1}_{v,\bar s}$ with the dual of the rational Tate module of the Barsotti--Tate $\calO_L$-module $\calG_{\calA,\bar s}$ at $\bar s$. From the moduli problem we know that $(R^1\pi_*\bbQ_p)^{+,1}_{v}$ is naturally isomorphic to the local system associated to the standard representation of $\GL_2(L)$ on $L^{\oplus 2}$. See \cite[Proposition 3.2.2]{Din17} for more details. Note that $R^1\pi_*\bbQ_p$ extends  naturally to the local system $R^1\pi_*\hat\bbQ_p$ on $\calX_{\pro\et}$, with $(R^1\pi_*\bbQ_p)^{+,1}_{v}$ extending to a local system $(R^1\pi_*\hat\bbQ_p)^{+,1}_{v}$. Note that the local system $R^1\pi_*\hat\bbQ_p$ is trivialized on $\calX_{K^v}$.

The relative de Rham comparison theorem relates the \'etale cohomology of $\pi:\calA\to\calX$ and the de Rham cohomology of $\pi:\calA\to \calX$. Let $\hat\calO_{\calX}$ be the completed structure sheaf of the pro-\'etale site on $\calX$. It gives an exact sequence of sheaves on $\calX_{\pro\et}$: 
\[
    0\to R^1\pi_*\calO_{\calA}\ox_{\calO_{\calX}}\hat\calO_{\calX}(1)\to R^1\pi_*\hat\bbZ_p\ox_{\bbZ_p}\hat\calO_{\calX}(1)\to \pi_*\Omega^1_{\calA/\calX}\ox_{\calO_{\calX}}\hat\calO_{\calX}\to 0.
\]
Here $(1)$ denotes the Tate twist. Note that by functoriality of the relative $p$-adic de Rham comparison theorem, this sequence is $\calO_B\ox_{\bbZ}\hat \calO_{\calX}$-equivariant. Therefore, it decomposes with respect to the action of $\calO_B\ox_{\bbZ}\hat \calO_{\calX}$. For $\eta\neq\iota$, taking the $(-)^{+,1}_{v,\eta}$ component yields an isomorphism 
\[
    (R^1\pi_*\hat C)^{+,1}_{v,\eta}\ox_{\hat C}\hat\calO_{\calX}\isom D_{\calX,\eta}\ox_{\calO_{\calX}}\hat\calO_{\calX}
\]
as $\Fil^1 D_{\calX,\eta}=0$. Here $\hat C=\hat\calO_C[\frac{1}{p}]$ and $\hat\calO_C=\ilim_n\calO_C/p^n\calO_C$ as pro-\'etale sheaves on $\calX$. Besides, if we take $(-)^{+,1}_{v,\iota}$, we get an exact sequence 
\[
    0\to \gr^0D_{\calX,\eta} \ox_{\calO_{\calX}}\hat\calO_{\calX}(1)\to (R^1\pi_*\hat C)^{+,1}_{v,\iota}(1)\ox_C \hat\calO_{\calX}\to \Fil^1 D_{\calX,\eta}\ox_{\calO_{\calX}}\hat\calO_{\calX}\to 0.
\]
For each $\eta\in \Hom_{\bbQ_p\-\alg}(L,C)$, the local system $(R^1\pi_*\hat C)^{+,1}_{v,\eta}$ is identified with the local system associated to $V_\eta$ via the $K_v$-torsor $\calX_{K^v}\to \calX$, where $V_{\eta}$ is the standard representation of $\GL_2(L)$ composed with the embedding $\eta:L\inj C$, referred as the \emph{$\eta$-standard representation} of $\GL_2(L)$.

The position of $\gr^0D_{\calX,\eta} \ox_{\calO_{\calX}}\hat\calO_{\calX}$ in $(R^1\pi_*\hat C)^{+,1}_{v,\iota}\ox_C \hat\calO_{\calX}$ gives the Hodge--Tate period map 
\[
    \pi_{\HT}:\calX_{K^v}\to\fl,
\]
where $\fl$ is the adic space associated to $\bbP^1_L\times_{L,\iota}C$. Our definition of $\pi_{\HT}$ agrees with that in \cite{JLH21}. One can also take the original definition of the Hodge--Tate period map as in \cite{Sch15}, as in our case the Hodge filtration for embeddings $\eta\neq\iota$ is trivial. The basic properties of $\pi_{\HT}$ are summarized as follows.
\begin{theorem}\label{thm:piHT}(\cite[Theorem III.1.2 and Theorem III.3.18]{Sch15})
The Hodge--Tate period map $\pi_{\HT}:\calX_{K^v}\to \fl$ is $\GL_2(L)\times\Gal_L$-equivariant. It is also equivariant under the Hecke operators away from $v$, where $\fl$ carries the trivial Hecke action. Moreover, let $\ffrb$ be the set of finite intersections of rational subsets $U_1=\{[x_1:x_2]:|x_1|\ge |x_2|\}$, $U_2=\{[x_1:x_2]:|x_1|\le |x_2|\}$ in $\fl$. Then $\ffrb$ is a basis of open affinoid subsets of $\fl$ stable under finite intersections and each $U\in\ffrb$ has the following properties:
\begin{enumerate}[(i)]
    \item its preimage $V:=\pi_{\HT}^{-1}(U)$ is affinoid perfectoid;
    \item $V$ is the preimage of an affinoid subset $V_{K_v}\subset\calX_{K^vK_v}$ for some sufficiently small open subgroup $K_v\subset\GL_2(L)$;
    \item The natural map $\dlim_{K_v}H^0(V_{K_v},\calO_{\calX_{K^vK_v}})\to H^0(V, \calO_{\calX_{K^v}})$ has dense image.
\end{enumerate}
\end{theorem}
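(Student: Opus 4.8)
The plan is to deduce the theorem from Scholze's work \cite{Sch15}, together with its transcription to unitary Shimura curves in \cite{JLH21}, so that the real content is to check that the moduli setup of \S\ref{sh} fits that framework and that the stated equivariances are formal. Recall that $\pi_{\HT}$ sends a point $x\in\calX_{K^v}$ to the line $\gr^0 D_{\calX,\iota}\ox_{\calO_{\calX}}\hat\calO_{\calX}(1)$ inside $(R^1\pi_*\hat C)^{+,1}_{v,\iota}(1)\ox_C\hat\calO_{\calX}$; at infinite level the local system $(R^1\pi_*\hat C)^{+,1}_{v,\iota}$ is trivialized through the $\iota$-standard representation, and this identifies the ambient projective line with $\fl=\bbP^1_L\times_{L,\iota}C$. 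For $\eta\neq\iota$ the Hodge filtration is trivial, so those components play no role.

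First I would record the equivariances, all of which follow from the functoriality of this construction. For $g\in\GL_2(L)$: its action on $\calX_{K^v}$ modifies the level structure at $v$, hence acts on the chosen trivialization of $(R^1\pi_*\hat C)^{+,1}_{v,\iota}$ through the standard representation, so $\pi_{\HT}(g\cdot x)=g\cdot\pi_{\HT}(x)$ with $g$ acting on $\bbP^1_L$ via the standard action base-changed along $\iota$. For $\Gal_L$: the relative $p$-adic de Rham comparison exact sequence and its splitting are $\Gal_L$-equivariant --- the Tate twist $(1)$ is exactly what absorbs the Galois action --- and $\Gal_L$ acts on $\fl$ through the coefficient field $C=\hat{\bar L}$. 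For the Hecke operators away from $v$: such an operator is implemented by a finite \'etale correspondence $\calX_{K_1^vK_v}\la\calX_{K_3^vK_v}\to\calX_{K_2^vK_v}$ changing only the tame level, along which the Barsotti--Tate $\calO_L$-module $\calG_{\calA}$, and hence its Hodge--Tate filtration, is unchanged; thus $\pi_{\HT}$ intertwines the correspondence with the identity of $\fl$.

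For assertions (i)--(iii) I would first observe that $\ffrb=\{U_1,U_2,U_1\cap U_2\}$; since $U_1\cap U_2$ is a rational subdomain of $U_1$ and the Weyl element $\left(\begin{smallmatrix}0&1\\1&0\end{smallmatrix}\right)\in\GL_2(L)$ interchanges $U_1$ and $U_2$, the $\GL_2(L)$-equivariance just established reduces everything to $U:=U_1$, the standard closed unit disc in $\bbP^1$. The content is then: (ii) $\pi_{\HT}^{-1}(U)$ is pulled back from a quasi-compact open $V_{K_v}\subset\calX_{K^vK_v}$ for some finite $K_v$, and that open is affinoid; (i) writing $V=\ilim_{K_v'\subset K_v}V_{K_v'}$ with each $V_{K_v'}$ affinoid, the limit $V$ is affinoid perfectoid; and then (iii) is automatic, since for an affinoid perfectoid $V\sim\ilim V_{K_v'}$ the ring $H^0(V,\calO^+)$ agrees, up to a bounded power of $p$, with the $p$-adic completion of $\dlim_{K_v'}H^0(V_{K_v'},\calO^+)$, so the colimit has dense image. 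For (ii) and (i) I would follow \cite[Chapter III]{Sch15} and \cite{JLH21}: the finite-level descent of $\pi_{\HT}^{-1}(U)$ comes from relating the position of the Hodge filtration to $\Gamma_0(\varpi^n)$-type level data on $\calG_{\calA}$; its affinoidness and the perfectoidness of the limit are obtained by handling the ordinary part through the anticanonical tower (affinoid perfectoid over a strict ordinary neighbourhood) and the supersingular part through the $p$-adic uniformization, glued together using the tilting equivalence and the almost purity theorem.

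The main obstacle is precisely this last point: showing that $\pi_{\HT}^{-1}(U_i)$ descends to a quasi-compact affinoid at some finite level and that the infinite-level limit is affinoid perfectoid. This is the technical heart of \cite[Chapter III]{Sch15} and of its unitary-curve version \cite{JLH21}, resting on the theory of the canonical subgroup, the perfectoidness of the anticanonical tower, and the control of $\pi_{\HT}$ on the supersingular locus. Accordingly the role of this theorem in the present paper is mainly to check that the moduli problem of \S\ref{sh}, after fixing the idempotent $\epsilon$ cutting out the $\GL_2(L)$-factor of $G(\bbQ_p)$, places us within the hypotheses of those references; granting that, the equivariance and density statements follow formally as indicated above.
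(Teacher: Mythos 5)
The paper gives no proof of this theorem at all --- it is simply cited from \cite{Sch15} (with the adaptation to unitary Shimura curves handled in \cite{JLH21}), so your approach of reducing to those references and spelling out the formal equivariances and the descent/density observations is exactly the intended justification, and your account of why each of those formal steps goes through is correct. One thing worth flagging: you read the statement of $\ffrb$ literally and conclude $\ffrb=\{U_1,U_2,U_1\cap U_2\}$, but three sets cannot form a basis of opens of $\bbP^1$; the paper has inherited a typo from the analogous statement in \cite[Theorem 4.1.1]{Pan22}, where $\ffrb$ is the set of finite intersections of \emph{$\GL_2$-translates} of $U_1$ and $U_2$. Your reduction to $U_1$ by equivariance together with the rational-subdomain argument for $U_1\cap U_2$ is the right idea and extends verbatim to arbitrary $\GL_2(L)$-translates via $\pi_{\HT}^{-1}(g\cdot U)=g\cdot\pi_{\HT}^{-1}(U)$, so this does not affect the substance of your proposal, but the ``basis'' clause does require that corrected reading of $\ffrb$.
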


Finally, the following important result holds: up to twist, the Hodge--Tate exact sequence on $\calX_{\pro\et}$ coincides with the Faltings extension for $\calX$. More precisely, let 
\[
    0\to\hat\calO_{\calX}(1)\to\gr^1\calO\bbB_{\dR,\calX}^+\ov{\nabla}\to \hat\calO_{\calX}\ox_{\calO_{\calX}}\Omega^1_{\calX/C}\to 0
\]
be the $\gr^1$-piece of the $\bbB_{\dR}^+$-Poincar\'e lemma \cite[Corollary 6.13]{Sch13}. On the other hand, tensoring the Hodge--Tate exact sequence by $(\gr^0 D_{\calX,\iota})^{-1}$ gives 
\[
    0\to \hat\calO_{\calX}(1)\to (R^1\pi_*\hat C)^{+,1}_{v,\iota}(1)\ox_C(\gr^0 D_{\calX,\iota})^{-1}\ox_{\calO_{\calX}}\hat\calO_{\calX}\to\hat\calO_{\calX}\ox \Omega^1_{\calX/C}\to 0
\]
where $\Fil^1 D_{\calX,\eta}\ox(\gr^0D_{\calX,\eta})^{-1}$ is identified with $\Omega^1_{\calX/C}$ via the Kodaira--Spencer isomorphism. A similar proof for \cite[Theorem 4.2.2]{Pan22} yields the following. 
\begin{theorem}\label{thm:FEHT}
There exists a canonical isomorphism between $(R^1\pi_*\hat C)^{+,1}_{v,\iota}(1)\ox_C(\gr^0 D_{\calX,\iota})^{-1}$ and $\gr^1\calO\bbB_{\dR}^+$ such that, as extensions of $\calO_{\calX}\ox \Omega^1_{\calX/C}$ by $\hat\calO_{\calX}(1)$, they differ by $-1$. 
\end{theorem}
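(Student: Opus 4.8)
The plan is to follow the proof of \cite[Theorem 4.2.2]{Pan22} for the modular curve, the only genuinely new ingredient being the bookkeeping of the $\calO_B\ox_{\bbZ}\hat\calO_{\calX}$-action. The main input is the relative $p$-adic de Rham comparison theorem of \cite{Sch13} for the universal abelian variety $\pi\colon\calA\to\calX$: it produces a filtered isomorphism, compatible with the connections, relating the $\calO\bbB_{\dR,\calX}^+$-module generated by $R^1\pi_*\hat\bbZ_p$ and the one generated by $R^1\pi_*\Omega^{\bullet}_{\calA/\calX}$. By functoriality in $\calA$, this isomorphism is $\calO_B\ox_{\bbZ}\hat\calO_{\calX}$-equivariant, so extracting the $(-)^{+,1}_{v,\iota}$-isotypic component yields a filtered, $\nabla$-compatible isomorphism
\begin{align*}
    \calO\bbB_{\dR,\calX}^+\ox_{\hat C}(R^1\pi_*\hat C)^{+,1}_{v,\iota}\aisom \calO\bbB_{\dR,\calX}^+\ox_{\calO_{\calX}}D_{\calX,\iota},
\end{align*}
where the left-hand side carries the trivial filtration and connection and the right-hand side carries the Hodge filtration and the Gauss--Manin connection (both up to a common Tate twist, to be fixed by the convention that the cyclotomic character has Hodge--Tate weight $-1$).

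This comparison isomorphism is the bridge between the two extensions in the statement. On the one hand, it refines the Hodge--Tate exact sequence for $\calG_{\calA}$ displayed just before the theorem: using that the Hodge filtration on $D_{\calX,\iota}$ is the two-step filtration $0\subset\Fil^1D_{\calX,\iota}\subset D_{\calX,\iota}$ with graded pieces $\Fil^1D_{\calX,\iota}$ and $\gr^0D_{\calX,\iota}$, one recovers the Hodge--Tate filtration on $(R^1\pi_*\hat C)^{+,1}_{v,\iota}(1)\ox_{\hat C}\hat\calO_{\calX}$. On the other hand, restricting to the image of $\Fil^1$ and passing to $\gr^1$ isolates the $\bbB_{\dR}^+$-Poincar\'e lemma extension $\gr^1\calO\bbB_{\dR,\calX}^+$, tensored with $\gr^0D_{\calX,\iota}$; here $\nabla$-compatibility together with Griffiths transversality forces the identification of the quotient term $\Fil^1D_{\calX,\iota}\ox(\gr^0D_{\calX,\iota})^{-1}$ with $\Omega^1_{\calX/C}$ to be precisely the Kodaira--Spencer isomorphism recalled in \S\ref{sh} (this is the only place where one uses that the universal family is Kodaira--Spencer nondegenerate in the $\iota$-direction). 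Tensoring through by $(\gr^0D_{\calX,\iota})^{-1}$ and comparing the two descriptions then produces the asserted canonical isomorphism between $\gr^1\calO\bbB_{\dR,\calX}^+$ and $(R^1\pi_*\hat C)^{+,1}_{v,\iota}(1)\ox_C(\gr^0D_{\calX,\iota})^{-1}\ox_{\calO_{\calX}}\hat\calO_{\calX}$, both viewed as extensions of $\hat\calO_{\calX}\ox\Omega^1_{\calX/C}$ by $\hat\calO_{\calX}(1)$.

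It remains to pin down the sign, and I expect this to be the main obstacle. One tracks the two extension classes in $\Ext^1_{\hat\calO_{\calX}}(\hat\calO_{\calX}\ox\Omega^1_{\calX/C},\hat\calO_{\calX}(1))$ through the identifications above. The Kodaira--Spencer isomorphism is the composite $\Fil^1D_{\calX,\iota}\xrightarrow{\nabla}D_{\calX,\iota}\ox_{\calO_{\calX}}\Omega^1_{\calX/C}\to\gr^0D_{\calX,\iota}\ox_{\calO_{\calX}}\Omega^1_{\calX/C}$, which uses the Gauss--Manin connection, whereas the Poincar\'e lemma extension is defined via the connection $\nabla$ on $\calO\bbB_{\dR,\calX}^+$; in the convolution of the two filtrations these two connections enter with opposite orientation, and this is exactly what produces the factor $-1$ rather than $+1$, just as in the modular-curve case of \cite{Pan22}. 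Carrying this out requires fixing all normalizations consistently --- the Hodge--Tate weight convention, the normalization of $\pi_{\HT}$ via the $\iota$-component as in \cite{JLH21}, the direction of Kodaira--Spencer, and the identification $\Omega^1_{\calX/C}\isom\omega_{\calX}^{(1,-1)}$ --- so that the answer comes out as $-1$. The geometric content is identical to Pan's, and the only genuinely new verification is the $\calO_B\ox_{\bbZ}\hat\calO_{\calX}$-equivariance of the relative de Rham comparison isomorphism, which is immediate from its functoriality in the abelian variety.
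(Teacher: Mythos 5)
Your proposal is correct and follows essentially the same approach as the paper, which in fact only cites \cite[Theorem 4.2.2]{Pan22} without spelling out the adaptation. You correctly identify the sole new verification needed beyond Pan's argument --- the $\calO_B\ox_{\bbZ}\hat\calO_{\calX}$-equivariance of the relative de Rham comparison, obtained by functoriality, which lets you isolate the $(-)^{+,1}_{v,\iota}$-isotypic component --- and you locate the source of the factor $-1$ in the same place Pan does, namely the interaction between the Gauss--Manin connection (via Kodaira--Spencer) and the $\calO\bbB_{\dR}^+$-connection (via the Poincar\'e lemma) inside the filtered comparison isomorphism.
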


\subsubsection*{Automorphic sheaves on the unitary Shimura variety at infinite level}
We now consider the pullback of automorphic sheaves from $\calX=\calX_{K^vK_v}$ to $\calX_{K^v}$. Instead of the pro-\'etale site on $\calX$, we work on the analytic site on $\calX_{K^v}$.

We first fix some notation. Let $\calO_{\calX_{K^v}}$ be the restriction of $\hat\calO_{\calX}$ to the analytic site on $\calX_{K^v}$. Define $\calO_{K^v}:=\pi_{\HT,*}\calO_{\calX_{K^v}}$. Let $\eta\in \Hom_{\bbQ_p\-\alg}(L,C)$ be an embedding. As $(R^1\pi_*\hat C)^{+,1}_{v,\eta}$ is the local system associated to the $\eta$-standard representation $V_{\eta}$ by the $K_v$-torsor $\calX_{K^v}\to \calX$, it becomes a trivial local system on $\calX_{K^v}$. We denote it by $V_{\eta}$ by abuse of notation. In general, for $a\ge b$ integers, define 
\[
    V_{\eta}^{(a,b)}:=\Sym^{a-b}V_\eta\ox_C{\det}^bV_{\eta}.
\]
This is an $\eta$-algebraic representation of $\GL_2(L)$ over $C$, and every $\eta$-algebraic representation of $\GL_2(L)$ over $C$ are of this form. Moreover, let $V$ be an irreducible algebraic representation of $\GL_2(L)$ over $C$. Then there exists integers $(a_\eta,b_\eta)_\eta$ such that 
\[
    V\isom\ox_\eta V_\eta^{(a_\eta,b_\eta)}.
\]

Let $D_{\calX,\eta}^{(a,b)}$ be the filtered vector bundle with integrable connection on $\calX$ we constructed in the previous subsection. Define $D_{\calX_{K^v},\eta}^{(a,b),\sm}:=\dlim_{K_v'\subset K_v}\pi_{K_v'}^{-1}D_{\calX,\eta}^{(a,b)}$, where $\pi_{K_v}:\calX_{K^vK_v'}\to \calX_{K^vK_v}$ is the natural projection. Let $\calO_{\calX_{K^v}}^{\sm}=\dlim_{K_v'\subset K_v}\pi_{K_v'}^{-1}\calO_{\calX}$ denote the set of smooth vectors in $\calO_{\calX_{K^v}}$. Then  $D_{\calX_{K^v},\eta}^{(a,b)}:=D_{\calX_{K^v},\eta}^{(a,b),\sm}\ox_{\calO_{\calX_{K^v}}^{\sm}}\calO_{\calX_{K^v}}$ is just the pullback of $D_{\calX,\eta}^{(a,b)}$ to $\calX_{K^v}$ as coherent sheaves. Moreover, $D_{\calX_{K^v},\eta}^{(a,b),\sm}$ is the set of smooth vectors in $D_{\calX_{K^v},\eta}^{(a,b)}$ for the $\GL_2(L)$-action. Let $D^{(a,b)}_{K^v,\eta}:=\pi_{\HT,*}D_{\calX_{K^v,\eta}}^{(a,b)}$. Let $\omega_{\calX_{K^v}}^{(a,b)}$ denote the pullback of $\omega_{\calX}^{(a,b)}$ to $\calX_{K^v}$ as coherent sheaves, and let $\omega_{K^v}^{(a,b)}:=\pi_{\HT,*}\omega_{\calX_{K^v}}^{(a,b)}$. Let $\omega_{K^v}^{(a,b),\sm}$ be the set of smooth vectors in $\omega_{K^v}^{(a,b)}$, and we have $\omega_{K^v}^{(a,b)}=\omega_{K^v}^{(a,b),\sm}\ox_{\calO_{K^v}^{\sm}}\calO_{K^v}$. Since $\pi_{\HT,*}$ is exact, we have exact sequences
\begin{align*}
    &V_\eta^{(1,0)}\ox_C \calO_{{K^v}}\isom D_{{K^v},\eta}^{(0,-1)},\\
    0\to \omega_{K^v}^{(0,-1)}\to &V_{\iota}^{(1,0)}\ox_C\calO_{K^v}\to \omega_{K^v}^{(-1,0)}(-1)\to 0.
\end{align*}

We introduce some notation related to the flag variety. Let $a,b$ be integers, and we define $\omega_{\fl}^{(a,b)}:=\calO_{\fl}(a-b)\ox {\det}^b$ where $\calO_{\fl}(1)$ is the usual Serre twist on projective spaces. The Euler sequence on $\fl$ is (up to twist)
\[
    0\to \omega_{\fl}^{(0,1)}\to V^{(1,0)}_\iota\ox_C \calO_{\fl}\to \omega_{\fl}^{(1,0)}\to 0,
\]
and we have $\Omega_{\fl}^{1}\isom \omega_{\fl}^{(-1,1)}$, $\det\ox_C\calO\isom \omega_{\fl}^{(1,1)}$. The pullback (as coherent sheaves) of the Euler exact sequence on $\fl$ is the Hodge--Tate exact sequence, which identifies 
\[
    \pi_{\HT}^*\omega_{\fl}^{(1,0)}\isom \omega_{\calX_{K^v}}^{(-1,0)}(-1)\quad 
    \mathrm{and}\quad \pi_{\HT}^*\omega_{\fl}^{(0,1)}\isom \omega_{\calX_{K^v}}^{(0,-1)}.
\]
Therefore, $\pi_{\HT}^*\omega_{\fl}^{(a,b)}\isom\omega_{\calX_{K^v}}^{(-a,-b)}(-a)$. We note that these isomorphisms are equivariant for the $\GL_2(L)$-action, $\Gal_L$-action, and Hecke action away from $p$.
\begin{remark}
Our normalizations on the \'etale cohomology and de Rham cohomology of the universal abelian variety are chosen so that to compatible with normalizations on the local setting. Roughly speaking, we require the \emph{\'etale cohomology}  and the \emph{de Rham homology} of the universal abelian variety to be the standard representations, and the indices on the Hodge filtrations on them are chosen so that they are compatible with usual equivariant vector bundles on the flag variety $\fl$ which we use the Borel subalgebra consisting of lower triangular matrices to define it. We are grateful to Zhixiang Wu for helpful conversations on normalizations.
\end{remark}

\subsection{Geometric Sen theory and functor $\VB$}
We recall some results from geometric Sen theory \cite{Pan22}, \cite{RC1}, along with the functor $\VB$ introduced in \cite{Pil22} and furthur developed in \cite[\S 4.5]{boxer2025modularitytheoremsabeliansurfaces}. These results will enable us to decomplete $\calO_{K^v}$ and analyze the structure of this huge sheaf. Although one can directly use the results from  \cite{RC1}, for further applications we will reproduce the explicit calculation in \S 4.3 of \cite{Pan22}.

First, we recall the definition of the geometric Sen operator. Our local setting is as follows. Let $X=\Spa(A,A^+)$ be a one-dimensional smooth affinoid adic space over $\Spa(C,\calO_C)$ such that there exists a toric chart $c:X\to \bbT^1$. We fix this toric chart. Let $\tilde{X}=\Spa(B,B^+)$ be a perfectoid affinoid adic space over $C$, such that $\tilde{X}$ is a pro-\'etale Galois covering of $X$ with Galois group $G$, a compact locally $\bbQ_p$-analytic group. For $G_0\subset G$ an open subgroup, let $X_{G_0}:=\Spa(B^{G_0},(B^+)^{G_0})$. Let $\bbT^1_\infty\sim \ilim_n\bbT^1_n$ with $\bbT^1_n=\Spa(C\langle T^{\pm 1/p^n}\rangle,\calO_C\langle T^{\pm 1/p^n}\rangle)$ be the usual pro-\'etale Galois covering of $\bbT^1$ with Galois group $\Gamma\isom\bbZ_p$ obtained by taking the $p^\infty$-th power of roots of the coordinate functions on $\bbT^1$. Then we can pullback this $\bbZ_p$-tower over $\bbT^1$ via the toric chart $c:X\to\bbT^1$, which we denote by $X_\infty=\Spa(A_\infty,A_\infty^+)$. Finally, let $\tilde{X}_\infty$ be the fiber product of $X_\infty$ and $\tilde{X}$ over $X$. Then $\tilde{X}_\infty=\Spa(B_\infty,B_\infty^+)$ is an affinoid perfectoid space over $\Spa(C,\calO_C)$. Let $X_{G_0,n}=\Spa(B_{G_0,n},B_{G_0,n}^+)$ be the fiber product of $X_{G_0}$ and $X_n$ over $X$.

We first work with finite dimensional locally analytic representation of $G$ over $C$. The reason we work with $C$ is that we need to consider various embeddings of $L$. Using arguments similar to those in \cite[Proposition 3.2.11]{Pan22}, we get the following. 
\begin{proposition}\label{prop:decomplete}
Let $V$ be a finite-dimensional locally $\bbQ_p$-analytic representation of $G$ over $C$, with a $G$-stable lattice $T\subset V$. Then there exists a sufficiently small open subgroup $G_0\subset G$, and a sufficiently large integer $n(G_0)\in\bbZ_{\ge 0}$, such that for any $n\ge n(G_0)$, there exists a unique free $B_{G_0,n}^+$-submodule $D_{G_0,n}^+(T)$ of rank $\dim_C V$ stable under $G\times\Gamma$, such that:
\begin{itemize}
    \item The natural morphism $(B_\infty)^{\circ}\ox_{B_{G_0,n}^+}D_{G_0,n}^+(T)\to (B_\infty)^{\circ}\ox_{\calO_C}T$ is an isomorphism.
    \item $\Gamma$ acts locally analytically on $D_{G_0,n}^+(T)$.
\end{itemize}
Moreover, set $D_{G_0,n}(V):=D_{G_0,n}^+(T)[\frac{1}{p}]$, we have 
\begin{itemize}
    \item $D_{G_0,n}(V)=(B_\infty\ox_C V)^{G_0,p^n\Gamma\-\an}$, and the natural map 
    \[
        B_\infty\ox_{B_{G_0,n}}(B_\infty\ox_{C}V)^{G_0,p^n\Gamma\-\an}\isom B_\infty\ox_{C}V
    \]
    is an isomorphism.
\end{itemize}
\end{proposition}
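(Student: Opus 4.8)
The plan is to realize the statement as an instance of the Tate--Sen decompletion method in the geometric setting, following the explicit computation of \cite[\S 3.2]{Pan22} (see also \cite{RC1}); the only extra feature here is that we work over $C$ so as to keep track of the various embeddings of $L$ into $E$.

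\emph{Setup and reduction.} Since $V$ is locally $\bbQ_p$-analytic, I would first choose $G_0\subset G$ open, normal and pro-$p$, small enough that the $G_0$-action on $V$ is analytic and small enough that the geometric inputs below apply over $X_{G_0}$. Note that $[G:G_0]<\infty$ as $G$ is compact, so $X_{G_0}\to X$ is finite \'etale, $X_{G_0,n}\to X_{G_0}$ is finite \'etale with group $\Gamma/p^n\Gamma$, and $\tilde X_\infty\to X_{G_0,n}$ is a pro-\'etale Galois cover with pro-$p$ group $G_0\times p^n\Gamma$ whose invariant ring is $B_{G_0,n}$. Form $M:=B_\infty\ox_C V$, a finite free $B_\infty$-module of rank $\dim_C V$ carrying a continuous semilinear action of $G\times\Gamma$, where $\Gamma$ acts only through $B_\infty$ and $G$ acts diagonally (semilinearly on $B_\infty$, $C$-linearly on $V$). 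The proposition then amounts to: for $n\ge n(G_0)$, $M$ descends to a finite free $B_{G_0,n}$-submodule $D_{G_0,n}(V)$, stable under $G\times\Gamma$, on which $G_0$ and $p^n\Gamma$ act analytically, with $B_\infty\ox_{B_{G_0,n}}D_{G_0,n}(V)\to M$ an isomorphism; the assertion over $(B_\infty)^\circ$ will then follow by intersecting with the integral lattice, and the description $D_{G_0,n}(V)=(B_\infty\ox_C V)^{G_0,p^n\Gamma\text{-an}}$, as well as uniqueness, will fall out of the construction.

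\emph{The decompletion.} The engine is the Tate--Sen formalism for the tower $\tilde X_\infty\to X_{G_0,n}$. Using the fixed toric chart $c\colon X\to\bbT^1$ together with Scholze's almost purity theorem, one verifies — exactly as in \cite[\S 3.2]{Pan22} — the Tate--Sen conditions for $B_\infty$ relative to the rings $B_{G_0,n}$: $B_\infty$ is the $p$-adic completion of $\dlim_{G_0',n}B_{G_0',n}$ up to a bounded power of $p$; Tate's normalized trace maps $R_{G_0,n}\colon B_\infty\to B_{G_0,n}$ along the $\Gamma=\bbZ_p$-tower $\bbT^1_\infty\to\bbT^1$ exist with the usual decay estimates; and the pro-\'etale cohomology in the $G_0$-direction is ``almost trivial'' by almost purity — all constants being uniform once $n\ge n(G_0)$. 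Granting this, I would run the standard successive-approximation argument: starting from the tautological $B_\infty$-basis $\{1\ox v_i\}$ of $M$, use the normalized traces and the cohomology estimates to replace it by a basis whose $(G_0\times\Gamma)$-orbit lies within a small neighbourhood of $B_{G_0,n}\ox_C V$, and then iterate; the estimates force convergence to a basis generating a $(G_0\times\Gamma)$-stable $B_{G_0,n}$-lattice $D_{G_0,n}(V)\subset M$ with $B_\infty\ox_{B_{G_0,n}}D_{G_0,n}(V)\aisom M$, on which $G_0$ and $p^n\Gamma$ act analytically (the analyticity, as opposed to mere continuity, being exactly what the quantitative decompletion yields once $n$ is large). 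By the base-change isomorphism and a rank count, $D_{G_0,n}(V)$ must coincide with $(B_\infty\ox_C V)^{G_0,p^n\Gamma\text{-an}}$; in particular it is canonical, hence stable under all of $G$ (which normalizes $G_0$ and centralizes $\Gamma$), which also yields the uniqueness.

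\emph{Integral refinement and the main obstacle.} For the statement over $(B_\infty)^\circ$, I would set $D_{G_0,n}^+(T):=D_{G_0,n}(V)\cap\big((B_\infty)^\circ\ox_{\calO_C}T\big)$ and re-run the approximation with the integral versions of the Tate--Sen estimates that almost purity supplies, up to a fixed power of $p$ absorbed by enlarging $n$; this shows $D_{G_0,n}^+(T)$ is a free $B_{G_0,n}^+$-module of rank $\dim_C V$, stable under $G\times\Gamma$ with $\Gamma$ acting analytically, and that $(B_\infty)^\circ\ox_{B_{G_0,n}^+}D_{G_0,n}^+(T)\to(B_\infty)^\circ\ox_{\calO_C}T$ is an honest isomorphism (the almost-isomorphism being upgraded using $p$-torsion-freeness over the perfectoid ring $(B_\infty)^\circ$), with $D_{G_0,n}^+(T)[\tfrac1p]=D_{G_0,n}(V)$. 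I expect the real work to be the middle step: establishing the Tate--Sen conditions — the normalized trace maps with their decay estimates, and the almost-vanishing of cohomology in the $G_0$-direction — for the perfectoid tower $\tilde X_\infty\to X_{G_0,n}$, with constants uniform and integral enough to give simultaneously the analyticity of the $\Gamma$-action on the descended module and the honest (not merely almost) integral base-change isomorphism. This is precisely the explicit geometric Sen computation of \cite[\S 3.2]{Pan22}, which adapts essentially verbatim to the present setting.
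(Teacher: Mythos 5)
Your proposal is correct and follows essentially the same route the paper takes: the paper's proof simply says ``Using arguments similar to those in \cite[Proposition 3.2.11]{Pan22}, we get the following,'' and your write-up fills in what those arguments are (the Tate--Sen decompletion along the $\Gamma$-tower, almost purity for the $G_0$-direction, successive approximation, and the analytic-vectors characterization giving uniqueness and $G$-stability), adapted verbatim to coefficients in $C$. One small stylistic remark: rather than descending rationally first and then intersecting with $(B_\infty)^\circ\otimes_{\mathcal{O}_C}T$ and re-running the integral estimates, it is slightly cleaner (and is what Pan does) to carry out the successive approximation directly at the integral level — your intersection step does not obviously produce a free $B_{G_0,n}^+$-module until after the integral approximation is run anyway, so one may as well start there and deduce the rational statement by inverting $p$.
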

We note that in the proof we need the following lemma to ensure that there always exists a unitary lattice inside a finite dimensional locally analytic representations.
\begin{lemma}
Let $V$ be a finite-dimensional locally analytic representation of $G$ over $C$. Then there exists a $G$-stable $\calO_C$-lattice $T\subset V$.
\begin{proof}
It suffices to show that there exists a norm $||\cdot||$ on $V$ such that the action is unitary (then one can take $T$ to be the unit ball). Let $||\cdot||$ be a norm on $V$. As $\frg$ acts as linear operators on $V$, the action is automatically continuous. Let $X_1,\dots,X_n$ be a $\bbQ_p$-basis of $\frg$. Then after replacing $X_i$ by $p^NX_i$, we may assume that $X_1,\dots,X_n$ is a $\bbZ_p$-basis of $\frg_0$ with $[\frg_0,\frg_0]\subset a\frg_0$ where $a$ is sufficiently small, and the operator norm $||X_i||\le p^{-2}$. Now we integrate $\frg_0$ to an open compact subgroup of $G$, say $G_0$. Then for any $g\in G_0$, we have $||g v||\le ||v||$ for any $v\in V$. Finally, as $G$ is compact, set $||v||':=\frac{1}{[G:G_0]}\sum_{h\in G/G_0}||hv||$, we see that the action of $G$ on $(V,||\cdot||')$ is unitary.
\end{proof}
\end{lemma}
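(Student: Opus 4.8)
The plan is to construct a $G$-invariant norm on $V$ and take its unit ball. First I would record the reduction: if $\|\cdot\|$ is a norm on $V$ with $\|g v\|=\|v\|$ for all $g\in G$ and $v\in V$, then $T:=\{v\in V:\|v\|\le 1\}$ is an $\calO_C$-submodule of $V$ by the ultrametric inequality together with $C$-homogeneity of the norm; it is bounded and open since $\|\cdot\|$ is a norm; and it generates $V$ over $C$ because $cv\in T$ for $|c|$ sufficiently small. Hence $T$ is an $\calO_C$-lattice, and it is $G$-stable because the norm is $G$-invariant. (As $C$ is not discretely valued, ``lattice'' is understood as a bounded open $\calO_C$-submodule generating $V$; this causes no trouble here.) So everything reduces to producing the invariant norm.

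To build it, fix a $C$-basis of $V$ and let $\|\cdot\|_0$ be the associated sup-norm. Since the $G$-action on $V$ is locally analytic, in particular continuous, and $G$ is compact, the function $g\mapsto\|g\|_{\mathrm{op}}$ on $G$ (operator norm with respect to $\|\cdot\|_0$) is continuous, so $M:=\sup_{g\in G}\|g\|_{\mathrm{op}}<\infty$. I would then set $\|v\|:=\sup_{g\in G}\|g\cdot v\|_0$. The bounds $\|v\|_0\le\|v\|\le M\|v\|_0$ (the first using $1\in G$) show this supremum is finite and that $\|\cdot\|$ is equivalent to $\|\cdot\|_0$; it is a non-archimedean norm because the ultrametric inequality and $C$-homogeneity pass to suprema; and it is $G$-invariant since, for $h\in G$, reindexing the supremum by $g'=gh$ gives $\|h\cdot v\|=\sup_{g\in G}\|(gh)\cdot v\|_0=\|v\|$. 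Feeding this norm into the reduction above gives the desired $G$-stable $\calO_C$-lattice.

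I expect this argument to be essentially formal; the only real content is the finiteness of the averaged norm, i.e. the boundedness $M<\infty$, which is exactly where compactness of $G$ and continuity of the action are used (local analyticity itself is not strictly needed). An alternative, perhaps more in keeping with the Lie-theoretic setup elsewhere in the paper, is to work with $\frg$: the action of $\frg$ on the finite-dimensional space $V$ is by bounded operators, so after rescaling an $\calO_L$-Lie lattice $\frg_0\subset\frg$ one may assume each basis element of $\frg_0$ acts with operator norm $\le p^{-2}$ on a chosen lattice $T_0$, whence $\exp$ identifies an open compact $G_0\subset G$ preserving $T_0$, and $T:=\sum_{g\in G/G_0}g\cdot T_0$ is a finite sum of lattices, hence a $G$-stable lattice. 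In this variant the one point needing care is the convergence of $\exp$ and $\log$ in the relevant radius (the reason for the bound $p^{-2}$ rather than $p^{-1}$), which again is where the hypotheses on $G$ enter.
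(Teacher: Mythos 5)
Your proof is correct, and your primary argument --- set $\|v\| := \sup_{g\in G}\|g\cdot v\|_0$ for an auxiliary norm $\|\cdot\|_0$, using compactness of $G$ and continuity of the action to bound the supremum --- takes a genuinely different and more elementary route than the paper's. The paper works through the Lie algebra: it rescales a $\bbZ_p$-basis of an $\calO_L$-Lie lattice $\frg_0$ until each basis element acts with small operator norm on a reference lattice, integrates $\frg_0$ to an open compact subgroup $G_0$ acting unitarily, and then passes from $G_0$ to $G$ by forming $\|v\|' := \frac{1}{[G:G_0]}\sum_{h\in G/G_0}\|hv\|$. Your supremum construction bypasses the Lie-theoretic bootstrap entirely (indeed, as you observe, local analyticity is never used, only continuity and compactness), and it has the further virtue that it manifestly stays ultrametric --- a supremum of ultrametric norms is again ultrametric --- so its unit ball really is an $\calO_C$-submodule. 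By contrast, an $\bbR$-linear arithmetic mean of ultrametric norms, as in the paper's final displayed formula, need not satisfy the strong triangle inequality; to make that last step watertight one should take a maximum over $G/G_0$ rather than the average, or equivalently pass directly to $T := \sum_{h\in G/G_0}h\cdot T_0$, a finite sum of lattices. This is precisely what your second, Lie-theoretic variant does, so that variant is essentially the paper's argument with this small repair built in. The two together give you a cleaner general-position proof and a version compatible with the paper's later Lie-theoretic setup.
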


As the action of $\Gamma$ on $D_{G_0,n}(V)$ is locally analytic, the Lie algebra $\Lie \Gamma$ acts on $D_{G_0,n}(V)$. Via the natural isomorphism 
\[
    B_\infty\ox_{B_{G_0,n}}D_{G_0,n}(V)\isom B_\infty\ox_{C}V.
\]
It extends $B_\infty$-linearly to an action
\[
    \phi_V:\Lie\Gamma\to \End_{B_\infty}(B_\infty\ox_C V).
\]
The action $\phi_V$ called the \emph{Sen operator} of $V$. We summarize some basic properties of the Sen operator $\phi_V$.
\begin{theorem}\label{thm:simpson}
For each finite-dimensional locally analytic representation of $G$ over $C$, there exists a unique $B_\infty$-linear action of $\Lie\Gamma$ on $B_\infty\ox_C V$
\[
    \phi_V:\Lie\Gamma\to \End_{B_\infty}(B_\infty\ox _C V),
\]
such that
\begin{enumerate}[(1)]
    \item this action extends the natural action of $\Lie\Gamma$ on the $G$-smooth, $\Gamma$-locally analytic vectors in $B_\infty\ox_C V$;
    \item $\phi_V$ commutes with the action of $G\times\Gamma$;
    \item $\phi_V$ is functorial in $V$, i.e., suppose $\psi:V\to W$ is a morphism of locally analytic representations of $G$, then $\id\ox\psi:B_\infty\ox_C V\to B_\infty\ox_C W$ intertwines $\phi_V$ and $\phi_W$;
    \item $\phi_V$ is compatible with tensor products, i.e., suppose $V_1$, $V_2$ are two finite dimensional locally analytic representations of $G$, then $\phi_{V_1\ox_C V_2}=\phi_{V_1}\ox \id+\id\ox\phi_{V_2}$.
\end{enumerate}
\end{theorem}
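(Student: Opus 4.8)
The plan is to take $\phi_V$ to be the operator already extracted from the descent datum of Proposition~\ref{prop:decomplete} and then to settle well-definedness, deduce uniqueness, and verify properties (2)--(4); property (1) will hold essentially by construction. Recall that for a suitable pair $(G_0,n)$ one has the free $B_{G_0,n}$-module $D_{G_0,n}(V)=(B_\infty\ox_C V)^{G_0,p^n\Gamma\-\an}$ on which $\Gamma$ acts locally analytically, hence $\Lie\Gamma$ acts $C$-linearly; transporting this along $B_\infty\ox_{B_{G_0,n}}D_{G_0,n}(V)\aisom B_\infty\ox_C V$ and extending $B_\infty$-linearly produces $\phi_V$. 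The first point to settle is independence of $(G_0,n)$: for $G_1\subset G_0$ open and $m\ge n$ large one has $D_{G_0,n}(V)\subset D_{G_1,m}(V)$ inside $B_\infty\ox_C V$ (both being cut out by smoothness/analyticity conditions), and the two $\Lie\Gamma$-actions agree on the smaller space because each is the derived action of the locally analytic $\Gamma$-representation $D_{G_1,m}(V)$. Passing to the union $D_\infty(V):=\bigcup_{G_0,n}D_{G_0,n}(V)$, which is exactly the space of $G$-smooth, $\Gamma$-locally analytic vectors in $B_\infty\ox_C V$, yields a single $C$-linear $\Lie\Gamma$-action, and $\phi_V$ is its $B_\infty$-linear extension. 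Since $D_\infty(V)$ generates $B_\infty\ox_C V$ over $B_\infty$ (Proposition~\ref{prop:decomplete}), this extension is forced, giving both well-definedness and property (1); uniqueness of a $B_\infty$-linear action satisfying (1) is then immediate, as any such must coincide with the derived action on $D_\infty(V)$ and hence, by $B_\infty$-linearity, with $\phi_V$ everywhere.

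For property (2), I would argue on $D_\infty(V)$ first: the $G$- and $\Gamma$-actions commute there (they come from the two factors of the Galois group of $\tilde{X}_\infty/X$), and since $\Gamma$ is abelian its derived action commutes with $\Gamma$; hence $\phi_V$ commutes with $G\times\Gamma$ on $D_\infty(V)$. To propagate this to all of $B_\infty\ox_C V$ one uses that $\Gamma$ (resp.\ $G$) acts semilinearly through $B_\infty$: for $\gamma\in\Gamma$, $b\in B_\infty$, $m\in D_\infty(V)$ one computes $\gamma\phi_V(x)(bm)=\gamma(b)\,\gamma\phi_V(x)(m)=\gamma(b)\,\phi_V(x)(\gamma m)=\phi_V(x)(\gamma(bm))$, and similarly for $g\in G$ after noting $g$ carries $D_{G_0,n}(V)$ to $D_{gG_0g^{-1},n}(V)$ and invoking the independence of $(G_0,n)$ just established.

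Properties (3) and (4) are then formal consequences of the intrinsic description of $D_\infty(-)$. A morphism $\psi\colon V\to W$ induces $\id_{B_\infty}\ox\psi$, which is $G\times\Gamma$-equivariant and $C$-linear, hence maps $D_\infty(V)$ into $D_\infty(W)$ and intertwines the derived $\Lie\Gamma$-actions there; by $B_\infty$-linearity (equivalently, by applying the uniqueness in (1) to the pushed-forward action on $B_\infty\ox_C W$) it intertwines $\phi_V$ and $\phi_W$. For tensor products, the multiplication map $D_{G_0,n}(V_1)\ox_{B_{G_0,n}}D_{G_0,n}(V_2)\to D_{G_0,n}(V_1\ox_C V_2)$ is $\Gamma$-equivariant for the diagonal action and becomes the canonical isomorphism after $\ox_{B_{G_0,n}}B_\infty$; differentiating the diagonal $\Gamma$-action and using the Leibniz rule for derived actions on a tensor product gives $\phi_{V_1\ox_C V_2}=\phi_{V_1}\ox\id+\id\ox\phi_{V_2}$ on $D_\infty(V_1)\ox D_\infty(V_2)$, which extends $B_\infty$-linearly.

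The genuine content is packaged into Proposition~\ref{prop:decomplete} (existence, uniqueness and flatness of the descent module, together with local analyticity of the $\Gamma$-action), which we are allowed to assume; granting it, the remaining difficulty is bookkeeping — chiefly the compatibility of the $D_{G_0,n}(V)$ for varying $(G_0,n)$ and the care required in extending along the semilinear $G$- and $\Gamma$-actions. The subtlest step is the gluing of the derived $\Lie\Gamma$-actions, which I expect to handle exactly as above: by exhibiting each $D_{G_0,n}(V)$ as a subrepresentation of the single locally analytic $\Gamma$-representation $D_\infty(V)$, so that compatibility of the Lie algebra actions is automatic rather than something checked by hand.
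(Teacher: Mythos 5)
Your proof is correct and follows essentially the same construction the paper delegates to Pan's Proposition 3.2.14 (with $C$-coefficients): extract the derived $\Gamma$-action from the decompleted module of Proposition~\ref{prop:decomplete}, extend $B_\infty$-linearly, and verify the properties on the smooth-locally-analytic generators. One point you treat a bit tersely: well-definedness of the $B_\infty$-linear extension is not literally "forced" by generation alone — it additionally requires that the derived $\Lie\Gamma$-action on $D_{G_0,n}(V)$ is $B_{G_0,n}$-linear, which holds because $\Gamma$ acts on $B_{G_0,n}$ through the finite quotient $\Gamma/p^n\Gamma$ so that $\Lie\Gamma$ annihilates $B_{G_0,n}$; you use this fact implicitly, and it is worth making explicit since it is also what makes the union of the derived actions on the various $D_{G_0,n}(V)$ compatible with rescaling by overlapping scalars. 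With that observation spelled out, the gluing, uniqueness, commutation with $G\times\Gamma$ via semilinearity, functoriality, and the Leibniz rule for tensor products all go through exactly as you write them.
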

This is analogous to \cite[Proposition 3.2.14]{Pan22}, but one needs to replace the coefficient field by $C$ in \cite[Proposition 3.2.11]{Pan22}.

Next, there is a universal Sen operator, which determines all the Sen operators above and essentially depends only on the covering $\tilde{X}/X$.
\begin{theorem}\label{thm:universalsimpson}
Let $\tilde{X}\to X$ be a perfectoid pro-\'etale Galois cover with Galois group $G$ as before. Then there exists a morphism of Lie algebras:
\[
    \phi_{\tilde{X}/X}:\Lie\Gamma\to B\ox_{\bbQ_p}\Lie G
\] 
such that $\phi_{\tilde{X}/X}$ is universal, i.e., for any finite dimensional locally analytic representation $V$ of $G$ over $C$, let $\phi_V:\Lie\Gamma\to\End_{B_\infty}(B_\infty\ox_C V)$ denote the map constructed in Theorem \ref{thm:simpson}, and let $a_V:B_\infty\ox_{\bbQ_p}\Lie G\to\End_{B_\infty}(B_\infty\ox_C V)$ be the induced $B_\infty$-linear Lie algebra representation on $B_\infty\ox_{C}V$. Then $a_V\comp \phi_{\tilde{X}/X}=\phi_V$. In particular, if $C\ox_{\bbQ_p}\Lie G\to \End_CV$ is injective, then $\phi_V$ determines $\phi_{\tilde{X}/X}$.
\begin{proof}
This is \cite[Corollary 3.3.4]{Pan22} and \cite[Corollary 3.3.6]{Pan22}. Roughly speaking, we apply Theorem \ref{thm:simpson} to $\calC^{\an}(G_0,C)$, which can be approximated by finite dimensional representations. This gives the map $\phi_{\tilde{X}/X}$. For any finite dimensional locally analytic representation $V$, we may assume that it is $G_0$-analytic, and use the matrix coefficient map to relate $V$ with $\calC^{\an}(G_0,C)$.
\end{proof}
\end{theorem}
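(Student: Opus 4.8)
The plan is to realize $\phi_{\tilde{X}/X}$ as the single universal element that simultaneously reproduces all the Sen operators $\phi_V$ of Theorem \ref{thm:simpson}, by applying that construction not to an individual finite-dimensional $V$ but to the finite-dimensional pieces of the regular representation. Fix a sufficiently small standard open subgroup $G_0\subset G$ and view $\calC^{\an}(G_0,C)$ as a $G_0$-representation by right translation. It contains a dense increasing union of finite-dimensional $G_0$-stable subspaces --- for instance the functions that are polynomial of bounded degree in the canonical coordinates of $G_0$, which are stable under right translation since the group law is given by the Baker--Campbell--Hausdorff series. Every step of Proposition \ref{prop:decomplete} and of Theorem \ref{thm:simpson} --- the decompletion $D_{G_0,n}(-)$ and the Sen operator --- is compatible with colimits along injective compact transition maps, so the formalism applies to each such piece and the outputs patch. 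Checking that the rank stabilisation and the descent of $D_{G_0,n}$ to a finite free $B_{G_0,n}^{+}$-module are uniform enough to pass to this colimit is the main technical point; it is \cite[Corollary 3.3.4]{Pan22}, and everything downstream of it is formal.

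Granting this, properties (2)--(4) of Theorem \ref{thm:simpson} say exactly that $\gamma\mapsto(V\mapsto\phi_V(\gamma))$ is a $G_0\times\Gamma$-equivariant, $\ox$-compatible derivation of the identity functor on finite-dimensional locally analytic $G_0$-representations over $C$, base changed along $C\to B_\infty$. By the usual Tannakian reasoning such a natural transformation is the action of one $B_\infty$-point of $\frg$; concretely one extracts it from the regular representation as above, obtaining $\phi_{\tilde{X}/X}(\gamma)\in B_\infty\ox_{\bbQ_p}\frg$ characterized by $a_V\comp\phi_{\tilde{X}/X}(\gamma)=\phi_V(\gamma)$ for every $V$. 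As $\Lie\Gamma$ is one-dimensional and abelian over $\bbQ_p$, the resulting $\bbQ_p$-linear map $\phi_{\tilde{X}/X}$ is automatically a morphism of Lie algebras.

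To make the Tannakian step concrete and verify universality, take a finite-dimensional locally analytic $V$, restrict it to $G_0$ so that it is $G_0$-analytic, and use the orbit map $v\mapsto(g\mapsto gv)$: this is a $G_0$-equivariant (indeed $\frg$-equivariant) closed embedding $V\hookrightarrow\calC^{\an}(G_0,V)\isom\calC^{\an}(G_0,C)^{\oplus\dim V}$, with $G_0$ acting on the target by right translation. By functoriality of the Sen operator (property (3)) this embedding intertwines $\phi_V$ with the Sen operator of $\calC^{\an}(G_0,C)^{\oplus\dim V}$, which by construction is the action of $\phi_{\tilde{X}/X}(\gamma)$; since the embedding is $\frg$-equivariant, restricting back gives $a_V\comp\phi_{\tilde{X}/X}(\gamma)=\phi_V(\gamma)$. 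In particular $\phi_{\tilde{X}/X}$ does not depend on the auxiliary choices, and when $C\ox_{\bbQ_p}\frg\hookrightarrow\End_C V$ the map $a_V$ is injective on $B_\infty\ox_{\bbQ_p}\frg$, so $\phi_V$ already determines $\phi_{\tilde{X}/X}$ --- the ``in particular'' of the statement.

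It remains to check that $\phi_{\tilde{X}/X}$ takes values in $B\ox_{\bbQ_p}\frg$ and not merely in $B_\infty\ox_{\bbQ_p}\frg$. Here I would use the $\Gamma$-half of the equivariance in property (2): for any $V$ the endomorphism $\phi_V(\gamma)$ of $B_\infty\ox_C V$ is invariant under conjugation by $\Gamma$. Choosing $V$ faithful, $a_V$ is both injective on $B_\infty\ox_{\bbQ_p}\frg$ and $\Gamma$-equivariant (with $\Gamma$ acting through $B_\infty$), whence $\phi_{\tilde{X}/X}(\gamma)\in(B_\infty\ox_{\bbQ_p}\frg)^{\Gamma}=B\ox_{\bbQ_p}\frg$, using that $\tilde{X}_\infty=\tilde{X}\times_X X_\infty\to\tilde{X}$ is a pro-\'etale $\Gamma$-torsor so that $B_\infty^{\Gamma}=B$. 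This reproduces \cite[Corollary 3.3.6]{Pan22} and finishes the construction; as already noted, the only non-formal ingredient beyond Theorem \ref{thm:simpson} is the colimit-compatibility used to apply the Sen formalism to $\calC^{\an}(G_0,C)$.
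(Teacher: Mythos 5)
Your proof is correct and follows essentially the same route the paper sketches: apply the Sen-operator construction to $\calC^{\an}(G_0,C)$ exhausted by finite-dimensional $G_0$-stable pieces to produce $\phi_{\tilde{X}/X}$, then transfer to a general finite-dimensional $V$ via the orbit (matrix coefficient) map $V\hookrightarrow\calC^{\an}(G_0,C)^{\oplus\dim V}$ and functoriality. Your extra paragraph on descending from $B_\infty\ox_{\bbQ_p}\frg$ to $B\ox_{\bbQ_p}\frg$ via $\Gamma$-invariance is a useful detail that the paper's proof leaves implicit in the citation of \cite[Corollary 3.3.6]{Pan22}, but it does not change the approach.
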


Now we calculate the the universal geometric Sen operator for the pro-\'etale Galois cover $\calX_{K^v}\to \calX_{K^vK_v}$. The idea is the same as \cite[\S 4.2]{Pan22}. Namely, we calculate the geometric Sen operator for the algebraic representation $\ox_\eta V_\eta^{(1,0)}$ using the Hodge--Tate exact sequence and its identification with the Faltings extension, which then determines the universal geometric Sen operator as the representation is faithful. Let $\frg_{\iota}:=\gl_2(L)\ox_L C$. Let $\frg_\iota^{0}:=\frg_\iota\ox_C\calO_{\fl}$ with $\frb_{\iota}^0\subset \frg_{\iota}^0$ the universal Borel algebra and $\frn_{\iota}^0$ the universal nilpotent algebra.  The following result is a special case of \cite[Theorem 5.2.5]{camargo2024locallyanalyticcompletedcohomology}.
\begin{theorem}\label{thm:geomSen}
Let $U\in\ffrb$ be sufficiently small, with $V=\pi_{\HT}^{-1}(U)=\textup{Spa}(B,B^+)$ and $V_{K_v}\subset\calX_{K^vK_v}$ such that $V=\pi_{K_v}^{-1}(V_{K_v})$. Then the universal geometric Sen operator for the pro-\'etale covering $V\to V_{K_v}$ is given by the action of a generator of $\frn^0_{\iota}|_U$ via the pullback along $\pi_{\textup{HT}}$.
\begin{proof}
By Theorem \ref{thm:universalsimpson}, it suffices to calculate the geometric Sen operator for the algebraic representation $V:=\ox_\eta V_\eta^{(1,0)}$ as $V$ is a faithful representation of $\GL_2(L)$. As $\phi_V=\sum_{\eta} \phi_{V_\eta}$, it suffices to calculate the geometric Sen operator for each $V_\eta$. For $\eta\neq\iota$, we have $V_\eta\ox_C\calO_{K^v}=D_{K^v,\eta}^{*,\sm}\ox_{\calO^{\sm}_{K^v}}\calO_{K^v}$. Choose a toric chart $c:V_{K_v}\ra \bbT^1$ and consider the associated perfectoid toric chart as above. Then we have
\begin{align*}
    D_{G_0,n}(V_\eta)&=(V_\eta\otimes B_\infty)^{G_0,p^n\Gamma\textup{-an}}\\
    &=(D_{\mathcal{X}_{K^v},\eta}^{*,\sm}\ox_{\calO^{\sm}_{\mathcal{X}_{K^v}}}\calO_{\mathcal{X}_{K^v}})(V_\infty)^{G_0,p^n\Gamma\textup{-an}}\\
    &=D_{\mathcal{X}_{K^v},\eta}^{*,\sm}(V_{G_0})\ox_{\calO^{\sm}_{\mathcal{X}_{K^v}}(V_{G_0})}\calO_{\mathcal{X}_{K^v}}(V_{G_0,\infty})^{p^n\Gamma\textup{-an}}.
\end{align*}
As $\Gamma$ acts smoothly on the latter term, the action of $\Lie\Gamma$ on $D_{G_0,n}(V_\eta)$ is trivial. Hence $\phi_{V_\eta}=0$ for each $\eta\neq\iota$. To compute the geometric Sen operator for $V_{\iota}$, we know that up to a twist by a line bundle, $V_\eta\ox_C\calO_{K^v}$ is isomorphic to $\gr^1\calO\bbB_{\dR}^+$, which is an extension of $\calO_{K^v}\ox_{\calO^{\sm}_{K^v}}\Omega^{1,\sm}_{K^v}$ by $\calO_{K^v}$. A local computation (e.g. \S 2 of \cite{DLLZ}) using toric charts shows that the action of $\Gamma$ on $\gr^1\calO\bbB_{\dR}^+$ is unipotent. Therefore, taking the derivative of this action, we get a nilpotent map on $V_\eta\ox_C\calO_{K^v}$ which induces an isomorphism between the two graded pieces. By the definition of the exact Hodge--Tate sequence, we see that this map is given by the universal nilpotent Lie algebra on $\fl$.
\end{proof}
\end{theorem}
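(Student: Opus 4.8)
The plan is to deduce the claim from the universality of the geometric Sen operator, reducing to a single faithful representation, and then to compute that representation one embedding at a time. By Theorem \ref{thm:universalsimpson} it suffices to identify the Sen operator $\phi_W$ of one faithful finite-dimensional algebraic representation $W$ of $\GL_2(L)$; I would take $W:=\ox_{\eta\in\Sigma}V_\eta^{(1,0)}$, for which the linearized action $C\ox_{\bbQ_p}\Lie G\to\End_C W$ is injective. By the tensor-product compatibility of Theorem \ref{thm:simpson}(4), $\phi_W=\sum_{\eta\in\Sigma}\phi_{V_\eta^{(1,0)}}$ acting on the respective tensor factors, so it is enough to compute each $\phi_{V_\eta^{(1,0)}}$ separately and then reassemble.

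For $\eta\neq\iota$ the $\eta$-Hodge filtration of the universal abelian scheme degenerates, so the relative de Rham comparison is an isomorphism and $V_\eta^{(1,0)}\ox_C\calO_{K^v}\isom D^{\sm}_{K^v,\eta}\ox_{\calO^{\sm}_{K^v}}\calO_{K^v}$; that is, after base change to $\hat\calO$ the local system $V_\eta^{(1,0)}$ is pulled back from a coherent sheaf with connection already present at the finite level $\calX_{K^vK_v}$. Choosing a toric chart on $V_{K_v}$ to build the ambient $\Gamma$-tower and feeding this into Proposition \ref{prop:decomplete}, the descended module becomes $D^{\sm}_{\calX_{K^v},\eta}(V_{G_0})\ox_{\calO^{\sm}_{\calX_{K^v}}(V_{G_0})}\calO_{\calX_{K^v}}(V_{G_0,\infty})^{p^n\Gamma\-\an}$, on which $\Gamma$ acts smoothly because the toric tower is transverse to the finite-level structure. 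Hence $\Lie\Gamma$ acts by zero and $\phi_{V_\eta^{(1,0)}}=0$.

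For $\eta=\iota$ the filtration is non-trivial. Using Theorem \ref{thm:FEHT} one identifies $V_\iota^{(1,0)}\ox_C\calO_{K^v}$, up to a (smooth, hence Sen-trivial) line-bundle twist, with $\gr^1\calO\bbB_{\dR}^+$, which sits in the Faltings extension $0\to\calO_{K^v}(1)\to\gr^1\calO\bbB_{\dR}^+\ov{\nabla}{\to}\calO_{K^v}\ox_{\calO^{\sm}_{K^v}}\Omega^{1,\sm}_{K^v}\to 0$. A local computation in toric coordinates (as in \cite[\S2]{DLLZ}, \cite[\S4.3]{Pan22}) shows the $\Gamma$-action on $\gr^1\calO\bbB_{\dR}^+$ is unipotent; differentiating, $\phi_{V_\iota^{(1,0)}}$ is the nilpotent endomorphism of $V_\iota^{(1,0)}\ox_C\calO_{K^v}$ that kills the sub-line bundle and maps it isomorphically onto the quotient. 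Since $\pi_{\HT}$ is by construction the map recording the position of $\gr^0 D_{\calX,\iota}\ox\hat\calO$ inside $V_\iota^{(1,0)}\ox\calO_{K^v}$, this filtration is $\pi_{\HT}^*$ of the tautological filtration on $V_\iota^{(1,0)}\ox_C\calO_\fl$; the universal Borel $\frb^0_\iota$ stabilizes the tautological line, and its nilpotent radical $\frn^0_\iota$ acts on $V_\iota^{(1,0)}\ox_C\calO_\fl$ precisely as such a nilpotent. Normalizing a generator of $\frn^0_\iota|_U$ against the chosen generator of $\Lie\Gamma$ gives $\phi_{V_\iota^{(1,0)}}=\pi_{\HT}^*(\text{generator of }\frn^0_\iota)$, and since $\phi_W=\phi_{V_\iota^{(1,0)}}$, universality yields the asserted description of $\phi_{\calX_{K^v}/\calX_{K^vK_v}}$.

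The main obstacle is the $\eta=\iota$ computation: one must show that the $\Gamma$-action on $\gr^1\calO\bbB_{\dR}^+$ is genuinely unipotent (not merely that its Sen operator is nilpotent) and pin down the normalization. This uses the precise form of Theorem \ref{thm:FEHT} (the two extensions of $\calO_{\calX}\ox\Omega^1_{\calX/C}$ by $\hat\calO_{\calX}(1)$ differ by $-1$) together with the $\bbB_{\dR}^+$-Poincar\'e lemma and an explicit local toric computation; one must also check that the line-bundle twist in Theorem \ref{thm:FEHT} is smooth so it does not perturb $\phi$, and that the resulting identification is $\GL_2(L)\times\Gal_L$- and Hecke-equivariant so that it is independent of the choice of $U\in\ffrb$.
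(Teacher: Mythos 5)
Your proposal follows the same route as the paper's proof: reduce via universality to the faithful representation $\ox_\eta V_\eta^{(1,0)}$, split by tensor-product compatibility, kill the $\eta\neq\iota$ factors using the degenerate Hodge filtration and the smooth $\Gamma$-action on the decompleted module, and for $\eta=\iota$ identify the unipotent $\Gamma$-action on $\gr^1\calO\bbB_{\dR}^+$ (via the Faltings extension and a toric local computation) with the universal nilpotent $\frn^0_\iota$ pulled back along $\pi_{\HT}$. Your added attention to the smoothness of the line-bundle twist $(\gr^0 D_{\calX,\iota})^{-1}$ and to normalization is a correct and useful elaboration, but it is the same argument.
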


\begin{remark}
It is not surprising to see that the image of the universal geometric Sen operator $\Lie\Gamma\to B\ox_{\bbQ_p}\frg$ belongs to $B\ox_{L,\iota}\frg$. Indeed, the pro-\'etale vector bundle $V_\eta\ox_C\calO_{K^v}$ is actually an \'etale vector bundle $D_{K^v,\eta}^{*,\sm}\ox_{\calO_{K^v}^{\sm}}\calO_{K^v}$ for $\eta\neq\iota$.
\end{remark}

\begin{remark}
If one defines a notion of locally $\iota$-analytic covering as in \cite[3.5]{Pan22} by replacing locally analytic by locally $\iota$-analytic, then one can show in the same way that $\calX_{K^v}\to \calX_{K^vK_v}$ is a locally $\iota$-analytic covering.
\end{remark}

\subsubsection*{The functor $\VB$}
Next we recall the definition of $\VB$ in \cite{Pil22, boxer2025modularitytheoremsabeliansurfaces} and slightly generalize it to our setting. For simplicity let $G=\GL_2(L)$ and let $\frg$ be the Lie algebra of $\GL_2(L)$. Let $\Coh_{\frg}(\fl)$ denote the category of $\frg$-equivariant coherent sheaves on $\fl$. For $\calF\in\Coh_{\frg}(\fl)$, the nilpotent Lie algebra $\frn^0_{\iota}$ acts on $\calF$ and the we may consider the derived functor associated to the functor $\calF\mapsto \calF^{\frn^0_\iota}$,  denoted by $R\Gamma(\frn^0_\iota,\calF)$. Explicitly, $R\Gamma(\frn^0_\iota,\calF)$ is given by the complex $\calF\ov{\frn_{\iota}^0}\to \calF\ox(\frn^0_{\iota})^{\vee}$. Let $\Coh_{\frg}(\fl)^{\frn^0_\iota}$ denote the full subcategory of $\Coh_\frg(\fl)$ consisting of objects killed by $\frn^0$. For example, $\calO_{\fl}$ is fixed by the $\frn^0_{\iota}$-action so that $\calO_{\fl}\in \Coh_{\frg}(\fl)^{\frn^0_\iota}$.

For $\calF\in\Coh_{\frg}(\fl)$, define $\VB(\calF)$ to be the sheaf on $\fl$ given by 
\[
    \VB(\calF)(U):=\dlim_{K_v}H^0_{\cont}(K_v,\calF(U)\ox_{\calO_{\fl}(U)}\calO_{K^v}(U)),
\]
for every affinoid open subset $U\subset \fl$. From the definition we see that $\VB(\calF)$ is an $\calO^{\sm}_{K^v}$-module. We summarize basic properties of the functor $\VB$ as follows.
\begin{theorem}\label{thm:VB}
\begin{enumerate}[(i)]
    \item Let $\calF\in \Coh_{\frg}(\fl)$. Then $\VB(\calF)$ is an $\calO^{\sm}_{K^v}$-module, locally free of finite rank.
    \item The functor $\VB$ admits natural right derived functor $R^i\VB: \Coh_{\frg}(\fl)\ra \calO^{\sm}_{K^v}\text{-mod}$, and for any $\calF\in \Coh_{\frg}(\fl)$, we have 
    \[
        R^i\VB(\calF)=\VB(H^i(\frn^0_{\iota},\calF)).
    \]
    \item Let $\calF\in \Coh_\frg(\fl)^{\frn^0_\iota}$. Then the natural map 
    \[
        \VB(\calF)\ox_{\calO^{\sm}_{K^v}}\calO_{K^v}\to \calF\ox_{\calO_{\fl}}\calO_{K^v}.
    \]
    is an isomorphism. Conversely, if $\calF\in \Coh_{\frg}(\fl)$ and there exists an $\calO_{K^v}^{\sm}$-module $\calM$ equipped with a smooth action of $K_v$, such that 
    \[
        \calM\ox_{\calO^{\sm}_{K^v}}\calO_{K^v}\isom \calF\ox_{\calO_{\fl}}\calO_{K^v},
    \]
    then $\calM\isom \VB(\calF)$.
    \item $\VB$ is an exact tensor functor on $\Coh_\frg(\fl)^{\frn^0_\iota}$. More precisely, if
    \[
        0\to \calF_1\to \calF_2\to \calF_3\to 0
    \]
    is an exact sequence in $\Coh_{\frg}(\fl)^{\frn^0_{\iota}}$. Then 
    \[
        0\to \VB(\calF_1)\to \VB(\calF_2)\to \VB(\calF_3)\to 0
    \]
    is exact. 
    \item $\VB$ is a tensor functor on $\Coh_\frg(\fl)^{\frn^0_\iota}$ that sends $\calO_{\fl}$ to $\calO_{K^v}^{\sm}$. More precisely, if $\calF,\calG\in \Coh_{\frg}(\fl)^{\frn_\iota^0}$, then $\VB(\calF\ox_{\calO_{\fl}}\calG)=\VB(\calF)\ox_{\calO^{\sm}_{K^v}}\VB(\calG)$.
    \item For $U\subset \fl$ affinoid open, there exists a compact open subgroup $K_v\subset \GL_2(L)$ and an $\calO_{K_v}|_U$-module denoted by $\VB_{K_v}(\calF)$, such that 
    \begin{align*}
        \VB(\calF)|_U=\VB_{K_v}(\calF)\ox_{\calO_{K_v}|_U}\calO_{K^v}^{\sm}|_U
    \end{align*}
    where $\calO_{K_v}=\pi_{\HT,*}\pi_{K_v}^{-1}\calO_{\calX_{K^vK_v}}$.
\end{enumerate}
\end{theorem}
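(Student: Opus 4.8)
The plan is to reduce all six assertions to a local computation on $\fl$ controlled by geometric Sen theory, using that the geometric Sen operator of $\pi_{\HT}$ is the generator of $\frn^0_\iota$ (Theorem \ref{thm:geomSen}). First I record that every nonzero $\calF\in\Coh_\frg(\fl)$ is locally free: its torsion subsheaf is again $\frg$-equivariant with finite support, but a $\frg=\sll_2(L)$-equivariant coherent sheaf on the one-dimensional space $\fl$ cannot have finite nonempty support, as the infinitesimal $\GL_2(L)$-action on $\fl$ has no proper invariant closed subspace. Next, all statements being local on $\fl$, by Theorem \ref{thm:piHT} it suffices to treat $U\in\ffrb$ small enough that $V:=\pi_{\HT}^{-1}(U)$ is affinoid perfectoid, equals $\pi_{K_v}^{-1}(V_{K_v})$ for an affinoid $V_{K_v}\subset\calX_{K^vK_v}$, and is stable under a small open $K_v\subset\GL_2(L)$; shrinking $K_v$ I may assume $\calF(U)$ is finite free over $\calO_\fl(U)$ with the $\frg$-equivariant structure integrated to a compatible semilinear $K_v$-action, and $\calO_{K^v}(U)=\calO_{\calX_{K^v}}(V)$ carries a semilinear $K_v$-action with $\calO_{K^v}(U)^{K_v}=\calO_{\calX_{K^vK_v}}(V_{K_v})$; in particular $\VB(\calO_\fl)=\calO_{K^v}^{\sm}$ is immediate from the definition. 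Fixing a toric chart $V_{K_v}\to\bbT^1$ and the associated perfectoid toric tower with group $\Gamma\cong\bbZ_p$ puts us in the setting where Proposition \ref{prop:decomplete} and Theorems \ref{thm:simpson}, \ref{thm:universalsimpson}, \ref{thm:geomSen} apply to the $K_v$-equivariant vector bundle $\calF(U)\ox_{\calO_\fl(U)}\calO_{K^v}(U)$ on $V$.

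\textbf{The core computation (parts (i), (ii), (vi)).} Since $\calF(U)\ox_{\calO_\fl(U)}\calO_{K^v}(U)$ is the pullback along $\pi_{\HT}$ of the $\frg$-equivariant bundle $\calF$, Theorems \ref{thm:universalsimpson} and \ref{thm:geomSen} together with the tensor-compatibility of Sen operators (Theorem \ref{thm:simpson}) identify its geometric Sen operator for the cover $V\to V_{K_v}$ with the action of a generator of $\frn^0_\iota|_U$ through the equivariant structure of $\calF$. Hence the relative geometric Sen theory of \cite{RC1} (as in the proof of Theorem \ref{thm:geomSen}) computes $R\Gamma_{\cont}(K_v,\calF(U)\ox_{\calO_\fl(U)}\calO_{K^v}(U))$ by the two-term Koszul complex of this Sen operator on the décompleted finite-level Sen module, and passing to the colimit over shrinking $K_v$ gives
\[
    \dlim_{K_v}R\Gamma_{\cont}(K_v,\calF(U)\ox_{\calO_\fl(U)}\calO_{K^v}(U))\;\simeq\;\big[\,\calF(U)\xrightarrow{\ \frn^0_\iota\ }\calF(U)\ox_{\calO_\fl(U)}(\frn^0_\iota|_U)^\vee\,\big]\ox_{\calO_\fl(U)}\calO_{K^v}^{\sm}(U).
\]
Taking cohomology, $R^i\VB(\calF)(U)=H^i(\frn^0_\iota,\calF)(U)\ox_{\calO_\fl(U)}\calO_{K^v}^{\sm}(U)$. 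Since $H^i(\frn^0_\iota,\calF)\in\Coh_\frg(\fl)^{\frn^0_\iota}$, is locally free by the first paragraph, and for any $\calG\in\Coh_\frg(\fl)^{\frn^0_\iota}$ the vanishing of the Sen operator yields $\VB(\calG)(U)=\calG(U)\ox_{\calO_\fl(U)}\calO_{K^v}^{\sm}(U)$, the right-hand side is exactly $\VB(H^i(\frn^0_\iota,\calF))(U)$; this is (ii). Taking $i=0$ shows $\VB(\calF)(U)=H^0(\frn^0_\iota,\calF)(U)\ox_{\calO_\fl(U)}\calO_{K^v}^{\sm}(U)$ is finite free over $\calO_{K^v}^{\sm}(U)$, which is (i). Part (vi) is the same computation stopped at a fixed level $K_v$: $\VB_{K_v}(\calF)|_U$ is the finite-level Sen module, a finite locally free $\calO_{K_v}|_U$-module, and $\VB(\calF)|_U=\VB_{K_v}(\calF)\ox_{\calO_{K_v}|_U}\calO_{K^v}^{\sm}|_U$ by construction.

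\textbf{Formal consequences (parts (iii), (iv), (v)).} For $\calF\in\Coh_\frg(\fl)^{\frn^0_\iota}$, base changing the identity $\VB(\calF)(U)=\calF(U)\ox_{\calO_\fl(U)}\calO_{K^v}^{\sm}(U)$ along $\calO_{K^v}^{\sm}(U)\to\calO_{K^v}(U)$ gives the natural isomorphism $\VB(\calF)\ox_{\calO_{K^v}^{\sm}}\calO_{K^v}\isom\calF\ox_{\calO_\fl}\calO_{K^v}$; conversely, if an $\calO_{K^v}^{\sm}$-module $\calM$ with smooth $K_v$-action satisfies $\calM\ox_{\calO_{K^v}^{\sm}}\calO_{K^v}\isom\calF\ox_{\calO_\fl}\calO_{K^v}$ (for any $\calF\in\Coh_\frg(\fl)$), then over $U$ the $K_v$-smooth vectors of $\calF(U)\ox_{\calO_\fl(U)}\calO_{K^v}(U)$ equal $\calM(U)$ on the one hand and $\dlim_{K_v}(\cdot)^{K_v}=\VB(\calF)(U)$ on the other (by the décompletion), so $\calM\isom\VB(\calF)$; this is (iii). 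For (iv): $\VB$ is left exact on all of $\Coh_\frg(\fl)$ since $H^0_{\cont}(K_v,-)$, the flat base change $-\ox_{\calO_\fl}\calO_{K^v}$, and $\dlim$ are; and for a short exact sequence $0\to\calF_1\to\calF_2\to\calF_3\to0$ in $\Coh_\frg(\fl)^{\frn^0_\iota}$, the connecting map $\VB(\calF_3)=\VB(H^0(\frn^0_\iota,\calF_3))\to\VB(H^1(\frn^0_\iota,\calF_1))=R^1\VB(\calF_1)$ is $\VB$ of the $\frn^0_\iota$-cohomology boundary $H^0(\frn^0_\iota,\calF_3)\to H^1(\frn^0_\iota,\calF_1)$, which vanishes because $H^0(\frn^0_\iota,\calF_2)=\calF_2$ surjects onto $\calF_3=H^0(\frn^0_\iota,\calF_3)$, so $\VB$ is exact there. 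Finally (v): applying (iii) three times, for $\calF,\calG\in\Coh_\frg(\fl)^{\frn^0_\iota}$,
\[
    \big(\VB(\calF)\ox_{\calO_{K^v}^{\sm}}\VB(\calG)\big)\ox_{\calO_{K^v}^{\sm}}\calO_{K^v}\isom(\calF\ox_{\calO_\fl}\calG)\ox_{\calO_\fl}\calO_{K^v},
\]
and since $\VB(\calF)\ox_{\calO_{K^v}^{\sm}}\VB(\calG)$ carries a smooth $K_v$-action, the uniqueness clause of (iii) gives $\VB(\calF\ox_{\calO_\fl}\calG)\isom\VB(\calF)\ox_{\calO_{K^v}^{\sm}}\VB(\calG)$; combined with $\VB(\calO_\fl)=\calO_{K^v}^{\sm}$ this is (v).

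\textbf{Main obstacle.} The one substantial step is the core computation: promoting Proposition \ref{prop:decomplete} and the Sen formalism from finite-dimensional locally analytic $K_v$-representations over $C$ to the relative setting of a $\frg$-equivariant vector bundle over the affinoid $U$, and identifying $\dlim_{K_v}R\Gamma_{\cont}(K_v,\calF(U)\ox\calO_{K^v}(U))$ with the Koszul complex of the geometric Sen operator $\frn^0_\iota$. This rests on the relative geometric Sen theory of \cite{RC1} and \cite[\S5]{camargo2024locallyanalyticcompletedcohomology}, on careful bookkeeping of the interlocking colimits over $K_v$ and over the toric tower, and on the flatness of $\calO_\fl(U)\to\calO_{K^v}(U)$; once this is in place, the remaining parts are formal.
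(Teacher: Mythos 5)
Your argument is correct and follows essentially the same route the paper takes: the paper's proof is a two-line citation to \cite[Theorem 3.3]{Pil22} (valid once the universal geometric Sen operator for $\calX_{K^v}\to\calX_{K^vK_v}$ is identified with $\frn^0_\iota$, i.e.\ Theorem \ref{thm:geomSen}), and your write-up is precisely that Sen-theoretic décompletion argument spelled out, with (iii)--(v) deduced formally just as the paper indicates. No gaps.
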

Since we already know that the universal geometric Sen operator for $\calX_{K^v}\to \calX_{K^vK_v}$ is $\frn_{\iota}^0$, the proof is almost the same as in \cite[Theorem 3.3]{Pil22}. The fact that $\VB$ is a tensor functor follows from (iii).

As some examples, we compute the value of $\VB$ in specific cases using the Hodge--Tate exact sequence.
\begin{proposition}\label{prop:RVBcalc}
\begin{enumerate}[(i)]
    \item $\VB(\omega_{\fl}^{(a,b)})=\omega_{K^v}^{(-a,-b),\sm}(-a)$ and $R^1\VB(\omega_{\fl}^{(a,b)})=\omega_{K^v}^{(-a-1,-b+1),\sm}(-a-1)$.
    \item $\VB(V_\iota^{(a,b)}\ox_C\calO_{\fl})=\omega_{K^v}^{(-b,-a),\sm}(-b)$ and  $R^1\VB(V_{\iota}^{(a,b)}\ox_C\calO_{\fl})=\omega_{K^v}^{(-a-1,-b+1),\sm}(-a-1)$.
    \item For $\eta\neq\iota$, $\VB(V_\eta^{(a,b)}\ox_C\calO_{\fl})=D_{K^v,\eta}^{(-b,-a),\sm}$ and $R^1\VB(V_\eta^{(a,b)}\ox_C\calO_{\fl})=D_{K^v,\eta}^{(-b,-a),\sm}\ox_{\calO^{\sm}_{K^v}}\Omega^{1,\sm}_{K^v}(-1)$.
\end{enumerate}
\begin{proof}
This is essentially \cite[Corollaire 3.30]{Pil22}. More specifically,
\begin{itemize}
    \item For $R\VB(V_\eta^{(a,b)}\ox_C\calO_{\fl})$,  this follows from $V_\eta^{(1,0)}\ox_C\calO_{K^v}\isom D_{K^v}^{(0,-1)}$ for $\eta\neq\iota$.
    \item For $R\VB(\omega_{\fl}^{(a,b)})$ with $(a,b)\in\bbZ^2$, this follows from $\pi_{\HT,*}\pi_{\HT}^*\omega_{\fl}^{(a,b)}\isom \omega_{K^v}^{(-a,-b)}(-a)$ and Theorem \ref{thm:VB}(ii).
    \item For $R\VB(V_\iota^{(a,b)}\ox_C\calO_{\fl})$, we first consider the case $(a,b)=(1,0)$. Consider the exact sequence on $\fl$:
    \[
        0\to \omega_{\fl}^{(0,1)}\to V^{(1,0)}\ox_C\calO_{\fl}\to \omega_{\fl}^{(1,0)}\to 0.
    \]
    Recall that $\frn^0_{\iota}$ maps $\omega_{\fl}^{(1,0)}$ isomorphically to $\omega_{\fl}^{(0,1)}$.  Thus by Theorem \ref{thm:VB}(ii), if we apply $R\VB$ to this exact sequence, we get $$\VB(\omega_{\fl}^{(0,1)})\aisom \VB(V^{(1,0)}\ox_C\calO_{\fl}),\quad  R^1\VB(V^{(1,0)}\ox_C\calO_{\fl})\aisom R^1\VB(\omega_{\fl}^{(1,0)}),$$ and the connection map $\VB(\omega_{\fl}^{(1,0)})\to R^1\VB(\omega_{\fl}^{(0,1)})$ is an isomorphism. The general case follows by taking symmetric powers and determinants.
\end{itemize}
\end{proof}
\end{proposition}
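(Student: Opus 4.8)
The plan is to reduce every assertion to a computation of $\VB$ on $\frg$-equivariant line bundles $\omega_\fl^{(a,b)}$ on $\fl\isom\bbP^1$, and then to feed the answer into the identity $R^i\VB(\calF)=\VB(H^i(\frn^0_\iota,\calF))$ of Theorem~\ref{thm:VB}(ii). Two preliminary facts do most of the work. First, $\frn^0_\iota$ acts \emph{trivially} on every $\omega_\fl^{(a,b)}$: such an action is an $\calO_\fl$-linear map $\omega_\fl^{(a,b)}\to\omega_\fl^{(a,b)}\ox_{\calO_\fl}(\frn^0_\iota)^\vee$, i.e. a global section of $\omega_\fl^{(-1,1)}\isom\calO_{\bbP^1}(-2)$, hence zero. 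Thus $\omega_\fl^{(a,b)}\in\Coh_\frg(\fl)^{\frn^0_\iota}$, with $H^0(\frn^0_\iota,\omega_\fl^{(a,b)})=\omega_\fl^{(a,b)}$ and $H^1(\frn^0_\iota,\omega_\fl^{(a,b)})\isom\omega_\fl^{(a,b)}\ox_{\calO_\fl}(\frn^0_\iota)^\vee$. Second, $(\frn^0_\iota)^\vee\isom\omega_\fl^{(1,-1)}$; this is forced by the Euler sequence on $\fl$ (see the third paragraph), and it is consistent with the desired $\VB(\omega_\fl^{(1,-1)})=\omega_{K^v}^{(1,-1),\sm}(-1)\isom\Omega^{1,\sm}_{K^v}(-1)$ via the Kodaira--Spencer isomorphism $\Omega^1_{\calX/C}\isom\omega_\calX^{(1,-1)}$. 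Combining the two facts, $H^1(\frn^0_\iota,\omega_\fl^{(a,b)})\isom\omega_\fl^{(a+1,b-1)}$.

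For the line bundles $\omega_\fl^{(a,b)}$ and for the sheaves $V_\eta^{(a,b)}\ox_C\calO_\fl$ with $\eta\neq\iota$, the operator $\frn^0_\iota$ acts trivially in both cases: for $V_\eta$ with $\eta\neq\iota$ because the factor $\gl_2(L)\ox_{L,\iota}C$ of $\frg\ox_{\bbQ_p}C$ acts trivially on the $\eta$-standard representation. So both are in $\Coh_\frg(\fl)^{\frn^0_\iota}$ and we may invoke the converse part of Theorem~\ref{thm:VB}(iii). Using $\pi_{\HT}^*\omega_\fl^{(a,b)}\isom\omega_{\calX_{K^v}}^{(a,b)}(-a)$, the module $\omega_{K^v}^{(a,b),\sm}(-a)$ with its smooth $K_v$-action satisfies $\omega_{K^v}^{(a,b),\sm}(-a)\ox_{\calO_{K^v}^{\sm}}\calO_{K^v}\isom\omega_\fl^{(a,b)}\ox_{\calO_\fl}\calO_{K^v}$, whence $\VB(\omega_\fl^{(a,b)})\isom\omega_{K^v}^{(a,b),\sm}(-a)$; and then $R^1\VB(\omega_\fl^{(a,b)})=\VB(\omega_\fl^{(a+1,b-1)})=\omega_{K^v}^{(a+1,b-1),\sm}(-a-1)$ by Theorem~\ref{thm:VB}(ii). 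For $\eta\neq\iota$ the relative de Rham comparison degenerates, giving $V_\eta^{(a,b)}\ox_C\calO_{K^v}\isom D^{(a,b),\sm}_{K^v,\eta}\ox_{\calO_{K^v}^{\sm}}\calO_{K^v}$, so the converse in Theorem~\ref{thm:VB}(iii) yields $\VB(V_\eta^{(a,b)}\ox_C\calO_\fl)\isom D^{(a,b),\sm}_{K^v,\eta}$, while $R^1\VB(V_\eta^{(a,b)}\ox_C\calO_\fl)=\VB\bigl((V_\eta^{(a,b)}\ox_C\calO_\fl)\ox_{\calO_\fl}(\frn^0_\iota)^\vee\bigr)\isom D^{(a,b),\sm}_{K^v,\eta}\ox_{\calO_{K^v}^{\sm}}\VB((\frn^0_\iota)^\vee)\isom D^{(a,b),\sm}_{K^v,\eta}\ox_{\calO_{K^v}^{\sm}}\Omega^{1,\sm}_{K^v}(-1)$, using the tensor property Theorem~\ref{thm:VB}(v) and the second fact above. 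This proves (i) and (iii).

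For (ii) I would first treat $(a,b)=(1,0)$ by applying $R\VB$ to the Euler sequence $0\to\omega_\fl^{(0,1)}\to V^{(1,0)}_\iota\ox_C\calO_\fl\to\omega_\fl^{(1,0)}\to 0$. In the long exact sequence produced by Theorem~\ref{thm:VB}(ii), the connecting operator $\frn^0_\iota\colon\omega_\fl^{(1,0)}=H^0(\frn^0_\iota,\omega_\fl^{(1,0)})\to H^1(\frn^0_\iota,\omega_\fl^{(0,1)})\isom\omega_\fl^{(1,0)}$ is an isomorphism: it is an injective $\calO_{\bbP^1}$-linear map between isomorphic line bundles, and its cokernel injects into $H^1(\frn^0_\iota,V^{(1,0)}_\iota\ox_C\calO_\fl)$, which is torsion-free since the Koszul differential $V^{(1,0)}_\iota\ox_C\calO_\fl\to V^{(1,0)}_\iota\ox_C(\frn^0_\iota)^\vee$ has locally constant rank. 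Hence $\VB(V^{(1,0)}_\iota\ox_C\calO_\fl)\isom\VB(\omega_\fl^{(0,1)})=\omega_{K^v}^{(0,1),\sm}$ and $R^1\VB(V^{(1,0)}_\iota\ox_C\calO_\fl)\isom R^1\VB(\omega_\fl^{(1,0)})=\omega_{K^v}^{(2,-1),\sm}(-2)$, which is the case $(a,b)=(1,0)$. The general case follows by applying $\Sym^{a-b}$ to the Euler sequence and twisting by $\det^b\isom\omega_\fl^{(b,b)}$: running the analogous $\mathfrak{sl}_2$/Bott computation of $H^\bullet(\frn^0_\iota,-)$ along the induced filtration of $V^{(a,b)}_\iota\ox_C\calO_\fl$, and using again the first fact together with $H^0(\bbP^1,\calO_{\bbP^1}(-2))=0$ to see that all intermediate connecting maps are isomorphisms, one finds $H^0(\frn^0_\iota,V^{(a,b)}_\iota\ox_C\calO_\fl)\isom\omega_\fl^{(b,a)}$ and $H^1(\frn^0_\iota,V^{(a,b)}_\iota\ox_C\calO_\fl)\isom\omega_\fl^{(a+1,b-1)}$; part (i) then gives the stated formulas.

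\emph{Main obstacle.} The only genuinely laborious step is the general-weight case of (ii): one must carry out the Bott computation for $\Sym^{a-b}$ of the Euler sequence and verify, via the $\calO_{\bbP^1}(-2)$-vanishing, that in the resulting long exact sequences every relevant connecting map is an isomorphism, so that only the two extreme line bundles $\omega_\fl^{(b,a)}$ and $\omega_\fl^{(a+1,b-1)}$ contribute to $H^0$ and $H^1$. Running parallel to this, the other delicate point is purely bookkeeping: keeping the Tate twists consistent when transporting along $\pi_{\HT}^*\omega_\fl^{(a,b)}\isom\omega_{\calX_{K^v}}^{(a,b)}(-a)$ and through the identification $(\frn^0_\iota)^\vee\isom\omega_\fl^{(1,-1)}\isom\Omega^1_\fl$.
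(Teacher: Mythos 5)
Your overall strategy is the same as the paper's: reduce to the $\frn^0_\iota$-action on line bundles $\omega_\fl^{(a,b)}$, use Theorem~\ref{thm:VB}(ii) and (iii), and treat $V_\iota^{(a,b)}\ox_C\calO_\fl$ via the Euler sequence (with symmetric powers for the general weight). However, your justification of the ``first fact'' would not survive being made internally consistent. You write that the $\calO_\fl$-linear map $\omega_\fl^{(a,b)}\to\omega_\fl^{(a,b)}\ox_{\calO_\fl}(\frn^0_\iota)^\vee$ is a global section of $\omega_\fl^{(-1,1)}\isom\calO_{\bbP^1}(-2)$, hence zero; but this identifies $(\frn^0_\iota)^\vee$ with $\omega_\fl^{(-1,1)}$, which contradicts your own (correct) ``second fact'' that $(\frn^0_\iota)^\vee\isom\omega_\fl^{(1,-1)}$. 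In fact $\sHom_{\calO_\fl}(\omega_\fl^{(a,b)},\omega_\fl^{(a,b)}\ox(\frn^0_\iota)^\vee)\isom(\frn^0_\iota)^\vee\isom\omega_\fl^{(1,-1)}\isom\calO_{\bbP^1}(2)$, whose space of global sections is three-dimensional, so $H^0=0$ cannot be invoked. The triviality of the $\frn^0_\iota$-action on $\omega_\fl^{(a,b)}$ is still true, but for a different reason: either observe that the fiber of the $\frg$-equivariant line bundle $\omega_\fl^{(a,b)}$ at $x\in\fl$ is a character of $\frb_x$, which restricts to the trivial character on $\frn_x$, or note that the equivariance forces the section of $(\frn^0_\iota)^\vee\isom T_\fl$ to be $\GL_2$-invariant inside $H^0(\fl,T_\fl)$, which is the adjoint representation of $\mathfrak{sl}_2$ and has no invariants.

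A secondary gap appears in your treatment of (ii): you assert that the connecting map $\omega_\fl^{(1,0)}=H^0(\frn^0_\iota,\omega_\fl^{(1,0)})\to H^1(\frn^0_\iota,\omega_\fl^{(0,1)})\isom\omega_\fl^{(1,0)}$ is injective, but you never show it is nonzero, which is what the torsion-free argument requires as its starting point. This is again true and elementary (the Koszul differential $V^{(1,0)}_\iota\ox_C\calO_\fl\to V^{(1,0)}_\iota\ox_C(\frn^0_\iota)^\vee$ has rank one at every point, so $H^0(\frn^0_\iota,V^{(1,0)}_\iota\ox_C\calO_\fl)$ is a rank-one subbundle and therefore coincides with $\omega_\fl^{(0,1)}$, forcing the connecting map to be nonzero), but it should be stated. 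With those two points repaired your argument agrees with the paper's.
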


We can also evaluate $\VB$ on $\mathfrak{g}$-equivariant quasi-coherent sheaves on $\fl$. When the quasi-coherent sheaf is the $p$-adic completion of colimit of some coherent sheaves,
as we have an integral version of the decompletion method in Proposition \ref{prop:decomplete}, some results in Theorem \ref{thm:VB} are still true. Similar results are already proved in \cite[3.3]{Pan22}, \cite[Proposition 4.2]{Pil22}.

Let $\eta\in \mathrm{Hom}(L,C)$ be a field embedding. For each $n\ge 0$, consider the $G_n$-action on the sheaf $\calC^{\eta\-\an}(G_n,C)\hat\ox_C\calO_{\fl}$ given by the left translation on $\calC^{\eta\-\an}(G_n,C)$ and the natural action on $\calO_{\fl}$. This action makes $\calC^{\eta\-\an}(G_n,C)\hat\ox_C\calO_{\fl}$ a $G_n$-equivariant sheaf on $\fl$. Here, for $U$ a rational open subset of $\fl$, $(\calC^{\eta\-\an}(G_n,C)\hat\ox_C\calO_{\fl})(U)$ is defined as $\calC^{\eta\-\an}(G_n,C)\hat\ox_C\calO_{\fl}(U)$. We equip it with the $G_n$-action given by
\begin{align*}
    g.f(-)=gf(g^{-1}-g),\quad\textrm{for }g\in G_n \textrm{ and }f\in \calC^{\eta\-\an}(G_n,C)\hat\ox_{C}\calO_{\fl}.
\end{align*} 
We can take the Lie algebra action of this $G_n$-action, which induces an $\calO_{\fl}$-linear action of $\frn_{\iota}^0$ on $\calC^{\eta\-\an}(G_n,C)\hat\ox_C\calO_{\fl}$. Let $\calC^{\eta,n\-\an}:=\Gamma(\frn^0_{\iota}, \calC^{\eta\-\an}(G_n,C)\hat\ox_C\calO_{\fl})$. Finally, define 
\begin{align*}
    \calC^{\eta\-\lan}:=\dlim_n\calC^{\eta,n\-\an}=\dlim_n\Gamma(\frn^0_{\iota}, \calC^{\eta\-\an}(G_n,C)\hat\ox_C\calO_{\fl}).
\end{align*}
Note that if $\eta\neq \iota$, then $\frn_{\iota}^0$ acts trivially on $\calC^{\eta\-\an}(G_n,C)\hat\ox_C\calO_{\fl}$  and  $\calC^{\eta\-\lan}=\dlim_n\calC^{\eta\-\an}(G_n,C)\hat\ox_C\calO_{\fl}=\calC^{\eta\-\lan}(\frg,C)\hat\ox\calO_{\fl}$. Using a similar proof as \cite[Theorem 4.1]{Pil22}, we get the following result.
\begin{proposition}\label{prop:RVBiotala}
We have $\VB(\calC^{\eta\-\lan}(\frg,C)\hat\ox_C\calO_{\fl})\isom\VB(\calC^{\eta\-\lan})\isom \calO_{K^v}^{\eta\-\lan}$ and there is a canonical isomorphism
\[
    \calO_{K^v}^{\eta\-\lan}\hat\ox_{\calO^{\sm}_{K^v}}\calO_{K^v}\aisom \calC^{\eta\-\lan}\hat\ox_{\calO_{\fl}}\calO_{K^v}.
\]
\end{proposition}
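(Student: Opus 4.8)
The plan is to argue with the functor $\VB$ and geometric Sen theory, in parallel with \cite[\S 4]{Pil22} and \cite[\S\S 4.3--4.4]{Pan22}, where the analogue for locally analytic functions on the modular curve is proved; the only genuinely new ingredient is the bookkeeping for the embeddings $\eta$, and I would separate the cases $\eta=\iota$ and $\eta\neq\iota$. First I would record the key structural fact: by construction $\calC^{\eta\-\lan}=\dlim_n\Gamma(\frn^0_{\iota},\calC^{\eta\-\an}(G_n,C)\hat\ox_C\calO_{\fl})$ is a $\frg$-equivariant $\calO_{\fl}$-module killed by $\frn^0_{\iota}$, being a filtered colimit of subsheaves killed by $\frn^0_{\iota}$; and, using the monomial description of the unit ball of $\calC^{\eta\-\an}(G_n,C)$ modulo $p^m$, it is moreover a $p$-adically completed filtered colimit of $\frg$-equivariant coherent subsheaves of $\fl$, all killed by $\frn^0_{\iota}$. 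This places $\calC^{\eta\-\lan}$ in the setting of the integral decompletion method (Proposition \ref{prop:decomplete} together with the explicit computation reproduced above from \cite[\S\S 3.3, 4.3]{Pan22}, \cite[\S 4]{Pil22}), so that the $\VB$-formalism of Theorem \ref{thm:VB} extends to it.

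The first isomorphism would then be formal: $\VB$ is left exact, hence $\VB=R^0\VB$, and $R^0\VB(\calF)=\VB(H^0(\frn^0_{\iota},\calF))$ by Theorem \ref{thm:VB}(ii), extended to the pro-coherent setting. Taking $\calF=\calC^{\eta\-\lan}(\frg,C)\hat\ox_C\calO_{\fl}=\dlim_n\calC^{\eta\-\an}(G_n,C)\hat\ox_C\calO_{\fl}$, and using that invariants commute with filtered colimits so that $H^0(\frn^0_{\iota},\calF)=\calC^{\eta\-\lan}$, gives $\VB(\calC^{\eta\-\lan}(\frg,C)\hat\ox_C\calO_{\fl})\isom\VB(\calC^{\eta\-\lan})$; for $\eta\neq\iota$ the two sheaves already coincide since $\frn^0_{\iota}$ acts trivially.

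It then remains to identify $\VB(\calC^{\eta\-\lan})$ with $\calO_{K^v}^{\eta\-\lan}$, which will also yield the last displayed isomorphism: since $\calC^{\eta\-\lan}$ is killed by $\frn^0_{\iota}$, the quasi-coherent analogue of Theorem \ref{thm:VB}(iii) provides a canonical isomorphism $\VB(\calC^{\eta\-\lan})\hat\ox_{\calO_{K^v}^{\sm}}\calO_{K^v}\aisom\calC^{\eta\-\lan}\hat\ox_{\calO_{\fl}}\calO_{K^v}$. To make the identification I would use the converse part of Theorem \ref{thm:VB}(iii): it suffices to check that $\calO_{K^v}^{\eta\-\lan}$ is an $\calO_{K^v}^{\sm}$-module with smooth $K_v$-action satisfying $\calO_{K^v}^{\eta\-\lan}\hat\ox_{\calO_{K^v}^{\sm}}\calO_{K^v}\isom\calC^{\eta\-\lan}\hat\ox_{\calO_{\fl}}\calO_{K^v}$. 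Unwinding $\VB$ locally on $U\in\ffrb$ small, with $V=\pi_{\HT}^{-1}(U)$ affinoid perfectoid and $V=\pi_{K_v}^{-1}(V_{K_v})$, one has $\VB(\calC^{\eta\-\lan})(U)=\dlim_{n,K'_v}H^0_{\cont}\bigl(K'_v,\ \Gamma(\frn^0_{\iota},\calC^{\eta\-\an}(G_n,C)\hat\ox_C\calO_{\fl}(U))\hat\ox_{\calO_{\fl}(U)}\calO_{K^v}(U)\bigr)$, and since the geometric Sen operator of $\calX_{K^v}\to\calX_{K^vK_v}$ is a generator of $\frn^0_{\iota}$ (Theorem \ref{thm:geomSen}), the decompletion of Proposition \ref{prop:decomplete} converts the inner term, after passing to $K'_v$-invariants, into the space of $(G_{n'},\eta)$-analytic sections of $\calO_{K^v}$ over $V$, with $n'$ controlled by $n$ and the level of $V_{K_v}$ (cf.\ Lemma \ref{lem:GNnormusualnorm}); taking colimits recovers $\calO_{K^v}^{\eta\-\lan}(U)$, and $\calO_{K^v}^{\sm}$-linearity and smoothness of the $K_v$-action are built into the construction. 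For $\eta\neq\iota$ this last step is elementary, since $\frn^0_{\iota}$ acts trivially and $V_\eta^{(1,0)}\ox_C\calO_{K^v}$ is the \'etale bundle $D^{(1,0)}_{K^v,\eta}$ (Proposition \ref{prop:RVBcalc}(iii)); for $\eta=\iota$ one identifies the $\frn^0_{\iota}$-action with the Sen operator via the identification of the Hodge--Tate exact sequence with the negative of the Faltings extension (Theorem \ref{thm:FEHT}), exactly as in \cite[\S 4.3]{Pan22}. The main obstacle I anticipate is precisely the extension of the $\VB$-formalism of Theorem \ref{thm:VB} from coherent sheaves to the $p$-adically completed ind-coherent sheaves $\calC^{\eta\-\lan}$ — that is, obtaining uniform control in $n$ of the decompletion levels of Proposition \ref{prop:decomplete} and checking that the completed tensor products behave as expected — which is what the integral decompletion method is designed to supply.
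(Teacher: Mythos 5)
Your proposal is correct and runs along essentially the same lines as the paper's proof: both reduce everything to the integral decompletion of Proposition \ref{prop:decomplete} applied locally on $U\in\ffrb$, using the identification of the geometric Sen operator with $\frn^0_\iota$ from Theorem \ref{thm:geomSen}. The only difference is organizational: the paper first establishes the tensor-product isomorphism directly by approximating $\calC^{\eta\-\an}(G_n,C)$ by the finite-dimensional degree-$\le m$ pieces $V_{n,m}$, decompleting these uniformly, and then passing to $p$-adic completions and colimits, after which the first isomorphism and the identification of $\calO_{K^v}^{\eta\-\lan}$ drop out from the definition of $\VB$ and of locally $\eta$-analytic vectors; you instead front-load a formal step ($R^0\VB(\calF)=\VB(H^0(\frn^0_\iota,\calF))$ extended to the ind-coherent setting) and then invoke the converse of Theorem \ref{thm:VB}(iii), which defers the same decompletion work to the final unwinding. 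You have correctly flagged the genuine technical content — extending the $\VB$-formalism to the $p$-adically completed ind-coherent sheaf $\calC^{\eta\-\lan}$ with uniform control of the levels $K_n$, $n'$ in $n$ — which is precisely what the paper's $V_{n,m}$-approximation supplies.
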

See the proof for the definition of the complete tensor product over $\calO_{K^v}^{\sm}$.
\begin{proof}
We briefly explain some key steps of the proof. First of all, we claim that there exists a sufficiently small open compact subgroup $K_n\subset\GL_2(L)$ and a sheaf
\begin{align*}
    \VB_{K_n}(\calC^{\eta\-\an}(G_n,C)\hat\ox\calO_{\fl})
\end{align*}
on $\fl$ such that 
\begin{align*}
    \VB_{K_n}(\calC^{\eta\-\an}(G_n,C)\hat\ox_C\calO_{\fl})\hat\ox_{\calO_{K_n}}\calO_{K^v}\isom \calC^{\eta,n\-\an}\hat\ox_{\calO_{\fl}}\calO_{K^v}.
\end{align*}
Here the completed tensor product over $\calO_{K_n}$ is defined by taking $p$-adic completion of the usual tensor product. More precisely, for $U\in\ffrb$ a sufficiently small open affinoid subset, such that $\pi_{\HT}^{-1}(U)=V$, write $V=\Spa(B,B^+)$ with the pro-\'etale $\Gamma$-torsor $V_\infty=\Spa(B_\infty,B_\infty^+)$ over $V$. Let $V_{n,m}\subset \calC^{\eta\-\alg}(G_n,C)$ be the subspace consisting of degree $\le m$ functions, which is stable under the $G_n$-action. The Gauss norm on $G_n$ defines a $G_n$-invariant lattice $T_{n,m}$ inside $V_{n,m}$. By Proposition \ref{prop:decomplete}, there exists a sufficiently small open subgroup $K_n\subset G_n$, such that for sufficiently large integer $n'$, there exists a unique free $B_{K_n,n'}^+$-submodule $D_{K_n,n'}^+(T_n)$ of rank $\dim_C V_{n,m}$ stable under $G_n\times\Gamma$ such that 
\begin{align*}
    (B_\infty)^\circ\ox_{B_{K_n,n'}^+}D_{K_n,n'}^+(T_{n,m})\isom (B_\infty)^\circ\ox_{\calO_C}T_{n,m}
\end{align*}
and this isomorphism is equivariant for the $G_n\times \Gamma$-action. By taking direct limits on $m$ and taking $p$-adic completions, we get an isomorphism 
\begin{align*}
    (B_\infty)^\circ\hat \ox_{B_{K_n,n'}^+}D_{K_n,n'}^+(T_n)\isom (B_\infty)^\circ\hat \ox_{\calO_C}T_n.
\end{align*}
where $T_n$ is the unit ball inside $\calC^{\eta\-\an}(G_n,C)=:V_n$. After inverting $p$, we get an isomorphism
\begin{align*}
    B_\infty\hat \ox_{B_{K_n,n'}}D_{K_n,n'}(V_n)\isom B_\infty\hat\ox_{C}V_n.
\end{align*}
From the construction of $\VB$, we know 
\begin{align*}
    \VB_{K_n}(\calC^{\eta\-\an}(G_n,C)\hat\ox_C\calO_{\fl})(U)\isom H^0(\Gamma,D_{K_n,n'}(V_n))=H^0(\Gamma,H^0(\Lie\Gamma,D_{K_n,n'}(V_n))).
\end{align*}
Then after taking $\Lie\Gamma$-invariant for the Sen operator as in Theorem \ref{thm:simpson} and taking $\Gamma$-invariant, as by Theorem \ref{thm:geomSen} the action of the Sen operator comes from the pull back of $\frn^0_{\iota}$, we get 
\begin{align*}
    B\hat\ox_{B_{K_n}}\VB_{K_n}(\calC^{\eta\-\an}(G_n,C)\hat\ox_C\calO_{\fl})(U)\isom B\hat\ox_{\calO_{\fl}(U)} H^0(\frn^0_{\iota},\calC^{\eta\-\an}(G_n,C)\hat\ox_C\calO_{\fl}(U)).
\end{align*}
This proves the claim.
Taking direct limit, we get
$$\VB(\calC^{\eta\-\lan}(\frg,C)\hat\ox_C\calO_{\fl})\hat\ox_{\calO^{\sm}_{K^v}}\calO_{K^v}\aisom \calC^{\eta\-\lan}\hat\ox_{\calO_{\fl}}\calO_{K^v}.$$
This also implies that $\VB(\calC^{\eta\-\lan}(\frg,C)\hat\ox_C\calO_{\fl})$ is invariant under $\frn^0_\iota$, thus $\VB(\calC^{\eta\-\lan}(\frg,C)\hat\ox_C\calO_{\fl})\isom\VB(\calC^{\eta\-\lan})$. Finally, as
\begin{align*}
     \calO_{K^v}^{\eta\-\lan}&=\dlim_n H^0_{\cont}(K_n,\calC^{\eta\-\an}(G_n,C)\hat\ox_C\calO_{K^v} )\\
    &=\dlim_n \VB(\calC^{\eta\-\an}(G_n,C)\hat\ox_C\calO_{\fl})\\
    &=\VB(\calC^{\eta\-\lan}(\frg,C)\hat\ox_C\calO_{\fl})
\end{align*}
we get all the desired results.
\end{proof}

\begin{proposition}\label{prop:R1VB}
For any $\eta\in\Hom_{\bbQ_p\-\alg}(L,C)$, we have 
\begin{align*}
    R^i\VB(\calC^{\eta\-\lan}(\frg,C)\hat\ox_C\calO_{\fl})\isom R^i\eta\-\lan(\calO_{K^v}).
\end{align*}
Moreover,
\begin{itemize}
    \item $R^1\iota\-\lan\calO_{K^v}=0$,
    \item $R^1\eta\-\lan\calO_{K^v}\neq 0$ for $\eta\neq\iota$.
\end{itemize}
\begin{proof}
Using the definition of $\VB$, we deduce 
\begin{align*}
    R^i\VB(\calC^{\eta\-\lan}(\frg,C)\hat\ox_C\calO_{\fl})\isom \dlim_{K_v}H^i_{\cont}(K_v,\calC^{\eta\-\lan}(K_v,C)\hat\ox_C\calO_{K^v})\isom R^i\eta\-\lan(\calO_{K^v}).
\end{align*}
Here we note that if $K$ is a compact $p$-adic Lie group, and $\{M_i\}_{i\in I}$ is a filtered direct sequence, then $H^q_{\cont}(K,\dlim_i M_i)\isom \dlim_i H^q_{\cont}(K, M_i)$. Indeed, if we write $H^q_{\cont}(K,\dlim_i M_i)$ in terms of cochains, it suffices to prove that $\calC^{\cont}(K,\dlim_i M_i)\isom \dlim_i \calC^{\cont}(K, M_i)$. This follows from the fact that $K$ is topologically finitely generated.

By the locally analytic Poincar\'e lemma applied to $\calC^{\iota\-\lan}(\frg,C)$, we know that 
\[
    H^1(\frn^0_{\iota},\calC^{\iota\-\lan}(\frg,C)\hat\ox_C\calO_{\fl})=0,
\]
which implies that $R^1\iota\-\lan\calO_{K^v}=0$. Next, when $\eta\neq\iota$, by Theorem \ref{thm:VB}, we have
\[
    R^1\VB(\calC^{\eta\-\lan}(\frg,C)\hat\ox_C\calO_{\fl})=\VB(R^1\Gamma(\frn^0_{\iota},\calC^{\eta\-\lan}(\frg,C)\hat\ox_C\calO_{\fl})).
\]
As $\frn^0_{\iota}$ acts trivially on $\calC^{\eta\-\lan}(\frg,C)\hat\ox_C\calO_{\fl}$, we deduce that
\begin{align*}
\VB(R^1\Gamma(\frn^0_{\iota},\calC^{\eta\-\lan}(\frg,C)\hat\ox_C\calO_{\fl}))\isom \VB(\calC^{\eta\-\lan}(\frg,C)\hat\ox_C\Omega^1_{\fl})
\end{align*}
which shows that 
\begin{align*}
    R^1\eta\-\lan\calO_{K^v}\isom \calO_{K^v}^{\eta\-\lan}\ox_{\calO_{K^v}^{\sm}}\Omega_{K^v}^{1,\sm}\neq 0.
\end{align*}
\end{proof}
\end{proposition}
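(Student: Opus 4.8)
The plan is to derive everything from the definition of $\VB$ and its derived functors together with the identification of the geometric Sen operator of $\calX_{K^v}\to\calX_{K^vK_v}$ with the universal nilpotent algebra $\frn^0_{\iota}$ (Theorem \ref{thm:geomSen}). For the first isomorphism $R^i\VB(\calC^{\eta\-\lan}(\frg,C)\hat\ox_C\calO_{\fl})\isom R^i\eta\-\lan(\calO_{K^v})$, I would unwind definitions on a basic open $U\in\ffrb$ with $\pi_{\HT}^{-1}(U)$ affinoid perfectoid, say $\pi_{\HT}^{-1}(U)=\pi_{K_v}^{-1}(V_{K_v})$ for some small $K_v$. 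By construction $\VB(\calG)(U)=\dlim_{K_v}H^0_{\cont}(K_v,\calG(U)\ox_{\calO_{\fl}(U)}\calO_{K^v}(U))$, and its derived functors are computed by $R^i\VB(\calG)(U)=\dlim_{K_v}H^i_{\cont}(K_v,\calG(U)\ox_{\calO_{\fl}(U)}\calO_{K^v}(U))$; for $\calG=\calC^{\eta\-\lan}(\frg,C)\hat\ox_C\calO_{\fl}$ the tensor product collapses to $\calC^{\eta\-\lan}(\frg,C)\hat\ox_C\calO_{K^v}(U)$. Writing $\calC^{\eta\-\lan}(\frg,C)=\dlim_n\calC^{\eta\-\an}(G_n,C)$ and using that a compact $p$-adic Lie group is topologically finitely generated (so continuous cochain cohomology commutes with filtered colimits), one reconciles the cofinal system of the $K_v$'s with that of the $G_n$'s --- legitimate since $\calX_{K^v}$ is a pro-\'etale $K_v$-torsor over every finite level --- to recognize the above as $\dlim_n H^i_{\cont}(G_n,\calO_{K^v}(U)\hat\ox_C\calC^{\eta\-\an}(G_n,C))=R^i\eta\-\lan(\calO_{K^v})(U)$. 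As $\ffrb$ is a basis of $\fl$, this identifies the two sheaves.

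Next I would reduce the two $R^1$ statements to Lie algebra cohomology of $\frn^0_{\iota}$. Although Theorem \ref{thm:VB}(ii) is phrased for $\Coh_{\frg}(\fl)$, the sheaf $\calC^{\eta\-\lan}(\frg,C)\hat\ox_C\calO_{\fl}$ is the $p$-adic completion of a filtered colimit of coherent $\frg$-equivariant sheaves, so the extension of $\VB$ and its derived functors discussed after Proposition \ref{prop:RVBcalc} (using the integral decompletion of Proposition \ref{prop:decomplete}) gives $R^1\VB(\calC^{\eta\-\lan}(\frg,C)\hat\ox_C\calO_{\fl})\isom\VB\big(H^1(\frn^0_{\iota},\calC^{\eta\-\lan}(\frg,C)\hat\ox_C\calO_{\fl})\big)$. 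Since $\frn^0_{\iota}$ is a line bundle on $\fl$, $H^\bullet(\frn^0_{\iota},-)$ is computed by the two-term Chevalley--Eilenberg complex $\calF\to\calF\ox_{\calO_{\fl}}(\frn^0_{\iota})^\vee$ attached to a local generator of $\frn^0_{\iota}$, so $H^1$ is the cokernel of this map.

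For $\eta=\iota$, this operator is the derivative, along the $\iota$-direction $\frn_{\iota}$, of the left regular action on $\calC^{\iota\-\lan}(\frg,C)$, and the locally $\iota$-analytic Poincar\'e lemma asserts exactly that it is surjective on the germ $\calC^{\iota\-\lan}(\frg,C)=\dlim_n\calC^{\iota\-\an}(G_n,C)$: an antiderivative of a locally $\iota$-analytic function is again locally $\iota$-analytic once the radius of analyticity is shrunk, which forces the colimit over $n$. Hence $H^1(\frn^0_{\iota},\calC^{\iota\-\lan}(\frg,C)\hat\ox_C\calO_{\fl})=0$ and $R^1\iota\-\lan\calO_{K^v}=\VB(0)=0$. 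For $\eta\neq\iota$, Theorem \ref{thm:geomSen} says the Sen operator lies in the $\iota$-component $\frg^0_{\iota}\subset\frg^0$; since the $\frg_{\iota}$-factor of $\frg$ annihilates $\calC^{\eta\-\an}(G_n,C)$ (the analogue of Proposition \ref{prop:123} for the $\frg$-action, compatible with the Sen operator), $\frn^0_{\iota}$ acts trivially on $\calC^{\eta\-\lan}(\frg,C)\hat\ox_C\calO_{\fl}$, so the Chevalley--Eilenberg differential vanishes and $H^1\isom\calC^{\eta\-\lan}(\frg,C)\hat\ox_C(\frn^0_{\iota})^\vee$, with $(\frn^0_{\iota})^\vee$ the tangent sheaf of $\fl$ (via the Euler sequence). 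Applying $\VB$, a tensor functor sending $\calO_{\fl}$ to $\calO^{\sm}_{K^v}$, together with Proposition \ref{prop:RVBiotala}, gives $R^1\eta\-\lan\calO_{K^v}\isom\calO_{K^v}^{\eta\-\lan}\ox_{\calO^{\sm}_{K^v}}\Omega^{1,\sm}_{K^v}$ up to a Tate twist, which is nonzero since it contains the nonzero subsheaf $\calO^{\sm}_{K^v}\ox_{\calO^{\sm}_{K^v}}\Omega^{1,\sm}_{K^v}=\Omega^{1,\sm}_{K^v}$.

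The main obstacle is the bookkeeping in the first paragraph: matching the colimit over all open compacts $K_v$ in the definition of $\VB$ with the colimit over the standard subgroups $G_n$ in the definition of $R^i\eta\-\lan$, and confirming that the derived-functor identity $R^i\VB=\VB\circ H^i(\frn^0_{\iota},-)$ genuinely survives the passage to the non-coherent sheaf $\calC^{\eta\-\lan}(\frg,C)\hat\ox_C\calO_{\fl}$. By contrast, the $\eta=\iota$ vanishing rests on the locally $\iota$-analytic Poincar\'e lemma, which I would cite as a black box, and the $\eta\neq\iota$ non-vanishing is then essentially formal once $\frn^0_{\iota}$ is known to act trivially.
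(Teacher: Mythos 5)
Your proposal is correct and follows essentially the same route as the paper's proof: identify $R^i\VB$ with the colimit of continuous group cohomology (using topological finite generation of compact $p$-adic Lie groups to pass $\dlim$ through $H^i_{\cont}$), invoke the locally $\iota$-analytic Poincar\'e lemma for the vanishing at $\eta=\iota$, and exploit the triviality of the $\frn^0_\iota$-action on the $\eta\neq\iota$ germ to compute $H^1$ as a twist by $(\frn^0_\iota)^\vee$. You are in fact slightly more careful than the paper's displayed proof in identifying $H^1(\frn^0_\iota,-)$ with a twist by the \emph{tangent} sheaf $(\frn^0_\iota)^\vee\cong T_{\fl}$ rather than $\Omega^1_{\fl}$ (which, after applying $\VB$ and using $\VB(\omega_{\fl}^{(1,-1)})\cong\Omega^{1,\sm}_{K^v}(-1)$, reconciles with the stated answer up to the Tate twist).
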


\begin{proposition}\label{prop:tensordecomposition}
For any subset of embeddings $J\subset \Hom_{\bbQ_p\-\alg}(L,C)$, there exists a natural decomposition 
\[
    \calO_{K^v}^{J\-\lan}\isom \hat\ox_{\calO^{\sm}_{K^v},\eta\in J}\calO_{K^v}^{\eta\-\lan}.
\]
\begin{proof}
By Proposition \ref{prop:Jgerm}, there is an isomorphism $\calC^{J\-\lan}(\frg,C)\isom \hat\ox_{C,\eta\in J}\calC^{\eta\-\lan}(\frg,C)$. Given two embeddings $\eta,\eta'\in \Hom_{\bbQ_p\-\alg}(L,C)$, we know there is an isomorphism 
\begin{align*}
    \calO_{K^v}\hat\ox_{\calO_{K^v}^{\sm}}\VB(\calC^{\eta\-\lan}(\frg,C)\hat\ox_C\calO_{\fl})\isom\calC^{\eta\-\lan}(\frg,C)\hat\ox_C\calO_{\fl} 
\end{align*}
and similarly for $\eta'$. This shows 
\begin{align*}
    \calO_{K^v}\hat\ox_{\calO_{K^v}^{\sm}}\VB(\calC^{\eta\-\lan}(\frg,C)\hat\ox_C\calO_{\fl})\hat\ox_{\calO_{K^v}^{\sm}}\VB(\calC^{\eta'\-\lan}(\frg,C)\hat\ox_C\calO_{\fl})\isom\calC^{\eta\-\lan}(\frg,C)\hat\ox_C\calC^{\eta'\-\lan}(\frg,C)\hat\ox_C\calO_{K^v}.
\end{align*}
Taking $K_n$-invariants (for the diagonal action) on the right hand side and taking colimits on $n$, we obtain 
\begin{align*}
    \VB(\calC^{\eta\-\lan}(\frg,C)\hat\ox_C\calO_{\fl})\hat\ox_{\calO_{K^v}^{\sm}}\VB(\calC^{\eta'\-\lan}(\frg,C)\hat\ox_C\calO_{\fl})\isom \VB(\calC^{\eta\-\lan}(\frg,C)\hat\ox_C\calC^{\eta'\-\lan}(\frg,C)\hat\ox_C\calO_{\fl}).
\end{align*}
Then the result follows by induction on embeddings in $J$, and it is a topological isomorphism by the open mapping theorem \cite[Theorem 1.1.17]{Eme17}.
\end{proof}
\end{proposition}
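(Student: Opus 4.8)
The plan is to express every sheaf in sight through the functor $\VB$ and then deduce the decomposition from the fact that $\VB$ commutes with (completed) tensor products in the relevant non-coherent setting. By Proposition \ref{prop:RVBiotala} we have $\calO_{K^v}^{\eta\-\lan}\isom\VB(\calC^{\eta\-\lan}(\frg,C)\hat\ox_C\calO_{\fl})$ for every embedding $\eta$, and likewise $\calO_{K^v}^{J\-\lan}\isom\VB(\calC^{J\-\lan}(\frg,C)\hat\ox_C\calO_{\fl})$; and Proposition \ref{prop:Jgerm} gives $\calC^{J\-\lan}(\frg,C)\isom\hat\ox_{C,\eta\in J}\calC^{\eta\-\lan}(\frg,C)$. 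So after these reductions the proposition amounts to the identity
\[
    \VB\bigl((\hat\ox_{C,\eta\in J}\calC^{\eta\-\lan}(\frg,C))\hat\ox_C\calO_{\fl}\bigr)\isom\hat\ox_{\calO^{\sm}_{K^v},\eta\in J}\VB\bigl(\calC^{\eta\-\lan}(\frg,C)\hat\ox_C\calO_{\fl}\bigr),
\]
and, by induction on $|J|$, it suffices to treat the case of two factors, where one factor is allowed to already be such a completed tensor product.

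For the two-factor case I would use the base-change isomorphism supplied by Proposition \ref{prop:RVBiotala}: for each $\eta$,
\[
    \calO_{K^v}\hat\ox_{\calO^{\sm}_{K^v}}\VB\bigl(\calC^{\eta\-\lan}(\frg,C)\hat\ox_C\calO_{\fl}\bigr)\isom\calC^{\eta\-\lan}(\frg,C)\hat\ox_C\calO_{\fl}.
\]
Tensoring the instance for $\eta$ with the instance for $\eta'$ over $\calO_{K^v}$ — and using that the completed tensor product commutes with the colimits in play, by \cite[Proposition 1.1.32]{Eme17} — yields
\[
    \calO_{K^v}\hat\ox_{\calO^{\sm}_{K^v}}\VB\bigl(\calC^{\eta\-\lan}(\frg,C)\hat\ox_C\calO_{\fl}\bigr)\hat\ox_{\calO^{\sm}_{K^v}}\VB\bigl(\calC^{\eta'\-\lan}(\frg,C)\hat\ox_C\calO_{\fl}\bigr)\isom\calC^{\eta\-\lan}(\frg,C)\hat\ox_C\calC^{\eta'\-\lan}(\frg,C)\hat\ox_C\calO_{K^v}.
\]
The right-hand side is the base change of $\calC^{\eta\-\lan}(\frg,C)\hat\ox_C\calC^{\eta'\-\lan}(\frg,C)\hat\ox_C\calO_{\fl}$ from $\calO_{\fl}$ to $\calO_{K^v}$, so it carries a diagonal $K_n$-action; taking $K_n$-invariants (equivalently, descending along the geometric Sen operator $\frn^0_\iota$, via Theorem \ref{thm:geomSen} and the construction of $\VB$) and then passing to the colimit over $n$ identifies the $\calO^{\sm}_{K^v}$-module $\VB(\calC^{\eta\-\lan}(\frg,C)\hat\ox_C\calO_{\fl})\hat\ox_{\calO^{\sm}_{K^v}}\VB(\calC^{\eta'\-\lan}(\frg,C)\hat\ox_C\calO_{\fl})$ with $\VB(\calC^{\eta\-\lan}(\frg,C)\hat\ox_C\calC^{\eta'\-\lan}(\frg,C)\hat\ox_C\calO_{\fl})$. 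Feeding this back into the induction, and invoking the open mapping theorem \cite[Theorem 1.1.17]{Eme17} to upgrade the resulting algebraic isomorphism to a topological one, completes the argument.

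The step I expect to be the main obstacle is precisely this descent along $\VB$ in the non-coherent, non-$\frn^0_\iota$-trivial situation. Theorem \ref{thm:VB}(v) gives the tensor-functor property of $\VB$ only for coherent sheaves annihilated by $\frn^0_\iota$, whereas $\calC^{\eta\-\lan}(\frg,C)\hat\ox_C\calO_{\fl}$ is neither coherent (it is a completed colimit of coherent subsheaves) nor, when $\eta=\iota$, killed by $\frn^0_\iota$. One therefore has to re-run, at the completed/ind level, the decompletion machinery of Proposition \ref{prop:decomplete} together with the uniform choice of $K_n$ used in the proof of Proposition \ref{prop:RVBiotala}, checking that taking $\frn^0_\iota$-invariants, then $K_n$-invariants, then the colimit over $n$, all commute with the various completed tensor products (over $C$, over $\calO_{\fl}$, and over $\calO^{\sm}_{K^v}$); and one must verify that the base-change property propagates through the induction, so that each partial product $\VB(\calC^{J'\-\lan}(\frg,C)\hat\ox_C\calO_{\fl})$ again satisfies the hypotheses needed for the next step. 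Once these bookkeeping points are settled, the statement follows formally.
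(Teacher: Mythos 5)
Your proposal is correct and follows essentially the same route as the paper's own proof: reduce to the two-factor case by induction, use the base-change isomorphism $\calO_{K^v}\hat\ox_{\calO^{\sm}_{K^v}}\VB(\calC^{\eta\-\lan}(\frg,C)\hat\ox_C\calO_{\fl})\isom\calC^{\eta\-\lan}(\frg,C)\hat\ox_C\calO_{K^v}$ from Proposition \ref{prop:RVBiotala}, tensor the two instances over $\calO_{K^v}$, descend by taking $K_n$-invariants and colimits, and invoke the open mapping theorem. The only real difference is that you explicitly flag that Theorem \ref{thm:VB}(v) covers only coherent $\frn^0_\iota$-trivial sheaves and must be rerun at the ind/completed level via the decompletion machinery of Propositions \ref{prop:decomplete}--\ref{prop:RVBiotala}; the paper does this implicitly in the line ``taking $K_n$-invariants... and taking colimits on $n$.''
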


\subsection{A local description of $\calO_{K^v}^{\Sigma\-\lan}$}\label{locdescr}
Next, we perform some local calculation to study the structure of $\calO_{K^v}^{\lan}$ as an $\calO_{K^v}^{\sm}$-module. Since $\calO_{K^v}^{\lan}$ decomposes as a completed tensor product (see Proposition \ref{prop:tensordecomposition}) of $\calO_{K^v}^{\eta\-\lan}$ over $\calO^{\sm}_{K^v}$ for each $\eta\in\Hom_{\bbQ_p\-\alg}(L,C)$, it suffices to study the subsheaves $\calO_{K^v}^{\eta\-\lan}$. There are two cases, depending on whether $\eta\neq\iota$ or $\eta=\iota$. Roughly speaking, when $\eta=\iota$, the structure of the sheaf $\calO_{K^v}^{\iota\-\lan}$ follows a similar pattern to that of the sheaf $\calO_{K^p}^{\lan}$ in \cite[4.2.6]{Pan22}. However, when $\eta\neq\iota$, the sheaf $\calO_{K^v}^{\eta\-\lan}$ behaves like a complete tensor product of $\calO_{K^v}^{\sm}$ with $\calC^{\eta\-\lan}(\frg,C)$, the germs of locally $\eta$-analytic function at $1$.

\subsubsection*{A local description of $\calO_{K^v}^{\iota\-\lan}$}
We first consider the subsheaf $\calO_{K^v}^{\iota\-\lalg}\subset\calO_{K^v}^{\iota\-\lan}$ consisting of locally algebraic sections for the $\GL_2(L)$-action. Let $\calC^{\iota\-\alg}(\GL_2(L),C)$ be the ring of $C$-valued $\iota$-algebraic functions on $\GL_2(L)$. We have an isomorphism of $\GL_2(L)\times \GL_2(L)$-modules (given by left and right translations)
\begin{align}\label{123}
    \calC^{\iota\-\alg}(\GL_2(L),C)=\bigoplus_{(a,b)\in\bbZ^2,a\ge b}(V_\iota^{(a,b)})^*\ox V_\iota^{(a,b)}.
\end{align}
Let $\calC^{\iota\-\alg}(G_n,C)$ be the subring of $\calC^{\iota\-\an}(G_n,C)$ consisting of functions that come from restrictions of functions in $\calC^{\iota\-\alg}(\GL_2(L),C)$. Let $\calC^{\iota\-\lalg}(\frg,C)\subset \calC^{\iota\-\lan}(\frg,C)$ be the algebra of germs of locally $\eta$-algebraic function at $1$, defined as 
\[
    \calC^{\iota\-\lalg}(\frg,C)=\dlim_n\calC^{\iota\-\alg}(G_n,C).
\]
Define $\calC^{\iota\-\lalg}:=H^0(\frn^0_{\iota},\calC^{\iota\-\lalg}(\frg,C)\ox_C\calO_{\fl})$ as a filtered colimit of coherent sheaf on $\fl$, annihilated by $\frn^0_\iota$. Here each term $\calC^{\iota\-\alg}(G_n,C)$ is equipped with the natural topology on finite dimensional $C$-vector spaces and $\calC^{\iota\-\lalg}(\frg,C)$ is equipped with the direct limit topology. Since
\[  
    \VB(\calC^{\iota\-\lalg}(\frg,C)\ox_C\calO_{\fl})\isom\calO_{K^v}^{\iota\-\lalg}
\]
by definition of the locally algebraic sections \cite[Corollary 4.2.7]{Eme17} and (\ref{123}), we obtain an isomorphism 
\[
    \calO_{K^v}^{\iota\-\lalg}\ox_{\calO^{\sm}_{K^v}}\calO_{K^v}\aisom \calC^{\iota\-\lalg}\ox_{\calO_{\fl}}\calO_{K^v}
\]
compatible with $\GL_2(L)$-actions as in the locally analytic case. As $H^0(\frn^0_{\iota},V^{(a,b)}_{\iota}\ox_C\calO_{\fl})\isom \omega_{\fl}^{(b,a)}$, we have 
\[
    \calC^{\iota\-\lalg}\ox_{\calO_{\fl}}\calO_{K^v}\isom\bigoplus _{(a,b)\in\bbZ^2,a\ge b}V_\iota^{(a,b)}\ox_C\omega_{K^v}^{(a,b)}(a)
\]
Taking $G_n$-invariants and taking direct limits on $n$, we get 
\[
    \calO_{K^v}^{\iota\-\lalg}\isom \bigoplus _{(a,b)\in\bbZ^2,a\ge b}V^{(a,b)}_{\iota}\ox_C\omega_{K^v}^{(a,b),\sm}(a).
\]
The self tensor product on $\calO_{K^v}^{\iota\-\lalg}$ over $\calO_{K^v}^{\sm}$ makes it an $\calO_{K^v}^{\sm}$-algebra. The $\calO_{K^v}^{\sm}$-algebra $\calO_{K^v}^{\iota\-\lalg}$ is generated by the two generators
\begin{align*}
    \calO_{K^v}^{V^{(1,0)}_\iota\-\lalg}\isom V^{(1,0)}_{\iota}\ox\omega_{K^v}^{(1,0),\sm}(1),\quad \calO_{K^v}^{V^{(1,1)}_\iota\-\lalg}\isom V^{(1,1)}_{\iota}\ox\omega_{K^v}^{(1,1),\sm}(1).
\end{align*}
Here, for $V$ an $\eta$-algebraic representation of $\GL_2(L)$ over $C$, we define $\calO_{K^v}^{V\-\lalg}\subset \calO_{K^v}^{\eta\-\lan}$ as the image of the natural inclusion 
\begin{align*}
    \Hom_{\gl_2(L)\ox_{L,\eta}C}(V,\calO_{K^v}^{\eta\-\lan})\ox_C V\hookrightarrow \calO_{K^v}^{V\-\lalg}.
\end{align*}
More precisely, for $U\in\ffrb$, let $s$ be a local basis of $\omega_{K^v}^{(-1,0),\sm}(U)$ with $e_{1,\iota},e_{2,\iota}$ a basis of $V^{(1,0)}_{\iota}(1)$, then $\frac{e_{1,\iota}}{s}$ and $\frac{e_{2,\iota}}{s}$ are generators of $V^{(1,0)}(1)\ox\omega_{K^v}^{(1,0),\sm}(U)=\calO_{K^v}^{V^{(1,0)}_{\iota}\-\lalg}(U)$. The underlying $\calO_{K^v}^{\sm}$-module structure on $\calO_{K^v}^{V^{(1,1)}_{\iota}\-\lalg}=V^{(1,1)}_{\iota}\ox_C\omega_{K^v}^{(1,1),\sm}(1)$ is trivial, and we let $\mathrm{t}$ to be a generator of this sheaf. The $\GL_2(\bbQ_p)$-case analogue of this element is the element $\rm t$ described in \cite[3.2.1]{PanII}. Then we have 
\begin{align*}
    \calO_{K^v}^{\iota\-\lalg}(U)\isom \calO_{K^v}^{\sm}(U)[\frac{e_{1,\iota}}{s},\frac{e_{2,\iota}}{s},\mathrm{t}].
\end{align*}


In order to describe $\calO_{K^v}^{\iota\-\lan}$, we also need sections coming from the flag variety.
\begin{proposition}
The image of the natural map $\calO_{\fl}\to\calO_{K^{v}}$ lands in $\calO_{K^{v}}^{\iota\-\lan}$.
\begin{proof}
Firstly, as in \cite[4.2.6]{Pan22} we know that the natural map $\calO_{\fl}\to\calO_{K^{v}}$ lands in $\calO_{K^{v}}^{\Sigma\-\lan}$. By construction the $\GL_2(L)$ action on $\fl$ is given by the natural action of $\GL_2(C)$ via the embedding $\iota:\GL_2(L)\inj\GL_2(C)$. We see that the image of $\calO_{\fl}$ in $\calO_{K^v}^{\Sigma\-\lan}$ is killed by $\frg_{\Sigma\bs \{\iota\}}$. Therefore, $\calO_{\fl}$ is contained in $\calO_{K^v}^{\iota\-\lan}$.
\end{proof}
\end{proposition}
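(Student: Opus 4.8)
The plan is to proceed in two steps: first show that the image of $\calO_{\fl}$ lands in the full locally analytic part $\calO_{K^v}^{\Sigma\-\lan}$, and then cut it down to $\calO_{K^v}^{\iota\-\lan}$ by an infinitesimal argument on the $\frg$-action.

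For the first step I would either quote Pan's local computation \cite[4.2.6]{Pan22} or rerun it in our setting. Using Theorem \ref{thm:piHT} one picks a cofinal system of sufficiently small $U\in\ffrb$ with $V:=\pi_{\HT}^{-1}(U)$ affinoid perfectoid and equal to the preimage of some $V_{K_v}\subset\calX_{K^vK_v}$; on such $U$ the algebra $\calO_{\fl}(U)$ is topologically generated over $C$ by an affine coordinate $t$ on $\bbP^1$. The structure map $\calO_{\fl}(U)\to\calO_{K^v}(U)=H^0(V,\calO_{\calX_{K^v}})$ is $\GL_2(L)$-equivariant, where $\GL_2(L)$ acts on $\calO_{\fl}$ through $\iota:\GL_2(L)\inj\GL_2(C)$ by fractional linear transformations. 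Hence, for $g$ in a standard open subgroup $G_n$ and for a slightly smaller affinoid $U'\subset U$ with $G_n\cdot U'\subset U$ (equivalently $G_n\cdot\pi_{\HT}^{-1}(U')\subset V$, by equivariance of $\pi_{\HT}$), the orbit map is $g\mapsto g\cdot t|_{U'}=\frac{\iota(a)t+\iota(b)}{\iota(c)t+\iota(d)}$, a convergent power series in the matrix entries of $g$ with values in the affinoid algebra $\calO_{\fl}(U')$. Pushing forward along $\pi_{\HT}$ shows that the image of $t$, and hence the image of all of $\calO_{\fl}$, consists of locally analytic vectors. (Alternatively, the coordinate functions on $\fl$ manifestly define sections of $\VB(\calC^{\Sigma\-\lan}(\frg,C)\hat\ox_C\calO_{\fl})\isom\calO_{K^v}^{\Sigma\-\lan}$.)

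For the second step, the crucial point is that these orbit maps involve only the entries of $\iota(g)$, never those of another conjugate $\eta(g)$ with $\eta\neq\iota$. Infinitesimally, the $\frg$-action on $\calO_{\fl}$ is the $C$-linear extension of the differential $\gl_2(L)\to\gl_2(C)$ of $\iota$, which under the identification $\gl_2(L)\ox_{\bbQ_p}C\isom\prod_{\eta\in\Sigma}\gl_2(C)$ is exactly the projection onto the $\iota$-factor. Thus the $\frg$-action on $\calO_{\fl}$, and therefore on its image in $\calO_{K^v}^{\Sigma\-\lan}$, factors through $\frg_\iota$, so in particular $\frg_{\Sigma\bs\{\iota\}}$ annihilates that image. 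By the identity $V^{(G_n,\{\iota\})\-\an}=(V^{(G_n,\Sigma)\-\an})^{\frg_{\Sigma\bs\{\iota\}}}$ recalled in \S\ref{ladef}, any locally analytic vector killed by $\frg_{\Sigma\bs\{\iota\}}$ is automatically locally $\iota$-analytic, and hence the image of $\calO_{\fl}$ lies in $\calO_{K^v}^{\iota\-\lan}$.

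The only genuinely delicate part is the first step: one must keep careful track of the topological $C$-vector space in which the orbit maps take values and of the compatibility of pullback along the perfectoid map $\pi_{\HT}$ with the formation of locally analytic vectors on affinoid perfectoid opens. But this is entirely parallel to the modular-curve case in \cite[4.2.6]{Pan22} and involves no new idea; the second step is a formal consequence once the $\frg$-module structure on $\calO_{\fl}$ is unwound.
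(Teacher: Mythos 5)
Your proposal is correct and follows essentially the same two-step strategy as the paper's own proof: quote \cite[4.2.6]{Pan22} to land in $\calO_{K^v}^{\Sigma\-\lan}$, then observe that the $\frg$-action on $\calO_{\fl}$ factors through $\frg_\iota$ (since the $\GL_2(L)$-action on $\fl$ is via $\iota$) and invoke $(V^{\Sigma\-\lan})^{\frg_{\Sigma\bs\{\iota\}}}=V^{\iota\-\lan}$. The extra detail you supply on the orbit maps in the first step is a welcome but inessential elaboration of the same argument.
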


Let $\mathrm{t} \in \calO_{K^v}^{V^{(1,1)}_{\iota}\-\lalg}$, $e_{1,\iota},e_{2,\iota}\in \omega_{K^v}^{(-1,0),V^{(1,0)}_{\iota}\-\lalg}$ as defined before. Let $U\in\ffrb$  such that $e_{1,\iota}$ does not vanish on $U$.
\begin{itemize}
    \item Let $e_{1,\iota,n}\in \omega_{K_n}^{(-1,0)}(U)$ be such that $||e_{1,\iota}-e_{1,\iota,n}||\le p^{-n}$, so that $||\frac{e_{1,\iota}}{e_{1,\iota,n}}-1||\le p^{-n}$.
    \item Let $x=\frac{e_{2,\iota}}{e_{1,\iota}}\in\calO^{\iota\-\lan}_{K^v}(U)$, and let $x_n\in\calO_{K_n}(U)$ be such that $||x-x_n||\le p^{-n}$.
    \item Let $\mathrm{t}_n\in\calO_{K_n}(U)$ be such that $||\mathrm{t}-\mathrm{t}_n||\le p^{-n}$.
\end{itemize}

\begin{theorem}\label{thm:Oiotalalocal}
Let $U\in\ffrb$ such that $e_1$ does not vanish on $U$. Let $s\in\calO_{K^v}^{\iota\-\lan}(U)$, then there exists a sufficiently large $n$ such that
\[
    s=\sum_{i,j,k\ge 0}c_{i,j,k}(x-x_n)^i(\log(\frac{e_{1,\iota}}{e_{1,n}}))^j(\log(\frac{\rm t}{\mathrm{t}_n}))^k
\]
with $c_{i,j,k}\in\calO_{K_n}(U)$ and $p^{(i+j+k)n}c_{i,j,k}\to 0$ as $i+j+k\to \infty$. 
\begin{proof}
As the flag variety in our case is also the analytification of $\bbP^1$, we can provide a description of the section in $\calO_{K^v}^{\iota\-\lan}(U)$ using the same method in \cite[Theorem 4.3.9]{Pan22}.
\end{proof}
\end{theorem}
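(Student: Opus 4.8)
The plan is to adapt the argument of \cite[Theorem 4.3.9]{Pan22} essentially verbatim: the flag variety $\fl$ here is again an analytification of $\bbP^1$, so the local geometry near a point of $\fl$, and hence the computation of the local structure of locally $\iota$-analytic sections, is formally identical to the modular curve case, only the base and coefficient fields change. The essential input is Proposition \ref{prop:RVBiotala} together with the claim proved inside it: for each sufficiently small $U\in\ffrb$ there is a compact open subgroup $K_n\subset\GL_2(L)$ and an $\calO_{K_n}|_U$-sheaf $\VB_{K_n}(\calC^{\iota\-\an}(G_n,C)\hat\ox_C\calO_{\fl})$ whose base change along $\calO_{K_n}\to\calO_{K^v}$ is $\calC^{\iota,n\-\an}\hat\ox_{\calO_{\fl}}\calO_{K^v}$ and whose $K_n$-invariants compute $\calO_{K^v}^{(G_n,\iota)\-\an}$. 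So it suffices to give an explicit local description of this $\calO_{K_n}|_U$-sheaf.

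First I would check that the three proposed coordinates are well defined and locally $\iota$-analytic. The function $x=e_{2,\iota}/e_{1,\iota}$ is the pullback of the standard affine coordinate on $\fl=\bbP^1$, hence lies in the image of $\calO_{\fl}\to\calO_{K^v}$, which sits inside $\calO_{K^v}^{\iota\-\lan}$ by the preceding proposition. Since $e_{1,\iota}\in\calO_{K^v}^{V_\iota^{(1,0)}\-\lalg}$ and $\mathrm t\in\calO_{K^v}^{V_\iota^{(1,1)}\-\lalg}$ are locally $\iota$-algebraic and $e_{1,\iota,n},\mathrm t_n$ are chosen $p^{-n}$-close to them at level $K_n$, the ratios $e_{1,\iota}/e_{1,\iota,n}$ and $\mathrm t/\mathrm t_n$ are $\equiv 1\bmod p^n$, so $\log(e_{1,\iota}/e_{1,\iota,n})$ and $\log(\mathrm t/\mathrm t_n)$ converge; being limits of polynomials in locally $\iota$-analytic sections they again lie in $\calO_{K^v}^{\iota\-\lan}(U)$. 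Lemmas \ref{lem:Dercont} and \ref{lem:GNnormusualnorm} supply the uniform control on the $(G_n,\iota)$-analyticity radius, and in particular the bound $p^{(i+j+k)n}c_{i,j,k}\to 0$, once $n$ is enlarged.

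The heart of the argument is to show that, after enlarging $n$ so that $s\in\calO_{K^v}^{(G_n,\iota)\-\an}(U)$ and $U$, $x_n$, $e_{1,\iota,n}$, $\mathrm t_n$ are all defined at level $K_n$, the continuous $\calO_{K_n}(U)$-algebra map
\[
    \Phi:\Big\{\textstyle\sum_{i,j,k\ge 0}c_{i,j,k}X^iY^jZ^k : c_{i,j,k}\in\calO_{K_n}(U),\ p^{(i+j+k)n}c_{i,j,k}\to 0\Big\}\;\longrightarrow\;\calO_{K^v}^{(G_n,\iota)\-\an}(U),
\]
sending $X,Y,Z$ to $x-x_n$, $\log(e_{1,\iota}/e_{1,\iota,n})$, $\log(\mathrm t/\mathrm t_n)$, is an isomorphism. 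For surjectivity one base-changes along $\calO_{K_n}\to\calO_{K^v}$: the target becomes $(\calC^{\iota,n\-\an}\hat\ox_{\calO_{\fl}}\calO_{K^v})(U)$, and $\calC^{\iota,n\-\an}=\Gamma(\frn_\iota^0,\calC^{\iota\-\an}(G_n,C)\hat\ox_C\calO_{\fl})$ is, locally on $U$, a free restricted-power-series algebra over $\calO_{\fl}|_U$ on the coordinate $x$ of $\fl$ together with the two coordinate functions on $G_n$ transverse to the unipotent direction killed by $\frn_\iota^0$ — this is exactly the local model of \cite[\S 4.3]{Pan22}. Matching these three generators with $x-x_n$, $\log(e_{1,\iota}/e_{1,\iota,n})$, $\log(\mathrm t/\mathrm t_n)$ — the first via the Euler/Hodge--Tate sequence, the latter two via the identifications $\calO_{K^v}^{V_\iota^{(1,0)}\-\lalg}\isom V_\iota^{(1,0)}\ox_C\omega_{K^v}^{(-1,0),\sm}(1)$ and $\calO_{K^v}^{V_\iota^{(1,1)}\-\lalg}\isom V_\iota^{(1,1)}\ox_C\omega_{K^v}^{(-1,-1),\sm}(1)$, which realize the logarithm of the period discrepancy between the pro-\'etale and algebraic trivializations as the surviving torus (resp.\ ``$\mathrm t$'') coordinate — and then using density of $\calO^{\sm}_{K^v}$ in $\calO_{K^v}$ (Theorem \ref{thm:piHT}(iii)) to approximate the $\calO_{K^v}$-coefficients successively by elements of $\calO_{K_n}(U)$, one writes $s\ox 1$ in the claimed form with coefficients in $\calO_{K_n}(U)$; since $s$ and all generators are $K_n$-invariant, taking $K_n$-invariants gives the expansion of $s$. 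Injectivity is immediate: composing $\Phi$ with the inclusion $a\mapsto a\ox 1$ of Proposition \ref{prop:RVBiotala} (split by evaluation at $1$), it becomes the tautological embedding of the free restricted-power-series algebra into $(\calC^{\iota,n\-\an}\hat\ox_{\calO_{\fl}}\calO_{K^v})(U)$.

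I expect the main obstacle to be the precise identification used in surjectivity: pinning down that $\log(e_{1,\iota}/e_{1,\iota,n})$ and $\log(\mathrm t/\mathrm t_n)$ are \emph{exactly} the two coordinates on $G_n$ dual to the torus and to the determinant direction surviving $\frn_\iota^0$-invariance — equivalently, that the pro-\'etale trivializations of $V_\iota^{(1,0)}$ and $V_\iota^{(1,1)}$ over $\calX_{K^v}$, compared to their algebraic descriptions at level $K_n$, differ precisely by these logarithms — and carrying the $p$-adic convergence bookkeeping (the exponent in $p^{(i+j+k)n}$, governed by the analyticity radius of $G_n$) uniformly through the approximation step. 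All of this is carried out in \cite[\S 4.3]{Pan22} and transfers without change, since it uses only that $\fl=\bbP^1$ and the explicit shape of the Hodge--Tate period map, both available here.
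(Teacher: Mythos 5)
Your proposal is correct and follows essentially the same route as the paper, which simply invokes the argument of \cite[Theorem 4.3.9]{Pan22} observing that $\fl\isom\bbP^1$ here as well; you have filled in what that transfer amounts to, namely exhibiting $x-x_n$, $\log(e_{1,\iota}/e_{1,\iota,n})$, $\log(\mathrm t/\mathrm t_n)$ as the local coordinates via Proposition \ref{prop:RVBiotala} and the decompleted sheaf $\VB_{K_n}$. The "main obstacle" you flag — identifying the two logarithms with the surviving torus/determinant directions after $\frn_\iota^0$-invariance — is precisely the content of Pan's local computation and goes through unchanged, so no gap remains.
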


\begin{proposition}\label{thm:Oiotalalocal1}
Let $U\in\ffrb$ be a sufficiently small open subset. The subspace consisting of locally $\iota$-algebraic sections $\calO_{K^v}^{\iota\-\lalg}(U)$ is dense in $\calO_{K^v}^{\iota\-\lan}(U)$ in the LB topology on $\calO_{K^v}^{\iota\-\lan}(U)$.
\begin{proof}
We may assume that $e_{1,\iota}$ does not vanish on $U$. Let $s\in\calO_{K^v}^{\iota\-\lan}(U)$ be a locally $\iota$-analytic section, which we may assume of the form 
\[
    s=\sum_{i,j,k\ge 0}c_{i,j,k}(x-x_n)^i(\log(\frac{e_{1,\iota}}{e_{1,n}}))^j(\log(\frac{\mathrm{t}}{\mathrm{t}_n}))^k.
\]
As 
\begin{align*}
    \log(\frac{e_{1,\iota}}{e_{1,n}})=\sum_{m= 1}^\infty (-1)^{m-1}\frac{1}{m}(\frac{e_{1,\iota}}{e_{1,n}}-1)^m
\end{align*}
and $e_{1,\iota},e_{1,n}\in\calO_{K^v}^{\sm}(U)$, one can approximate $\log(\frac{e_{1,\iota}}{e_{1,n}})$ by $K_n$-analytic, locally $\iota$-algebraic sections by truncating the summation. Similarly $\log(\frac{\mathrm{t}}{\mathrm{t}_n})$ can be approximated by $K_n$-analytic, locally $\iota$-algebraic sections. Finally, recall that $x=\frac{e_{2,\iota}}{e_{1,\iota}}$, and $e_{1,\iota,n}\in \omega_{K^v}^{(-1,0),\sm}(U)$ is a locally $\iota$-algebraic sections such that $e_{1,\iota}/e_{1,\iota,n}\in \calO_{K^v}^{\iota\-\lalg}(U)$ is sufficiently close to $1$. Then we can write 
\[
    x=\frac{e_{2,\iota}}{e_{1,\iota}}=\frac{e_{2,\iota}/e_{1,\iota,n}}{e_{1,\iota}/e_{1,\iota,n}}=\frac{\frac{e_{2,\iota}}{e_{1,\iota,n}}}{1+(\frac{e_{1,\iota}}{e_{1,\iota,n}}-1)}=\frac{e_{2,\iota}}{e_{1,\iota,n}}\sum_{i\ge 0}(1-\frac{e_{1,\iota}}{e_{1,\iota,n}})^i,
\]
which enables us to approximate $x$ using $K_n$-analytic, locally $\iota$-algebraic sections by truncating the summation. Here $\frac{e_{2,\iota}}{e_{1,\iota,n}}\in \calO_{K^v}^{\iota\-\lalg}(U)$ as $e_{2,\iota}$ is locally $\iota$-algebraic and $e_{1,\iota,n}$ is smooth for the $K_n$-action. Thus we can approximate $s$ using $K_n$-analytic, locally $\iota$-algebraic sections for some fixed $n$, which applies that $\calO_{K^v}^{\iota\-\lalg}(U)$ is dense in $\calO_{K^v}^{\iota\-\lan}(U)$.
\end{proof}
\end{proposition}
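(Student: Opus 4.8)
The plan is to deduce the density from the explicit local expansion of Theorem~\ref{thm:Oiotalalocal}, by approximating one at a time the three ``transcendental'' generators occurring there by locally $\iota$-algebraic sections. First I would shrink $U$ if necessary so that $e_{1,\iota}$ does not vanish on it (harmless for a density statement, and for such $U$ the element $e_{1,\iota,n}\in\omega_{K_n}^{(1,0)}(U)$ approximating $e_{1,\iota}$ is a nonvanishing smooth section once $n$ is large). Fix $s\in\calO_{K^v}^{\iota\-\lan}(U)$. By Theorem~\ref{thm:Oiotalalocal} there is a sufficiently large $n$ with
\[
    s=\sum_{i,j,k\ge 0}c_{i,j,k}\,(x-x_n)^i\,\big(\log(e_{1,\iota}/e_{1,\iota,n})\big)^j\,\big(\log(\mathrm{t}/\mathrm{t}_n)\big)^k,
\]
$c_{i,j,k}\in\calO_{K_n}(U)$, $p^{(i+j+k)n}c_{i,j,k}\to 0$, and (after possibly enlarging $n$) this series converges in the Banach piece $\calO_{K^v}^{(K_n,\iota)\-\an}(U)$ of the LB space $\calO_{K^v}^{\iota\-\lan}(U)$, with its norm $||\cdot||_{K_n,\iota}$. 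Hence its partial sums already converge to $s$ in the LB topology, and it suffices to approximate each partial sum in $||\cdot||_{K_n,\iota}$ by elements of $\calO_{K^v}^{\iota\-\lalg}(U)$. Since $\calO_{K^v}^{\iota\-\lalg}(U)$ is an $\calO_{K^v}^{\sm}$-subalgebra of $\calO_{K^v}^{\iota\-\lan}(U)$ containing $\calO_{K_n}(U)$ (in particular $x_n$, $e_{1,\iota,n}$, $\mathrm{t}_n$ and all $c_{i,j,k}$) as well as $e_{1,\iota},e_{2,\iota},\mathrm{t}$, and since multiplication is continuous for $||\cdot||_{K_n,\iota}$, it is enough to approximate each of $x$, $\log(e_{1,\iota}/e_{1,\iota,n})$ and $\log(\mathrm{t}/\mathrm{t}_n)$ separately.

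For the two logarithmic terms I would apply $\log(1+t)=\sum_{m\ge 1}(-1)^{m-1}t^m/m$ to $t=e_{1,\iota}/e_{1,\iota,n}-1$ and $t=\mathrm{t}/\mathrm{t}_n-1$; these $t$ already lie in $\calO_{K^v}^{\iota\-\lalg}(U)$ (a ratio of a locally $\iota$-algebraic section by a $K_n$-smooth one) and satisfy $||t||\le p^{-n}$, so the series converge and their truncations are locally $\iota$-algebraic sections converging to the logarithms in $||\cdot||_{K_n,\iota}$. For $x=e_{2,\iota}/e_{1,\iota}$ I would divide numerator and denominator by $e_{1,\iota,n}$ and expand geometrically:
\[
    x=\frac{e_{2,\iota}}{e_{1,\iota,n}}\cdot\frac{1}{1+(e_{1,\iota}/e_{1,\iota,n}-1)}=\frac{e_{2,\iota}}{e_{1,\iota,n}}\sum_{i\ge 0}\big(1-e_{1,\iota}/e_{1,\iota,n}\big)^i ,
\]
where $e_{2,\iota}/e_{1,\iota,n}\in\calO_{K^v}^{\iota\-\lalg}(U)$ and $1-e_{1,\iota}/e_{1,\iota,n}\in\calO_{K^v}^{\iota\-\lalg}(U)$ has norm $\le p^{-n}$, so truncation again yields locally $\iota$-algebraic approximations of $x$. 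Multiplying the three approximations together approximates every monomial, hence every partial sum, hence $s$, which gives the density.

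The estimates and bookkeeping are routine; the one point that deserves care is purely topological — arranging that the Theorem~\ref{thm:Oiotalalocal} expansion, the logarithmic and geometric series, and all the final approximations take place inside \emph{one and the same} Banach piece $\calO_{K^v}^{(K_n,\iota)\-\an}(U)$ of the LB space, so that ``convergence'' consistently means convergence for a single norm $||\cdot||_{K_n,\iota}$. Once $n$ is fixed large enough (both so that Theorem~\ref{thm:Oiotalalocal} applies and so that all the relevant error terms have norm $\le p^{-n}<1$, ensuring convergence of the $\log$ and geometric series and boundedness of all monomials), everything happens at that fixed level and the argument closes. I expect this coordination of a single Banach level, rather than any individual estimate, to be the only (modest) obstacle.
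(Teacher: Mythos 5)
Your proposal is correct and takes essentially the same route as the paper: expand $s$ via Theorem~\ref{thm:Oiotalalocal}, then approximate the three transcendental generators $\log(e_{1,\iota}/e_{1,\iota,n})$, $\log(\mathrm{t}/\mathrm{t}_n)$, and $x$ by truncating their $\log$ and geometric series, whose partial sums are locally $\iota$-algebraic. The only added value in your write-up is the explicit coordination that all convergences take place at one fixed Banach level $\calO_{K^v}^{(K_n,\iota)\-\an}(U)$, which the paper leaves implicit (and where its proof has a minor slip, asserting $e_{1,\iota}\in\calO_{K^v}^{\sm}(U)$ when it is locally $\iota$-algebraic rather than smooth); this bookkeeping is a genuine, if modest, clarification.
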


\subsubsection*{A local description of $\calO_{K^v}^{\eta\-\lan}$ for $\eta\neq\iota$}
We consider the subsheaf $\calO^{\eta\-\lalg}_{K^v}\subset\calO_{K^v}^{\eta\-\lan}$ consisting of locally $\eta$-algebraic sections. Using the functor $\VB$, and note that $\frn_\iota^0$ acts trivially on $\calC^{\eta\-\lalg}(\frg,C)\hat\ox_C\calO_{\fl}$, we have a natural isomorphism 
\[
    \calC^{\eta\-\lalg}(\frg,C)\hat\ox_C\calO_{K^v}\isom \calO_{K^v}^{\eta\-\lalg}\hat\ox_{\calO_{K^v}^{\sm}}\calO_{K^v}.
\]
By Proposition \ref{prop:RVBcalc} we deduce that 
\[
    \calO_{K^v}^{\eta\-\lalg}\isom \bigoplus_{(a,b)\in\bbZ^2,a\ge b}V^{(a,b)}_{\eta}\ox_C D_{K^v,\eta}^{(a,b),\sm}
\]

Similar to the $\eta=\iota$ case, we have the following result.
\begin{theorem}\label{thm:Oetalalocal}
Let $U\in\ffrb$ be a sufficiently small open subset and $s\in\calO_{K^v}^{\eta\-\lan}(U)$ for some $\eta\neq \iota$, then there exists a sufficiently large $n$ and $e_1',\dots e_4'\in \calO_{K^v}^{\eta\-\lalg}(U)$ with $||e_i'||=1$ such that 
\[
    s=\sum_{i,j,k,l\ge 0}c_{i,j,k,l}e_1'^ie_2'^je_3'^ke_4'^l
\]
with $c_{i,j,k,l}\in\calO_{K_n}(U)$ and $c_{i,j,k,l}\to 0$ as $i+j+k+l\to \infty$. 
\end{theorem}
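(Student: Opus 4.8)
The plan is to follow the argument behind Proposition \ref{prop:RVBiotala} (which itself adapts \cite[Theorem 4.1, Proposition 4.2]{Pil22}), exploiting crucially that for $\eta\neq\iota$ the nilpotent algebra $\frn^0_{\iota}$ acts trivially on $\calC^{\eta\-\an}(G_n,C)\hat\ox_C\calO_{\fl}$. As a consequence $\calO_{K^v}^{\eta\-\lan}$ is, locally, a completed Tate algebra over $\calO_{K^v}^{\sm}$, without the $\log$-terms that appeared in the $\eta=\iota$ case of Theorem \ref{thm:Oiotalalocal}. Fix $U\in\ffrb$ small enough that $\pi_{\HT}^{-1}(U)=\pi_{K_v}^{-1}(V_{K_v})$ is affinoid perfectoid and carries a toric chart, as in the proof of Proposition \ref{prop:RVBiotala}. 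That proposition provides, for each $n$, a compact open subgroup $K_n\subset\GL_2(L)$ and an $\calO_{K_n}|_U$-module $\VB_{K_n}(\calC^{\eta\-\an}(G_n,C)\hat\ox_C\calO_{\fl})$ fitting into
\begin{align*}
  \VB_{K_n}(\calC^{\eta\-\an}(G_n,C)\hat\ox_C\calO_{\fl})|_U\hat\ox_{\calO_{K_n}|_U}\calO_{K^v}|_U\isom \calC^{\eta\-\an}(G_n,C)\hat\ox_C\calO_{K^v}|_U,
\end{align*}
equivariantly for the diagonal $K_n$-action, with the left-hand module recovered as the $K_n$-invariants of the right-hand side, and with $\calO_{K^v}^{\eta\-\lan}|_U=\dlim_n\VB_{K_n}(\calC^{\eta\-\an}(G_n,C)\hat\ox_C\calO_{\fl})|_U$.

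Next I would use the explicit description of $\calC^{\eta\-\an}(G_n,C)$ as a Tate algebra $C\langle T_1,\dots,T_N\rangle$ in $N=\dim_L\GL_2(L)=4$ variables, topologically generated over $C$ by four elements $T_i$ of norm $1$ which, being suitably rescaled $\eta$-images of affine coordinates near $1\in\GL_2(L)$, may be taken to lie in $\calC^{\eta\-\alg}(G_n,C)$. Since $\frn^0_{\iota}$ kills $\calC^{\eta\-\alg}(\frg,C)\hat\ox_C\calO_{\fl}$, the image of $T_i\hat\ox 1\in\calC^{\eta\-\an}(G_n,C)\hat\ox_C\calO_{K^v}(U)$ under the inverse of the displayed isomorphism is a \emph{finite} sum $\sum_j s_{i,j}\ox f_{i,j}$ with $s_{i,j}$ an $\eta$-algebraic section and $f_{i,j}\in\calO_{K^v}(U)$. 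Approximating each $f_{i,j}$ by a smooth section $f_{i,j,m}\in\calO_{K_n}(U)$ (possible after shrinking $K_n$, using Theorem \ref{thm:piHT}(iii)), and letting $e_i'$ be the element of $\calC^{\eta\-\an}(G_n,C)\hat\ox_C\calO_{K^v}(U)$ corresponding to $\sum_j s_{i,j}\ox f_{i,j,m}$, one obtains: $e_i'$ is a finite $\calO_{K_n}(U)$-combination of $\eta$-algebraic sections, hence $e_i'\in\calO_{K^v}^{\eta\-\lalg}(U)$ with $||e_i'||=1$; $e_i'$ is fixed by the diagonal $K_n$-action; and $e_i'$ is as close to $T_i$ as we wish. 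A ``close generators still generate'' argument then gives $\calO_{K^v}|_U\langle e_1',\dots,e_4'\rangle\isom\VB_{K_n}(\calC^{\eta\-\an}(G_n,C)\hat\ox_C\calO_{\fl})|_U\hat\ox_{\calO_{K_n}|_U}\calO_{K^v}|_U$, and taking $K_n$-invariants yields $\VB_{K_n}(\calC^{\eta\-\an}(G_n,C)\hat\ox_C\calO_{\fl})|_U\isom\calO_{K_n}|_U\langle e_1',\dots,e_4'\rangle$.

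Finally, given $s\in\calO_{K^v}^{\eta\-\lan}(U)=\dlim_n\VB_{K_n}(\calC^{\eta\-\an}(G_n,C)\hat\ox_C\calO_{\fl})(U)$, there is some $n$ with $s\in\calO_{K_n}(U)\langle e_1',\dots,e_4'\rangle$, which is exactly the asserted convergent expansion $s=\sum_{i,j,k,l\ge 0}c_{i,j,k,l}e_1'^ie_2'^je_3'^ke_4'^l$ with $c_{i,j,k,l}\in\calO_{K_n}(U)$ and $c_{i,j,k,l}\to 0$. The main obstacle is the ``close generators still generate'' step: one must verify that perturbing the coordinate generators $T_i$ of the reduced Noetherian affinoid algebra $\calC^{\eta\-\an}(G_n,C)$, after base change to $\calO_{K^v}|_U$, by elements of norm $<1$ again presents the same Tate algebra, and that the resulting $e_i'$ are genuinely $\eta$-locally algebraic and $K_n$-fixed; this is precisely the adaptation of \cite[Theorem 4.1, Proposition 4.2]{Pil22} to the present setting (compare the proof of Proposition \ref{prop:RVBiotala}), and it rests on the density of $\calO_{K^v}^{\sm}$ in $\calO_{K^v}$.
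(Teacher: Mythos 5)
Your proposal follows essentially the same route as the paper's proof: both invoke the decompletion/$\VB$ machinery from Proposition \ref{prop:RVBiotala} to produce the $\calO_{K_n}$-model, pick explicit norm-$1$ $\eta$-algebraic Tate-algebra generators of $\calC^{\eta\-\an}(G_n,C)$, use density of $\calO_{K^v}^{\sm}(U)$ in $\calO_{K^v}(U)$ to perturb these into $K_n$-fixed locally $\eta$-algebraic sections $e_i'$, appeal to ``close generators still generate,'' and conclude by taking $K_n$-invariants and passing to the colimit. The one place where your phrasing is loose is the claim that ``since $\frn^0_\iota$ kills $\calC^{\eta\-\alg}(\frg,C)\hat\ox_C\calO_{\fl}$, the image of $T_i\ox 1$ is a finite sum'': the finiteness is actually a consequence of the $T_i$ generating a finite-dimensional $G_n$-subrepresentation of $\calC^{\eta\-\an}(G_n,C)$, which under $\VB$ corresponds to a finite-rank $\calO_{K_n}(U)$-module (the $\frn^0_\iota$-triviality is what makes the $s_{i,j}$ $\eta$-algebraic, not what bounds the number of terms); the paper states this cleanly and you should adopt that justification.
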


    \begin{proof}
From the proof of Theorem \ref{prop:RVBiotala},
we know that there exists a compact open subgroup $K_n\subset\GL_2(L)$, and an $\calO_{K_n}$-module $\VB_{K_n}(\calC^{\eta\-\an}(G_n,C)\hat\ox_C\calO_{K^v})$, such that 
\begin{align*}
    \VB_{K_n}(\calC^{\eta\-\an}(G_n,C)\hat\ox_C\calO_{K^v})\hat\ox_{\calO_{K_n}}\calO_{K^v}\aisom\calC^{\eta\-\an}(G_n,C)\hat\ox_C\calO_{K^v}.
\end{align*}
Let $e_{i}\in \calC^{\eta\-\alg}(G_n,C)$ with $i=1,2,3,4$ be topological generators of the $p$-adically complete $C$-algebra $\calC^{\eta\-\an}(G_n,C)$ with norm $1$. For example, one can choose $\{e_i\}$ be the function on $G_n$ sending $\begin{pmatrix}
    a& b\\c&d
\end{pmatrix}$ to $\frac{a-1}{p^n},\frac{b}{p^n},\frac{c}{p^n},\frac{d-1}{p^n}$ respectively. Then $\calC^{\eta\-\an}(G_n,C)=C\langle e_1,e_2,e_3,e_4\rangle$. Suppose that $e_i\ox 1$ (where $1\in \calO_{K^v}(U)$ is the constant function on $U$) corresponds to $\sum_{j} s_{i,j}\ox f_{i,j}$ under the above isomorphism, where $s_{i,j}\in \VB_{K_n}(\calC^{\eta\-\an}(G_n,C)\hat\ox_C\calO_{K^v})(U)$ and $f_{i,j}\in \calO_{K^v}(U)$. Note that since $\{e_i\}_{i=1,\dots,4}$ generates a finite-dimensional representation of $G_n$, it corresponds to a finite rank $\mathcal{O}_{K_n}(U)$-module under the isomorphism, thus these summations are of finite term. Choose $f_{i,j,m}\in\calO_{K^v}^{\sm}(U)$ such that $f_{i,j,m}$ is sufficiently close to $f_{i,j}$ as $\calO_{K^v}^{\sm}(U)$ is dense in $\calO_{K^v}(U)$. We may assume that these $f_{i,j,m}$ lie in $\calO_{K_n}(U)$ by shrinking $K_n$. Then $\sum_{j} s_{i,j}\ox f_{i,j,m}$ is sufficiently close to $\sum_{j} s_{i,j}\ox f_{i,j}$. Let $e_{i}'\in \calC^{\eta\-\an}(G_n,C)\hat\ox_C\calO_{K^v}(U)$ be such that they correspond to $\sum_{j} s_{i,j}\ox f_{i,j,m}$ via the above isomorphism. As $e_i'$ is sufficiently close to $e_i$ for $i=1,2,3,4$, we have 
\begin{align*}
    \calC^{\eta\-\an}(G_n,C)\hat\ox_C \calO_{K^v}=C\langle e_1,e_2,e_3,e_4\rangle \hat\ox_C \calO_{K^v}=C\langle e_1',e_2',e_3',e_4'\rangle \hat\ox_C \calO_{K^v}.
\end{align*} 
Thus
\begin{align*}
    \calO_{K^v}|_U\langle e_1',e_2',e_3',e_4'\rangle\isom \VB_{K_n}(\calC^{\eta\-\an}(G_n,C)\hat\ox_C\calO_{K^v})|_U\hat \ox_{\calO_{K_n}|_U}\calO_{K^v}|_U.
\end{align*}
By construction $e_i'$ is fixed by the diagonal action of $K_n$ on $\calC^{\eta\-\an}(G_n,C)\hat\ox_C\calO_{K^v}(U)$ (since this diagonal action corresponds to the action of $K_n$ on the $\calO_{K^v}$ component of $\VB_{K_n}(\calC^{\eta\-\an}(G_n,C)\hat\ox_C\calO_{K^v})$), thus taking $K_n$-invariants on both sides we get 
\begin{align*}
    \calO_{K_n}(U)\langle e_1',e_2',e_3',e_4'\rangle\isom \VB_{K_n}(\calC^{\eta\-\an}(G_n,C)\hat\ox_C\calO_{K^v})(U).
\end{align*}
As the direct limit over $n$ of the right hand side is $\calO_{K^v}^{\eta\-\lan}(U)$ and from the construction we know that $e_i'$ is locally algebraic, via the isomorphism 
\begin{align*}
    \VB_{K_n}(\calC^{\eta\-\alg}(G_n,C)\hat\ox_C\calO_{K^v})\hat\ox_{\calO_{K_n}}\calO_{K^v}\aisom\calC^{\eta\-\alg}(G_n,C)\hat\ox_C\calO_{K^v}
\end{align*}
From this we get the desired result.
\end{proof}

\begin{proposition}\label{prop:Oetala}
Let $U\in\ffrb$ be a sufficiently small open subset. The subspace consisting of locally $\eta$-algebraic sections $\calO_{K^v}^{\eta\-\lalg}(U)$ is dense in $\calO_{K^v}^{\eta\-\lan}(U)$ in the LB topology on $\calO_{K^v}^{\eta\-\lan}(U)$.
\begin{proof}
From the above theorem, we know that for any $s\in \calO_{K^v}^{\eta\-\lan}(U)$, there exists a sufficiently large $n$ such that $s$ can be approximated by $K_n$-$\eta$-algebraic vectors. Thus $\calO_{K^v}^{\eta\-\lalg}(U)$ is dense in $\calO_{K^v}^{\eta\-\lan}(U)$.
\end{proof}
\end{proposition}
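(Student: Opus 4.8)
The plan is to deduce the density directly from the explicit local description already established in Theorem~\ref{thm:Oetalalocal}. Fix a sufficiently small $U\in\ffrb$ and an arbitrary section $s\in\calO_{K^v}^{\eta\-\lan}(U)$. By Theorem~\ref{thm:Oetalalocal} there exist a level $n$ and locally $\eta$-algebraic sections $e_1',\dots,e_4'\in\calO_{K^v}^{\eta\-\lalg}(U)$ with $\|e_i'\|=1$ such that $s=\sum_{i,j,k,l\ge 0}c_{i,j,k,l}\,{e_1'}^{i}{e_2'}^{j}{e_3'}^{k}{e_4'}^{l}$ with $c_{i,j,k,l}\in\calO_{K_n}(U)$ and $c_{i,j,k,l}\to 0$ as $i+j+k+l\to\infty$; moreover this exhibits $s$ as an element of the Banach space $\calO_{K^v}^{(G_n,\eta)\-\an}(U)$, whose norm of the displayed expansion is $\sup_{i,j,k,l}|c_{i,j,k,l}|$.

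The next step is truncation. For $N\ge 0$ put $s_N:=\sum_{i+j+k+l\le N}c_{i,j,k,l}\,{e_1'}^{i}{e_2'}^{j}{e_3'}^{k}{e_4'}^{l}$, a finite sum. Each coefficient $c_{i,j,k,l}\in\calO_{K_n}(U)$ comes from finite level, hence lies in $\calO_{K^v}^{\sm}(U)\subset\calO_{K^v}^{\eta\-\lalg}(U)$ (with trivial $\gl_2(L)\ox_{L,\eta}C$-action), and each $e_i'$ is locally $\eta$-algebraic; since the locally $\eta$-algebraic sections form a subalgebra of $\calO_{K^v}^{\eta\-\lan}(U)$ containing $\calO_{K^v}^{\sm}(U)$ (a product of locally algebraic vectors spans a subquotient of the tensor product of the two algebraic representations involved), every $s_N$ lies in $\calO_{K^v}^{\eta\-\lalg}(U)$. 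Finally $\|s-s_N\|_{G_n,\eta}\le\sup_{i+j+k+l>N}|c_{i,j,k,l}|\to 0$, so $s_N\to s$ in $\calO_{K^v}^{(G_n,\eta)\-\an}(U)$, hence in the finer LB topology on $\calO_{K^v}^{\eta\-\lan}(U)=\dlim_m\calO_{K^v}^{(G_m,\eta)\-\an}(U)$ since the transition maps are continuous. As $s$ was arbitrary, $\calO_{K^v}^{\eta\-\lalg}(U)$ is dense.

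I expect the truncation argument above to be entirely routine; the actual content, and hence the main obstacle, lies in Theorem~\ref{thm:Oetalalocal} itself, i.e.\ in producing after shrinking the level a genuinely locally $\eta$-algebraic coordinate system $e_1',\dots,e_4'$ for which $\calO_{K^v}^{\eta\-\lan}(U)$ is a Tate algebra over $\calO_{K_n}(U)$. This rests on the decompletion/$\VB$ machinery behind Proposition~\ref{prop:RVBiotala}: one realizes $\calC^{\eta\-\an}(G_n,C)\hat\ox_C\calO_{K^v}$ as $\VB_{K_n}(\calC^{\eta\-\an}(G_n,C)\hat\ox_C\calO_{\fl})\hat\ox_{\calO_{K_n}}\calO_{K^v}$, expands the algebra generators $e_i$ of $\calC^{\eta\-\an}(G_n,C)$ in these terms using density of $\calO_{K^v}^{\sm}$ in $\calO_{K^v}$, perturbs them slightly to locally $\eta$-algebraic elements $e_i'$ still topologically generating the Tate algebra, and then takes $K_n$-invariants to descend the coordinates to finite level.
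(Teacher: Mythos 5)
Your proposal is correct and follows essentially the same route as the paper's own argument: both reduce to the explicit power-series expansion furnished by Theorem~\ref{thm:Oetalalocal} and then truncate, using that the coefficients $c_{i,j,k,l}$ are smooth and the $e_i'$ are locally $\eta$-algebraic, so the partial sums land in $\calO_{K^v}^{\eta\text{-}\mathrm{lalg}}(U)$. The paper states this in one sentence; you fill in the (routine but worth writing down) details — that products of locally algebraic vectors are again locally algebraic, that $\calO_{K^v}^{\sm}\subset\calO_{K^v}^{\eta\text{-}\mathrm{lalg}}$, and that convergence at a fixed stage of the LB system implies convergence in the LB topology — and you correctly identify that the real content lives in Theorem~\ref{thm:Oetalalocal} and the decompletion machinery of Proposition~\ref{prop:RVBiotala} rather than in this truncation step.
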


Combining Propositions \ref{thm:Oiotalalocal1}, \ref{prop:Oetala}, and \ref{prop:tensordecomposition}, we get the following result.
\begin{corollary}
For any sufficiently small $U\in\ffrb$, $\calO_{K^v}^{\lalg}(U)$ is dense in $\calO_{K^v}^{\lan}(U)$. 
\end{corollary}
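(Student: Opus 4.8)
The plan is to combine the three density statements already proved with the tensor-product decomposition of Proposition \ref{prop:tensordecomposition}, using the fact that all the relevant topologies on the local sections are LB-topologies for which the completed tensor product over $\calO_{K^v}^{\sm}$ behaves well. First I would fix a sufficiently small $U\in\ffrb$, small enough that the hypotheses of Theorems \ref{thm:Oiotalalocal} and \ref{thm:Oetalalocal} and Propositions \ref{thm:Oiotalalocal1} and \ref{prop:Oetala} all apply over $U$, and so that $U$ is quasi-compact. By Proposition \ref{prop:tensordecomposition} applied with $J=\Sigma=\Hom_{\bbQ_p\-\alg}(L,C)$ we have a (topological) isomorphism
\[
    \calO_{K^v}^{\lan}|_U\isom \hat\ox_{\calO_{K^v}^{\sm},\eta\in\Sigma}\calO_{K^v}^{\eta\-\lan}|_U,
\]
and the locally algebraic subsheaf is the corresponding completed tensor product of the subsheaves $\calO_{K^v}^{\eta\-\lalg}|_U$; indeed $\calC^{\lalg}(\frg,C)\isom\hat\ox_{C,\eta\in\Sigma}\calC^{\eta\-\lalg}(\frg,C)$ by the same proof as Proposition \ref{prop:Jgerm}, and applying $\VB$ and taking $K_n$-invariants exactly as in the proof of Proposition \ref{prop:tensordecomposition} identifies $\calO_{K^v}^{\lalg}|_U$ with $\hat\ox_{\calO_{K^v}^{\sm},\eta\in\Sigma}\calO_{K^v}^{\eta\-\lalg}|_U$ sitting inside $\calO_{K^v}^{\lan}|_U$ compatibly with the decomposition.

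Next I would invoke the key density inputs: Proposition \ref{thm:Oiotalalocal1} gives that $\calO_{K^v}^{\iota\-\lalg}(U)$ is dense in $\calO_{K^v}^{\iota\-\lan}(U)$, and Proposition \ref{prop:Oetala} gives the analogous density of $\calO_{K^v}^{\eta\-\lalg}(U)$ in $\calO_{K^v}^{\eta\-\lan}(U)$ for each $\eta\neq\iota$. The point is then purely formal: if $A_\eta\subset B_\eta$ is a dense subspace of an LB-space (or more precisely of the local sections appearing here, which are countable colimits of Banach spaces with injective compact transition maps), then the algebraic tensor product $\bigotimes_{\eta}A_\eta$ is dense in the projective completed tensor product $\hat\ox_{\eta}B_\eta$. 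For a finite index set $\Sigma$ this follows by induction on $|\Sigma|$: the algebraic tensor product is dense in the completed tensor product by definition of the latter as a completion, and density of $A_\eta$ in $B_\eta$ gives that $\bigotimes A_\eta$ is dense in $\bigotimes B_\eta$, hence in $\hat\ox B_\eta$. Over the base $\calO_{K^v}^{\sm}$ one runs the same argument, using that the completed tensor product over $\calO_{K^v}^{\sm}$ is by construction (cf.\ the proof of Proposition \ref{prop:RVBiotala}) obtained by completing the tensor product over finite-level structure sheaves and then passing to the colimit, so the colimit of the (dense) algebraic tensor products is still dense. Putting these together, $\calO_{K^v}^{\lalg}(U)=\bigl(\hat\ox_{\calO_{K^v}^{\sm},\eta}\calO_{K^v}^{\eta\-\lalg}\bigr)(U)$ is dense in $\calO_{K^v}^{\lan}(U)$.

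The main technical point to be careful about is the interaction of density with completed tensor products and with the colimits defining the LB-topologies: one needs that taking completed tensor product over $\calO_{K^v}^{\sm}$ preserves density of the algebraic sections, and that this is compatible with the colimit over the levels $K_n$ (equivalently, over $n$ in the germ description). This is not entirely formal because completed tensor products of LB-spaces can be delicate, but here the transition maps in all the relevant colimits are injective and compact (as in Propositions \ref{prop:RVBiotala}, \ref{prop:tensordecomposition}), so one may reduce to the Banach-space case at each finite stage, where density of an algebraic tensor product inside a completed tensor product of Banach spaces is standard, and then take colimits using \cite[Proposition 1.1.32]{Eme17}. Alternatively — and this is perhaps the cleanest route — one can bypass the abstract discussion entirely by working with the explicit local expansions: Theorems \ref{thm:Oiotalalocal} and \ref{thm:Oetalalocal} together give, for any $s\in\calO_{K^v}^{\lan}(U)$, a convergent expansion of $s$ as a power series in the finitely many coordinate functions $x$, $\log(e_{1,\iota}/e_{1,n})$, $\log(\mathrm t/\mathrm t_n)$ (for the $\iota$-factor) and $e_1',\dots,e_4'$ (for each $\eta\neq\iota$) with coefficients in $\calO_{K_n}(U)$, and then the truncation argument in the proofs of Propositions \ref{thm:Oiotalalocal1} and \ref{prop:Oetala} — approximating each coordinate function and each logarithm by $K_n$-analytic locally algebraic sections and truncating the power series — applies verbatim to the multivariable expansion, exhibiting $s$ as a limit of elements of $\calO_{K^v}^{\lalg}(U)$. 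Either way the conclusion is that $\calO_{K^v}^{\lalg}(U)$ is dense in $\calO_{K^v}^{\lan}(U)$, as claimed.
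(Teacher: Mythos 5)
Your proof is correct and follows the same route as the paper, which simply cites Propositions \ref{thm:Oiotalalocal1}, \ref{prop:Oetala}, and \ref{prop:tensordecomposition}; your extra care about density passing through the completed tensor product over $\calO_{K^v}^{\sm}$, and the alternative via the explicit local expansions of Theorems \ref{thm:Oiotalalocal} and \ref{thm:Oetalalocal}, merely spell out details the paper leaves implicit.
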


\begin{remark}
Recall the work of Berger-Colmez \cite{BC16}, where for any $p$-adic Lie extension $K_\infty/K$ with $K$ a finite extension of $\bbQ_p$ and Galois group given by a $p$-adic Lie group $\Gamma$, they showed that $\hat K_\infty^{\lan}$ is given by convergent power series of $d-1$ variables if we base change $\hat K_\infty^{\lan}$ to $C$, where $d$ is the dimension of the $p$-adic Lie group $\Gamma$.

In our geometric situation, the dimension of $\calX_{K^vK_v}$ is $1$ and we have the pro-\'etale Galois covering $\calX_{K^v}\to \calX_{K^vK_v}$ with Galois group $K_v$, which is a $p$-adic Lie group of dimension $4d$ with $d=[L:\bbQ_p]$. From the above calculation we see that every element in $\calO_{K^v}^{\lan}$ locally can be given by a power series with $(4d-1)$-variables over $\calO_{K^v}^{\sm}$, and the only non-trivial relation on $\calO_{K^v}^{\lan}$ is given by the geometric Sen operator along $\calX_{K^v}\to \calX_{K^vK_v}$. 
\end{remark}

There are various actions on the sheaf $\calO_{K^v}^{\lan}$. We study their relations, and obtain similar results as in \cite[Theorem 5.1.8]{Pan22}, \cite[Proposition 6.5.4]{PanII}. As $\frn_{\iota}^0$ acts trivially on $\calO_{K^v}^{\lan}$, it induces an action of $\frh^0_{\iota}=\frh_{\iota}\ox_C\calO_{\fl}$ on $\calO_{K^v}^{\lan}$ with $\frh^0_{\iota}=\frh_{\iota}\ox_C\calO_{\fl}$ the universal Cartan algebra. Therefore, we get a natural action of $\frh_{\iota}$ on $\calO_{K^v}^{\lan}$, which we denote by $\theta_{\frh}$. We note that this action of $\frh_{\iota}$ commutes with the $\GL_2(L)$-action. 

Besides, there is an action of the arithmetic Sen operator on $\calO_{K^v}^{\lan}$. We follow the treatment in \cite[Definition 5.1.5]{Pan22}, \cite[3.3]{PanII}. Let $U\in\ffrb$ be an affinoid open subset of $\fl$ such that $e_1$ does not vanish on $U$. Then there exists a sufficiently large finite extension $M$ of $\bbQ_p$ inside $C$, such that the smooth sections $x_n,e_{1,n},\mathrm{t}_n$ in Theorem \ref{thm:Oiotalalocal} can be defined over $M$, and $e_i'$ for $\eta\neq\iota$ and $i=1,\dots, 4$ in Theorem \ref{thm:Oetalalocal} can also be defined over $M$. Besides, the $\Gal_L$-action on $x$ is trivial, and $\Gal_L$ acts on $e_1$ and $\rm t$ through the cyclotomic character. As $U\in\ffrb$, $\pi_{\HT}^{-1}(U)$ is the preimage of an affinoid open subset $V_{K_v}$ under the projection map $\pi_{K_v}:\calX_{K^v}\to \calX_{K^vK_v}$. We assume $V_{K_v}$ is defined over $M$. We may assume $M$ contains $\mu_{p^2}$ and define $M_\infty=\cup_n M(\mu_{p^n})$. Then $\Gal(M_\infty/M)\isom \bbZ_p$. Define 
\begin{align*}
    A_{\iota,M,n}:=\{f=\sum_{i,j,k\ge 0}c_{i,j,k}(x-x_n)^i(\log(\frac{e_{1,\iota}}{e_{1,n}}))^j(\log(\frac{\rm t}{\mathrm{t}_n}))^k:f\in \calO_{K^v}^{\iota\-\lan}(U),c_{i,j,k}\text{ is fixed by }\Gal(\bar L/M)\}.
\end{align*}
This is a $\Gal(\bar L/M)$-stable subspace of $\calO_{K^v}^{\iota\-\lan}(U)$, and $\dlim_n A_{\iota,M,n}\hat\ox_{M}C\isom \calO_{K^v}^{\iota\-\lan}(U)$. Moreover, $A_{\iota,M,n}$ is fixed by  $\Gal(\bar L/M_\infty)$, and the action of $\Gal(M_\infty/M)$ on $A_{\iota,M,n}$ is analytic. This induces a Sen operator $1\in \bbZ_p\isom \Lie(\Gal(M_\infty/M))$ on $\calO_{K^v}^{\iota\-\lan}(U)$. Besides, for $\eta\neq\iota$, define 
\begin{align*}
    A_{\eta,M,n}:=\{f=\sum_{i,j,k,l\ge 0}a_{ijkl}e_{1}'^ie_{2}'^je_{3}'^ke_{4}'^l, f\in \calO_{K^v}^{\eta\-\lan}(U),a_{ijkl}\text{ is fixed by }\Gal(\bar L/M)\}.
\end{align*}
This is also a $\Gal(\bar L/M)$-stable subspace of $\calO_{K^v}^{\eta\-\lan}(U)$, and $\dlim_n A_{\eta,M,n}\hat\ox_{M}C\isom \calO_{K^v}^{\eta\-\lan}(U)$. Moreover, $A_{\eta,M,n}$ is fixed by  $\Gal(\bar L/M_\infty)$, and the action of $\Gal(M_\infty/M)$ on $A_{\eta,M,n}$ is analytic. This induces a Sen operator $1\in \bbZ_p\isom \Lie(\Gal(M_\infty/M))$ on $\calO_{K^v}^{\eta\-\lan}(U)$. Finally, the Sen operator on $\calO_{K^v}^{\eta\-\lan}(U)$ for $\eta\in\Hom_{\bbQ_p\-\alg}(L,C)$ defines a Sen operator on $\calO_{K^v}^{\lan}(U)$ by Proposition \ref{prop:tensordecomposition}. Clearly this operator is compatible when we vary $U\in\ffrb$, and is independent of choice of $M$. We call this the arithmetic Sen operator on $\calO_{K^v}^{\lan}$.

\begin{corollary}\label{cor:hSen}
$\theta_{\frh}\left( \begin{matrix} 1&0\\0&0 \end{matrix} \right)$ is the arithmetic Sen operator on $\calO_{K^v}^{\lan}$. The arithmetic Sen operator acts trivially on $\calO_{K^v}^{\iota^c\-\lan}$.
\begin{proof}
As $\theta_{\frh}\left( \begin{matrix} 1&0\\0&0 \end{matrix} \right)$ and the arithmetic Sen operator acts continuously on $\calO_{K^v}^{\lan}$, it suffices to check this result on $\calO_{K^v}^{\lalg}$, which is dense in $\calO_{K^v}^{\lan}$. For $\Hom_{\bbQ_p\-\alg}(L,C)\bs\{\iota\}$-algebraic sections, the operator on both sides acts trivially so that the equation holds trivially. For $\iota$-algebraic sections, this is similar to \cite[Thm 5.1.8]{Pan22}. More precisely, as both sides are differential operators of order $1$, it suffices to check this on generators $V_{\iota}^{(1,0)}\ox \omega_{K^v}^{(1,0),\sm}(1)$, $V_{\iota}^{(1,1)}\ox\omega_{K^v}^{(1,1),\sm}(1)$, and $V_{\eta}^{(1,0)}\ox D_{\eta}^{(1,0),\sm}$.
\end{proof}
\end{corollary}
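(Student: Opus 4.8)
The plan is to follow the strategy of \cite[Theorem 5.1.8]{Pan22}: reduce the identity to a computation on a dense family of explicit sections, and treat the tensor factors indexed by $\iota$ and by $\iota^{c}$ separately. Note first that both $\theta_{\frh}\!\left(\begin{smallmatrix}0&0\\0&1\end{smallmatrix}\right)$ and the arithmetic Sen operator are $C$-linear, continuous derivations of the sheaf of algebras $\calO_{K^v}^{\lan}$: the former is induced by a Lie-algebra action coming (via the geometric Sen operator, Theorem \ref{thm:geomSen}) from the $\GL_2(L)$-action by ring automorphisms, and the latter is the derivative of the $\Gal_L$-action, also by ring automorphisms of $\hat\calO_{\calX_{K^v}}$. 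Both derivations annihilate the subsheaf of smooth sections $\calO_{K^v}^{\sm}$: for $\theta_{\frh}$ because smooth sections have algebraic weight zero (equivalently, $\calO_{K^v}^{\sm}=\VB(\calO_{\fl})$ with $\calO_{\fl}$ of weight $0$), and for the arithmetic Sen operator because, after descending a cofinal family of the affinoid preimages $\pi_{\HT}^{-1}(U)$, $U\in\ffrb$, together with the relevant smooth sections to a sufficiently large finite extension $M/\bbQ_p$ (which exists since the $\calX_{K^vK_v}$ come from the $F$-scheme $X_{K^vK_v}$ base-changed along $F\hookrightarrow L\xrightarrow{\iota}C$), $\calO_{K^v}^{\sm}$ acquires an $M$-structure on which $\Gal(\bar L/M)$ acts trivially. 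Hence both operators are $\calO_{K^v}^{\sm}$-linear derivations, and by the density of $\calO_{K^v}^{\lalg}$ in $\calO_{K^v}^{\lan}$ (Propositions \ref{thm:Oiotalalocal1} and \ref{prop:Oetala}) and the tensor decomposition of Proposition \ref{prop:tensordecomposition}, it suffices to verify the identity on the finitely many $\calO_{K^v}^{\sm}$-algebra generators of $\calO_{K^v}^{\iota\-\lalg}$ and of $\calO_{K^v}^{\eta\-\lalg}$ for each $\eta\neq\iota$.

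For $\eta\neq\iota$, I would show that both operators vanish on $\calO_{K^v}^{\eta\-\lan}$; since $\calO_{K^v}^{\iota^c\-\lan}\cong\hat\ox_{\calO_{K^v}^{\sm},\,\eta\neq\iota}\calO_{K^v}^{\eta\-\lan}$ by Proposition \ref{prop:tensordecomposition}, this already settles the second assertion of the corollary. The relevant generators are $\calO_{K^v}^{V_\eta^{(1,0)}\-\lalg}\cong V_\eta^{(1,0)}\ox_C D_{K^v,\eta}^{(0,-1),\sm}$, coming from the étale vector bundle $D_{K^v,\eta}^{\sm}$ pulled back from finite level. The $\theta_{\frh}$-action kills these because the geometric Sen operator for $V_\eta$ is zero (the $\eta$-Hodge filtration being trivial, Theorem \ref{thm:geomSen} and Proposition \ref{prop:RVBcalc}), and the arithmetic Sen operator kills them because in the presentation of $A_{\eta,M,n}$ both the generators $e_i'$ of Theorem \ref{thm:Oetalalocal} and the coefficients $a_{ijkl}$ can be taken over $M$, so $A_{\eta,M,n}$ is pointwise $\Gal(\bar L/M)$-fixed.

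For $\eta=\iota$, one runs Pan's computation in our setting. Here $\calO_{K^v}^{\iota\-\lalg}$ is generated over $\calO_{K^v}^{\sm}$ by $V_\iota^{(1,0)}\ox\omega_{K^v}^{(-1,0),\sm}(1)$ and $V_\iota^{(1,1)}\ox\omega_{K^v}^{(-1,-1),\sm}(1)$, i.e.\ by $e_{1,\iota}/s$, $e_{2,\iota}/s$ and $\mathrm{t}$. On these, the arithmetic Sen operator annihilates $x=e_{2,\iota}/e_{1,\iota}$ (whose $\Gal_L$-action is trivial) and sends $\log(e_{1,\iota}/e_{1,n})$ and $\log(\mathrm{t}/\mathrm{t}_n)$ to constants, since $\Gal_L$ acts on $e_{1,\iota}$ and on $\mathrm{t}$ through the cyclotomic character while $e_{1,n}$ and $\mathrm{t}_n$ are smooth over $M$; the precise constants are pinned down by the identification of the Hodge--Tate exact sequence with the Faltings extension $\gr^1\calO\bbB_{\dR}^+$ (Theorem \ref{thm:FEHT}) and the convention that the cyclotomic character has Hodge--Tate weight $-1$. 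On the other side, $\theta_{\frh}$ acts on the same sections through the universal Cartan on $\fl$ — on $x$ as the vector field on $\bbP^1$ attached to $\left(\begin{smallmatrix}0&0\\0&1\end{smallmatrix}\right)$ and fixing $\infty=[1:0]$, and on the $\omega_{\fl}^{(\ast,\ast)}$-part (twisted by $(\gr^0D_{\calX,\iota})^{-1}$) by the corresponding weights — and a direct $2\times 2$ matrix computation gives the same values. Matching on generators and using the Leibniz rule together with continuity gives the identity on all of $\calO_{K^v}^{\lan}$.

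The main obstacle is the normalization bookkeeping in the case $\eta=\iota$: one must check that the $\Gal_L$-equivariant identifications of the flag-variety sections $e_{1,\iota}$ and $\mathrm{t}$ with $p$-adic periods, the resulting cyclotomic twists, and the universal Cartan action attached precisely to $\left(\begin{smallmatrix}0&0\\0&1\end{smallmatrix}\right)$ (and not to $\left(\begin{smallmatrix}1&0\\0&0\end{smallmatrix}\right)$ or its negative) all line up so that the two derivations agree exactly, not merely up to sign or scalar. Once Theorem \ref{thm:FEHT} is threaded through, this is essentially identical to Pan's argument; the genuinely new ingredient — the vanishing on $\calO_{K^v}^{\iota^c\-\lan}$ — is the easy half.
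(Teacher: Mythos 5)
Your proposal is essentially the paper's proof: reduce by continuity and the derivation (order-one differential operator) property to a check on the $\calO_{K^v}^{\sm}$-algebra generators of $\calO_{K^v}^{\lalg}$, observe that both operators vanish on the $\iota^c$-algebraic generators, and carry out Pan's explicit computation (matching the universal Cartan action against the cyclotomic-twist bookkeeping from Theorem~\ref{thm:FEHT}) on the $\iota$-algebraic generators. You spell out somewhat more detail on why the arithmetic Sen operator kills $\calO_{K^v}^{\eta\-\lan}$ for $\eta\neq\iota$ and on the normalization matching for $\eta=\iota$, but the structure and key reductions are identical.
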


Let $\chi:\frh_{\iota}\to C$ be a weight of $\frh_\iota$, which is given by $\chi(\left( \begin{matrix} a&0\\0&d\end{matrix} \right))=n_1a+n_2d$ for some $n_1,n_2\in C$ and $a,d\in C$. We write $\chi=(n_1,n_2)$. In this paper we will mainly focus on the case where $n_1$, $n_2$ are all integers. Let $\calO_{K^v}^{\lan,{\chi}}$ denote the isotypic part inside $\calO_{K^v}^{\lan}$ where the $\theta_{\frh}$-action is given by $\chi$. As the $\theta_{\frh}$-action on $\calO_{K^v}^{\iota^c\-\lan}$ is trivial, from the isomorphism in Proposition \ref{prop:tensordecomposition}, we get a natural decomposition
\[
    \calO_{K^v}^{\lan,\chi}\isom \calO_{K^v}^{\iota\-\lan,\chi}\hat\ox_{\calO_{K^v}^{\sm}}\calO_{K^v}^{\iota^c\-\lan}.
\]

\begin{corollary}\label{cor:Oiotalachi}
Let $U\in\ffrb$ such that $e_{1,\iota}$ is invertible on $U$. Let $\chi=(n_1,n_2)$ be a character with $n_1,n_2\in \bbZ$. For $s\in \calO_{K^v}^{\iota\-\lan,\chi}(U)$, there exists a sufficiently large $n$ such that 
\begin{align*}
    s = \mathrm{t}^{n_2}e_{1,\iota}'^{n_1-n_2}\sum_{i=0}^{+\infty}c_i(x-x_n)^i
\end{align*}
for any $i\ge 0$, $c_i\in H^0(U,\omega_{K^v}^{(n_1-n_2,0),\sm})$, and $c_i p^{ni}\to 0$ as $i\to \infty$. Here we put $e_{1,\iota}':=e_{1,\iota}(1)$.

In particular, the image of $\omega_{K^v}^{(n_1,n_2),\sm}(U)\ox_C\omega_{\fl}^{(n_1,n_2)}(n_1)(U)$ is dense in $\calO_{K^v}^{\iota\-\lan,\chi}(U)$ for $U\in\ffrb$.
\begin{proof}
For the first claim, this follows from Theorem \ref{thm:Oiotalalocal}, and explicit actions of $\theta_{\frh}(\left( \begin{matrix}     a&0\\0&d \end{matrix} \right))$ on $\mathrm{t}$, $e_{1,\iota}$, $e_{2,\iota}$. See \cite[Theorem 3.2.2]{PanII} for the modular curve case.

For the second claim, after twist by $\mathrm{t}$, we may assume that $n_1=0$. From the explicit description of sections in $\calO_{K^v}^{\iota\-\lan,\chi}$ as above, we see 
\begin{align*}
    \{e_{1,\iota}^{n_2}\sum_{i=0}^{N}c_ix^i, c_i\in H^0(U,\omega_{K^v}^{(-n_2,0),\sm})\}
\end{align*}
generates a dense subspace of $\calO_{K^v}^{\iota\-\lan,\chi}(U)$. One also obtain similar descriptions when $e_{2,\iota}$ is invertible on $U$. From this we deduce the second claim.
\end{proof}
\end{corollary}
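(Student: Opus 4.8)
The plan is to read off the $\chi$-isotypic part directly from the general local expansion of Theorem~\ref{thm:Oiotalalocal} by analyzing the $\theta_{\frh}$-action on the coordinate functions appearing there. \emph{Step 1 (the $\theta_{\frh}$-action on coordinates).} Fix $U\in\ffrb$ on which $e_1$ is invertible and take $s\in\calO_{K^v}^{\iota\-\lan,\chi}(U)$. By Theorem~\ref{thm:Oiotalalocal} there is a large $n$ with $s=\sum_{i,j,k\ge 0}c_{i,j,k}(x-x_n)^i(\log(e_{1,\iota}/e_{1,n}))^j(\log(\mathrm{t}/\mathrm{t}_n))^k$, $c_{i,j,k}\in\calO_{K_n}(U)$, $p^{(i+j+k)n}c_{i,j,k}\to 0$. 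Since the $\GL_2(L)$-action on $\calO_{K^v}^{\sm}$ is smooth, $\theta_{\frh}$ annihilates the smooth sections $c_{i,j,k}$, $x_n$, $e_{1,n}$, $\mathrm{t}_n$; and since $x$ is pulled back from $\calO_{\fl}$, on which $\frh_{\iota}^0$ acts trivially, $\theta_{\frh}(x-x_n)=0$. A direct computation of the $\theta_{\frh}$-weights of the locally $\iota$-algebraic sections $e_{1,\iota}$ and $\mathrm{t}$ — this is the content of \cite[Theorem 3.2.2]{PanII}, using the identification $\omega_{\calX_{K^v}}^{(1,0)}\isom\pi_{\HT}^*\omega_{\fl}^{(1,0)}$ up to twist and, for $\mathrm{t}$, the Tate-twist/arithmetic-Sen computation of Corollary~\ref{cor:hSen} together with Theorem~\ref{thm:FEHT} — then shows that $\theta_{\frh}(\diag(t_1,t_2))$ multiplies $\log(e_{1,\iota}/e_{1,n})$ by $t_2$ and $\log(\mathrm{t}/\mathrm{t}_n)$ by $t_1+t_2$ (in the normalization of the text).

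\emph{Step 2 (solving the eigenvector equation).} By Step 1, for each $(t_1,t_2)$ the operator $\theta_{\frh}(\diag(t_1,t_2))$ acts on $s$ as the constant-coefficient derivation $t_2\,\partial_{L_e}+(t_1+t_2)\,\partial_{L_{\mathrm{t}}}$ in the variables $L_e=\log(e_{1,\iota}/e_{1,n})$, $L_{\mathrm{t}}=\log(\mathrm{t}/\mathrm{t}_n)$, vanishing on $\calO_{K_n}(U)$ and on $x-x_n$. Writing $s=\sum_i(x-x_n)^i g_i$ with each $g_i$ a convergent power series in $L_e,L_{\mathrm{t}}$ over $\calO_{K_n}(U)$, the condition that $s$ be $\theta_{\frh}$-isotypic of weight $\chi=(n_1,n_2)$ reads $\big(t_2\partial_{L_e}+(t_1+t_2)\partial_{L_{\mathrm{t}}}\big)g_i=(n_1 t_1+n_2 t_2)g_i$ for all $t_1,t_2$. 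Specializing $(t_1,t_2)=(1,0)$ and $(0,1)$, and using that $L_e,L_{\mathrm{t}}$ have positive valuation so that exponentials converge, one solves $g_i=(e_{1,\iota}/e_{1,n})^{n_2-n_1}(\mathrm{t}/\mathrm{t}_n)^{n_1}c_i$ with $c_i\in\calO_{K_n}(U)$. Absorbing the smooth factors $e_{1,n}^{n_2-n_1}$ (a nonvanishing section of $\omega_{K^v}^{(n_2-n_1,0),\sm}$ on $U$) and $\mathrm{t}_n^{n_1}$ yields $s=\mathrm{t}^{n_1}e_{1,\iota}^{n_2-n_1}\sum_i c_i(x-x_n)^i$ with $c_i\in H^0(U,\omega_{K^v}^{(n_1-n_2,0),\sm})$, and tracking the original Gauss-norm condition gives $p^{ni}c_i\to 0$. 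This proves the first assertion.

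\emph{Step 3 (density).} Multiplying by the invertible $\theta_{\frh}$-equivariant section $\mathrm{t}$ shifts the weight by $(1,1)$, so it suffices to treat $n_1=0$. Given $s=e_{1,\iota}^{n_2}\sum_i c_i(x-x_n)^i\in\calO_{K^v}^{\iota\-\lan,(0,n_2)}(U)$, the partial sums $e_{1,\iota}^{n_2}\sum_{i\le N}c_i(x-x_n)^i$ converge to $s$ in the LB-topology; since $e_{1,\iota}^{n_2}$ trivializes $\omega_{\fl}^{(n_2,0)}$ over $U$ (where $e_1\neq 0$) and the functions $(x-x_n)^i$ lie in $\calO_{\fl}(U)$, each partial sum lies in the image of $\omega_{K^v}^{(-n_2,0),\sm}(U)\ox_C\omega_{\fl}^{(n_2,0)}(U)$. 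Multiplying back by the appropriate power of $\mathrm{t}$ (and using the identification carried by $\mathrm{t}$) transports this to the density of the image of $\omega_{K^v}^{(-n_2,-n_1),\sm}(U)\ox_C\omega_{\fl}^{(n_2,n_1)}(U)$ in $\calO_{K^v}^{\iota\-\lan,(n_1,n_2)}(U)$ for general $\chi=(n_1,n_2)$.

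The main obstacle is Step 1: pinning down the precise $\theta_{\frh}$-weights of $e_{1,\iota}$ and, especially, of $\mathrm{t}$ — where both the automorphic line bundle and the Tate twist contribute, so one must invoke Corollary~\ref{cor:hSen} and Theorem~\ref{thm:FEHT} carefully — and recognizing that, because $\calO_{K^v}^{\iota\-\lan}(U)$ contains genuine convergent power series in the ``small'' logarithmic variables, the eigenvector equation of Step 2 admits exponential solutions rather than only polynomial ones; this is exactly what collapses the three-variable expansion of Theorem~\ref{thm:Oiotalalocal} onto the single-variable form claimed. Once these two points are secured, Steps 2 and 3 are routine manipulations with convergent power series, Gauss norms, and Tate twists, parallel to \cite[Theorem 3.2.2]{PanII}.
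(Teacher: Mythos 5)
Your proposal is correct and takes essentially the same approach as the paper: the paper's (very terse) proof says exactly "this follows from Theorem \ref{thm:Oiotalalocal}, and explicit actions of $\theta_{\frh}$ on $\mathrm{t}$, $e_{1,\iota}$, $e_{2,\iota}$," deferring the computation to the analogous \cite[Theorem 3.2.2]{PanII}. What you have done is unpack that sentence: you correctly identify that $\theta_\frh$ kills the smooth and flag-variety variables, acts on $L_e$ and $L_{\mathrm t}$ as constants, and — the genuinely important observation — that the resulting first-order eigenvector PDE is solved by \emph{exponentials} of the small logarithmic variables rather than polynomials, which is precisely what collapses the three-variable expansion of Theorem \ref{thm:Oiotalalocal} to the claimed one-variable form. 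Your Step 3 (twisting by $\mathrm{t}$ to reduce to $n_1=0$ and truncating the power series) is likewise the same reduction as the paper's. The only mild imprecision is the phrase "multiplies $L_e$ by $t_2$," which should read "$\theta_\frh(\diag(t_1,t_2))$ sends $L_e$ to the constant $t_2$" (so that $\theta_\frh$ acts as $t_2\partial_{L_e}+(t_1+t_2)\partial_{L_{\mathrm t}}$), but the subsequent computation makes clear that this is what you mean; and the growth estimate $p^{ni}c_i\to 0$ requires increasing $n$ slightly when dividing through by the unit $(e_{1,\iota}/e_{1,n})^{n_2-n_1}(\mathrm{t}/\mathrm{t}_n)^{n_1}$, which the "for sufficiently large $n$" in the statement absorbs.
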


Let $\calZ(U(\frg_{\iota}))$ be the center of the universal enveloping algebra of $\frg_\iota$, which is a $C$-algebra generated by $z=\left( \begin{matrix}1&0\\0&1\end{matrix} \right)$ and the Casimir operator $\Omega=u^+u^-+u^-u^++\frac{1}{2}h^2\in Z(U(\frg_{\iota}))$.
\begin{corollary}\label{cor:infSen}
Let $\Theta_{\Sen}$ denote the arithmetic Sen operator on $\calO_{K^v}^{\lan}$. Then as operators on $\calO_{K^v}^{\lan}$, we have 
\[
    (2\Theta_{\Sen}-z+1)^2-1=2\Omega.
\]
\begin{proof}
    This follows from Corollary \ref{cor:hSen} and the relation between the horizontal action $\theta_\frh$ and the infinitesimal action, as in \cite[Corollary 4.2.8]{Pan22}.
\end{proof}

\end{corollary}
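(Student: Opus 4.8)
The plan is to deduce the identity from the description of the arithmetic Sen operator in Corollary \ref{cor:hSen} together with the standard comparison between the horizontal action $\theta_{\frh}$ and the infinitesimal $\frg_\iota$-action on $\calO_{K^v}^{\lan}$, exactly as for the modular curve in \cite[Corollary 4.2.8]{Pan22}. Since $\frn_\iota^0$ annihilates $\calO_{K^v}^{\lan}$, the $\frg_\iota$-action factors through the universal Cartan, and the central direction is unambiguous: writing $\theta_1:=\theta_{\frh}\left(\begin{smallmatrix}1&0\\0&0\end{smallmatrix}\right)$ and $\theta_2:=\theta_{\frh}\left(\begin{smallmatrix}0&0\\0&1\end{smallmatrix}\right)$ for the action of the standard diagonal basis of $\frh_\iota$ via $\theta_{\frh}$, one has $\theta_1+\theta_2=z$ as operators (because $z$ is central, its infinitesimal action equals $\theta_{\frh}(z)$), while $\theta_2=\Theta_{\Sen}$ by Corollary \ref{cor:hSen}. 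Hence $2\Theta_{\Sen}-z=\theta_2-\theta_1$, and the claimed identity becomes equivalent to $2\Omega=(\theta_2-\theta_1+1)^2-1$.

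To establish the latter I would first reduce to locally algebraic sections: all operators in question act continuously on $\calO_{K^v}^{\lan}$, and $\calO_{K^v}^{\lalg}$ is dense in it (Propositions \ref{thm:Oiotalalocal1} and \ref{prop:Oetala}, together with the tensor-product decomposition of Proposition \ref{prop:tensordecomposition}). Both sides moreover vanish on $\calO_{K^v}^{\iota^c\-\lalg}$: the operator $\Theta_{\Sen}$ does by Corollary \ref{cor:hSen}, and $z,\Omega\in \calZ(U(\frg_\iota))$ do because $\frg_\iota$ acts trivially on $\calO_{K^v}^{\iota^c\-\lan}$. By Proposition \ref{prop:tensordecomposition} this leaves the identity on $\calO_{K^v}^{\iota\-\lalg}\isom\bigoplus_{a\ge b}V_\iota^{(a,b)}\ox_C\omega_{K^v}^{(-a,-b),\sm}(a)$, where it further suffices to check it on the two algebra generators $\calO_{K^v}^{V_\iota^{(1,0)}\-\lalg}$ and $\calO_{K^v}^{V_\iota^{(1,1)}\-\lalg}$, since both sides are built from the first-order operators $\theta_1,\theta_2$ and hence are determined by their restrictions to a generating set of the $\calO_{K^v}^{\sm}$-algebra $\calO_{K^v}^{\iota\-\lalg}$. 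On each generator the relevant weights are explicit, $\Omega$ acts through the Harish--Chandra scalar, and the identity becomes an elementary computation using the explicit local formulas of \S\ref{locdescr}.

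The one genuinely non-formal ingredient is the Beilinson--Bernstein-type relation on $\fl\isom\bbP^1_L\times_{L,\iota}C$ expressing the Casimir $\Omega\in\calZ(U(\frg_\iota))$ through $\theta_{\frh}$: localizing a $\frg_\iota$-equivariant sheaf at a point of $\fl$ makes its fibre a module over the universal Borel $\frb_\iota^0$, and $\Omega$ then acts by the Harish--Chandra scalar attached to the $\frh_\iota^0$-weight shifted by the half sum of positive roots; it is precisely this $\rho$-shift that produces the ``$+1$'' in $(2\Theta_{\Sen}-z+1)$. I expect that checking the sign of this $\rho$-shift against the normalizations of $\pi_{\HT}$ and of $\theta_{\frh}$ fixed earlier will be the only delicate point; everything else follows the scalar-matching computation of \cite[Corollary 4.2.8]{Pan22} essentially verbatim.
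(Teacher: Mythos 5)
Your proposal follows essentially the same route as the paper's own (commented-out) argument: reduce to the dense subspace $\calO_{K^v}^{\lalg}$, observe both sides vanish on $\calO_{K^v}^{\iota^c\-\lalg}$, reduce to the $\iota$-locally algebraic part $\bigoplus_{a\ge b}V_\iota^{(a,b)}\ox\omega_{K^v}^{(-a,-b),\sm}(a)$, and verify the resulting scalar identity; and like the paper's one-line proof you correctly identify the substance as the Harish--Chandra/Beilinson--Bernstein comparison between $\theta_\frh$ and the infinitesimal action, citing \cite[Cor.~4.2.8]{Pan22}.

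There is, however, one step that does not hold up as written: the claim that ``it further suffices to check it on the two algebra generators $\calO_{K^v}^{V_\iota^{(1,0)}\-\lalg}$ and $\calO_{K^v}^{V_\iota^{(1,1)}\-\lalg}$, since both sides are built from the first-order operators $\theta_1,\theta_2$.'' The right-hand side $2\Omega$ is \emph{a priori} a second-order operator not expressed through $\theta_1,\theta_2$ --- that it is so expressed is exactly the content of the corollary, so the generator reduction is circular as a logical step; and even granting that both sides are built from the derivations $\theta_1,\theta_2$, a quadratic operator identity (with the nonhomogeneous $+1$ and $-1$) is not determined by its restriction to a multiplicative generating set. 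The correct reduction, which the paper's commented-out proof uses, is to note that $\Theta_{\Sen}$, $z$, and $\Omega$ all commute with the $\frg_\iota$-action, hence act by scalars on \emph{each} $\frg_\iota$-isotypic component $V_\iota^{(a,b)}\ox\omega_{K^v}^{(-a,-b),\sm}(a)$, and then to verify the resulting polynomial identity in $(a,b)$ for all $(a,b)$ with $a\ge b$, not merely for the generators $(1,0)$ and $(1,1)$. Alternatively, as your last paragraph suggests, once the Harish--Chandra projection formula $2\Omega=(h\pm 1)^2-1$ with $h=\theta_1-\theta_2$ is established on $\frn^0_\iota$-killed modules, the identity follows formally without any component-by-component check, making the generator step unnecessary. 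Your flagging of the sign of the $\rho$-shift as the delicate point is apt; with the paper's conventions it is exactly this shift that produces the ``$+1$,'' and the verification on all $(a,b)$ (or the abstract Harish--Chandra argument) closes the small gap.
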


Let $\tilde{\chi}_k$ denote the infinitesimal character for $\Sym^k V_\iota^*$. Using the relations of the arithmetic Sen operator and the actions of $\calZ(U(\frg_{\iota}))$ Corollary \ref{cor:hSen}, we get the following results. 
\begin{corollary}\label{cor:infdecomp}
There is a natural $\GL_2(L)$ and Galois equivariant decomposition 
\[
    \calO_{K^v}^{\lan,\tilde{\chi}_k}= \calO_{K^v}^{\lan,(0,-k)}\oplus \calO_{K^v}^{\lan,(-k-1,1)}.
\]
\end{corollary}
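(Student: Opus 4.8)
The plan is to deduce this formally from Corollaries \ref{cor:infSen} and \ref{cor:hSen}, together with a short computation of the infinitesimal character $\tilde{\chi}_k$ of $\Sym^k V_\iota^*$. First I would record the values of $\tilde{\chi}_k$ on the two generators of $\calZ(U(\frg_\iota))$: since $z=\left(\begin{matrix}1&0\\0&1\end{matrix}\right)$ acts by the scalar $-1$ on $V_\iota^*$, it acts by $-k$ on $\Sym^k V_\iota^*$, so $\tilde{\chi}_k(z)=-k$; and evaluating $\Omega=u^+u^-+u^-u^++\tfrac12 h^2$ on a highest (equivalently lowest) weight vector of $\Sym^k V_\iota^*$ gives $\tilde{\chi}_k(\Omega)=\tfrac{k(k+2)}{2}$. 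On the $\tilde{\chi}_k$-isotypic sheaf $\calO_{K^v}^{\lan,\tilde{\chi}_k}$, by definition of the decomposition by infinitesimal character, $z$ and $\Omega$ act by these scalars, so substituting into the operator identity $(2\Theta_{\Sen}-z+1)^2-1=2\Omega$ of Corollary \ref{cor:infSen} yields, on $\calO_{K^v}^{\lan,\tilde{\chi}_k}$,
\[
    (2\Theta_{\Sen}+k+1)^2=(k+1)^2.
\]

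Next I would split off idempotents. This identity says $2\Theta_{\Sen}+k+1$ is killed by $(T-(k+1))(T+(k+1))$; since $k\ge 0$ the scalar $2(k+1)$ is invertible, the two linear factors are coprime, and hence
\[
    \calO_{K^v}^{\lan,\tilde{\chi}_k}=\ker(\Theta_{\Sen})\oplus\ker(\Theta_{\Sen}+k+1),
\]
the two projectors being polynomials in $\Theta_{\Sen}$. It then remains to identify the two summands with the asserted $\theta_{\frh}$-isotypic pieces. By Corollary \ref{cor:hSen} we have $\Theta_{\Sen}=\theta_{\frh}\left(\begin{matrix}0&0\\0&1\end{matrix}\right)$, while on $\calO_{K^v}^{\lan,\tilde{\chi}_k}$ the central operator $z=\theta_{\frh}\left(\begin{matrix}1&0\\0&1\end{matrix}\right)$ acts by $\tilde{\chi}_k(z)=-k$; as these two matrices span the abelian Lie algebra $\frh_\iota$, the action $\theta_{\frh}$ on each $\Theta_{\Sen}$-eigenspace is through a single weight of $\frh_\iota$. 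On $\ker(\Theta_{\Sen})$ we read off $\theta_{\frh}\left(\begin{matrix}0&0\\0&1\end{matrix}\right)=0$ and $\theta_{\frh}\left(\begin{matrix}1&0\\0&0\end{matrix}\right)=-k$, i.e. the weight $(-k,0)$, so this summand is $\calO_{K^v}^{\lan,(-k,0)}$; on $\ker(\Theta_{\Sen}+k+1)$ we get the weight $(1,-k-1)$, so this summand is $\calO_{K^v}^{\lan,(1,-k-1)}$. Finally, the projectors being polynomials in $\Theta_{\Sen}$, and both $\Theta_{\Sen}$ and $z$ commuting with the $\GL_2(L)$-action (by the remark preceding Corollary \ref{cor:hSen} for $\theta_{\frh}$, and because the arithmetic Sen operator arises from a $\Gal_L$-action commuting with $\GL_2(L)$) and being compatible with the $\Gal_L$-action, the decomposition is $\GL_2(L)\times\Gal_L$-equivariant.

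I expect no serious obstacle: the statement is a formal consequence of the two preceding corollaries. The only points requiring care are (a) pinning down $\tilde{\chi}_k(z)=-k$ and $\tilde{\chi}_k(\Omega)=\tfrac{k(k+2)}{2}$ in our normalization of $\Omega$, which is a routine $\sll_2$-computation; (b) checking that the two Sen eigenvalues $0$ and $-(k+1)$ are distinct, which is automatic since $k\ge 0$; and (c) being explicit that on $\calO_{K^v}^{\lan,\tilde{\chi}_k}$ the center acts by genuine scalars (not merely generalized eigenvalues), which is what licenses both the substitution into the identity of Corollary \ref{cor:infSen} and the coprimality argument above.
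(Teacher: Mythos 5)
Your proposal is correct and carries out exactly the argument the paper sketches (the paper gives no formal proof of this corollary, merely stating it follows from Corollaries \ref{cor:hSen} and \ref{cor:infSen}); the computations $\tilde{\chi}_k(z)=-k$, $\tilde{\chi}_k(\Omega)=\tfrac{k(k+2)}{2}$, the derived relation $\Theta_{\Sen}(\Theta_{\Sen}+(k+1))=0$, the coprime-factor splitting, and the identification of the two summands via $\theta_{\frh}$ are all accurate and complete. The equivariance argument via projectors that are polynomials in $\Theta_{\Sen}$ is also sound.
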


We will also need certain twists of $\calO_{K^v}^{\lan,(a,b)}$, for some $(a,b)\in\bbZ^2$. The following treatment is parallel to \cite[3.1.2]{PanII}.
\begin{proposition}\label{prop:twist}
Let $(a,b),(a',b')\in\bbZ^2$ be integers. We have 
\begin{enumerate}[(i)]
    \item $\calO_{K^v}^{\lan,(a,b)}\ox_{\calO_{K^v}^{\sm}}\omega_{K^v}^{(a',b'),\sm}\isom \omega_{K^v}^{(a',b'),\lan,(a,b)}$.
    \item $\calO_{K^v}^{\lan,(a,b)}\ox_{\calO_{\fl}}\omega_{\fl}^{(a',b')}\isom \omega_{K^v}^{(-a',-b'),\lan,(a+b',b+a')}(-a')$.
\end{enumerate}
where $\omega_{K^v}^{(a',b'),\lan,(a,b)}$ is the subsheaf of $\omega_{K^v}^{(a',b'),\lan}$ such that $\theta_\frh$ acts via  the character $(a,b)$.
\begin{proof}
For the first isomorphism, this follows from $\omega_{K^v}^{(a',b')}\isom \omega_{K^v}^{(a',b'),\sm}\ox_{\calO_{K^v}^{\sm}}\calO_{K^v}$, and $\theta_{\frh}(\frh_{\iota})$ acts trivially on $\omega_{K^v}^{(a',b'),\sm}$. For the second isomorphism, we note that the $\GL_2(L)$-action on $\omega_{\fl}^{(a',b')}$ is locally $\iota$-analytic, and $\pi_{\HT,*}\pi_{\HT}^{*}\omega_{\fl}^{(a',b')}=\omega_{K^v}^{(-a',-b')}(-a')$ and $\theta_{\frh}(\frh_{\iota})$ acts on $\omega_{\fl}^{(a',b')}$ via the weight $(a',b')$.
\end{proof}
\end{proposition}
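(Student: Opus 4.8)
The plan is to reduce both parts to an identity expressing $\omega_{K^v}^{(a',b')}$ as a line-bundle twist of $\calO_{K^v}$, together with the elementary observation that tensoring by a line bundle carrying a sufficiently analytic $\GL_2(L)$-action is compatible with passing to locally analytic vectors and with the $\theta_{\frh}$-isotypic decomposition. The only extra inputs beyond what is already developed in \S\ref{uni} are the projection formula on $\fl$ and the computation of the $\theta_{\frh}$-weight of $\omega_{\fl}^{(a',b')}$.

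For (i): recall $\omega_{K^v}^{(a',b')}\isom \omega_{K^v}^{(a',b'),\sm}\ox_{\calO_{K^v}^{\sm}}\calO_{K^v}$ with $\omega_{K^v}^{(a',b'),\sm}$ invertible over $\calO_{K^v}^{\sm}$. Over a small $U\in\ffrb$ pick an $\calO_{K^v}^{\sm}(U)$-basis $t$ of $\omega_{K^v}^{(a',b'),\sm}(U)$; as $t$ is fixed by some $K_v$ it is a smooth vector, so the orbit map of $ft$ is $t$ times that of $f$ and $\theta_{\frh}(ft)=(\theta_{\frh}f)\,t$ (using that $\theta_{\frh}$ kills smooth vectors). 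Hence $ft$ is locally analytic, resp.\ of $\theta_{\frh}$-weight $(a,b)$, if and only if $f$ is; sheafifying gives $\omega_{K^v}^{(a',b'),\lan}\isom \omega_{K^v}^{(a',b'),\sm}\ox_{\calO_{K^v}^{\sm}}\calO_{K^v}^{\lan}$ and its $\theta_{\frh}$-weight-$(a,b)$ part $\omega_{K^v}^{(a',b'),\sm}\ox_{\calO_{K^v}^{\sm}}\calO_{K^v}^{\lan,(a,b)}$, which is (i).

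For (ii): the identification $\pi_{\HT}^*\omega_{\fl}^{(a',b')}\isom \omega_{\calX_{K^v}}^{(a',b')}(-a')$ and the projection formula (using that $\omega_{\fl}^{(a',b')}$ is invertible and $\pi_{\HT,*}\calO_{\calX_{K^v}}=\calO_{K^v}$) give $\omega_{K^v}^{(a',b')}(-a')\isom \omega_{\fl}^{(a',b')}\ox_{\calO_{\fl}}\calO_{K^v}$. Over a small $U\in\ffrb$ pick an $\calO_{\fl}(U)$-basis $t$ of $\omega_{\fl}^{(a',b')}(U)$. Since the $\GL_2(L)$-action on $\omega_{\fl}^{(a',b')}$ is locally $\iota$-analytic (it is algebraic for $\GL_2(C)$, precomposed with $\iota$), the function describing the action of $g$ on $t$ near $U$ is locally $\iota$-analytic in $g$, so $ft$ is locally analytic iff $f$ is; thus multiplication by $t$ identifies $\calO_{K^v}^{\lan}(U)$ with $\omega_{K^v}^{(a',b'),\lan}(U)(-a')$, $\GL_2(L)$-equivariantly. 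Next, $\omega_{\fl}^{(a',b')}$ is a $\GL_2(L)$-equivariant line bundle on $\fl\isom \bbP^1$, hence associated to a character $\lambda$ of the universal Cartan, and $\theta_{\frh}$ acts on it by the scalar $\lambda$; computing $\lambda$ from $\omega_{\fl}^{(a',b')}=\calO_{\fl}(a'-b')\ox{\det}^{b'}$ via the Euler sequence and the fibre at $\infty$ (whose stabiliser is a Borel) gives $\lambda=(b',a')$. Therefore, on $\calO_{K^v}^{\lan,(a,b)}\ox_{\calO_{\fl}}\omega_{\fl}^{(a',b')}$ — which makes sense as an $\calO_{\fl}$-module because $\theta_{\frh}$ kills $\calO_{\fl}$, so $\calO_{K^v}^{\lan,(a,b)}$ is $\calO_{\fl}$-stable — the operator $\theta_{\frh}$ acts by the weight $(a,b)+(b',a')=(a+b',b+a')$, and restricting the isomorphism above to these weights and sheafifying gives $\calO_{K^v}^{\lan,(a,b)}\ox_{\calO_{\fl}}\omega_{\fl}^{(a',b')}\isom \omega_{K^v}^{(a',b'),\lan,(a+b',b+a')}(-a')$, a topological isomorphism by the open mapping theorem.

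I do not expect a serious obstacle; the content is careful bookkeeping of twists and weights. The two points requiring a little care are the claims that multiplication by the chosen local basis $t$ identifies locally analytic vectors correctly (mild elaborations of the arguments behind Theorem \ref{thm:Oiotalalocal} and Corollary \ref{cor:Oiotalachi}, using only that $t$ is smooth, resp.\ locally $\iota$-analytic) and the rank-one weight computation for $\omega_{\fl}^{(a',b')}$. Alternatively, part (ii) can be run entirely through the functor $\VB$: as an equivariant line bundle $\omega_{\fl}^{(a',b')}$ lies in $\Coh_{\frg}(\fl)^{\frn_{\iota}^0}$, one has $\VB(\omega_{\fl}^{(a',b')})=\omega_{K^v}^{(a',b'),\sm}(-a')$ by Proposition \ref{prop:RVBcalc}, and $\calO_{K^v}^{\lan}$ is the value of $\VB$ on a pro-system of objects of $\Coh_{\frg}(\fl)^{\frn_{\iota}^0}$ (Propositions \ref{prop:RVBiotala} and \ref{prop:tensordecomposition}); the exact tensor functoriality of $\VB$ on $\Coh_{\frg}(\fl)^{\frn_{\iota}^0}$ then yields both assertions after tracking the $\theta_{\frh}$-weights.
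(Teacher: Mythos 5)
Your proposal is correct and takes essentially the same approach as the paper: both parts rest on the identification $\omega_{K^v}^{(a',b')}\isom \omega_{K^v}^{(a',b'),\sm}\ox_{\calO_{K^v}^{\sm}}\calO_{K^v}$ (for (i)) and on $\pi_{\HT,*}\pi_{\HT}^*\omega_{\fl}^{(a',b')}=\omega_{K^v}^{(a',b')}(-a')$ together with the $\theta_{\frh}$-weight $(b',a')$ of $\omega_{\fl}^{(a',b')}$ (for (ii)); your write-up simply fills in the local-basis bookkeeping that the paper's terse proof elides, and the alternative route through the exact tensor functor $\VB$ is a harmless bonus not present in the paper.
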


\subsection{Completed cohomology of unitary Shimura curves}
Firstly we recall the definition of the completed cohomology of unitary Shimura curves introduced by Emerton \cite{Eme06}. Let $(G,X)$ be the Shimura datum such that for $K\subset G(\bbA^\infty)$ sufficiently small,
\[
    X_K(\bbC)=G(\bbQ)\bs (G(\bbA^\infty)\times X/K)
\]
is the set of complex points of the unitary Shimura curve $X_K$. Let $K^v\subset G^v$ be a sufficiently small open compact subgroup as before. Set 
\[
    \tilde{H}^i(K^v,\bbZ/p^n):=\dlim_{K_v\subset \GL_2(L)}H^i(\calX_{K^vK_v}(\bbC),\bbZ/p^n)
\]
and define the \emph{completed cohomology} of the unitary Shimura curves of tame level $K^v$ to be
\[
    \tilde{H}^i(K^v,\bbZ_p):=\ilim_n\dlim_{K_v\subset \GL_2(L)}H^i(X_{K^vK_v}(\bbC),\bbZ/p^n).
\]
The completed cohomology group carries many natural actions.
\begin{itemize}
    \item It has a natural action of $\GL_2(L)$. Moreover, as $\calX_{K^vK_v}$ is a curve, $\tilde{H}^1(K^v,\bbZ_p)$ is torsion-free. Set 
    \[
        \tilde{H}^1(K^v,\bbQ_p):=\tilde{H}^1(K^v,\bbZ_p)[\frac{1}{p}].
    \]
    This is an admissible unitary Banach representation of $\GL_2(L)$, with $\tilde{H}^1(K^v,\bbZ_p)$ the unit ball. 
    \item As $X_{K^vK_v}$ is defined over $F$, we can equip $\tilde{H}^i(K^v,\bbZ_p)$ with a natural action of $\Gal_F$.
    \item Let $\bbT(K^vK_v)\subset \End_{\bbZ_{p}}(H^1(X_{K^vK_v}(\bbC),\bbZ_{p}))$ be the usual unramified Hecke algebra, given by the image of the abstract Hecke algebra $\bbT^S:=\bbZ[K^S\bs G(\bbA^{\infty,S})/K^S]$, where $S$ is a finite set of primes containing $p$, such that $K^v=K^{S}K^v_S$ with $K^S\subset G(\bbA^{\infty,S})$ a product of hyperspecial subgroups. Then we define the big Hecke algebra of tame level $K^v$ to be
    \[
        \bbT(K^v):=\ilim_{K_v}\bbT(K^vK_v).
    \]
    It acts faithfully on $\tilde{H}^1(K^v,\bbZ_p)$.
\end{itemize}
These actions on $\tilde{H}^1(K^v,\bbZ_p)$ all commute with each other, and satisfy the following relations.
\begin{itemize}
    \item The Eichler--Shimura relation. More precisely, let $\lambda:\bbT(K^v)\to E$ be a $\bbZ_p$-algebra homomorphism. By \cite{carayol86, carayol1994formes, Che}, we can associate a representation $\rho:\Gal_F\to \GL_2(E)$ when $\rho$ is absolutely irreducible, and we have 
    \begin{align*}
        \Hom_{\Gal_F}(\rho,\tilde{H}^1(K^v,E))=\Hom_{\Gal_F}(\rho,\tilde{H}^1(K^v,E)[\lambda]).
    \end{align*}
    By the main result of \cite{BLR}, we have 
    \begin{align*}
        \tilde{H}^1(K^v,E)[\lambda]\isom\rho\ox_E\Hom_{\Gal_F}(\rho,\tilde{H}^1(K^v,E)).
    \end{align*}
    In the modular curve case, this was discussed in \cite[6.1.1]{Pan22}.
    \item The infinitesimal character and Hodge--Tate weights. Suppose that $\rho|_{\Gal_L}$ is of Hodge--Tate weight $\{a_{\sigma},b_{\sigma}\}_{\sigma\in\Sigma}$ with $a_\sigma\ge b_\sigma$, by \cite[Theorem 1.4]{DPSinf}, the $\GL_2(L)$-representation $\Hom_{\Gal_F}(\rho,\tilde{H}^1(K^v,E)^{\lan})$ have the same infinitesimal character as the Verma module $U(\gl_2\ox_{\bbQ_p}E)\ox_{U(\frb\ox_{\bbQ_p}E)}\mu$ with $\mu=(a_\sigma-1,b_\sigma)_{\sigma\in\Sigma}$.
\end{itemize}

We note that by \cite[Corollary 6.1.7]{camargo2024locallyanalyticcompletedcohomology}, for any $i\ge 0$, there is a natural isomorphism 
\[
    \tilde{H}^i(K^v,\bbQ_p)\isom H^i_{\pro\et}(\calX_{K^v},\bbQ_p).
\]
This isomorphism is equivariant for the $\Gal_L$-action. 

\begin{theorem}\label{11}
There is a $\GL_2(L)\times \Gal_L$-equivariant and Hecke equivariant isomorphism
\begin{align*}
    \tilde{H}^1(K^v,C)\isom H^1(\fl,\calO_{K^v}).
\end{align*}
\end{theorem}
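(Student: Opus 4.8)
The plan is to factor the desired isomorphism through the coherent cohomology of the perfectoid space $\calX_{K^v}$, as outlined in \S1.2:
\[
    \tilde{H}^1(K^v,C)\isom H^1(\calX_{K^v},\calO_{\calX_{K^v}})\isom H^1(\fl,\pi_{\HT,*}\calO_{\calX_{K^v}})=H^1(\fl,\calO_{K^v}).
\]
The first isomorphism is a form of the primitive comparison theorem adapted to the perfectoid tower; the second is a Leray spectral sequence argument using that $\pi_{\HT}$ has no higher direct images on $\calO_{\calX_{K^v}}$.

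For the first isomorphism I would begin from the identification $\tilde{H}^i(K^v,\bbQ_p)\isom H^i_{\pro\et}(\calX_{K^v},\bbQ_p)$ recalled above from \cite[Corollary 6.1.7]{camargo2024locallyanalyticcompletedcohomology} (alternatively, combine the comparison between singular cohomology of $\calX_{K^vK_v}(\bbC)$ and \'etale cohomology of the adic space at each finite level with $\calX_{K^v}\sim\ilim_{K_v}\calX_{K^vK_v}$ to commute cohomology with the colimit in $K_v$). After base change to $C$ it remains to produce a natural isomorphism $H^1_{\pro\et}(\calX_{K^v},\hat{\bbZ}_p)\ox_{\bbZ_p}C\isom H^1(\calX_{K^v},\calO_{\calX_{K^v}})$. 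This I would obtain by applying the primitive comparison theorem \cite{Sch13} to each proper smooth rigid curve $\calX_{K^vK_v}$ over $C$, producing an almost isomorphism $H^i_{\et}(\calX_{K^vK_v},\bbZ/p^n)\ox\calO_C/p^n\to H^i_{\et}(\calX_{K^vK_v},\calO^+_{\calX_{K^vK_v}}/p^n)$, then passing to the colimit over $K_v$ and invoking the comparison $\dlim_{K_v}H^i_{\et}(\calX_{K^vK_v},\calO^+/p^n)\isom H^i_{\et}(\calX_{K^v},\calO^+_{\calX_{K^v}}/p^n)$ for the perfectoid $\calX_{K^v}$ \cite{Sch15}, and finally taking $\ilim_n$ and inverting $p$, which turns the almost isomorphisms into genuine ones. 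That $\tilde{H}^1(K^v,\bbZ_p)$ is torsion-free (as $\calX_{K^vK_v}$ is a curve) makes the $p$-adic limit harmless.

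For the second isomorphism, recall from Theorem \ref{thm:piHT} that $\ffrb$ is a basis of open affinoids of $\fl$, stable under finite intersection, whose preimages $\pi_{\HT}^{-1}(U)$ are affinoid perfectoid. Since higher cohomology of the structure sheaf vanishes on an affinoid perfectoid space, we get $R^j\pi_{\HT,*}\calO_{\calX_{K^v}}(U)=H^j(\pi_{\HT}^{-1}(U),\calO_{\calX_{K^v}})=0$ for all $j\ge 1$ and all $U\in\ffrb$; as $\ffrb$ generates the topology of $\fl$, this gives $R^j\pi_{\HT,*}\calO_{\calX_{K^v}}=0$ for $j\ge 1$. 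The Leray spectral sequence for $\pi_{\HT}$ then collapses, yielding $H^i(\fl,\calO_{K^v})\isom H^i(\calX_{K^v},\calO_{\calX_{K^v}})$ for every $i$, in particular for $i=1$.

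Finally, every construction above — the perfectoid tower $\calX_{K^v}$, the Hodge--Tate period map, and all the comparison maps — is functorial in $K_v$ and compatible with the extra structures, so the composite isomorphism is $\GL_2(L)$-equivariant (the action coming from varying $K_v$), $\Gal_L$-equivariant (coming from the $F$-structure of $X_{K^vK_v}$ and the fixed embedding $\iota$, using the $\GL_2(L)\times\Gal_L$-equivariance of $\pi_{\HT}$ in Theorem \ref{thm:piHT}), and equivariant for the prime-to-$v$ Hecke action (with $\fl$ carrying the trivial Hecke action). The main obstacle is the bookkeeping in the first step: making the almost-mathematics precise and justifying that cohomology commutes both with the colimit over $K_v$ and with the $p$-adic inverse limit. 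This is essentially contained in \cite{Sch13,Sch15,camargo2024locallyanalyticcompletedcohomology}, so in practice the proof is an assembly of these inputs rather than a new argument.
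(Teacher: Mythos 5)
Your proposal is correct and is essentially the proof the paper intends: the paper's own argument is a one-line citation of \cite[Theorem 4.4.6]{Pan22}, whose proof runs exactly through the two steps you lay out — primitive comparison at finite level passed up the perfectoid tower to identify $\tilde{H}^1(K^v,C)\isom H^1(\calX_{K^v},\calO_{\calX_{K^v}})$, followed by the vanishing of higher direct images of $\calO_{\calX_{K^v}}$ along $\pi_{\HT}$ (the ``affiness'' of $\pi_{\HT}$, via Theorem \ref{thm:piHT}) to collapse the Leray spectral sequence. You have simply written out the details that the paper delegates to \cite{Pan22}.
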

Here, the action of $\Gal_L$ on $\calO_{K^v}$ is induced by the $L$-structure of $\calX_{K^vK_v}$, with $K_v\subset\GL_2(L)$ open compact subgroups.
\begin{proof}
The proof is similar to \cite[Theorem 4.4.6]{Pan22}, which follows from the primitive comparison theorem \cite[Theorem 1.3]{Sch13} and the ``affiness" of $\pi_{\HT}$.
\end{proof}

Moreover, we have the following theorem.
\begin{theorem}\label{thm:JlacommutesH1}
Let $J\subset\Hom_{\bbQ_p\-\alg}(L,C)$ be a subset of embeddings containing $\iota$. Let $V$ be an algebraic representation of $\GL_2(L)$ such that $\gl_2(L)\ox_{L,\eta}C$ acts trivially on $V$ for any $\eta\in J$. We have a $\GL_2(L)\times\Gal_L$-equivariant and Hecke equivariant isomorphism
\begin{align*}
    H^1(\fl,\calO_{K^v})^{J\-\lan,V\-\lalg}=H^1(\fl,\calO_{K^v}^{J\-\lan,V\-\lalg}).
\end{align*}
\begin{proof}
Let $\{G_n\}_{n\ge 0}$ be a sequence of standard open compact subgroup of $\GL_2(L)$. For any $C$-Banach space $M$ with a continuous action of $\GL_2(L)$, we have
\begin{align*}
    M^{J\-\lan,V\-\lalg}
    &\cong \dlim_n ((M\hat \otimes \calC^{J\-\an}(G_n,C))\otimes V^*)^{G_n}\otimes V\\
    &\cong \dlim_n ((M\hat \otimes \calC^{J\-\an}(G_n,C))\otimes V^*\otimes V)^{G_n},
\end{align*}
where we equip $V^*$ with the natural action of $G_n$ and $V$ with the trivial action.
As the local systems associated to the algebraic representations of $\GL_2(L)$ on $\calX_{K^vK_v}$ is trivial on $\calX_{K^v}$ (which is quasi-compact and quasi-separated), and $\calC^{J\-\an}(G_n,C)$ are completions of algebraic representations of $\GL_2(L)$ for $n\ge 0$, by the primitive comparison theorem or \cite[Theorem 4.4.6]{Pan22}
we get 
\begin{align*}
    R\Gamma_{\pro\et}(\calX_{K^v},C)\hat\ox_C \calC^{J\-\an}(G_n,C)\ox V^*\ox V&\isom R\Gamma_{\pro\et}(\calX_{K^v},\calO_{\calX_{K^v}})\hat\ox_C\calC^{J\-\an}(G_n,C)\ox_C V^*\ox V \\
    &\isom R\Gamma_{\pro\et}(\calX_{K^v},\calC^{J\-\an}(G_n,C)\ox_C V^*\ox V\hat\ox_C \calO_{\calX_{K^v}})\\
    &\cong R\Gamma_{\an}(\calX_{K^v},\calC^{J\-\an}(G_n,C)\ox_C V^*\ox V\hat\ox_C \calO_{\calX_{K^v}})\\
    &\cong R\Gamma_{\an}(\fl,\calC^{J\-\an}(G_n,C)\ox_C V^*\ox V\hat\ox_C \calO_{{K^v}}).
\end{align*}
where we equip $V^*$ with the natural action of $G_n$ and $V$ with the trivial action, the second step follows from the fact that completed tensor product with Banach space is exact (see \cite[Theorem 1]{gruson1966theorie}), the third step follows from the almost acyclicity for affinoid perfectoid spaces and the last step follows from $R\pi_{\HT,*}\calO_{\calX_{K^v}}=\calO_{K^v}$. Here $R\Gamma_{\an}(\fl,-)$ denotes the cohomology on the analytic site of $\fl$. Therefore, taking group cohomology we get
\begin{align*}
    R\Gamma_{\cont}(G_n, R\Gamma_{\pro\et}(\calX_{K^v},C)\hat\ox_C \calC^{J\-\an}(G_n,C)\ox_C V^*\ox V)\\\isom R\Gamma_{\cont}(G_n, R\Gamma_{\an}(\fl,\calC^{J\-\an}(G_n,C)\ox_C V^*\ox V\hat\ox_C \calO_{{K^v}})).
\end{align*}
We compute $\dlim_n$ on both sides. Firstly,
\begin{align*}
    &\dlim_n R\Gamma_{\cont}(G_n, R\Gamma_{\pro\et}(\calX_{K^v},C)\hat\ox_C \calC^{J\-\an}(G_n,C)\ox_C V^*\ox_C V)\\\cong &RJ\-\lan (R\Gamma_{\pro\et}(\calX_{K^v},C)\ox_C V^*\ox_C V).
\end{align*}
As
$
    H^i_{\pro\et}(\calX_{K^v},\bbQ_p)\isom \tilde{H}^i(K^v,\bbQ_p)
$ is an admissible $\GL_2(L)$-representation, by Corollary \ref{cor:Jlala} and \cite[Theorem 2.2.3]{Pan22} 
\begin{align*}
    &RJ\-\lan (R\Gamma_{\pro\et}(\calX_{K^v},C)\ox_C V^*\otimes V)\\
    \cong& R\Gamma(\frg_{{J^c}},R\lan (R\Gamma_{\pro\et}(\calX_{K^v},C))\ox_C V^*\otimes V)\\
    \cong& R\Gamma(\frg_{{J^c}}, R\Gamma_{\pro\et}(\calX_{K^v},C)^{\lan}\ox_C V^*\otimes V),
\end{align*}
where $J^c=\Hom_{\bbQ_p\-\alg}(L,C)\bs J$, and $\frg_{J^c}:=\ox_{\eta\in J^c}\gl_2(L)\ox_{L,\eta}C$.
Secondly, by Lemma \ref{Cech} together with Cech computations, we deduce
\begin{align*}
    &\dlim_n R\Gamma_{\cont}(G_n,R\Gamma_{\an}(\fl,\calC^{J\-\an}(G_n,C)\hat\ox_C \calO_{{K^v}}\ox_C V^*\ox_C V))\\
    \isom &\dlim_n R\Gamma_{\an}(\fl,R\Gamma_{\cont}(G_n,\calC^{J\-\an}(G_n,C)\hat\ox_C \calO_{{K^v}}\ox_C V^*\ox_C V))\\\cong&R\Gamma_{\an}(\fl,\dlim_n R\VB(\calC^{J\-\an}(G_n,C)\hat\ox_C\calO_{\fl}\ox_C V^*\ox_C V))\\
    \cong&R\Gamma_{\an}(\fl,\calO_{K^v}^{J\-\lan,V\-\lalg}),
\end{align*}
where the last step follows by Proposition \ref{prop:RVBiotala} (which only stated for $J=\iota$, but the result holds for general $J$ contains $\iota$ and the proof dose not change). From these two parts, we get an isomorphism 
\begin{align*}
    R\Gamma(\frg_{{J^c}}, R\Gamma_{\pro\et}(\calX_{K^v},C)^{\lan}\ox_C V^*\ox_C V)\isom R\Gamma_{\an}(\fl,\calO_{K^v}^{J\-\lan,V\-\lalg}).
\end{align*}
Taking $H^1$, we get a natural map 
\begin{align*}
    H^1(\fl,\calO_{K^v}^{J\-\lan,V\-\lalg})\to H^0(\frg_{{J^c}},H^1_{\pro\et}(\calX_{K^v},C)^{\lan}\ox_C V^*\ox_C V)\cong H^1_{\pro\et}(\calX_{K^v},C)^{J\-\lan,V\-\lalg},
\end{align*}
whose kernel and cokernel are controlled by $H^i(\frg_{{J^c}},H^0_{\pro\et}(\calX_{K^v},C)^{\lan}\ox_C V^*\otimes V)$ with $i=1,2$. As the action of $\GL_2(L)$ on $H^0_{\pro\et}(\calX_{K^v},C)\isom \tilde{H}^0(K^v,C)$ factors through determinants, and the action of the Lie algebra of the center of $\GL_2(L)$ on $\tilde{H}^0(K^v,C)$ has no higher cohomology, by Kunneth formula we see that  
\begin{align*}
    H^1(\frg_{{J^c}}, H^0_{\pro\et}(\calX_{K^v},C)\ox_C V^*\otimes V) = H^2(\frg_{{J^c}},H^0_{\pro\et}(\calX_{K^v},C)\ox_C V^*\otimes V) = 0
\end{align*}
because
\begin{align*}
    H^1(\frg_{{J^c}}, \calC^{\lan}(\calO_L,C)\ox_C V^*\otimes V) = H^2(\frg_{{J^c}},\calC^{\lan}(\calO_L,C)\ox_C V^*\otimes V) = 0.
\end{align*}
Therefore, we deduce 
\begin{align*}
    H^1(\fl,\calO_{K^v}^{J\-\lan,V\-\lalg})\isom H^1_{\pro\et}(\calX_{K^v},C)^{J\-\lan,V\-\lalg}\isom \tilde H^1(K^v,C)^{J\-\lan,V\-\lalg}\isom H^1(\fl,\calO_{K^v})^{J\-\lan,V\-\lalg}
\end{align*}
as desired.
\end{proof}
\end{theorem}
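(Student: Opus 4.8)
The plan is to reduce the computation of $J$-locally analytic, $V$-locally algebraic vectors in the completed cohomology to a spectral-sequence argument on the flag variety, combining the primitive comparison theorem with the functor $\VB$ and the acyclicity results of \S\ref{ladef}. First I would rewrite, for any Banach representation $M$ of $\GL_2(L)$, the space $M^{J\-\lan,V\-\lalg}$ as a colimit over a cofinal system $\{G_n\}$ of standard open compact subgroups:
\begin{align*}
    M^{J\-\lan,V\-\lalg}\isom \dlim_n\bigl((M\hat\ox_C\calC^{J\-\an}(G_n,C))\ox_C V^*\ox_C V\bigr)^{G_n},
\end{align*}
where $V^*$ carries the natural $G_n$-action and $V$ the trivial one; this is the $J$-analytic analogue of Emerton's description of locally algebraic vectors, and the point is that since $V$ is algebraic and $\gl_2(L)\ox_{L,\eta}C$ acts trivially on $V$ for $\eta\in J$, the $V$-isotypic projection commutes with taking $J$-analytic vectors.

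The second step is to apply this with $M=R\Gamma_{\pro\et}(\calX_{K^v},C)$ at the level of complexes. Here the key input is that the local systems attached to algebraic representations of $\GL_2(L)$ become trivial on $\calX_{K^v}$, so tensoring $R\Gamma_{\pro\et}(\calX_{K^v},C)$ with $\calC^{J\-\an}(G_n,C)\ox_C V^*\ox_C V$ can be moved inside the cohomology; using the primitive comparison theorem \cite[Theorem 1.3]{Sch13} (equivalently \cite[Theorem 4.4.6]{Pan22}), the exactness of completed tensor product with a Banach space \cite{gruson1966theorie}, and $R\pi_{\HT,*}\calO_{\calX_{K^v}}=\calO_{K^v}$, I would obtain a $G_n$-equivariant identification
\begin{align*}
    R\Gamma_{\pro\et}(\calX_{K^v},C)\hat\ox_C\calC^{J\-\an}(G_n,C)\ox_C V^*\ox_C V\isom R\Gamma_{\an}(\fl,\calC^{J\-\an}(G_n,C)\ox_C V^*\ox_C V\hat\ox_C\calO_{K^v}).
\end{align*}
Taking $R\Gamma_{\cont}(G_n,-)$ of both sides and then $\dlim_n$ turns the left side into $RJ\-\lan$ of the (algebraic-twisted) \'etale cohomology complex, which by Corollary \ref{cor:Jlala} and \cite[Theorem 2.2.3]{Pan22} equals $R\Gamma(\frg_{J^c},R\Gamma_{\pro\et}(\calX_{K^v},C)^{\lan}\ox_C V^*\ox_C V)$ since $\tilde H^i(K^v,\bbQ_p)$ is admissible; the right side becomes $R\Gamma_{\an}(\fl,\dlim_n R\VB(\calC^{J\-\an}(G_n,C)\hat\ox_C\calO_{\fl}\ox_C V^*\ox_C V))$, which by (the $J$-version of) Proposition \ref{prop:RVBiotala} is $R\Gamma_{\an}(\fl,\calO_{K^v}^{J\-\lan,V\-\lalg})$.

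The final step is bookkeeping with the resulting isomorphism $R\Gamma(\frg_{J^c},R\Gamma_{\pro\et}(\calX_{K^v},C)^{\lan}\ox_C V^*\ox_C V)\isom R\Gamma_{\an}(\fl,\calO_{K^v}^{J\-\lan,V\-\lalg})$ in degree $1$. Taking $H^1$ produces a natural map $H^1(\fl,\calO_{K^v}^{J\-\lan,V\-\lalg})\to H^1_{\pro\et}(\calX_{K^v},C)^{J\-\lan,V\-\lalg}$ whose kernel and cokernel are measured by $H^i(\frg_{J^c},H^0_{\pro\et}(\calX_{K^v},C)^{\lan}\ox_C V^*\ox_C V)$ for $i=1,2$; since the $\GL_2(L)$-action on $H^0_{\pro\et}(\calX_{K^v},C)\isom\tilde H^0(K^v,C)$ factors through the determinant, this $H^0$ is a sum of characters times $\calC^{\lan}(\calO_L,C)$-type factors, and the relevant Lie-algebra cohomology $H^1(\frg_{J^c},\calC^{\lan}(\calO_L,C)\ox_C V^*\ox_C V)=H^2(\frg_{J^c},\calC^{\lan}(\calO_L,C)\ox_C V^*\ox_C V)=0$ vanishes, so the map is an isomorphism; combined with Theorem \ref{11} this yields $H^1(\fl,\calO_{K^v})^{J\-\lan,V\-\lalg}\isom H^1(\fl,\calO_{K^v}^{J\-\lan,V\-\lalg})$, with all structures tracked along the way. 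The main obstacle I anticipate is justifying the interchange of $\dlim_n$ with $R\Gamma_{\cont}(G_n,-)$ and with $R\Gamma_{\an}(\fl,-)$ in the topologized derived category — this needs the remark in Proposition \ref{prop:R1VB} that continuous cohomology of a topologically finitely generated compact $p$-adic group commutes with filtered colimits, together with quasi-compactness of $\fl$ to commute the colimit past sheaf cohomology — and, relatedly, verifying that the $J$-version of Proposition \ref{prop:RVBiotala} (stated there for a single embedding) genuinely extends to arbitrary $J\ni\iota$, which is claimed but should be checked via the tensor decomposition of Proposition \ref{prop:tensordecomposition}.
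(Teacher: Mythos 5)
Your proposal is correct and follows the paper's proof essentially step for step: the same rewriting of $M^{J\-\lan,V\-\lalg}$ as a colimit of $G_n$-invariants, the same application of primitive comparison plus exactness of completed tensor and $R\pi_{\HT,*}\calO_{\calX_{K^v}}=\calO_{K^v}$, the same identification of the two $\dlim_n$ sides via Corollary \ref{cor:Jlala} and the $J$-version of Proposition \ref{prop:RVBiotala}, and the same degree-one bookkeeping with the vanishing of $H^i(\frg_{J^c},H^0_{\pro\et}(\calX_{K^v},C)^{\lan}\ox_C V^*\ox_C V)$ for $i=1,2$. The two obstacles you flag at the end are precisely the points the paper also addresses (the remark inside Proposition \ref{prop:R1VB} and the parenthetical extension of Proposition \ref{prop:RVBiotala} to arbitrary $J\ni\iota$), so the argument is complete as you envision it.
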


We also show $H^1(\fl,\calO_{K^v}^{J\-\lan,J^c\-\lalg})$ can be computed using Cech complexes with coverings in $\ffrb$. This amounts to prove:
\begin{lemma}\label{Cech}
Let $J\subset\Hom_{\bbQ_p\-\alg}(L,C)$ be a set of embeddings such that $\iota\in J$. Then the Cech cohomology group $\check H^j(U,\calO_{K^v}^{J\-\lan,J^c\-\lalg})=0$ for any $U\in\ffrb$ and $j\ge 1$.
\begin{proof}
Let $U\in\ffrb$ and let $\ffru\subset\ffrb$ be a cover of $U$. As $\pi_{\HT}^{-1}(U)$ is affinoid perfectoid for any $U\in\ffrb$, the augmented Cech complex $\calO_{K^v}(U)\to \check{C}(\ffru,\calO_{K^v})$ is acyclic. From Theorem \ref{thm:VB}(iv), we deduce the augmented Cech complex
\begin{align*}
    \calO_{K^v}(U)^{J\-\lan,J^c\-\lalg}\to \check{C}(\ffru,\calO_{K^v})^{J\-\lan,J^c\-\lalg}=\check{C}(\ffru,\calO_{K^v}^{J\-\lan,J^c\-\lalg})
\end{align*}
is also acyclic. This shows $\check H^j(U,\calO_{K^v}^{J\-\lan,J^c\-\lalg})=0$ for $j\ge 1$. (Alternatively one can also use similar arguments as in \cite[Proposition 3.2.11]{PanII} together with the power series expansion description of $\calO_{K^v}^{J\-\lan,J^c\-\lalg}$.)
\end{proof}
\end{lemma}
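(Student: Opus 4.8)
The plan is to reduce the vanishing of $\check H^j(U,\calO_{K^v}^{J\-\lan,J^c\-\lalg})$ for $U\in\ffrb$ to the already-known acyclicity of the augmented \v{C}ech complex for $\calO_{K^v}$ on preimages of members of $\ffrb$. First I would recall that for any $U\in\ffrb$, its preimage $\pi_{\HT}^{-1}(U)$ is affinoid perfectoid (Theorem \ref{thm:piHT}(i)), and that $\ffrb$ is stable under finite intersections; hence for any finite cover $\ffru\subset\ffrb$ of $U$, every finite intersection of members of $\ffru$ again lies in $\ffrb$ and has affinoid perfectoid preimage. By the usual Tate-style acyclicity (vanishing of higher cohomology of the structure sheaf on an affinoid perfectoid), the augmented \v{C}ech complex
\[
    0\to \calO_{K^v}(U)\to \check C(\ffru,\calO_{K^v})
\]
is exact. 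This is the input we start from.

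Next I would apply the functor of taking locally $J$-analytic, $J^c$-algebraic vectors termwise. The key point is that this operation is exact on the relevant class of modules: via the identification $\calO_{K^v}^{J\-\lan,J^c\-\lalg}\isom \VB(\calC^{J\-\lan}(\frg,C)\hat\ox_C\calC^{\sm}(\ldots)\ldots)$ — more precisely using Proposition \ref{prop:tensordecomposition}, Proposition \ref{prop:RVBiotala}, and the exactness statement Theorem \ref{thm:VB}(iv) for $\VB$ on $\Coh_{\frg}(\fl)^{\frn^0_\iota}$ — the passage $\calF\mapsto \calF^{J\-\lan,J^c\-\lalg}$ commutes with the exact sequences appearing in the \v{C}ech complex. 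Concretely, each term of $\check C(\ffru,\calO_{K^v})$ is a (finite product of) $\calO_{K^v}(U')$ for $U'\in\ffrb$, and taking $(-)^{J\-\lan,J^c\-\lalg}$ is compatible with finite products and is exact on this diagram because it is realized by $\VB$ of an exact sequence of $\frn^0_\iota$-killed equivariant sheaves (after the tensor-decomposition reduces the $J$-analytic part to the $\iota$-analytic part which is handled by $\VB$, the $\iota^c$-analytic/algebraic part being an honest étale vector bundle). One should also record the obvious identity $\check C(\ffru,\calO_{K^v})^{J\-\lan,J^c\-\lalg}=\check C(\ffru,\calO_{K^v}^{J\-\lan,J^c\-\lalg})$, which just says that taking locally analytic vectors commutes with the finite products defining the \v{C}ech terms. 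Combining these, the augmented complex
\[
    0\to \calO_{K^v}(U)^{J\-\lan,J^c\-\lalg}\to \check C(\ffru,\calO_{K^v}^{J\-\lan,J^c\-\lalg})
\]
is exact, which is precisely the statement that $\check H^j(U,\calO_{K^v}^{J\-\lan,J^c\-\lalg})=0$ for $j\ge 1$ (and $\check H^0$ recovers the sections over $U$).

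The main obstacle I expect is justifying the exactness of $(-)^{J\-\lan,J^c\-\lalg}$ applied to the \v{C}ech complex, i.e. making sure that the functor $\VB$ (and its locally analytic variant from Proposition \ref{prop:RVBiotala}) really is exact on the specific exact sequences that show up here. The subtlety is that the terms are not coherent sheaves but completed colimits of such; so one needs that the integral decompletion (Proposition \ref{prop:decomplete}) behaves uniformly enough on the cover $\ffru$ — that there is a single sufficiently small open compact $K_v\subset\GL_2(L)$ and a single large $n$ working simultaneously for all $U'$ appearing in $\ffru$. Since $\ffru$ is finite and each $U'\in\ffrb$ has preimage that is the preimage of an affinoid at some finite level (Theorem \ref{thm:piHT}(ii)), one can take a common $K_v$; then $\VB_{K_v}$ of the \v{C}ech complex is an exact complex of coherent sheaves on $\fl$ (exactness being checked before completion, using that $\calO^{\sm}_{K^v}(U')$ is dense in $\calO_{K^v}(U')$ and that completed tensor product with a Banach space is exact, cf. \cite{gruson1966theorie}), and tensoring back up to $\calO_{K^v}$ preserves exactness. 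Once this uniformity is in hand, the argument is formal.
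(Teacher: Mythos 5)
Your argument is correct and follows essentially the same route as the paper's proof: both reduce to the Tate-acyclicity of the \v{C}ech complex for $\calO_{K^v}$ over the affinoid perfectoid cover and then invoke the exactness of $\VB$ (Theorem~\ref{thm:VB}(iv)) to push this acyclicity through the functor $(-)^{J\text{-la},J^c\text{-lalg}}$. Your additional remarks on choosing a uniform $K_v$ and decompletion level $n$ over the finite cover, and on the compatibility of the \v{C}ech terms with taking locally analytic vectors, spell out details the paper leaves implicit but do not change the substance of the argument.
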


\begin{theorem}\label{thm:JlalgcommutesH1}
Let $V_{\iota}=V^{(a,b)}_{\iota}$ with $a,b\in\bbZ^2$ be an $\iota$-algebraic representation of $\GL_2(L)$. There is a $\GL_2(L)\times \Gal_L$-equivariant and Hecke equivariant exact sequence 
\begin{align*}
    0\to &H^1(\fl,\omega_{K^v}^{(-b,-a),\sm}\ox_{\calO_{K^v}^{\sm}}\calO_{K^v}^{\iota^c\-\lan})(-b)\\
    &\to \Hom_{\frg_{\iota}}(V_{\iota}^{(a,b)},H^1(\fl,\calO_{K^v}^{\lan}))\to H^0(\fl,\omega_{K^v}^{(-a-1,-b+1),\sm}\ox_{\calO_{K^v}^{\sm}}\calO_{K^v}^{\iota^c\-\lan})(-a-1)\to 0
\end{align*}
\begin{proof}
Our strategy is similar to Theorem \ref{thm:JlacommutesH1}. Let $\{G_n\}_{n\ge 0}$ be a sequence of standard open compact subgroup of $\GL_2(L)$. First of all, by the primitive comparison theorem, we get 
\begin{align*}
    &R\Gamma_{\pro\et}(\calX_{K^v},C)\hat\ox_C\calC^{\iota^c\-\an}(G_n,C)\ox_C V_{\iota}^*\\
    \isom & R\Gamma_{\pro\et}(\calX_{K^v},\calC^{\iota^c\-\an}(G_n,C)\ox_C V_{\iota}^*\hat\ox_C\calO_{\calX_{K^v}})\\
    \isom & R\Gamma_{\an}(\calX_{K^v},\calC^{\iota^c\-\an}(G_n,C)\ox_C V_{\iota}^*\hat\ox_C\calO_{\calX_{K^v}}).
\end{align*}
Therefore, 
\begin{align*}
    &\dlim_n R\Gamma_{\cont}(G_n,R\Gamma_{\pro\et}(\calX_{K^v},C)\hat\ox_C C^{\iota^c\-\an}(G_n,C)\ox_C V_{\iota}^*))\\
    \isom & \dlim_n R\Gamma_{\an}(\calX_{K^v},R\Gamma_{\cont}(G_n,\calC^{\iota^c\-\an}(G_n,C)\hat\ox_C\calO_{\calX_{K^v}}\ox_C V_{\iota}^*).
\end{align*}
Firstly, 
\begin{align*}
    &\dlim_n R\Gamma_{\an}(\calX_{K^v},R\Gamma_{\cont}(G_n,\calC^{\iota^c\-\an}(G_n,C)\ox_C V_{\iota}^*\hat\ox_C \calO_{\calX_{K^v}}))\\=&R\Gamma_{\an}(\fl,\dlim_n R\VB(\calC^{\iota^c\-\an}(G_n,C)\hat\ox_C\calO_{\fl}\ox_C V_{\iota}^*))\\
    =&R\Gamma_{\an}(\fl, R\VB(\calC^{\iota^c\-\lan}(\frg,C)\hat\ox_C\calO_{\fl}\ox_C V_{\iota}^*)).
\end{align*}
As 
\begin{align*}
    H^0(R\VB(\calC^{\iota^c\-\lan}(\frg,C)\hat\ox_C\calO_{\fl}\ox_C V_{\iota}^*))&\isom \omega_{K^v}^{(-b,-a),\sm}\ox_{\calO_{K^v}^{\sm}}\calO_{K^v}^{\iota^c\-\lan}(-b)\\
    H^1(R\VB(\calC^{\iota^c\-\lan}(\frg,C)\hat\ox_C\calO_{\fl}\ox_C V_{\iota}^*))&\isom 
    \omega_{K^v}^{(-a-1,-b+1),\sm}\ox_{\calO_{K^v}^{\sm}}\calO_{K^v}^{\iota^c\-\lan}(-a-1)
\end{align*}
and 
\begin{align*}
    H^2(\fl,\omega_{K^v}^{(-b,-a),\sm}\ox_{\calO_{K^v}^{\sm}}\calO_{K^v}^{\iota^c\-\lan}(-b))=0,
\end{align*}
we see that there is an exact sequence 
\begin{align*}
    0\to &H^1(\fl,\omega_{K^v}^{(-b,-a),\sm}\ox_{\calO_{K^v}^{\sm}}\calO_{K^v}^{\iota^c\-\lan})(-b)\\
    &\to H^1(R\Gamma_{\an}(\fl, R\VB(\calC^{\iota^c\-\lan}(\frg,C)\hat\ox_C\calO_{\fl}\ox_C V_{\iota}^*)))\\
    &\to H^0(\fl,\omega_{K^v}^{(-a-1,-b+1),\sm}\ox_{\calO_{K^v}^{\sm}}\calO_{K^v}^{\iota^c\-\lan})(-a-1)\to 0.
\end{align*}
Next, 
\begin{align*}
    &\dlim_n R\Gamma_{\cont}(G_n, R\Gamma_{\pro\et}(\calX_{K^v},C)\hat\ox_C \calC^{\iota^c\-\an}(G_n,C)\ox_C V^*_{\iota}))\\=&R\iota^c\-\lan (R\Gamma_{\pro\et}(\calX_{K^v},C)\ox_C V_{\iota}^*)
    \\=&R\Gamma(\frg_{\iota},R\lan(R\Gamma_{\pro\et}(\calX_{K^v},C)\ox_C V_{\iota}^*)).
\end{align*}
Using similar methods of Theorem \ref{thm:JlacommutesH1}, we can show that 
\begin{align*}
    H^1(R\Gamma(\frg_{\iota},R\lan(R\Gamma_{\pro\et}(\calX_{K^v},C)\ox_C V_{\iota}^*)))    \isom \Hom_{\frg_{\iota}}(V_{\iota},H^1(\fl,\calO_{K^v}^{\lan})).
\end{align*}
From this we deduce the result.
\end{proof}
\end{theorem}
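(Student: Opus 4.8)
The strategy is identical in spirit to the proof of Theorem~\ref{thm:JlacommutesH1}, replacing the role of $\calC^{J\-\an}(G_n,C)\ox V^*\ox V$ by $\calC^{\iota^c\-\an}(G_n,C)\ox V_\iota^*$, so that only algebraic local systems (trivialized on $\calX_{K^v}$) enter, and so that the primitive comparison theorem applies verbatim. First I would fix a sequence $\{G_n\}_{n\ge 0}$ of standard open compact subgroups of $\GL_2(L)$ and, for each $n$, tensor the pro-\'etale cohomology $R\Gamma_{\pro\et}(\calX_{K^v},C)$ with the Banach space $\calC^{\iota^c\-\an}(G_n,C)\ox_C V_\iota^*$, where $G_n$ acts only on $V_\iota^*$. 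Since completed tensor product with a Banach space is exact (\cite{gruson1966theorie}), the primitive comparison theorem \cite[Theorem 1.3]{Sch13} (equivalently \cite[Theorem 4.4.6]{Pan22}) identifies this with $R\Gamma_{\an}(\calX_{K^v},\calC^{\iota^c\-\an}(G_n,C)\ox_C V_\iota^*\hat\ox_C\calO_{\calX_{K^v}})$. Taking continuous $G_n$-cohomology and then $\dlim_n$ gives the same object computed two ways.

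On the geometric side, pushing forward along $\pi_{\HT}$ (which is "affinoid" in the sense of Theorem~\ref{thm:piHT}), the computation of $\dlim_n R\Gamma_{\cont}(G_n,-)$ becomes $R\Gamma_{\an}(\fl, R\VB(\calC^{\iota^c\-\lan}(\frg,C)\hat\ox_C\calO_{\fl}\ox_C V_\iota^*))$ by Proposition~\ref{prop:RVBiotala} (and its evident extension to subsets containing $\iota$; here $\iota\notin\iota^c$, so in fact $\frn^0_\iota$ acts trivially and $R\VB$ just splits into $H^0$ and $H^1$ along $\Omega^1_{\fl}$). Using the explicit $\VB$-values $H^0(R\VB(\calC^{\iota^c\-\lan}(\frg,C)\hat\ox_C\calO_{\fl}\ox_C V_\iota^*))\isom \omega_{K^v}^{(b,a),\sm}\ox_{\calO_{K^v}^{\sm}}\calO_{K^v}^{\iota^c\-\lan}(-b)$ and $H^1(\cdots)\isom \omega_{K^v}^{(a+1,b-1),\sm}\ox_{\calO_{K^v}^{\sm}}\calO_{K^v}^{\iota^c\-\lan}(-a-1)$ — which come from Proposition~\ref{prop:RVBcalc}(ii), i.e.\ from the Euler/Hodge--Tate sequence and $\frn^0_\iota$ acting isomorphically between the two graded pieces — together with the vanishing $H^2(\fl,\omega_{K^v}^{(b,a),\sm}\ox_{\calO_{K^v}^{\sm}}\calO_{K^v}^{\iota^c\-\lan}(-b))=0$ (cohomological dimension of the curve $\fl$ is $1$, and the $\iota^c$-analytic factor contributes only a $p$-torsion-free countable colimit, so there is no extra cohomology), the spectral sequence / five-term exact sequence for the complex $R\Gamma_{\an}(\fl, R\VB(\cdots))$ yields the short exact sequence
\begin{align*}
    0\to &H^1(\fl,\omega_{K^v}^{(b,a),\sm}\ox_{\calO_{K^v}^{\sm}}\calO_{K^v}^{\iota^c\-\lan})(-b)\\
    &\to H^1\bigl(R\Gamma_{\an}(\fl, R\VB(\calC^{\iota^c\-\lan}(\frg,C)\hat\ox_C\calO_{\fl}\ox_C V_{\iota}^*))\bigr)\\
    &\to H^0(\fl,\omega_{K^v}^{(a+1,b-1),\sm}\ox_{\calO_{K^v}^{\sm}}\calO_{K^v}^{\iota^c\-\lan})(-a-1)\to 0.
\end{align*}

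On the representation side, $\dlim_n R\Gamma_{\cont}(G_n, R\Gamma_{\pro\et}(\calX_{K^v},C)\hat\ox_C\calC^{\iota^c\-\an}(G_n,C)\ox_C V_\iota^*)$ is by definition $R\iota^c\-\lan(R\Gamma_{\pro\et}(\calX_{K^v},C)\ox_C V_\iota^*)$, which by Corollary~\ref{cor:Jlala} (admissibility of completed cohomology) equals $R\Gamma(\frg_\iota, R\Gamma_{\pro\et}(\calX_{K^v},C)^{\lan}\ox_C V_\iota^*)$. Its $H^1$ is, by the same argument as in Theorem~\ref{thm:JlacommutesH1} (kernel and cokernel controlled by $H^{1},H^2$ of $\frg_\iota$ acting on $H^0_{\pro\et}(\calX_{K^v},C)^{\lan}\ox V_\iota^*$, which vanish because that $H^0$ factors through the determinant and the relevant Lie algebra cohomology of $\calC^{\lan}(\calO_L,C)$ vanishes), isomorphic to $\Hom_{\frg_\iota}(V_\iota^{(a,b)}, H^1(\fl,\calO_{K^v}^{\lan}))$, using $H^1_{\pro\et}(\calX_{K^v},C)^{\lan}\isom H^1(\fl,\calO_{K^v})^{\lan}$ from Theorem~\ref{thm:JlacommutesH1}. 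Matching the two computations of $H^1$ gives the claimed exact sequence; all maps are $\GL_2(L)\times\Gal_L$- and Hecke-equivariant because every identification used (primitive comparison, $R\pi_{\HT,*}$, $\VB$) is. \textbf{The main obstacle} I expect is bookkeeping rather than conceptual: verifying that the edge maps in the degeneration of the two spectral sequences are genuinely compatible (so that the comparison isomorphism of the total $H^1$'s is filtered for the two filtrations and induces the arrows in the sequence), and checking the requisite vanishing statements ($H^2(\fl,-)=0$ on the geometric side, and the $\frg_\iota$-cohomology vanishing on the automorphic side) in the presence of the large coefficient sheaf $\calO_{K^v}^{\iota^c\-\lan}$, where one must invoke exactness of $\VB$ on $\Coh_\frg(\fl)^{\frn^0_\iota}$ (Theorem~\ref{thm:VB}(iv)) and the fact that $\iota^c$-analytic functions are colimits of algebraic ones to reduce to the coherent case.
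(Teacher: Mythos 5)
Your proposal reproduces the paper's proof — same use of the primitive comparison theorem with $\calC^{\iota^c\-\an}(G_n,C)\ox_C V_\iota^*$ coefficients, same passage to $R\VB$ and computation via Proposition~\ref{prop:RVBcalc}(ii), same vanishing of $H^2(\fl,-)$, and same appeal to Corollary~\ref{cor:Jlala} plus the determinant argument for $H^0_{\pro\et}$ on the representation side. The only misstep is the parenthetical that ``$\frn^0_\iota$ acts trivially and $R\VB$ just splits into $H^0$ and $H^1$ along $\Omega^1_\fl$'': that would be correct if the coefficient sheaf were $\calC^{\iota^c\-\lan}(\frg,C)\hat\ox_C\calO_\fl$ alone, but $\frn^0_\iota$ acts non-trivially on the $V_\iota^*\ox_C\calO_\fl$ factor, which is exactly why $H^0$ and $H^1$ of $R\VB$ come out as the line bundles $\omega_{K^v}^{(b,a),\sm}(-b)$ and $\omega_{K^v}^{(a+1,b-1),\sm}(-a-1)$ of different weights rather than differing by a twist by $\Omega^{1}_\fl$; you then cite the correct mechanism (``$\frn^0_\iota$ acting isomorphically between the two graded pieces'' of the Euler sequence, per Proposition~\ref{prop:RVBcalc}(ii)), so the conclusion stands and this is only an expository inconsistency rather than a gap.
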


\begin{remark}
We sketch a more direct and clearer proof of the following result:
\begin{align*}
    \tilde{H}^1(K^v,C)^{\iota\-\lan}_{\frm}\isom H^1(\fl,\calO_{K^v}^{\iota\-\lan})_{\frm}.
\end{align*}
where $(-)_{\frm}$ denotes localization at a non-Eisenstein maximal ideal of the Hecke algebra $\bbT^S$. By Theorem \ref{thm:Oetalalocal}, we know that $R\Gamma(\frg_{\iota^c},\calO^{\lan}_{K^v})=\calO^{\iota\-\lan}_{K^v}[0]$. Hence the Chevalley-Eilenberg complex $\calO^{\lan}_{K^v}\ox \wedge^\bullet\frg_{\iota^c}^*$ gives a resolution of $\calO^{\iota\-\lan}_{K^v}$. As $\tilde H^0(K^v,C)^{\lan}_{\frm}=0$, we see $H^1(\fl,-)_{\frm}$ of the Chevalley-Eilenberg complex $\calO^{\lan}_{K^v}\ox \wedge^\bullet\frg_{\iota^c}^*$ remains a resolution of $H^1(\fl,\calO_{K^v}^{\iota\-\lan})_{\frm}$. From this we deduce that 
\begin{align*}
    H^1(\fl,\calO_{K^v}^{\iota\-\lan})_{\frm}=H^1(\fl,\calO_{K^v}^{\lan})^{\frg_{\iota^c}}_{\frm}
\end{align*}
which is as we want.
\end{remark}

\section{Intertwining operator}\label{intertw}
Recall that in Corolloary \ref{cor:infdecomp} we have a natural decomposition of the $\tilde{\chi}_k$-isotypic part in $\calO_{K^v}^{\lan}$, specifically
\[
    \calO_{K^v}^{\lan,\tilde{\chi}_k}= \calO_{K^v}^{\lan,(0,-k)}\oplus \calO_{K^v}^{\lan,(-k-1,1)}.
\]
Here $\tilde{\chi}_k$ is the infinitesimal character of the \emph{dual} of the $k$-th symmetric power of the standard representation. In this section, we aim to construct a map
\[
    I:\calO_{K^v}^{\lan,(0,-k)}\to\calO_{K^v}^{\lan,(-k-1,1)}(k+1),
\]
which is equivariant for the $\GL_2(L)$-action, the Galois action, and the Hecke action. We will call this the intertwining operator, in the same spirit as \cite[Section 4.4]{PanII}. This intertwining operator also restricts to a natural map 
\begin{align*}
    I:\calO_{K^v}^{\iota\-\lan,\iota^c\-\lalg,(0,-k)}\to\calO_{K^v}^{\iota\-\lan,\iota^c\-\lalg,(-k-1,1)}(k+1),
\end{align*}
where $\calO_{K^v}^{\iota\-\lan,\iota^c\-\lalg}\subset \calO_{K^v}^{\lan}$ is the subsheaf consisting of locally $\iota$-analytic, locally $\iota^c$-algebraic sections. We will first define some differential operators on $\calO_{K^v}^{\lan,(0,-k)}$. The intertwining operator will be the composition of some of them. The behavior of these differential operators varies across different Newton strata of unitary Shimura curves. We will recall some descriptions of these strata and study the differential operators on each of the strata.

\subsection{Differential operators}\label{int}
Let $k\ge 0$ be an integer. We will construct two differential operators on $\calO_{K^v}^{\lan,(0,-k)}$. Roughly speaking, we reproduce the constructions in \cite[Section 4]{PanII}, but replace $\calO_{K^p}^{\sm}$ by $\calO_{K^v}^{\iota^c\-\lan}$ everywhere. Heuristically, $\calO_{K^v}^{\iota^c\-\lan}$ consists of sections that are ``smooth'' under the locally $L$-analytic action of $\GL_2(L)$.
\begin{proposition}\label{prop:deta}
Suppose $\eta\neq\iota$. There exists a $\GL_2(L)$-equivariant continuous map 
\[
    d_\eta:\calO_{K^v}^{\eta\-\lan}\to \calO_{K^v}^{\eta\-\lan}\ox_{\calO_{K^v}^{\sm}}\Omega_{K^v}^{1,\sm}
\]
which is the unique extension of the Gauss--Manin connection on $\calO_{K^v}^{\eta\-\lalg}$.
\begin{proof}
We use the power series description of $\calO_{K^v}^{\eta\-\lan}$ as described in Theorem \ref{thm:Oetalalocal}. For $U\in\ffrb$ sufficiently small and $s\in \calO_{K^v}^{\eta\-\lan}(U)$, there exists sections $e_1',e_2',e_3',e_4'\in \calO_{K^v}^{\lalg}$ for $i=1,\dots,4$, such that $s=\sum_{i,j,k,l}c_{ijkl}e_1'^ie_2'^je_3'^k e_4'^l$ where $c_{ijkl}\in \calO_{K_n}(U)$ for some sufficiently large $n$ and $c_{ijkl}\rightarrow 0$ when $i+j+k+l\rightarrow\infty$. Define $d_\eta(s)=\lim_{n}\nabla(\sum_{i+j+k+l\le n}c_{ijkl}e_1'^ie_2'^je_3'^k e_4'^l)$ where $\nabla$ is the Gauss--Manin connection on $\calO_{K^v}^{\eta\-\lalg}$. As $d:\calO_{K_n}(U)\to \Omega_{K_n}^{1,\sm}(U)$ is a continuous operator, by the Leibniz rule we know that $d_\eta(s)$ is a well-defined section in $(\calO_{K^v}^{\eta\-\lan}\ox_{\calO_{K^v}^{\sm}}\Omega_{K^v}^{1,\sm})(U)$, which gives us the differential operator 
\[
    d_\eta:\calO_{K^v}^{\eta\-\lan}\to \calO_{K^v}^{\eta\-\lan}\ox_{\calO_{K^v}^{\sm}}\Omega_{K^v}^{1,\sm}.
\]
Finally, from the construction we see that $d_\eta$ is uniquely determined by its restriction on the locally $\eta$-algebraic part.
\end{proof}
\end{proposition}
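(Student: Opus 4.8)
The plan is to construct $d_\eta$ locally on the basis $\ffrb$ using the explicit power-series presentation of $\calO_{K^v}^{\eta\-\lan}$ from Theorem \ref{thm:Oetalalocal}, and then to check that the local constructions glue and satisfy the required equivariance. First I would fix a sufficiently small $U\in\ffrb$. By Theorem \ref{thm:Oetalalocal}, there is an integer $n$ and sections $e_1',\dots,e_4'\in\calO_{K^v}^{\eta\-\lalg}(U)$ with $\|e_i'\|=1$ such that every $s\in\calO_{K^v}^{\eta\-\lan}(U)$ has a unique expansion $s=\sum_{i,j,k,l\ge 0}c_{ijkl}\,e_1'^i e_2'^j e_3'^k e_4'^l$ with $c_{ijkl}\in\calO_{K_n}(U)$ and $c_{ijkl}\to 0$. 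The key point is that the locally $\eta$-algebraic part $\calO_{K^v}^{\eta\-\lalg}$ already carries the Gauss--Manin connection $\nabla$ (coming from the integrable connection on $D_{\calX,\eta}^{(a,b)}$ recalled in \S\ref{uni}), which is a differential operator valued in $\calO_{K^v}^{\eta\-\lalg}\ox_{\calO_{K^v}^{\sm}}\Omega_{K^v}^{1,\sm}$ and satisfies the Leibniz rule relative to $\calO_{K^v}^{\sm}$. So I would define $d_\eta(s)$ as the limit $\lim_N \nabla\bigl(\sum_{i+j+k+l\le N}c_{ijkl}\,e_1'^i e_2'^j e_3'^k e_4'^l\bigr)$.

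For this to make sense I would verify convergence: applying $\nabla$ to a monomial $c_{ijkl}\,e_1'^i\cdots e_4'^l$ produces, by the Leibniz rule, a term $dc_{ijkl}\cdot e_1'^i\cdots e_4'^l$ (where $d\colon\calO_{K_n}(U)\to\Omega^{1,\sm}_{K_n}(U)$ is the usual exterior derivative, which is continuous) plus a sum of terms $c_{ijkl}\cdot(\text{monomial of total degree }i+j+k+l\text{ in the }e'_m)\cdot\nabla e'_m$, each with norm controlled by $\|c_{ijkl}\|$ times a constant depending only on $U$ and the finitely many $\nabla e'_m$. Since $c_{ijkl}\to 0$, both families of terms go to $0$, so the partial sums form a Cauchy sequence in $(\calO_{K^v}^{\eta\-\lan}\ox_{\calO_{K^v}^{\sm}}\Omega^{1,\sm}_{K^v})(U)$, and $d_\eta(s)$ is well defined and independent of the chosen expansion by uniqueness in Theorem \ref{thm:Oetalalocal}. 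The map $d_\eta$ is $\calO_{K^v}^{\sm}$-linear Leibniz and continuous by construction, and it restricts to $\nabla$ on $\calO_{K^v}^{\eta\-\lalg}(U)$; since $\calO_{K^v}^{\eta\-\lalg}(U)$ is dense in $\calO_{K^v}^{\eta\-\lan}(U)$ by Proposition \ref{prop:Oetala}, $d_\eta$ is the \emph{unique} continuous extension of $\nabla$, which in particular makes it independent of all choices and hence shows the local constructions glue to a morphism of sheaves.

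Finally, for $\GL_2(L)$-equivariance: the Gauss--Manin connection on $\calO_{K^v}^{\eta\-\lalg}$ is $\GL_2(L)$-equivariant (it comes from functoriality of de Rham cohomology along the tower $\calX_{K^vK_v'}\to\calX_{K^vK_v}$, hence commutes with the Hecke/level action at $v$), and $\GL_2(L)$ acts continuously on both source and target; by density of the locally $\eta$-algebraic sections the equivariance propagates to all of $\calO_{K^v}^{\eta\-\lan}$. I expect the only genuinely delicate point to be the convergence estimate for $\nabla e'_m$ together with the verification that, after applying $\nabla$, the result still lands in the completed tensor product over $\calO_{K^v}^{\sm}$ (rather than a larger completion) — this is where one must use that the $e'_m$ are locally \emph{algebraic}, so that $\nabla e'_m\in\calO_{K^v}^{\eta\-\lalg}\ox_{\calO_{K^v}^{\sm}}\Omega^{1,\sm}_{K^v}$ lies in the algebraic part with bounded norm; everything else is a routine Leibniz-rule bookkeeping and a density argument.
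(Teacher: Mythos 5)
Your proposal follows essentially the same route as the paper: use the local power-series presentation from Theorem \ref{thm:Oetalalocal}, apply the Gauss--Manin connection $\nabla$ termwise to truncated sums, invoke continuity of $d$ on $\calO_{K_n}(U)$ and the Leibniz rule to show convergence, and conclude uniqueness from density of $\calO_{K^v}^{\eta\-\lalg}$. The extra detail you supply on convergence estimates, gluing, and $\GL_2(L)$-equivariance is correct and simply makes explicit what the paper leaves implicit.
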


\begin{corollary}\label{cor:diotac}
There exists a $\GL_2(L)$-equivariant continuous map 
\[
    d_{\iota^c}:\calO_{K^v}^{\iota^c\-\lan}\to \calO_{K^v}^{\iota^c\-\lan}\ox_{\calO_{K^v}^{\sm}}\Omega_{K^v}^{1,\sm}
\]
which is the unique extension of the Gauss--Manin connection on $\calO_{K^v}^{\iota^c\-\lalg}$.
\begin{proof}
By Proposition \ref{prop:tensordecomposition}, we know that 
\begin{align*}
    \calO_{K^v}^{\iota^c\-\lan}\isom \hat\ox_{\calO_{K^v}^{\sm},\eta\neq\iota}\calO_{K^v}^{\eta\-\lan}.
\end{align*}
Then we can define $d_{\iota^c}$ on $\calO_{K^v}^{\iota^c\-\lan}$ using the Leibniz rule for $d_{\eta}$ on each $\calO_{K^v}^{\eta\-\lan}$ in Proposition \ref{prop:deta}.
\end{proof}
\end{corollary}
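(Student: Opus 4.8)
The plan is to construct $d_{\iota^c}$ one tensor factor at a time, starting from the operators $d_\eta$ of Proposition~\ref{prop:deta} and the decomposition
\[
    \calO_{K^v}^{\iota^c\-\lan}\isom \hat\ox_{\calO_{K^v}^{\sm},\,\eta\neq\iota}\calO_{K^v}^{\eta\-\lan}
\]
of Proposition~\ref{prop:tensordecomposition}. The observation that makes this work is that each $d_\eta$ is not merely $C$-linear and $\GL_2(L)$-equivariant but is a \emph{connection relative to the pair} $\bigl(\calO_{K^v}^{\sm},\, d\colon\calO_{K^v}^{\sm}\to\Omega_{K^v}^{1,\sm}\bigr)$: this is already visible from the construction in Proposition~\ref{prop:deta}, where $d_\eta$ is obtained by applying the Gauss--Manin connection term by term to a power-series expansion with smooth coefficients, so that $d_\eta(a s)=d(a)\otimes s+a\,d_\eta(s)$ for $a\in\calO_{K^v}^{\sm}$. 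Since connections over a common base add on tensor products, the $d_\eta$ for $\eta\neq\iota$ can then be assembled into a single connection over $(\calO_{K^v}^{\sm},d)$.

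Concretely, I would fix a sufficiently small $U\in\ffrb$, enumerate $\iota^c=\{\eta_1,\dots,\eta_{d-1}\}$, and on an algebraic tensor $s_1\otimes\cdots\otimes s_{d-1}$ (with $s_i\in\calO_{K^v}^{\eta_i\-\lan}(U)$) set
\[
    d_{\iota^c}(s_1\otimes\cdots\otimes s_{d-1})
    :=\sum_{i=1}^{d-1}\; s_1\otimes\cdots\otimes d_{\eta_i}(s_i)\otimes\cdots\otimes s_{d-1},
\]
each summand being read inside $\bigl(\hat\ox_{\eta\neq\iota}\calO_{K^v}^{\eta\-\lan}\bigr)\ox_{\calO_{K^v}^{\sm}}\Omega_{K^v}^{1,\sm}$ via the $\Omega_{K^v}^{1,\sm}$-component of $d_{\eta_i}$. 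First I would check this descends to the algebraic tensor product over $\calO_{K^v}^{\sm}$: replacing $s_i$ by $a s_i'$ with $a\in\calO_{K^v}^{\sm}(U)$ produces, by the Leibniz property of $d_{\eta_i}$, exactly the compensating $d(a)$-terms, so the expression respects the defining relations of $\ox_{\calO_{K^v}^{\sm}}$. Next, using the explicit power-series descriptions of Theorem~\ref{thm:Oetalalocal} together with the continuity estimate of Lemma~\ref{lem:Dercont}, each $d_{\eta_i}$ is bounded on the Banach pieces at level $K_n$, so the finite sum above is continuous; it therefore extends uniquely to the completed tensor product and then to the colimit over $K_v$, yielding the desired map $d_{\iota^c}$ on $\calO_{K^v}^{\iota^c\-\lan}$. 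Then $\GL_2(L)$-equivariance is inherited summand by summand from that of each $d_{\eta_i}$ (using that $\GL_2(L)$ acts compatibly on $\Omega_{K^v}^{1,\sm}$ and that the Gauss--Manin connection is functorial). For uniqueness, $\calO_{K^v}^{\iota^c\-\lalg}$ is dense in $\calO_{K^v}^{\iota^c\-\lan}$ (Proposition~\ref{prop:Oetala} combined with Proposition~\ref{prop:tensordecomposition}), and on it $d_{\iota^c}$ restricts by construction to the Gauss--Manin connection of $\calO_{K^v}^{\iota^c\-\lalg}$; any continuous extension of that connection must agree with $d_{\iota^c}$ on this dense subspace, hence everywhere.

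The step I expect to require the most care is making the naive Leibniz formula — manifestly defined only on the \emph{algebraic} tensor product — continuous for the topology in which $\hat\ox_{\calO_{K^v}^{\sm}}$ is formed, namely (as in Proposition~\ref{prop:RVBiotala} and the surrounding discussion) the $p$-adically completed tensor products over the finite-level rings $\calO_{\calX_{K^vK_v}}$ followed by a colimit. The clean way to handle this is to phrase everything at a fixed finite level $K_n$, where the base is an affinoid algebra and $p$-adically completed tensor products behave well, and to use the boundedness of $d_\eta$ from Theorem~\ref{thm:Oetalalocal}/Lemma~\ref{lem:Dercont} to obtain the estimates needed to pass to the completion; the colimits over $n$ and over $K_v$ are then harmless. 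Everything else — well-definedness over $\calO_{K^v}^{\sm}$, equivariance, and uniqueness — is formal once this is in place. Note that only the existence of a connection is asserted; no integrability of $d_{\iota^c}$ (which would invoke $[\frg_\eta,\frg_{\eta'}]=0$) is needed here.
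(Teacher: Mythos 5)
Your proposal is correct and follows essentially the same route as the paper: the paper's proof simply invokes the tensor decomposition of Proposition~\ref{prop:tensordecomposition} and says "define $d_{\iota^c}$ using the Leibniz rule for $d_\eta$," which is exactly what you do, just with the expected details (descent of the Leibniz formula to the tensor product over $\calO_{K^v}^{\sm}$, continuity on Banach pieces, passage to completion, density for uniqueness) spelled out. Your observation that each $d_\eta$ is an $\calO_{K^v}^{\sm}$-connection rather than merely a $C$-linear map is the key point and is indeed what makes the sum-rule formula well defined over $\calO_{K^v}^{\sm}$.
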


\begin{theorem}\label{thm:d}
There exists a unique continuous operator 
\[
    d^{k+1}:\calO_{K^v}^{\lan,(0,-k)}\to \calO_{K^v}^{\lan,(0,-k)}\ox_{\calO_{K^v}^{\sm}}(\Omega_{K^v}^{1,\sm})^{\ox k+1}
\]
such that it is determined by the following properties:
\begin{enumerate}[(i)]
    \item $d^{k+1}$ is $\calO_{\fl}$-linear.
    \item The restriction of $d^{k+1}$ on $\calO_{K^v}^{\lalg,(0,-k)}$ is given by the Gauss--Manin connection.
\end{enumerate}
Moreover, it restricts to a continuous operator 
\[
    d^{k+1}:\calO_{K^v}^{\iota\-\lan,\iota^c\-\lalg,(0,-k)}\to \calO_{K^v}^{\iota\-\lan,\iota^c\-\lalg,(0,-k)}\ox_{\calO_{K^v}^{\sm}}(\Omega_{K^v}^{1,\sm})^{\ox k+1}.
\]
\begin{proof}
First we handle the case where $k=0$. For $s\ox t\in \calO_{K^v}^{\lan,(0,0)}\isom \calO_{K^v}^{\iota\-\lan,(0,0)}\hat\ox_{\calO_{K^v}^{\sm}}\calO_{K^v}^{\iota^c\-\lan}$, put 
\begin{align*}
    d^1(s\ox t):=d_{\iota}(s)\ox t+s\ox d_{\iota^c}(t)
\end{align*}
where $d_{\iota}:\calO_{K^v}^{\iota\-\lan,(0,0)}\to \calO_{K^v}^{\iota\-\lan,(0,0)}\ox_{\calO_{K^v}^{\sm}}\Omega_{K^v}^{1,\sm}$ is defined using similar argument in \cite[Theorem 4.1.4]{PanII}, and $d_{\iota^c}$ is the differential operator defined in Corollary \ref{cor:diotac}.

Now we handle the general case. We may $\calO_{K^v}^{\lan,(0,0)}$-linearly extend the Hodge filtration on $D_{K^v}^{(k,0),\sm}$ to 
\begin{align*}
    D_{K^v}^{(k,0),\sm}\ox_{\calO_{K^v}^{\sm}}\calO_{K^v}^{\lan,(0,0)}.
\end{align*}
Besides, the differential operator $d^1$ on $\calO_{K^v}^{\lan,(0,0)}$ and the Gauss--Manin connection on $D_{K^v}^{(k,0),\sm}$ induces an operator
\begin{align*}
    \nabla:D_{K^v}^{(k,0),\sm}\ox_{\calO_{K^v}^{\sm}}\calO_{K^v}^{\lan,(0,0)}\to D_{K^v}^{(k,0),\sm}\ox_{\calO_{K^v}^{\sm}}\calO_{K^v}^{\lan,(0,0)}\ox_{\calO_{K^v}^{\sm}}\Omega_{K^v}^{1,\sm}.
\end{align*}
The graded pieces of $D_{K^v}^{(k,0),\sm}\ox_{\calO_{K^v}^{\sm}}\calO_{K^v}^{\lan,(0,0)}$ are 
\begin{align*}
    \omega_{K^v}^{(k-i,i),\sm}\ox_{\calO_{K^v}^{\sm}}\calO_{K^v}^{\lan,(0,0)},i=0,1,\dots,k
\end{align*}
with $\omega_{K^v}^{(k,0),\sm}\ox_{\calO_{K^v}^{\sm}}\calO_{K^v}^{\lan,(0,0)}$ being the quotient. Similarly, the graded pieces of
\begin{align*}
    D_{K^v}^{(k,0),\sm}\ox_{\calO_{K^v}^{\sm}}\calO_{K^v}^{\lan,(0,0)}\ox_{\calO_{K^v}^{\sm}}\Omega_{K^v}^{1,\sm}
\end{align*}
are 
\begin{align*}
    \omega_{K^v}^{(k-i-1,i+1),\sm}\ox_{\calO_{K^v}^{\sm}}\calO_{K^v}^{\lan,(0,0)},i=0,1,\dots,k
\end{align*}
with $\omega_{K^v}^{(-1,k+1),\sm}\ox_{\calO_{K^v}^{\sm}}\calO_{K^v}^{\lan,(0,0)}$ being the subobject. The operator $\nabla$ induces isomorphisms between other graded pieces, hence induces a morphism 
\begin{align*}
    \omega_{K^v}^{(k,0),\sm}\ox_{\calO_{K^v}^{\sm}}\calO_{K^v}^{\lan,(0,0)}\to \omega_{K^v}^{(-1,k+1),\sm}\ox_{\calO_{K^v}^{\sm}}\calO_{K^v}^{\lan,(0,0)}.
\end{align*}
By construction we know that the above map is $\calO_{\fl}$-linear. In particular, we first twist the above connection by $\omega_{K^v}^{(-k,-k),\sm}$ and then by Proposition \ref{prop:twist}, applying $-\ox_{\calO_{\fl}}\omega_{\fl}^{(0,-k)}$, we get a morphism 
\begin{align*}
    \calO_{K^v}^{\lan,(0,-k)}\to \calO_{K^v}^{\lan,(0,-k)} \ox_{\calO_{K^v}^{\sm}}(\Omega_{K^v}^{1,\sm})^{\ox k+1}.
\end{align*}
One directly checks that this gives the desired differential operator $d^{k+1}$ on $\calO_{K^v}^{\lan,(0,-k)}$, and restricts to a differential operator on $\calO_{K^v}^{\iota\-\lan,\iota^c\-\lalg,(0,-k)}$.

For uniqueness, by Corollary \ref{cor:Oiotalachi} and Proposition \ref{prop:Oetala}, $\calO_{K^v}^{\lalg,\chi}$ generates a dense $\calO_{\fl}$ submodule in $\calO_{K^v}^{\lan,\chi}$, so that $d^{k+1}$ is determined by the two properties. 
\end{proof}
\end{theorem}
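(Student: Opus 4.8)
The plan is to construct $d^{k+1}$ by bootstrapping from the case $k=0$, where the tensor decomposition $\calO_{K^v}^{\lan,(0,0)}\isom \calO_{K^v}^{\iota\-\lan,(0,0)}\hat\ox_{\calO_{K^v}^{\sm}}\calO_{K^v}^{\iota^c\-\lan}$ coming from Corollary \ref{cor:infdecomp} and Proposition \ref{prop:tensordecomposition} reduces matters to an operator on each factor combined by the Leibniz rule. On the $\iota$-analytic factor I would extend the Gauss--Manin connection from $\calO_{K^v}^{\iota\-\lalg,(0,0)}$ to $\calO_{K^v}^{\iota\-\lan,(0,0)}$ following \cite[Theorem 4.1.4]{PanII}: using the power-series normal form of Theorem \ref{thm:Oiotalalocal} and Corollary \ref{cor:Oiotalachi} (coordinates $x$, $\log(e_{1,\iota}/e_{1,n})$, $\log(\mathrm{t}/\mathrm{t}_n)$ pulled back from $\fl$), one differentiates term by term, the points being that $d:\calO_{K_n}(U)\to \Omega^{1,\sm}_{K^v}(U)$ is continuous and that the locally $\iota$-algebraic sections are dense (Proposition \ref{thm:Oiotalalocal1}). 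On the $\iota^c$-analytic factor I would use the operator $d_{\iota^c}$ of Corollary \ref{cor:diotac}, itself built from the $d_\eta$, $\eta\neq\iota$, of Proposition \ref{prop:deta} by the Leibniz rule across Proposition \ref{prop:tensordecomposition}. Since both $d_\iota$ and $d_{\iota^c}$ restrict to Gauss--Manin on the common subring $\calO_{K^v}^{\sm}$, the formula $d^1(s\ox t)=d_\iota(s)\ox t+s\ox d_{\iota^c}(t)$ is compatible with the completed tensor product over $\calO_{K^v}^{\sm}$ and yields a continuous $\calO_{\fl}$-linear operator $d^1$.

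For general $k$, I would extend $d^1$ $\calO_{K^v}^{\lan,(0,0)}$-linearly, together with the Gauss--Manin connection on $D_{K^v}^{(k,0),\sm}$, to a connection $\nabla$ on $D_{K^v}^{(k,0),\sm}\ox_{\calO_{K^v}^{\sm}}\calO_{K^v}^{\lan,(0,0)}$. The Hodge filtration gives graded pieces $\omega_{K^v}^{(k-i,i),\sm}\ox_{\calO_{K^v}^{\sm}}\calO_{K^v}^{\lan,(0,0)}$ for $0\le i\le k$, and Griffith transversality sends $\Fil^j$ into $\Fil^{j-1}\ox\Omega^{1,\sm}_{K^v}$; on the intermediate graded pieces the induced maps are isomorphisms (the usual $\mathfrak{sl}_2$-structure on a symmetric power, i.e.\ iterated Kodaira--Spencer), so composing along the filtration produces an $\calO_{\fl}$-linear map from the top graded piece $\omega_{K^v}^{(0,k),\sm}\ox_{\calO_{K^v}^{\sm}}\calO_{K^v}^{\lan,(0,0)}$ to the bottom one $\omega_{K^v}^{(k+1,-1),\sm}\ox_{\calO_{K^v}^{\sm}}\calO_{K^v}^{\lan,(0,0)}$. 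Twisting by $\omega_{\fl}^{(0,-k)}$ and invoking Proposition \ref{prop:twist} rewrites this as the desired $d^{k+1}:\calO_{K^v}^{\lan,(-k,0)}\to \calO_{K^v}^{\lan,(-k,0)}\ox_{\calO_{K^v}^{\sm}}(\Omega^{1,\sm}_{K^v})^{\ox k+1}$; property (i) is built in and property (ii) holds because on locally algebraic sections every ingredient is Gauss--Manin. Uniqueness is then immediate: by Corollary \ref{cor:Oiotalachi} and Proposition \ref{prop:Oetala} the locally algebraic sections generate a dense $\calO_{\fl}$-submodule of $\calO_{K^v}^{\lan,\chi}$, so a continuous $\calO_{\fl}$-linear operator agreeing with Gauss--Manin there is pinned down. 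The restriction to $\calO_{K^v}^{\iota\-\lan,\iota^c\-\lalg,(-k,0)}$ follows since $d_\iota$ preserves $\iota$-analyticity, each $d_\eta$ preserves $\eta$-algebraicity, and the filtered and twisting operations are $\GL_2(L)$-equivariant.

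The step I expect to be genuinely delicate is the construction of $d_\iota$ in the $k=0$ case: one must check that differentiating the power-series normal form is independent of the approximating smooth sections $e_{1,n}$, $x_n$, $\mathrm{t}_n$, that the resulting local operators glue over the cover $\ffrb$, and that the Leibniz definition on the completed tensor product over $\calO_{K^v}^{\sm}$ is well-posed (respects the relations defining the completion) and continuous. Once $d^1$ is available, the passage to higher $k$ is purely formal homological algebra with the Hodge-filtered connection.
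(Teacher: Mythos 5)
Your proposal matches the paper's proof essentially step for step: the $k=0$ case via the tensor-product decomposition $\calO_{K^v}^{\lan,(0,0)}\isom \calO_{K^v}^{\iota\-\lan,(0,0)}\hat\ox_{\calO_{K^v}^{\sm}}\calO_{K^v}^{\iota^c\-\lan}$ and the Leibniz formula $d^1(s\ox t)=d_\iota(s)\ox t + s\ox d_{\iota^c}(t)$; the bootstrap to general $k$ through the Hodge-filtered connection on $D_{K^v}^{(k,0),\sm}\ox_{\calO_{K^v}^{\sm}}\calO_{K^v}^{\lan,(0,0)}$, isomorphisms on intermediate graded pieces, and the twist by $\omega_{\fl}^{(0,-k)}$ via Proposition \ref{prop:twist}; and uniqueness by density of locally algebraic sections. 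The only cosmetic difference is that you unpack the construction of $d_\iota$ (which the paper delegates to \cite[Theorem 4.1.4]{PanII}) and you attribute the $\chi$-isotypic tensor decomposition partly to Corollary \ref{cor:infdecomp} when the relevant statement is the one following Corollary \ref{cor:hSen}, but this does not change the argument.
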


\begin{theorem}\label{thm:dbar}
There exists up to scalar a unique continuous operator
\[
    \bar d^{k+1}:\calO_{K^v}^{\lan,(0,-k)}\to \calO_{K^v}^{\lan,(0,-k)}\ox_{\calO_{\fl}}(\Omega^1_{\fl})^{\ox k+1}
\]
such that it is determined by the following three properties:
\begin{enumerate}[(i)]
    \item $\bar d^{k+1}$ is $\calO_{K^v}^{\iota^c\-\lan}$-linear.
    \item There exists a nonzero constant $c\in\bbQ^\times$ such that $\bar d^{k+1}(s)=c(u^+)^{k+1}(s)\ox (dx)^{k+1}$ for any $s\in\calO_{K^v}^{\lan,(0,-k)}$ defined over an open subset of $\fl$ where $x$ has no poles.
    \item $\bar d^{k+1}$ commutes with the $\textup{GL}_2(L)$ action and the Hecke actions away from $p$. 
\end{enumerate}
Moreover, it is surjective with kernel $\calO_{K^v}^{\iota\-\lalg,\iota^c\-\lan,(0,-k)}$.
\begin{proof}
Let $\chi=(0,-k)$. We first define $\bar d^{k+1}$ on $\calO_{K^v}^{\iota\-\lan,\chi}$. As the structure of $\calO_{K^v}^{\iota\-\lan}$ is similar to that of $\calO_{K^p}^{\lan,\chi}$ studied in \cite{PanII}, we may use similar methods in \cite[Theorem 4.2.7]{PanII} to define $\bar d^{k+1}$ on $\calO_{K^v}^{\iota\-\lan,\chi}$. Then we just $\calO_{K^v}^{\iota^c\-\lan}$-linearly extend the definition of $\bar d^{k+1}$ on $\calO_{K^v}^{\iota\-\lan,\chi}$ to $\calO_{K^v}^{\lan,\chi}$. For uniqueness, note that it is determined by the properties (ii) and (iii).
For the image and kernel, as it is $\calO_{K^v}^{\iota^c\-\lan}$-linear, they are determined by its restriction on $\calO_{K^v}^{\iota\-\lan,\chi}$, and then we can use similar methods in \cite[Proposition 4.2.9]{PanII} to deduce the result.
\end{proof}
\end{theorem}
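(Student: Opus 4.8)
The plan is to transport Pan's construction of the conjugate differential operator on modular curves, \cite[Theorem 4.2.7, Proposition 4.2.9]{PanII}, to our setting, the only genuinely new input being the tensor decomposition recorded just before Corollary \ref{cor:Oiotalachi},
\[
    \calO_{K^v}^{\lan,\chi}\isom \calO_{K^v}^{\iota\-\lan,\chi}\hat\ox_{\calO_{K^v}^{\sm}}\calO_{K^v}^{\iota^c\-\lan},\qquad \chi=(-k,0),
\]
which lets us reduce every assertion to the locally $\iota$-analytic sheaf $\calO_{K^v}^{\iota\-\lan,\chi}$. By the local description of Theorem \ref{thm:Oiotalalocal} and Corollary \ref{cor:Oiotalachi}, the $\iota$-part is modelled on $\calO_{\fl}=\calO_{\bbP^1}$ in exactly the same way $\calO_{K^p}^{\lan,\chi}$ is in \cite[4.2.6]{Pan22}.

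First I would construct a continuous $\calO_{K^v}^{\sm}$-linear operator
\[
    \bar d^{k+1}_{\iota}\colon \calO_{K^v}^{\iota\-\lan,\chi}\to \calO_{K^v}^{\iota\-\lan,\chi}\ox_{\calO_{\fl}}(\Omega^1_{\fl})^{\ox k+1}
\]
following \cite[Theorem 4.2.7]{PanII}: for $U\in\ffrb$ on which $e_{1,\iota}$ is invertible, a section $s\in\calO_{K^v}^{\iota\-\lan,\chi}(U)$ has the expansion $s=\mathrm{t}^{-k}e_{1,\iota}^{k}\sum_{i\ge0}c_i(x-x_n)^i$ of Corollary \ref{cor:Oiotalachi}; on the cell $\{x\text{ has no poles}\}$ the raising operator $u^+\in\frg_{\iota}$ acts, up to nonzero scalars, as the $(k+1)$-st formal $x$-derivative, so $c\,(u^+)^{k+1}(s)\ox(dx)^{\ox k+1}$ makes sense on this expansion for a suitable $c\in\bbQ^\times$ and lands in the $\theta_{\frh}$-weight $(1,-k-1)$ part. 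One checks, as in loc.\ cit., independence of the truncation level $n$ and of the chart, so that these local formulas glue; continuity; and $\GL_2(L)$- and prime-to-$p$ Hecke-equivariance, using that $u^+$ commutes with these operators on the cell and that $\fl=\bbP^1$ is covered by $\GL_2(L)$-translates of the cell. Then $\bar d^{k+1}:=\bar d^{k+1}_{\iota}\hat\ox\id_{\calO_{K^v}^{\iota^c\-\lan}}$ is, by construction, $\calO_{K^v}^{\iota^c\-\lan}$-linear, which is property (i), and it satisfies (ii) and (iii) because $\bar d^{k+1}_{\iota}$ does.

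Uniqueness follows from (ii) and (iii): property (ii) pins down $\bar d^{k+1}$ on $\pi_{\HT}^{-1}$ of $\{x\text{ has no poles}\}$, and $\GL_2(L)$-equivariance propagates it to all of $\fl$ since the two standard affinoid cells of Theorem \ref{thm:piHT} cover $\fl$; property (i) only serves to fix the extension along the $\iota^c$-directions, consistently with (ii) on the dense locally algebraic subsheaf. For the image and kernel, $\calO_{K^v}^{\iota^c\-\lan}$-linearity reduces the question to $\bar d^{k+1}_{\iota}$, where \cite[Proposition 4.2.9]{PanII} gives surjectivity with kernel the locally $\iota$-algebraic part $\calO_{K^v}^{\iota\-\lalg,\chi}$ — in the expansion above, exactly the sections for which $\sum_i c_i(x-x_n)^i$ is a polynomial of degree $\le k$, which is what $(u^+)^{k+1}$ annihilates. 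Applying $-\hat\ox_{\calO_{K^v}^{\sm}}\calO_{K^v}^{\iota^c\-\lan}$, which preserves exactness by the local structure in Propositions \ref{prop:tensordecomposition} and \ref{prop:RVBiotala}, then yields surjectivity of $\bar d^{k+1}$ with kernel $\calO_{K^v}^{\iota\-\lalg,\iota^c\-\lan,(-k,0)}$.

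The substantive work beyond citing \cite{PanII} lies entirely in the handling of the completed tensor product $\hat\ox_{\calO_{K^v}^{\sm}}$: one must verify that $\bar d^{k+1}_{\iota}\hat\ox\id$ is well defined and continuous on the \emph{completed} (not merely algebraic) tensor product, that kernel and cokernel are computed correctly there, and — the most delicate point — that the $\GL_2(L)$-, $\frg_{\iota}$-, $\Gal_L$- and Hecke-actions are compatible across the identifications of Propositions \ref{prop:tensordecomposition} and \ref{prop:RVBiotala}. Once that bookkeeping is in place, all the local analysis is formally identical to the modular-curve computation.
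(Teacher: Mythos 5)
Your proposal is correct and follows essentially the same route as the paper's proof: construct the operator on $\calO_{K^v}^{\iota\text{-la},\chi}$ by transporting the local expansion and $(u^+)^{k+1}$-formula from \cite[Theorem 4.2.7]{PanII}, extend $\calO_{K^v}^{\iota^c\text{-la}}$-linearly via the tensor decomposition of Proposition \ref{prop:tensordecomposition}, deduce uniqueness from (ii) and (iii) by $\GL_2(L)$-equivariant propagation from the affine cell, and read off kernel and image from the $\iota$-part via \cite[Proposition 4.2.9]{PanII}. Your closing observations about the completed tensor product and compatibility of the various actions are exactly the bookkeeping the paper implicitly defers to Propositions \ref{prop:tensordecomposition} and \ref{prop:RVBiotala}.
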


As $d^{k+1}$ is $\calO_{\fl}$-linear, we may twist $d^{k+1}$ by $(\Omega^1_{\fl})^{\ox k+1}$ to get a differential operator 
\[
    d'^{k+1}:\calO_{K^v}^{\lan,(0,-k)}\ox_{\calO_{\fl}}(\Omega^1_{\fl})^{\ox k+1}\to (\Omega^{1,\sm}_{K^v})^{\ox k+1}\ox_{\calO_{K^v}^{\sm}}\calO_{K^v}^{\lan,(0,-k)}\ox_{\calO_{\fl}}(\Omega^1_{\fl})^{\ox k+1}.
\]
Besides, as $\bar d^{k+1}$ is $\calO_{K^v}^{\sm}$-linear, we may twist $\bar d^{k+1}$ by $(\Omega^{1,\sm}_{K^v})^{\ox k+1}$ to get a differential operator 
\[
    \bar d'^{k+1}:\calO_{K^v}^{\lan,(0,-k)}\ox_{\calO_{K^v}^{\sm}}(\Omega^{1,\sm}_{K^v})^{\ox k+1}\to (\Omega^1_{\fl})^{\ox k+1} \ox_{\calO_{\fl}} \calO_{K^v}^{\lan,(0,-k)}\ox_{\calO_{K^v}^{\sm}}(\Omega^{1,\sm}_{K^v})^{\ox k+1}.
\]
As $\Omega_{K^v}^{1,\sm}=\omega_{K^v}^{(-1,1),\sm}$ and $\Omega_{\fl}^{1}=\omega_{\fl}^{(-1,1)}$, we have a natural identification
\[
    (\Omega^1_{\fl})^{\ox k+1} \ox_{\calO_{\fl}} \calO_{K^v}^{\lan,(0,-k)}\ox_{\calO_{K^v}^{\sm}}(\Omega^{1,\sm}_{K^v})^{\ox k+1}\isom \calO_{K^v}^{\lan,(-k-1,1)}(k+1).
\]
From the local expansion of $\calO_{K^v}^{\lan,\chi}$, we see that the following diagram 
$$
\begin{tikzcd}
    \calO_{K^v}^{\lan,(0,-k)} \arrow[r, "d^{k+1}"] \arrow[d, "\bar d^{k+1}"] & \calO_{K^v}^{\lan,(0,-k)}\ox_{\calO_{K^v}^{\sm}}(\Omega^{1,\sm}_{K^v})^{\ox k+1} \arrow[d, "\bar d'^{k+1}"] \\
    \calO_{K^v}^{\lan,(0,-k)}\ox_{\calO_{\fl}}(\Omega^1_{\fl})^{\ox k+1} \arrow[r, "d'^{k+1}"]           & (\Omega^1_{\fl})^{\ox k+1} \ox_{\calO_{\fl}} \calO_{K^v}^{\lan,(0,-k)}\ox_{\calO_{K^v}^{\sm}}(\Omega^{1,\sm}_{K^v})^{\ox k+1}\isom \calO_{K^v}^{\lan,(-k-1,1)}(k+1)       
\end{tikzcd}
$$
is commutative.

\begin{definition}\label{def:twistofd}
Define the intertwining operator 
\[
    I_k:\calO_{K^v}^{\lan,(0,-k)}\to \calO_{K^v}^{\lan,(-k-1,1)}(k+1)
\]
as the composition $\bar d'^{k+1}\comp d^{k+1}=d'^{k+1}\comp\bar d^{k+1}$. 
\end{definition}

We need both the above intertwining operator and its locally $\iota^c$-algebraic version.
\begin{proposition}
The intertwining operator $I_k$ restricts to an operator 
\[
    I_{k}^{\iota^c\-\lalg}:\calO_{K^v}^{\iota\-\lan,\iota^c\-\lalg,(0,-k)}\to \calO_{K^v}^{\iota\-\lan,\iota^c\-\lalg,(-k-1,1)}(k+1).
\]
\begin{proof}
This follows directly from the constructions of $d^{k+1}$ and $\bar d^{k+1}$, which are compatible with taking locally $\iota^c$-algebraic sections.
\end{proof}
\end{proposition}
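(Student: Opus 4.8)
The plan is to show that the intertwining operator $I_k=\bar d'^{k+1}\comp d^{k+1}$ preserves the subsheaf $\calO_{K^v}^{\iota\-\lan,\iota^c\-\lalg}$ of the relevant isotypic spaces, by checking that each of its two constituent operators does so. Recall from Theorem \ref{thm:d} that $d^{k+1}$ is explicitly constructed so that it restricts to a continuous operator
\[
    d^{k+1}:\calO_{K^v}^{\iota\-\lan,\iota^c\-\lalg,(-k,0)}\to \calO_{K^v}^{\iota\-\lan,\iota^c\-\lalg,(-k,0)}\ox_{\calO_{K^v}^{\sm}}(\Omega_{K^v}^{1,\sm})^{\ox k+1},
\]
and from Theorem \ref{thm:dbar} that $\bar d^{k+1}$ is $\calO_{K^v}^{\iota^c\-\lan}$-linear and, on the locally $\iota$-analytic part $\calO_{K^v}^{\iota\-\lan,\chi}$, is constructed by the same recipe as in \cite[Theorem 4.2.7]{PanII}, which manifestly preserves the locally $\iota^c$-algebraic part (there the operator is built from the $\mathfrak{sl}_2$-triple acting along the $\iota$-direction only, so it commutes with the grading by $\iota^c$-algebraic pieces). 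Thus the first step is to record these two compatibilities.

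First I would argue that $\bar d^{k+1}$ restricts to $\bar d^{k+1}:\calO_{K^v}^{\iota\-\lan,\iota^c\-\lalg,(-k,0)}\to \calO_{K^v}^{\iota\-\lan,\iota^c\-\lalg,(-k,0)}\ox_{\calO_{\fl}}(\Omega^1_{\fl})^{\ox k+1}$. Since $\bar d^{k+1}$ is defined by first constructing it on $\calO_{K^v}^{\iota\-\lan,\chi}$ via the $u^+$-power construction along the $\iota$-factor, and then extending $\calO_{K^v}^{\iota^c\-\lan}$-linearly, and since the subsheaf of $\iota^c$-locally-algebraic vectors in $\calO_{K^v}^{\iota^c\-\lan}$ is $\GL_2(L)$-stable and closed under the $\calO_{K^v}^{\sm}$-algebra structure, the $\calO_{K^v}^{\iota^c\-\lan}$-linear extension sends the tensor factor $\calO_{K^v}^{\iota\-\lan,\chi}\hat\ox_{\calO_{K^v}^{\sm}}\calO_{K^v}^{\iota^c\-\lalg}$ into itself. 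The key point here is the tensor decomposition of Proposition \ref{prop:tensordecomposition} together with the fact that $\theta_{\frh}$ acts trivially on $\calO_{K^v}^{\iota^c\-\lan}$ (Corollary \ref{cor:hSen}), so that $\calO_{K^v}^{\lan,\chi}\isom\calO_{K^v}^{\iota\-\lan,\chi}\hat\ox_{\calO_{K^v}^{\sm}}\calO_{K^v}^{\iota^c\-\lan}$ and the $\iota^c$-algebraic subsheaf of the right factor is preserved. Next, the same reasoning applies to the twisted operators $d'^{k+1}$ and $\bar d'^{k+1}$, since twisting by $(\Omega^1_{\fl})^{\ox k+1}$ or $(\Omega^{1,\sm}_{K^v})^{\ox k+1}$ — both of which are (smooth, resp. $\iota$-algebraic) line bundles — does not affect the $\iota^c$-algebraicity of sections.

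Finally I would conclude: $I_k=\bar d'^{k+1}\comp d^{k+1}$ is a composition of two maps each of which carries $\calO_{K^v}^{\iota\-\lan,\iota^c\-\lalg,(-k,0)}$ into the corresponding $\iota\-\lan,\iota^c\-\lalg$ subsheaf, so $I_k$ does too, and under the identification $(\Omega^1_{\fl})^{\ox k+1}\ox_{\calO_{\fl}}\calO_{K^v}^{\lan,(-k,0)}\ox_{\calO_{K^v}^{\sm}}(\Omega^{1,\sm}_{K^v})^{\ox k+1}\isom \calO_{K^v}^{\lan,(1,-k-1)}(k+1)$ the target subsheaf is exactly $\calO_{K^v}^{\iota\-\lan,\iota^c\-\lalg,(1,-k-1)}(k+1)$. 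This gives the claimed operator $I_k^{\iota^c\-\lalg}$. The only mild subtlety — and the step I would be most careful about — is verifying that the dense-submodule/uniqueness arguments used to pin down $d^{k+1}$ and $\bar d^{k+1}$ (Corollary \ref{cor:Oiotalachi} and Proposition \ref{prop:Oetala}) are compatible with restricting to the $\iota^c$-algebraic part, i.e. that $\calO_{K^v}^{\iota\-\lalg,\iota^c\-\lalg,\chi}$ is dense in $\calO_{K^v}^{\iota\-\lan,\iota^c\-\lalg,\chi}$; but this follows from Proposition \ref{thm:Oiotalalocal1} applied to the $\iota$-factor and the fact that on the $\iota^c$-factor "locally algebraic" and "algebraic of bounded degree" coincide after shrinking $U$, so no genuine approximation is needed in the $\iota^c$-direction. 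Hence the construction of $I_k$ descends verbatim to the locally $\iota^c$-algebraic setting.
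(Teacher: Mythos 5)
Your argument is correct and unpacks the paper's one-line proof in the same way: $d^{k+1}$ preserves the locally $\iota^c$-algebraic subsheaf by construction (this is stated explicitly at the end of Theorem \ref{thm:d}), and $\bar d^{k+1}$ does so because it is built first on the $\iota$-analytic factor and then extended $\calO_{K^v}^{\iota^c\-\lan}$-linearly, which in particular sends $\calO_{K^v}^{\iota\-\lan,\chi}\hat\ox_{\calO_{K^v}^{\sm}}\calO_{K^v}^{\iota^c\-\lalg}$ into the corresponding subsheaf, and the twists by the $\iota$-line bundles $(\Omega^1_{\fl})^{\ox k+1}$ and $(\Omega^{1,\sm}_{K^v})^{\ox k+1}$ do not affect $\iota^c$-algebraicity. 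The closing density worry is superfluous — preservation follows from the explicit construction, not from any approximation — and the parenthetical that locally $\iota^c$-algebraic sections become algebraic of bounded degree after shrinking $U$ is not accurate (they remain an infinite direct sum of isotypic pieces), but neither point affects the validity of the argument.
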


\subsection{The ordinary locus}
First we study the properties of differential operators defined in the previous section on the ordinary locus. The main references we use for the geometry of the ordinary locus of unitary Shimura curves are \cite{Car83,HT01,Joh16}.

First we recall the definition of integral models of $X_{K}$. Fix a sufficiently small compact open subgroup $ K^p\subset G(\bbA^{\infty,p})$ and a tuple $\ul m=(m_1,\dots,m_r)\in\bbZ_{\ge 0}^r$. The moduli functor $\ffrx_{ K^p,\ul m}$ is defined as follows. Let $S$ be a connected locally Noetherian $\calO_L$-scheme and $s$ be a geometric point of $S$, then $\ffrx_{ K^p,\ul m}(S,s)$ is the set of equivalence classes of $(r + 4)$-tuples $(A,\lambda,i,\bar\eta^p,\alpha_i)$ where
\begin{itemize}
    \item $A/S$ is an abelian scheme of dimension $4d$;
    \item $\lambda:A\to A^{\vee}$ is a prime-to-$p$ polarization;
    \item $i : \calO_B \to \End(A) \ox_{\bbZ} \bbZ_{(p)}$ is a homomorphism such that $(A, i )$ is compatible and $\lambda\comp i(b)=i(b^*)^{\vee}\comp\lambda$ for all $b\in\calO_B$;
    \item $\bar\eta^p$ is a $\pi_1(S,s)$-invariant $U^p$-orbit of isomorphisms of $B\ox_{\bbQ}\bbA^{\infty,p}$-modules $\eta^p:V\ox_{\bbQ}\bbA^{\infty,p}\to V^p A_s$ which take the pairing $(-,-)$ on $V\ox_{\bbQ}\bbA^{\infty,p}$ to a $(\bbA^{\infty,p})^\times$-multiple of the $\lambda$-Weil pairing on $V_pA^s$.
    \item $\alpha_1:\varpi^{-m_1}\Lambda_{11}/\Lambda_{11}=\epsilon\varpi^{-m_1}\Lambda_1/\Lambda_1\to \calG_A[\varpi^{m_1}]=\epsilon\calA[\varpi^{m_1}]$ is a Drinfeld $\varpi^{m_1}$-level structure.
    \item for $i=2,\dots,r$, $\alpha_i:(\varpi_i^{-m_i}\Lambda_i/\Lambda_i)_S\to A[\varpi_i^{m_i}]$ is an isomorphism of $S$-group schemes with $\calO_B$-actions.
\end{itemize}
Two $(r+4)$-tuples $(A,\lambda,i,\bar\eta^p,\alpha_i)$ and $(A',\lambda',i',(\bar\eta^p)',\alpha_i')$ are equivalent if there is a prime-to-$p$ isogeny $\delta:A\to A'$ and a $\gamma\in\bbZ_{(p)}^\times$ such that $\delta$ carries $\lambda$ to $\gamma\lambda'$, $i$ to $i'$, $\bar\eta^p$ to $(\bar\eta^p)'$ and $\alpha_i$ to $\alpha_i'$. $\ffrx_{ K^p,\ul m}(S,s)$ is canonically independent of the choice of $s$, and we get a functor on all locally Noetherian $\calO_L$-schemes by requiring that $\ffrx_{ K^p,\ul m}(\sqcup S_i)=\prod_i\ffrx_{ K^p,\ul m}(S_i)$. This functor is representable by a projective scheme of pure dimension $1$ over $\calO_L$. $\ffrx_{ K^p,\ul m}$ is smooth over $\calO_L$ when $m_1=0$. If $\ul m'\ge \ul m$ (pointwisely), then the natural map $\ffrx_{K^p,\ul m'}\to\ffrx_{ K^p,\ul m}$ is finite and flat; moreover it is \'etale if $m_1'=m_1$. Note that if $\ul m=(m_1,\dots,m_r)$ and $K_{v_i}=1+\varpi_i^{m_i}\calO_{B,w_i}^{\op}\subset G_p^v$, then for the level subgroup $K= K^p\cdot\bbZ_p^\times\cdot\prod_{i=1}^r K_{v_i}$, $X_K$ is canonically isomorphic to the generic fiber of $\ffrx_{ K^p,\ul m}$.

Fix $m_2,\dots m_r\in \N$. Let $K^v=K= K^p\cdot\bbZ_p^\times\cdot\prod_{i\ge 2}^r K_{v_i}$ with $K_{v_i}=1+\varpi_i^{m_i}\calO_{B,w_i}^{\op}\subset G_p^v$, and we define $\ffrx_{K^v,m}$ to be the adic space associated to $\ffrx_{ K^p,\ul m}\times_{\calO_L,\iota}\calO_C$, where $\ul m=(m,m_2,\dots,m_r)$. Let $\bar X_{K^v,m}$ be the special fiber of $\ffrx_{K^v,m}$. Over $\bar X_{K^v,m}$, we have a universal abelian scheme $\bar A$ and the associated Barsotti--Tate $\calO_L$-module $\calG$. There is a Newton stratification on $\bar X_{K^v,m}$, given by 
\[
    \bar X_{K^v,m}=\bar X_{K^v,m}^{\ord}\sqcup \bar X_{K^v,m}^{\ss}
\]
where $\calG^{\et}$ has $\calO_L$-height $1$ on $\bar X_{K^v,m}^{\ord}$, and $\calG^{\et}$ is zero on $\bar X_{K^v,m}^{\ss}$, where $\calG^{\et}$ is the maximal \'etale quotient of $\calG$. Moreover, $\bar X_{K^v,m}^{\ord}$ is an open subscheme of $\bar X_{K^v,m}$, which is smooth of dimension $1$, and $\bar X_{K^v,m}^{\ss}$ is a closed subscheme of $\bar X_{K^v,m}$, which is of dimension $0$. 

In this section, we mainly handle the ordinary part $\bar X_{K^v,m}^{\ord}$, and leave the supersingular part $\bar X_{K^v,m}^{\ss}$ later. By \cite[9.4.3]{Car83}, we know that the irreducible components of $\bar X_{K^v,m}$ are parametrized by $\bbP^1(\calO_L/\varpi^m)$, or equivalently one-dimensional $\calO_L$-linear quotients $(\calO_L/\varpi^m)^{\oplus 2}\surj \calO_L/\varpi^m$. Each component is isomorphic to the Igusa curve of level $m$. We recall the definition of Igusa curves. Let $s$ be a geometric point of $\bar X_{K^v,m}^{\ord}$. Then $\calG_s\isom \LT_L\times(L/\calO_L)$ as Barsotti--Tate $\calO_L$-modules, where $\LT_L$ denotes the Lubin--Tate formal group for $L$. Define $\Ig_{K^v,m}/\bar X_{K^v,0}^{\ord}$ to be the moduli space for isomorphisms 
\[
    j: \varpi^{-m}\calO_L/\calO_L\aisom\calG^{\et}[\varpi^m].
\]
Then $\Ig_{K^v,m}$ is represented by an affine smooth curve of dimension $1$. From \cite[Page 124]{HT01} we know that the ordinary locus in each irreducible component of $\bar X_{K^v,m}$ is isomorphic to $\Ig_{K^v,m}$ up to powers of relative Frobenius. Suppose that the canonical subgroup corresponds to the second factor in $(\calO_L/\varpi^m)^{\oplus 2}$ under the level structure. Define the \emph{canonical locus} $\calX_{K^v,m,c}\subset\calX_{K^v,m}$ to be the tubular neighborhood of the ordinary locus such that the reduction lies in the irreducible component of $\bar X_{K^v,m}$ indexed by the ``canonical'' submodule $(0,*)\subset (\calO_L/\varpi^m)^2$. In particular, $\bar X_{K^v,m}$ is stable under the $\bar B(\calO_L)$-action, where $\bar B$ is the Borel subgroup consisting of lower triangular matrices.

Recall that for $(a_{\iota},b_{\iota})\in\bbZ^2$ we have an invertible $\calO_{K^v}^{\sm}$-module $\omega_{K^v}^{(a_{\iota},b_{\iota}),\sm}$. Besides, for any $\eta\in\Hom_{\bbQ_p\-\alg}(L,C)$ and $(a_{\eta},b_{\eta})\in\bbZ^2$ with $a_{\eta}\ge b_{\eta}$, we have a filtered integrable connection $D_{K^v,\eta}^{(a_{\eta},b_{\eta}),\sm}$ which is a locally free $\calO_{K^v}^{\sm}$-module of rank $a_\eta-b_\eta+1$. When $\eta\neq\iota$, the Hodge filtration on $D_{K^v,\eta}^{(a_{\eta},b_{\eta}),\sm}$ is trivial, and $\omega_{K^v}^{(b_{\iota},a_{\iota}),\sm}$ (resp. $\omega_{K^v}^{(a_{\iota},b_{\iota}),\sm}$) is the minimal (resp. maximal) non-trivial graded piece of the Hodge filtration of $D_{K^v,\iota}^{(a_{\iota},b_{\iota}),\sm}$.

Let
\begin{align*}
    W=\{(a_\eta,b_\eta)_{\eta}:\text{ for any }\eta\in\Hom_{\bbQ_p\-\alg}(L,C),(a_\eta,b_\eta)\in\bbZ^2, \text{and }a_\eta\ge b_\eta\text{ when }\eta\neq\iota\}
\end{align*}
be the set of integral weights for modular forms on unitary Shimura curves. We define an action of the Weyl group of $\GL_2$ on $W$ given by $s.((a_\iota,b_{\iota}),(a_{\eta},b_{\eta})_{\eta\neq\iota})=((b_\iota,a_{\iota}),(a_{\eta},b_{\eta})_{\eta\neq\iota})$ for $s$ the non-trivial reflection in the Weyl group. We similarly define the dot action of the Weyl group on $W$ as usual. For any $w\in W$, define 
\[
    \omega^{w,\sm}_{K^v}:=\omega_{K^v}^{(a_\iota,b_\iota),\sm}\ox_{\calO_{K^v}^{\sm}}\bigotimes_{\eta\neq\iota} D_{K^v,\eta}^{(a_\eta,b_\eta),\sm}
\]
which is a locally free $\calO_{K^v}^{\sm}$-module of rank $\prod_{\eta\neq\iota}(a_\eta-b_\eta+1)$. Let $W_+\subset W$ be the subset such that $a_{\iota}\ge b_{\iota}$. For $w=(a_{\eta},b_{\eta})_{\eta\in\Sigma}\in W_+$, the integrable connection on
\[
    D_{K^v}^{w,\sm}:=D_{K^v,\iota}^{(a_\iota,b_\iota),\sm}\ox_{\calO_{K^v}^{\sm}}\bigotimes_{\eta\neq\iota} D_{K^v,\eta}^{(a_\eta,b_\eta),\sm}
\]
together with the Kodaira--Spencer isomorphism defines the $\theta$-operator \cite[Lemme 3.3.6]{Din17}:
\[
    \theta:\omega^{w,\sm}_{K^v}\to \omega_{K^v}^{w,\sm}\ox_{\calO_{K^v}^{\sm}}(\Omega_{K^v}^{1,\sm})^{\ox k+1}
\]
with $k=a_\iota-b_{\iota}$. When $w=((0,-k)_{\iota},(0,0)_{\eta\neq\iota})$, $\omega^{w,\sm}_{K^v}=\omega_{K^v}^{(0,-k),\sm}$ and the $\theta$-operator is the usual $\theta$-operator on automorphic line bundles $\omega_{K^v}^{(0,-k),\sm}\to \omega_{K^v}^{(0,-k),\sm}\ox(\Omega_{K^v}^{1,\sm})^{\ox k+1}$ (up to twist by determinants). 

We define overconvergent modular forms on unitary Shimura curves following \cite[Section 9]{Kas04}, \cite[Chapitre 4]{Din17}. Let $K_{m}\subset \GL_2(L)$ be the level subgroup such that $\calX_{K^vK_{m}}$ is canonically isomorphic to the generic fiber of $\ffrx_{K^v, m}$.  

Let $B\subset \GL_2$ be the algebraic subgroup consisting of upper triangular matrices. 
\begin{definition}
Let $w\in W$ be an integral weight.
\begin{itemize}
    \item Define the \emph{space of modular forms of weight $w$ and tame level $K^v$} as 
    \[
        M_w(K^v):= H^0(\fl,\omega_{K^v}^{w,\sm}).
    \]
    Using similar arguments of \cite[Lemma 5.3.5]{Pan22}, we see that this space coincides with the usual space of modular forms on unitary Shimura curves of weight $w$.
    \item Define the \emph{space of overconvergent modular forms of weight $w$ and level $K^vK_m$} as
    \[
        M_w^{\dagger}(K^vK_m):=\dlim_{\calX_{K^vK_m,c}\subset U}H^0(U,\omega_{K^vK_{m}}^{w}),
    \]
    where $U$ runs over strict neighborhood of $\calX_{K^vK_m,c}$. Note that our definition is slightly different from the one in \cite{Kas04} as we only work with the canonical Igusa component. 
    \item Define the space of overconvergent modular forms of weight $w$ and tame level $K^v$ as
    \[
        M_w^{\dagger}(K^v):=\dlim_{ m} M_w^{\dagger}(K^vK_{ m}).
    \]
\end{itemize}
\end{definition}
Note that the natural action of $\bar B(\calO_L/\varpi^m)\subset\GL_2(\calO_L/\varpi^m)$ on $\bar X_{K^v,m}$ preserves the Igusa component for the canonical index. This gives a $\bar B(\calO_L)$-action on $M_w^{\dagger}(K^v)$.

\begin{lemma}\label{lem:overconvergentmodularforms}
We have a $\bar B(\calO_L)$-equivariant isomorphism
\[
    \dlim_{U\ni\infty}\omega_{K^v}^{w,\sm}(U)=M_w^{\dagger}(K^v)
\]
and the restriction map $H^0(\fl,\omega_{K^v}^{w,\sm})\to \dlim_{U\ni\infty}\omega_{K^v}^{w,\sm}(U)$ is identified with the natural inclusion map $M_w(K^v)\inj M_w^{\dagger}(K^v)$.
\begin{proof}
This follows from similar arguments in \cite[Lem 5.3.7]{Pan22}. Roughly speaking, the strict neighborhoods of the canonical locus are cofinal to open neighborhoods of $\infty\in\fl$ via the Hodge--Tate period map \cite[Corollary 3.2.5]{JLH21}.
\end{proof}
\end{lemma}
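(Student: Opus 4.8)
The plan is to unwind the definition of the smooth automorphic sheaf $\omega_{K^v}^{w,\sm}$, evaluate it on a cofinal system of neighbourhoods of $\infty\in\fl$, and rearrange the resulting filtered colimit using that under $\pi_{\HT}$ such neighbourhoods are cofinal with preimages of strict neighbourhoods of the canonical locus; this follows the strategy of \cite[Lem 5.3.7]{Pan22}. First I would fix a cofinal system of rational open neighbourhoods $U\ni\infty$ inside $\ffrb$. By Theorem~\ref{thm:piHT}, for each such $U$ the preimage $\pi_{\HT}^{-1}(U)$ is affinoid perfectoid and of the form $\pi_{K_m}^{-1}(V_{K_m})$ for some open $V_{K_m}\subset\calX_{K^vK_m}$ and some sufficiently deep Iwahori-type level $K_m$ (so that $\calX_{K^vK_m}$ carries the integral model $\ffrx_{K^v,m}$ with its canonical locus $\calX_{K^vK_m,c}$). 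Unwinding the definition of the smooth sheaf $\omega_{K^v}^{w,\sm}$ as a colimit over $K_v$ of pullbacks from finite level then identifies
\[
    \dlim_{U\ni\infty}\omega_{K^v}^{w,\sm}(U)\isom\dlim_{U\ni\infty}\dlim_m H^0(V_{K_m},\omega_{K^vK_m}^{w}),
\]
a double colimit indexed by the neighbourhoods $U$ of $\infty$ and by $m$.

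Next I would invoke the key geometric input, \cite[Corollary 3.2.5]{JLH21} (which also underlies the finite-level description of $\pi_{\HT}$ via canonical subgroups): for each fixed $m$, as $U$ shrinks to $\infty$ the affinoids $V_{K_m}=\pi_{K_m}(\pi_{\HT}^{-1}(U))$ run through a cofinal system of strict neighbourhoods of $\calX_{K^vK_m,c}$ inside $\calX_{K^vK_m}$, and conversely every strict neighbourhood of $\calX_{K^vK_m,c}$ arises, up to refinement, as such a $V_{K_m}$. Since the levels $K_m$ are cofinal among open compact subgroups of $\GL_2(L)$, the double colimit above rearranges to
\[
    \dlim_m\ \dlim_{\calX_{K^vK_m,c}\subset U'}H^0(U',\omega_{K^vK_m}^{w})=\dlim_m M_w^\dagger(K^vK_m)=M_w^\dagger(K^v),
\]
which is the desired isomorphism.

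Finally I would check compatibility of the extra structures. The stabilizer of $\infty\in\fl$ in $\GL_2(L)$ is the Borel $B(L)$, so $B(\calO_L)\subset B(L)$ preserves the chosen system of neighbourhoods of $\infty$ and acts on $\omega_{K^v}^{w,\sm}$ through its $\GL_2(L)$-action; since $\pi_{\HT}$ is $\GL_2(L)$-equivariant and compatible with the integral models, this matches, under the identification above, the natural action of $B(\calO_L/\varpi^m)$ on the canonical Igusa components of $\bar X_{K^v,m}$ recalled before the lemma, hence the $B(\calO_L)$-action on $M_w^\dagger(K^v)$. For the last assertion, the map $H^0(\fl,\omega_{K^v}^{w,\sm})\to\dlim_{U\ni\infty}\omega_{K^v}^{w,\sm}(U)$ is simply restriction of global sections; feeding in the identification $H^0(\fl,\omega_{K^v}^{w,\sm})=M_w(K^v)$ of \cite[Lemma 5.3.5]{Pan22} together with the isomorphism just proved, it becomes the tautological inclusion of a classical modular form into its germ along the canonical locus, i.e.\ $M_w(K^v)\inj M_w^\dagger(K^v)$.

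The main obstacle is the cofinality step: one must verify that the overconvergence condition, being defined on \emph{some} strict neighbourhood of $\calX_{K^vK_m,c}$, coincides exactly with being pulled back along $\pi_{\HT}$ from a neighbourhood of $\infty\in\fl$, uniformly in $m$, which requires pinning down the normalization of $\pi_{\HT}$ at finite level (the construction via canonical subgroups, as in \cite{JLH21}, rather than Scholze's, cf.\ the remark after Theorem~\ref{thm:piHT}). Granting \cite[Corollary 3.2.5]{JLH21}, all the remaining steps are formal manipulations of filtered colimits and a routine equivariance check.
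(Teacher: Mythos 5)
Your proposal is correct and follows essentially the same route as the paper's proof: the paper simply cites \cite[Lem 5.3.7]{Pan22} and the cofinality statement \cite[Corollary 3.2.5]{JLH21}, and your write-up fleshes out exactly the steps those citations are meant to encapsulate (unwinding the smooth sheaf as a double colimit over neighbourhoods of $\infty$ and over level, rearranging via the cofinality of $\pi_{\HT}$-preimages with strict neighbourhoods of the canonical locus, and then checking $B(\calO_L)$-equivariance and the compatibility with global sections). No gaps; this is the intended argument, just spelled out in more detail than the paper's one-line proof.
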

In particular, using the above isomorphism, we can extend the $\bar B(\calO_L)$-action on $M_w^{\dagger}(K^v)$ to a $\bar B(L)$-action. Using Lemma \ref{lem:overconvergentmodularforms}, we can calculate the cohomology of $d^{k+1}$ on open neighborhoods of $\bbP^1(L)$ in terms of overconvergent modular forms. For simplicity we first restrict the differential operator $d^{k+1}$ to $\calO_{K^v}^{\iota\-\lan,(\ox_{\eta\neq\iota}V_{\eta}^{(a_\eta,b_\eta)})\-\lalg,\chi}$ with $\chi=(0,-k)$ for some $k\ge 0$. Recall that $\calO_{K^v}^{\iota\-\lan,(\ox_{\eta\neq\iota}V_{\eta}^{(a_\eta,b_\eta)})\-\lalg,\chi}$ has a dense subspace given by $(\ox_{\eta\neq\iota}V_{\eta}^{(a_\eta,b_\eta)})\ox (\ox_{\eta\neq\iota}D_{\eta}^{(a_\eta,b_\eta)})\ox \omega_{K^v}^{(0,-k),\sm}\ox \omega_{\fl}^{(0,-k)}$. For $\calF$ a sheaf on $\fl$, we let $\calF_\infty$ to denote the stalk of $\calF$ at $\infty$. By similar methods in \cite[Lemma 5.1.2, 5.1.3]{PanII}, we can identify $$(d^{k+1})_\infty:(\calO_{K^v}^{\iota\-\lan,(\ox_{\eta\neq\iota}V_{\eta}^{(a_\eta,b_\eta)})\-\lalg,\chi})_\infty\to(\calO_{K^v}^{\iota\-\lan,(\ox_{\eta\neq\iota}V_{\eta}^{(a_\eta,b_\eta)})\-\lalg,\chi}\ox_{\calO_{K^v}^{\sm}}(\Omega^{1,\sm}_{K^v})^{\ox k+1})_\infty$$ with the map 
\begin{align*}
    1\ox\theta^{k+1}:(\ox_{\eta\neq\iota}V_{\eta}^{(a_\eta,b_\eta)})\ox_C(\omega_{\fl}^{(0,-k)})_\infty\hat\ox_C M_{w}^{\dagger}(K^v)\to (\ox_{\eta\neq\iota}V_{\eta}^{(a_\eta,b_\eta)})\ox_C (\omega_{\fl}^{(0,-k)})_\infty\hat\ox_C M_{s\cdot w}^{\dagger}(K^v)
\end{align*}
induced from $\theta^{k+1}:M_{w}^{\dagger}(K^v)\to M_{s\cdot w}^{\dagger}(K^v)$ with $w=((0,-k)_{\iota},(a_\eta,b_\eta)_{\eta\neq\iota})\in W$ and $s\cdot w=((-k-1,1)_{\iota},(a_\eta,b_\eta)_{\eta\neq\iota})\in W$

The cohomology of $\theta^{k+1}$ has a natural identification with the rigid cohomology of the Igusa curve. Indeed, by \cite{Kas04, Din17}, for any $w\in W_+$, $D_{K^vK_{m}}^w$ defines an overconvergent $F$-isocrystal on $\Ig_{K^v,m}$, which we denote by $D_{K^vK_{m}}^{w,\dagger}$. By the definition of rigid cohomology, we have a natural isomorphism
\begin{align*}
    H^i_{\rig}(\Ig_{K^v,m},D_{K^vK_{m}}^{w,\dagger})=\bbH^i(M_{w}^{\dagger}(K^vK_{m})\ov{\theta^{k+1}}\to M_{s\cdot w}^{\dagger}(K^vK_{m})).
\end{align*}
By finiteness results on rigid cohomology \cite[Theorem 1.2.1]{Kedlayafinitenessrigidcohomology}, we know that $H^i_{\rig}(\Ig_{K^v,m},D_{K^vK_{m}}^{w,\dagger})$ is a finite-dimensional $C$-vector space. Define 
\[
    H^i_{\rig}(\Ig_{K^v},D_{K^v}^{w,\sm,\dagger}):=\dlim_m H^i_{\rig}(\Ig_{K^v,m},D_{K^vK_{m}}^{w,\dagger}).
\]
Note that $H^i_{\rig}(\Ig_{K^v},D_{K^v}^{w,\sm,\dagger})$ is a smooth admissible $\bar B(L)$-representation as  $H^i_{\rig}(\Ig_{K^v,m},D_{K^vK_{m}}^{w,\dagger})$ is finite dimensional.

Similar to \cite[5.1.9]{PanII}, let $\calH^i_{\ord}(K^v,w)$ denote the smooth geometric induction of $H^i_{\rig}(\Ig_{K^v},D_{K^v}^{w,\sm,\dagger})$ from $\infty$ to $\bbP^1(L)\subset\fl$. Explicitly, let $U$ be an open subset of $\bbP^1(L)$. Let $\pi:\GL_2(L)\to \bbP^1(L)$ denote the projection. Then $\calH^i_{\ord}(K^v,w)(U)$ is the set of smooth functions
\begin{align*}
    f:\pi^{-1}(U)\to H^i_{\rig}(\Ig_{K^v},D_{K^v}^{w,\sm,\dagger})
\end{align*}
such that $f(bg)=b.f(g)$ for $b\in \bar B(L)$ and $g\in \GL_2(L)$. In particular, we have 
\[
    H^0(\bbP^1(L),\calH^i_{\ord}(K^v,w))=(\Ind^{\GL_2(L)}_{\bar B(L)}H^i_{\rig}(\Ig_{K^v},D_{K^v}^{w,\sm,\dagger}))^\infty
\]
here $(\Ind^{\GL_2(L)}_{\bar B(L)}-)^\infty$ denotes the smooth induction. Let $w=((0,-k)_{\iota},(a_\eta,b_\eta)_{\eta\neq\iota})$. By an analogue of \cite[Corollary 5.1.11, Corollary 5.1.12]{PanII} we have the following result.
\begin{proposition}\label{prop:dord} Then
\begin{align*}
    (\ker  d^{k+1})|_{\bbP^1(L)}\isom (\ox_{\eta\neq\iota}V_{\eta}^{(a_\eta,b_\eta)})\ox_C\omega_{\fl}^{(0,-k)}|_{\bbP^1(L)}\ox_C\calH^0_{\ord}(K^v,w)\\
    (\coker  d^{k+1})|_{\bbP^1(L)}\isom (\ox_{\eta\neq\iota}V_{\eta}^{(a_\eta,b_\eta)})\ox_C\omega_{\fl}^{(0,-k)}|_{\bbP^1(L)}\ox_C\calH^1_{\ord}(K^v,w).
\end{align*}
and 
\begin{align*}
    H^0(\bbP^1(L),\ker d^{k+1})\isom (\Ind^{\GL_2(L)}_{\bar B(L)}H^0_{\rig}(\Ig_{K^v},D_{K^v}^{w,\sm,\dagger})z_2^{-k})^{\iota\-\lan}\\
    H^0(\bbP^1(L),\coker d^{k+1})\isom (\Ind^{\GL_2(L)}_{\bar B(L)}H^1_{\rig}(\Ig_{K^v},D_{K^v}^{w,\sm,\dagger})z_2^{-k})^{\iota\-\lan}.
\end{align*}
where $z_i:\bar B(L)\to C$ is the character sending $\left( \begin{matrix} a_1&0\\b&a_2 \end{matrix} \right)$ to $\iota(a_i)$ for $i=1,2$.
\end{proposition}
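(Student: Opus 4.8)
The plan is to follow the strategy of \cite[Corollary 5.1.11, Corollary 5.1.12]{PanII}, transporting the computation from the stalk at $\infty\in\fl$ to a neighbourhood of all of $\bbP^1(L)$ by means of the $\GL_2(L)$-action. First I would use the identification established just before the statement: the stalk $(d^{k+1})_\infty$ is $1\ox\theta^{k+1}$ between the stalks of $(\ox_{\eta\neq\iota}V_{\eta}^{(a_\eta,b_\eta)})^*\ox_C\omega_{\fl}^{(n_2,n_1)}\hat\ox_C M_{sw}^{\dagger}(K^v)$ and the analogous object with $M_{s\cdot sw}^{\dagger}(K^v)$, where the overconvergent modular forms carry the $B(L)$-action coming from Lemma \ref{lem:overconvergentmodularforms}. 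Since formation of kernels and cokernels commutes with the completed tensor by the fixed finite-dimensional factor $(\ox_{\eta\neq\iota}V_{\eta}^{(a_\eta,b_\eta)})^*\ox_C\omega_{\fl}^{(n_2,n_1)}$ and with the LB-colimit defining the stalk at $\infty$, the kernel and cokernel of $(d^{k+1})_\infty$ are, after twisting out that factor, computed by $\ker\theta^{k+1}$ and $\coker\theta^{k+1}$ on overconvergent modular forms of tame level $K^v$.

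Next I would invoke the rigid-cohomology interpretation: by \cite{Kas04, Din17} the filtered connection $D_{K^vK_m}^{w}$ extends to an overconvergent $F$-isocrystal $D_{K^vK_m}^{w,\dagger}$ on $\Ig_{K^v,m}$, and by the definition of rigid cohomology the hypercohomology $\bbH^i$ of the two-term complex $[M_{sw}^{\dagger}(K^vK_m)\xrightarrow{\theta^{k+1}} M_{s\cdot sw}^{\dagger}(K^vK_m)]$ is $H^i_{\rig}(\Ig_{K^v,m},D_{K^vK_m}^{w,\dagger})$, finite-dimensional by \cite[Theorem 1.2.1]{Kedlayafinitenessrigidcohomology}. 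Taking the colimit over $m$ identifies $\ker\theta^{k+1}$ and $\coker\theta^{k+1}$ on $M^{\dagger}_{\bullet}(K^v)$ with $H^0_{\rig}(\Ig_{K^v},D_{K^v}^{w,\sm,\dagger})$ and $H^1_{\rig}(\Ig_{K^v},D_{K^v}^{w,\sm,\dagger})$ respectively, as smooth admissible $B(L)$-representations. One point to check here is that $\theta^{k+1}$ becomes surjective onto $M^{\dagger}_{s\cdot sw}$ after passing to infinite Igusa level, so that the colimit of the cokernels is genuinely $H^1_{\rig}$; this uses the exactness properties of the $\theta$-operator at infinite level in the same spirit as \cite{PanII}.

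Then I would globalize over $\bbP^1(L)$. Since both the source sheaf $\calO_{K^v}^{\iota\-\lan,(\ox_{\eta\neq\iota}V_{\eta}^{(a_\eta,b_\eta)})^*\-\lalg,\chi}$ and the target of $d^{k+1}$ are $\GL_2(L)$-equivariant, and $\GL_2(L)$ acts transitively on $\bbP^1(L)$ with stabilizer $B(L)$ at $\infty$, restricting to $\bbP^1(L)$ and forming kernels and cokernels sheaf-theoretically yields sheaves that are the smooth geometric inductions of the stalk computations at $\infty$; this is precisely the construction of $\calH^i_{\ord}(K^v,w)$, and it simultaneously accounts for the factor $\omega_{\fl}^{(n_2,n_1)}|_{\bbP^1(L)}$ and for the Tate twist $(k_{1,\iota})$, which records how $\omega_{\fl}$ together with the Hodge--Tate twist transforms under $\GL_2(L)$-translation. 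Taking global sections over $\bbP^1(L)$ converts the geometric induction into $(\Ind^{\GL_2(L)}_{B(L)}-)^{\iota\-\lan}$ twisted by $z_1^{n_1}z_2^{n_2}$, the character recording the torus action on $\omega_{\fl}^{(n_2,n_1)}$ at $\infty$; one obtains locally $\iota$-analytic (rather than smooth) sections because sections of $\calO_{K^v}^{\iota\-\lan}$ are locally $\iota$-analytic in the $\GL_2(L)$-variable while the coefficient cohomology groups are smooth. The main obstacle I expect is exactly this last passage: matching the twists precisely and showing that the natural map from the global sections of the kernel/cokernel sheaf on $\bbP^1(L)$ to the induced representation is an isomorphism, i.e. that there are no higher cohomology contributions from $\bbP^1(L)$ (controlled by \v{C}ech computations with the cofinal family $\ffrb$, as in Lemma \ref{Cech}) and that the locally $\iota$-analytic structure is faithfully captured. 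This is the analogue of \cite[Corollary 5.1.11, Corollary 5.1.12]{PanII}, and carrying Pan's $\GL_2(\bbQ_p)$-arguments over to the locally $\iota$-analytic setting for $\GL_2(L)$ is where the real care is needed.
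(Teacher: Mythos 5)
Your proposal is correct and follows essentially the same route the paper takes, which is to transport Pan's \cite[Corollary 5.1.11, 5.1.12]{PanII} into this setting: identify the stalk of $d^{k+1}$ at $\infty$ with $1\otimes\theta^{k+1}$ on overconvergent forms, interpret $\ker/\coker\theta^{k+1}$ as $H^0_{\rig}$ and $H^1_{\rig}$ of the Igusa tower with coefficients in $D_{K^v}^{w,\sm,\dagger}$, then use $\GL_2(L)$-equivariance to realize the kernel/cokernel sheaves restricted to $\bbP^1(L)$ as geometric inductions from $B(L)$, and take global sections. One small remark: your worry about $\theta^{k+1}$ becoming surjective at infinite Igusa level is not actually needed — since $\Ig_{K^v,m}$ is an affine curve, $H^1_{\rig}(\Ig_{K^v,m},D_{K^vK_m}^{w,\dagger})$ is already the cokernel of $\theta^{k+1}$ at each finite level, and both kernels and cokernels commute with the filtered colimit defining the stalk, so the stalk identification is automatic without any surjectivity input.
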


\begin{remark}
To see that this is an induction for the locally-$\iota$ analytic vectors, note that $(\ox_{\eta\neq\iota}V_{\eta}^{(a_\eta,b_\eta)}\ox _C\calO_{K^v})^{\iota\-\lan}\isom \calO_{K^v}^{\iota\-\lan}\ox_{\calO_{K^v}^{\sm}}\Hom_{\frg}((\ox_{\eta\neq\iota}V_{\eta}^{(a_\eta,b_\eta)})^*,\calO_{K^v}^{\lan})$.
\end{remark}

As the differential operator $d'^{k+1}$ is a twist of $d^{k+1}$ by $(\Omega_{\fl}^1)^{\ox k+1}$, we have similarly 
\begin{align*}
    (\ker  d'^{k+1})|_{\bbP^1(L)}\isom (\ox_{\eta\neq\iota}V_{\eta}^{(a_\eta,b_\eta)})\ox_C\omega_{\fl}^{(-k-1,1)}|_{\bbP^1(L)}\ox_C\calH^0_{\ord}(K^v,w)\\
    (\coker  d'^{k+1})|_{\bbP^1(L)}\isom (\ox_{\eta\neq\iota}V_{\eta}^{(a_\eta,b_\eta)})\ox_C\omega_{\fl}^{(-k-1,1)}|_{\bbP^1(L)}\ox_C\calH^1_{\ord}(K^v,w).
\end{align*}
and 
\begin{align*}
    H^0(\bbP^1(L),\ker d'^{k+1})\isom (\Ind^{\GL_2(L)}_{\bar B(L)}H^0_{\rig}(\Ig_{K^v},D_{K^v}^{w,\sm,\dagger})z_1^{-k-1}z_2)^{\iota\-\lan}\\
    H^0(\bbP^1(L),\coker d'^{k+1})\isom (\Ind^{\GL_2(L)}_{\bar B(L)}H^1_{\rig}(\Ig_{K^v},D_{K^v}^{w,\sm,\dagger})z_1^{-k-1}z_2)^{\iota\-\lan}.
\end{align*}

Finally, we want to mention the relation between the rigid cohomology of Igusa curves and de Rham cohomology of unitary Shimura curves, see for example \cite{HT01},\cite{Joh16}. Note that in \cite{Joh16} he assumed that the totally real number field $F^+=\bbQ$, but the reason he added this assumption is to ensure certain newton strata are affine, which is satisfied in our situation by \cite[Proposition 2.2.7]{Joh16}, \cite{MR3989256}. We rewrite their results in our situation.
\begin{theorem}\label{thm:IgJacquet}
We have an equality of virtual $\bar B(L)\times \bbT^S$-representations
\begin{align*}
    &J_{ \bar N(L)}(\dlim_{m}H^1_{\dR}(\calX_{K^vK_m},D_{K^vK_m}^{w})-\dlim_{m}H^0_{\dR}(\calX_{K^vK_m},D_{K^vK_m}^{w})-\dlim_{m}H^2_{\dR}(\calX_{K^vK_m},D_{K^vK_m}^{w}))\\
    = &2(\dlim_{m}H^1_{\rig}(\Ig_{K^v,m},D_{K^vK_m}^{w,\dagger})-\dlim_{m}H^0_{\rig}(\Ig_{K^v,m},D_{K^vK_m}^{w,\dagger}))
\end{align*}
where $J_{\bar N(L)}$ denote the smooth Jacquet functor for $\bar N(L)\subset \bar B(L)$ the unipotent radical.
\begin{proof}
In \cite[Corollary 19]{Joh16}, the author proves this theorem in the dual case, i.e., for compactly supported rigid  cohomology of $\Ig_{K^v,m}$ with coefficient in $D_{K^vK_m}^{w,\dagger}$. Subsequently, Poincaré duality for rigid cohomology \cite{Kedlayafinitenessrigidcohomology} can be employed to deduce our case. Finally, we note that if $\pi$ is an admissible smooth $\GL_2(L)$-representation, then $J_{\bar N(L)}(\pi)=(J_{N(L)}(\pi^*))^*$. This is a direct consequence of the second adjoint theorem for smooth representations.
\end{proof}
\end{theorem}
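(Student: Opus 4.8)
The plan is to deduce the statement from the work of Johansson \cite{Joh16} by a base change and duality argument, adapting his setup from $F^+=\bbQ$ to our totally real field $F^+$. First I would recall that \cite[Corollary 19]{Joh16} establishes the dual form of the identity, namely an equality of virtual $B(L)\times\bbT^S$-representations relating $J_{\bar N(L)}$ of the alternating sum of compactly supported de Rham cohomology groups $\dlim_m H^i_{\dR,c}(\calX_{K^vK_m},D^w_{K^vK_m})$ to twice the alternating sum of compactly supported rigid cohomology groups $\dlim_m H^i_{\rig,c}(\Ig_{K^v,m},D^{w,\dagger}_{K^vK_m})$. The only place the hypothesis $F^+=\bbQ$ is used in \cite{Joh16} is to guarantee that the relevant Newton strata (in particular the non-ordinary/supersingular locus) are affine, which is needed for the degeneration and vanishing arguments; as noted in the paragraph preceding the statement, this affineness holds in our situation by \cite[Proposition 2.2.7]{Joh16} and \cite{MR3989256}, so his argument carries over verbatim after replacing the Shimura curve over $\bbQ$ by $\calX_{K^vK_m}$ over $F$.

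Next I would pass from the compactly supported version to the version in the statement by invoking two Poincaré duality statements. On the rigid side, Poincaré duality for rigid cohomology with coefficients in an overconvergent $F$-isocrystal \cite{Kedlayafinitenessrigidcohomology} gives, for each finite level $m$, a perfect pairing identifying $H^i_{\rig,c}(\Ig_{K^v,m},D^{w,\dagger}_{K^vK_m})$ with the dual of $H^{2-i}_{\rig}(\Ig_{K^v,m},(D^{w,\dagger}_{K^vK_m})^\vee)$ up to a Tate twist; since the coefficient isocrystal $D^w$ is essentially self-dual up to a character (the determinant twists present in the definition of $D^{(a,b)}_{K^v,\eta}$), the alternating sum is preserved. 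On the de Rham side one uses Poincaré duality for the de Rham cohomology of the (proper, smooth after passing to a suitable model) curves $\calX_{K^vK_m}$ with coefficients in the filtered vector bundle with integrable connection $D^w_{K^vK_m}$, again compatibly with the essential self-duality of the coefficients. Finally, on the level of $B(L)$-representations, I would use the displayed compatibility in the proof of the statement: for $\pi$ an admissible smooth $\GL_2(L)$-representation one has $J_{\bar N(L)}(\pi)=(J_{N(L)}(\pi^\vee))^\vee$, which is the second adjointness theorem for smooth representations (Bernstein). Combining these three dualities turns the $J_{\bar N(L)}$-identity for compactly supported cohomology into the $J_{N(L)}$-identity for ordinary cohomology, as desired, with the admissibility needed to apply the representation-theoretic duality coming from finiteness of rigid cohomology \cite{Kedlayafinitenessrigidcohomology} together with the finiteness of de Rham cohomology at each finite level.

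I expect the main obstacle to be bookkeeping rather than conceptual: one must check that the various duality isomorphisms (rigid, de Rham, and the smooth representation-theoretic one) are compatible with the $B(L)$-actions, the Frobenius structures, and the $\bbT^S$-action, and that the Tate twists and determinant characters introduced by self-duality of $D^w$ match up on both sides so that they cancel in the virtual (alternating-sum) identity. A secondary technical point is to confirm that Johansson's independence-of-$\ell$ comparison between the $\ell$-adic cohomology of the Shimura curve and the rigid cohomology of the Igusa curves \cite{HT01,Joh16} is insensitive to the enlargement of $F^+$ beyond the affineness issue already addressed; this amounts to checking that the decompositions of the relevant cohomology groups under the $\calO_B\ox_{\bbZ}C$-action and the Newton stratification behave exactly as in the $F^+=\bbQ$ case, which they do since all of this is local at $v$ and our running assumptions on $B$ (split at $w$, etc.) are precisely those used in \cite{HT01}. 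Once these compatibilities are in place the identity follows formally.
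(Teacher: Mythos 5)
Your proposal follows the same route as the paper's proof: cite \cite[Corollary 19]{Joh16} for the compactly-supported version, apply Poincar\'e duality for rigid cohomology, and use the second adjoint theorem in the form $J_{\bar N(L)}(\pi)=(J_{N(L)}(\pi^*))^*$ to pass from $J_{\bar N}$ to $J_N$. The one extra step you insert, Poincar\'e duality on the de Rham side, is a harmless (and arguably clarifying) elaboration made essentially trivial by the properness of the unitary Shimura curve.
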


\subsection{The supersingular locus}
\subsubsection{The Lubin--Tate space at infinite level}
In this subsection, we recall some facts on $1$-dimensional Lubin--Tate spaces at the infinite level. The main references we use are \cite{Wei16, SW13}.

Let us fix some notation. Let $\iota:L\inj C$ be the distinguished embedding we fixed before. Let $\breve{L}$ be the completion of the maximal unramified extension of $L$ inside $C$. Let $D_L$ denote the unique non-split quaternion algebra over $L$. Also, fix an isomorphism $\Lie(\GL_2(L))\ox_{\bbQ_p}C\isom\Lie(D_L^\times)\ox_{\bbQ_p}C$. If we put $\frg_\iota:=\Lie(\GL_2(L))\ox_{L,\iota}C$ and $\check \frg_\iota:=\Lie(D_L^\times)\ox_{L,\iota}C$, then the above identification induces an isomorphism $\frg_\iota\isom \check\frg_\iota$.

Let $H_0$ be a one-dimensional formal $\calO_L$-module over $\bar\bbF_q$ of $\calO_L$-height $2$ (hence of usual height $2[L:\bbQ_p]$), which is unique up to quasi-isogeny. Let $\Nilp_{\calO_{\breve{L}}}$ be the category of $\calO_{\breve{L}}$-algebras such that $\varpi$ is nilpotent. Let $\ffrm_{\LT,0}$ be the functor from $\Nilp_{\calO_{\breve{L}}}$ to the category of sets, assigning an $\calO_{\breve{L}}$-algebra $R$ to the set of equivalence classes of pairs $(G,\rho)$, where 
\begin{itemize}
    \item $G$ is a compatible $p$-divisible $\calO_L$-module over $R$.
    \item $\rho:H_0\ox_{\bar\bbF_q}R/\pi\dasharrow G\ox_R R/\pi$ is an $\calO_L$-equivariant quasi-isogeny.
\end{itemize}
Two pairs $(G,\rho)$ and $(G',\rho')$ are equivalent if the quasi-isogeny $\beta'\comp\beta^{-1}:G\ox_R R/\pi\dasharrow G'\ox_R R/\pi$ lifts to an isomorphism $\tilde{\beta'\comp\beta^{-1}}:G\aisom G'$ over $R$. Here the property of being compatible means that the natural $\calO_L$-action on $\Lie(G)$ induced by the $\calO_L$-action on $G$, coincides with the $\calO_L$-action induced by the natural action of $R$ on $\Lie(G)$.

By \cite{RZ96}, this functor is represented by a formal scheme over $\Spf(\calO_{\breve{L}})$. It is also equipped with a $D_L^\times$-action via the identification $D_L^\times\isom \mathrm{QIsog}_{\calO_L}(H_0)$. Lift $\iota:L\inj C$ to a $\bbQ_p$-embedding $\breve{\iota}:\breve{L}\inj C$. We denote by $\calM_{\LT,0}$ the base change of the adic generic fiber of $\ffrm_{\LT,0}$ from $\breve L$ to $C$ via $\breve{\iota}$. It will be clear from below that the choice of embedding $\breve{\iota}:\breve{L}\inj C$ does not affect our results. 

Let $(\calG,\rho)$ be the universal deformation of $H_0$ on $\ffrm_{\LT,0}$. The $\varpi$-adic Tate module of $\calG$ defines a $\bbZ_p$-local system of rank $2[L:\bbQ_p]$ on the \'etale site of $\calM_{\LT,0}$, whose dual will be denoted by $V_{\LT}$. For any $n\ge 0$, by considering trivializations of $V_{\LT}/\varpi^nV_{\LT}$, we get an \'etale Galois covering $\calM_{\LT,n}$ of $\calM_{\LT,0}$ with Galois group $\GL_2(\calO_L/\varpi^n\calO_L)$. By \cite{SW13}, there exists a perfectoid space $\calM_{\LT,\infty}$ over $C$ such that 
\[
    \calM_{\LT,\infty}\sim\ilim_n\calM_{\LT,n}.
\]
There is a natural continuous right action of $\GL_2(\calO_L)$ on $\calM_{\LT,\infty}$, which can be extended to $\GL_2(L)$. Note that the action of $\GL_2(L)$ here differs from the one in \cite{SW13} by $g\mapsto (g^{-1})^t$.

There are two important maps on $\calM_{\LT,\infty}$, namely the Gross--Hopkins period map and the Hodge--Tate period map. The Gross--Hopkins period map comes from Grothendieck--Messing theory on deformation of Barsotti--Tate groups, and the Hodge--Tate period map comes from Hodge--Tate comparison maps.

We recall the definition of the Gross--Hopkins period map. Recall that $(G,\rho)$ is the universal Barsotti--Tate group on $\ffrm_{\LT,0}$. Then its covariant Dieudonne crystal defines a rank $2d$ vector bundle $M(\calG)$ over $\calM_{\LT,0}$, equipped with an $\calO_L$-action and an integrable connection $\nabla_{\LT,0}$, which is the dual of the Gauss--Manin connection. In fact, $M(\calG)$ is equipped with a natural trivialization. Let $M(H_0)$ denote the covariant Dieudonne module of $H_0$, which is a free $W(\bar\bbF_p)$-module of rank $2d$, equipped with an $\calO_L$-action. As $\End_{\calO_L}(H_0)=\calO_{D_L}$, we see that $M(H_0)[\frac{1}{p}]$ is the standard representation of $D_L^\times$ on $L^{\oplus 2}\ox_{\bbQ_p}W(\bar\bbF_p)[\frac{1}{p}]$. Then the universal quasi-isogeny $\rho$ on $\ffrm_{\LT,0}$ induces a natural trivialization 
\[
    M(\calG)\isom M(H_0)\ox_{W(\bar\bbF_p)}\calO_{\calM_{\LT,0}}
\]
under which $\nabla_{\LT,0}$ is identified with the standard connection on $\calO_{\calM_{\LT}}$. Note that this isomorphism is $\calO_L$-equivariant, and the map $W(\bar\bbF_p)\to\calO_{\calM_{\LT,0}}$ is given by the restriction of $\breve{\iota}$. We rewrite this trivialization as 
\[
    M(\calG)\isom \bigoplus_\eta M(\calG)_\eta =\bigoplus_\eta W_\eta\ox_C \calO_{\calM_{\LT,0}},
\]
where $W_\eta$ is the $\eta$-standard representation of $D_L^\times$. The Lie algebra of $\calG$ defines a line bundle on $\calM_{\LT,0}$, whose dual will be denoted by $\omega_{\LT,0}$. By the Grothendieck--Messing theory, $(\calG,\rho)$ gives rise to a surjection 
\[
    M(H_0)\ox_{W(\bar\bbF_p)}\calO_{\calM_{\LT}}\isom M(\calG)\to(\omega_{\LT,0})^{-1}.
\]
By functoriality, we know that this surjection is $\calO_L\ox_{\bbZ_p}C$-equivariant. Let 
\[
    W_\iota\ox_C\calO_{\calM_{\LT,0}}=M(\calG)_\iota\to(\omega_{\LT,0})^{-1}
\]
be the direct summand of the above morphism, where the action of $\calO_L\ox_{\bbZ_p}C$ factors through $L\ox_{L,\iota}C$. Note that by the compatible condition on $\calG$ we know that the $\calO_L$-action on $\Lie(\calG)$ already factors through the embedding $\iota:L\inj C$. It turns out that the kernel of $M(\calG)_\iota\to(\omega_{\LT,0})^{-1}$ is a line bundle on $\calM_{\LT}$. This induces the so-called Gross--Hopkins period map \cite{GH94}
\[
    \pi_{\GM}:\calM_{\LT,0}\to\fl_{\GM}
\]
where $\fl_{\GM}$ is the adic space over $C$ associated to the flag variety parametrizing $1$-dimensional quotients of the $2$-dimensional $C$-vector space $W_\iota$. This map is $D_L^\times$-equivariant, if $\fl_{\GM}$ is identified with (the base change to $C$ of) the Brauer-Severi variety for $D_L$. Also note that the $D_L$-action on $\fl_{\GM}$ factors through the splitting $D_L^\times\inj\GL_2(C)$ induced by $\iota$. By Grothendieck--Messing theory, $\pi_{\GM}$ is surjective and \'etale, and admits local sections. Let $\omega_{\fl,\GM}$ denote the tautological ample line bundle on $\fl_{\GM}$, then $\pi_{\GM}^{*}(\omega_{\fl,\GM})=(\omega_{\LT,0})^{-1}$. 

Let $D(\calG)_\iota$ be the dual of $M(\calG)_\iota$. It is a vector bundle of rank $2$ on $\calM_{\LT,0}$, equipped with an integrable connection $\nabla_{\LT,0}$. The dual of the surjection $M(\calG)_\iota\surj (\omega_{\LT,0})^{-1}$ induces a Hodge filtration on $D(\calG)_\iota$, given by $\Fil^{0}D(\calG)_\iota=D(\calG)_\iota$, $\Fil^1D(\calG)_\iota=\omega_{\LT,0}$ and $\Fil^2D(\calG)_\iota=0$. We have the Kodaira--Spencer isomorphism 
\[
    \KS:\Fil^1D(\calG)_\iota\ov{\nabla_{\LT}}\to D(\calG)_\iota\ox_{\calO_{\calM_{\LT,0}}}\Omega_{\calM_{\LT,0}}^1\to\gr^0 D(\calG)_\iota\ox_{\calO_{\calM_{\LT,0}}}\Omega_{\calM_{\LT,0}}^1.
\]
This gives $\Omega_{\calM_{\LT,0}}^1\isom\omega_{\LT,0}^2\ox{\det}_{\iota}^{-1}$.  

Note that the above construction is compatible by adding finite level structures on $\calM_{\LT,0}$. Let us rename $D(\calG)$ by $D_{\LT,0}$, and $D(\calG)_\eta$ by $D_{\LT,0,\eta}$. We may pull back them as coherent modules to $\calM_{\LT,n}$ via the canonical projection $\calM_{\LT,n}\to \calM_{\LT,0}$, given by $D_{\LT,n}=\bigoplus_\eta D_{\LT,n,\eta}$. The Hodge filtrations on $D_{\LT,n,\eta}$ are trivial if $\eta\neq \iota$, and the Hodge filtration on $D_{\LT,n,\iota}$ is given by $\omega_{\LT,n}$. Similarly we have the Kodaira--Spencer isomorphisms. The Gross--Hopkins period map on $\calM_{\LT,n}$ are defined to be the composition $\calM_{\LT,n}\to\calM_{\LT,0}\ov{\pi_{\GM}}\to \fl_{\GM}$.

We discuss the Hodge--Tate period map on $\calM_{\LT,\infty}$. Let $\omega_{\LT}$ be the pull-back as coherent sheaves of $\omega_{\LT,0}$ to $\calM_{\LT,\infty}$. Since the $\varpi$-adic Tate module of $\calG$ gets trivialized on $\calM_{\LT,\infty}$, the Hodge--Tate exact sequence induces a $\GL_2(L)$-equivariant surjection 
\[
    V_L\ox_{\bbQ_p}\calO_{\LT,\infty}\to\omega_{\LT,\infty}(-1),
\]
where $V_L$ is the standard representation of $\GL_2(L)$ over $L$, and $\calO_{\LT,\infty}$ is the completed structure sheaf on $\calM_{\LT,\infty}$. By functoriality, this map is $\calO_L\ox_{\bbZ_p}C$-equivariant. Taking the $\iota$-component in the decomposition, we get an exact sequence  
\[
    0\to \omega_{\LT,\infty}^{-1}\ox {\det} V_\iota\to V_\iota\ox_C\calO_{\LT,\infty}\to\omega_{\LT,\infty}(-1)\to 0,
\]
where $V_\iota$ is the $\iota$-standard representation of $\GL_2(L)$ over $C$. Here we used the identification that the \emph{dual} of the $\varpi$-adic Tate module is isomorphic to $V_L$. Note that by the compatibility condition, $L$ already acts on $\omega_{\LT,\infty}$ via $\iota$. This defines the Hodge--Tate period map 
\[
    \pi_{\LT,\HT}:\calM_{\LT,\infty}\to\fl
\]
which is $\GL_2(L)$-equivariant, with the image is exactly the Drinfeld upper half plane $\Omega\subset\fl$. Note that the action of $\GL_2(L)$ on $\fl$ factors through the embedding $\iota$. Also $\pi_{\HT,\LT}^*(\omega_{\fl})=\omega_{\LT,\infty}(-1)$ with $\omega_{\fl}$ the tautological ample line bundle on $\fl$.

Besides, if we take other components of $V_L\ox_{\bbQ_p}\calO_{\calM_{\LT,\infty}}$, we get a $\GL_2(L)\times D_L^\times$-equivariant isomorphism 
\[
    V_\eta\ox_C\calO_{\LT,\infty}=D_{\LT,\infty,\eta}=D_{\LT,\infty,\eta}^{\sm}\ox_{\calO_{\LT,\infty}^{\sm}}\calO_{\LT,\infty},
\]
where $D_{\LT,\infty,\eta}$ is the pullback of $D_{\LT,\infty,0,\eta}$ to $\calM_{\LT,\infty}$, and $D_{\LT,\infty,\eta}^{\sm}$ is the set of smooth vectors in $D_{\LT,\infty,\eta}$ for the $\GL_2(L)$-action, which is also identified with 
\[
    D_{\LT,\infty,\eta}^{\sm}=\dlim_n\pi_n^{-1}D_{\LT,n,\eta}.
\]
As $D_{\LT,\infty,\eta}^{\sm}$ is get trivialized by the moduli problem, we get a $\GL_2(L)\times D_L^\times$-equivariant isomorphism 
\[
    V_\eta\ox_C\calO_{\LT,\infty}\isom D_{\LT,\infty,\eta}\isom W_\eta^*\ox_C \calO_{\LT,\infty}.
\]

Recall that we have a $D_L^\times$-equivariant map $\pi_{\GM}:\calM_{\LT,0}\to\fl_{\GM}$. We denote by $\pi_{\LT,\GM}:\calM_{\LT,\infty}\to\fl_{\GM}$ the composite of the projection map $\calM_{\LT,\infty}\to\calM_{\LT,0}$ and $\pi_{\GM}:\calM_{\LT,0}\to\fl_{\GM}$. It is $\GL_2(L)\times D_L^\times$-equivariant with respect to the trivial action of $\GL_2(L)$ on $\fl_{\GM}$ and natural action of $D_L^\times$ (on $W_\iota$) on $\fl_{\GM}$. Note that the centers $L^\times\subset\GL_2(L)$ and $L^\times\subset D_L^\times$ act in the same way on $\calM_{\LT,\infty}$.

\subsubsection{The Drinfeld space at infinite level}
In this subsection, we recall some results on Drinfeld tower. For this, let us first recall the definition of special formal $\calO_{D_L}$-modules.
\begin{definition}
Let $S$ be a scheme over $\calO_{\breve{L}}$. A special formal $\calO_{D_L}$-module over $S$ is a pair $(X,\iota)$ consisting of a formal group $X$ over $S$ and a ring homomorphism $\iota:\calO_D\inj \End(X)$ such that 
\begin{itemize}
    \item The action of $\calO_L$ on $\Lie(X)$ induced by $\iota$ coincides with that induced by $\calO_L\inj\calO_{\breve{L}}\to\calO_S$.
    \item Using the action of $\calO_{L_2}\subset\calO_D$ on $\Lie(X)$ by $\iota$, where $L_2/L$ is the unramified extension of degree $2$ lying in $D_L$, we can decompose $\Lie(X)$ with respect to the decomposition of $\calO_{L_2}\ox_{\bbZ_p}\calO_S$. Namely, we have $\Lie(X)=\bigoplus _{i\in\bbZ/2\bbZ}\Lie^i X$, here the action of $\calO_{L_2}$ on $\Lie^i(X)$ coincides with that induced by $\calO_{L_2}\ov{\sigma^i}\to \calO_{L_2}\inj\calO_{\breve{L}}\to\calO_S$. We require that each $\Lie^i(X)$ be a locally free $\calO_S$-module of rank $1$. This is the Kottwitz condition.
\end{itemize}
\end{definition}

Let $H_1$ be a special formal $\calO_{D_L}$-module over $\bar\bbF_q$ of $\calO_L$-height $4$, which is unique up to quasi-isogeny. Note that $\mathrm{QIsog}_{\calO_{D_L}}(H_1)=\GL_2(L)$. Let $\ffrm_{\Dr,0}$ be the functor from $\Nilp_{\calO_{\breve{L}}}$ to the category of sets, which assigns an $\calO_{\breve{L}}$-algebra $R$ to the set of equivalence classes of pairs $(G,\rho)$, where 
\begin{itemize}
    \item $G$ is a $p$-divisible special formal $\calO_{D_L}$-module over $R$.
    \item $\rho:H_1\ox_{\bar\bbF_q}R/\pi\dasharrow G\ox_R R/\pi$ is an $\calO_{D_L}$-equivariant quasi-isogeny.
\end{itemize}
Two pairs $(G,\rho)$ and $(G',\rho')$ are equivalent if the quasi-isogeny $\beta'\comp\beta^{-1}:G\ox_R R/\pi\dasharrow G'\ox_R R/\pi$ lifts to an isomorphism $\tilde{\beta'\comp\beta^{-1}}:G\aisom G'$ over $R$. Note that the property of being special implies compatible. 

Drinfeld \cite{Dr76} showed the functor $\ffrm_{\Dr,0}$ is represented by a formal scheme over $\Spf \calO_{\breve{L}}$. It is equipped with a $\GL_2(L)$-action via the identification $\GL_2(L)\isom \mathrm{QIsog}_{\calO_{D_L}}(H_1)$. Let $\calM_{\Dr,0}$ denote the base change of the adic generic fiber from $\breve{L}$ to $C$ via $\breve{\iota}$, where $\breve{\iota}$ is a lifting of $\iota$. Let $(\calG',\rho')$ be the universal $\varpi$-divisible special formal $\calO_{D_L}$-module on $\calM_{\Dr,0}$. The $\varpi$-adic Tate module defines a $\bbZ_p$-local system of rank $4d$ equipped with an action of $D_L$ on the \'etale site of $\calM_{\Dr,0}$, whose dual will be denoted by $V_{\Dr}$. For $n\ge 0$, we get an \'etale Galois covering $\calM_{\Dr,n}$ of $\calM_{\Dr,0}$ with Galois group $(\calO_{D_L}/\varpi^n)^\times=\calO_{D_L}^\times/(1+\varpi^n\calO_{D_L})$, by considering $\calO_{D_L}$-equivariant trivializations of $V_{\Dr}/\varpi^n V_{\Dr}$. Note that this covering $\calM_{\Dr,n}\to\calM_{\Dr,0}$ is $\GL_2(L)$-equivariant. By \cite{SW13}, there exists a perfectoid space $\calM_{\Dr,\infty}$ over $C$ such that 
\[
    \calM_{\Dr,\infty}\sim\ilim_n\calM_{\Dr,n}.
\]
There is a natural continuous right action of $\calO_{D_L}^\times$ on $\calM_{\Dr,\infty}$, which can be extended naturally to an action of $D_L^\times$.

Again there are two important maps, the Gross--Hopkins period map and the Hodge--Tate period map on $\calM_{\Dr,\infty}$. 

Let us first recall the definition of Gross--Hopkins period morphism. Let $(\calG',\rho')$ be the universal special formal $\calO_{D_L}$-module over $\ffrm_{\Dr,0}$. Its covariant Dieudonne crystal defines a vector bundle $M(\calG')$ of rank $4d$ on $\calM_{\Dr,0}$ equipped with an integrable connection $\nabla_{\Dr,0}$, the dual of the Gauss--Manin connection. $M(\calG')$ also carries with an action of $\calO_{D_L}\ox_{\bbZ_p}C$. Let $M(H_1)$ denote the covariant Dieudonne module of $H_1$, which is a free $W(\bar\bbF_p)$-module of rank $4d$, equipped with an $\calO_{D_L}$-action. The universal quasi-isogeny $\rho'$ induces a natural trivialization $M(\calG')\isom M(H_1)\ox_{W(\bar\bbF_p)}\calO_{\calM_{\Dr}}$ under which $\nabla_{\Dr,0}$ is identified with the standard connection on $\calO_{\calM_{\Dr,0}}$. The Lie algebra of $\calG'$ defines a rank $2$ vector bundle $\Lie(\calG')$ over $\calM_{\Dr,0}$, which carries an $\calO_{D_L}\ox_{\bbZ_p} C$-action. By the Grothendieck--Messing theory, $(\calG',\rho')$ gives rise to a surjection 
\[
    M(H_1)\ox_{W(\bar\bbF_p)}\calO_{\calM_{\Dr}}\isom M(\calG')\to\Lie(\calG'),
\]
which is $\calO_{D_L}\ox_{\bbZ_p}C$-equivariant. Let 
\[
    M(\calG')_\iota\to \Lie(\calG')
\]
be the direct summand of the above surjection such that the action of $\calO_{D_L}\ox_{\bbZ_p}C$ factors through $\calO_{D_L}\ox_{L,\iota}C$. Note that by the special condition, the action of $\calO_{D_L}\ox_{\bbZ_p}C$ on $\Lie(\calG')$ already factors through $\calO_{D_L}\ox_{L,\iota}C$. It induces the so-called Gross--Hopkins period map 
\[
    \pi_{\GM}':\calM_{\Dr,\infty}\to\fl'_{\GM}
\]
where $\fl'_{\GM}$ is the adic space over $C$ associated to the flag variety parametrizing $2$-dimensional $D_L$-equivariant quotients of the $4$-dimensional $C$-vector space $(M(H_1)\ox_{W(\bar\bbF_p)}C)_\iota$. Note that this map is $\GL_2(L)$-equivariant. Also note that the $\GL_2(L)$-action on $\fl'_{\GM}$ factor through the embedding $\iota$. Let $W_\iota$ be the standard $\iota$-algebraic representation of $D_L^\times$ over $C$. Then everything above is $W_{\iota}$-isotypic. More precisely, let $D(\calG')$ be the dual of $M(\calG')$. Let $D_{\Dr,0,\iota}=\Hom_{D_L^\times}(W_{\iota},D(\calG')_\iota)$, which is a $\GL_2(L)$-equivariant vector bundle on $\calM_{\Dr,0}$ of rank $2$, with trivialization $D_{\Dr,0,\iota}\isom V_\iota^*\ox_C\calO_{\Dr,0}$. Similarly one defines $D_{\Dr,0,\eta}=\Hom_{D_L^\times}(W_\eta,D(\calG')_\eta)$, with trivialization $D_{\Dr,0,\eta}\isom V_\eta^*\ox_C\calO_{\Dr,0}$. Let $\omega_{\Dr,0}^{-1}=\Hom_{D_L^\times}(W_{\iota},\Lie(\calG'))$. The Lie algebra of the universal Barsotti--Tate group defines the Hodge filtration on $D_{\Dr,0}$. More precisely, the dual of the surjection $M(\calG)_\iota\surj \Lie(\calG')$ induces a Hodge filtration on $D_{\Dr,0,\iota}$, given by $\Fil^{0}D_{\Dr,0,\iota}=D_{\Dr,0,\iota}$, $\Fil^1D_{\Dr,0,\iota}=\omega_{\Dr,0}$ and $\Fil^2D_{\Dr,0,\iota}=0$. We have the Kodaira--Spencer isomorphism 
\[
    \KS:\Fil^1D_{\Dr,0,\iota}\ov{\nabla_{\Dr,0}}\to D_{\Dr,0,\iota}\ox_{\calO_{\calM_{\Dr,0}}}\Omega_{\calM_{\Dr,0}}^1\to \gr^0D_{\Dr,0,\iota}\ox_{\calO_{\calM_{\Dr,0}}}\Omega_{\calM_{\Dr,0}}^1.
\]
This gives $\Omega_{\calM_{\Dr}}^1\isom\omega_{\Dr,0}^2\ox{\det}_\iota^{-1}$. Note that the Hodge filtration on $D_{\Dr,0,\eta}$ for $\eta\neq\iota$ is trivial.

As for the Gross--Hopkins period morphism, Drinfeld shows that the image of $\pi_{\GM}'$ is exactly $\Omega$, the Drinfeld upper half plane. Moreover, if we let $\calM_{\Dr,0}^{(0)}$ denote the component of $\calM_{\Dr,0}$ such that the quasi-isogeny is an isomorphism, then $\pi_{\GM}'|_{\calM^{(0)}_{\Dr,0}}$ is an isomorphism onto $\Omega$. Also if we let $\omega_{\fl_{\GM}'}$ denote the tautological ample line bundle on $\fl_{\GM}'$, we have ${\pi'}_{\GM}^*(\omega_{\fl_{\GM}'})=\omega_{\Dr,0}^{-1}$.

Again the above constructions are compatible by adding finite level structures on $\calM_{\Dr,0}$. We may pull back the vector bundles on $\calM_{\Dr,0}$ defined before as coherent modules to $\calM_{\Dr,n}$ via the canonical projection $\calM_{\Dr,n}\to \calM_{\Dr,0}$, given by $D_{\Dr,n}=\bigoplus_\eta D_{\Dr,n,\eta}$. The Hodge filtration on $D_{\Dr,n,\eta}$ is trivial if $\eta\neq \iota$, and the Hodge filtration on $D_{\Dr,n,\iota}$ is given by $\omega_{\Dr,n}$. Similarly we have the Kodaira--Spencer isomorphisms. The Gross--Hopkins period map on $\calM_{\Dr,n}$ are defined to be the composition $\calM_{\Dr,n}\to\calM_{\Dr,0}\ov{\pi_{\GM}}\to \fl_{\GM}'$.

Now let us recall the definition of Hodge--Tate period map on $\calM_{\Dr,\infty}$. Let $\omega_{\Dr,\infty}$ be the pull-back (as coherent sheaves) of $\omega_{\Dr,0}$ to $\calM_{\Dr,\infty}$. Since the $\pi$-adic Tate module of $\calG'$ gets trivialized on $\calM_{\Dr,\infty}$, the Hodge--Tate exact sequence induces a $D_L^\times$-equivariant surjection 
\[
    \calO_{D_L}\ox_{\bbZ_p}\calO_{\Dr,\infty}\to\Lie(\calG')^{\vee}(-1)
\]
where $\calO_{\Dr,\infty}$ is the completed structure sheaf on $\calM_{\Dr,\infty}$. By functoriality, this map is $\calO_L\ox_{\bbZ_p}C$-equivariant. Taking the $\iota$-component in the decomposition, we get a surjection
\[
    \calO_{D_L}\ox_{L,\iota}\calO_{\Dr,\infty}\to\Lie(\calG')^{\vee}(-1).
\]
Note that by the compatibility condition, $L$ already acts on $\Lie(\calG')^{\vee}$ via $\iota$. Let $\fl'$ be the adic space over $C$ associated to the flag variety parametrizing $2$-dimensional $D_L$-equivariant quotients of the $4$-dimensional $C$-vector space $\calO_{D_L}\ox_{L,\iota}C$. Then the Hodge--Tate sequence defines the Hodge--Tate period map  
\[
    \pi_{\Dr,\HT}:\calM_{\Dr,\infty}\to\fl'
\]
which is $D_L^\times$-equivariant, and surjective. Note that the action of $D_L^\times$ on $\fl'$ factors through the embedding $\iota$. Also $\pi_{\Dr,\HT}^*(\omega_{\fl'})=\omega_{\Dr,\infty}(-1)$. Again, the above construction are $W_{\iota}$-isotypic, in the sense that if we take $W_\iota$-isotypic components, we get the Hodge--Tate exact sequence 
\[
    0\to \omega_{\Dr,\infty}^{-1}\ox {\det} W_\iota\to  W_\iota\ox_C \calO_{\Dr,\infty}\to \omega_{\Dr,\infty}(-1)\to 0,
\]
and an isomorphism 
\[
    W_\eta\ox_C\calO_{\Dr,\infty}\isom D_{\Dr,\infty,\eta}\isom D_{\Dr,\infty,\eta}^{\sm}\ox_{\calO_{\Dr,\infty}^{\sm}}\calO_{\Dr,\infty}\isom V_\eta^*\ox_C\calO_{\Dr,\infty}.
\]

Recall that we have a $\GL_2(L)$-equivariant map $\pi_{\GM}':\calM_{\Dr,0}\to\fl_{\GM}'$. We denote by $\pi_{\Dr,\GM}:\calM_{\Dr,\infty}\to\fl_{\GM}'$ the composite of the projection map $\calM_{\Dr,\infty}\to\calM_{\Dr,0}$ and $\pi_{\GM}':\calM_{\Dr,0}\to\fl_{\GM}'$. It is $\GL_2(L)\times D_L^\times$-equivariant with respect to the trivial action of $D_L^\times$ on $\fl_{\GM}'$ and natural action of $\GL_2(L)$ (on $V_\iota$) on $\fl_{\GM}'$. Note that the centers $L^\times\subset\GL_2(L)$ and $L^\times\subset D_L^\times$ act in the same way on $\calM_{\Dr,\infty}$.

\subsubsection{Locally analytic sections on Lubin--Tate spaces and Drinfeld spaces at infinite level}
In this subsection, we first recall a duality result due to Scholze-Weinstein \cite{SW13}, which relates Lubin--Tate spaces at infinite level, and Drinfeld spaces at infinite level. From this result, we can deduce some relations between the Lubin--Tate side and Drinfeld side. Let $\calO_{\LT,\infty}$ denote the completed structure sheaf on $\calM_{\LT,\infty}$ and $\calO_{\Dr,\infty}$ denote the completed structure sheaf on $\calM_{\Dr,\infty}$.

Recall the following theorem \cite[Proposition 7.2.2, Theorem 7.2.3]{SW13}.
\begin{theorem}\label{thm:SWduality}
There are natural $\GL_2(L)\times D_L^\times$-equivariant isomorphisms 
\[
    \calM_{\LT,\infty}\isom\calM_{\Dr,\infty}
\]
such that $\pi_{\LT,\HT}$ is naturally identified with $\pi_{\Dr,\GM}$ and $\pi_{\LT,\GM}$ is naturally identified with $\pi_{\Dr,\HT}$. This induces a $\GL_2(L)\times D_L^\times$-equivariant isomorphism 
\[
    \calO_{\LT,\infty}\isom \calO_{\Dr,\infty}.
\]
\end{theorem}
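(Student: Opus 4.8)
The plan is to deduce the statement from the Lubin--Tate/Drinfeld duality isomorphism of Scholze--Weinstein \cite{SW13}, adding only the modest bookkeeping needed to see the $\GL_2(L)\times D_L^\times$-equivariance and the matching of the four period morphisms in the present normalization.

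First I would recall that, by construction, $\calM_{\LT,\infty}$ and $\calM_{\Dr,\infty}$ are the perfectoid spaces attached (in the sense of $\sim$) to the two Rapoport--Zink towers, and that by \cite{SW13} each represents an explicit functor on affinoid perfectoid $C$-algebras: for $(R,R^+)$ perfectoid over $C$, a point of $\calM_{\LT,\infty}(R,R^+)$ is a compatible $\varpi$-divisible $\calO_L$-module $G/R^+$ together with an $\calO_L$-quasi-isogeny from $H_0$ to $G$ over $R^+/\varpi$ and a trivialization $\calO_L^2\aisom T_\varpi G$, and similarly on the Drinfeld side with $(\calO_{D_L},H_1)$ in place of $(\Mat_2(\calO_L),H_0)$ and an $\calO_{D_L}$-linear trivialization of $T_\varpi G'$. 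Passing to universal covers $\widetilde G=\varprojlim_{[\varpi]}G$ and invoking the Scholze--Weinstein classification of $\varpi$-divisible groups over integral perfectoid rings, both functors become purely linear-algebraic, and the two resulting problems are interchanged because the basic object on one side, equipped with its full endomorphism action, is the basic object on the other: indeed $\End_{\calO_L}(H_0)=\calO_{D_L}$ and $\mathrm{QIsog}_{\calO_{D_L}}(H_1)=\GL_2(L)$, and symmetrically. This is \cite[Proposition 7.2.2]{SW13} and produces an isomorphism of perfectoid spaces $\calM_{\LT,\infty}\aisom\calM_{\Dr,\infty}$ over $C$; since it is built from the intrinsic linear algebra, it intertwines the $\GL_2(L)$-action by level structures on one side with the $\GL_2(L)$-action by quasi-isogenies on the other, and likewise for $D_L^\times$, so that the combined $\GL_2(L)\times D_L^\times$-actions are matched (the centers $L^\times$ acting identically in both presentations).

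Second I would check the compatibility of period maps. In this picture a $C$-point of $\calM_{\LT,\infty}$ records, on the one hand, the Hodge--Tate filtration of $G$ inside $T_\varpi G\otimes_{\calO_L}C$ cut out by $\pi_{\LT,\HT}$ after the chosen trivialization, and, on the other hand, via the quasi-isogeny the trivialization of the Dieudonn\'e module $D(\calG)_\iota$, with respect to which the position of the Hodge filtration $\omega_{\LT}$ is exactly $\pi_{\LT,\GM}$. Transporting $G$ to the Drinfeld side, the Hodge filtration of its Dieudonn\'e module becomes the Hodge--Tate filtration of the corresponding $\varpi$-divisible $\calO_{D_L}$-module and conversely, which is the content of \cite[Theorem 7.2.3]{SW13}; hence $\pi_{\LT,\HT}$ is identified with $\pi_{\Dr,\GM}$ and $\pi_{\LT,\GM}$ with $\pi_{\Dr,\HT}$, the targets being matched since $\fl$ and $\fl'_{\GM}$ are both the projectivization of the $\iota$-standard representation $V_\iota$ of $\GL_2(L)$, while $\fl_{\GM}$ and $\fl'$ are both the Brauer--Severi variety of $D_L$ attached to the $\iota$-standard representation $W_\iota$ of $D_L^\times$. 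Finally, an isomorphism of perfectoid spaces over $C$ identifies their completed structure sheaves by pullback, so $\calM_{\LT,\infty}\aisom\calM_{\Dr,\infty}$ induces the desired $\calO_{\LT,\infty}\aisom\calO_{\Dr,\infty}$, with equivariance and compatibility with the period identifications inherited from the preceding steps.

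The only genuinely delicate point is the normalization bookkeeping in the second step: one must reconcile the Tate twists and the convention ``Hodge'' versus ``Hodge--Tate'' filtration appearing in the two Hodge--Tate exact sequences and in the definitions of $\pi_{\LT,\GM}$ and $\pi_{\Dr,\GM}$ recorded above with those of \cite{SW13, Wei16}. This is routine, but it is the place where an error would most easily creep in; everything else is a direct appeal to loc.\ cit.
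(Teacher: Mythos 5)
Your proposal is correct and takes essentially the same approach as the paper, which presents this theorem as a recalled result and cites \cite[Proposition 7.2.2, Theorem 7.2.3]{SW13} without supplying a proof of its own. Your expanded explanation — the functorial moduli description, passage to universal covers and the linear-algebraic classification, the exchange of the roles of $\End_{\calO_L}(H_0)=\calO_{D_L}$ and $\mathrm{QIsog}_{\calO_{D_L}}(H_1)=\GL_2(L)$, and the swap of the two period maps with the matching of targets — accurately tracks what \cite{SW13} proves, and your final caveat about verifying the Tate twists and Hodge/Hodge--Tate normalizations is the right thing to flag (though left unverified here, just as the paper itself leaves it implicit).
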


We fix some notation to be consistent with the global situation. Let 
\[
    0\to \omega_{\fl}^{(0,1)}\to V^{(1,0)}_\iota\ox\calO_{\fl}\to \omega_{\fl}^{(1,0)}\to 0
\]
be an exact sequence of $\GL_2(L)$-equivariant sheaves on $\fl$. 
\begin{itemize}
    \item Applying $\pi_{\LT,\HT}^*$, we get 
    \[
        0\to \omega_{\LT,\infty}^{(0,-1)}\to V^{(1,0)}_{\iota}\ox_C\calO_{\LT,\infty}\to \omega_{\LT,\infty}^{(-1,0)}(-1)\to 0.
    \]
    \item Applying $\pi_{\Dr,\GM}^*$, we get 
    \[
        0\to \omega_{\Dr,\infty}^{(0,1)}\to V^{(1,0)}_{\iota}\ox_C\calO_{\Dr,\infty}\to \omega_{\Dr,\infty}^{(1,0)}\to 0.
    \]
\end{itemize}

Let 
\[
    0\to \omega_{\fl'}^{(0,1)}\to W^{(1,0)}_{\iota}\ox\calO_{\fl'}\to \omega_{\fl'}^{(1,0)}\to 0
\]
be an exact sequence of $D_L^\times$-equivariant sheaves on on $\fl'$. 
\begin{itemize}
    \item Applying $\pi_{\Dr,\HT}^*$, we get 
    \[
        0\to \omega_{\Dr,\infty}^{(0,-1)}\to W^{(1,0)}_{\iota}\ox_C\calO_{\Dr,\infty}\to \omega_{\Dr,\infty}^{(-1,0)}(-1)\to 0.
    \]
    \item Applying $\pi_{\LT,\GM}^*$, we get 
    \[
        0\to \omega_{\LT,\infty}^{(0,1)}\to W^{(1,0)}_{\iota}\ox_C\calO_{\LT,\infty}\to \omega_{\LT,\infty}^{(1,0)}\to 0.
    \]
\end{itemize}
Moreover, for $(a,b)\in\bbZ^2$, we define 
\[
    \omega_{\LT,\infty}^{(a,b)}:=(\omega_{\LT,\infty}^{(1,0)})^{\ox (a-b)}\ox ({\det}V_{\iota}^{(1,0)})^{\ox b}\qquad \omega_{\Dr,\infty}^{(a,b)}:=(\omega_{\Dr,\infty}^{(1,0)})^{\ox (a-b)}\ox ({\det}W_{\iota}^{(1,0)})^{\ox b}.
\]

In particular, as $\pi_{\LT,\GM}$ is naturally identified with $\pi_{\Dr,\HT}$, we get the following corollary.
\begin{corollary}\label{lg}
Under the natural isomorphism $\calM_{\LT,\infty}\isom \calM_{\Dr,\infty}$, we have a $\GL_2(L)\times D_L^\times$-equivariant and Galois equivariant isomorphism 
\[
    \omega_{\LT,\infty}^{(-a,-b)}(-a)\isom\omega_{\Dr,\infty}^{(a,b)}.
\]
\end{corollary}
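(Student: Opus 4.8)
The plan is to deduce Corollary \ref{lg} directly from the Scholze--Weinstein duality in Theorem \ref{thm:SWduality} together with the two Euler-type exact sequences that were just set up. Recall that under the isomorphism $\calM_{\LT,\infty}\isom \calM_{\Dr,\infty}$, the Hodge--Tate period map on the Lubin--Tate side, $\pi_{\LT,\HT}$, is identified with the Gross--Hopkins period map on the Drinfeld side, $\pi_{\Dr,\GM}$, and symmetrically $\pi_{\LT,\GM}$ is identified with $\pi_{\Dr,\HT}$. The first step is to pull back the exact sequence
\[
    0\to \omega_{\fl'}^{(0,1)}\to W^{(1,0)}_{\iota}\ox\calO_{\fl'}\to \omega_{\fl'}^{(1,0)}\to 0
\]
on $\fl'$ along $\pi_{\LT,\GM}$; as recorded in the excerpt, this yields
\[
    0\to \omega_{\LT,\infty}^{(1,0)}\to W^{(1,0)}_{\iota}\ox_C\calO_{\LT,\infty}\to \omega_{\LT,\infty}^{(0,1)}\to 0,
\]
so that $\omega_{\LT,\infty}^{(0,1)}$ is the Hodge--Tate quotient line bundle coming from the Drinfeld Hodge--Tate filtration, i.e.\ $\pi_{\Dr,\HT}^*\omega_{\fl'}^{(1,0)}$.

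Next I would invoke the \emph{other} pullback: applying $\pi_{\Dr,\HT}^*$ to the same Euler sequence on $\fl'$ gives
\[
    0\to \omega_{\Dr,\infty}^{(0,1)}\to W^{(1,0)}_{\iota}\ox_C\calO_{\Dr,\infty}\to \omega_{\Dr,\infty}^{(1,0)}(-1)\to 0,
\]
where by definition $\omega_{\Dr,\infty}^{(1,0)}(-1)=\pi_{\Dr,\HT}^*\omega_{\fl'}^{(1,0)}$. Comparing the two displays — one computed as the pullback along $\pi_{\LT,\GM}$ and the other along $\pi_{\Dr,\HT}$, which are the \emph{same} morphism $\calM_{\LT,\infty}=\calM_{\Dr,\infty}\to\fl'$ by Theorem \ref{thm:SWduality} — identifies the quotient line bundle in both, giving
\[
    \omega_{\LT,\infty}^{(0,1)}\isom \pi_{\Dr,\HT}^*\omega_{\fl'}^{(1,0)}\isom \omega_{\Dr,\infty}^{(1,0)}(-1),
\]
which is exactly the claimed isomorphism after rewriting. (If one prefers to read off the sub-object rather than the quotient, one gets the dual statement $\omega_{\LT,\infty}^{(1,0)}\isom \omega_{\Dr,\infty}^{(0,1)}\ox\det$, but twisting by $\det W_\iota^{(1,0)}$ and using $\calO_{\LT,\infty}\isom\calO_{\Dr,\infty}$ reduces it to the same thing.) The $\GL_2(L)\times D_L^\times$-equivariance and Galois equivariance are automatic, since every map in sight — the duality isomorphism, the period maps, and the Euler sequences — is equivariant for all three actions; this only needs to be remarked, not proved.

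The only genuine point requiring care, and the step I expect to be the main obstacle, is bookkeeping of the twists: one must check that the normalizations of $\omega_{\fl'}^{(1,0)}$, $\omega_{\fl'}^{(0,1)}$ and the Tate twists $(-1)$ appearing in the two pulled-back sequences are consistent, i.e.\ that ``the Hodge--Tate filtration introduces a Tate twist on the quotient'' is applied on exactly one side and that the Gross--Hopkins pullback introduces none. This is precisely parallel to the global comparison between the Euler sequence on $\fl$ and the Hodge--Tate exact sequence on $\calX_{K^v}$ (where $\pi_{\HT}^*\omega_{\fl}^{(a,b)}\isom\omega_{\calX_{K^v}}^{(a,b)}(-a)$), so the conventions are already fixed; I would simply trace through the identifications $\pi_{\LT,\GM}^*\omega_{\fl_{\GM}'}=\omega_{\LT,0}^{-1}$ and $\pi_{\Dr,\HT}^*\omega_{\fl'}=\omega_{\Dr,\infty}(-1)$ recorded above, together with $\pi_{\LT,\GM}$ being identified with $\pi_{\Dr,\HT}$, to pin down the exponents. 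Once the twist accounting is done the corollary follows formally.
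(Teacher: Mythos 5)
Your overall strategy — pull back an Euler sequence on the flag variety along the two period maps identified by Scholze--Weinstein duality and read off the isomorphism from the corresponding terms — is indeed what the paper intends; the paper offers no proof beyond the one-sentence hint ``as $\pi_{\LT,\GM}$ is naturally identified with $\pi_{\Dr,\HT}$, we get the following corollary.'' So your proposal and the paper are on the same track. The issue is precisely the twist bookkeeping you flag as ``the main obstacle,'' and it does not resolve as smoothly as you assert.

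Concretely: using the $\fl'$-sequence and the identification $\pi_{\LT,\GM}\isom\pi_{\Dr,\HT}$ (your Method), comparing \emph{sub-objects} in the two displayed pullbacks gives $\omega_{\LT,\infty}^{(1,0)}\isom\omega_{\Dr,\infty}^{(0,1)}$ with \emph{no} Tate twist (not $\omega_{\LT,\infty}^{(1,0)}\isom\omega_{\Dr,\infty}^{(0,1)}\ox\det$, as you write), and comparing \emph{quotients} gives $\omega_{\LT,\infty}^{(0,1)}\isom\omega_{\Dr,\infty}^{(1,0)}(-1)$. Neither matches the corollary's literal statement $\omega_{\LT,\infty}^{(1,0)}(-1)\isom\omega_{\Dr,\infty}^{(0,1)}$, and in particular ``exactly the claimed isomorphism after rewriting'' would require knowing that $\det V_\iota\isom\det W_\iota$ as equivariant sheaves, which is not granted. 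The corollary as literally written drops out of the \emph{other} half of the duality: use $\pi_{\LT,\HT}\isom\pi_{\Dr,\GM}$, pull back the Euler sequence on $\fl$ (not $\fl'$), and compare quotients, where the recorded identifications are $\pi_{\LT,\HT}^*\omega_{\fl}^{(1,0)}\isom\omega_{\LT,\infty}^{(1,0)}(-1)$ and $\pi_{\Dr,\GM}^*\omega_{\fl}^{(1,0)}\isom\omega_{\Dr,\infty}^{(0,1)}$. That route gives the stated result immediately. It is also worth noting that the paper's four displayed pullback sequences are not mutually consistent in their Tate twists (tensoring the two $\fl$-comparisons yields $\det V_\iota(-1)\isom\det W_\iota$, while the $\fl'$-comparisons yield $\det V_\iota\isom\det W_\iota(-1)$), so one of the displayed sequences likely carries a misplaced $(-1)$; this is probably why the ``hint'' sentence in the paper points at the $\fl'$-identification even though the corollary's twist comes out of the $\fl$-story. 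Your proof, as written, would be cleanest and correct if you simply replace ``$\pi_{\LT,\GM}\isom\pi_{\Dr,\HT}$'' with ``$\pi_{\LT,\HT}\isom\pi_{\Dr,\GM}$'' and compare the two pullbacks of $\omega_{\fl}^{(1,0)}$ along that one map.
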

Besides, for any $\eta\neq\iota$ and $(a,b)\in\bbZ^2$ with $a\ge b$, we have 
\[
    V_\eta^{(a,b)}\ox_C\calO_{\LT,\infty}\isom (D_{\LT,\eta}^{(a,b)})^*\isom (W_\eta^{(a,b)})^*\ox_C\calO_{\LT,\infty},
\]
where $D_{\LT,\eta}^{(a,b)}:=\Sym^{a-b}D_{\LT,\eta}^*\ox_{\calO_{\LT,\infty}} {\det}^bD_{\LT,\eta}^*$. This shows that 
\[
    \calO_{\LT,\infty}^{V_\eta\-\lalg}\isom V_\eta\ox W_\eta\ox \calO_{\LT,\infty}^{\sm}.
\]
Similarly, if we define $D_{\Dr,\eta}^{(a,b)}:=\Sym^{a-b}D_{\Dr,\eta}^*\ox_{\calO_{\Dr,\infty}} {\det}^bD_{\Dr,\eta}^*$, we have a natural isomorphism 
\[
    W_\eta^{(a,b)}\ox_C\calO_{\Dr,\infty}\isom (D_{\Dr,\eta}^{(a,b)})^*\isom (V_\eta^{(a,b)})^*\ox_C\calO_{\Dr,\infty},
\]
and 
\[
    \calO_{\Dr,\infty}^{W_\eta\-\lalg}\isom W_\eta\ox V_\eta\ox \calO_{\Dr,\infty}^{\sm}.
\]

Let $\calM_{\LT,\infty}^{(0)}$ be the component of $\calM_{\LT,\infty}$ such that the quasi-isogenies in the moduli problem are of height $0$. Note that $\calM_{\LT,\infty}^{(0)}$ is $\GL_2(L)^0\times \calO_{D_L}^\times$-stable. Let $\pi_{\HT,\LT}^{(0)}:\calM_{\LT,\infty}^{(0)}\to \Omega$ be the restriction of $\pi_{\HT,\LT}$ to $\calM_{\LT,\infty}^{(0)}$ and $\Omega$. By embedding $\calM_{\LT,\infty}^{(0)}$ into unitary Shimura curves, we deduce that there exists a basis of open affinoid subsets $\ffrb$ of $\Omega$, such that for any $U\in\ffrb$, the preimage $\pi_{\HT,\LT}^{(0),-1}(U)$ is affinoid perfectoid and is also the preimage of some open affinoid subset of $\calM_{\LT,n}^{(0)}$. Let 
\begin{align*}
    \calO_{\LT}:=\pi_{\LT,\HT,*}^{(0)}\calO_{\calM_{\LT,\infty}^{(0)}}.
\end{align*}

Let $\calM_{\Dr,\infty}^{(0)}$ be the component of $\calM_{\Dr,\infty}$ such that the quasi-isogenies in the moduli problem are of height $0$. Let $\pi_{\Dr,\GM}^{(0)}:\calM_{\Dr,\infty}^{(0)}\to \Omega$ be the restriction of $\pi_{\Dr,\GM}$ to $\calM_{\Dr,\infty}^{(0)}$ and $\Omega$. By \cite[Proposition 4.2]{CDN20}, there exists a basis of open affinoid subsets $\ffrb$ of $\Omega$, such that for any $U\in\ffrb$, the preimage $\pi_{\Dr,\GM}^{(0),-1}(U)$ is affinoid perfectoid. Let 
\begin{align*}
    \calO_{\Dr}:=\pi_{\Dr,\GM,*}^{(0)}\calO_{\calM_{\Dr,\infty}^{(0)}}.
\end{align*}
By Theorem \ref{thm:SWduality}, there is a $\GL_2(L)^0\times\calO_{D_L}^\times$-equivariant isomorphism 
\begin{align*}
    \calO_{\LT}\isom\calO_{\Dr}.
\end{align*}

We write $\calO_{\LT}^{\lan}$ (resp. $\calO_{\Dr}^{\lan}$) for the set of locally analytic vectors in $\calO_{\LT}$ for the $\GL_2(L)$-action (resp. in $\calO_{\Dr}$ for the $D_L^\times$-action). Similarly we define $\calO_{\LT}^{\sm}\subset\calO_{\LT}^{\lalg}\subset\calO_{\LT}^{\lan}$ to be the set of smooth vectors, locally algebraic vectors for the $\GL_2(L)$-action, and $\calO_{\Dr}^{\sm}\subset\calO_{\Dr}^{\lalg}\subset\calO_{\Dr}^{\lan}$ to be the set of smooth vectors, locally algebraic vectors for the $D_L^\times$-action. Let $\omega_{\LT}^{(a,b)}:=\pi_{\LT,\HT,*}^{(0)}\omega_{\LT,\infty}^{(a,b)}|_{\calM_{\LT,\infty}^{(0)}}$ with $(a,b)\in\bbZ^2$, and we define $D_{\LT,\eta}^{(a,b)}$ with $\eta\neq\iota$ and $(a,b)\in\bbZ^2$, $a\ge b$, $\omega_{\Dr}^{(a,b)}$ with $(a,b)\in\bbZ^2$, $D_{\Dr,\eta}^{(a,b)}$ with $\eta\neq\iota$ and $(a,b)\in\bbZ^2$, $a\ge b$ similarly.

We study the local structure of $\calO_{\LT}^{\lan}$, using similar methods when we study locally analytic sections on the unitary Shimura curve of infinite level at $p$. Since the situation is very similar to that case, we omit the proof.
\begin{theorem}\label{thm:OLTla}
We have $\calO_{\LT}^{V_\iota^{(a,b)}\-\lalg}\isom V_{\iota}^{(a,b)}\ox \omega_{\LT}^{(a,b),\sm}$ and for $\eta\neq\iota$, $\calO_{\LT}^{V_\eta^{(a,b)}\-\lalg}\isom V_\eta^{(a,b)}\ox W_\eta^{(a,b)}\ox \calO_{\LT}^{\sm}$. There is a natural decomposition 
\[
    \calO_{\LT}^{\lan}\isom \hat \ox_{\calO_{\LT}^{\sm},\eta}\calO_{\LT}^{\eta\-\lan}.
\]
with $\calO_{\LT}^{\lalg}$ dense in $\calO_{\LT}^{\lan}$ in the LB topology. More precisely, for $U\in\ffrb$,
\begin{enumerate}[(i)]
    \item Let $b\in H^0(U,\omega_{\LT}^{(-1,0)})$ be a basis, $e_{1,\iota},e_{2,\iota}\in H^0(U,\omega_{\LT}^{(-1,0)}(-1))$ be images of a standard basis of $V_{\iota}^{(1,0)}$ in the Hodge--Tate filtration. Let $\mathrm{t}\in\calO_{\LT}^{V^{(1,1)}_{\iota}\-\lalg}(U)$ be a generator. Let $e_{1,\iota,n},e_{2,\iota,n},\mathrm{t}_{n}\in\calO_{\LT}^{\sm}(U)$ such that $\frac{e_{1,\iota}}{b}-e_{1,\iota,n},\frac{e_{2,\iota}}{b}-e_{2,\iota,n},\mathrm{t}-\mathrm{t}_n$ has norm smaller than $p^{-n}$. Then for any $s\in\calO_{\LT}^{\iota\-\lan}(U)$, there exists a sufficiently large $n$, such that 
    \[
        s=\sum_{i,j,k\ge 0}a_{ijk}(\frac{e_{1,\iota}}{b}-e_{1,\iota,n})^i(\frac{e_{2,\iota}}{b}-e_{2,\iota,n})^j(\mathrm{t}-\mathrm{t}_n)^k
    \]
    with $a_{ijk}\in\calO_{\LT}^{\sm}(U)$.
    \item Let $\eta$ be an embedding different from $\iota$. Let $e_{ij,\eta},i,j=1,2$ be a basis of $V_\eta\ox W_\eta$. Let $e_{ij,\eta,n}\in\calO_{\LT}^{\sm}$ such that $e_{ij,\eta}-e_{ij,\eta,n}$ has norm smaller than $p^{-n}$. Then for any $s\in\calO_{\LT}^{\eta\-\lan}(U)$, there exists a sufficiently large $n$, such that 
    \[
        s=\sum_{i,j,k,l\ge 0}a_{ijkl}(e_{11,\eta}-e_{11,\eta,n})^i(e_{12,\eta}-e_{12,\eta,n})^j(e_{21,\eta}-e_{21,\eta,n})^k(e_{22,\eta}-e_{22,\eta,n})^l
    \]
    with $a_{ijkl}\in\calO_{\LT}^{\sm}(U)$.
\end{enumerate}
\end{theorem}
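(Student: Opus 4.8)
The plan is to transcribe the arguments of \S\ref{uni} almost verbatim, replacing the flag variety $\fl=\bbP^1_L\times_{L,\iota}C$ by the Drinfeld upper half plane $\Omega\subset\fl$, the perfectoid Shimura curve $\calX_{K^v}$ by the height-zero component $\calM_{\LT,\infty}^{(0)}$, and the Hodge--Tate period map $\pi_{\HT}$ by $\pi_{\LT,\HT}^{(0)}\colon\calM_{\LT,\infty}^{(0)}\to\Omega$. The first step is to record that $\calM_{\LT,\infty}^{(0)}$ is a pro-\'etale Galois cover of each $\calM_{\LT,n}^{(0)}$ with Galois group a congruence subgroup of $\GL_2(\calO_L)$, whose preimages over the basis $\ffrb$ of $\Omega$ are affinoid perfectoid (this is the statement quoted above, obtained by embedding $\calM_{\LT,\infty}^{(0)}$ into the base change to $C$ of a unitary Shimura curve); then the geometric Sen machinery (Proposition \ref{prop:decomplete}, Theorems \ref{thm:simpson} and \ref{thm:universalsimpson}, and \cite{RC1}) applies. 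The crucial computational input is the universal geometric Sen operator for $\calM_{\LT,\infty}^{(0)}\to\calM_{\LT,n}^{(0)}$: exactly as in Theorem \ref{thm:geomSen} one evaluates it on the faithful representation $\bigotimes_\eta V_\eta^{(1,0)}$, and for $\eta\neq\iota$ it vanishes because $V_\eta\ox_C\calO_{\LT,\infty}$ is the \'etale bundle $D_{\LT,\infty,\eta}^{\sm}\ox_{\calO_{\LT,\infty}^{\sm}}\calO_{\LT,\infty}$ (using $V_\eta\ox_C\calO_{\LT,\infty}\isom W_\eta^*\ox_C\calO_{\LT,\infty}$), whereas for $\eta=\iota$ the Hodge--Tate exact sequence $0\to\omega_{\LT,\infty}^{(0,1)}\to V_\iota^{(1,0)}\ox_C\calO_{\LT,\infty}\to\omega_{\LT,\infty}^{(1,0)}(-1)\to 0$ and its identification with the Faltings extension on $\calM_{\LT,n}^{(0)}$ (the analogue of Theorem \ref{thm:FEHT}, which one can also deduce by base change from Theorem \ref{thm:FEHT} along the embedding into a unitary Shimura curve) show the Sen operator is the pullback of a generator of $\frn_\iota^0|_\Omega\subset\frg_\iota\ox_C\calO_\Omega$.

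With the Sen operator identified with $\frn_\iota^0$, I would set up the functor $\VB$ over $\Omega$ just as in the paragraphs preceding Theorem \ref{thm:VB}, together with its derived variant; then Theorem \ref{thm:VB} and the analogues of Propositions \ref{prop:RVBcalc}, \ref{prop:RVBiotala} and \ref{prop:tensordecomposition} carry over word for word. Feeding the Euler sequence $0\to\omega_\fl^{(0,1)}\to V_\iota^{(1,0)}\ox_C\calO_\Omega\to\omega_\fl^{(1,0)}\to 0$ into $\VB$ and taking symmetric powers and determinants gives $\calO_{\LT}^{V_\iota^{(a,b)}\-\lalg}\isom V_\iota^{(a,b)}\ox\omega_{\LT}^{(-a,-b),\sm}$; feeding in $V_\eta^{(a,b)}\ox_C\calO_\Omega$ for $\eta\neq\iota$ and using the trivialization $V_\eta^{(a,b)}\ox_C\calO_{\LT,\infty}\isom (W_\eta^{(a,b)})^*\ox_C\calO_{\LT,\infty}$ gives $\calO_{\LT}^{V_\eta^{(a,b)}\-\lalg}\isom V_\eta^{(a,b)}\ox W_\eta^{(a,b)}\ox\calO_{\LT}^{\sm}$; and the analogue of Proposition \ref{prop:tensordecomposition} yields the tensor decomposition $\calO_{\LT}^{\lan}\isom\hat\ox_{\calO_{\LT}^{\sm},\eta}\calO_{\LT}^{\eta\-\lan}$.

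For the explicit power series descriptions (i) and (ii): since $\Omega$ is an open subspace of the analytification of $\bbP^1_L\times_{L,\iota}C$, the sheaf $\calO_{\LT}^{\iota\-\lan}$ has the same local shape as the sheaf on the modular curve in \cite{Pan22}, so the proof of Theorem \ref{thm:Oiotalalocal} applies on $U\in\ffrb$ (small enough that $\omega_{\LT}^{(1,0)}$ has a basis $b$ on $U$ and $e_{1,\iota}$ does not vanish): every section of $\calO_{\LT}^{\iota\-\lan}(U)$ is a convergent power series over $\calO_{\LT}^{\sm}(U)$ in $\tfrac{e_{1,\iota}}{b}-e_{1,\iota,n}$, $\tfrac{e_{2,\iota}}{b}-e_{2,\iota,n}$, $\mathrm{t}-\mathrm{t}_n$. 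For $\eta\neq\iota$ the proof of Theorem \ref{thm:Oetalalocal} applies verbatim, using that $\calC^{\eta\-\an}(G_n,C)$ is topologically generated by four coordinate functions and the decompletion isomorphism supplied by $\VB$. Finally, density of $\calO_{\LT}^{\lalg}$ in $\calO_{\LT}^{\lan}$ follows by truncating these series, as in Propositions \ref{thm:Oiotalalocal1} and \ref{prop:Oetala}, combined with the tensor decomposition.

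The main obstacle is bookkeeping rather than a new idea — the authors themselves phrase it as ``omitted because very similar''. Concretely, the only points requiring genuine care are (a) checking that the geometric Sen formalism of \cite{RC1} (equivalently, the explicit arguments of \cite{Pan22} reproduced in \S\ref{uni}) really does apply to $\calM_{\LT,\infty}^{(0)}$, i.e.\ that one has toric charts on the $\calM_{\LT,n}^{(0)}$ compatible with affinoid perfectoid preimages in $\calM_{\LT,\infty}^{(0)}$, which is exactly where the embedding of $\calM_{\LT,\infty}^{(0)}$ into the base change to $C$ of a unitary Shimura curve is used; and (b) identifying the resulting geometric Sen operator with $\frn_\iota^0$, which reduces, via that same embedding and functoriality, to Theorem \ref{thm:geomSen} already proved. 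Everything else is a mechanical adaptation of \S\ref{uni}.
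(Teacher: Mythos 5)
Your proposal is correct and coincides with what the paper intends: the authors explicitly omit the proof with the remark that it is "very similar" to the unitary Shimura curve case, and what you have written is precisely the adaptation they have in mind. You correctly identify the two points that require care — (a) reducing the affinoid-perfectoid/pro-\'etale cover hypotheses, and the identification of the geometric Sen operator with $\frn_\iota^0$, to the Shimura curve case via the embedding $\pi_{\HT}^{-1}(\Omega)\isom\sqcup_{i\in I}\calM_{\LT,\infty}^{(0)}$ already set up in \S\ref{padicuniformization}; and (b) that for $\eta\neq\iota$ the vanishing of the Sen operator and the trivialization $V_\eta\ox_C\calO_{\LT,\infty}\isom W_\eta^*\ox_C\calO_{\LT,\infty}$ together yield the $W_\eta^{(a,b)}$-factor absent from the Shimura curve formula — and the rest is the same $\VB$-formalism and power-series bookkeeping as in Theorems \ref{thm:Oiotalalocal}, \ref{thm:Oetalalocal} and Propositions \ref{thm:Oiotalalocal1}, \ref{prop:Oetala}.
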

Note that we may choose $\mathrm{t}\in\calO_{\LT}^{V_{\iota}^{(1,1)}\-\lalg}(U)\isom V_\iota^{(1,1)}\ox W_\iota^{(1,1)}\ox \calO_{\LT}^{\sm}(U)$ to be a generator of $V_\iota^{(1,1)}\ox W_\iota^{(1,1)}$. This element is also a generator for $\calO_{\Dr}^{W_\iota^{(1,1)}\-\lalg}$.

Dually, we have a similar description for $D_L^\times$-locally analytic sections on Drinfeld space at infinite level. 
\begin{theorem}\label{thm:ODrla}
We have $\calO_{\Dr}^{W_\iota^{(a,b)}\-\lalg}\isom W_{\iota}^{(a,b)}\ox \omega_{\Dr}^{(a,b),\sm}$ and for $\eta\neq\iota$, $\calO_{\Dr}^{W_\eta^{(a,b)}\-\lalg}\isom W_\eta^{(a,b)}\ox V_\eta^{(a,b)}\ox \calO_{\Dr}^{\sm}$. There is a natural decomposition 
\[
    \calO_{\Dr}^{\lan}\isom \hat \ox_{\calO_{\Dr}^{\sm},\eta}\calO_{\Dr}^{\eta\-\lan}.
\]
with $\calO_{\Dr}^{\lalg}$ dense in $\calO_{\Dr}^{\lan}$ in the LB topology. More precisely, let $U\in\ffrb$ be a sufficiently small affinoid perfectoid open subset. 
\begin{enumerate}[(i)]
    \item Let $c\in H^0(U,\omega_{\Dr}^{(-1,0)})$ be a basis, $f_{1,\iota},f_{2,\iota}\in H^0(U,\omega_{\Dr}^{(-1,0)}(-1))$ be images of a standard basis of $W_{\iota}^{(1,0)}$ in the Hodge--Tate filtration. Let $\mathrm{t}\in\calO_{\Dr}^{W^{(1,1)}_{\iota}\-\lalg}(U)$ be a generator. Let $f_{1,\iota,n},f_{2,\iota,n},\mathrm{t}_{n}'\in\calO_{\Dr}^{\sm}(U)$ such that $\frac{f_{1,\iota}}{c}-f_{1,\iota,n},\frac{f_{2,\iota}}{c}-e_{2,\iota,n},\mathrm{t}-\mathrm{t}_n'$ has norm smaller than $p^{-n}$. Then for any $s\in\calO_{\Dr}^{\iota\-\lan}(U)$, there exists a sufficiently large $n$, such that 
    \[
        s=\sum_{i,j,k\ge 0}a_{ijk}(\frac{f_{1,\iota}}{c}-f_{1,\iota,n})^i(\frac{f_{2,\iota}}{c}-f_{2,\iota,n})^j(\mathrm{t}-\mathrm{t}_n')^k
    \]
    with $a_{ijk}\in\calO_{\Dr}^{\sm}(U)$.
    \item Let $\eta$ be an embedding different from $\iota$. Let $f_{ij,\eta},i,j=1,2$ be a basis of $V_\eta\ox W_\eta$. Let $f_{ij,\eta,n}\in\calO_{\Dr}^{\sm}$ such that $f_{ij,\eta}-f_{ij,\eta,n}$ has norm smaller than $p^{-n}$. Then for any $s\in\calO_{\Dr}^{\eta\-\lan}(U)$, there exists a sufficiently large $n$, such that 
    \[
        s=\sum_{i,j,k,l\ge 0}a_{ijkl}(f_{11,\eta}-f_{11,\eta,n})^i(f_{12,\eta}-f_{12,\eta,n})^j(f_{21,\eta}-f_{21,\eta,n})^k(f_{22,\eta}-f_{22,\eta,n})^l
    \]
    with $a_{ijkl}\in\calO_{\Dr}^{\sm}(U)$.
\end{enumerate}
\end{theorem}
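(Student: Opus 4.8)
The plan is to follow the proof of Theorem~\ref{thm:OLTla} (equivalently, of the local description of $\calO_{K^v}^{\Sigma\-\lan}$ in Section~\ref{locdescr}) with the roles of $\GL_2(L)$ and $D_L^\times$ interchanged. The starting point is that $\pi_{\GM}'$ identifies $\calM_{\Dr,0}^{(0)}$ with $\Omega$, so that $\pi_{\Dr,\GM}^{(0)}\colon\calM_{\Dr,\infty}^{(0)}\to\Omega$ realizes $\calM_{\Dr,\infty}^{(0)}$ as a pro-\'etale $\calO_{D_L}^\times$-torsor over the one-dimensional smooth rigid space $\calM_{\Dr,0}^{(0)}\isom\Omega$, and $\calO_{\Dr}=\pi_{\Dr,\GM,*}^{(0)}\calO_{\calM_{\Dr,\infty}^{(0)}}$ is the pushforward along this torsor. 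Geometric Sen theory (Theorem~\ref{thm:universalsimpson} and its integral refinement Proposition~\ref{prop:decomplete}, or \cite{RC1}) then applies, producing over each sufficiently small $U\in\ffrb$ a decompletion and a universal geometric Sen operator valued in $B\ox_{\bbQ_p}\Lie(D_L^\times)$.

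First I would compute this Sen operator using the faithful $D_L^\times$-representation $W:=\ox_\eta W_\eta^{(1,0)}$, so that $\phi_W=\sum_\eta\phi_{W_\eta}$. For $\eta\neq\iota$ we have $W_\eta^{(1,0)}\ox_C\calO_{\Dr}\isom D_{\Dr,\eta}^{\sm}\ox_{\calO_{\Dr}^{\sm}}\calO_{\Dr}$, an \'etale bundle descending to $\calM_{\Dr,n}^{(0)}$, so $\phi_{W_\eta}=0$ exactly as in the $\eta\neq\iota$ part of Theorem~\ref{thm:geomSen}. For $\eta=\iota$, $W_\iota^{(1,0)}\ox_C\calO_{\Dr}$ carries the Hodge filtration $\omega_{\Dr,\infty}$, and the crucial input is the Drinfeld analogue of Theorem~\ref{thm:FEHT}: the $W_\iota$-isotypic part of the Hodge--Tate exact sequence on $\calM_{\Dr,\infty}^{(0)}$, twisted by $(\gr^0D_{\Dr,0,\iota})^{-1}$, is canonically identified (up to sign) with the Faltings extension $\gr^1\calO\bbB_{\dR}^+$ of $\calM_{\Dr,0}^{(0)}\isom\Omega$ via the relative de Rham comparison for the universal special formal $\calO_{D_L}$-module $\calG'$, where the Kodaira--Spencer isomorphism $\Omega^1_{\calM_{\Dr,0}^{(0)}}\isom\omega_{\Dr,0}^2\ox{\det}_\iota^{-1}$ matches the two quotients. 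A local computation in toric charts as in Theorem~\ref{thm:geomSen} (cf.\ \S 2 of \cite{DLLZ}) shows the residual $\Gamma$-action is unipotent, so $\phi_{W_\iota}$ is the pullback along $\pi_{\Dr,\HT}^{(0)}$ of a generator of the universal nilpotent subalgebra $\check\frn_\iota^0\subset\check\frg_\iota^0:=\check\frg_\iota\ox_C\calO_{\fl'}$; by faithfulness of $W$ this is the universal geometric Sen operator of the torsor.

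Granting this, I would introduce the Drinfeld analogue $\VB'$ of the functor $\VB$ (Theorem~\ref{thm:VB}), replacing $(\fl,\GL_2(L),\frn_\iota^0)$ by $(\fl',D_L^\times,\check\frn_\iota^0)$, and establish the same formal properties. Evaluating $\VB'$ on the Euler sequence over $\fl'$ and on the germ sheaves $\calC^{\eta\-\lan}(\check\frg_\iota,C)\hat\ox_C\calO_{\fl'}$ as in Propositions~\ref{prop:RVBcalc} and~\ref{prop:RVBiotala} gives $\calO_{\Dr}^{W_\iota^{(a,b)}\-\lalg}\isom W_\iota^{(a,b)}\ox\omega_{\Dr}^{(-a,-b),\sm}$, and, using the trivialization $D_{\Dr,\eta}^{(a,b)}\isom(V_\eta^{(a,b)})^*\ox_C\calO_{\Dr,\infty}$ for $\eta\neq\iota$, $\calO_{\Dr}^{W_\eta^{(a,b)}\-\lalg}\isom W_\eta^{(a,b)}\ox V_\eta^{(a,b)}\ox\calO_{\Dr}^{\sm}$; it also yields canonical isomorphisms relating $\calO_{\Dr}^{\eta\-\lan}$ to the Drinfeld analogues of the sheaves $\calC^{\eta\-\lan}$, whence the tensor decomposition $\calO_{\Dr}^{\lan}\isom\hat\ox_{\calO_{\Dr}^{\sm},\eta}\calO_{\Dr}^{\eta\-\lan}$ as in Proposition~\ref{prop:tensordecomposition}. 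For the explicit expansions: when $\eta=\iota$ the argument of Theorem~\ref{thm:Oiotalalocal}/Theorem~\ref{thm:OLTla}(i) applies verbatim, since $\fl'$ is again an analytified $\bbP^1_C$, producing after passage to isotypic components for the horizontal Cartan action (constructed as in Section~\ref{locdescr}) the power series in $\tfrac{e_{1,\iota}}{c}-f_{1,\iota,n}$, $\tfrac{f_{2,\iota}}{c}-f_{2,\iota,n}$, $\mathrm t-\mathrm t_n'$; when $\eta\neq\iota$ the vanishing of $\phi_{W_\eta}$ makes $\calO_{\Dr}^{\eta\-\lan}$ a completed tensor product of $\calO_{\Dr}^{\sm}$ with the germ $\calC^{\eta\-\lan}(\check\frg_\iota,C)$, and Theorem~\ref{thm:Oetalalocal} gives the four-variable expansion. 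Density of $\calO_{\Dr}^{\lalg}$ in $\calO_{\Dr}^{\lan}$ then follows by truncation as in Propositions~\ref{thm:Oiotalalocal1} and~\ref{prop:Oetala}, and the final remark that $\mathrm t$ generates both $\calO_{\Dr}^{W_\iota^{(1,1)}\-\lalg}$ and $\calO_{\LT}^{V_\iota^{(1,1)}\-\lalg}$ follows from Theorem~\ref{thm:SWduality} and Corollary~\ref{lg}.

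The hard part will be the Drinfeld analogue of Theorem~\ref{thm:FEHT} --- matching the $W_\iota$-isotypic Hodge--Tate sequence on $\calM_{\Dr,\infty}^{(0)}$ with the Faltings extension of $\calM_{\Dr,0}^{(0)}\isom\Omega$. One must feed the special/Kottwitz condition on $\calG'$ into the relative de Rham comparison, and be careful that the isomorphism $\pi_{\GM}'\colon\calM_{\Dr,0}^{(0)}\aisom\Omega$ and the ensuing Kodaira--Spencer identification are only available on the height-$0$ component. Transporting the statement from the Lubin--Tate side via Theorem~\ref{thm:SWduality} is tempting but delicate, since that duality interchanges $\pi_{\LT,\HT}$ with $\pi_{\Dr,\GM}$: one then has to check that the Faltings extension of $\Omega$, as the base of the $\calO_{D_L}^\times$-torsor on the Drinfeld side, matches the relevant object on the Lubin--Tate side, where the analogous base $\calM_{\LT,0}$ is not isomorphic to $\Omega$. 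Once this identification and the resulting Sen operator computation are in place, the remaining steps are routine and parallel to Section~\ref{locdescr}.
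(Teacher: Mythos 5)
Your proposal is correct and reconstructs what the paper intends but omits: apply geometric Sen theory to the pro-\'etale $\calO_{D_L}^\times$-torsor $\calM_{\Dr,\infty}^{(0)}\to\calM_{\Dr,0}^{(0)}\isom\Omega$, compute the universal Sen operator by testing against the faithful representation $\ox_\eta W_\eta^{(1,0)}$ (zero for $\eta\neq\iota$ because $D_{\Dr,\eta}^{\sm}$ lives at finite level; for $\eta=\iota$, nilpotent and given by a generator of $\check\frn_\iota^0$ pulled back via $\pi_{\Dr,\HT}$), and then run the $\VB$-machinery and local expansions of \S\ref{locdescr}. Since the paper says the Lubin--Tate statement follows ``by similar methods'' and the Drinfeld one ``dually,'' this is the intended argument.

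Your flagged ``hard part''---the Drinfeld analogue of Theorem~\ref{thm:FEHT}---is a legitimate point but slightly overstated. For the Sen operator computation it is enough to know that $W_\iota\ox_C\calO_{\Dr}$ is, up to a line-bundle twist, isomorphic \emph{as an extension} to $\gr^1\calO\bbB_{\dR}^+$ of $\Omega$; the precise sign asserted in Theorem~\ref{thm:FEHT} is not used in the analogue of Theorem~\ref{thm:geomSen} (the proof there appeals only to unipotence of the $\Gamma$-action in a toric chart and to the Hodge--Tate filtration to identify the nilpotent endomorphism). That weaker statement follows from the relative de Rham comparison applied to the universal special formal $\calO_{D_L}$-module $\calG'$ over $\Omega$. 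The only new wrinkle relative to Pan is that $\calG'$ is a $p$-divisible group rather than an abelian variety, but the paper already constructs the crystalline vector bundle $D(\calG')_\iota$ with its connection and Hodge filtration and identifies $\Omega^1_\Omega\isom\omega_{\Dr,0}^2\ox\det_\iota^{-1}$ via Kodaira--Spencer; taking $\gr^0$ of the comparison then produces a map between the two extensions of $\hat\calO_\Omega\ox\Omega^1_\Omega$ by $\hat\calO_\Omega(1)$ which is nonzero on the sub, hence an isomorphism. The special/Kottwitz condition is precisely what makes the $\iota$-Hodge filtration nontrivial (while it is trivial for $\eta\neq\iota$), and that is what you need. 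Your warning against deducing this via SW duality from the Lubin--Tate side is also correct: the subsequent proposition identifying $\calO_{\LT}^{J\-\lan}$ and $\calO_{\Dr}^{J\-\lan}$ for $J\ni\iota$ uses both Theorem~\ref{thm:OLTla} and Theorem~\ref{thm:ODrla} as independent inputs, so that route would be circular. The remaining steps ($\VB'$, the computation of $\lalg$ sections, the tensor decomposition, the power-series expansions, and density) are, as you say, routine transcriptions of \S\ref{locdescr}.
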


There are some close relations between $\calO_{\LT}^{\lan}$ and $\calO_{\Dr}^{\lan}$.
\begin{proposition}
Let $J\subset \Sigma$ be a set of embeddings such that $J\ni\iota$. Then the natural isomorphism $\calO_{\LT}\isom \calO_{\Dr}$ induces a natural $\GL_2(L)\times D_L^\times$-equivariant identification 
\[
    \calO_{\LT}^{J\-\lan}\isom \calO_{\Dr}^{J\-\lan}.
\]
The above isomorphism restricts to an isomorphism 
\[
    \calO_{\LT}^{\iota\-\lan,V_\eta^{(a,b)}\-\lalg}\isom \calO_{\Dr}^{\iota\-\lan,W_\eta^{(a,b)}\-\lalg}
\]
for $\eta\neq\iota$, $(a,b)\in\bbZ^2$ with $a\ge b$.
\begin{proof}
We first prove that $\calO_{\LT}^{\iota\-\lan}=\calO_{\Dr}^{\iota\-\lan}$. Roughly speaking, as $\calM_{\LT,n}^{(0)}$ is locally affinoid of finite type, it is easy to check that $\calO_{\LT}^{\sm}\subset \calO_{\Dr}^{\lan}$ by \cite[Lemma 2.3.4]{RC1}. As $\calM_{\LT,n}$ is an \'etale covering of $\fl'$ via the Gross--Hopkins period map, and the $D_L^\times$-action on $\fl'$ factors through $\iota$, we see that $\calO_{\LT}^{\sm}\subset \calO_{\Dr}^{\iota\-\lan}$. Then from the explicit power series expansions of $\calO_{\LT}^{\iota\-\lan}$ and $\calO_{\Dr}^{\iota\-\lan}$, we can directly check the equality $\calO_{\LT}^{\iota\-\lan}\isom \calO_{\Dr}^{\iota\-\lan}$. This part of proof is similar to \cite[Corollary 5.3.9]{PanII}.

Now we handle the locally $J\bs\{\iota\}$-algebraic part. For $\eta\neq\iota$, by Corollary \ref{lg} we have
\[
    \calO_{\LT}^{\eta\-\lalg}\isom \bigoplus_{(a,b)\in\bbZ^2,a\ge b}V_\eta^{(a,b)}\ox W_\eta^{(a,b)}\ox \calO_{\LT}^{\sm},
\]
and we have a similar description for $\calO_{\Dr}^{\eta\-\lalg}$. As we just show that $\calO_{\LT}^{\sm}\subset \calO_{\Dr}^{\iota\-\lan}$ and $\calO_{\Dr}^{\sm}\subset \calO_{\LT}^{\iota\-\lan}$, we have the isomorphism
\[
    \calO_{\LT}^{\iota\-\lan,J\bs\{\iota\}\-\lalg}\isom \calO_{\Dr}^{\iota\-\lan,J\bs\{\iota\}\-\lalg}
\]
for any set of embeddings $J$ contains $\iota$.

Next, we want to show $\calO_{\LT}^{\eta\-\lan}\subset \calO_{\Dr}^{\{\iota,\eta\}\-\lan}$. Let $G_n=1+p^n\Mat_2(\calO_L)\subset \GL_2(L)$ be an open compact subgroup. For $n$ sufficiently large, by Lemma \ref{lem:GNnormusualnorm}, the $G_n$-analytic norm of $e_{ij,\eta}-e_{ij,\eta,n}$ equals the usual norm. See Theorem \ref{thm:OLTla} for definition of $e_{ij,\eta}-e_{ij,\eta,n}$. Moreover, as the action of $D_L^\times$ on $e_{ij,\eta}-e_{ij,\eta,n}$ is locally analytic, for $m$ sufficiently large the $\check G_m$-analytic norm on $e_{ij,\eta}-e_{ij,\eta,n}$ is also the usual norm, where $\check G_m:=1+p^m\calO_{D_L}\subset D_L^\times$. From this, we deduce $\calO_{\LT}^{\eta\-\lan}\subset \calO_{\Dr}^{\{\iota,\eta\}\-\lan}$. Dually, we have $\calO_{\Dr}^{\eta\-\lan}\subset \calO_{\LT}^{\{\iota,\eta\}\-\lan}$. From this one deduces that $\calO_{\LT}^{J\-\lan}\isom \calO_{\Dr}^{J\-\lan}$ for any $J\ni\iota$.
\end{proof}
\end{proposition}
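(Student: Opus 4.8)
The plan is to reduce everything to the case $J=\{\iota\}$, where both the $\GL_2(L)$-action on $\calO_{\LT}$ and the $D_L^\times$-action on $\calO_{\Dr}$ give a meaningful notion of locally $\iota$-analyticity through the fixed identification $\check{\frg}_\iota\isom\frg_\iota$, and then bootstrap to general $J$ via the tensor-product decomposition of locally analytic sections. The first ingredient I would establish is the pair of inclusions $\calO_{\LT}^{\sm}\subset\calO_{\Dr}^{\iota\-\lan}$ and $\calO_{\Dr}^{\sm}\subset\calO_{\LT}^{\iota\-\lan}$. For the first one, $\calM_{\LT,n}^{(0)}$ is locally an affinoid of finite type over $C$, so a decompletion argument in the style of geometric Sen theory (cf.\ \cite[Lemma 2.3.4]{RC1}) puts its structure sheaf inside the locally analytic sections for any compact group acting through an \'etale period map; the relevant map here is the Gross--Hopkins period map $\pi_{\GM}:\calM_{\LT,n}^{(0)}\to\fl_{\GM}$, which is \'etale and $D_L^\times$-equivariant with the $D_L^\times$-action on $\fl_{\GM}$ factoring through the splitting $D_L^\times\inj\GL_2(C)$ induced by $\iota$; hence $\GL_2(L)$-smooth sections are locally $\iota$-analytic for $D_L^\times$. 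The symmetric inclusion uses the Gross--Hopkins map $\pi_{\GM}':\calM_{\Dr,n}^{(0)}\to\fl_{\GM}'$ in the same way. This part is essentially \cite[Corollary 5.3.9]{PanII}.

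With these inclusions in hand I would prove $\calO_{\LT}^{\iota\-\lan}\isom\calO_{\Dr}^{\iota\-\lan}$ by matching coordinates. By Theorems \ref{thm:OLTla} and \ref{thm:ODrla}, over a sufficiently small $U\in\ffrb$ the sections of $\calO_{\LT}^{\iota\-\lan}$ are convergent power series over $\calO_{\LT}^{\sm}$ in three generators built from the Hodge--Tate filtration (images of a basis of $V_\iota^{(1,0)}$, together with $\mathrm{t}$), while those of $\calO_{\Dr}^{\iota\-\lan}$ are power series over $\calO_{\Dr}^{\sm}$ in the analogous generators built from the Gross--Hopkins filtration on the Drinfeld side. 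Under Theorem \ref{thm:SWduality} and the identifications $\pi_{\LT,\HT}\leftrightarrow\pi_{\Dr,\GM}$, $\pi_{\LT,\GM}\leftrightarrow\pi_{\Dr,\HT}$, the Hodge--Tate exact sequence on one tower corresponds to the Gross--Hopkins exact sequence on the other (Corollary \ref{lg}), so these generators are exchanged up to multiplication by elements of the respective smooth sheaves; combined with the inclusions above, the two power-series rings coincide and the isomorphism is $\GL_2(L)\times D_L^\times$-equivariant. The refinement $\calO_{\LT}^{\iota\-\lan,V_\eta^{(a,b)}\-\lalg}\isom\calO_{\Dr}^{\iota\-\lan,W_\eta^{(a,b)}\-\lalg}$ for $\eta\neq\iota$ then follows, since by Corollary \ref{lg} the $V_\eta^{(a,b)}$-isotypic part of the locally $\eta$-algebraic sections on the Lubin--Tate side has multiplicity sheaf $W_\eta^{(a,b)}\ox_C\calO_{\LT}^{\sm}$ and symmetrically on the Drinfeld side, and these are matched once $\calO_{\LT}^{\sm}\subset\calO_{\Dr}^{\iota\-\lan}$ and $\calO_{\Dr}^{\sm}\subset\calO_{\LT}^{\iota\-\lan}$.

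For general $J\ni\iota$ I would proceed as follows. For $\eta\neq\iota$, the generators $e_{ij,\eta}-e_{ij,\eta,n}$ in the power-series expansion of $\calO_{\LT}^{\eta\-\lan}$ (Theorem \ref{thm:OLTla}) are locally $\eta$-algebraic for $\GL_2(L)$ and, through $W_\eta$, carry a locally analytic $D_L^\times$-action; by Lemma \ref{lem:GNnormusualnorm}, for $n$ and $m$ large the $G_n$- and $\check G_m$-analytic norms of these elements agree with the supremum norm, so the expansion converges in the locally $\{\iota,\eta\}$-analytic topology for $D_L^\times$ (the coefficients lie in $\calO_{\LT}^{\sm}\subset\calO_{\Dr}^{\iota\-\lan}$ and the generators contribute the $\eta$-direction). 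Hence $\calO_{\LT}^{\eta\-\lan}\subset\calO_{\Dr}^{\{\iota,\eta\}\-\lan}$, and dually $\calO_{\Dr}^{\eta\-\lan}\subset\calO_{\LT}^{\{\iota,\eta\}\-\lan}$. Finally the tensor decomposition $\calO_{\LT}^{J\-\lan}\isom\hat\ox_{\calO_{\LT}^{\sm},\eta\in J}\calO_{\LT}^{\eta\-\lan}$ and its Drinfeld analogue (Theorems \ref{thm:OLTla} and \ref{thm:ODrla}, parallel to Proposition \ref{prop:tensordecomposition}), together with the case $J=\{\iota\}$ and these inclusions, give $\calO_{\LT}^{J\-\lan}\subset\calO_{\Dr}^{J\-\lan}$ and the reverse inclusion, hence equality.

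I expect the main obstacle to be the case $J=\{\iota\}$: one must transport two genuinely different local coordinate systems across the Scholze--Weinstein duality and verify that $\GL_2(L)$-smoothness on one tower becomes $D_L^\times$-local $\iota$-analyticity on the other, the substance lying in the \'etaleness of the Gross--Hopkins period maps and the fact that the flag-variety actions of the ``wrong'' group factor through $\iota$. Once coordinates and norms are matched, the remaining steps reduce to bookkeeping with power series and norm estimates, and the general-$J$ statement is a formal consequence of the tensor decomposition.
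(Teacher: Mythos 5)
Your proposal is correct and follows essentially the same route as the paper: establish the two smooth-into-$\iota$-locally-analytic inclusions via the decompletion argument and the fact that the Gross--Hopkins period maps are \'etale with the opposing group acting on the target through $\iota$, match the power-series coordinate systems to get $\calO_{\LT}^{\iota\-\lan}\isom\calO_{\Dr}^{\iota\-\lan}$, use Corollary \ref{lg} for the locally $\eta$-algebraic parts, invoke Lemma \ref{lem:GNnormusualnorm} to control the $G_n$- and $\check G_m$-analytic norms of the $\eta$-coordinates and thereby obtain $\calO_{\LT}^{\eta\-\lan}\subset\calO_{\Dr}^{\{\iota,\eta\}\-\lan}$ (and its dual), then assemble via the completed tensor-product decomposition. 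Your explicit appeal to the tensor decomposition to conclude the general-$J$ case is a slightly fuller account of what the paper leaves as "From this one deduces," but the argument is the same.
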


By the geometric Sen theory, we know that $\calO_{\LT}^{\lan}$ is killed by the universal nilpotent algebra on $\fl$. Hence it induces a horizontal action of $\frh_{\iota}$ on $\calO_{\LT}^{\lan}$ which we denote by $\theta_{\LT,\frh}$. Here $\frh_\iota\subset \frg_\iota$ is the Cartan subalgebra consists of diagonal matrices. Similarly, we have a horizontal action of $\check\frh_{\iota}$ on $\calO_{\Dr}^{\lan}$ which we denote by $\theta_{\Dr,\frh}$. Here, $\check\frh_{\iota}$ is the image of $\frh_\iota$ under the isomorphism $\frg_\iota\isom \check \frg_\iota$.
\begin{proposition}
The two actions $\theta_{\LT,\frh}$ and $\theta_{\Dr,\frh}$ on $\calO_{\LT}^{\lan}\isom \calO_{\Dr}^{\lan}$ are the same.

Moreover, under the natural isomorphism 
\[
    \omega_{\Dr}^{(a,b)}\isom \omega_{\LT}^{(-a,-b)}(-a)
\]
for some $(a,b)\in\bbZ^2$, we get an isomorphism 
\[
    \omega_{\Dr}^{(a,b),\lan,\theta_{\Dr,\frh}=(n_1,n_2)}\isom \omega_{\LT}^{(-a,-b),\lan,\theta_{\LT,\frh}=(n_1+a,n_2+b)}(-a)
\]
for any $(n_1,n_2)\in\bbZ^2$.
\begin{proof}
The proof is similar to \cite[Cor 5.3.13]{PanII}. For the claim on twists, first note that the isomorphism $\omega_{\Dr}^{(a,b)}\isom \omega_{\LT}^{(-a,-b)}(-a)$ remains true when we passing to locally analytic vectors for the $\GL_2(L)$-action and the $D_L^\times$-action. We have 
\[
    \omega_{\Dr}^{(a,b),\lan,\theta_{\Dr}=(n_1,n_2)}=\omega_{\Dr}^{(a,b),\sm}\ox_{\calO_{\Dr}^{\sm}}\calO_{\Dr}^{\lan,\theta_{\Dr}=(n_1,n_2)}=\omega_{\Dr}^{(a,b),\sm}\ox_{\calO_{\Dr}^{\sm}}\calO_{\LT}^{\lan,\theta_{\LT}=(n_1,n_2)}.
\]
As the Gross--Hopkins period map $\pi_{\Dr,\GM,0}:\calM_{\Dr,0}\to\Omega\subset\fl$ induces $\omega_{\Dr,0}^{(a,b)}=\omega_{\fl}^{(a,b)}$, and $\theta_{\LT}$ acts on $\omega_{\fl}^{(a,b)}$ via $(a,b)$, we see that $\omega_{\Dr}^{(a,b),\sm}\ox_{\calO_{\Dr}^{\sm}}\calO_{\LT}^{\lan,\theta_{\LT}=(n_1,n_2)}$ is $(n_1+a,n_2+b)$-isotypic for the $\theta_{\LT}$-action, hence the claim.
\end{proof}
\end{proposition}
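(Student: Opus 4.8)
The plan is to follow \cite[Cor 5.3.13]{PanII}: reduce the equality $\theta_{\LT,\frh}=\theta_{\Dr,\frh}$ to the coincidence of the arithmetic Sen operators of the two towers under the Scholze--Weinstein identification, and then read off the twisting statement by a weight computation.

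First I would record the Lubin--Tate and Drinfeld analogues of Corollary \ref{cor:hSen}. Geometric Sen theory applied to the pro-\'etale cover $\calM_{\LT,\infty}^{(0)}\to\calM_{\LT,n}^{(0)}$ shows, exactly as in Theorems \ref{thm:geomSen} and \ref{thm:VB}, that its universal geometric Sen operator is the universal nilpotent algebra $\frn_\iota^0$ pulled back along $\pi_{\LT,\HT}^{(0)}$, so $\calO_{\LT}^{\lan}$ is killed by it and $\theta_{\LT,\frh}(\diag(0,1))$ coincides with the arithmetic Sen operator of this cover on $\calO_{\LT}^{\lan}$; running the same argument for $\calM_{\Dr,\infty}^{(0)}\to\calM_{\Dr,n}^{(0)}$ with the period map $\pi_{\Dr,\HT}^{(0)}$ and with $\check\frg_\iota$ in place of $\frg_\iota$ identifies $\theta_{\Dr,\frh}$ of the corresponding element of $\check\frh_\iota$ with the arithmetic Sen operator on $\calO_{\Dr}^{\lan}$. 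Since $\calO_{\LT}^{\lan}\isom\calO_{\Dr}^{\lan}$ carries compatible Galois actions (Theorem \ref{thm:SWduality}: the isomorphism $\calM_{\LT,\infty}^{(0)}\isom\calM_{\Dr,\infty}^{(0)}$ descends to a finite extension $M/\bbQ_p$ over which both towers are defined, compatibly with the cyclotomic $\Gal(M_\infty/M)$-action used to define the arithmetic Sen operator), the two arithmetic Sen operators coincide. This pins down $\theta_{\LT,\frh}$ and $\theta_{\Dr,\frh}$ on the noncentral direction; on the central direction, $\diag(1,1)$ acts through $\theta_\frh$ by the infinitesimal action of the centre (the horizontal modification being supported on the $\PGL_2$-part, as in the computation behind Corollary \ref{cor:infSen}), and since $L^\times\subset\GL_2(L)$ and $L^\times\subset D_L^\times$ act identically on $\calM_{\LT,\infty}\isom\calM_{\Dr,\infty}$ these central actions agree. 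As these two directions span $\frh_\iota$, we obtain $\theta_{\LT,\frh}=\theta_{\Dr,\frh}$. Alternatively one could verify the equality directly on the dense subspace $\calO_{\LT}^{\lalg}\subset\calO_{\LT}^{\lan}=\calO_{\Dr}^{\lan}$, on which both actions are diagonalised with explicit weights read off from Theorems \ref{thm:OLTla} and \ref{thm:ODrla} together with Corollary \ref{lg}, but the Sen-operator route is shorter.

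For the twisting statement, note that $\theta_{\Dr,\frh}$ acts trivially on the $D_L^\times$-smooth sheaf $\omega_{\Dr}^{(a,b),\sm}$, so the decomposition $\omega_{\Dr}^{(a,b),\lan}\isom\omega_{\Dr}^{(a,b),\sm}\ox_{\calO_{\Dr}^{\sm}}\calO_{\Dr}^{\lan}$ coming from Theorem \ref{thm:ODrla} gives, using the first part,
\[
\omega_{\Dr}^{(a,b),\lan,\theta_{\Dr,\frh}=(n_1,n_2)}\isom\omega_{\Dr}^{(a,b),\sm}\ox_{\calO_{\Dr}^{\sm}}\calO_{\Dr}^{\lan,\theta_{\Dr,\frh}=(n_1,n_2)}\isom\omega_{\Dr}^{(a,b),\sm}\ox_{\calO_{\Dr}^{\sm}}\calO_{\LT}^{\lan,\theta_{\LT,\frh}=(n_1,n_2)}.
\]
Under Scholze--Weinstein, $\pi_{\Dr,\GM}^{(0)}$ is identified with $\pi_{\LT,\HT}^{(0)}$ and $\omega_{\Dr,0}^{(a,b)}$ with $\omega_{\fl}^{(b,a)}|_\Omega$, so $\omega_{\Dr}^{(a,b),\sm}$ is (a twist of) the pullback of $\omega_\fl^{(b,a)}$ along $\pi_{\LT,\HT}^{(0)}$, on which $\theta_{\LT,\frh}$ acts through the weight $(a,b)$ by Proposition \ref{prop:twist}. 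Hence the space above is $(n_1+a,n_2+b)$-isotypic for $\theta_{\LT,\frh}$; combining with the $\calO_{\LT}$-module isomorphism $\omega_{\Dr}^{(a,b)}\isom\omega_{\LT}^{(b,a)}(-b)$ (obtained from Corollary \ref{lg} by taking tensor powers and determinant twists) and restricting to locally analytic vectors yields $\omega_{\Dr}^{(a,b),\lan,\theta_{\Dr,\frh}=(n_1,n_2)}\isom\omega_{\LT}^{(b,a),\lan,\theta_{\LT,\frh}=(n_1+a,n_2+b)}(-b)$.

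The step I expect to be most delicate is the coincidence of the two arithmetic Sen operators: one must check that the Scholze--Weinstein isomorphism, a priori only an isomorphism of adic spaces over $C$, is compatible with the descent data on the two towers finely enough that the associated $\Gal(M_\infty/M)$-actions — and therefore the resulting arithmetic Sen operators — are literally equal, and one must keep track of the normalisations on the Drinfeld side, where $\calO_{\Dr}$ is a pushforward along $\pi_{\Dr,\GM}$ while the $D_L^\times$-geometric Sen operator is governed by $\pi_{\Dr,\HT}$.
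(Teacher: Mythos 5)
Your proof is correct and follows the same route the paper sketches (the paper defers part one to [Cor.~5.3.13, PanII], which is exactly the arithmetic-Sen-operator argument you reconstruct: pin down $\theta_\frh$ on the non-central direction via the intrinsic arithmetic Sen operator of the $\breve L$-rational Scholze--Weinstein isomorphism, and on the central direction via the fact that the two copies of $L^\times$ act identically). Your second half — reducing via $\omega_{\Dr}^{(a,b),\lan}\isom\omega_{\Dr}^{(a,b),\sm}\ox_{\calO_{\Dr}^{\sm}}\calO_{\Dr}^{\lan}$, identifying $\omega_{\Dr}^{(a,b),\sm}$ with a pullback of $\omega_\fl^{(b,a)}$ along $\pi_{\LT,\HT}=\pi_{\Dr,\GM}$ and reading off the $\theta_{\LT,\frh}$-weight $(a,b)$ from Proposition \ref{prop:twist} — is verbatim the paper's computation, and the alternative you flag (checking directly on the dense subspace $\calO_{\LT}^{\lalg}$) is also valid though not used in the paper.
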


Let $k\ge 0$ be an integer. We define differential operators on $\calO_{\LT}^{\lan,(0,-k)}\isom \calO_{\Dr}^{\lan,(0,-k)}$. The constructions are the same as Theorem \ref{thm:d} and Theorem \ref{thm:dbar}. See \cite[Theorem 5.2.15 and Remark 5.2.16]{PanII} in the $L=\bbQ_p$ case.
\begin{proposition}\label{d1}
There exists a unique continuous operator 
\[
    d^{k+1}_{\LT}:\calO_{\LT}^{\lan,(0,-k)}\to \calO_{\LT}^{\lan,(0,-k)}\ox_{\calO_{\LT}^{\sm}}(\Omega_{\LT}^{1,\sm})^{\ox k+1}
\]
such that it is determined by the following properties:
\begin{enumerate}[(i)]
    \item $d^{k+1}_{\LT}$ is $\calO_{\fl}$-linear.
    \item the restriction of $d^{k+1}_{\LT}$ on $\calO_{\LT}^{\lalg,(0,-k)}$ is given by the Gauss--Manin connection.
\end{enumerate}
\end{proposition}

\begin{proposition}\label{d2}
There exists a unique continuous operator
\[
    \bar d^{k+1}_{\LT}:\calO_{\LT}^{\lan,(0,-k)}\to \calO_{\LT}^{\lan,(0,-k)}\ox_{\calO_{\fl}}(\Omega^1_{\fl})^{\ox k+1}
\]
such that it is determined by the following properties:
\begin{enumerate}[(i)]
    \item $\bar d^{k+1}_{\LT}$ is $\calO_{\LT}^{\iota^c\-\lan}$-linear.
    \item There exists a non-zero constant $c\in\bbQ^\times$ such that $\bar d^{k+1}_{\LT}(s)=c(u^+)^{k+1}(s)\ox (dx)^{k+1}$ for any $s\in\calO_{K^v}^{\lan,\chi}$.
\end{enumerate}
Moreover, it is surjective with kernel $\calO_{\LT}^{\iota\-\lalg,\iota^c\-\lan,(0,-k)}$.
\end{proposition}
Here the surjectivity of $\bar d_{\LT}^{k+1}$ follows by a similar calculation of $\frn$-cohomology in \cite[Proposition 4.2.9, Theorem 5.2.16]{PanII}.

On the Drinfeld side, we have similar results. See \cite[Theorem 5.3.17 and Remark 5.3.18]{PanII} in the $L=\bbQ_p$ case.
\begin{proposition}\label{d3}
There exists a unique continuous operator 
\[
    d^{k+1}_{\Dr}:\calO_{\Dr}^{\lan,(0,-k)}\to \calO_{\Dr}^{\lan,(0,-k)}\ox_{\calO_{\Dr}^{\sm}}(\Omega_{\Dr}^{1,\sm})^{\ox k+1}
\]
such that it is determined by the following properties:
\begin{enumerate}[(i)]
    \item $d^{k+1}_{\Dr}$ is $\calO_{\fl'}$-linear.
    \item the restriction of $d^{k+1}_{\Dr}$ on $\calO_{\Dr}^{\lalg,(0,-k)}$ is given by the Gauss--Manin connection.
\end{enumerate}
\end{proposition}

\begin{proposition}\label{d4}
There exists a unique continuous operator
\[
    \bar d^{k+1}_{\Dr}:\calO_{\Dr}^{\lan,(0,-k)}\to \calO_{\Dr}^{\lan,(0,-k)}\ox_{\calO_{\fl'}}(\Omega^1_{\fl'})^{\ox k+1}
\]
such that it is determined by the following properties:
\begin{enumerate}[(i)]
    \item $\bar d^{k+1}_{\Dr}$ is $\calO_{\Dr}^{\iota^c\-\lan}$-linear.
    \item There exists a non-zero constant $c\in\bbQ^\times$ such that $\bar d^{k+1}_{\Dr}(s)=c(u^+)^{k+1}(s)\ox (dx)^{k+1}$ for any $s\in\calO_{K^v}^{\lan,(0,-k)}$.
\end{enumerate}
Moreover, it is surjective with kernel $\calO_{\Dr}^{\iota\-\lalg,\iota^c\-\lan,(0,-k)}$.
\end{proposition}
Again, the surjectivity of $\bar d_{\Dr}^{k+1}$ follows by a similar calculation of $\frn$-cohomology in \cite[Proposition 4.2.9, Theorem 5.3.17]{PanII}.

We note that we may also construct twists of these differential operators in the same spirit as \ref{def:twistofd}, so that to get $d_{\LT}'^{k+1}$, $\bar d_{\LT}'^{k+1}$, $d_{\Dr}'^{k+1}$, $\bar d_{\Dr}'^{k+1}$. Besides, under the natural identification $\calO_{\LT}^{\lan,\chi}\isom \calO_{\Dr}^{\lan,\chi}$, the differential operators we defined above have the following relations:
\begin{theorem}\label{d5}
$d_{\LT}^{k+1}$ equals $\bar d_{\Dr}^{k+1}$ up to a non-zero constant. $\bar d_{\LT}^{k+1}$ equals $d_{\Dr}^{k+1}$ up to a non-zero constant. 
\begin{proof}
In order to check $d_{\LT}^{k+1}=\bar d_{\Dr}^{k+1}$, we may check that $d_{\LT}^{k+1}|_{\calO_{\LT}^{\iota\-\lan,\iota^c\-\lalg,(0,-k)}}$ satisfies the properties of $\bar d_{\Dr}^{k+1}$ by continuity. As $d_{\LT}^{k+1}$ is $\calO_{\fl}$-linear, and $\calM_{\Dr,n}$ are \'etale coverings of $\calM_{\Dr,0}$, we can deduce that $d_{\LT}^{k+1}$ is $\calO_{\Dr}^{\iota^c\-\lalg}$-linear, and hence $\calO_{\Dr}^{\iota^c\-\lan}$-linear by continuity and Proposition \ref{prop:Oetala}. Since $\pi_{\HT,\GM}:\calM_{\LT,0}\to \fl'$ is an \'etale covering, it is also easy to see that up to constants, $d_{\LT}^{k+1}$ are given by $(\check u^+)^{k+1}$, where $\check u^+$ is the image of $u^+$ under the natural isomorphism $\frg_\iota\isom \check \frg_{\iota}$. This shows that $d_{\LT}^{k+1}$ satisfies the properties of $\bar d_{\Dr}^{k+1}$, so that by uniqueness of $\bar d_{\Dr}^{k+1}$ we deduce that $d_{\LT}^{k+1}=\bar d_{\Dr}^{k+1}$. Using similar methods, one can also show that $\bar d_{\LT}^{k+1}=d_{\Dr}^{k+1}$.
\end{proof}
\end{theorem}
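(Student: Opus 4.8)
The plan is to reduce the theorem to a statement about the locally $\iota$-analytic, locally $\iota^c$-algebraic parts, where the structure sheaves are governed by the flag varieties and period morphisms in a transparent way, and then invoke the uniqueness characterizations of $\bar d_{\Dr}^{k+1}$ (Proposition \ref{d4}) and $\bar d_{\LT}^{k+1}$ (Proposition \ref{d2}). Concretely, I would first record that Theorem \ref{thm:SWduality} identifies $\calM_{\LT,\infty}\isom\calM_{\Dr,\infty}$ in a $\GL_2(L)\times D_L^\times$-equivariant way, swapping $\pi_{\LT,\HT}\leftrightarrow\pi_{\Dr,\GM}$ and $\pi_{\LT,\GM}\leftrightarrow\pi_{\Dr,\HT}$, so that (restricting to height-$0$ components) we obtain $\calO_{\LT}\isom\calO_{\Dr}$, hence $\calO_{\LT}^{J\-\lan}\isom\calO_{\Dr}^{J\-\lan}$ for $J\ni\iota$ by the preceding proposition, and the identifications of graded/twisted line bundles in Corollary \ref{lg} and the discussion afterward. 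The key point is that under this identification the two flag varieties get interchanged: $\fl$ for the $\GL_2(L)$-action on $\calO_{\LT}$ is identified with $\fl'$ for the $D_L^\times$-action on $\calO_{\Dr}$ via $\pi_{\LT,\HT}=\pi_{\Dr,\GM}$, and vice versa via $\pi_{\LT,\GM}=\pi_{\Dr,\HT}$. This is exactly what makes $\calO_{\fl}$-linearity on one side correspond to ``étale-covering-linearity'' — i.e. $\calO_{\Dr}^{\iota^c\-\lan}$-linearity — on the other side, which is the first of the two defining properties of $\bar d_{\Dr}^{k+1}$.

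Next I would verify the two characterizing properties of $\bar d_{\Dr}^{k+1}$ for the operator $d_{\LT}^{k+1}$ (transported via the duality). For the $\calO_{\Dr}^{\iota^c\-\lan}$-linearity: $d_{\LT}^{k+1}$ is $\calO_{\fl}$-linear by Proposition \ref{d1}(i); since $\calM_{\Dr,n}\to\calM_{\Dr,0}$ is étale and $\pi_{\Dr,\GM,0}:\calM_{\Dr,0}\to\Omega\subset\fl$, the pulled-back smooth sections $\calO_{\Dr}^{\sm}$ lie over $\calO_{\fl}$-locally on $\Omega$, so $\calO_{\fl}$-linearity forces $\calO_{\Dr}^{\iota^c\-\lalg}$-linearity; by density (Theorem \ref{thm:ODrla}, density of $\calO_{\Dr}^{\lalg}$ in $\calO_{\Dr}^{\lan}$, together with the analogue of Proposition \ref{prop:Oetala}) and continuity this upgrades to $\calO_{\Dr}^{\iota^c\-\lan}$-linearity. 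For the second property, I would use that the restriction of $d_{\LT}^{k+1}$ to the locally algebraic part is the Gauss--Manin connection (Proposition \ref{d1}(ii)); under the étale covering $\pi_{\LT,\GM}:\calM_{\LT,0}\to\fl'$ the Gauss--Manin connection in the ``missing'' flag direction is computed by the raising operator, so up to a rational constant $d_{\LT}^{k+1}(s)=c\,(\check u^+)^{k+1}(s)\ox(dx)^{k+1}$, where $\check u^+$ is the image of $u^+$ under $\frg_\iota\isom\check\frg_\iota$ — precisely the shape required of $\bar d_{\Dr}^{k+1}$. Equivariance for $\GL_2(L)$ and the prime-to-$p$ Hecke action is automatic from functoriality of all constructions. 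The uniqueness clause in Proposition \ref{d4} then gives $d_{\LT}^{k+1}=\bar d_{\Dr}^{k+1}$ up to a nonzero constant, and the symmetric argument (swapping the roles of Hodge--Tate and Gross--Hopkins period maps on each side) gives $\bar d_{\LT}^{k+1}=d_{\Dr}^{k+1}$ up to a nonzero constant.

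I expect the main obstacle to be bookkeeping around the period morphisms and the decomposition into $\iota$- versus $\iota^c$-parts: one must check carefully that the duality isomorphism $\calO_{\LT}\isom\calO_{\Dr}$ of Theorem \ref{thm:SWduality} really does intertwine the two horizontal $\theta_{\frh}$-actions (needed so that the weight-$(-k,0)$ isotypic pieces match — this is the preceding proposition in the excerpt), and that it carries the Kodaira--Spencer/Hodge filtration data on the Lubin--Tate side to the corresponding data on the Drinfeld side in the precise normalization used to define $d^{k+1}$ and $\bar d^{k+1}$. A secondary subtlety is that $d_{\LT}^{k+1}$ was only characterized by $\calO_{\fl}$-linearity plus its value on the locally algebraic part, whereas $\bar d_{\Dr}^{k+1}$ is characterized by $\calO_{\Dr}^{\iota^c\-\lan}$-linearity plus the explicit $(u^+)^{k+1}$-formula plus equivariance; so the real content is showing these two lists of properties are compatible — that $\calO_{\fl}$-linearity on the Lubin--Tate side becomes $\calO_{\Dr}^{\iota^c\-\lan}$-linearity on the Drinfeld side, and that the Gauss--Manin description becomes the raising-operator description — which is exactly where the étaleness of the two covering towers over the respective flag varieties and the swap $\fl\leftrightarrow\fl'$ do all the work.
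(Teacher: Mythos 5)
Your proposal is correct and takes essentially the same route as the paper: identify the codomains, restrict to the locally $\iota$-analytic, locally $\iota^c$-algebraic part, verify that $\calO_{\fl}$-linearity of $d_{\LT}^{k+1}$ becomes $\calO_{\Dr}^{\iota^c\-\lan}$-linearity via the étale towers and density, identify the Gauss--Manin connection with the $(\check u^+)^{k+1}$ formula via the étale Gross--Hopkins covering, and conclude by the uniqueness characterization of $\bar d_{\Dr}^{k+1}$. The ``main obstacles'' you flag (compatibility of the two $\theta_{\frh}$-actions and of the Kodaira--Spencer normalizations) are indeed exactly the content that the paper supplies in the propositions immediately preceding the statement.
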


\begin{corollary}
The differential maps $d_{\LT}^{k+1}$ and $d_{\Dr}^{k+1}$ are surjective, with kernel $\calO_{\Dr}^{\iota\-\lalg,\iota^c\-\lan,(0,-k)}$ and $\calO_{\LT}^{\iota\-\lalg,\iota^c\-\lan,(0,-k)}$ respectively.
\end{corollary}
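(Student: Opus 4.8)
The plan is to deduce both surjectivity statements and the identification of the kernels directly from Theorem~\ref{d5}, combined with the properties of $\bar d_{\LT}^{k+1}$ and $\bar d_{\Dr}^{k+1}$ recorded in Propositions~\ref{d2} and~\ref{d4}. First I would recall that Theorem~\ref{d5} gives equalities (up to a nonzero scalar in $\bbQ^\times$)
\[
    d_{\LT}^{k+1} = \bar d_{\Dr}^{k+1}, \qquad d_{\Dr}^{k+1} = \bar d_{\LT}^{k+1}
\]
as maps between the naturally identified source and target sheaves on $\Omega$ (via $\calO_{\LT}^{\lan,(-k,0)}\isom\calO_{\Dr}^{\lan,(-k,0)}$ and the matching of the codomains established in the proof of Theorem~\ref{d5}). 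Since multiplying an operator by a nonzero scalar changes neither its image nor its kernel, surjectivity of $d_{\LT}^{k+1}$ is literally the surjectivity of $\bar d_{\Dr}^{k+1}$, which is part of the statement of Proposition~\ref{d4}; likewise $\ker d_{\LT}^{k+1} = \ker \bar d_{\Dr}^{k+1}$, which by Proposition~\ref{d4} is $\calO_{\Dr}^{\iota\-\lalg,\iota^c\-\lan,(-k,0)}$. Symmetrically, surjectivity of $d_{\Dr}^{k+1}$ and the computation $\ker d_{\Dr}^{k+1}=\ker\bar d_{\LT}^{k+1}=\calO_{\LT}^{\iota\-\lalg,\iota^c\-\lan,(-k,0)}$ follow from Proposition~\ref{d2}.

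Concretely, the only point requiring a word of care is the bookkeeping of the two flag varieties and the identification of source and target. In the proof of Theorem~\ref{d5} one already checks that
\[
    \calO_{\LT}^{\lan,(-k,0)}\ox_{\calO_{\LT}^{\sm}}(\Omega^{1,\sm}_{\LT})^{\ox k+1} \isom \calO_{\Dr}^{\lan,(-k,0)}\ox_{\calO_{\fl'}}(\Omega^1_{\fl'})^{\ox k+1}
\]
and the symmetric statement with $\LT$ and $\Dr$ interchanged, so the domains and codomains of $d_{\LT}^{k+1}$ and $\bar d_{\Dr}^{k+1}$ (resp.\ $d_{\Dr}^{k+1}$ and $\bar d_{\LT}^{k+1}$) are canonically the same objects under the Scholze--Weinstein duality isomorphism $\calO_{\LT}\isom\calO_{\Dr}$ of Theorem~\ref{thm:SWduality}. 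I would simply invoke this identification and then transport the surjectivity and kernel computations across it. No new $\frn$-cohomology computation is needed here, since the delicate cohomological input was already absorbed into Propositions~\ref{d2} and~\ref{d4} (whose surjectivity and kernel claims rest on the $\frn$-cohomology calculations in \cite[Proposition 4.2.9, Theorem 5.2.16, Theorem 5.3.17]{PanII}).

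There is essentially no obstacle: this corollary is a formal consequence of Theorem~\ref{d5} and the two propositions it builds on. The only thing to be mildly careful about is the precise matching of the Tate twists and weight characters in the codomains when one passes between $\omega_{\LT}^{(a,b)}$ and $\omega_{\Dr}^{(b,a)}(-b)$ (cf.\ Corollary~\ref{lg} and the preceding horizontal-weight comparison), but all of these identifications have already been set up in the earlier subsections, so the write-up amounts to citing them and concluding. I would therefore keep the proof to one or two sentences: ``This is immediate from Theorem~\ref{d5}, Proposition~\ref{d2}, and Proposition~\ref{d4}, since multiplying by a nonzero constant does not affect the image or the kernel, and the source and target sheaves of the relevant operators are identified under $\calO_{\LT}\isom\calO_{\Dr}$.''
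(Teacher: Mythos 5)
Your proof is correct and is exactly the approach the paper intends: the corollary is placed immediately after Theorem~\ref{d5} with no proof given, precisely because it is the formal consequence you describe, transporting the surjectivity and kernel statements of Propositions~\ref{d2} and~\ref{d4} across the scalar identities $d_{\LT}^{k+1}=c\,\bar d_{\Dr}^{k+1}$ and $d_{\Dr}^{k+1}=c\,\bar d_{\LT}^{k+1}$. Your care about matching codomains under $\calO_{\LT}\isom\calO_{\Dr}$ is appropriate but, as you note, that bookkeeping is already done inside the proof of Theorem~\ref{d5}, so nothing new is needed.
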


\subsubsection{Supersingular locus and the $p$-adic uniformization theorem}\label{padicuniformization}
In this section, we compute the cohomology of the intertwining operator on the supersingular locus. Firstly, we recall the $p$-adic uniformization theorem for the supersingular locus in the unitary Shimura curve. We follow the treatment in \cite[Section 11]{Car83}.

Recall that $F^+$ is a totally real number field and there exists a place $v$ of $F^+$ such that $F^+_v\isom L$. Let $D$ be the quaternion algebra over $F^+$ we used before, i.e., $B=D\ox_{F^+}F$, and $D$ is split at $v,\infty$. In order to describe the supersingular locus of $\calX_{K^v}$, let $\bar D$ be the quaternion algebra over $F^+$ such that the invariants of $D$ and $\bar D$ agree at all places except $v,\infty$, and $\bar D$ is ramified at $v,\infty$.

Now, starting from $\bar D$, we can use a similar procedure to define an algebraic group $\bar G$ over $\bbQ$, whose $R$-point for any $\bbQ$-algebra $R$ is given by 
\[
    \bar G(R)=\{(g,z)\in (\bar D\ox_{\bbQ}R)^\times\times(\calE\ox_{\bbQ}R)^\times:N_{\bar D/F^+}(g)=N_{F/F^+}(z),N_{\bar D/F^+}(g)\cdot N_{F/F^+}(z)\in R^\times\},
\]
where $N_{\bar D/F^+}$ denote the reduced norm map for $\bar D$. Note that our $\bar G$ is $\bar G'$ used in \cite{Car83}. Set $\bar B:=\bar D\ox_{F^+}F$. From the discussion in \cite[11.5.1]{Car83}, we see that $\bar B$ also has an involution of the second kind, and positive. In particular, we can also interpret $\bar G$ as a unitary similitude group, and $\bar G(\bbQ_p)=\bbQ_p^\times\times D_L^\times\times\prod_{i=2}^r (B_{w_i}^{\op})^\times$. In other words, the difference between $G(\bbQ_p)$ and $\bar G(\bbQ_p)$ is that we switch the $\GL_2(L)$-factor to its inner form $D_L^\times$. If we decompose $\bar G(\bbA^\infty)=\bar G_v\bar G^v$ similar to $G(\bbA^\infty)=G_v G^v$, then we have $\bar G_v\isom D_L^\times$, and we have a natural identification $\bar G^v\isom G^v$. 

Let $(A,\lambda,\iota,\eta)$ be an $\bar\bbF_q$-point of $\bar X_{K^v,0}^{\ss}$, the supersingular locus in the special fiber of the unitary Shimura curve of spherical level, and forget the data of level structure to get $(A,\lambda,\iota)$. From the Honda-Tate theory, \cite[11.5.1]{Car83} shows that any two such pairs $(A,\lambda,\iota)$, $(A',\lambda',\iota')$ are quasi-isogenous, and the automorphism group of $(A,\lambda,\iota)$ is $\bar G(\bbQ)$. From this, one deduces that if we let $K^v\subset G^v\isom \bar G^v$ be a level subgroup away from $v$, then $\bar G(\bbA^\infty)$ acts transitively (\cite[11.5.2]{Car83}) on $\bar X_{K^v,0}^{\ss}(\bar\bbF_q)$, and we have an isomorphism  
\[
    \bar X_{K^v,0}^{\ss}(\bar\bbF_q)\isom \bar G(\bbQ)\bs \bar G(\bbA^\infty)/K^v\calO_{D_L}^\times.
\]
Therefore, by using the Serre-Tate theory, one deduces that 
\[
    \pi_{\HT}^{-1}(\Omega)\isom \bar G(\bbQ)\bs(\calM_{\LT,\infty}\times G^v/K^v).
\]
This isomorphism is $\GL_2(L)\times D_L^\times$-equivariant, functorial in $K^v$.

Let $X_{K^v}=\bar G(\bbQ)\bs (\bar G(\bbA^\infty)/K^v)$, which is isomorphic to $\sqcup_{i\in I}\calO_{D_L}^\times$ as an $\calO_{D_L}^\times$-space, where $I=\bar X^{\ss}_{K^v,0}$ is a finite set, as it is a discrete subset of a proper curve. Then we can reinterpret the uniformization as a $\GL_2(L)^0$-equivariant isomorphism 
\[
    \pi_{\HT}^{-1}(\Omega)\isom (\calM_{\LT,\infty}^{(0)}\times X_{K^v})/\calO_{D_L}^\times\isom \sqcup_{i\in I}\calM_{\LT,\infty}^{(0)},
\]
where $\GL_2(L)^0=\{g\in\GL_2(L):|\det(g)|=1\}$. This isomorphism is compatible with the Hodge--Tate period map on unitary Shimura curves and Lubin--Tate spaces at infinite level, induces a $\GL_2(L)^0$-equivariant isomorphism $\calO_{K^v}|_{\Omega}\isom\bigoplus_{i\in I}\calO_{\LT}$ of $\calO_{\Omega}$-modules. Moreover, this identification is compatible with horizontal Cartan actions $\theta_{\frh}$ and $\theta_{\LT,\frh}$, and differential operators $d^{k+1}|_{\Omega}$ and $d_{\LT}^{k+1}$. In particular, we have 
\begin{theorem}
Under the $\GL_2(L)^0$-equivariant and $\calO_{\Omega}$-linear isomorphism $\calO_{K^v}|_{\Omega}\isom \bigoplus_{i\in I}\calO_{\LT}$, we have $d^{k+1}|_{\Omega}=\bigoplus_{i\in I}d^{k+1}_{\LT}$. In particular,
\begin{align*}
    d^{k+1}:\calO_{K^v}^{\lan,(0,-k)}\to \calO_{K^v}^{\lan,(0,-k)}\ox_{\calO_{K^v}^{\sm}}(\Omega^{1,\sm}_{K^v})^{\ox k+1}
\end{align*}
is surjective on $\Omega$. Moreover, for any $\eta\neq\iota$, $(a,b)\in\bbZ^2$ with $a\ge b$, the map 
\begin{align*}
d^{k+1}:\calO_{K^v}^{\iota\-\lan,V_{\eta}^{(a,b)}\-\lalg,(0,-k)}\to \calO_{K^v}^{\lan,V_{\eta}^{(a,b)}\-\lalg,(0,-k)}\ox_{\calO_{K^v}^{\sm}}(\Omega^{1,\sm}_{K^v})^{\ox k+1}
\end{align*}
is surjective on $\Omega$.
\end{theorem}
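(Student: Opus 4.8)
The plan is to reduce everything to the Lubin--Tate side via the $p$-adic uniformization of the supersingular locus, and then invoke the uniqueness characterizations of $d^{k+1}$ (Theorem~\ref{thm:d}) and $d^{k+1}_{\LT}$ (Proposition~\ref{d1}). First I would recall, following Carayol~\cite[\S 11]{Car83} and Serre--Tate deformation theory, that every geometric point of $\bar X_{K^v,0}^{\ss}$ carries an abelian scheme with $\calO_B$-action whose associated Barsotti--Tate $\calO_L$-module is the unique formal $\calO_L$-module $H_0$ of $\calO_L$-height $2$, that all such are quasi-isogenous with automorphism group $\bar G(\bbQ)$, and that the resulting identification $\pi_{\HT}^{-1}(\Omega)\isom \bar G(\bbQ)\bs(\calM_{\LT,\infty}\times G^v/K^v)$ restricts on the height-$0$ component to a $\GL_2(L)^0$-equivariant isomorphism $\pi_{\HT}^{-1}(\Omega)\isom\sqcup_{i\in I}\calM_{\LT,\infty}^{(0)}$ with $I$ finite (full $\GL_2(L)$ permuting the $I$-components according to the height of the quasi-isogeny, which makes sense since $\Omega=\fl\bs\fl(L)$ is $\GL_2(L)$-stable). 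Since both Hodge--Tate period maps are built from the $\iota$-component of the Hodge--Tate exact sequence of the respective universal Barsotti--Tate $\calO_L$-modules, which match under Serre--Tate, the two copies of $\Omega\subset\fl$ correspond, and pushing forward along $\pi_{\HT}$ gives the $\calO_{\Omega}$-linear, $\GL_2(L)^0$-equivariant identification $\calO_{K^v}|_\Omega\isom\bigoplus_{i\in I}\calO_{\LT}$; the geometric Sen operator along $\calX_{K^v}\to\calX_{K^vK_v}$ restricted to $\Omega$ is pulled back from the universal nilpotent algebra on $\fl$ (Theorem~\ref{thm:geomSen}), as is the one on the Lubin--Tate side, so the horizontal Cartan actions $\theta_{\frh}|_\Omega$ and $\bigoplus_i\theta_{\LT,\frh}$ agree and the weight decompositions match, giving $\calO_{K^v}^{\lan,(-k,0)}|_\Omega\isom\bigoplus_i\calO_{\LT}^{\lan,(-k,0)}$.

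Next I would prove $d^{k+1}|_\Omega=\bigoplus_i d^{k+1}_{\LT}$ by checking that the right-hand side satisfies the two properties characterizing $d^{k+1}$ in Theorem~\ref{thm:d}: it is $\calO_{\fl}$-linear (clear, since each $d^{k+1}_{\LT}$ is $\calO_{\fl}$-linear and the uniformization is compatible with $\calO_{\fl}\to\calO_{K^v}$), and it restricts to the Gauss--Manin connection on the locally algebraic part. For the latter, the functoriality of Serre--Tate/Grothendieck--Messing theory identifies the universal abelian scheme over $\pi_{\HT}^{-1}(\Omega)$ with the pullback of the universal object over $\calM_{\LT,\infty}$, $\calO_B$-equivariantly and up to isogeny; hence the filtered vector bundles with integrable connection $D_{K^v,\eta}^{(a,b),\sm}$, together with their Hodge filtrations and Kodaira--Spencer isomorphisms, match $\bigoplus_i D_{\LT,\eta}^{(a,b),\sm}$ term by term (all being extracted from the de Rham cohomology of the universal object), so the differential operators assembled from this data coincide on $\calO_{K^v}^{\lalg,(-k,0)}|_\Omega$. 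By the density of locally algebraic sections and uniqueness, $d^{k+1}|_\Omega=\bigoplus_i d^{k+1}_{\LT}$. Restricting the same argument to the $V_\eta^{(a,b)}$-isotypic part of the locally $\iota^c$-algebraic sections (which under the uniformization corresponds to the $W_\eta^{(a,b)}$-isotypic part on the Lubin--Tate side, cf.\ Corollary~\ref{lg} and the discussion around Theorem~\ref{thm:OLTla}) yields the corresponding identification for $d^{k+1}$ on $\calO_{K^v}^{\iota\-\lan,V_\eta^{(a,b)}\-\lalg,(-k,0)}|_\Omega$.

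Finally I would transport surjectivity from the Lubin--Tate side. By Theorem~\ref{d5}, $d^{k+1}_{\LT}$ equals $\bar d^{k+1}_{\Dr}$ up to a nonzero constant, and $\bar d^{k+1}_{\Dr}$ is surjective by Proposition~\ref{d4} (equivalently, one may quote the corollary immediately following Theorem~\ref{d5}); the same $\frn$-cohomology computation restricted to the $W_\eta^{(a,b)}$-isotypic component gives surjectivity of $d^{k+1}_{\LT}$ on the locally $\iota^c$-algebraic part. Taking the finite direct sum over $i\in I$ and transporting along the isomorphism of the previous paragraph yields surjectivity of $d^{k+1}|_\Omega$, and of its restriction to $\calO_{K^v}^{\iota\-\lan,V_\eta^{(a,b)}\-\lalg,(-k,0)}$ on $\Omega$. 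The step requiring the most care is the second paragraph: one must track the $\calO_B\ox_{\bbZ}C$-action through Serre--Tate and Grothendieck--Messing theory to see that all of the de Rham data (connection, Hodge filtration, Kodaira--Spencer) matches $\eta$-component by $\eta$-component, and reconcile the present definition of $\pi_{\HT}$ via the $\iota$-component with Scholze's original one, so that the uniqueness characterization of $d^{k+1}$ may legitimately be applied on $\Omega$.
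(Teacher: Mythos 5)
Your proposal is correct and follows essentially the same route as the paper: use the $p$-adic uniformization $\pi_{\HT}^{-1}(\Omega)\isom\sqcup_i\calM_{\LT,\infty}^{(0)}$ to match the pushforward sheaves, match the differential operators by checking the characterizing properties of $d^{k+1}$ on locally algebraic sections, and then transport surjectivity from the Lubin--Tate side via $d^{k+1}_{\LT}=\bar d^{k+1}_{\Dr}$ (up to scalar) and the surjectivity of $\bar d^{k+1}_{\Dr}$. The paper leaves the compatibility of $d^{k+1}|_\Omega$ with $\bigoplus_i d^{k+1}_{\LT}$ as an assertion in the discussion preceding the theorem; your use of the uniqueness characterization from Theorem~\ref{thm:d} (applied locally on $\Omega$, via density of locally algebraic sections) makes that step explicit, which is the right way to justify it.
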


In order to describe the kernel of $d^{k+1}$ on $\Omega$, we recall some notation of algebraic automorphic forms on $\bar G$. 
\begin{definition}\label{def:AutoonD}
Let $w=(a_\eta,b_\eta)_{\eta\in\Sigma}\in W_+$ be a weight. Let $W^w:=\ox_{\eta\in\Sigma} W_\eta^{(a_\eta,b_\eta)}$.
\begin{enumerate}[(1)]
    \item Let $\calA_{\bar G,w}$ denote the set of maps 
    \[
        f:\bar G(\bbQ)\bs \bar G(\bbA^\infty)\to W^w
    \]
    such that $f(dh_vh^v)=h_v^{-1}f(d)$ for $h_v\in K_v\subset \bar G_v= D_L^\times$ and $h^v\in K^v\subset \bar G^v$ for some open compact subgroup $K_v$, $K^v$. It admits a right translation action of $\bar G(\bbA^\infty)$ and an action of $D_L^\times$ via $(h_v.f)(d)=h_v.f(dh_v)$ for $h_v\in D_L^\times$. Note that both actions are smooth.
    \item Let $\calA^1_{\bar G,w}\subset\calA_{\bar G,w}$ denote the subset of maps which factor through the reduced norm map. This is zero if $w\neq 0$ up to twist by determinants.
    \item Let $\calA^c_{\bar G,w}:=\calA_{\bar G,w}/\calA_{\bar G,w}^1$. Note that the Hecke action induces a natural splitting $\calA_{\bar G,w}\isom \calA_{\bar G,w}^c\oplus \calA_{\bar G,w}^1$.
\end{enumerate}
\end{definition}

Now we describe the kernel of $d^{k+1}|_{\calO_{K^v}^{\iota\-\lan,V_\eta^{(a,b)}\-\lalg,(0,-k)}}$ on $\Omega$.
\begin{proposition}\label{prop:donss}
Let $d^{k+1}:\calO_{K^v}^{\iota\-\lan,V_\eta^{(a,b)}\-\lalg,(0,-k)}\to \calO_{K^v}^{\iota\-\lan,V_\eta^{(a,b)}\-\lalg,(0,-k)}\ox_{\calO_{K^v}^{\sm}}(\Omega_{K^v}^{1,\sm})^{\ox k+1}$. There is a Hecke and $\GL_2(L)^0$-equivariant isomorphism 
\[
    \ker d^{k+1}|_{\Omega}\isom (\calA^{K^v}_{\bar G,w}\ox_C \omega_{\Dr}^{(0,-k),\sm})^{\calO_{D_L}^\times}\ox_C V_\eta^{(a,b)},
\]
where $w=(0,-k)_\iota,(a,b)_\eta$. Similarly, 
\[
    \ker d'^{k+1}|_{\Omega}\isom (\calA^{K^v}_{\bar G,w}\ox_C \omega_{\Dr}^{(-k-1,1),\sm})^{\calO_{D_L}^\times}\ox_C V_\eta^{(a,b)}.
\]
\begin{proof}
This is the same as \cite[Proposition 5.4.12]{PanII}. 
\end{proof}
\end{proposition}

\subsection{Cohomology of the intertwining operator I}\label{cohint}
Let $\chi=(0,-k)$ with $k\ge 0$. In the rest of this section, we perform some preprocessing procedures to calculate the cohomology of the intertwining operator.
\[
    I_k^{\iota^c\-\lalg}:\calO_{K^v}^{\iota\-\lan,\iota^c\-\lalg,(0,-k)}\to \calO_{K^v}^{\iota\-\lan,\iota^c\-\lalg,(-k-1,1)}(k+1).
\]
For simplicity, we write $I=I_k^{\iota^c\-\lalg}$ in this section.

First of all, let us fix some notation and give a brief summary. As 
\begin{align*}
    \calO_{K^v}^{\iota\-\lan,\iota^c\-\lalg,(0,-k)}\isom \bigoplus_{(k_{1,\eta},k_{2,\eta})\in\bbZ^2,\eta\neq\iota}\calO_{K^v}^{\iota\-\lan,(\ox_{\eta\neq\iota}V_\eta^{(k_{1,\eta},k_{2,\eta})})\-\lalg,(0,-k)},
\end{align*}
it suffices to compute the cohomology of $I$ on each direct summand. By abuse of notations, we also let $I$ to denote the following intertwining operator
\[
    I:\calO_{K^v}^{\iota\-\lan,(\ox_{\eta\neq\iota}V_\eta^{(k_{1,\eta},k_{2,\eta})})\-\lalg,(0,-k)}\to\calO_{K^v}^{\iota\-\lan,(\ox_{\eta\neq\iota}V_\eta^{(k_{1,\eta},k_{2,\eta})})\-\lalg,(-k-1,1)}(k+1).
\]
Consider the following diagram with exact rows and columns: 
$$
\begin{tikzcd}
    &                                                 & 0 \arrow[d]                                                 & 0 \arrow[d]                                   &                                                    &   \\
    &                                                 & \ker \bar d \arrow[d] \arrow[r, "\theta"]                   & \ker \bar d' \arrow[d]                        &                                                    &   \\
0 \arrow[r] & \ker d \arrow[d, "\bar d|_{\ker d}"'] \arrow[r] & \calO \arrow[r, "d"] \arrow[d, "\bar d"'] \arrow[rd, "I" description] & \calO\ox\Omega \arrow[d, "\bar d'"] \arrow[r] & \coker d \arrow[r] \arrow[d, "\bar d|_{\coker d}"] & 0 \\
0 \arrow[r] & \ker d' \arrow[r]                               & \calO\ox\bar\Omega \arrow[r, "d'"'] \arrow[d]               & \calO^s \arrow[d] \arrow[r]                   & \coker d' \arrow[r]                                & 0 \\
    &                                                 & 0                                                           & 0                                             &                                                    &  
\end{tikzcd}
$$
where some of the terms in the above diagram is given by:
\begin{itemize}
    \item $\calO=\calO_{K^v}^{\iota\-\lan,(\ox_{\eta\neq\iota}V_\eta^{(k_{1,\eta},k_{2,\eta})})\-\lalg,(0,-k)}$,
    \item $\calO\ox\Omega=\calO_{K^v}^{\iota\-\lan,(\ox_{\eta\neq\iota}V_\eta^{(k_{1,\eta},k_{2,\eta})})\-\lalg,(0,-k)}\ox_{\calO_{ K^v}^{\sm}}(\Omega_{K^v}^{1,\sm})^{\ox k+1}$,
    \item $\calO\ox\bar\Omega=\calO_{K^v}^{\iota\-\lan,(\ox_{\eta\neq\iota}V_\eta^{(k_{1,\eta},k_{2,\eta})})\-\lalg,(0,-k)}\ox_{\calO_{\fl}}(\Omega^1_{\fl})^{\ox k+1}$,
    \item $\calO^s=\calO_{K^v}^{\iota\-\lan,(\ox_{\eta\neq\iota}V_\eta^{(k_{1,\eta},k_{2,\eta})})\-\lalg,(-k-1,1)}(k+1)$.
\end{itemize}
We have calculated the cohomologies of these differential operators. Let $w=(k_{1,\eta},k_{2,\eta})_{\eta\in\Sigma}$ with $w_{\iota}=(0,-k)$ and $w_\eta=(k_{1,\eta},k_{2,\eta})$. By Theorem \ref{thm:dbar}, Proposition \ref{prop:donss} and Proposition \ref{prop:dord}, we get the following results:
\begin{itemize}
    \item $\ker\bar d=V^{w}\ox_C\omega_{K^v}^{w,\sm}$ with $V^w=\ox_{\eta\in\Sigma}V_\eta^{w_\eta}$.
    \item $\ker \bar d'=V^{w}\ox_C\omega_{K^v}^{s\cdot w,\sm}$ with $(s\cdot w)_{\iota}=(-k-1,1)$ and $(s\cdot w)_{\eta}=w_{\eta}$.
    \item $\coker d=i_*(i^*\omega_{\fl}^{w}\ox\calH^1_{\ord}( K^v,w))$ with $\omega_{\fl}^w:=\omega_{\fl}^{w_\iota}\ox_C (\ox_{\eta\neq\iota}V_\eta^{w_\eta})$.
    \item $\coker d'=i_*(i^*\omega_{\fl}^{s\cdot w}\ox\calH^1_{\ord}( K^v,w))$.
    \item $\ker d$ fits into an exact sequence
    \[
        0\to j_!\ker d|_{\Omega}\to\ker d\to i_*\ker d|_{\bbP^1(L)}\to 0,
    \]
    where 
    \begin{itemize}
        \item $\ker d|_{\Omega}=(\calA^{K^v}_{\bar G, w}\ox_C\omega_{\Dr}^{w,\sm})^{\calO_{D_L}^\times}$. 
        \item $\ker d|_{\bbP^1(L)}=i_*(i^*\omega_{\fl}^{w}\ox\calH^0_{\ord}( K^v,w))$.
    \end{itemize}
    \item $\ker d'$ fits into an exact sequence
    \[
        0\to j_!\ker d'|_{\Omega}\to\ker d'\to i_*\ker d'|_{\bbP^1(L)}\to 0,
    \]
    where 
    \begin{itemize}
        \item $\ker d'|_{\Omega}=(\calA^{K^v}_{\bar G, w}\ox_C\omega_{\Dr}^{s\cdot w,\sm})^{\calO_{D_L}^\times}$.
        \item $\ker d'|_{\bbP^1(L)}=i_*(i^*\omega_{\fl}^{s\cdot w}\ox\calH^0_{\ord}( K^v,w))$.
    \end{itemize} 
\end{itemize}

From the construction of those differential operators, we see that the induced differential operators are given by:
\begin{itemize}
    \item $d|_{\ker \bar d}:\ker\bar d\to\ker \bar d'$ is given by twists of the usual $\theta$-operator on automorphic sheaves.
    \item $\bar d|_{\ker d|_{\Omega}}:\ker d|_{\Omega}\to\ker d'|_{\Omega}$ are given by differential operators on Drinfeld towers. Note that on the bottom level the differential operator can be seen as the restriction of the one on flag variety via $j:\Omega\inj\fl$.
    \item $\bar d|_{\coker d}:\coker d\to\coker d'$ and $\bar d|_{\ker d|_{\bbP^1(L)}}:\ker d|_{\bbP^1(L)}\to \ker d'|_{\bbP^1(L)}$ are given by differential operators on the flag variety $\fl$, or more precisely on $i:\bbP^1(L)\inj\fl$.
\end{itemize}

For simplicity, for $\calF$ a sheaf on $\fl$, we let $H^i(\calF)$ to denote the $i$-th cohomology group of $\calF$ on $\fl$. The following lemma presents some vanishing results we will use later.
\begin{lemma}\label{lem:v}
We have the following vanishing results:
\begin{enumerate}[(i)]
    \item If $w$ is not the highest weight of a one-dimensional algebraic representation of $\GL_2(L)$ over $C$, then $H^0(\ker \bar d)=0$, and $H^1(\ker \bar d')=0$. Otherwise, up to twist by determinants we may assume that $w=0$. Then $H^0(\ker \bar d)=M_0(K^v)$ and $H^1(\ker \bar d')=M_0(K^v)$.
    \item $H^i(\coker d)=0$ and $H^i(\coker d')=0$ for $i\ge 1$.
    \item $H^i(j_!\ker d|_{\Omega})=0$ and $H^i(j_!\ker d'|_{\Omega})=0$ for $i\neq 1$.
    \item We have exact sequences
    \begin{align*}
        &0\to H^0(\ker d)\to H^0(\ker d|_{\bbP^1(L)})\to H^1(j_!\ker d|_{\Omega})\to H^1(\ker d)\to 0,\\
        &0\to H^0(\ker d')\to H^0(\ker d'|_{\bbP^1(L)})\to H^1(j_!\ker d'|_{\Omega})\to H^1(\ker d')\to 0.
    \end{align*}
\end{enumerate}
\begin{proof}
For (i), as $\ker \bar d$ is a subsheaf of $\calO$, and $\calO$ is a subsheaf of $\calO^{\lan}_{K^v}$, we see that $H^0(\ker \bar d)$ is a subspace of $H^0(\calO^{\lan}_{K^v})$ consisting of locally analytic functions on an open subset of $L^\times$ by identifying $H^0(\calO^{\lan}_{K^v})$ with locally analytic vectors in the zero-th completed cohomology group. But as the $\GL_2(L)$-action on $H^0(\calO^{\lan}_{K^v})$ factors through the determinant, one easily deduce the desired results. By Theorem \ref{thm:dbar}, we know $\ker\bar d=V^{w}\ox_C\omega_{K^v}^{w ,\sm}(k_{1,\iota})$. In order to compute $H^*(\ker \bar d)$, we can use Serre duality on each curve $\calX_{K^vK_v}$ and take inductive limit on $K_v$. Using similar methods we can compute $H^*(\ker \bar d')$.

For (ii), note that $\coker d$ and $\coker d'$ are supported on $\bbP^1(L)$.

For (iii), it suffices to show that $H^i(j_!\omega_{\Dr}^{(a,0),D_L^\times\-\sm})=0$ for any $a\in\bbZ$ and $i\neq 1$. This follows from the Serre duality for $\calM_{\Dr,n}$, and $\calM_{\Dr,n}$ is a Stein rigid analytic curve.

For (iv), we use (ii), (iii).
\end{proof}
\end{lemma}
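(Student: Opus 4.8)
\textbf{Proof proposal for Lemma \ref{lem:v}.}

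The plan is to treat the four assertions in turn, reducing each to a cohomology computation on the flag variety $\fl \cong \bbP^1$ (analytified) where all the relevant sheaves have already been identified explicitly in Theorems \ref{thm:dbar}, \ref{prop:dord} and Proposition \ref{prop:donss}. For (i), I would use the identification of $H^0(\fl,\calO_{K^v}^{\lan})$ with the locally analytic vectors in the zeroth completed cohomology $\tilde H^0(K^v,C)$, which by the Eichler--Shimura dictionary recalled in \S\ref{uni} carries a $\GL_2(L)$-action factoring through the determinant (more precisely, $\tilde H^0(K^v,C)\isom \calC^{\lan}(\calO_L,C)$ up to twist, since the curves are connected). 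Since $\ker\bar d\hookrightarrow \calO\hookrightarrow \calO_{K^v}^{\lan}$, a global section of $\ker\bar d$ is in particular a locally analytic vector on which the $\frh_\iota$-action via $\theta_{\frh}$ is given by the weight $sw^*$ shifted by the Tate twist; reconciling the requirement that the $\GL_2(L)$-action factors through $\det$ with the weight $w$ forces $w$ to be a (twist of) the highest weight of a one-dimensional representation, and then one is reduced to $w=0$, where the description $\ker\bar d = V^0\ox_C\omega_{K^v}^{0,\sm} = \omega_{K^v}^{0,\sm}$ gives $H^0 = M_0(K^v)$ by the very definition of modular forms on unitary Shimura curves. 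For $H^1(\ker\bar d')$, since $\ker\bar d' \isom V^w\ox_C \omega_{K^v}^{s\cdot sw^*,\sm}(k_{1,\iota})$ is a (colimit of) automorphic line bundles, I would apply Serre duality on each finite-level curve $\calX_{K^vK_v}$ against the canonical bundle $\omega_{K^v}^{(1,-1),\sm}$, identify $H^1$ with the dual of an $H^0$ of another automorphic bundle, and use the same factoring-through-$\det$ argument on the dual side to conclude the vanishing unless $w=0$, in which case Serre duality pairs $H^1(\ker\bar d')$ with $H^0$ of a weight-zero bundle, again giving $M_0(K^v)$ (this is where one must be a little careful with the colimit over $K_v$: one takes $\dlim_{K_v}$ of the finite-level Serre duality statements, which is exact).

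For (ii), the point is simply that $\coker d$ and $\coker d'$, as computed, are $i_*$ of sheaves on the closed analytic subspace $\bbP^1(L)\subset\fl$ (where $i:\bbP^1(L)\hookrightarrow\fl$ and these are in fact skyscraper-type sheaves along the profinite set $\bbP^1(L)$, obtained by smooth geometric induction); since $\bbP^1(L)$ has no higher cohomology for such sheaves (it is, for cohomological purposes, a profinite set, and the smooth induced sheaves $\calH^i_{\ord}(K^v,w)$ are acyclic on it), pushing forward along the closed immersion $i$ is exact and $H^i(\coker d)=H^i(\coker d')=0$ for $i\geq 1$. Concretely I would invoke the description of $\calH^1_{\ord}(K^v,w)$ as a smooth $B(L)$-geometric-induction sheaf on $\bbP^1(L)$ and note its flasqueness.

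For (iii), the key reduction is that $j_!\ker d|_\Omega$ and $j_!\ker d'|_\Omega$ are extensions by zero from the Drinfeld upper half plane $\Omega = \bbP^1\setminus\bbP^1(L)$, and after unwinding the $\calO_{D_L}^\times$-invariants (using the $p$-adic uniformization isomorphism $\pi_{\HT}^{-1}(\Omega)\isom \sqcup_{i\in I}\calM_{\Dr,\infty}^{(0)}$ and Theorem \ref{thm:OLTla}/\ref{thm:ODrla}), both sheaves are, up to the finite index set $I$ and the coefficient $\calA^{K^v}_{\bar G,w^*}$, built out of $j_!\omega_{\Dr}^{(l,0),D_L^\times\-\sm}$ for appropriate integers $l$; so it suffices to show $H^i(\fl, j_!\omega_{\Dr}^{(l,0),D_L^\times\-\sm}) = 0$ for $i\neq 1$. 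This I would prove by Serre duality on the Stein curve $\calM_{\Dr,n}$ at each finite level: $j_!$ on $\fl$ corresponds to compactly-supported sections on $\Omega$, Serre duality for the Stein rigid curve $\calM_{\Dr,n}$ gives $H^i_c(\calM_{\Dr,n},\calF)^\vee \isom H^{2-i}(\calM_{\Dr,n},\calF^\vee\ox\Omega^1)$, and since a Stein curve has $H^j = 0$ for $j\geq 2$ and $H^j_c = 0$ for $j = 0$, only $i=1$ survives; one then passes to the colimit over $n$. Finally, (iv) is a formal consequence: apply the long exact sequence of cohomology on $\fl$ to the exact sequence $0\to j_!\ker d|_\Omega \to \ker d\to i_*\ker d|_{\bbP^1(L)}\to 0$ (and likewise for $d'$), and plug in the vanishing $H^i(j_!\ker d|_\Omega) = 0$ for $i\neq 1$ from (iii) together with the fact that $i_*$ is exact on $\bbP^1(L)$ so that $H^i(i_*\ker d|_{\bbP^1(L)}) = H^i(\bbP^1(L),\ker d|_{\bbP^1(L)})$, which vanishes for $i\geq 1$ by the profinite-set argument of (ii); this collapses the long exact sequence to the stated four-term exact sequences.

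I expect the main obstacle to be part (iii), specifically setting up Serre duality correctly on the infinite-level Drinfeld tower and controlling the $\calO_{D_L}^\times$-invariants: one must check that taking $\calO_{D_L}^\times$-invariants commutes with the relevant cohomology (which is fine since $\calO_{D_L}^\times$ is compact and we are in characteristic zero, so the invariants functor is exact), that the smooth coefficient system $\calA^{K^v}_{\bar G,w^*}$ does not interfere (it is a smooth admissible representation, so a colimit of finite-dimensional pieces, and cohomology commutes with filtered colimits here), and that the $j_!$-versus-$H^*_c$ translation is valid at infinite level (this should follow from the affinoid-perfectoid property of $\pi_{\HT}^{-1}(U)$ for $U\in\ffrb$ and the fact that $\calM_{\Dr,n}^{(0)}$ is Stein at each finite level, as in \cite{CDN20}). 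The other three parts are comparatively routine given the explicit descriptions already established.
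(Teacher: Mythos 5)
Your proposal is correct and follows essentially the same route as the paper: part (i) via the identification of $H^0(\calO_{K^v}^{\lan})$ with locally analytic vectors in $\tilde H^0$ (where the $\GL_2(L)$-action factors through $\det$) and Serre duality at finite level; part (ii) via support on the profinite set $\bbP^1(L)$; part (iii) via the reduction to $j_!\omega_{\Dr}^{(l,0),D_L^\times\text{-}\mathrm{sm}}$ and Serre duality for the Stein curves $\calM_{\Dr,n}$; and part (iv) via the long exact sequence of $0\to j_!j^*\ker d\to\ker d\to i_*i^*\ker d\to 0$ together with (ii) and (iii). The additional detail you supply (exactness of $\calO_{D_L}^\times$-invariants, commutation of cohomology with filtered colimits, the $j_!$-versus-$H^1_c$ translation) is a faithful filling-in of steps the paper leaves implicit.
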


\begin{remark}
From the proof we see that if $w$ is not the highest weight of a one-dimensional algebraic representation of $\Res_{L/\bbQ_p}\GL_2$ over $C$, then $H^0(D_{K^v}^{w,\sm})=0$.
\end{remark}

Recall that for hypercohomology of a complex of sheaves on a site, there are two spectral sequence computing it, inducing the Hodge filtration and the conjugate filtration on the hypercohomology group. We will mainly use this to compute the cohomology of the differential operators we constructed before, together with various vanishing results in Lemma \ref{lem:v}.

Define $\DR$ to be the two-term complex $\calO\ov{d}\to\calO\ox\Omega$. 
\begin{proposition}\label{prop:DRcoh}
\begin{enumerate}[(i)]
    \item $\bbH^0(\DR)=H^0(\calO)$.
    \item We have an exact sequence $0\to H^0(\calO\ox\Omega)\to \bbH^1(\DR)\to H^1(\calO)\to H^1(\calO\ox \Omega)\to 0$.
    \item $\bbH^0(\DR)=H^0(\ker d)$.
    \item we have an exact sequence $0\to H^1(\ker d)\to \bbH^1(\DR)\to H^0(\coker d)\to 0$.
    \item $\bbH^2(\DR)=0$.
\end{enumerate}
\begin{proof}
For (i) and (ii), we use the cohomology of the triangle $\calO\ox\Omega[-1]\to \DR\to \calO\to $. As $\calO$ is a subsheaf of $\calO^{\lan}_{K^v}$, and $H^0(\fl,\calO_{K^v}^{\lan})\isom\tilde{H}^0(K^v,C)^{\lan}$, we see that the $\GL_2(L)$-action on $H^0(\calO)$ factors through the determinant map. Let $z\in \GL_2(L)$ be a scalar matrix. For any $f\in \calO_{K^v}^{\lan}$, we know that $\theta_{\frh}(z).f=z.f$. From this we deduce that $H^0(\calO)$ consists of some locally algebraic functions on an open subset of $L^\times$. From this one can deduce that the map $H^0(\calO)\to H^0(\calO\ox\Omega)$ is zero. Moreover, if $k\neq 0$, we have $H^0(\calO)=0$.

For (iii), (iv) and (v), we use the cohomology of the triangle $\ker d\to \DR\to \coker d[-1]\to$. Since $\fl$ is of dimension $1$, we have $H^2(\ker d)=0$ and $H^1(\coker d)=0$, and from this we deduce $\bbH^2(\DR)=0$.
\end{proof}
\end{proposition}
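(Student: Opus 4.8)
The statement to be proved is Proposition \ref{prop:DRcoh}, computing the hypercohomology of the two-term de Rham complex $\DR=[\calO\ov{d}\to\calO\ox\Omega]$ on $\fl$. The strategy is the standard one: exploit the two distinguished triangles attached to a two-term complex, namely the ``stupid filtration'' triangle $\calO\ox\Omega[-1]\to\DR\to\calO\to$ and the ``canonical'' triangle $\ker d\to\DR\to\coker d[-1]\to$, and feed in the cohomology computations already assembled (Theorem \ref{thm:dbar}, Proposition \ref{prop:dord}, Proposition \ref{prop:donss}) together with the vanishing results of Lemma \ref{lem:v}. Since $\fl=\bbP^1$ has cohomological dimension $1$ for coherent-type sheaves (and the sheaves in play are colimits of such, with the Cech-acyclicity of Lemma \ref{Cech} controlling the computation via the basis $\ffrb$), all higher cohomology beyond degree $1$ vanishes, which immediately gives $\bbH^2(\DR)=0$ and truncates every long exact sequence.

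The plan, step by step. First I would write the long exact sequence of $\calO\ox\Omega[-1]\to\DR\to\calO\to$:
\[
    \cdots\to H^{i-1}(\calO)\to H^{i-1}(\calO\ox\Omega)\to \bbH^i(\DR)\to H^i(\calO)\to H^i(\calO\ox\Omega)\to\cdots.
\]
For $i=0$ this gives $\bbH^0(\DR)=\ker(H^0(\calO)\to H^0(\calO\ox\Omega))$; the key observation is that this connecting map is the zero map. To see this, note $\calO\subset\calO_{K^v}^{\lan}$, so $H^0(\calO)$ sits inside $H^0(\fl,\calO_{K^v}^{\lan})\isom\tilde H^0(K^v,C)^{\lan}$ by Theorem \ref{11}, on which the $\GL_2(L)$-action factors through the determinant (since $\tilde H^0$ is one-dimensional over the connected components). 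Using $\theta_{\frh}(z).f=z.f$ for scalar $z$ (Corollary \ref{cor:hSen} and the compatibility of $\theta_\frh$ with the infinitesimal action), one deduces $H^0(\calO)$ consists of locally algebraic sections supported on $L^\times$, hence is killed by $d$; in particular $H^0(\calO)=0$ when $\chi=(-k,0)$ with $k\neq 0$. This yields (i) $\bbH^0(\DR)=H^0(\calO)$ (the map being zero), and (ii) the four-term exact sequence $0\to H^0(\calO\ox\Omega)\to\bbH^1(\DR)\to H^1(\calO)\to H^1(\calO\ox\Omega)\to 0$, where exactness on the right uses $H^2$-vanishing on $\fl$.

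Second, for (iii), (iv), (v) I would use the triangle $\ker d\to\DR\to\coker d[-1]\to$, which gives
\[
    \cdots\to H^{i-1}(\coker d)\to H^i(\ker d)\to \bbH^i(\DR)\to H^i(\coker d)\to H^{i+1}(\ker d)\to\cdots.
\]
Since $\coker d$ is supported on $\bbP^1(L)$ it is $i_*$ of a sheaf on the profinite set $\bbP^1(L)$, so $H^i(\coker d)=0$ for $i\geq 1$ (Lemma \ref{lem:v}(ii)); and $\ker d\subset\calO$ is a subsheaf on the one-dimensional $\fl$, so $H^i(\ker d)=0$ for $i\geq 2$. Degree $0$ then gives $\bbH^0(\DR)=H^0(\ker d)$ (combining with (i), also $=H^0(\calO)$, consistent), degree $1$ gives the short exact sequence $0\to H^1(\ker d)\to\bbH^1(\DR)\to H^0(\coker d)\to 0$, and degree $2$ gives $\bbH^2(\DR)=0$. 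Throughout, the identifications of $\ker d$, $\coker d$, $\calO\ox\Omega$ as concrete sheaves are quoted from Theorem \ref{thm:dbar}, Proposition \ref{prop:dord}, Proposition \ref{prop:donss}, and the LB/topological subtleties are handled exactly as in Lemma \ref{Cech} and the discussion of Cech complexes; these are routine given the earlier sections.

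\textbf{Main obstacle.} The only genuinely non-formal point is establishing that the edge map $H^0(\calO)\to H^0(\calO\ox\Omega)$ vanishes, i.e.\ pinning down $H^0(\calO)$. This requires the input that global sections of $\calO_{K^v}^{\lan}$ over all of $\fl$ recover locally analytic vectors in degree-zero completed cohomology, whose $\GL_2(L)$-action is abelian (factoring through $\det$), so that the horizontal Cartan action forces any such section into a fixed $\theta_\frh$-weight incompatible with a nonzero image under $d$ unless $\chi$ is the weight of a character; for $\chi=(-k,0)$, $k>0$, this forces $H^0(\calO)=0$ outright. Everything else is bookkeeping with long exact sequences and the dimension bound on $\fl$.
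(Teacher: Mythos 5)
Your proposal is correct and follows essentially the same route as the paper's proof: both use the stupid-filtration triangle $\calO\ox\Omega[-1]\to\DR\to\calO\to$ together with the determinant/$\theta_\frh$-argument to show $H^0(\calO)\to H^0(\calO\ox\Omega)$ vanishes for (i)--(ii), and the canonical triangle $\ker d\to\DR\to\coker d[-1]\to$ together with the dimension-one bound $H^2(\ker d)=0$ and $H^1(\coker d)=0$ for (iii)--(v). The only difference is that you spell out the bookkeeping more explicitly, which is fine but not a new argument.
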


Define $\DR'$ to be the two-term complex $\calO\ox\bar \Omega\ov{d'}\to\calO^s$. 
\begin{proposition}\label{prop:DRcohprime}
\begin{enumerate}[(i)]
    \item $\bbH^0(\DR')=H^0(\calO\ox\bar \Omega)$.
    \item We have an exact sequence $0\to \bbH^1(\DR')\to H^1(\calO\ox\Omega)\to H^1(\calO^s)\to 0$.
    \item $\bbH^0(\DR')=H^0(\ker d')$.
    \item we have an exact sequence $0\to H^1(\ker d')\to \bbH^1(\DR')\to H^0(\coker d')\to 0$.
    \item $\bbH^2(\DR')=0$.
\end{enumerate}
\begin{proof}
The proof is similar to Proposition \ref{prop:DRcoh}.
\end{proof}
\end{proposition}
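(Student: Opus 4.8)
The statement to be proved is the exact analogue of Proposition \ref{prop:DRcoh} with the roles of the two differential operators interchanged: we are computing the hypercohomology of the two-term complex $\DR'=[\calO\ox\bar\Omega\ov{d'}\to\calO^s]$. The plan is to run the same two spectral sequence arguments as in Proposition \ref{prop:DRcoh}, but now exploiting the finer structure of $d'$ coming from Theorem \ref{thm:dbar} --- namely that $\bar d'$ (and its twist $d'$) is \emph{surjective} as a sheaf map on $\fl$, not merely up to cokernel supported on $\bbP^1(L)$. Concretely, for parts (i) and (ii) I would use the distinguished triangle
\[
    \calO^s[-1]\to \DR'\to \calO\ox\bar\Omega\to
\]
together with the long exact cohomology sequence on $\fl$. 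Since $\fl\isom\bbP^1$ is one-dimensional, $H^i$ vanishes for $i\geq 2$, so the long exact sequence reads
\[
    0\to \bbH^0(\DR')\to H^0(\calO\ox\bar\Omega)\ov{d'}\to H^0(\calO^s)\to \bbH^1(\DR')\to H^1(\calO\ox\bar\Omega)\ov{d'}\to H^1(\calO^s)\to \bbH^2(\DR')\to 0.
\]
The key input is that $d'$ is surjective (as a morphism of sheaves) with kernel $\ker d'=\calO_{K^v}^{\iota\-\lalg,\iota^c\-\lan,s\cdot\chi}$ by Theorem \ref{thm:dbar}; hence the induced map $H^0(\calO\ox\bar\Omega)\to H^0(\calO^s)$ need not be surjective, but after accounting for $\ker d'$ the relevant obstruction lies in $H^1(\ker d')$. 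Comparing against the other triangle $\ker d'\to\DR'\to\coker d'[-1]\to$ and using the identification $\bbH^0(\DR')=H^0(\ker d')$ (part (iii)) gives exactly the four-term sequence of part (ii): $H^0(\calO\ox\bar\Omega)$ surjects onto $\bbH^1(\DR')$ modulo $H^0(\ker d')$, while the cokernel of $d'$ on $H^0$ injects into $H^1(\ker d')$, and the sheaf-level surjectivity of $d'$ forces $\bbH^2(\DR')=0$.

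For parts (iii), (iv), (v) I would instead use the triangle $\ker d'\to\DR'\to\coker d'[-1]\to$ coming from the short exact sequence of complexes $0\to[\ker d'\to 0]\to\DR'\to[\calO\ox\bar\Omega/\ker d'\to\calO^s]\to 0$, where the quotient complex is quasi-isomorphic to $\coker d'[-1]$. The resulting long exact sequence is
\[
    0\to H^0(\ker d')\to \bbH^0(\DR')\to 0\to H^1(\ker d')\to \bbH^1(\DR')\to H^0(\coker d')\to H^2(\ker d')\to \bbH^2(\DR')\to H^1(\coker d').
\]
Here $H^2(\ker d')=0$ since $\fl$ has cohomological dimension $1$, and $H^1(\coker d')=0$ because $\coker d'$ is supported on the profinite set $\bbP^1(L)\subset\fl$ (this is part (ii) of Lemma \ref{lem:v}, applied to the primed complex, whose proof is identical). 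This immediately yields $\bbH^0(\DR')=H^0(\ker d')$, the four-term exact sequence $0\to H^1(\ker d')\to\bbH^1(\DR')\to H^0(\coker d')\to 0$, and $\bbH^2(\DR')=0$. As the excerpt indicates, the proof is entirely parallel to that of Proposition \ref{prop:DRcoh}; the only bookkeeping difference is that now it is $\bar d'$ that is exact on the sheaf level while $d^{k+1}$ (equivalently the horizontal operator $d$ on $\calO$) had a cokernel supported on $\bbP^1(L)$, so one simply swaps which of the two spectral sequences is ``clean.''

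The main obstacle --- such as it is --- is not in the spectral sequence formalism but in correctly matching up the cohomology computations of the constituent sheaves $\ker d'$, $\coker d'$ that feed into these exact sequences; these were assembled in the bulleted list preceding Lemma \ref{lem:v} (the exact sequence $0\to j_!\ker d'|_\Omega\to\ker d'\to i_*\ker d'|_{\bbP^1(L)}\to 0$, the identification $\coker d'=i_*(i^*\omega_{\fl}^{s\cdot w}\ox\calH^1_{\ord}(K^v,w^*))(k_{1,\iota})$, etc.) and in the primed analogue of Lemma \ref{lem:v}(ii)--(iii). Once one grants those inputs --- all of which are established earlier by Theorem \ref{thm:dbar}, Proposition \ref{prop:donss}, and Proposition \ref{prop:dord}, together with Serre duality on the Stein curves $\calM_{\Dr,n}$ --- the five assertions follow formally, exactly as in Proposition \ref{prop:DRcoh}. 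I would therefore present the proof by simply citing Proposition \ref{prop:DRcoh} and indicating the one change (surjectivity of $d'$ versus surjectivity of $d$ on $\Omega$ only), spelling out the two triangles above for the reader's convenience.
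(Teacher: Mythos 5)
Your overall strategy --- running the two distinguished triangles $\calO^s[-1]\to \DR'\to \calO\ox\bar\Omega$ and $\ker d'\to\DR'\to\coker d'[-1]$ --- is the same one the paper uses for Proposition \ref{prop:DRcoh}, and your handling of parts (iii)--(v) via the second triangle (using $H^2(\ker d')=0$ on a curve and $H^1(\coker d')=0$ since $\coker d'$ is supported on $\bbP^1(L)$) is correct. However, there is a genuine error running through your treatment of parts (i), (ii), and part of (v): you assert that ``$d'$ is surjective (as a morphism of sheaves) with kernel $\calO_{K^v}^{\iota\-\lalg,\iota^c\-\lan,s\cdot\chi}$ by Theorem \ref{thm:dbar},'' and at the end you say that ``it is $\bar d'$ that is exact on the sheaf level'' in $\DR'$. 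This conflates two different operators. The complex $\DR'$ is defined with $d'$, which is the twist of $d$ by $(\Omega^1_{\fl})^{\ox k+1}$; like $d$, it has a nonzero cokernel supported on $\bbP^1(L)$ (this is the sheaf $\coker d'=i_*(i^*\omega_{\fl}^{s\cdot w}\ox\calH^1_{\ord}(K^v,w^*))(k_{1,\iota})$ appearing in the bulleted list before Lemma \ref{lem:v}). It is $\bar d$ and its twist $\bar d'$ that are surjective by Theorem \ref{thm:dbar}, and those operators appear in $\bar\DR$ and $\bar\DR'$ (Propositions \ref{prop:DSJIJ} and \ref{prop:DRcohpp}), not in $\DR'$. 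Consequently, the kernel you cite is roughly $\ker\bar d$, not $\ker d'$ (the latter is supported on both $\Omega$ and $\bbP^1(L)$, not a pure locally algebraic sheaf), and the sentence ``the sheaf-level surjectivity of $d'$ forces $\bbH^2(\DR')=0$'' is false as a derivation.

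The correct route for (i) and (ii) mirrors the paper's argument for Proposition \ref{prop:DRcoh}(i)--(ii) and is actually simpler here: both $\calO\ox\bar\Omega$ and $\calO^s$ carry $\theta_\frh$-weight $s\cdot\chi=(k_{1,\iota}+1,k_{2,\iota}-1)$, and since $k=k_{1,\iota}-k_{2,\iota}\ge 0$ this weight is never of the form $(a,a)$; so, reasoning as in Proposition \ref{prop:DRcoh} that $H^0$ of such a sheaf lives in $\tilde H^0(K^v,C)^\lan$ on which $\GL_2(L)$ acts through the determinant, one gets $H^0(\calO^s)=0$ (and $H^0(\calO\ox\bar\Omega)=0$), which kills the connecting map and gives (i) and (ii) from the first triangle, granted $\bbH^2(\DR')=0$ from (v). Also note that the $H^1(\calO\ox\Omega)$ in the displayed statement of part (ii) should read $H^1(\calO\ox\bar\Omega)$, matching the complex $\DR'=[\calO\ox\bar\Omega\to\calO^s]$; you silently correct this in your long exact sequence, which is fine, but it is worth flagging.
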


Define $\bar\DR$ to be the two-term complex $\calO\ov{d}\to\calO\ox\bar \Omega$.
\begin{proposition}\label{prop:DSJIJ}
For the cohomology of $\bar\DR$, we have
\begin{enumerate}[(i)]
    \item $\bbH^i(\bar\DR)=H^i(\ker \bar d)$ for $i\ge 0$.
    \item $\bbH^0(\bar \DR)=H^0(\calO)$.
    \item We have an exact sequence $0\to H^0(\calO\ox\bar\Omega)\to \bbH^1(\bar\DR)\to H^1(\calO)\to H^1(\calO\ox \bar\Omega)\to 0$.
\end{enumerate}
\begin{proof}
The proof is similar to Proposition  \ref{prop:DRcoh}. The key point is that $\bar d$ is surjective, so that $\bar \DR$ is quasi-isomorphic to $\ker\bar d[0]$.
\end{proof}
\end{proposition}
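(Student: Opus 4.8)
The plan is to mimic the proof of Proposition \ref{prop:DRcoh} almost verbatim, since $\bar\DR$ differs from $\DR$ only in that the target of the differential is twisted by $(\Omega^1_{\fl})^{\ox k+1}$ instead of $(\Omega^{1,\sm}_{K^v})^{\ox k+1}$, and the crucial new input is that $\bar d$ (equivalently $\bar d^{k+1}$ in Theorem \ref{thm:dbar}) is \emph{surjective} with known kernel. First I would record the distinguished triangle associated to the two-term complex, namely
\[
    \calO\ox\bar\Omega[-1]\to \bar\DR\to \calO\xrightarrow{+1}
\]
(the ``stupid'' filtration with $\calO$ in degree $0$), which gives the long exact sequence
\[
    \cdots\to H^{i-1}(\calO\ox\bar\Omega)\to \bbH^i(\bar\DR)\to H^i(\calO)\xrightarrow{\bar d} H^i(\calO\ox\bar\Omega)\to\cdots.
\]
Since $\fl\isom\bbP^1$ has cohomological dimension $1$, everything beyond degree $2$ vanishes, and the only subtle degree-$0$ point is to see that $H^0(\calO)\to H^0(\calO\ox\bar\Omega)$ is the zero map. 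For this I would argue exactly as in Proposition \ref{prop:DRcoh}(i)--(ii): $\calO\subset\calO_{K^v}^{\lan}$ and $H^0(\fl,\calO_{K^v}^{\lan})\isom \tilde H^0(K^v,C)^{\lan}$ has $\GL_2(L)$ acting through the determinant, so combined with the fact that $\theta_{\frh}(z)$ acts as $z$ for scalar $z$ (Corollary \ref{cor:hSen}), $H^0(\calO)$ consists of locally algebraic functions on an open subset of $L^\times$; the Gauss--Manin-type operator $\bar d$ kills these, giving $H^0(\calO)\to H^0(\calO\ox\bar\Omega)=0$, and moreover $H^0(\calO)=0$ unless $\chi=(0,0)$, i.e. $k=0$. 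This yields (ii) $\bbH^0(\bar\DR)=H^0(\calO)$ and (iii) the four-term exact sequence
\[
    0\to H^0(\calO\ox\bar\Omega)\to \bbH^1(\bar\DR)\to H^1(\calO)\xrightarrow{\bar d} H^1(\calO\ox\bar\Omega)\to 0
\]
after noting $H^2(\calO)=0$.

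For part (i), the key observation is that $\bar d^{k+1}$ is surjective \emph{as a map of sheaves} by Theorem \ref{thm:dbar} (restricted to $\calO_{K^v}^{\iota\-\lan,(\ox_{\eta\neq\iota}V_\eta^{(k_{1,\eta},k_{2,\eta})})\-\lalg,\chi}$; the surjectivity there is compatible with taking locally $\iota^c$-algebraic sections, and the kernel is $\ker\bar d$). Therefore the complex $\bar\DR=[\calO\xrightarrow{\bar d}\calO\ox\bar\Omega]$ is quasi-isomorphic to $\ker\bar d$ placed in degree $0$, whence $\bbH^i(\bar\DR)\isom H^i(\fl,\ker\bar d)$ for all $i\ge 0$. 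This is the whole content of (i), and combined with (ii), (iii) it also re-proves consistency: $H^0(\ker\bar d)=H^0(\calO)$, which matches Lemma \ref{lem:v}(i) (zero unless $w$ is the highest weight of a one-dimensional representation, i.e. $\chi=(0,0)$ up to determinant twist, in which case it is $M_0(K^v)$).

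I do not expect a genuine obstacle here; the proof is a formal consequence of (a) the surjectivity of $\bar d$ from Theorem \ref{thm:dbar}, (b) the computation of $H^0(\calO)$ already carried out in Proposition \ref{prop:DRcoh}, and (c) the $1$-dimensionality of $\fl$. The one place requiring a small amount of care is verifying that the surjectivity of $\bar d^{k+1}$ on the full sheaf $\calO_{K^v}^{\lan,(-k,0)}$ descends to surjectivity on the locally $\iota^c$-algebraic, fixed-$(\ox_{\eta\neq\iota}V_\eta^{(k_{1,\eta},k_{2,\eta})})$-isotypic subsheaf $\calO=\calO_{K^v}^{\iota\-\lan,(\ox_{\eta\neq\iota}V_\eta^{(k_{1,\eta},k_{2,\eta})})\-\lalg,\chi}$ that appears in the diagram — but this is exactly the compatibility of the constructions of $d^{k+1}$ and $\bar d^{k+1}$ with taking locally $\iota^c$-algebraic sections, already noted after Theorem \ref{thm:dbar} and used implicitly throughout the diagram at the start of Subsection \ref{cohint}. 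So the proof is short: invoke surjectivity of $\bar d$ to get the quasi-isomorphism $\bar\DR\simeq \ker\bar d[0]$ for (i), and then copy the degree-$0$ argument of Proposition \ref{prop:DRcoh} verbatim for (ii) and (iii).
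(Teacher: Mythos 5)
Your proposal is correct and follows exactly the paper's (very brief) intended argument: use the surjectivity of $\bar d$ from Theorem \ref{thm:dbar} to identify $\bar\DR$ with $\ker\bar d[0]$ for (i), and reuse the degree-$0$ computation and distinguished triangle from Proposition \ref{prop:DRcoh} for (ii) and (iii). Your additional remark about checking that surjectivity descends to the locally $\iota^c$-algebraic, fixed isotypic subsheaf is a worthwhile clarification that the paper leaves implicit.
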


Define $\bar\DR'$ to be the two-term complex $\calO\ox\Omega\ov{\bar d'}\to\calO^s$.
\begin{proposition}\label{prop:DRcohpp}
For the cohomology of $\bar\DR'$, we have
\begin{enumerate}[(i)]
    \item $\bbH^i(\bar\DR')=H^i(\ker \bar d')$ for $i\ge 0$.
    \item $\bbH^0(\bar \DR')=H^0(\calO\ox\Omega)$.
    \item We have an exact sequence $0\to \bbH^1(\bar\DR')\to H^1(\calO\ox\bar\Omega)\to H^1(\calO^s)\to 0$.
\end{enumerate}
\begin{proof}
The proof is similar to Proposition \ref{prop:DSJIJ}. 
\end{proof}
\end{proposition}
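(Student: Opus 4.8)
The plan is to proceed exactly as in the proofs of Propositions \ref{prop:DRcoh} through \ref{prop:DRcohpp}, since $\bar\DR'$ is structurally dual to $\bar\DR$ with the roles of $d$ and $\bar d'$ (the flag-variety operator) swapped. The key observation that makes everything work is that $\bar d':\calO\ox\Omega\to\calO^s$ is surjective with kernel $\ker\bar d'$ (this is part of Theorem \ref{thm:dbar}, applied after the appropriate twist; concretely $\bar d'$ is the twist of $\bar d^{k+1}$ by $(\Omega_{K^v}^{1,\sm})^{\ox k+1}$, so surjectivity and the identification of the kernel descend from that theorem). Hence $\bar\DR'$, viewed as a two-term complex placed in degrees $0$ and $1$, is quasi-isomorphic to the single sheaf $\ker\bar d'$ in degree $0$. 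This immediately gives $\bbH^i(\bar\DR')=H^i(\ker\bar d')$ for all $i\ge 0$, which is statement (i).

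For statements (ii) and (iii) I would instead use the stupid (Hodge) filtration on the two-term complex, i.e. the distinguished triangle $\calO^s[-1]\to\bar\DR'\to\calO\ox\Omega\to$ on $\fl$. The associated long exact sequence in cohomology reads
\[
    0\to \bbH^0(\bar\DR')\to H^0(\calO\ox\Omega)\ov{\bar d'}\to H^0(\calO^s)\to \bbH^1(\bar\DR')\to H^1(\calO\ox\Omega)\to H^1(\calO^s)\to \bbH^2(\bar\DR')\to \cdots
\]
Since $\fl\isom\bbP^1$ has cohomological dimension $1$, the terms $H^i(\calO\ox\Omega)$ and $H^i(\calO^s)$ vanish for $i\ge 2$. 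Combining this with (i) (which gives $\bbH^2(\bar\DR')=H^2(\ker\bar d')=0$ and, if $w$ is not the highest weight of a one-dimensional algebraic representation, also pins down $H^i(\ker\bar d')$ via Lemma \ref{lem:v}(i)), the long exact sequence truncates. For (ii) I need the connecting-type vanishing: the map $H^0(\calO\ox\Omega)\to H^0(\calO^s)$ should be zero, exactly as in Proposition \ref{prop:DRcoh}(ii) and Proposition \ref{prop:DSJIJ}(ii). This is because $H^0(\fl,\calO_{K^v}^{\lan})\isom\tilde H^0(K^v,C)^{\lan}$ has $\GL_2(L)$ acting through the determinant, so $H^0$ of any of these $\theta_{\frh}$-isotypic twisted sheaves consists of (essentially locally algebraic) functions on an open subset of $L^\times$, forcing the relevant degree-zero differential to vanish (here one uses $\theta_{\frh}(z).f=z.f$ for scalar $z$, as in the cited proofs). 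Granting this, $\bbH^0(\bar\DR')=H^0(\calO\ox\Omega)$, which is (ii), and the remaining piece of the long exact sequence becomes $0\to H^0(\calO^s)/\operatorname{im}\to\bbH^1(\bar\DR')\to H^1(\calO\ox\Omega)\to H^1(\calO^s)\to 0$; since the degree-zero differential is zero, $H^0(\calO^s)/\operatorname{im}=H^0(\calO^s)$, but again by the determinant argument $H^0(\calO^s)=0$ when $k\neq 0$ and more generally this term does not contribute to a four-term exact sequence of the stated shape, yielding $0\to\bbH^1(\bar\DR')\to H^1(\calO\ox\Omega)\to H^1(\calO^s)\to 0$, which is (iii). (One should double-check the case $k=0$ separately, exactly as is implicitly done in Proposition \ref{prop:DRcohpp}.)

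The main obstacle, as in the companion propositions, is the vanishing of the degree-zero differential $H^0(\calO\ox\Omega)\to H^0(\calO^s)$ — equivalently, showing $H^0$ of the source is annihilated by $\bar d'$. This is not a formal consequence of cohomological dimension; it genuinely uses the input that global sections of $\calO_{K^v}^{\lan}$ in degree $0$ come from the completed $H^0$ of the Shimura curves, on which $\GL_2(L)$ acts through $\det$, together with compatibility of the arithmetic/horizontal action $\theta_{\frh}$ with this structure. Once that single fact is in hand, statements (i)--(iii) all follow from purely homological bookkeeping with the two spectral sequences (Hodge and conjugate filtrations) attached to the two-term complex, parallel to Propositions \ref{prop:DRcoh}, \ref{prop:DRcohprime}, \ref{prop:DSJIJ}. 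I would therefore write the proof as ``The proof is similar to Proposition \ref{prop:DSJIJ}; the key point is that $\bar d'$ is surjective, so that $\bar\DR'$ is quasi-isomorphic to $\ker\bar d'[0]$, and the degree-zero differential $H^0(\calO\ox\Omega)\to H^0(\calO^s)$ vanishes because the $\GL_2(L)$-action on $H^0(\fl,\calO_{K^v}^{\lan})$ factors through the determinant,'' leaving the routine diagram chase to the reader.
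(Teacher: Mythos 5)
Your proposal is correct and follows the same route the paper intends (the paper simply says ``similar to Proposition \ref{prop:DSJIJ}''). You correctly identify the two ingredients: the surjectivity of $\bar d'$ (inherited from Theorem \ref{thm:dbar} via the twist) forces the quasi-isomorphism $\bar\DR'\simeq\ker\bar d'[0]$, which gives (i) at once; and the stupid filtration triangle $\calO^s[-1]\to\bar\DR'\to\calO\ox\Omega\to$ together with $\bbH^2(\bar\DR')=H^2(\ker\bar d')=0$ (dimension $1$) gives (ii) and (iii) once the degree-$0$ behavior is controlled by the determinant argument, exactly as in Propositions \ref{prop:DRcoh} and \ref{prop:DSJIJ}.

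Two small streamlining remarks, neither of which is a gap. First, the cleanest way to get both (ii) and (iii) from the long exact sequence is to observe directly that $H^0(\calO^s)=0$: since $\calO^s$ is a subsheaf of $\calO_{K^v}^{\lan}$ sitting in the $\theta_\frh$-weight $s\cdot\chi$ isotypic piece and $H^0(\fl,\calO_{K^v}^{\lan})\isom\tilde H^0(K^v,C)^{\lan}$ has $\GL_2(L)$ acting through the determinant, the weight constraint kills $H^0$. This single vanishing automatically makes the connecting map into $H^0(\calO^s)$ zero (for (ii)) and the connecting map out of $H^0(\calO^s)$ vanish (for (iii)), so you do not need to argue separately that the degree-$0$ differential vanishes and then dispose of $H^0(\calO^s)$ — the latter implies the former. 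Second, your hedge ``one should double-check the case $k=0$ separately'' is unnecessary here: in Proposition \ref{prop:DRcoh} the weight of $\calO$ is $(-k,0)$, whose $h$-component is $-k$ and hence degenerates at $k=0$, which is why that proof singles out $k=0$; but the weight $s\cdot\chi$ of $\calO^s$ has $h$-component $k+2$, nonzero for all $k\ge 0$, so there is no exceptional case. Also note a slip: you cite ``Proposition \ref{prop:DRcohpp}'' (the statement being proved) in your opening sentence and in the final parenthetical; you presumably meant \ref{prop:DRcohprime} or \ref{prop:DSJIJ}.
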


Let $\DR^{\lalg}$ be the kernel of the $\DR\ov{\bar d}\to \DR'$. We can write it as the two term complex $\DR^{\lalg}=[\ker \bar d\ov{d}\to \ker \bar d']$, with the differential map given by the $\theta$-opeartor.
\begin{proposition}\label{prop:DRcohppp}
For the cohomology of $\DR^{\lalg}$, we have the following results:
\begin{enumerate}[(i)]
    \item $\bbH^0(\DR^{\lalg})=H^0(\ker \bar d)$.
    \item We have an exact sequence $0\to H^0(\ker \bar d')\to \bbH^1(\DR^{\lalg})\to H^1(\ker \bar d)\to 0$.
    \item We have an isomorphism $H^1(\ker \bar d')\aisom  H^2(\DR^{\lalg})$.
\end{enumerate}
\begin{proof}
We use the cohomology of the triangle $\ker \bar d'[-1]\to \DR^{\lalg}\to \ker \bar d$, and vanishing results on cohomology of $\omega_{K^v}^{w,\sm}$.
\end{proof}
\end{proposition}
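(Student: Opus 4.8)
The plan is to deduce the cohomology of the complex $\DR^{\lalg}=[\ker\bar d\ov{\theta}\to\ker\bar d']$ from the hypercohomology spectral sequence together with the vanishing results already established in Lemma \ref{lem:v}. First I would write down the exact triangle $\ker\bar d'[-1]\to\DR^{\lalg}\to\ker\bar d$ coming from the stupid filtration on the two-term complex, and take its long exact sequence in cohomology on $\fl$:
\begin{align*}
    0\to H^0(\DR^{\lalg})\to H^0(\ker\bar d)\ov{\theta}\to H^0(\ker\bar d')\to H^1(\DR^{\lalg})\to H^1(\ker\bar d)\ov{\theta}\to H^1(\ker\bar d')\to H^2(\DR^{\lalg})\to 0,
\end{align*}
where we already know $H^i(\ker\bar d)=H^i(\ker\bar d')=0$ for $i\ge 2$ since $\fl$ is one-dimensional and $\ker\bar d$, $\ker\bar d'$ are quasi-coherent, so the sequence terminates as written.

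The key input is to show that both maps $H^0(\ker\bar d)\ov{\theta}\to H^0(\ker\bar d')$ and $H^1(\ker\bar d)\ov{\theta}\to H^1(\ker\bar d')$ vanish, which immediately splits the long exact sequence into the three assertions. By Theorem \ref{thm:dbar}, $\ker\bar d\isom V^w\ox_C\omega_{K^v}^{sw^*,\sm}(k_{1,\iota})$ and $\ker\bar d'\isom V^w\ox_C\omega_{K^v}^{s\cdot sw^*,\sm}(k_{1,\iota})$, and the induced differential $d|_{\ker\bar d}$ is the usual $\theta$-operator on automorphic line bundles described above (a twist of $\theta\colon\omega^{sw,\sm}_{K^v}\to\omega^{s\cdot sw,\sm}_{K^v}$). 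Thus $H^*(\DR^{\lalg})$ computes the algebraic de Rham cohomology of the tower $\{\calX_{K^vK_v}\}$ with coefficients in the integrable connection $D^{w,\sm}_{K^v}$, or rather the part of it cut out by a $\theta$-adic weight condition. On each finite-level curve $\calX_{K^vK_v}$ the $\theta$-operator factors through the graded pieces of the Hodge filtration as in the Kodaira--Spencer set-up, and the point is that the map it induces on global sections and on $H^1$ is zero because the source and target of $\theta$ on cohomology are Hodge-filtration pieces attached to \emph{different} weights: concretely, $H^0(\ker\bar d)$ consists of holomorphic modular forms of weight $sw$ while $H^0(\ker\bar d')$ consists of those of weight $s\cdot sw$, and $\theta$ shifts the filtration so as to land in the ``wrong'' graded piece, hence the composite to $H^0(\ker\bar d')$ is zero; dually, via Serre duality on $\calX_{K^vK_v}$, the map on $H^1$ is the transpose of the corresponding map on $H^0$ for the dual weight, which is zero for the same reason. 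I would make this precise by mimicking the finite-level Hodge-theoretic argument used in the modular curve case, e.g. the computation behind \cite[Proposition 4.2.9]{PanII}, and then pass to the colimit over $K_v$, noting that all maps are $\GL_2(L)\times\Gal_L\times\bbT^S$-equivariant.

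Granting that both $\theta$-maps on cohomology vanish, the long exact sequence collapses: $H^0(\DR^{\lalg})\aisom H^0(\ker\bar d)$, which is assertion (i); the middle piece gives the short exact sequence $0\to H^0(\ker\bar d')\to H^1(\DR^{\lalg})\to H^1(\ker\bar d)\to 0$, which is (ii); and the tail gives $H^1(\ker\bar d')\aisom H^2(\DR^{\lalg})$, which is (iii). The main obstacle is the vanishing of the two $\theta$-maps on cohomology — the rest is purely formal from the spectral sequence and the dimension count on $\fl$. For that vanishing, I expect the cleanest route is to reduce, via the stalk identification of $d$ at $\infty$ with the $\theta$-operator on overconvergent forms (as recorded before Proposition \ref{prop:dord}) and the corresponding Serre-duality computation on each $\calX_{K^vK_v}$, to the classical fact that the $\theta$-operator on classical modular forms of a fixed weight lands in a graded piece disjoint from the holomorphic ones; one must take a little care that the weight $w$ here is not the highest weight of a one-dimensional representation (the degenerate case being handled separately, where $\ker\bar d$ and $\ker\bar d'$ both have $H^0$ equal to $M_0(K^v)$ as in Lemma \ref{lem:v}(i), and one checks directly that $\theta$ kills constants).
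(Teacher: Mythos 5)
Your overall strategy is the same as the paper's: take the long exact sequence in hypercohomology coming from the stupid-filtration triangle $\ker\bar d'[-1]\to\DR^{\lalg}\to\ker\bar d$, and show that the connecting maps $H^i(\ker\bar d)\to H^i(\ker\bar d')$ vanish. The problem is your explanation of \emph{why} they vanish in the generic case. You argue that $\theta$ ``shifts the filtration so as to land in the wrong graded piece'', so that the composite to $H^0(\ker\bar d')$ is zero. But $\theta\colon\omega_{K^v}^{sw^*,\sm}\to\omega_{K^v}^{s\cdot sw^*,\sm}$ is a single map of line bundles — there is no intermediate graded piece to fall out of — and the induced map on $H^0$ at each finite level is the classical theta-operator on modular forms, which is generically nonzero. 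Taken at face value, the argument would prove something false, and it cannot be salvaged as a statement about the map itself.

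What actually makes the long exact sequence collapse is Lemma~\ref{lem:v}(i), which is precisely the ``vanishing results on cohomology of $\omega_{K^v}^{w,\sm}$'' that the paper's proof invokes: when $w$ is not the highest weight of a one-dimensional representation of $\Res_{L/\bbQ_p}\GL_2$, one already has $H^0(\ker\bar d)=0$ and $H^1(\ker\bar d')=0$. These two facts kill the degree-$0$ connecting map because its \emph{source} is zero, and the degree-$1$ connecting map because its \emph{target} is zero, so all three assertions drop out of the six-term sequence with no further work — no analysis of $\theta$ as a map is needed. Your write-up invokes Lemma~\ref{lem:v}(i) only for the degenerate case $w\equiv 0$, but it is the generic-case vanishing from that same lemma that carries the proof. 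Your treatment of the degenerate case (both groups equal $M_0(K^v)$, $d$ kills locally constant sections, and the $H^1$ statement follows by Serre duality) is fine, but it is a secondary check, not a substitute for applying the lemma where the real content is.
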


Finally, we study the cohomology of $\DR^{\lalg}\to\DR\to\DR'\to$.
\begin{proposition}\label{prop:DRcohpppp}
We have an isomorphism $\bbH^0(\DR^{\lalg})=\bbH^0(\DR)$.

We have an exact sequence 
\[
    0\to \bbH^0(\DR')\to \bbH^1(\DR^{\lalg})\to\bbH^1(\DR)\to \bbH^1(\DR')\to \bbH^2(\DR^{\lalg})\to 0.
\]
\begin{proof}
For $\bbH^0(\DR^{\lalg})=\bbH^0(\DR)$, we have already shown that they are the same by direct calculation. Then the cohomology of the triangle $\DR^{\lalg}\to\DR\to\DR'\to$ will yields a long exact sequence, which stops at $\bbH^2(\DR)=0$.
\end{proof}
\end{proposition}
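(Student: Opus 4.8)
The plan is to deduce the statement from the long exact sequence in hypercohomology attached to the short exact sequence of complexes of sheaves on $\fl$
\begin{align*}
    0\to \DR^{\lalg}\to \DR\to \DR'\to 0.
\end{align*}
First I would justify that this is genuinely exact. By construction $\DR^{\lalg}=[\ker\bar d\xrightarrow{d}\ker\bar d']$ is the subcomplex of $\DR=[\calO\xrightarrow{d}\calO\ox\Omega]$ obtained by passing to the kernels of the vertical maps $\bar d\colon\calO\to\calO\ox\bar\Omega$ and $\bar d'\colon\calO\ox\Omega\to\calO^s$ in the large commutative diagram. Both of these vertical maps are surjective — this is precisely the exactness of the two middle columns of that diagram, which in turn comes from Theorem \ref{thm:dbar} applied in weight $\chi$ and $s\cdot\chi$ — so the termwise cokernels are $\calO\ox\bar\Omega$ in degree $0$ and $\calO^s$ in degree $1$. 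Finally, the commutativity $\bar d'\comp d=d'\comp\bar d$ (which is exactly the two descriptions of $I_k$ in Definition \ref{def:twistofd}) shows that the differential induced on the quotient complex is $d'$, so $\DR/\DR^{\lalg}\isom\DR'$ as complexes.

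Next I would write out the resulting long exact sequence
\begin{align*}
    0\to \bbH^0(\DR^{\lalg})\to \bbH^0(\DR)\to \bbH^0(\DR')\to \bbH^1(\DR^{\lalg})\to \bbH^1(\DR)\to \bbH^1(\DR')\to \bbH^2(\DR^{\lalg})\to \bbH^2(\DR)\to\cdots
\end{align*}
and truncate it. On the right, Proposition \ref{prop:DRcoh}(v) gives $\bbH^2(\DR)=0$, so the sequence stops after $\bbH^2(\DR^{\lalg})$. On the left, I would observe that the inclusion-induced map $\bbH^0(\DR^{\lalg})\to\bbH^0(\DR)$ is an isomorphism. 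Indeed, combining Proposition \ref{prop:DRcohppp}(i) with Proposition \ref{prop:DSJIJ}(i)--(ii) gives $\bbH^0(\DR^{\lalg})=H^0(\ker\bar d)=\bbH^0(\bar\DR)=H^0(\calO)$, while Proposition \ref{prop:DRcoh}(i) gives $\bbH^0(\DR)=H^0(\calO)$ as well, and the inclusion realizes the identity of $H^0(\calO)$. Hence the edge map $\bbH^0(\DR)\to\bbH^0(\DR')$ is zero, and the long exact sequence breaks off into
\begin{align*}
    0\to \bbH^0(\DR')\to \bbH^1(\DR^{\lalg})\to\bbH^1(\DR)\to \bbH^1(\DR')\to \bbH^2(\DR^{\lalg})\to 0,
\end{align*}
which is the claimed six-term exact sequence; the identification $\bbH^0(\DR^{\lalg})=\bbH^0(\DR)$ is the first assertion.

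All the inputs here — surjectivity of $\bar d$ and $\bar d'$, commutativity of the square, and the low-degree hypercohomology identifications in Propositions \ref{prop:DRcoh}, \ref{prop:DSJIJ} and \ref{prop:DRcohppp} — are already in place, so there is no substantial new obstacle; this is a formal consequence of the earlier bookkeeping. The one point that deserves a line of care is the identification $\bbH^0(\DR^{\lalg})=\bbH^0(\DR)$, since it is what forces the six-term sequence to start at $\bbH^0(\DR')$ rather than at $\bbH^0(\DR)$: this relies on the fact (noted in the proof of Proposition \ref{prop:DRcoh}) that the $\GL_2(L)$-action on $H^0(\calO)$ factors through the determinant, so that every global section of $\calO$ in weight $\chi$ is locally algebraic and hence simultaneously horizontal for $d$ and annihilated by $\bar d$, putting $H^0(\calO)$ inside $H^0(\ker d\cap\ker\bar d)=\bbH^0(\DR^{\lalg})$.
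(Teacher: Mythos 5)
Your proof is correct and takes essentially the same approach as the paper: form the short exact sequence of complexes $0\to\DR^{\lalg}\to\DR\to\DR'\to 0$ (coming from the surjectivity of $\bar d$ and $\bar d'$), take the long exact hypercohomology sequence, truncate on the right using $\bbH^2(\DR)=0$, and truncate on the left using the identification $\bbH^0(\DR^{\lalg})=\bbH^0(\DR)=H^0(\calO)$ established in the preceding propositions. You supply the details the paper leaves implicit (notably that the inclusion of complexes induces the identity on $H^0(\calO)$, via the locally algebraic nature of global sections), but the route is the same.
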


\subsection{Cohomology of the intertwining operator II - Hecke decompositions}
In this section, we study the Hecke action on the cohomology of $I:\calO\to\calO^s$ and calculate some Hecke isotypic spaces.

Recall that $\bbT^S$ is the abstract spherical Hecke algebra. Let $w=(k_{1,\eta},k_{2,\eta})_{\eta\in\Sigma}\in W$ be a weight. Recall that $M_w(K^v)=H^0(\omega_{K^v}^{w,\sm})$ denote the space of classical modular forms of tame level $K^v$ and weight $w$. We know that the Hecke action of $\bbT^S$ on $M_w(K^v)$ is semisimple, and there is a natural decomposition with respect to these Hecke eigenvalues. Let $\sigma_w^{K^v}$ be the union of the Hecke eigenvalues appear in $M_w(K^v)$ and $M_{s\cdot w}(K^v)$. One can use the Eichler-Shimura decomposition to check that the set of Hecke eigenvalues appear in $\bbH^i(\DR^{\lalg})$ for $i=0,1,2$ is $\sigma_{w}^{K^v}$. For $\lambda$ inside $\sigma_{w}^{K^v}$, let $\pi$ be the automorphic representation of $G(\bbA)$ over $C$ associated to $\lambda$. Then we have 
\[
    M_{w}(K^v)[\lambda]=(\pi^{v}_f)^{K^v}\ox_C\pi_v
\]
where $\pi_f$ is the non-archimedean part of the automorphic representation of $G(\bbA)$ associated to $\lambda$, and $\pi_v$ is a smooth admissible representation of $\GL_2(L)$, which is a local component of $\pi$ at $v$. Recall that $v$ is a place of $F^+$ over $p$ we fixed before. 

Recall that $\calA_{\bar G,w}^{K^v}$ denote the space of automorphic forms for $\bar G$, of tame level $K^v$ and weight $w$. By the Jacquet--Langlands correspondence, the Hecke action on $\calA_{\bar G,w}^{K^v}$ is semisimple, and the set of Hecke eigenvalues inside $\calA_{\bar G,w}^{K^v}$ is a subset of $\sigma^{K^v}_{w}$. We give a description of $\calA_{\bar G,w}^{K^v}$. Suppose that $\pi_v$ is a special series or a supercuspidal representation. Then $\pi_v$ can be transferred to a smooth irreducible representation $\tau_v$ of $D_L^\times$  by the local Jacquet--Langlands correspondence, and we have 
\[
    \calA^{K^v}_{\bar G,w}[\lambda]=(\pi_f^v)^{K^v}\ox_C\tau_v.
\]
Here the normalization is that $\pi_v$ and $\tau_v$ have the same central character.

Next, we want to analysis the Hecke action on $H^*_{\rig}(\Ig_{K^v},D_{K^v}^{w,\sm,\dagger})$. We first prove the following lemma which allows us to control the Hecke action on $H^1_{\rig}(\Ig_{K^v},D_{K^v}^{w,\sm,\dagger})$.
\begin{lemma}\label{lem:pvc000}
There exists a $\GL_2(L)^0$-equivariant and $\bbT^S$-equivariant exact sequence 
\begin{align*}
    &\bbH^1(\DR^{\lalg})\to (\Ind^{\GL_2(L)}_{\bar B(L)}H^1_{\rig}(\Ig_{K^v},D_{K^v}^{w,\sm,\dagger}))^\infty\ox _CV^w\\
    \to& (H^2_{\dR,c}(\calM_{\Dr,\infty}^{(0)})\ox_C\calA_{\bar G,w}^{K^v})^{\calO_{D_L}^\times}\ox_C V^w\to \bbH^2(\DR^{\lalg})\to 0.
\end{align*}
\begin{proof}
We know that we have a morphism of exact sequences 
\begin{center}
\small
\begin{tikzpicture}[descr/.style={fill=white,inner sep=1.5pt}]
    \matrix (m) [
        matrix of math nodes,
        row sep=2.5em,
        column sep=2.5em,
        text height=1.5ex, 
        text depth=0.25ex
    ]
    { 0 & H^1(\ker d) & \bbH^1(\DR) & H^0(\coker d) & 0  \\
      0 & H^1(\ker d') & \bbH^1(\DR') & H^0(\coker d') & 0 \\
    };
    \path[->,font=\scriptsize]
    (m-1-1) edge (m-1-2)
    (m-1-2) edge (m-1-3)
    (m-1-3) edge (m-1-4)
    (m-1-4) edge (m-1-5)
    (m-2-1) edge (m-2-2)
    (m-2-2) edge (m-2-3)
    (m-2-3) edge (m-2-4)
    (m-2-4) edge (m-2-5)
    ;
    \path[->,font=\scriptsize]
    (m-1-2) edge node[right] {$\bar d$} (m-2-2)
    (m-1-3) edge node[right] {$\bar d$} (m-2-3)
    (m-1-4) edge node[right] {$\bar d$} (m-2-4)
    ;
\end{tikzpicture}.
\end{center}
By the snake lemma, we get an exact sequence 
\begin{align*}
    \bbH^1(\DR^{\lalg})\to(\Ind^{\GL_2(L)}_{\bar B(L)}H^1_{\rig}(\Ig_{K^v},D_{K^v}^{w,\sm,\dagger}))^\infty\ox _CV^w\to M\to \bbH^2(\DR^{\lalg})\to 0,
\end{align*}
where
\begin{itemize}
    \item $M=\coker (H^1(\ker d)\to H^1(\ker d'))$,
    \item as $H^0(\coker d)\to H^0(\coker d')$ is surjective, so that the term after $\bbH^2(\DR^{\lalg})$ is zero.
\end{itemize}

To study the morphism $H^1(\ker d)\to H^1(\ker d')$, note that we have another morphism of exact sequences 
\begin{center}
\small
\begin{tikzpicture}[descr/.style={fill=white,inner sep=1.5pt}]
    \matrix (m) [
        matrix of math nodes,
        row sep=2.5em,
        column sep=2.5em,
        text height=1.5ex, 
        text depth=0.25ex
    ]
    { 0 & H^0(\ker d) & H^0(i_*i^*\ker d) & H^1(j_!j^*\ker d) & H^1(\ker d) & 0  \\
      0 & H^0(\ker d') & H^0(i_*i^*\ker d') & H^1(j_!j^*\ker d') & H^1(\ker d') & 0 \\
    };
    \path[->,font=\scriptsize]
    (m-1-1) edge (m-1-2)
    (m-1-2) edge (m-1-3)
    (m-1-3) edge (m-1-4)
    (m-1-4) edge (m-1-5)
    (m-1-5) edge (m-1-6)
    (m-2-1) edge (m-2-2)
    (m-2-2) edge (m-2-3)
    (m-2-3) edge (m-2-4)
    (m-2-4) edge (m-2-5)
    (m-2-5) edge (m-2-6)
    ;
    \path[->,font=\scriptsize]
    (m-1-2) edge node[right] {$\bar d$} (m-2-2)
    (m-1-3) edge node[right] {$\bar d$} (m-2-3)
    (m-1-4) edge node[right] {$\bar d$} (m-2-4)
    (m-1-5) edge node[right] {$\bar d$} (m-2-5)
    ;
\end{tikzpicture}
\end{center}
As $\bar d:H^0(i_*i^*\ker d)\to H^0(i_*i^*\ker d')$ is surjective, we see that $M$ is also the cokernel of $\bar d:H^1(j_!j^*\ker d)\to H^1(j_!j^*\ker d')$. From this one deduces the claim.
\end{proof}
\end{lemma}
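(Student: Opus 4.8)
\textbf{Proof proposal for Lemma \ref{lem:pvc000}.}

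The plan is to realize the desired exact sequence as the end of a long exact sequence produced by the snake lemma, applied to the vertical map $\bar d$ acting on the two short exact sequences describing $\bbH^1(\DR)$ and $\bbH^1(\DR')$ from Propositions \ref{prop:DRcoh}(iv) and \ref{prop:DRcohprime}(iv). First I would write down these two four-term exact sequences $0\to H^1(\ker d)\to \bbH^1(\DR)\to H^0(\coker d)\to 0$ and $0\to H^1(\ker d')\to \bbH^1(\DR')\to H^0(\coker d')\to 0$, noting that they fit into a commutative ladder via the maps induced by $\bar d$ on $\ker d\to \ker d'$, on $\DR\to \DR'$, and on $\coker d\to \coker d'$. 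Applying the snake lemma to this ladder, and using that $H^0(\coker d)\to H^0(\coker d')$ is surjective (which follows from the surjectivity of $\bar d$ on the flag-variety part, i.e.\ from Theorem \ref{thm:dbar} and the explicit identifications $\coker d=i_*(i^*\omega_{\fl}^w\ox\calH^1_{\ord})(k_{1,\iota})$, $\coker d'=i_*(i^*\omega_{\fl}^{s\cdot w}\ox\calH^1_{\ord})(k_{1,\iota})$ together with the surjectivity of the induced map on the flag variety), one obtains an exact sequence ending in $\bbH^1(\DR^{\lalg})\to (\Ind^{\GL_2(L)}_{B(L)}H^1_{\rig}(\Ig_{K^v},D_{K^v}^{w^*,\sm,\dagger}))^\infty\ox_C V^w\to M\to \bbH^2(\DR^{\lalg})\to 0$, where $M=\coker(H^1(\ker d)\to H^1(\ker d'))$ and the middle identification uses Proposition \ref{prop:dord} to write $H^1(\coker d)$ type terms, together with Proposition \ref{prop:DRcohppp} to relate the snake kernel/cokernel to $\bbH^1(\DR^{\lalg})$ and $\bbH^2(\DR^{\lalg})$.

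The second and crucial step is to identify $M$ with $(H^2_{\dR,c}(\calM_{\Dr,\infty}^{(0)})\ox_C\calA_{\bar G,w^*}^{K^v})^{\calO_{D_L}^\times}\ox_C V^w$. For this I would use the exact sequences for $\ker d$ and $\ker d'$ coming from the decomposition into the $\Omega$-part and the $\bbP^1(L)$-part (the four-term sequences $0\to j_!\ker d|_\Omega\to \ker d\to i_*\ker d|_{\bbP^1(L)}\to 0$ and likewise for $\ker d'$), take the associated long exact cohomology sequences, and set them in a commutative ladder via $\bar d$. Since $\bar d$ acts on the $\bbP^1(L)$-part through differential operators on the flag variety (this is recorded in the bullet-point description preceding the lemma), the map $\bar d: H^0(i_*i^*\ker d)\to H^0(i_*i^*\ker d')$ is surjective — this is the flag-variety $\theta$-operator in degree $0$, and surjectivity on these induced sheaves on $\bbP^1(L)$ is exactly the statement used for $\coker$. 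Hence, by chasing the ladder (using $H^i(j_!\ker d|_\Omega)=0$ for $i\neq 1$ from Lemma \ref{lem:v}(iii), and likewise for $\ker d'$), $M$ is also the cokernel of $\bar d: H^1(j_!\ker d|_\Omega)\to H^1(j_!\ker d'|_\Omega)$. Now I would invoke the $p$-adic uniformization: $j_!\ker d|_\Omega$ and $j_!\ker d'|_\Omega$ are, by Proposition \ref{prop:donss}, the extensions by zero of $(\calA_{\bar G,w^*}^{K^v}\ox_C\omega_{\Dr}^{w,\sm})^{\calO_{D_L}^\times}(k_{1,\iota})$ and $(\calA_{\bar G,w^*}^{K^v}\ox_C\omega_{\Dr}^{s\cdot w,\sm})^{\calO_{D_L}^\times}(k_{1,\iota})$, and $\bar d$ restricts to the Drinfeld-tower differential operator $d_{\Dr}^{k+1}$ (via Theorem \ref{d5}, which matches $\bar d$ on the supersingular locus with $d_{\Dr}^{k+1}$ up to a nonzero constant). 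Therefore $H^1(j_!\ker d|_\Omega)\to H^1(j_!\ker d'|_\Omega)$ is computed by the hypercohomology of the Drinfeld de Rham complex on $\calM_{\Dr,\infty}^{(0)}$ with coefficients twisted by $\calA_{\bar G,w^*}^{K^v}$, and its cokernel is $(H^2_{\dR,c}(\calM_{\Dr,\infty}^{(0)})\ox_C\calA_{\bar G,w^*}^{K^v})^{\calO_{D_L}^\times}\ox_C V^w$, where the compactly supported de Rham cohomology and the twist $V^w$ appear because $j_!$ corresponds to compact supports along $\Omega\hookrightarrow\fl$ and the $V^w$ factor is split off from $\calO_{K^v}^{\iota\-\lan,(\ox_{\eta\neq\iota}V_\eta^{(k_{1,\eta},k_{2,\eta})})\-\lalg,\chi}$ as in the local description of $\calO_{K^v}^{\eta\-\lan}$.

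The main obstacle, as I see it, is the last identification — i.e.\ correctly matching the cokernel of $d_{\Dr}^{k+1}$ (twisted by automorphic forms on $\bar G$ and taken $\calO_{D_L}^\times$-invariant) with the compactly supported de Rham cohomology $H^2_{\dR,c}(\calM_{\Dr,\infty}^{(0)})$, keeping track of the Tate twists, the weight twist $V^w$, and the $\GL_2(L)^0$- and $\bbT^S$-equivariance throughout. This requires care with: (a) the fact that $j_!$ on $\fl$ yields compactly supported cohomology on $\Omega$, which under $p$-adic uniformization becomes compactly supported de Rham cohomology of the (Stein) Drinfeld curves; (b) Theorem \ref{d5} and Propositions \ref{d3}, \ref{d4} to see that the relevant two-term complex on the Drinfeld side is exactly a shifted/twisted de Rham complex, whose $\bbH^2$ in the $\calO_{D_L}^\times$-invariants is $H^2_{\dR,c}$; and (c) verifying all maps are $\bbT^S$-equivariant, which follows since every construction in \S\ref{uni}–\S\ref{intertw} is Hecke-equivariant away from $p$ (the Hecke action away from $v$ being untouched by all the $p$-adic manipulations). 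Once these bookkeeping points are settled, the snake-lemma output directly yields the stated four-term exact sequence.
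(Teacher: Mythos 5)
Your proof follows essentially the same route as the paper: the same ladder of four-term sequences for $\bbH^1(\DR)$ and $\bbH^1(\DR')$, the snake lemma, the observation that $\ORD$ is surjective so the final cokernel vanishes, and then the second ladder (with $i_*i^*$ and $j_!j^*$) to trade $M=\coker(\SS)$ for the cokernel on the $j_!$ level before invoking Proposition~\ref{prop:donss} and the Drinfeld-side differential operators. One small slip: the proposition relating $\ker(\bar d)$ and $\coker(\bar d)$ on $\bbH^1$ to $\bbH^1(\DR^{\lalg})$ and $\bbH^2(\DR^{\lalg})$ is Proposition~\ref{prop:DRcohpppp} (the long exact sequence for $\DR^{\lalg}\to\DR\to\DR'$), not Proposition~\ref{prop:DRcohppp} (which computes $\bbH^*(\DR^{\lalg})$ internally).
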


\begin{lemma}
The Hecke eigenvalues appear in $H^i_{\rig}(\Ig_{K^v},D_{K^v}^{w,\sm,\dagger})$ for $i=0,1$ is a subset of $\sigma_{w}^{K^v}$.
\begin{proof}
Consider the exact sequence 
\begin{align*}
H^0(\ker d)\to H^0(i_*i^*\ker d)\to H^1(j_!j^*\ker d),
\end{align*}
where $H^0(\ker d)=H^0(\DR)=H^0(\DR^{\lalg})$, and the Hecke eigenvalues appear in $H^1(j_!j^*\ker d)$ is a subset of $\sigma_{w}^{K^v}$. From this we deduce the results.
\end{proof}
\end{lemma}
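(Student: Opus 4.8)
The plan is to exploit the exact sequence
\[
    H^0(\ker d)\to H^0(i_*i^*\ker d)\to H^1(j_!j^*\ker d)
\]
coming from the triangle $j_!j^*\ker d\to \ker d\to i_*i^*\ker d$ on $\fl$, which is the basic localization sequence for the open immersion $j:\Omega\inj\fl$ and the closed complement $i:\bbP^1(L)\inj\fl$. All three maps are $\bbT^S$-equivariant, since the Hecke operators away from $p$ act on every term through the Hecke correspondences on unitary Shimura curves and commute with the stratification. Under the identifications recalled earlier, $H^0(i_*i^*\ker d)\isom H^0(\bbP^1(L),\ker d|_{\bbP^1(L)})$ is, up to twist, an induction of the rigid cohomology group $H^0_{\rig}(\Ig_{K^v},D_{K^v}^{w^*,\sm,\dagger})$ (Proposition~\ref{prop:dord}), so the Hecke eigensystems occurring in $H^0_{\rig}(\Ig_{K^v},D_{K^v}^{w^*,\sm,\dagger})$ are exactly those occurring in $H^0(i_*i^*\ker d)$. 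Thus it suffices to bound the eigensystems of $H^0(i_*i^*\ker d)$, and the displayed sequence reduces this to bounding those of $H^0(\ker d)$ and of $H^1(j_!j^*\ker d)$.

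First I would handle $H^0(\ker d)$. By Proposition~\ref{prop:DRcoh}(iii) we have $H^0(\ker d)=\bbH^0(\DR)$, and by Proposition~\ref{prop:DRcohpppp} this equals $\bbH^0(\DR^{\lalg})$, which by Proposition~\ref{prop:DRcohppp}(i) is $H^0(\ker\bar d)$. By Lemma~\ref{lem:v}(i), $H^0(\ker\bar d)$ is either zero (if $w$ is not the highest weight of a one-dimensional algebraic representation) or, after a twist by a power of $\mathrm{t}$ reducing to $w=0$, equal to $M_0(K^v)$; in either case its Hecke eigensystems lie in $\sigma_{sw^*}^{K^v}$ by the very definition of $\sigma_{sw^*}^{K^v}$ as the union of eigensystems in $M_{w^*}(K^v)$ and $M_{s\cdot w^*}(K^v)$ (note $M_0(K^v)$ consists of eigensystems already tracked by this set; one invokes the remark after Lemma~\ref{lem:v} together with Serre duality on each $\calX_{K^vK_v}$ to match them up). Second, for $H^1(j_!j^*\ker d)=H^1(j_!\ker d|_\Omega)$, one uses $\ker d|_\Omega=(\calA^{K^v}_{\bar G,w^*}\ox_C\omega_{\Dr}^{w,\sm})^{\calO_{D_L}^\times}(k_{1,\iota})$ from Proposition~\ref{prop:donss}, so that after the $p$-adic uniformization $\calO_{K^v}|_\Omega\isom\bigoplus_{i\in I}\calO_{\LT}$ the group $H^1(j_!\ker d|_\Omega)$ is built out of the compactly supported coherent cohomology of Drinfeld curves with coefficients twisted by $\calA^{K^v}_{\bar G,w^*}$. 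The Hecke action factors through $\calA^{K^v}_{\bar G,w^*}$, whose eigensystems lie in $\sigma_{sw^*}^{K^v}$ by the Jacquet--Langlands correspondence (as already observed in the paragraph preceding Lemma~\ref{lem:pvc000}).

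Combining these two inputs with exactness of the displayed three-term sequence, every Hecke eigensystem in $H^0(i_*i^*\ker d)$ occurs either in $H^0(\ker d)$ or in $H^1(j_!j^*\ker d)$, hence in $\sigma_{sw^*}^{K^v}$; since $H^0(i_*i^*\ker d)$ surjects onto (a twist of an induction of) $H^0_{\rig}(\Ig_{K^v},D_{K^v}^{w^*,\sm,\dagger})$ and the Hecke action is visible on the rigid cohomology factor, the same bound holds for $H^0_{\rig}$. For the $i=1$ case one argues identically using Lemma~\ref{lem:pvc000}: there $\bbH^1(\DR^{\lalg})$ (whose eigensystems lie in $\sigma_{sw^*}^{K^v}$ by Propositions~\ref{prop:DRcohppp} and~\ref{prop:DRcohpppp}, reducing to cohomology of automorphic line bundles) maps into $(\Ind^{\GL_2(L)}_{B(L)}H^1_{\rig}(\Ig_{K^v},D_{K^v}^{w^*,\sm,\dagger}))^\infty\ox_C V^w$ with cokernel a subquotient of $(H^2_{\dR,c}(\calM_{\Dr,\infty}^{(0)})\ox_C\calA^{K^v}_{\bar G,w^*})^{\calO_{D_L}^\times}\ox_C V^w$, whose Hecke action again factors through $\calA^{K^v}_{\bar G,w^*}$ and hence has eigensystems in $\sigma_{sw^*}^{K^v}$.

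The step I expect to be the main obstacle is the bookkeeping in the $H^0(\ker d)$ piece: one must check that the one-dimensional-weight exceptional case $w=0$ really does not introduce new eigensystems outside $\sigma_{sw^*}^{K^v}$ — i.e.\ that the Hecke eigensystems of $M_0(K^v)$ (equivalently of $H^0$ of the relevant automorphic line bundle, via the remark after Lemma~\ref{lem:v}) are genuinely among those already recorded — which requires unwinding the definition of $\sigma_{sw^*}^{K^v}$ and a Serre-duality comparison on the finite-level curves $\calX_{K^vK_v}$. Everything else is a diagram chase through the already-established cohomology computations of the differential operators.
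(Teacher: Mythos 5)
Your proof is correct and takes essentially the same route as the paper: for $i=0$ the paper uses exactly the displayed three-term exact sequence, bounds $H^0(\ker d)=\bbH^0(\DR^{\lalg})$ and $H^1(j_!j^*\ker d)$ by $\sigma_{sw^*}^{K^v}$, and reads off the bound on the middle term, while the $i=1$ case is left implicit and follows from Lemma~\ref{lem:pvc000} just as you describe. The step you flag as a possible obstacle is not one: the identification of the Hecke eigensystems in $\bbH^i(\DR^{\lalg})$ for $i=0,1,2$ with $\sigma_{sw^*}^{K^v}$ is already stated, with its Serre-duality justification, in the opening paragraph of the section, so $H^0(\ker d)=\bbH^0(\DR^{\lalg})$ is handled in one line and the detour through $H^0(\ker\bar d)$ and Lemma~\ref{lem:v}(i) is unnecessary.
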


\begin{theorem}\label{thm:H0Ig}
Let $\lambda\in \sigma_{w}^{K^v}$ such that $\pi_v$ is generic. Then 
\begin{enumerate}[(i)]
    \item $H^0_{\rig}(\Ig_{K^v},D_{K^v}^{w,\sm,\dagger})[\lambda]=0$.
    \item $H^1_{\rig}(\Ig_{K^v},D_{K^v}^{w,\sm,\dagger})\tilde{[\lambda]}\isom H^1_{\rig}(\Ig_{K^v},D_{K^v}^{w,\sm,\dagger})[\lambda]$. Here $(-)\tilde{[\lambda]}$ denote the generalized Hecke eigenspace with eigenvalue $\lambda$.
\end{enumerate}
Moreover, we have an equality of $\bar B(L)\times \bbT^S$-representations
\begin{align*}
    H^1_{\rig}(\Ig_{K^v},D_{K^v}^{w,\sm,\dagger})[\lambda]^{\ss}=(\pi_f^v)^{K^v}\ox_C J_{\bar N(L)}(\pi_v)^{\ss},
\end{align*}
where $(-)^{\ss}$ denote the semi-simplification for the $\bar B(L)$-action.
\begin{proof}
First of all, we note that as we assume $\pi_v$ is generic, $\lambda$ will not appear in $\bbH^i(\DR^{\lalg})$ for $i=0,2$. Suppose that $\pi_v$ is supercuspidal. We have an exact sequence 
\begin{align*}
    &\bbH^1(\DR^{\lalg})[\lambda]\to (\Ind^{\GL_2(L)}_{\bar B(L)}H^1_{\rig}(\Ig_{K^v},D_{K^v}^{w,\sm,\dagger})\tilde{[\lambda]})^\infty\ox _CV^w\\
    \to& (H^2_{\dR,c}(\calM_{\Dr,\infty}^{(0)})\ox_C\calA_{\bar G,w}^{K^v}[\lambda])^{\calO_{D_L}^\times}\ox_C V^w\to 0. 
\end{align*}
As $\bbH^1(\DR^{\lalg})[\lambda]$ is supercuspidal, we see that the map
\begin{align*}
    \bbH^1(\DR^{\lalg})[\lambda]\to (\Ind^{\GL_2(L)}_{\bar B(L)}H^1_{\rig}(\Ig_{K^v},D_{K^v}^{w,\sm,\dagger})\tilde{[\lambda]})^\infty\ox _CV^w
\end{align*}
is zero. But as the $\GL_2(L)^0$-action on the last term factors through the determinant map, we see that the last two term must be zero. In particular, $H^1_{\rig}(\Ig_{K^v},D_{K^v}^{w ,\sm,\dagger})[\lambda]=0$. As the Jacquet module of $\bbH^1(\DR^{\lalg})[\lambda]$ is zero, by Theorem \ref{thm:IgJacquet}, we see that $H^0_{\rig}(\Ig_{K^v},D_{K^v}^{w ,\sm,\dagger})[\lambda]=0$.

Next we assume that $\pi_v$ is special. As $\calA_{\bar G,w }^{K^v}[\lambda]$ is the trivial $\calO_{D_L}^\times$-representation, we see that
\[
    (H^2_{\dR,c}(\calM_{\Dr,\infty}^{(0)})\ox_C\calA_{\bar G,w }^{K^v}[\lambda])^{\calO_{D_L}^\times}=H^2_{\dR,c}(\calM_{\Dr,0}^{(0)})
\]
is also trivial as a $\GL_2(L)^0$-representation. Also we know that $\bbH^1(\DR^{\lalg})[\lambda]$ is two copy of the smooth Steinberg representation of $\GL_2(L)$ twisted by a finite dimensional locally algebraic representation. This shows that only one copy of Steinberg representation will survive in the image of the map $\bbH^1(\DR^{\lalg})[\lambda]\to (\Ind^{\GL_2(L)}_{\bar B(L)}H^1_{\rig}(\Ig_{K^v},D_{K^v}^{w ,\sm,\dagger})\tilde{[\lambda]})^\infty\ox _CV^w$, and therefore 
\[
    H^1_{\rig}(\Ig_{K^v},D_{K^v}^{w ,\sm,\dagger})\tilde{[\lambda]}=|\cdot|^{-1}\ox |\cdot|
\]
as the trivial $\bar B(L)^0:=\bar B(L)\cap \GL_2(L)^0$-representation. By Theorem \ref{thm:IgJacquet}, we have an identity of virtual $T(L)$-representations
\[
    2|\cdot|^{-1}\ox |\cdot|=2(|\cdot|^{-1}\ox |\cdot|-H^0_{\rig}(\Ig_{K^v},D_{K^v}^{w ,\sm,\dagger})\tilde{[\lambda]}),
\]
and therefore $H^0_{\rig}(\Ig_{K^v},D_{K^v}^{w ,\sm,\dagger})\tilde{[\lambda]}=0$.

Now we assume that $\pi_v$ is a principal series. As $\bbH^1(\DR^{\lalg})[\lambda]$ is two copy of some principal series, and the $\GL_2(L)^0$-action on $H^2_{\dR,c}(\calM_{\Dr,\infty}^{(0)})$ factors through the determinant map, we see that $\dim H^1_{\rig}(\Ig_{K^v},D_{K^v}^{w ,\sm,\dagger})\tilde{[\lambda]}\le 2$. Combining with the identity in Theorem \ref{thm:IgJacquet}, we see that $H^0_{\rig}(\Ig_{K^v},D_{K^v}^{w,\sm,\dagger})\tilde{[\lambda]}=0$.

Finally, by Theorem \ref{thm:IgJacquet}, we deduce that the Hecke action on $H^1(\Ig_{K^v},D_{K^v}^{w ,\sm,\dagger})$ is semi-simple when $\pi_v$ is generic.
\end{proof}
\end{theorem}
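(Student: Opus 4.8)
\textbf{Proof proposal for Theorem \ref{thm:H0Ig}.}

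The plan is to exploit the exact sequence from Lemma \ref{lem:pvc000}, which (after taking generalized $\lambda$-eigenspaces, using that $\lambda$ does not occur in $\bbH^i(\DR^{\lalg})$ for $i=0,2$ since $\pi_w$ is generic) reads
\[
    \bbH^1(\DR^{\lalg})[\lambda]\to (\Ind^{\GL_2(L)}_{B(L)}H^1_{\rig}(\Ig_{K^v},D_{K^v}^{w^*,\sm,\dagger})\tilde{[\lambda]})^\infty\ox_C V^w\to (H^2_{\dR,c}(\calM_{\Dr,\infty}^{(0)})\ox_C\calA_{\bar G,w^*}^{K^v}[\lambda])^{\calO_{D_L}^\times}\ox_C V^w\to 0,
\]
together with the equality of virtual $B(L)\times\bbT^S$-representations from Theorem \ref{thm:IgJacquet} relating $J_{N(L)}$ of the de Rham cohomology to rigid cohomology of the Igusa curves. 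First I would recall the structure of $\bbH^1(\DR^{\lalg})[\lambda]$: by the explicit cohomology computations in \S\ref{cohint} (Proposition \ref{prop:DRcohppp}, \ref{prop:DRcohpppp}) together with the $\lambda$-isotypic decomposition $M_{s\cdot sw^*}(K^v)[\lambda]=(\pi_f^v)^{K^v}\ox_C\pi_w$, the space $\bbH^1(\DR^{\lalg})[\lambda]$ is, up to a finite-dimensional locally algebraic twist by $V^w$, two copies of $\pi_w$ (one from $H^0(\ker\bar d')$-type terms, one from $H^1(\ker\bar d)$-type terms via Serre duality on each $\calX_{K^vK_v}$). Meanwhile the last term in the exact sequence is a $\GL_2(L)^0$-representation whose action factors through the determinant, because $H^2_{\dR,c}(\calM_{\Dr,\infty}^{(0)})$ is one-dimensional for $\GL_2(L)^0$ (the $D_L^\times$-action there is what carries content).

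The argument then splits into the three generic cases. If $\pi_w$ is supercuspidal, the map out of $\bbH^1(\DR^{\lalg})[\lambda]$ into the induced representation must vanish (a supercuspidal representation has no nonzero map to a parabolic induction unless it appears as a subquotient of the inducing data, which it does not here), so the last two terms are zero; hence $H^1_{\rig}(\Ig_{K^v},D_{K^v}^{w^*,\sm,\dagger})[\lambda]=0$, and then Theorem \ref{thm:IgJacquet} forces $H^0_{\rig}[\lambda]=0$ as well since the Jacquet module of the supercuspidal $\bbH^1(\DR^{\lalg})[\lambda]$ is zero. If $\pi_w$ is special (a twist of Steinberg), then $\calA_{\bar G,w^*}^{K^v}[\lambda]$ is a character of $\calO_{D_L}^\times$, so the coinvariant term collapses to $H^2_{\dR,c}(\calM_{\Dr,0}^{(0)})$ which is trivial for $\GL_2(L)^0$; one copy of Steinberg survives in the induced representation, giving $H^1_{\rig}\tilde{[\lambda]}=|\cdot|\ox|\cdot|^{-1}$ as a $B(L)^0$-representation, and Theorem \ref{thm:IgJacquet} with $J_{N(L)}(\mathrm{St})=|\cdot|\ox|\cdot|^{-1}$ kills $H^0_{\rig}\tilde{[\lambda]}$. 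If $\pi_w$ is a principal series, then $\bbH^1(\DR^{\lalg})[\lambda]$ is two copies of a principal series and, since the target coinvariant term is determinant-type, $\dim J_{N(L)}(\text{image})\le 2=\dim J_{N(L)}(\pi_w)^{\oplus 2}$, so comparing with the virtual identity in Theorem \ref{thm:IgJacquet} again yields $H^0_{\rig}\tilde{[\lambda]}=0$. In all three cases, vanishing of $H^0_{\rig}\tilde{[\lambda]}$ plus the virtual identity pins down $H^1_{\rig}(\Ig_{K^v},D_{K^v}^{w^*,\sm,\dagger})[\lambda]^{\ss}=(\pi_f^v)^{K^v}\ox_C J_{N(L)}(\pi_w)^{\ss}$, and semisimplicity of the Hecke action follows because each side now has no higher generalized eigenspace (having matched dimensions exactly).

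The main obstacle, as I see it, is the case analysis on the map $\bbH^1(\DR^{\lalg})[\lambda]\to (\Ind^{\GL_2(L)}_{B(L)}H^1_{\rig}\tilde{[\lambda]})^\infty\ox_C V^w$: I need a clean input on smooth representation theory of $\GL_2(L)$ guaranteeing that a generic $\pi_w$ cannot map nontrivially into the relevant parabolic induction of $H^1_{\rig}\tilde{[\lambda]}$, except in the controlled Steinberg/principal-series situations, and I need to know precisely the $\GL_2(L)^0$-structure (in particular the determinant-type behaviour) of $(H^2_{\dR,c}(\calM_{\Dr,\infty}^{(0)})\ox_C\calA_{\bar G,w^*}^{K^v}[\lambda])^{\calO_{D_L}^\times}$. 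The latter is essentially the statement that $H^2_{\dR,c}$ of a (component of a) Drinfeld curve is one-dimensional with trivial $\GL_2(L)^0$-action, which should follow from Poincaré duality and the structure of $\calM_{\Dr,0}^{(0)}\isom\Omega$; the former is a standard but delicate compatibility between the local Langlands correspondence, Jacquet modules, and the specific parabolic inductions appearing here, and keeping track of the twist by $V^w$ and the normalizations (central characters of $\pi_w$ versus $\tau_w$) is where the bookkeeping is most error-prone. Everything else — the cohomology long exact sequences, the snake lemma diagram chase in Lemma \ref{lem:pvc000}, and the virtual-representation comparison — is routine given the results already established in \S\ref{intertw} and \S\ref{cohint}.
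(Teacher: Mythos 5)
Your proposal follows essentially the same route as the paper's proof: both rely on the exact sequence from Lemma \ref{lem:pvc000}, the virtual identity of Theorem \ref{thm:IgJacquet}, and the supercuspidal/special/principal-series trichotomy together with the observation that $(H^2_{\dR,c}(\calM_{\Dr,\infty}^{(0)})\ox_C\calA_{\bar G,w^*}^{K^v}[\lambda])^{\calO_{D_L}^\times}$ is determinant-type for $\GL_2(L)^0$. The only slip is numerical: in the principal-series case you write $\dim J_{N(L)}(\pi_w)^{\oplus 2}=2$ when it equals $4$, and the bound actually obtained is $2\dim H^1_{\rig}\tilde{[\lambda]}\le 4$ (the factor $2$ coming from the two-step Jacquet filtration of a parabolic induction), whence $\dim H^1_{\rig}\tilde{[\lambda]}\le 2$; this does not affect the argument or conclusion.
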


\begin{corollary}
Let $\lambda\in \sigma_{w}^{K^v}$ be a Hecke eigenvalue such that $\pi_v$ is generic. We have the following results:
\begin{enumerate}[(i)]
    \item $H^1(\ker d)\tilde{[\lambda]}=H^1(j_!j^*\ker d)\tilde{[\lambda]}=H^1(j_!j^*\ker d)[\lambda]$.
    \item $H^1(\ker d')\tilde{[\lambda]}=H^1(j_!j^*\ker d')\tilde{[\lambda]}=H^1(j_!j^*\ker d')[\lambda]$.
    \item $\bbH^0(\DR')\tilde{[\lambda]}=0$.
\end{enumerate}
\end{corollary}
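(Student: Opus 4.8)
The plan is to deduce all three statements by combining Theorem \ref{thm:H0Ig} with the exact sequences recorded in Lemma \ref{lem:v}(iv) and Proposition \ref{prop:DRcohprime}, together with the vanishing of $H^0_{\rig}$ on the generalized eigenspace. First I would handle (i). Recall from the proof of Lemma \ref{lem:v}(iv) the exact sequence
\[
    0\to H^0(\ker d)\to H^0(\ker d|_{\bbP^1(L)})\to H^1(j_!\ker d|_{\Omega})\to H^1(\ker d)\to 0,
\]
and that $H^0(\ker d|_{\bbP^1(L)})\isom i_*(i^*\omega_{\fl}^{w}\ox\calH^0_{\ord}(K^v,w^*))(k_{1,\iota})$, whose global sections are (up to the fixed twists) an induced representation built from $H^0_{\rig}(\Ig_{K^v},D_{K^v}^{w^*,\sm,\dagger})$. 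By Theorem \ref{thm:H0Ig}(i), after passing to the generalized $\lambda$-eigenspace this term vanishes, since $\pi_w$ is generic; hence $H^0(\ker d)\tilde{[\lambda]}=0$ and the map $H^1(j_!\ker d|_{\Omega})\tilde{[\lambda]}\to H^1(\ker d)\tilde{[\lambda]}$ becomes an isomorphism. This gives $H^1(\ker d)\tilde{[\lambda]}=H^1(j_!j^*\ker d)\tilde{[\lambda]}$. For the second equality $H^1(j_!j^*\ker d)\tilde{[\lambda]}=H^1(j_!j^*\ker d)[\lambda]$, i.e. semisimplicity of the Hecke action on this generalized eigenspace, I would invoke Theorem \ref{thm:H0Ig}(ii) and its proof: the Jacquet--Langlands description $\ker d|_{\Omega}=(\calA^{K^v}_{\bar G,w^*}\ox_C\omega_{\Dr}^{w,\sm})^{\calO_{D_L}^\times}(k_{1,\iota})$ shows that $H^1(j_!j^*\ker d)$ is computed from $H^1_{\dR,c}$-type cohomology of the Drinfeld tower twisted by the semisimple $\bbT^S$-module $\calA^{K^v}_{\bar G,w^*}$, on which the Hecke action is semisimple by the Jacquet--Langlands correspondence (as recorded in the previous subsection); so the generalized eigenspace equals the eigenspace.

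Statement (ii) is entirely parallel: replace $d$ by $d'$ throughout, use the second exact sequence of Lemma \ref{lem:v}(iv), the identification $H^0(\ker d'|_{\bbP^1(L)})\isom i_*(i^*\omega_{\fl}^{s\cdot w}\ox\calH^0_{\ord}(K^v,w^*))(k_{1,\iota})$, and the same vanishing $H^0_{\rig}(\Ig_{K^v},D_{K^v}^{w^*,\sm,\dagger})\tilde{[\lambda]}=0$ from Theorem \ref{thm:H0Ig}(i), to conclude $H^0(\ker d')\tilde{[\lambda]}=0$ and hence $H^1(\ker d')\tilde{[\lambda]}=H^1(j_!\ker d'|_{\Omega})\tilde{[\lambda]}=H^1(j_!j^*\ker d')[\lambda]$, the last equality again by semisimplicity coming from $\calA^{K^v}_{\bar G,w^*}$.

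Finally, for (iii), recall $\bbH^0(\DR')=H^0(\ker d')$ by Proposition \ref{prop:DRcohprime}(iii), so $\bbH^0(\DR')\tilde{[\lambda]}=H^0(\ker d')\tilde{[\lambda]}$, which we have just shown vanishes as a byproduct of proving (ii). Thus (iii) is immediate once (ii) is in place, and I would simply note this at the end.

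The main obstacle I expect is bookkeeping around the generalized eigenspaces: one must check that forming $\tilde{[\lambda]}$ is exact on the relevant finite-length $\bbT^S$-modules and commutes with the long exact sequences and with $i_*,j_!$ (which is automatic since these functors on sheaves supported on the profinite set $\bbP^1(L)$ / on $\Omega$ are exact and $\bbT^S$-equivariant), and that the Hecke eigenvalues occurring in the ``error terms'' $H^0(\ker d)$, $H^0(\ker d')$ really are governed by $H^0_{\rig}(\Ig_{K^v},-)$ rather than picking up extra contributions --- this is exactly where the genericity of $\pi_w$ enters, via Theorem \ref{thm:H0Ig}(i) and the Remark following Lemma \ref{lem:v} excluding one-dimensional contributions. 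Everything else is a formal consequence of the cohomology exact sequences already established.
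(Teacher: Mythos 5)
Your proposal is correct and follows exactly the intended line of argument (the paper states this corollary without proof, as a direct consequence of Theorem \ref{thm:H0Ig} and the exact sequences from Lemma \ref{lem:v}(iv) and Proposition \ref{prop:DRcohprime}). The one point worth flagging: you correctly read Theorem \ref{thm:H0Ig}(i) as giving vanishing of the generalized eigenspace $H^0_{\rig}(\Ig_{K^v},D_{K^v}^{w^*,\sm,\dagger})\tilde{[\lambda]}$, which its proof does establish in all three cases and is what the later proof of Theorem \ref{thm:kerI1} also records, even though the statement is typeset with $[\lambda]$; and your semisimplicity step, while it briefly gestures at Theorem \ref{thm:H0Ig}(ii), really rests on the (correct) observation that the $\bbT^S$-action on $H^1(j_!\ker d|_\Omega)$ factors through the semisimple $\bbT^S$-module $\calA^{K^v}_{\bar G,w^*}$.
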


Combining with the results we proved before, we get the following commutative diagrams with exact rows and columns:
\begin{theorem}\label{thm:kerI1}
Let $\lambda\in \sigma_{w}^{K^v}$ be a Hecke eigenvalue such that $\pi_v$ is generic. Let $I^1$ denote the intertwining operator on the level of cohomology groups: 
\begin{align*}
    I^1:H^1(\calO)\to H^1(\calO^s).
\end{align*}
Then we have a commutative diagram with exact rows and columns:
\begin{center}
    \begin{tikzpicture}[descr/.style={fill=white,inner sep=1.5pt}]
        \matrix (m) [
            matrix of math nodes,
            row sep=1.5em,
            column sep=1.5em,
            text height=1.5ex, text depth=0.25ex
        ]
        { 
        H^0(\calO\ox\Omega)[\lambda] &\bbH^1(\DR^{\lalg})[\lambda]&\bbH^1(\bar \DR)[\lambda]  &  \\
        H^0(\calO\ox\Omega)[\lambda] &\bbH^1(\DR)\tilde{[\lambda]} & H^1(\calO)\tilde{[\lambda]} &H^1(\calO\ox\Omega)\tilde{[\lambda]} \\
        & \bbH^1(\DR')\tilde{[\lambda]} & H^1(\calO\ox\bar\Omega)\tilde{[\lambda]} & H^1(\calO^s)\tilde{[\lambda]} \\
        };
        \path[right hook->, font=\scriptsize]
        (m-1-1) edge   (m-1-2)
        (m-2-1) edge   (m-2-2)
        (m-3-2) edge   (m-3-3)
        ;
        \path[->, font=\scriptsize]
        (m-2-2) edge  (m-2-3)
        (m-1-2) edge  (m-2-2)
        (m-2-2) edge  (m-3-2)
        (m-1-3) edge  (m-2-3)
        (m-2-3) edge  (m-2-4)
        (m-3-3) edge  (m-3-4)
        (m-2-3) edge  (m-3-3)
        (m-2-4) edge  (m-3-4)
        (m-2-3) edge node[above] {$I$} (m-3-4)
        ;
        \path[->>, font=\scriptsize]
        (m-1-2) edge  (m-1-3)
        
        ;
        \draw[double distance = 1.5pt] 
        (m-1-1) -- (m-2-1)
        ;
    \end{tikzpicture},
\end{center}
with 
\begin{align*}
    H^0(\calO\ox\Omega)[\lambda]\isom (\pi_f^{K^v})\ox_C\pi_v.
\end{align*}
Therefore, $\ker I^1\tilde{[\lambda]}$ fits into the following two exact sequences: 
\begin{align*}
    0&\to H^0(\calO\ox\Omega)[\lambda]\to \bbH^1(\DR)\tilde{[\lambda]}\to  \ker I^1\tilde{[\lambda]}\to 0,\\
    0&\to H^0(\calO\ox\Omega)[\lambda]\to \ker I^1\tilde{[\lambda]}\to \bbH^1(\DR')\tilde{[\lambda]}\to 0.
\end{align*}
\begin{proof}
This is a direct consequence of the result that $H^0_{\rig}(\Ig_{K^v},D_{K^v}^{w ,\sm,\dagger})\tilde{[\lambda]}=0$ in this case by \ref{thm:H0Ig}, and results in Proposition \ref{prop:DRcoh}, \ref{prop:DRcohprime}, \ref{prop:DSJIJ} \ref{prop:DRcohpp}, \ref{prop:DRcohppp}, \ref{prop:DRcohpppp}.
\end{proof}
\end{theorem}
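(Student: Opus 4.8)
The plan is to assemble the big commutative diagram from the six hypercohomology computations (Propositions \ref{prop:DRcoh}, \ref{prop:DRcohprime}, \ref{prop:DSJIJ}, \ref{prop:DRcohpp}, \ref{prop:DRcohppp}, \ref{prop:DRcohpppp}) after localizing at the generalized Hecke eigenspace $\tilde{[\lambda]}$, using Theorem \ref{thm:H0Ig} to kill the terms coming from $H^0_{\rig}(\Ig_{K^v},D_{K^v}^{w^*,\sm,\dagger})$. First I would recall the factorization $I = \bar d'^{k+1}\comp d^{k+1} = d'^{k+1}\comp \bar d^{k+1}$ from Definition \ref{def:twistofd}, which says exactly that $I$ fits into the $2\times 2$ square of complexes $\DR^{\lalg}\to\DR\to\DR'$ (horizontal via $d$) and $\DR^{\lalg}\to\bar\DR\to\bar\DR'$ (vertical via $\bar d$) sitting inside the large diagram in \S \ref{cohint}. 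Taking $\bbH^1$ of the rows and columns of this square and localizing at $\tilde{[\lambda]}$ produces the three-by-four array; the rows are the long exact sequences of $\DR^{\lalg}\to\DR\to\DR'$ and its barred analogue, and the columns come from $\bbH^\bullet$ of $\DR\ov{\bar d}\to\DR'$ and $\DR^{\lalg}\ov{\bar d}\to (\text{shift})$.

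Next I would feed in the vanishing inputs. Since $\pi_w$ is generic, $\lambda\notin\sigma$ for the one-dimensional-weight exceptional locus, so $H^0(\calO)\tilde{[\lambda]}=0$ and hence (Proposition \ref{prop:DRcoh}(i), \ref{prop:DSJIJ}(ii)) $\bbH^0(\DR)\tilde{[\lambda]}=\bbH^0(\bar\DR)\tilde{[\lambda]}=0$; moreover $\bbH^2(\DR)=\bbH^2(\DR')=0$ and $\bbH^i(\bar\DR)=H^i(\ker\bar d)$. By Theorem \ref{thm:H0Ig}(i) and the preceding corollary, $H^0_{\rig}(\Ig_{K^v},D_{K^v}^{w^*,\sm,\dagger})\tilde{[\lambda]}=0$, so the connecting maps in Lemma \ref{lem:v}(iv) give $H^1(\ker d)\tilde{[\lambda]}\isom H^1(j_!j^*\ker d)\tilde{[\lambda]}$ and likewise for $d'$, and $\bbH^0(\DR')\tilde{[\lambda]}=H^0(\ker d')\tilde{[\lambda]}=0$ since $H^0(\ker d'|_{\bbP^1(L)})$ is built from $H^0_{\rig}$. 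Plugging these into the rows of Propositions \ref{prop:DRcohprime}(ii) and \ref{prop:DRcohpp}(iii) collapses the relevant terms, and the identification $H^0(\calO\ox\Omega)[\lambda]\isom(\pi_f^{K^v})\ox_C\pi_w$ is the classicality statement $M_{s\cdot sw^*}(K^v)[\lambda]=(\pi_f^v)^{K^v}\ox_C\pi_w$ from \S 4.6 combined with the Serre-duality identification $H^0(\calO\ox\Omega)=H^0(\omega_{K^v}^{s\cdot sw^*,\sm})$ at weight $\chi=(-k,0)$. The two displayed exact sequences for $\ker I^1\tilde{[\lambda]}$ then come from chasing the diagram: the first is the image factorization $\bbH^1(\DR)\tilde{[\lambda]}\surj \ker I^1\tilde{[\lambda]}$ with kernel the left column $H^0(\calO\ox\Omega)[\lambda]$, and the second reads $\ker I^1\tilde{[\lambda]}$ off as an extension of $\bbH^1(\DR')\tilde{[\lambda]}$ by $H^0(\calO\ox\Omega)[\lambda]$ using exactness of the bottom row (whose $\bbH^0$ term vanishes).

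The main obstacle I expect is bookkeeping rather than conceptual: one must verify that all the connecting and comparison maps in the three-by-four array genuinely commute (this is where $I=\bar d'\comp d = d'\comp\bar d$ is used, and where the snake-lemma arguments of Lemma \ref{lem:pvc000} get reused), and one must be careful that "localize at $\tilde{[\lambda]}$" is exact and commutes with all the $\bbH^\bullet$ functors --- this is fine because $\bbT^S$ acts through a finite-dimensional algebra on each cohomology group (all the pieces are admissible smooth or coherent with finite-dimensional pieces after bounding the level), so generalized eigenspace decomposition is a direct summand. A second minor point is to confirm that genericity of $\pi_w$ forces $\lambda\notin\sigma_{sw^*}^{K^v}$-exceptional-part in the precise sense needed for $\bbH^i(\DR^{\lalg})[\lambda]=0$ for $i=0,2$ (used already in the proof of Theorem \ref{thm:H0Ig}); granting that, the diagram degenerates exactly as claimed and the two exact sequences fall out. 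I would close by noting that the general weight $\chi=(k_{2,\iota},k_{1,\iota})$ reduces to $\chi=(-k,0)$ by twisting with the invertible section $\mathrm{t}\in\calO_{K^v}^{\iota\-\lan,V^{(1,1)}_\iota\-\lalg}$, as in \S \ref{cohint}.
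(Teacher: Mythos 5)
Your proposal is correct and follows essentially the same assembly as the paper's (very terse) proof, namely feeding the $H^0_{\rig}$-vanishing of Theorem \ref{thm:H0Ig} and the genericity hypothesis into the six hypercohomology long exact sequences and chasing the resulting diagram. One small point worth tightening: when you read $\ker I^1\tilde{[\lambda]}$ off as an extension of $\bbH^1(\DR')\tilde{[\lambda]}$ by $H^0(\calO\ox\Omega)[\lambda]$, what the chase literally produces as the subobject is $\bbH^1(\bar\DR)[\lambda]=\bbH^1(\DR^\lalg)[\lambda]/H^0(\calO\ox\Omega)[\lambda]$ (via the surjection $\bbH^1(\DR^\lalg)\to\ker I^1$ with kernel $H^0(\calO\ox\Omega)$ and the short exact sequence of column 2); the identification of this subobject with $H^0(\calO\ox\Omega)[\lambda]$ is the Serre-duality step implicit in Lemma \ref{lem:v}(i), not the equality $H^0(\calO\ox\Omega)=H^0(\ker\bar d')$ you label as ``Serre duality'' — that one is just the computation of $\ker\bar d'$ via Theorem \ref{thm:dbar}. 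Both identifications are needed, and both are available, so the argument goes through; you have just slightly mislabeled which step uses duality.
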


Recall that in Proposition \ref{prop:DRcoh}, \ref{prop:DRcohprime}, there are exact sequences 
\begin{align*}
    0\to H^1(\ker d)\to \bbH^1(\DR)\to H^0(\coker d)\to 0,\\
    0\to H^1(\ker d')\to \bbH^1(\DR')\to H^0(\coker d')\to 0.
\end{align*}
We denote the restriction of $\bar d:\bbH^1(\DR)\to \bbH^1(\DR')$ on $H^1(\ker d)$ as $\mathrm{SS}$, and the restriction of $\bar d$ on $H^0(\coker d)$ as $\mathrm{ORD}$.
\begin{theorem}\label{thm:kerI2}
Let $\lambda\in \sigma_{w}^{K^v}$ be a Hecke eigenvalue such that $\pi_v$ is generic. We have a morphism of exact sequences 
\begin{center}
\begin{tikzpicture}[descr/.style={fill=white,inner sep=1.5pt}]
    \matrix (m) [
        matrix of math nodes,
        row sep=2.5em,
        column sep=2.5em,
        text height=1.5ex, 
        text depth=0.25ex
    ]
    { 0 & H^1(\ker d)[\lambda] & \bbH^1(\DR)\tilde{[\lambda]} & H^0(\coker d)[\lambda] & 0  \\
      0 & H^1(\ker d')[\lambda] & \bbH^1(\DR')\tilde{[\lambda]} & \coker \mathrm{SS}[\lambda] & 0 \\
    };
    \path[->,font=\scriptsize]
    (m-1-1) edge (m-1-2)
    (m-1-2) edge (m-1-3)
    (m-1-3) edge (m-1-4)
    (m-1-4) edge (m-1-5)
    (m-2-1) edge (m-2-2)
    (m-2-2) edge (m-2-3)
    (m-2-3) edge (m-2-4)
    (m-2-4) edge (m-2-5)
    ;
    \path[->,font=\scriptsize]
    (m-1-2) edge node[right] {$\SS$} (m-2-2)
    (m-1-3) edge node[right] {$\bar d$} (m-2-3)
    (m-1-4) edge node[right] {$\ORD$} (m-2-4)
    ;
\end{tikzpicture}
\end{center}
which is equivariant for the $\GL_2(L)$-action, $\Gal_L$-action, and the Hecke action. Taking the associated long exact sequence, we get an exact sequence 
\[
    0\to \ker\SS[\lambda]\to \bbH^1(\DR^{\lalg})[\lambda]\to \ker \ORD[\lambda]\to \coker \SS[\lambda]\to 0,
\] 
where each term we introduced before is given as follows:
\begin{itemize}
    \item $\bbH^1(\DR^{\lalg})[\lambda]=(\pi_f^v)^{K^v}\ox_C\pi_v^{\oplus 2}\ox_CV^w$,
    \item $\ker \mathrm{SS}[\lambda]=(\pi_f^v)^{K^v}\ox_C(H^1_{\dR,c}(\calM_{\Dr,\infty}^{(0)})\ox_C\tau_v)^{\calO_{D_L}^\times}\ox_C V^w$,
    \item $\coker \mathrm{SS}[\lambda]=(\pi_f^v)^{K^v}\ox_C(H^2_{\dR,c}(\calM_{\Dr,\infty}^{(0)})\ox_C\tau_v)^{\calO_{D_L}^\times}\ox_C V^w$,
    \item $H^1(\ker d)[\lambda]=(\pi_f^v)^{K^v}\ox_C(H^1(j_!\omega_{\Dr}^{w,\sm})\ox_C\tau_v)^{\calO_{D_L}^\times}$,
    \item $H^1(\ker d')[\lambda]=(\pi_f^v)^{K^v}\ox_C(H^1(j_!\omega_{\Dr}^{s\cdot w,\sm})\ox_C\tau_v)^{\calO_{D_L}^\times}$,
    \item $\ker\mathrm{ORD}[\lambda]=(\pi_f^v)^{K^v}\ox_C (\Ind^{\GL_2(L)}_{\bar B(L)}H^1_{\rig}(\Ig_{K^v},D_{K^v}^{w ,\sm,\dagger})[\lambda])^\infty\ox_C V^w$,
    \item $H^0(\coker d)[\lambda]=(\pi_f^v)^{K^v}\ox_C(\Ind^{\GL_2(L)}_{\bar B(L)}H^1_{\rig}(\Ig_{K^v},D_{K^v}^{w ,\sm,\dagger})[\lambda]\cdot z_{2,\iota}^{-k})^{\iota\-\lan}\ox_C V^{w^{\iota}}$, where $V^{w^{\iota}}=\ox_{\eta\neq\iota}V_\eta^{w_\eta}$,
    \item $H^0(\coker d')[\lambda]=(\pi_f^v)^{K^v}\ox_C(\Ind^{\GL_2(L)}_{\bar B(L)}H^1_{\rig}(\Ig_{K^v},D_{K^v}^{w ,\sm,\dagger})[\lambda]\cdot z_{1,\iota}^{-k-1}z_{2,\iota})^{\iota\-\lan}\ox_C V^{w^{\iota}}$.    
\end{itemize}
\begin{proof}
This follows by applying the snake lemma to the morphism of short exact sequences induced by $\bar d$, as in the proof of Lemma \ref{lem:pvc000}. But now we have a clearer understanding of each term in the induced long exact sequence.
\end{proof}
\end{theorem}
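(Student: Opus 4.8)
The plan is to deduce Theorem \ref{thm:kerI2} by applying the snake lemma to the morphism of short exact sequences supplied by the operator $\bar d$ acting between the two presentations of $\bbH^1(\DR)\tilde{[\lambda]}$ and $\bbH^1(\DR')\tilde{[\lambda]}$, exactly in the pattern already used in Lemma \ref{lem:pvc000}. First I would record the two short exact sequences from Proposition \ref{prop:DRcoh}(iv) and Proposition \ref{prop:DRcohprime}(iv):
\begin{align*}
    0\to H^1(\ker d)\to \bbH^1(\DR)\to H^0(\coker d)\to 0,\qquad
    0\to H^1(\ker d')\to \bbH^1(\DR')\to H^0(\coker d')\to 0,
\end{align*}
and observe that $\bar d:\DR\to \DR'$ induces a vertical map of complexes, hence a commuting ladder between these two rows, whose left vertical arrow is $\SS$ (the restriction of $\bar d$ to $H^1(\ker d)$, valued in $H^1(\ker d')$) and whose middle vertical arrow is $\bar d$ on $\bbH^1(\DR)$. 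The right vertical arrow is then the induced map $H^0(\coker d)\to H^0(\coker d')$, which is surjective because $\bar d|_{\coker d}:\coker d\to \coker d'$ is surjective on the level of sheaves (recall $\coker d=i_*(i^*\omega_{\fl}^w\ox \calH^1_{\ord}(K^v,w^*))(k_{1,\iota})$ and the differential operator on the flag variety side is surjective); I would then replace the bottom-right term by $\coker\SS[\lambda]$ after passing to the generalized $\lambda$-eigenspace, noting $\bbH^1(\DR')\tilde{[\lambda]}\to\coker\SS[\lambda]$ is the natural quotient once we identify the image of $H^1(\ker d')$ with the image of $\SS$. Here Theorem \ref{thm:H0Ig} is essential: it guarantees $H^0_{\rig}(\Ig_{K^v},D_{K^v}^{w^*,\sm,\dagger})\tilde{[\lambda]}=0$ and semisimplicity of the Hecke action on $H^1_{\rig}$, which controls the connecting maps and lets us pass freely between $\tilde{[\lambda]}$ and $[\lambda]$ on the relevant pieces.

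Next I would apply the snake lemma to this ladder (on the $\lambda$-isotypic parts), producing the six-term exact sequence
\begin{align*}
    0\to \ker\SS[\lambda]\to \bbH^1(\DR^{\lalg})[\lambda]\to \ker\ORD[\lambda]\to \coker\SS[\lambda]\to \coker(\bbH^1(\DR)\to\bbH^1(\DR'))[\lambda]\to 0,
\end{align*}
then note that the rightmost cokernel vanishes because the right vertical map $\ORD$ is surjective (as just argued) and $\bar d:\bbH^1(\DR)\to\bbH^1(\DR')$ is then surjective by the snake argument, so the sequence truncates to the stated four-term exact sequence $0\to \ker\SS[\lambda]\to \bbH^1(\DR^{\lalg})[\lambda]\to \ker\ORD[\lambda]\to \coker\SS[\lambda]\to 0$. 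The identification $\ker\bar d$-kernel of the middle map with $\bbH^1(\DR^{\lalg})$ comes from Proposition \ref{prop:DRcohpppp}: $\DR^{\lalg}$ is the fiber of $\DR\to\DR'$, so $\bbH^1(\DR^{\lalg})$ sits in the long exact sequence $0\to\bbH^0(\DR')\to\bbH^1(\DR^{\lalg})\to\bbH^1(\DR)\to\bbH^1(\DR')$, and Theorem \ref{thm:H0Ig} (via the corollary giving $\bbH^0(\DR')\tilde{[\lambda]}=0$ for $\pi_w$ generic) kills the $\bbH^0(\DR')$ contribution, so $\bbH^1(\DR^{\lalg})[\lambda]\isom\ker(\bbH^1(\DR)\to\bbH^1(\DR'))[\lambda]$, which is precisely the kernel of the middle snake-map.

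Then I would identify each of the eight terms explicitly using the Hecke-module computations of the previous subsection. For $\bbH^1(\DR^{\lalg})[\lambda]$: by construction $\DR^{\lalg}=[\ker\bar d\ov{\theta}\to\ker\bar d']$ with $\ker\bar d=V^w\ox\omega_{K^v}^{sw^*,\sm}(k_{1,\iota})$, so $\bbH^1(\DR^{\lalg})[\lambda]$ is governed by $\bbH^1$ of the de Rham complex of the automorphic bundle, which (as virtual representation, then genuine since $\pi_w$ is generic) equals $(\pi_f^v)^{K^v}\ox_C\pi_w^{\oplus 2}\ox_C V^w$ — here the $\oplus 2$ is the standard feature of the de Rham cohomology of a curve with an automorphic local system, matched with $M_{s\cdot sw^*}(K^v)[\lambda]=(\pi_f^v)^{K^v}\ox_C\pi_w$. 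For $\ker\SS[\lambda]$ and $\coker\SS[\lambda]$: $\SS$ is $\bar d$ restricted to $H^1(\ker d)$, and using the exact sequence $0\to j_!\ker d|_\Omega\to\ker d\to i_*\ker d|_{\bbP^1(L)}\to 0$ together with $H^0_{\rig}(\Ig)\tilde{[\lambda]}=0$ we reduce $H^1(\ker d)[\lambda]$ to $H^1(j_!\ker d|_\Omega)[\lambda]=(\pi_f^v)^{K^v}\ox_C(H^1(j_!\omega_{\Dr}^{w,\sm})\ox_C\tau_w)^{\calO_{D_L}^\times}$ by the $p$-adic uniformization (Proposition \ref{prop:donss}) and Jacquet--Langlands; the Drinfeld-tower differential operator $\bar d$ on these, whose source/target are $H^1(j_!\omega_{\Dr}^{w,\sm})$ and $H^1(j_!\omega_{\Dr}^{s\cdot w,\sm})$, has kernel and cokernel given by $H^1_{\dR,c}$ and $H^2_{\dR,c}$ of $\calM_{\Dr,\infty}^{(0)}$ respectively (this is the Drinfeld-side analogue of the de Rham complex computation, using Serre duality on the Stein curve $\calM_{\Dr,n}$). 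For $\ker\ORD[\lambda]$: $\ORD$ is $\bar d$ on $H^0(\coker d)$; using Proposition \ref{prop:dord} these are locally $\iota$-analytic inductions of $H^1_{\rig}(\Ig_{K^v},D_{K^v}^{w^*,\sm,\dagger})[\lambda]$ twisted by the characters $z_{i,\iota}$, and the smooth induction appears as the kernel because the locally $\iota$-analytic $\theta$-type operator on the flag variety has kernel the smooth sections (Proposition \ref{prop:dord} combined with the remark on locally $\iota$-analytic induction). I expect the main obstacle to be the bookkeeping of the twists — matching the precise characters $z_{2,\iota}^k$, $z_{1,\iota}^{k+1}z_{2,\iota}^{-1}$ and the Tate twists $(k_{1,\iota})$ across the three geometric inputs (automorphic bundles on $\calX_{K^v}$, the Drinfeld tower, and the flag variety / Igusa side), and confirming that all identifications are simultaneously $\GL_2(L)\times\Gal_L\times\bbT^S$-equivariant; once the reduction to the special case $\chi=(-k,0)$ is in place (twisting by $\mathrm{t}$ for the general case, as noted before the statement), and Theorem \ref{thm:H0Ig} is invoked to kill the unwanted $H^0_{\rig}$ and $\bbH^0(\DR')$ pieces, the snake lemma does the structural work and the remaining content is the identification of each term from the earlier propositions.
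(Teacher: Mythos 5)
Your overall strategy is the same as the paper's: apply the snake lemma to the ladder of short exact sequences from Propositions \ref{prop:DRcoh}(iv), \ref{prop:DRcohprime}(iv), use Theorem \ref{thm:H0Ig} (together with the preceding corollary giving $\bbH^0(\DR')\tilde{[\lambda]}=0$) to identify the kernel of the middle vertical map with $\bbH^1(\DR^{\lalg})[\lambda]$ and to pass from $\tilde{[\lambda]}$ to $[\lambda]$, and then read off each of the eight terms from the earlier computations in \S 4. That much is exactly what the paper's one-line proof points to.

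There is, however, a genuine gap in how you truncate the six-term snake sequence to four terms. You write that ``the rightmost cokernel vanishes because the right vertical map $\ORD$ is surjective ... and $\bar d:\bbH^1(\DR)\to\bbH^1(\DR')$ is then surjective by the snake argument.'' Surjectivity of $\ORD$ only gives $\coker\ORD[\lambda]=0$; the snake lemma then tells you that $\coker\SS[\lambda]\surj\coker(\bar d)[\lambda]$ is surjective, not that $\coker(\bar d)[\lambda]=0$. The vanishing you need is $\coker(\bbH^1(\DR)\to\bbH^1(\DR'))[\lambda]=\bbH^2(\DR^{\lalg})[\lambda]=0$, which requires the separate input from Propositions \ref{prop:DRcohppp} and \ref{prop:DRcohpppp} combined with Lemma \ref{lem:v}(i): one identifies $\bbH^2(\DR^{\lalg})\isom H^1(\ker\bar d')$, which vanishes at $\lambda$ because $\pi_w$ generic forces $w$ not to be the highest weight of a one-dimensional representation. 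You should add this step explicitly; without it the truncation to the stated four-term sequence is unjustified. Relatedly, your justification for why the bottom-right entry of the ladder can be taken to be $\coker\SS[\lambda]$ (``identify the image of $H^1(\ker d')$ with the image of $\SS$'') does not hold as written, since $H^1(\ker d')\inj\bbH^1(\DR')$ is already injective and its image is not the image of $\SS$; but the paper's own display is imprecise on this point (compare with the bottom row of the ladder in Lemma \ref{lem:pvc000}, where the entry is $H^0(\coker d')$), so this is less a fault of your argument than of the formulation. The four-term exact sequence, which is the operative content, does come out correctly once the vanishing of $\bbH^2(\DR^{\lalg})[\lambda]$ is in place.
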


So our situation follows the same pattern as in \cite[Section 5]{PanII}, except that we have to replace locally analytic for $\GL_2(\bbQ_p)$-action by locally $\iota$-analytic for the $\GL_2(L)$-action.

\begin{corollary}\label{classical}
There is a generalized eigenspace decomposition
\[
    \ker I\isom \bigoplus_{\lambda\in\sigma_{w}^{K^v}}\ker I\tilde{[\lambda]}.
\]
In particular, $\ker I$ is classical in the sense that the Hecke eigenvalues appear in $\ker I$ is classical.
\begin{proof}
This follows from Theorem \ref{thm:kerI1}, \ref{thm:kerI2}.
\end{proof}
\end{corollary}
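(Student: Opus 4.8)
\textbf{Proof proposal for Corollary \ref{classical}.}

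The plan is to deduce the generalized eigenspace decomposition of $\ker I$ from the decomposition of $\ker I^1$ established in Theorem \ref{thm:kerI1} and Theorem \ref{thm:kerI2}, combined with the vanishing of the degree-$0$ part of the intertwining operator. First I would recall that $\ker I$ naturally sits in the hypercohomology formalism developed in \S\ref{cohint}: since $I = \bar d'\comp d = d'\comp\bar d$ is a composition of differential operators, its kernel is computed by the hypercohomology of the associated complexes. More precisely, the degree-$0$ piece $H^0(\fl,\ker I) \subset H^0(\calO)$ vanishes when $\chi = (-k,0)$ with $k > 0$ (and is a space of locally algebraic functions on an open subset of $L^\times$ in general, by the argument in Proposition \ref{prop:DRcoh}(ii)), while the interesting part is $\ker I^1 = \ker(I^1\colon H^1(\calO)\to H^1(\calO^s))$. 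Thus the content of the corollary reduces to showing that $\ker I^1$ admits a generalized Hecke eigenspace decomposition indexed by $\sigma_{sw^*}^{K^v}$, and that each $\lambda$ occurring is classical (i.e., comes from an automorphic representation of $G(\bbA)$).

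The key steps, in order, are: (1) identify $\ker I$ as an extension built from $\bbH^1(\DR)$ and $\bbH^1(\DR')$ via the two short exact sequences at the end of Theorem \ref{thm:kerI1}, namely $0\to H^0(\calO\ox\Omega)[\lambda]\to \bbH^1(\DR)\tilde{[\lambda]}\to \ker I^1\tilde{[\lambda]}\to 0$ and $0\to H^0(\calO\ox\Omega)[\lambda]\to \ker I^1\tilde{[\lambda]}\to \bbH^1(\DR')\tilde{[\lambda]}\to 0$; (2) observe that by the description of cohomology of $\DR$, $\DR'$, $\DR^{\lalg}$ in Propositions \ref{prop:DRcoh}--\ref{prop:DRcohpppp}, together with Theorem \ref{thm:H0Ig} and Theorem \ref{thm:IgJacquet}, the Hecke eigenvalues appearing in all of $\bbH^*(\DR)$, $\bbH^*(\DR')$, $\bbH^*(\DR^{\lalg})$, $H^*_{\rig}(\Ig_{K^v},D_{K^v}^{w^*,\sm,\dagger})$, $H^*(\ker d)$, $H^*(\ker d')$, $H^*(\coker d)$, $H^*(\coker d')$, and $H^0(\calO\ox\Omega)$ all lie in the finite set $\sigma_{sw^*}^{K^v}$; (3) since $\sigma_{sw^*}^{K^v}$ is finite and the Hecke action on each finite-level piece is a finite-dimensional operator, conclude that $\bbH^1(\DR)$ and $\bbH^1(\DR')$ (hence $\ker I^1$, hence $\ker I$) decompose as the direct sum of their generalized $\lambda$-eigenspaces over $\lambda\in\sigma_{sw^*}^{K^v}$; (4) note that each $\lambda\in\sigma_{sw^*}^{K^v}$ is classical by definition — it is a Hecke eigenvalue appearing in $M_w(K^v)$ or $M_{s\cdot w}(K^v)$, i.e., in the space of classical modular forms on the unitary Shimura curve — so the decomposition exhibits $\ker I$ as supported on classical Hecke eigensystems.

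The step I expect to be the main obstacle is (3): establishing the \emph{direct sum} decomposition rather than just a filtration with the prescribed graded pieces, i.e., showing the Hecke action is locally finite and semisimplifies the colimit correctly. At each finite level $K_v$ the relevant cohomology groups are finite-dimensional over $C$ and the spherical Hecke algebra $\bbT^S$ acts through a finite quotient, so the generalized eigenspace decomposition is automatic; the issue is to check that this decomposition is compatible with the transition maps as $K_v$ shrinks, so that taking the colimit (which defines $\calO_{K^v}$ and its cohomology) preserves the decomposition. This follows because the transition maps are $\bbT^S$-equivariant and the eigenvalue set $\sigma_{sw^*}^{K^v}$ is already determined at a fixed finite level (it only depends on the tame level $K^v$ and the weight $w$, not on $K_v$), so a standard argument with idempotents in the (finite-dimensional, semisimple after base change to a large enough field) Hecke algebra image shows the colimit splits accordingly. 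Once this is in place, combining with the two exact sequences of Theorem \ref{thm:kerI1} — which are already written $\lambda$-by-$\lambda$ — immediately gives $\ker I\isom\bigoplus_{\lambda\in\sigma_{sw^*}^{K^v}}\ker I\tilde{[\lambda]}$, and classicality is then a restatement of the definition of $\sigma_{sw^*}^{K^v}$.
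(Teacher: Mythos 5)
Your proposal is correct and takes essentially the same approach as the paper's terse one-line proof (which simply invokes Theorems~\ref{thm:kerI1} and~\ref{thm:kerI2}): identify the nontrivial content of $\ker I$ with $\ker I^1$, use the exact sequences relating $\ker I^1\tilde{[\lambda]}$ to $\bbH^1(\DR)\tilde{[\lambda]}$ and $\bbH^1(\DR')\tilde{[\lambda]}$, conclude that all Hecke eigenvalues lie in the finite set $\sigma_{sw^*}^{K^v}$, deduce the generalized eigenspace decomposition, and note that classicality is then a tautology given the definition of $\sigma_{sw^*}^{K^v}$.

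One phrase in your step (3) is off, though it does not affect the conclusion. You say ``At each finite level $K_v$ the relevant cohomology groups are finite-dimensional over $C$''; this is false for $H^1(\fl,\calO)$ and $H^1(\fl,\calO^s)$, which are the locally analytic vectors of an admissible Banach completed cohomology and are infinite-dimensional even at fixed level $K_n$ (they are LB-spaces). What \emph{is} finite-dimensional, and what actually controls the Hecke action, are the classical objects occurring in the filtrations of $\bbH^1(\DR)$ and $\bbH^1(\DR')$ established in Propositions~\ref{prop:DRcoh}--\ref{prop:DRcohpppp}: $\bbH^1(\DR^{\lalg})$, $H^i_{\rig}(\Ig_{K^v,m},D_{K^vK_m}^{w^*,\dagger})$, $\calA^{K^v}_{\bar G,w^*}$, and the coherent cohomology groups $H^i(\omega_{K^v}^{w,\sm})$ at each finite level. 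The Hecke action commutes with $\GL_2(L)$ and, on each $\lambda$-generalized eigenspace, factors through the finite-dimensional factor $(\pi_f^v)^{K^v}$; combined with the finiteness of $\sigma_{sw^*}^{K^v}$ (a consequence of the Lemma preceding Theorem~\ref{thm:H0Ig}), this yields local finiteness of the Hecke action on $\ker I^1$ and hence the generalized eigenspace decomposition. So the correct justification is ``locally finite Hecke action with eigenvalues confined to a finite classical set'' rather than ``finite-dimensional at each finite level,'' but the substance of your argument is the same as the paper's.
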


We discuss the semisimplicity for the Hecke action on $\ker I^1$.
\begin{corollary}
Let $\lambda\in\sigma_{w}^{K^v}$ be a Hecke eigenvalue such that $\pi_v$ is generic. Then the Hecke actions on $\bbH^1(\DR)\tilde{[\lambda]}$, $\bbH^1(\DR')\tilde{[\lambda]}$ and $\ker I\tilde{[\lambda]}$ are semisimple.
\begin{proof}
As $\bbH^1(\DR')\tilde{[\lambda]}$ and $\ker I\tilde{[\lambda]}$ are quotients of $\bbH^1(\DR)\tilde{[\lambda]}$, it suffices to show that 
\begin{align*}
    \bbH^1(\DR)\tilde{[\lambda]}\isom \bbH^1(\DR)[\lambda].
\end{align*}
By discussions before, we know that when $\pi_v$ is supercuspidal, the Hecke action on $\bbH^1(\DR)\tilde{[\lambda]}$ coming from the Hecke action on automorphic forms of $\bar G$, which is semisimple.  When $\pi_v$ is a principal series, the Hecke action on $\bbH^1(\DR)\tilde{[\lambda]}$ coming from the Hecke action on rigid cohomology of Igusa varieties, which is semisimple by Theorem \ref{thm:H0Ig}. Finally, when $\pi_v$ is a special series, Proposition \ref{prop:semisimplespecial} will show that the Hecke action on $\bbH^1(\DR)\tilde{[\lambda]}$ is semisimple.
\end{proof}
\end{corollary}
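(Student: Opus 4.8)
The plan is to reduce semisimplicity of the Hecke action on $\bbH^1(\DR)\tilde{[\lambda]}$ to the semisimplicity of known auxiliary objects, namely automorphic forms on $\bar G$, rigid cohomology of Igusa curves, and the special-series case treated in Proposition \ref{prop:semisimplespecial}. The key structural input is the exact sequence from Proposition \ref{prop:DRcoh}(iv),
\begin{align*}
    0\to H^1(\ker d)\to \bbH^1(\DR)\to H^0(\coker d)\to 0,
\end{align*}
which after passing to the generalized $\lambda$-eigenspace becomes
\begin{align*}
    0\to H^1(\ker d)\tilde{[\lambda]}\to \bbH^1(\DR)\tilde{[\lambda]}\to H^0(\coker d)\tilde{[\lambda]}\to 0.
\end{align*}
First I would observe that $\bbH^1(\DR')\tilde{[\lambda]}$ and $\ker I^1\tilde{[\lambda]}$ are both quotients of $\bbH^1(\DR)\tilde{[\lambda]}$ (the former by Proposition \ref{prop:DRcohpppp} together with the vanishing $\bbH^0(\DR')\tilde{[\lambda]}=0$ in the generic case, the latter by Theorem \ref{thm:kerI1}), so it suffices to prove $\bbH^1(\DR)\tilde{[\lambda]}\isom \bbH^1(\DR)[\lambda]$, i.e.\ that the generalized eigenspace is an honest eigenspace.

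Next I would split into the three cases according to the type of $\pi_w$ (supercuspidal, principal series, special series), exactly as in the corollary statement. In the supercuspidal case, by the discussion preceding Theorem \ref{thm:H0Ig}, the term $H^0(\coker d)\tilde{[\lambda]}$ vanishes (as there $H^1_{\rig}(\Ig_{K^v},D_{K^v}^{w^*,\sm,\dagger})[\lambda]=0$ and the relevant cohomology of $j_!$-extended sheaves on the Drinfeld tower governs everything), so $\bbH^1(\DR)\tilde{[\lambda]}\isom H^1(\ker d)\tilde{[\lambda]}$, and by Theorem \ref{thm:kerI2} this is $(\pi_f^v)^{K^v}\ox_C(H^1(j_!\omega_{\Dr}^{w,\sm})\ox_C\tau_w)^{\calO_{D_L}^\times}$; the Hecke action here comes entirely from the semisimple action on automorphic forms $\calA_{\bar G,w^*}^{K^v}$ via the Jacquet--Langlands transfer, hence is semisimple, and in particular the generalized eigenspace equals the eigenspace. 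In the principal series case, Theorem \ref{thm:H0Ig}(ii) already gives $H^1_{\rig}(\Ig_{K^v},D_{K^v}^{w^*,\sm,\dagger})\tilde{[\lambda]}\isom H^1_{\rig}(\Ig_{K^v},D_{K^v}^{w^*,\sm,\dagger})[\lambda]$, so the Hecke action on both $H^1(\ker d)\tilde{[\lambda]}$ (which, via the exact sequences of Lemma \ref{lem:v}(iv) and the $j_!$-description, is controlled by the semisimple automorphic/rigid-cohomology input) and on $H^0(\coker d)\tilde{[\lambda]}$ (an induction of the semisimple $H^1_{\rig}[\lambda]$) is semisimple; one then checks that the extension in the short exact sequence above splits Hecke-equivariantly because all eigenvalues are already honest eigenvalues, forcing $\bbH^1(\DR)\tilde{[\lambda]}=\bbH^1(\DR)[\lambda]$. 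The special-series case is deferred to Proposition \ref{prop:semisimplespecial}, whose statement directly furnishes semisimplicity of the Hecke action on $\bbH^1(\DR)\tilde{[\lambda]}$.

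Finally I would assemble the three cases: in each, $\bbH^1(\DR)\tilde{[\lambda]}=\bbH^1(\DR)[\lambda]$, and then semisimplicity descends to the quotients $\bbH^1(\DR')\tilde{[\lambda]}$ and $\ker I^1\tilde{[\lambda]}$, which are therefore also honest eigenspaces with semisimple Hecke action. The main obstacle I anticipate is the principal-series case: one must be careful that the Hecke action on the middle term $\bbH^1(\DR)\tilde{[\lambda]}$ really is semisimple rather than merely having semisimple sub and quotient—this requires either invoking the splitting of Proposition \ref{prop:DRcoh}(iv) at the level of $\lambda$-eigenspaces (which follows once both outer terms are honest eigenspaces, since an extension of a semisimple module by a semisimple module with no common non-semisimple obstruction and matching eigenvalues is semisimple), or else tracing through the relation with $\tilde{H}^1(K^v,C)$ and its admissibility; the special-series semisimplicity Proposition \ref{prop:semisimplespecial} is the other technical crux but is quoted as a black box here.
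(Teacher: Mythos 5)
Your overall strategy matches the paper's: reduce to $\bbH^1(\DR)\tilde{[\lambda]}=\bbH^1(\DR)[\lambda]$, then split into the three cases according to the type of $\pi_w$, quoting Proposition \ref{prop:semisimplespecial} for the special series. Your treatment of the supercuspidal case is fine, since you correctly observe that $H^0(\coker d)\tilde{[\lambda]}=0$ (from Theorem \ref{thm:H0Ig}), so the short exact sequence collapses to $\bbH^1(\DR)\tilde{[\lambda]}\isom H^1(\ker d)\tilde{[\lambda]}$ and the Hecke action comes from automorphic forms on $\bar G$.

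However, there is a genuine gap in your principal-series case. You establish semisimplicity of the Hecke action on $H^1(\ker d)\tilde{[\lambda]}$ and $H^0(\coker d)\tilde{[\lambda]}$ separately, and then appeal to the claim that ``an extension of a semisimple module by a semisimple module with no common non-semisimple obstruction and matching eigenvalues is semisimple.'' This is false as a general principle: for a commutative algebra $\bbT$ acting on a module $M$ with a submodule $M'$ on which $\bbT$ acts through $\lambda$ and quotient $M/M'$ on which $\bbT$ also acts through $\lambda$, the action on $M$ need not factor through $\lambda$ (e.g.\ a single Jordan block of size $2$). The vague ``no common non-semisimple obstruction'' clause is not a condition you have verified; in the special-series case the paper supplies exactly such a verification via $\Hom_{\GL_2(L)}(\tilde I,\St_2^{\iota\text{-an}})=0$ in Proposition \ref{prop:semisimplespecial}, and you have nothing analogous here. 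The actual reason the principal-series case works is simpler and parallels your supercuspidal argument: the supersingular contribution $H^1(\ker d)\tilde{[\lambda]}$ is zero, because $\ker d|_\Omega$ is built from $\calA_{\bar G,w^*}^{K^v}$ and the Jacquet--Langlands transfer $\tau_w$ does not exist when $\pi_w$ is a principal series, so $\calA_{\bar G,w^*}^{K^v}[\lambda]=0$. Thus $\bbH^1(\DR)\tilde{[\lambda]}\isom H^0(\coker d)\tilde{[\lambda]}$, and semisimplicity follows directly from Theorem \ref{thm:H0Ig}(ii) on the rigid cohomology of Igusa curves, with no extension problem to resolve.
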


Let $\lambda\in \sigma_{w}^{K^v}$ be a Hecke eigenvalue such that $\pi_v$ is generic. We will discuss some examples in detail. For simplicity, we assume our weight $w$ satisfies $w_\eta=(0,0)$ for all embeddings $\eta$, but all results below is valid for general weights.
\subsubsection*{Examples: $\pi_v$ is a principal series}
Assume that $\pi_v$ is a principal series. Write $\pi_v=(\Ind^{\GL_2(L)}_{\bar B(L)}\chi)^\infty$ for some smooth character $\chi$ of $T(L)$. In this case, we know that
\begin{itemize}
    \item $\bbH^1(\DR)[\lambda]=H^0(\coker d)[\lambda]=(\pi_f^v)^{K^v}\ox_C(\Ind^{\GL_2(L)}_{\bar B(L)}H^1_{\rig}(\Ig_{K^v},D_{K^v}^{w ,\sm,\dagger})[\lambda])^{\iota\-\lan}$.
    \item $\bbH^1(\DR')[\lambda]=H^0(\coker d')[\lambda]=(\pi_f^v)^{K^v}\ox_C(\Ind^{\GL_2(L)}_{\bar B(L)}H^1_{\rig}(\Ig_{K^v},D_{K^v}^{w ,\sm,\dagger})[\lambda]z_{1,\iota}^{-1}z_{2,\iota})^{\iota\-\lan}$.
\end{itemize}
From theorem \ref{thm:kerI1} and theorem \ref{thm:kerI2}, we deduce that, as $\GL_2(L)$-representations, $\ker I[\lambda]$ fits into exact sequences:
\begin{align*}
    0\to (\pi_f^v)^{K^v}\ox_C(\Ind^{\GL_2(L)}_{\bar B(L)}\chi)^\infty \to &\ker I[\lambda]\to (\pi_f^v)^{K^v}\ox_C(\Ind^{\GL_2(L)}_{\bar B(L)}H^1_{\rig}(\Ig_{K^v},D_{K^v}^{w ,\sm,\dagger})[\lambda] z_{1,\iota}^{-1}z_{2,\iota})^{\iota\-\lan}\to 0,\\
    0\to (\pi_f^v)^{K^v}\ox_C(\Ind^{\GL_2(L)}_{\bar B(L)}\chi)^\infty\to &(\pi_f^v)^{K^v}\ox_C(\Ind^{\GL_2(L)}_{\bar B(L)}H^1_{\rig}(\Ig_{K^v},D_{K^v}^{w ,\sm,\dagger})[\lambda])^{\iota\-\lan}\to \ker I[\lambda]\to 0.
\end{align*}

\subsubsection*{Examples: $\pi_v$ is a special series}
After twisting by a smooth character, we may assume that $\pi_v=\St_2^\infty$, the smooth Steinberg representation of $\GL_2(L)$. 

We first compute $\bbH^1(\DR)[\lambda]$ and show that the Hecke action on $\bbH^1(\DR)\tilde{[\lambda]}$ is semisimple. Let us introduce some notations on representations. Set $I=(\Ind^{\GL_2(L)}_{B(L)}1)^{\iota\-\an}$, with cosocle $I_s=(\Ind^{\GL_2(L)}_{B(L)}z_{1,\iota}z_{2,\iota}^{-1})^{\iota\-\an}$. Let $\St_2^{\iota\-\an}:=I/1$ be the locally $\iota$-analytic Steinberg representation over $C$. We also define $\tilde{I}=(\Ind^{\GL_2(L)}_{B(L)}|\cdot|\ox |\cdot|^{-1})^{\iota\-\an}$, with cosocle $\tilde{I}_s=(\Ind^{\GL_2(L)}_{B(L)}z_{1,\iota}z_{2,\iota}^{-1}|\cdot|\ox |\cdot|^{-1})^{\iota\-\an}$. Using the theory of Orlik-Strauch representations, we have exact sequences 
\begin{align*}
    &0\to (\Ind^{\GL_2(L)}_{B(L)}1)^\infty\to I\to I_s\to 0,\\
    &0\to (\Ind^{\GL_2(L)}_{B(L)}|\cdot|\ox |\cdot|^{-1})^\infty\to \tilde{I}\to \tilde{I}_s\to 0
\end{align*}
of $\GL_2(L)$-representations. 


\begin{proposition}\label{universalSteinberg}
\begin{align*}
    \dim_C\Ext^1_{\GL_2(L),\iota}(\tilde{I},\St_2^{\iota\-\an})=1
\end{align*}
where $\Ext^1_{\GL_2(L),\iota}(-,-)$ denote extensions being locally $\iota$-analytic.
\begin{proof}
Recall that by definition we have $\St_2^{\iota\-\an}=I/1$. As $\Ext^{q}_{\GL_2(L),\iota}(\tilde{I},I)=0$ for $q=1,2$ by \cite[Prop 8.15, Prop 8.18]{Koh11}, we see that 
\[
    \Ext^1_{\GL_2(L),\iota}(\tilde{I},\St_2^{\iota\-\an})\isom\Ext^2_{\GL_2(L),\iota}(\tilde{I},1).
\]
By Schneider-Teitelbaum duality  \cite[Corollary 4.4]{MR2133762}, the remark after \cite[Theorem 8.4]{MR2375595} for the locally $L$-analytic case, and the computation for the Schneider-Teitelbaum dual for locally $L$-analytic principal series \cite[Proposition 6.5]{MR2133762}, \cite[Theorem 6.5]{Koh11} and the proof of \cite[Theorem 8.18]{Koh11} for the (FIN) condition (in \cite[Section 6]{MR2133762}), we have
\[
    \Ext^2_{\GL_2(L),\iota}(\tilde{I},1)=\Ext^1_{\GL_2(L),\iota}(1,(\Ind^{\GL_2(L)}_{\bar B(L)}t_1^{-1}t_2)^{\iota\-\an})
\]
so that from the Schraen spectral sequence \cite[Cor 4.9]{Schr11}
\[
    \Ext^p_{T(L),\iota}(H_q^{\an}(\bar N(L),1),t_1^{-1}t_2)\implies\Ext_{\GL_2(L),\iota}^{p+q}(1,(\Ind^{\GL_2(L)}_{\bar B(L)}t_1^{-1}t_2)^{\iota\-\an})
\]
together with $H_0^{\an}(\bar N(L),1)=t_1t_2^{-1}$, $H_1^{\an}(\bar N(L),1)=t_1^{-1}t_2$, we get that 
\[
    \dim \Ext^1_{\GL_2(L),\iota}(L(\nu),(\Ind^{\GL_2(L)}_{\bar B(L)}t_1^{-1}t_2)^{\iota\-\an})=1
\]
as desired.
\end{proof}
\end{proposition}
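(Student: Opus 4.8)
The plan is to reduce the computation of $\Ext^1_{\GL_2(L),\iota}(\tilde I,\St_2^{\iota\-\an})$ step by step to an explicit $\Ext^1$ between locally $\iota$-analytic principal series, which can then be evaluated via the Schraen spectral sequence. First I would recall that $\St_2^{\iota\-\an}=I/1$ by definition, sitting in the short exact sequence $0\to 1\to I\to \St_2^{\iota\-\an}\to 0$, and apply the long exact sequence of $\Ext^\bullet_{\GL_2(L),\iota}(\tilde I,-)$ to it. The vanishing $\Ext^q_{\GL_2(L),\iota}(\tilde I,I)=0$ for $q=1,2$, which follows from the results of Kohlhaase \cite{Koh11} on extensions between locally analytic principal series (here both $\tilde I$ and $I$ are principal series induced from generic characters, and their inducing characters are not in the relevant linkage class, forcing the $\Ext$-groups to vanish in these degrees), then yields a connecting isomorphism $\Ext^1_{\GL_2(L),\iota}(\tilde I,\St_2^{\iota\-\an})\isom \Ext^2_{\GL_2(L),\iota}(\tilde I,1)$.

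The next step is to rewrite $\Ext^2_{\GL_2(L),\iota}(\tilde I,1)$ using duality. Here I would invoke Schneider--Teitelbaum duality for the locally $L$-analytic setting (as in \cite{MR2133762}, together with the remark following \cite[Theorem 8.4]{MR2375595} and the (FIN) discussion in \cite[Section 6]{MR2133762}, \cite[Theorem 8.18]{Koh11}) to identify $\Ext^2_{\GL_2(L),\iota}(\tilde I,1)$ with $\Ext^1_{\GL_2(L),\iota}(1,(\Ind^{\GL_2(L)}_{\bar B(L)}t_1^{-1}t_2)^{\iota\-\an})$; the shift in cohomological degree comes from the dimension of the flag variety and the explicit computation of Schneider--Teitelbaum duals of locally analytic principal series \cite[Proposition 6.5]{MR2133762}, \cite[Theorem 6.5]{Koh11}, where the inducing character $|\cdot|\ox|\cdot|^{-1}$ of $\tilde I$ gets transformed (up to the modulus normalization) into $t_1^{-1}t_2$ on the opposite Borel.

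Finally I would compute $\Ext^1_{\GL_2(L),\iota}(1,(\Ind^{\GL_2(L)}_{\bar B(L)}t_1^{-1}t_2)^{\iota\-\an})$ via Schraen's spectral sequence \cite[Cor 4.9]{Schr11}, namely
\[
    \Ext^p_{T(L),\iota}(H_q^{\an}(\bar N(L),1),t_1^{-1}t_2)\implies\Ext_{\GL_2(L),\iota}^{p+q}(1,(\Ind^{\GL_2(L)}_{\bar B(L)}t_1^{-1}t_2)^{\iota\-\an}).
\]
Using $H_0^{\an}(\bar N(L),1)=t_1t_2^{-1}$ and $H_1^{\an}(\bar N(L),1)=t_1^{-1}t_2$ (the Lie algebra homology of the one-dimensional $\bar N(L)$ with trivial coefficients, $\bar N(L)$ being $1$-dimensional over $L$), the only contribution to total degree $1$ comes from the $(p,q)=(0,1)$ corner, i.e. $\Hom_{T(L),\iota}(t_1^{-1}t_2,t_1^{-1}t_2)$, which is $1$-dimensional, while the $(1,0)$ corner $\Ext^1_{T(L),\iota}(t_1t_2^{-1},t_1^{-1}t_2)$ vanishes since $t_1t_2^{-1}\neq t_1^{-1}t_2$ as characters of $T(L)$. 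This gives $\dim_C\Ext^1_{\GL_2(L),\iota}(\tilde I,\St_2^{\iota\-\an})=1$ as claimed. The main obstacle I anticipate is making the duality step fully rigorous in the locally $L$-analytic (as opposed to $L=\bbQ_p$) setting—keeping careful track of the modulus characters, the cohomological degree shift coming from $\dim\fl$, and verifying the (FIN)/finiteness hypotheses needed to apply Schneider--Teitelbaum duality to these non-admissible-in-general representations; once that identification is in hand, the spectral sequence computation is routine.
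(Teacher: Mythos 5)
Your proposal follows the paper's proof step for step: the same long exact sequence reducing to $\Ext^2_{\GL_2(L),\iota}(\tilde I,1)$, the same Schneider--Teitelbaum duality identification with $\Ext^1_{\GL_2(L),\iota}(1,(\Ind^{\GL_2(L)}_{\bar B(L)}t_1^{-1}t_2)^{\iota\text{-an}})$, and the same Schraen spectral sequence computation using the two homology groups of $\bar N(L)$. The only addition is your slightly more explicit bookkeeping of which $E_2$-corners contribute; just note that the vanishing of $\Ext^1_{T(L),\iota}(t_1t_2^{-1},t_1^{-1}t_2)$ is forced by the two characters having distinct $\iota$-weights (hence different infinitesimal characters), not merely by their being unequal as characters.
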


\begin{remark}
The non-trivial extension class inside $\dim_E\Ext^1_{\GL_2(L),\iota}(\tilde{I},\St_2^{\iota\-\an})$ (up to scalar) is the ``universal'' extension described in \cite[Theorem 5.4.1(v)]{BQ24}.
\end{remark}


 
Now we begin to calculate $\bbH^1(\DR)[\lambda]$. 
\begin{proposition}
The exact sequence 
\[
    0\to \ker\mathrm{SS}[\lambda]\to H^1(\ker d)[\lambda]\ov{\mathrm{SS}}\to H^1(\ker d')[\lambda]\to \coker\mathrm{SS}[\lambda]\to 0
\]
is isomorphic to $(\pi_f^v)^{K^v}$ tensor with the $\GL_2(L)$-equivariant exact sequence
\[
    0\to \St_2^\infty\to \St_2^{\iota\-\an}\to [I_s-1]\to 1\to 0.
\]
\begin{proof}
Since $\pi_v'=\calA^{K^v}_{\bar G,w }[\lambda]$ is the trivial representation of $D_L^\times$, it is straightforward to verify that $H^1(\ker d)[\lambda]\ov{\mathrm{SS}}\to H^1(\ker d')[\lambda]$ is isomorphic to $H^1(j_!\calO_{\Omega})\ov{\bar d_{\Dr}}\to H^1(j_!\Omega_{\Omega}^1)$. Consequently, by using results from the calculation of de Rham cohomology of Drinfeld spaces, we deduce the desired result.
\end{proof}
\end{proposition}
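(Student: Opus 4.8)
The plan is to reduce the statement to a purely local (at $v$) computation on the Drinfeld tower, exploiting that the relevant automorphic sheaf has trivial $\calO_{D_L}^\times$-type. First I would unwind all the identifications already in place. By Theorem~\ref{thm:kerI2}, with the assumption that $w_\eta = (0,0)$ for all $\eta$ and $\pi_w = \St_2^\infty$, the four terms of the exact sequence in question are:
\[
    \ker\SS[\lambda] = (\pi_f^v)^{K^v}\ox_C (H^1_{\dR,c}(\calM^{(0)}_{\Dr,\infty})\ox_C\tau_w)^{\calO_{D_L}^\times},\qquad
    \coker\SS[\lambda] = (\pi_f^v)^{K^v}\ox_C (H^2_{\dR,c}(\calM^{(0)}_{\Dr,\infty})\ox_C\tau_w)^{\calO_{D_L}^\times},
\]
and
\[
    H^1(\ker d)[\lambda] = (\pi_f^v)^{K^v}\ox_C (H^1(j_!\omega_{\Dr}^{w,\sm})\ox_C\tau_w)^{\calO_{D_L}^\times},\qquad
    H^1(\ker d')[\lambda] = (\pi_f^v)^{K^v}\ox_C (H^1(j_!\omega_{\Dr}^{s\cdot w,\sm})\ox_C\tau_w)^{\calO_{D_L}^\times}.
\]
Here $\tau_w$ is the Jacquet--Langlands transfer of $\St_2^\infty$, which is the trivial character of $D_L^\times$ (up to the unramified twist we have normalized away); in particular $\calA^{K^v}_{\bar G,w^*}[\lambda]$ is the trivial $D_L^\times$-representation, so taking $\calO_{D_L}^\times$-invariants of $(-)\ox_C\tau_w$ just amounts to restricting to the bottom level component $\calM_{\Dr,0}^{(0)}\isom\Omega$ of the tower. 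Thus the first reduction is: the $(\pi_f^v)^{K^v}$-factor splits off (since the $\bbT^S$-action is on that factor alone), and it suffices to identify the $\GL_2(L)$-equivariant four-term sequence
\[
    0\to H^1(j_!\calO_\Omega)\ov{\bar d_{\Dr}} H^1(j_!\Omega^1_\Omega)\to\bbH^1(\DR^{\lalg})[\lambda]/(\pi_f^v)^{K^v}\to H^0(\coker d)[\lambda]/(\pi_f^v)^{K^v}\to\cdots
\]
with the asserted sequence $0\to\St_2^\infty\to\St_2^{\iota\-\an}\to[I_s-1]\to 1\to 0$.

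The heart of the argument is the identification of the morphism
\[
    \bar d_{\Dr}: H^1(j_!\calO_\Omega)\longrightarrow H^1(j_!\Omega^1_\Omega)
\]
(the operator $\SS$ restricted to the $\lambda$-part), together with its kernel and cokernel, in terms of known $\GL_2(L)$-representations. For this I would invoke the de Rham cohomology computations for the Drinfeld upper half plane. Recall $H^1_{\dR,c}(\Omega)$ contains $\St_2^\infty$ as its smooth part, and the standard presentation realizes $H^1(j_!\calO_\Omega)$ and $H^1(j_!\Omega^1_\Omega)$ as the two stages of the de Rham complex $[\calO_\Omega^{\lan,(0,0)}\to \Omega^{1,\lan}]$ restricted to compact supports; the operator $\bar d_{\Dr}$ is (up to a nonzero constant, by Proposition~\ref{d2}) the operator $(u^+)^{k+1} = u^+$ since $k = 0$ here. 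The key input is that $\ker\bar d_{\Dr} = \calO_\Omega^{\iota\-\lalg,\iota^c\-\lan,(0,0)}$ by the last assertion of Proposition~\ref{d2}; combining this with the known decomposition of $H^1_{\dR,c}(\Omega)$ and with the computation that the $\iota$-analytic induced Steinberg $\St_2^{\iota\-\an} = (\Ind^{\GL_2(L)}_{B(L)}1)^{\iota\-\an}/1$ sits in a short exact sequence $0\to\St_2^\infty\to\St_2^{\iota\-\an}\to I_s\to 0$ (from the Orlik--Strauch theory recalled above for the special series case), one reads off $\ker\SS[\lambda] = \St_2^\infty$ and $\coker\SS[\lambda] = 1$ (the latter because $H^2_{\dR,c}(\Omega)$ is one-dimensional with $\GL_2(L)$ acting through the determinant, i.e.\ trivially on $\GL_2(L)^0$, while the middle cohomology $H^1(\ker d')[\lambda]$ produces the extra piece $[I_s - 1]$). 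Here I would lean on the structure of $H^1(j_!\Omega^1_\Omega)$: its smooth part together with the Hodge-filtration twist contributes the constituent $I_s$, and the ``boundary'' contribution from $H^2_{\dR,c}$ being trivial forces the cokernel to be exactly the trivial representation $1$.

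Putting the pieces together, the four-term exact sequence
\[
    0\to\ker\SS[\lambda]\to H^1(\ker d)[\lambda]\ov{\SS} H^1(\ker d')[\lambda]\to\coker\SS[\lambda]\to 0
\]
becomes, after dividing out $(\pi_f^v)^{K^v}$, precisely $0\to\St_2^\infty\to\St_2^{\iota\-\an}\to[I_s-1]\to 1\to 0$, where $[I_s-1]$ denotes the non-split extension of $1$ by $I_s$ arising as the image of $\SS$ together with the cokernel (the non-splitness being forced by the fact that $H^1(\ker d')[\lambda]$ is genuinely a non-split successive extension, which one checks via the $\Ext^1$ computation analogous to Proposition~\ref{universalSteinberg} — specifically $\Ext^1_{\GL_2(L),\iota}(1, I_s)\ne 0$). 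The $\GL_2(L)$-equivariance of every map is automatic from the equivariance of $\bar d_{\Dr}$ and of the identifications in Theorem~\ref{thm:kerI2}; the only subtlety is matching the twist $z_{1,\iota}z_{2,\iota}^{-1}$ appearing in $H^1(\ker d')[\lambda]$ with the cosocle $I_s$ of $(\Ind^{\GL_2(L)}_{B(L)}1)^{\iota\-\an}$, which is a direct bookkeeping check on central/torus characters.

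\textbf{Main obstacle.} The hard part is pinning down the cokernel $\coker\SS$ exactly, i.e.\ showing it is the trivial representation $1$ and not some larger quotient, and simultaneously showing that the extension $[I_s-1]$ is the correct (non-split) one rather than $I_s\oplus 1$. This requires a genuine computation of $H^1(j_!\Omega^1_\Omega)$ as a $\GL_2(L)$-representation (equivalently, compactly supported de Rham cohomology of $\Omega$ with a Tate twist), and in particular control of the image of $\SS$ inside it. I expect to handle this by transporting to the de Rham cohomology of the Drinfeld tower where the relevant $\GL_2(L)$-module structure is documented in the literature on $p$-adic uniformization and coverings of $\Omega$, and by using the $\frn$-cohomology calculations underlying Proposition~\ref{d2} (surjectivity of $\bar d_{\Dr}$ with the stated kernel) to control both kernel and cokernel of $\SS$ at once. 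The residual bookkeeping — central characters, Tate twists, and the precise socle/cosocle filtration of the $\iota$-analytic principal series — is routine given the Orlik--Strauch machinery already cited.
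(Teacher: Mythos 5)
Your proposal takes essentially the same approach as the paper: use the triviality of the Jacquet--Langlands transfer $\tau_w$ (equivalently, of $\calA^{K^v}_{\bar G,w^*}[\lambda]$) to reduce to the bottom level $\calM_{\Dr,0}^{(0)}\isom\Omega$ of the Drinfeld tower, identify $\SS[\lambda]$ with the induced map $H^1(j_!\calO_\Omega)\to H^1(j_!\Omega^1_\Omega)$, and then read off the four terms from the known compactly supported de Rham cohomology of the Drinfeld upper half plane. The paper's proof is precisely this two-step reduction, stated more tersely; your account is correct in substance, with only minor notational imprecision (the operator on $\Omega$ is the Gauss--Manin connection $d_{\Dr}=\bar d_{\LT}$ rather than $\bar d_{\Dr}=(\check u^+)^{k+1}$, which would vanish on $D_L^\times$-smooth vectors, though the paper's own proof uses the same label).
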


\begin{proposition}
The exact sequence 
\[
    0\to \ker\ORD[\lambda]\to H^0(\coker d)[\lambda]\ov{\mathrm{ORD}}\to H^0(\coker d')[\lambda]\to 0
\]
is $\GL_2(L)$-equivariantly isomorphic to $(\pi_f^v)^{K^v}$ tensor with the $\GL_2(L)$-equivariant exact sequence
\[
    0\to (\Ind^{\GL_2(L)}_{B(L)}|\cdot|\ox |\cdot|^{-1})\to I\to I_s\to 0.
\]
\begin{proof}
This is because as $T(L)$-representations, we have $J_{N(L)}\pi_v=|\cdot|\ox |\cdot|^{-1}$.
\end{proof}
\end{proposition}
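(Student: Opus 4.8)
The plan is to deduce the claim directly from the description of $H^0(\coker d)[\lambda]$ and $H^0(\coker d')[\lambda]$ established in Theorem \ref{thm:kerI2}, combined with the identification of the rigid cohomology of Igusa curves with Jacquet modules in Theorem \ref{thm:H0Ig}. Recall that by Theorem \ref{thm:kerI2} we have, with our simplifying assumption $w_\eta=(0,0)$ for all $\eta$ (so that the factors $V^w$, $V^{w^\iota}$ are trivial and can be ignored),
\begin{align*}
    H^0(\coker d)[\lambda]&\isom (\pi_f^v)^{K^v}\ox_C\bigl(\Ind^{\GL_2(L)}_{B(L)}H^1_{\rig}(\Ig_{K^v},D_{K^v}^{w^*,\sm,\dagger})[\lambda]\bigr)^{\iota\-\lan},\\
    H^0(\coker d')[\lambda]&\isom (\pi_f^v)^{K^v}\ox_C\bigl(\Ind^{\GL_2(L)}_{B(L)}H^1_{\rig}(\Ig_{K^v},D_{K^v}^{w^*,\sm,\dagger})[\lambda]\cdot z_{1,\iota}z_{2,\iota}^{-1}\bigr)^{\iota\-\lan},
\end{align*}
where I have used $k=0$ so that $z_{2,\iota}^k=1$ and $z_{1,\iota}^{k+1}z_{2,\iota}^{-1}=z_{1,\iota}z_{2,\iota}^{-1}$. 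First I would invoke the special-series case of Theorem \ref{thm:H0Ig}: when $\pi_w=\St_2^\infty$ (up to the smooth twist we have normalized away), the semisimplification of $H^1_{\rig}(\Ig_{K^v},D_{K^v}^{w^*,\sm,\dagger})[\lambda]$ as a $B(L)$-representation is $J_{N(L)}(\pi_w)^{\ss}$, and since $J_{N(L)}(\St_2^\infty)=|\cdot|\ox|\cdot|^{-1}$ is one-dimensional (hence already semisimple), we get $H^1_{\rig}(\Ig_{K^v},D_{K^v}^{w^*,\sm,\dagger})[\lambda]=|\cdot|\ox|\cdot|^{-1}$ as a $B(L)$-representation on the nose. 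In particular $H^0_{\rig}(\Ig_{K^v},D_{K^v}^{w^*,\sm,\dagger})\tilde{[\lambda]}=0$, which is exactly what makes the short exact sequence $0\to \ker\ORD[\lambda]\to H^0(\coker d)[\lambda]\ov{\ORD}\to H^0(\coker d')[\lambda]\to 0$ of Theorem \ref{thm:kerI2} exact on the right.

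Next I would substitute $H^1_{\rig}(\Ig_{K^v},D_{K^v}^{w^*,\sm,\dagger})[\lambda]=|\cdot|\ox|\cdot|^{-1}$ into the two induced representations above. For $H^0(\coker d)[\lambda]$ this gives $(\pi_f^v)^{K^v}\ox_C (\Ind^{\GL_2(L)}_{B(L)}|\cdot|\ox|\cdot|^{-1})^{\iota\-\lan}$, and I would then identify $(\Ind^{\GL_2(L)}_{B(L)}|\cdot|\ox|\cdot|^{-1})^{\iota\-\lan}$ with the representation $I$ introduced in the preceding discussion: indeed $|\cdot|\ox|\cdot|^{-1}=\delta_{B}^{1/2}$ is the modulus character up to the square root convention, and twisting $(\Ind_{B(L)}^{\GL_2(L)}1)^{\iota\-\an}=I$ by the smooth character $|\cdot|\ox|\cdot|^{-1}$ is an isomorphism of locally $\iota$-analytic representations that sends $I$ to $\tilde I$; but the statement as written already records $(\Ind^{\GL_2(L)}_{B(L)}|\cdot|\ox|\cdot|^{-1})$ literally, so at the level of the exact sequence it is cleanest to match the first term of $0\to (\Ind^{\GL_2(L)}_{B(L)}|\cdot|\ox|\cdot|^{-1})\to I\to I_s\to 0$ with $H^0(\coker d)[\lambda]$ after I check the cosocle computation. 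For $H^0(\coker d')[\lambda]$ the twist by $z_{1,\iota}z_{2,\iota}^{-1}$ turns $|\cdot|\ox|\cdot|^{-1}$ into $z_{1,\iota}z_{2,\iota}^{-1}|\cdot|\ox|\cdot|^{-1}$, and the corresponding locally $\iota$-analytic induction is, by the Orlik--Strauch/Kohlhaase exact sequence $0\to (\Ind^{\GL_2(L)}_{B(L)}|\cdot|\ox|\cdot|^{-1})^\infty\to \tilde I\to \tilde I_s\to 0$ and the analogous statement for $I$, precisely $I_s$ (up to the same normalizing twist). Putting the two identifications together, the map $\ORD$ becomes the natural surjection $I\surj I_s$ with kernel $(\Ind^{\GL_2(L)}_{B(L)}|\cdot|\ox|\cdot|^{-1})^\infty$ — which, after stripping the smooth twist, is $(\Ind^{\GL_2(L)}_{B(L)}|\cdot|\ox |\cdot|^{-1})$ as stated, and the tensor factor $(\pi_f^v)^{K^v}$ pulls out of everything by exactness of $-\ox_C(\pi_f^v)^{K^v}$.

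The main obstacle I anticipate is purely bookkeeping around normalizations: reconciling the ``$|\cdot|\ox|\cdot|^{-1}$'' appearing in the Jacquet module $J_{N(L)}(\St_2^\infty)$ (which comes with a specific choice of $\delta_B^{1/2}$) with the characters $z_{1,\iota}z_{2,\iota}^{-1}$ appearing in the twists in Theorem \ref{thm:kerI2}, and checking that the locally $\iota$-analytic induction of a smooth character times $z_{1,\iota}z_{2,\iota}^{-1}$ really is the cosocle $I_s$ (respectively $\tilde I_s$) and not some other constituent. This requires unwinding the definition of $z_{i,\iota}$ (the character $\diag(a_1,a_2)\mapsto \iota(a_i)$) and the weight $\chi=(-k,0)=(0,0)$ in the $k=0$ case, and then applying the structure of locally analytic principal series with integral weight — all of which is routine given \cite{Koh11, Schr11} and the Orlik--Strauch exact sequences already cited. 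Once the dictionary is fixed, the $\GL_2(L)$-equivariance of $\ORD$ is automatic (it is induced by $\bar d$, which is $\GL_2(L)$-equivariant by Theorem \ref{thm:dbar}), and exactness on the left is the injectivity of the smooth subrepresentation inclusion, so the proof concludes.
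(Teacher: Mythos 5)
Your core computation is correct and is exactly what the paper's one-sentence proof intends: invoke the special-series case of Theorem~\ref{thm:H0Ig} to get $H^1_{\rig}(\Ig_{K^v},D_{K^v}^{w^*,\sm,\dagger})[\lambda]=J_{N(L)}(\St_2^\infty)=|\cdot|\ox|\cdot|^{-1}$, then substitute into the formulas for $\ker\ORD[\lambda]$, $H^0(\coker d)[\lambda]$, $H^0(\coker d')[\lambda]$ from Theorem~\ref{thm:kerI2} at $k=0$. The tensor factor $(\pi_f^v)^{K^v}$ and the $\GL_2(L)$-equivariance of $\ORD$ (inherited from $\bar d$) then come for free.

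However, the ``reconciliation'' step you propose is not correct. Your own substitution gives $H^0(\coker d)[\lambda]\isom(\pi_f^v)^{K^v}\ox(\Ind^{\GL_2(L)}_{B(L)}|\cdot|\ox|\cdot|^{-1})^{\iota\-\lan}=(\pi_f^v)^{K^v}\ox\tilde I$ and $H^0(\coker d')[\lambda]\isom(\pi_f^v)^{K^v}\ox\tilde I_s$, and there is \emph{no} isomorphism ``twisting $I$ by $|\cdot|\ox|\cdot|^{-1}$ to get $\tilde I$'': the character $|\cdot|\ox|\cdot|^{-1}=\delta_B$ (not $\delta_B^{1/2}$, as you wrote) is a character of $T(L)$ only, not of $\GL_2(L)$, so twisting the inducing character produces a genuinely different, non-isomorphic $\GL_2(L)$-representation. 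In fact you should trust your computation: the statement as printed (with $I$ and $I_s$) appears to contain a typo omitting tildes, and the exact sequence the computation actually produces is
\[
    0\to (\Ind^{\GL_2(L)}_{B(L)}|\cdot|\ox|\cdot|^{-1})^\infty\to \tilde I\to \tilde I_s\to 0,
\]
which is precisely the Orlik--Strauch sequence for $\tilde I$ stated two displays earlier in the paper, and is the only version consistent with the ensuing Corollary (that $\bbH^1(\DR)\tilde{[\lambda]}$ is the non-split extension of $\tilde I$ -- not $I$ -- by $\St_2^{\iota\-\an}$, since $H^1(\ker d)[\lambda]\isom\St_2^{\iota\-\an}$) and with Proposition~\ref{prop:semisimplespecial} (where $H^0(\coker d')[\lambda]$ produces the cosocle factor $\tilde I_s$). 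So the gap in your write-up is not in the method but in trying to force agreement with a misprinted $I,I_s$ by inventing an isomorphism that does not exist; drop that and your argument is essentially the paper's proof with the details filled in.
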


Next we want to determine the structure of $\bbH^1(\DR)\tilde{[\lambda]}$ as a $\GL_2(L)\times\bbT^S$-representation. If we omit the $\bbT^S$-action, we know that $\bbH^1(\DR)\tilde{[\lambda]}$ is an extension of $\tilde{I}$ by $\St_2^{\iota\-\an}$ up to some multiplicities.

\begin{corollary}
As $\GL_2(L)$-representations, 
$\bbH^1(\DR)\tilde{[\lambda]}$ is the unique non-split extension of $\tilde{I}$ by $\St_2^{\iota\-\an}$ up to some multiplicities.
\begin{proof}
For simplicity we assume $\dim_C (\pi_f^v)^{K^v}=1$. As $\dim \Ext^1_{\GL_2(L),\iota}(\tilde{I},\St_2^{\iota\-\an})=1$, $\bbH^1(\DR)\tilde{[\lambda]}$ is either isomorphic to $\St_2^{\iota\-\an}\oplus \tilde{I}$ or a non-split extension of $\tilde{I}$ by $\St_2^{\iota\-\an}$. Suppose that $\bbH^1(\DR)\tilde{[\lambda]}\isom \St_2^{\iota\-\an}\oplus \tilde{I}$. As $\bbH^1(\DR')\tilde{[\lambda]}\isom \bbH^1(\DR)\tilde{[\lambda]}/\bbH^1(\DR^{\lalg})[\lambda]$ with $\bbH^1(\DR^{\lalg})[\lambda]\isom (\St_2^\infty)^{\oplus 2}$, we see that $\bbH^1(\DR')\tilde{[\lambda]}=(I_s\oplus 1-\tilde{I}_s)$. But we know that up to twist $\bbH^1(\DR')\tilde{[\lambda]}$ can only be an extension of $\tilde I_s$ by $I_s-1$. This gives a contradiction.
\end{proof}
\end{corollary}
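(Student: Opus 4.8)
The plan is to reduce the assertion to the one-dimensionality of $\Ext^1_{\GL_2(L),\iota}(\tilde{I},\St_2^{\iota\-\an})$, which is Proposition \ref{universalSteinberg}, and then to exclude the split extension by playing it off against the already-determined shape of $\bbH^1(\DR')\tilde{[\lambda]}$. First I would normalize away multiplicities by assuming $\dim_C(\pi_f^v)^{K^v}=1$; the general case is obtained formally by tensoring all the representations below with $(\pi_f^v)^{K^v}$.

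Granting that, as a $\GL_2(L)$-representation, $\bbH^1(\DR)\tilde{[\lambda]}$ sits in a short exact sequence $0\to\St_2^{\iota\-\an}\to\bbH^1(\DR)\tilde{[\lambda]}\to\tilde{I}\to 0$ — which is exactly what the assembly of Theorems \ref{thm:kerI1} and \ref{thm:kerI2} with the special-series descriptions of $\ker\mathrm{SS}$, $\coker\mathrm{SS}$, $\ker\mathrm{ORD}$, $H^0(\coker d)$, etc.\ of the two preceding propositions yields — Proposition \ref{universalSteinberg} forces this sequence to be either split or the unique nonsplit one. So everything comes down to ruling out $\bbH^1(\DR)\tilde{[\lambda]}\isom\St_2^{\iota\-\an}\oplus\tilde{I}$.

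To rule this out I would combine two inputs. First, Proposition \ref{prop:DRcohpppp} together with the vanishing $\bbH^0(\DR')\tilde{[\lambda]}=0$ (valid because $\pi_w$ is generic) gives an injection $\bbH^1(\DR^{\lalg})[\lambda]\hookrightarrow\bbH^1(\DR)\tilde{[\lambda]}$ with cokernel $\bbH^1(\DR')\tilde{[\lambda]}$, and $\bbH^1(\DR^{\lalg})[\lambda]\isom(\St_2^\infty)^{\oplus 2}$ in the special-series weight-zero situation. Second, the $\mathrm{SS}$/$\mathrm{ORD}$ analysis of Theorem \ref{thm:kerI2} pins down $\bbH^1(\DR')\tilde{[\lambda]}$, up to a twist, as an extension of $\tilde{I}_s$ by the nonsplit $I_s-1$, hence as an indecomposable representation with socle filtration $I_s-1-\tilde{I}_s$. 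Now, in the hypothetical split case, $(\St_2^\infty)^{\oplus 2}$ would have to be the socle of the smooth part of $\St_2^{\iota\-\an}\oplus\tilde{I}$, using that $\St_2^\infty$ is the socle of $\St_2^{\iota\-\an}$ and also the socle of the smooth subrepresentation $(\Ind^{\GL_2(L)}_{B(L)}|\cdot|\ox|\cdot|^{-1})^\infty\subset\tilde{I}$; quotienting by it and using $\St_2^{\iota\-\an}/\St_2^\infty\isom I_s$ together with $\tilde{I}/\St_2^\infty\isom[1-\tilde{I}_s]$ would give $\bbH^1(\DR')\tilde{[\lambda]}\isom I_s\oplus[1-\tilde{I}_s]$, which is decomposable. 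This contradicts the indecomposability just noted, so the split case is impossible and $\bbH^1(\DR)\tilde{[\lambda]}$ is the unique nonsplit extension.

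The step I expect to be the main obstacle is the first reduction rather than the final contradiction: extracting from the web of exact sequences in Theorems \ref{thm:kerI1}--\ref{thm:kerI2}, specialized to $\pi_w=\St_2^\infty$, that $\bbH^1(\DR)\tilde{[\lambda]}$ is precisely an extension $0\to\St_2^{\iota\-\an}\to\cdot\to\tilde{I}\to 0$ — constituents in the right positions and no hidden subquotients — together with fixing the exact socle/radical structure of the locally $\iota$-analytic principal series $I$, $I_s$, $\tilde{I}$, $\tilde{I}_s$ so that $I_s\oplus[1-\tilde{I}_s]$ and $I_s-1-\tilde{I}_s$ are genuinely distinguishable. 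All of this uses only the Orlik--Strauch structure theory already invoked for Proposition \ref{universalSteinberg}, so no new ingredient is required.
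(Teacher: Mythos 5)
Your proof takes essentially the same route as the paper's: reduce to $\dim\Ext^1_{\GL_2(L),\iota}(\tilde{I},\St_2^{\iota\-\an})=1$ (Proposition~\ref{universalSteinberg}), suppose the split case, quotient by $\bbH^1(\DR^{\lalg})[\lambda]\isom(\St_2^\infty)^{\oplus 2}$ to force $\bbH^1(\DR')\tilde{[\lambda]}\isom I_s\oplus[1-\tilde{I}_s]$, and contradict the established shape of $\bbH^1(\DR')\tilde{[\lambda]}$; the socle argument you make explicit when identifying the image of $(\St_2^\infty)^{\oplus 2}$ is the correct justification for a step the paper leaves terse.

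One small imprecision in your last step: from the short exact sequence $0\to[I_s-1]\to\bbH^1(\DR')\tilde{[\lambda]}\to\tilde{I}_s\to 0$ you cannot yet deduce that $\bbH^1(\DR')\tilde{[\lambda]}$ is indecomposable with socle filtration $I_s-1-\tilde{I}_s$ --- that extension could a priori split. But the contradiction does not need indecomposability. The actual point is that any extension of $\tilde{I}_s$ by the nonsplit $[I_s-1]$ contains $[I_s-1]$ as a subrepresentation and thus has socle contained in $I_s\oplus\tilde{I}_s$ (the trivial representation $1$ is only a middle subquotient), whereas $I_s\oplus[1-\tilde{I}_s]$ visibly has $1$ in its socle. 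Equivalently, $[I_s-1]$ does not embed into the semisimple $I_s\oplus 1$, so $I_s\oplus[1-\tilde{I}_s]$ cannot carry the required filtration, split or not. Replacing ``contradicts the indecomposability'' by this socle observation makes the argument watertight without changing its structure.
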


\begin{proposition}\label{prop:semisimplespecial}
The Hecke action of $\bbT^S$ on $\bbH^1(\DR)\tilde{[\lambda]}$ is semisimple. That is, 
\[
    \bbH^1(\DR)\tilde{[\lambda]}=\bbH^1(\DR)[\lambda]
\]
Consequently,
\begin{align*}
    \bbH^1(\DR)\tilde{[\lambda]}&=\bbH^1(\DR)[\lambda]=(\pi_f^v)^{K^v}\ox_C(\St_2^{\infty})^{\oplus 2}-I_s-1-\tilde{I}_s,\\
    \bbH^1(\DR')\tilde{[\lambda]}&=\bbH^1(\DR')[\lambda]=(\pi_f^v)^{K^v}\ox_C I_s-1-\tilde{I}_s,\\
    \ker I^1\tilde{[\lambda]}&=\ker I^1[\lambda]=(\pi_f^v)^{K^v}\ox_C \St_2^{\infty}-I_s-1-\tilde{I}_s.
\end{align*}
\begin{proof}
For simplicity we assume $\dim_C (\pi_f^v)^{K^v}=1$. As $\bbT^S$ is a commutative algebra, and we know that the Hecke action on $H^1(\ker d)\tilde{[\lambda]}\isom \St_2^{\iota\-\an}$ and $H^0(\coker d)\tilde{[\lambda]}\isom \tilde{I}$ is semisimple, it suffices to show that $\Hom_{\GL_2(L)}(\tilde{I},\St_2^{\iota\-\an})=0$. The only same Jordan-Holder factors of $\tilde{I},\St_2^{\iota\-\an}$ is $\St_2^{\infty}$. However, $\St_2^\infty$ is not a quotient of $(\Ind^{\GL_2(L)}_{B(L)}|\cdot|\ox |\cdot|^{-1})$. This implies that $\Hom_{\GL_2(L)}(\tilde{I},\St_2^{\iota\-\an})=0$, and hence the Hecke action of $\bbT^S$ on $\bbH^1(\DR)\tilde{[\lambda]}$ is semisimple. This directly implies that the Hecke action of $\bbT^S$ on $\bbH^1(\DR')\tilde{[\lambda]}$ and $\ker I\tilde{[\lambda]}$ is semisimple, as they are quotients of $\bbH^1(\DR)\tilde{[\lambda]}$.
\end{proof}
\end{proposition}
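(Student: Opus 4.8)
The plan is to prove Proposition \ref{prop:semisimplespecial}, i.e. that the $\bbT^S$-action on $\bbH^1(\DR)\tilde{[\lambda]}$ is semisimple, by reducing it to a statement about morphisms of $\GL_2(L)$-representations and then checking that statement via Jordan--H\"older considerations. For simplicity assume $\dim_C(\pi_f^v)^{K^v}=1$; the general case follows by tensoring with $(\pi_f^v)^{K^v}$ (on which $\bbT^S$ acts by the scalar $\lambda$) throughout.

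First I would set up the two exact sequences coming from Theorem \ref{thm:kerI2} (the snake-lemma long exact sequence applied to the morphism of short exact sequences induced by $\bar d$): one expresses $\bbH^1(\DR)\tilde{[\lambda]}$ as an extension with subobject $H^1(\ker d)[\lambda]$ and quotient $H^0(\coker d)[\lambda]$, and by the identifications already established ($\pi_w=\St_2^\infty$, so $\pi_w'=\tau_w$ is the trivial $D_L^\times$-representation) we have $H^1(\ker d)[\lambda]\isom \St_2^{\iota\-\an}$ and $H^0(\coker d)[\lambda]\isom \tilde I$, both carrying semisimple $\bbT^S$-actions (for $H^0(\coker d)$ this comes from semisimplicity of the Hecke action on rigid cohomology of Igusa curves, Theorem \ref{thm:H0Ig}; for $H^1(\ker d)$ from the fact that $\tau_w$ is one-dimensional so the Hecke action factors through $(\pi_f^v)^{K^v}$). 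Since $\bbT^S$ is commutative and acts semisimply (by $\lambda$) on both the sub- and the quotient, the obstruction to semisimplicity of the extension $\bbH^1(\DR)\tilde{[\lambda]}$ lies in $\Hom_{\GL_2(L)}(\tilde I,\St_2^{\iota\-\an})$: if this Hom vanishes, then any $\bbT^S$-equivariant splitting of the associated graded forces $\bbT^S$ to act by $\lambda$ on the whole space.

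The key step is therefore to show $\Hom_{\GL_2(L)}(\tilde I,\St_2^{\iota\-\an})=0$. I would argue via composition series: the only common Jordan--H\"older constituent of $\tilde I=(\Ind^{\GL_2(L)}_{B(L)}|\cdot|\ox|\cdot|^{-1})^{\iota\-\an}$ and $\St_2^{\iota\-\an}=I/1$ is the smooth Steinberg $\St_2^\infty$. A nonzero $\GL_2(L)$-map $\tilde I\to \St_2^{\iota\-\an}$ would have image a nonzero subquotient of $\St_2^{\iota\-\an}$ that is also a quotient of $\tilde I$; tracing the (known, from Orlik--Strauch theory) socle/cosocle structure of both sides, one checks $\St_2^\infty$ appears as a submodule of $\St_2^{\iota\-\an}$ but is \emph{not} a quotient of $\tilde I$ (the cosocle of $\tilde I$ is $\tilde I_s$, and the smooth constituent sits in the socle, not the cosocle), which forces the map to be zero. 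Granting this vanishing, semisimplicity of $\bbT^S$ on $\bbH^1(\DR)\tilde{[\lambda]}$ follows, hence also on $\bbH^1(\DR')\tilde{[\lambda]}$ and $\ker I^1\tilde{[\lambda]}$ since these are quotients. Finally I would read off the explicit representation-theoretic descriptions in the displayed formulas directly from the preceding two propositions computing $H^1(\ker d)\ov{\SS}\to H^1(\ker d')$ and $H^0(\coker d)\ov{\ORD}\to H^0(\coker d')$, together with Theorem \ref{thm:kerI1} for the $\ker I^1$ statement.

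The main obstacle I anticipate is the Jordan--H\"older / socle bookkeeping needed for the vanishing of $\Hom_{\GL_2(L)}(\tilde I,\St_2^{\iota\-\an})$: one must be careful that in the locally $\iota$-analytic (as opposed to locally $\bbQ_p$-analytic) category the relevant principal series $I$, $\tilde I$ and their subquotients have the expected length and the expected positions of the smooth constituent, which relies on the Orlik--Strauch description and the $\iota$-analytic analogues of the extension computations already invoked in Proposition \ref{universalSteinberg} (e.g. \cite{Koh11,Schr11}). Everything else is formal: the passage from a vanishing Hom to semisimplicity of a commutative-algebra action on a two-step extension with semisimple graded pieces is a standard linear-algebra argument, and the explicit formulas are just transcriptions of the already-proven computations.
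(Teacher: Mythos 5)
Your proposal is correct and follows essentially the same route as the paper: reduce semisimplicity of the two-step $\bbT^S$-module $\bbH^1(\DR)\tilde{[\lambda]}$ (with graded pieces $\St_2^{\iota\-\an}$ and $\tilde I$, both already known to carry semisimple Hecke actions) to the vanishing of $\Hom_{\GL_2(L)}(\tilde I,\St_2^{\iota\-\an})$, then prove that vanishing by observing that the only shared Jordan--H\"older constituent is $\St_2^\infty$, which sits in the socle of $(\Ind^{\GL_2(L)}_{B(L)}|\cdot|\ox|\cdot|^{-1})^\infty$ rather than in the cosocle of $\tilde I$, so it cannot arise as a quotient of $\tilde I$. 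Your slightly more explicit tracing of the socle/cosocle positions is a harmless elaboration of the paper's terser phrasing; the argument and its inputs (Theorem \ref{thm:kerI2}, Theorem \ref{thm:H0Ig}, the Orlik--Strauch structure of the $\iota$-analytic principal series) are the same.
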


\subsubsection*{Examples: $\pi_v$ is supercuspidal}\label{cuspidal}
Assume that $\pi_v$ is supercuspidal. In this case, $J_{\bar N(L)}\pi_v=0$, and the $D_L^\times$ action on $\tau_v$ does not factors through the reduced norm map. We may assume that the subgroup $1+\varpi^n\calO_L$ acts trivially on $\tau_v$, and up to twist by unramified characters we assume that $\varpi\in L^\times\subset D_L^\times$ acts trivially on $\tau_v$.

Recall that there is a natural action of $\GL_2(L)^0\times\calO_{D_L}^\times$ on $\calM_{\LT,\infty}^{(0)}\isom \calM_{\Dr,\infty}^{(0)}$. Define a valuation map as follows:
\begin{align*}
    v:\GL_2(L)\times D_L^\times\to \bbZ,(g,\check g)\mapsto v_p(\det(g))+v_p(\mathrm{nrd}(\check g)),
\end{align*}
where $\mathrm{nrd}$ is the reduced norm on $D_L^\times$. Let $(\GL_2(L)\times D_L^\times)^0$ be the kernel of $v$, then $(\GL_2(L)\times D_L^\times)^0$ acts on $\calM_{\LT,\infty}^{(0)}\isom \calM_{\Dr,\infty}^{(0)}$. From this we deduce that $H^1(j_!\calO_{\Dr}^{\sm})$ carries actions of $(\GL_2(L)\times D_L^\times)^0$, and there is a natural way to equip $\bigoplus_{i\in\bbZ} H^1(j_!\calO_{\Dr}^{\sm})$ with an action of $\GL_2(L)\times D_L^\times$ via the identification 
\begin{align*}
    \bigoplus_{i\in\bbZ} H^1(j_!\calO_{\Dr}^{\sm})\isom \text{ind}^{\GL_2(L)\times D_L^\times}_{(\GL_2(L)\times D_L^\times)^0}H^1(j_!\calO_{\Dr}^{\sm})
\end{align*}
where $\text{ind}$ stands for the compact induction. Similarly we can equip $\bigoplus _{i\in\bbZ}H^1(j_!\Omega_{\Dr}^{1,\sm})$ and $\bigoplus _{i\in\bbZ}H^q_{\dR,c}(\calM_{\Dr,\infty}^{(0)})$ with $q=1,2$ a natural $\GL_2(L)\times D_L^\times$-action. Hence we have 
\begin{itemize}
    \item $\bbH^1(\DR)[\lambda]=H^1(\ker d)[\lambda]=(\pi_f^v)^{K^v}\ox_C(\bigoplus_{i\in\bbZ} H^1(j_!\calO_{\Dr}^{\sm})\ox_C\tau_v)^{D_L^\times}$.
    \item $\bbH^1(\DR')[\lambda]=H^1(\ker d')[\lambda]=(\pi_f^v)^{K^v}\ox_C(\bigoplus_{i\in\bbZ} H^1(j_!\Omega_{\Dr}^{1,\sm})\ox_C\tau_v)^{D_L^\times}$.
    \item $\ker \SS[\lambda]=(\pi_f^v)^{K^v}\ox_C(\bigoplus_{i\in\bbZ} H^1_{\dR,c}(\calM_{\Dr,\infty}^{(0)})\ox_C\tau_v)^{D_L^\times}$.
\end{itemize}
From the identification 
\begin{align*}
    \ker \SS[\lambda]=\bbH^1(\DR^{\lalg})[\lambda]
\end{align*}
and $\bbH^1(\DR^{\lalg})[\lambda]\isom \pi_v^{\oplus 2}$, we deduce that 
\begin{align*}
    (\bigoplus_{i\in\bbZ} H^1_{\dR,c}(\calM_{\Dr,\infty}^{(0)})\ox_C\tau_v)^{D_L^\times}=\pi_v^{\oplus 2}.
\end{align*}
Therefore, we have exact sequences
\begin{align*}
    0\to (\pi_f^v)^{K^v}\ox_C\pi_v\to &\ker I[\lambda]\to (\pi_f^v)^{K^v}\ox_C\pi_c  \to 0,\\
    0\to (\pi_f^v)^{K^v}\ox_C\pi_v\to &(\pi_f^v)^{K^v}\ox_C\tilde{\pi}\to \ker I[\lambda]\to 0,
\end{align*} 
where $\tilde\pi=(\bigoplus_{i\in\bbZ}H^1(j_!\calO_{\Dr}^{\sm})\ox_C\tau_v)^{D_L^\times}$ and $\pi_c=(\bigoplus_{i\in\bbZ}H^1(j_!\Omega_{\Dr}^{1,\sm})\ox_C\tau_v)^{D_L^\times}$.

\subsection{Translations of intertwining operators}
In this section, we study the translation of the intertwining operator. Traditionally, the translation functor is a useful tool in the study of representations of semisimple
Lie algebras. There exist recent works [JLS22, Din24] applying translation functors to study locally analytic
representations. We use similar methods as in \cite{WConDrinfeld} to study translations of the sheaves $\calO_{K^v}^{\lan,(a,b)}$, and translations of the intertwining operators.

Let $\iota:L\inj C$ be our fixed $\bbQ_p$-embedding as before. Let $\frg_\iota:=\gl_2(L)\ox_{L,\iota}C$, and $\calZ(\frg_\iota)$ be the center of $U(\frg_\iota)$. Let $\frb_\iota$ be the Borel subalgebra consisting of upper triangular matrices, and let $\frh_{\iota}$ be the Cartan subalgebra consisting of diagonal matrices. For an integral weight $\mu$ of $\frh_\iota$, we denote by $\tilde{\chi}_\mu$ the infinitesimal character of $\calZ(\frg_\iota)$ acting on $U(\frg_{\iota})\ox_{U(\frb_\iota)}\mu$. If $\mu$ is dominant, we let $V^{\mu}$ be the $\iota$-algebraic representation of $\GL_2(L)$ over $E$, with highest weight $\mu$ with respect to $\frb_\iota$. The center $\calZ(\frg_\iota)$ is generated as an $E$-algebra by $z=\left( \begin{matrix} 1&0\\0&1\end{matrix} \right)\in \frg_\iota$, and the Casimir operator $\Omega=u^+u^-+u^-u^++\frac{1}{2}h^2\in U(\frg_\iota)$, where $u^+=\left( \begin{matrix} 0&1\\0&0\end{matrix} \right)\in\frg_\iota$, $u^-=\left( \begin{matrix} 0&0\\1&0\end{matrix} \right)\in\frg_\iota$, and $h=\left( \begin{matrix} 1&0\\0&-1\end{matrix} \right)\in\frg_\iota$.

Let $\Lambda$ be the root lattice of $\GL_2$ and $\Lambda^{\vee}$ be the dual root lattice. We identify $\Lambda$ with $\bbZ^2$ such that the standard representation of $\GL_2$ has highest weight $(1,0)$. Let $\varrho\in\Lambda^{\vee}$ be the dual root such that $\varrho^{\vee}=(1,0)$. The Weyl group of $\GL_2$ consists of two elements. We denote the non-trivial one by $s$. The action of the Weyl group on the root lattice is given by $s(a,b)=(b,a)$, and we define the dot action to be $s\cdot (a,b)=(b-1,a+1)$. Let $\lambda\in\Lambda$ be a root. If $s\cdot\lambda\neq \lambda$, then we say $\lambda$ is generic. Otherwise, we say $\lambda$ is singular. Given an integral weight $\lambda=(a,b)\in\Lambda$, we say it is dominant if $a\ge b$, and antidominant if $a<b$.

Let $\Rep^{\iota\-\lan}_{\tilde{\chi}_\mu}(G)$ be the category of locally $\iota$-analytic representations on LB spaces over $C$, with generalized infinitesimal character $\tilde{\chi}_\mu$. We define the translation functor from weight $\mu$ to weight $\lambda$ as follows:
\begin{align*}
    T_\mu^\lambda:\Rep^{\iota\-\lan}_{\tilde{\chi}_\mu}(G)&\to \Rep^{\iota\-\lan}_{\tilde{\chi}_\lambda}(G)\\
    M&\mapsto (V^{\bar\nu}\ox_C M)\{\calZ(\frg_\iota)=\tilde{\chi}_\lambda\}
\end{align*}
where $\bar\nu$ is the unique dominant weight in the orbit of the usual action of the Weyl group on $\lambda-\mu$, and $\{-\}$ denotes the generalized eigenspace. By \cite[Lemma 2.4.6, Theorem 2.4.7]{JLS22}, $T_\mu^\lambda$ is an exact functor, with left and right adjoints $T_\lambda^\mu$. 

Let $\mu,\lambda\in\Lambda$ be two roots. Recall that if $\mu=(a,b)\in\bbZ^2$, then $\calZ(\frg_\iota)$ acts on $\calO_{K^v}^{\lan,\mu}$ via $\tilde{\chi}_\mu$. We define the translation of the sheaf $\calO_{K^v}^{\lan,\mu}$ as follows. Let $T_\mu^\lambda \calO_{K^v}^{\lan,\mu}$ be the sheaf on $\fl$, such that for every sufficiently small open affinoid subset $U\subset\fl$, we define $(T_\mu^\lambda \calO_{K^v}^{\lan,\mu})(U):=T_\mu^\lambda (\calO_{K^v}^{\lan,\mu}(U))$. Here we use the Lie algebra action of $\frg_\iota$ on $\calO_{K^v}^{\lan,\mu}(U)$ to define the translation functor. One directly checks that $T_\mu^\lambda \calO_{K^v}^{\lan,\mu}$ is a well-defined $\GL_2(L)$-equivariant sheaf on $\fl$.

Next, we describe $T_\mu^\lambda \calO_{K^v}^{\lan,\mu}$ using certain twists of $\calO_{K^v}^{\lan,(a,b)}$. We always put the trivial action of $\bbT^S$ and $C$-semilinear trivial action of $\Gal(\bar L/L)$ on $V^{(a,b)}$ when computing translations of $\calO_{K^v}^{\lan,\mu}$. We will mainly use the following property (Proposition \ref{prop:twist}) to do calculations: 
\begin{align*}
    \calO_{K^v}^{\lan,(a,b)}\ox_{\calO_{\fl}}\omega_{\fl}^{(a',b')}\isom \omega_{K^v}^{(-a',-b'),\lan,(a+b',b+a')}(-a')
\end{align*}
for $(a,b),(a',b')\in\bbZ^2$ integers.

\begin{proposition}\label{prop:translationOsmoxOfl1}
Let $\mu,\lambda\in\Lambda$ be roots. In the following two situations:
\begin{itemize}
    \item $\mu,\lambda$ are generic, and $\mu,\lambda$ are both dominant or $\mu,\lambda$ are both antidominant,
    \item $\mu$ is generic and $\lambda$ is singular,
\end{itemize}
we have $T_{\mu}^{\lambda}\calO_{K^v}^{\lan,\mu}\isom \omega_{K^v}^{\mu-\lambda,\lan,\lambda}(\langle\varrho,\mu-\lambda\rangle)$. Here $\langle-,-\rangle$ denotes the natural pairing between the roots and dual roots, and $(-)$ denotes the Tate twist.
\begin{proof}
This follows by a direct computation on flag variety. For example, we have an exact sequence of locally free $\calO_{\fl}$-modules: 
\[
    0\to \omega_{\fl}^{(0,1)}\to V^{(1,0)}\ox_C\calO_{\fl}\to \omega_{\fl}^{(1,0)}\to 0.
\]
After tensoring the above exact sequence with the $\calO_{\fl}$-module $\calO_{K^v}^{\lan,(0,0)}$ over $\calO_{\fl}$, and get an exact sequence 
\[
    0\to \omega_{K^v}^{(0,-1),\lan,(0,1)}\to V^{(1,0)}\ox_C \calO_{K^v}^{\lan,(0,0)}\to \omega_{K^v}^{(-1,0),\lan,(1,0)}(-1)\to 0.
\]
The exactness of the above exact sequence follows from the flatness of $\omega_{\fl}^{(1,0)}$ over $\calO_{\fl}$. As $\omega_{K^v}^{(-1,0),\lan,(1,0)}(-1)$ has infinitesimal character $\tilde{\chi}_{(1,0)}$ and $\omega_{K^v}^{(0,-1),\lan,(0,1)}$ has infinitesimal character $\tilde{\chi}_{(0,1)}$, we see that $T_{(0,0)}^{(1,0)}\calO_{K^v}^{\lan,(0,0)}\isom\omega_{K^v}^{(-1,0),\lan,(1,0)}(-1)$, and $T_{(0,0)}^{(0,1)}\calO_{K^v}^{\lan,(0,0)}\isom\omega_{K^v}^{(0,-1),\lan,(0,1)}$.

For the general case, one can argue as follows.  Applying $\Sym^a\ox {\det}^b$ to the inclusion map $\omega_{\fl}^{(0,1)}\subset V^{(1,0)}_{\iota}\ox_C\calO_{\fl}$, one get a filtration on $V^{(a,b)}_{\iota}\ox_C\calO_{\fl}$. We pick $(a,b)\in\bbZ^2$ so that $\bar\nu=(a,b)$, where $\bar\nu$ is the unique dominant weight in the orbit of the dot action of the Weyl group on $\lambda-\mu$. After tensoring $\calO_{K^v}^{\lan,\mu}$ with $V^{(a,b)}_{\iota}\ox_C\calO_{\fl}$ over $\calO_{\fl}$, we get a filtration on $V^{\bar\nu}\ox_C\calO_{K^v}^{\lan,\mu}$. The condition on $\mu$ and $\lambda$ implies that there is only one graded piece such that $\calZ(\frg_\iota)$ acts via $\tilde{\chi}_\lambda$. From this one can compute the translation of $\calO_{K^v}^{\lan,\mu}$.  
\end{proof}
\end{proposition}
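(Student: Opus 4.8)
\textbf{Proof plan for Proposition \ref{prop:translationOsmoxOfl1}.}

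The plan is to reduce the computation of the translation functor $T_\mu^\lambda$ applied to the sheaf $\calO_{K^v}^{\lan,s\mu}$ to an entirely formal computation on the flag variety $\fl=\bbP^1$, exploiting the fact that $\calO_{K^v}^{\lan,(a,b)}$ is an $\calO_{\fl}$-module and that the translation functor is built from tensoring with a fixed finite-dimensional algebraic representation and then projecting to a single infinitesimal character. First I would recall the base cases $(a,b)=(1,0)$ and $(a,b)=(0,1)$ explicitly, as sketched in the proposition: tensor the Euler exact sequence $0\to\omega_{\fl}^{(0,1)}\to V^{(1,0)}\ox_C\calO_{\fl}\to\omega_{\fl}^{(1,0)}\to 0$ with the $\calO_{\fl}$-module $\calO_{K^v}^{\lan,(0,0)}$ over $\calO_{\fl}$ (flatness of the line bundles $\omega_{\fl}^{(\bullet,\bullet)}$ guarantees exactness), and read off, via Proposition \ref{prop:twist}, the two-step filtration
\[
    0\to \omega_{K^v}^{(0,1),\lan,(1,0)}\to V^{(1,0)}\ox_C\calO_{K^v}^{\lan,(0,0)}\to \omega_{K^v}^{(1,0),\lan,(0,1)}(-1)\to 0.
\]
Since $\theta_{\frh}$ acts on the sub and the quotient with distinct integral weights, the Casimir $\Omega\in\calZ(\frg_\iota)$ acts with distinct infinitesimal characters $\tilde{\chi}_{(0,1)}$ and $\tilde{\chi}_{(1,0)}$ on these two graded pieces, so picking out the $\tilde{\chi}_\lambda$-generalized eigenspace singles out exactly one of them. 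This yields $T_{(0,0)}^{(1,0)}\calO_{K^v}^{\lan,(0,0)}\isom\omega_{K^v}^{(1,0),\lan,(0,1)}(-1)$ and $T_{(0,0)}^{(0,1)}\calO_{K^v}^{\lan,(0,0)}\isom\omega_{K^v}^{(0,1),\lan,(1,0)}$.

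For the general case I would take $(a,b)\in\bbZ^2$ to be the unique dominant weight $\bar\nu$ in the $W$-orbit (under the usual, non-dotted, action) of $\lambda-\mu$, so that $V^{\bar\nu}=V^{(a,b)}_{\iota}=\Sym^{a-b}V^{(1,0)}_\iota\ox_C{\det}^bV_\iota$. Applying $\Sym^{a-b}(-)\ox{\det}^b$ to the inclusion $\omega_{\fl}^{(0,1)}\subset V^{(1,0)}_\iota\ox_C\calO_{\fl}$ produces a filtration of $V^{\bar\nu}\ox_C\calO_{\fl}$ by $\calO_{\fl}$-submodules with graded pieces $\omega_{\fl}^{(a-i,b+i)}$ for $i=0,\dots,a-b$. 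Tensoring this filtration over $\calO_{\fl}$ with the flat $\calO_{\fl}$-module $\calO_{K^v}^{\lan,s\mu}=\calO_{K^v}^{\lan,(\mu_2,\mu_1)}$ (writing $\mu=(\mu_1,\mu_2)$) and using Proposition \ref{prop:twist} again, I would identify the graded pieces with twists $\omega_{K^v}^{(a-i,b+i),\lan,\star}(\star)$, whose $\theta_{\frh}$-weights are pairwise distinct integral weights, hence whose infinitesimal characters $\tilde{\chi}$ are pairwise distinct \emph{except} possibly for coincidences forced by the dot action (the $s$-fixed point). Here is where the two hypotheses enter: when $\mu,\lambda$ are both generic and both dominant (or both antidominant), or when $\mu$ is generic and $\lambda$ is singular, a direct check on the list of weights $\{(a-i,b+i)\}$ shows that exactly one graded piece lies in the block $\tilde{\chi}_\lambda$; its $\calZ(\frg_\iota)$-action is genuinely semisimple (not merely generalized) on that piece because the extensions in the filtration are by modules in distinct blocks, so the generalized eigenspace equals the eigenspace. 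Taking the $\{\calZ(\frg_\iota)=\tilde{\chi}_\lambda\}$-part of $V^{\bar\nu}\ox_C\calO_{K^v}^{\lan,s\mu}$ therefore recovers precisely that one graded piece, and bookkeeping with Proposition \ref{prop:twist} and the Tate twists gives $T_\mu^\lambda\calO_{K^v}^{\lan,s\mu}\isom\omega_{K^v}^{\lambda-\mu,\lan,s\lambda}(\langle\varrho,\mu-\lambda\rangle)$.

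The main obstacle, and the step deserving the most care, is the combinatorial verification that under each of the two stated hypotheses exactly one graded piece of the filtration sits in the block $\tilde{\chi}_\lambda$ — equivalently, that no two distinct graded weights $(a-i,b+i)$ and $(a-j,b+j)$ are related by the dot action $s\cdot$ within the relevant range, and that the unique surviving piece is the one whose weight and Tate twist match the right-hand side of the claimed formula. This requires tracking signs in the identification of $\lambda-\mu$ with $\bar\nu$, the behavior of the dot action versus the ordinary action, and the dominant/antidominant dichotomy for $\mu$ and $\lambda$; the singular case for $\lambda$ is slightly different since then $\tilde{\chi}_\lambda$ corresponds to an $s$-fixed weight, so one must confirm that the filtration still has a single graded piece (rather than two glued pieces) in that block, which follows because the relevant inclusion $\omega_{\fl}^{(0,1)}\hookrightarrow V^{(1,0)}_\iota\ox_C\calO_{\fl}$ remains strict. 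The sheaf-theoretic and functoriality statements (that $(T_\mu^\lambda\calO_{K^v}^{\lan,s\mu})(U):=T_\mu^\lambda(\calO_{K^v}^{\lan,s\mu}(U))$ glues to a $\GL_2(L)$-equivariant sheaf, compatibility with restriction, exactness of $T_\mu^\lambda$) are routine consequences of exactness of the translation functor \cite[Theorem 2.4.7]{JLS22} and the fact that $\ffrb$ forms a basis stable under intersection (Theorem \ref{thm:piHT}), so I would state them without belaboring the details.
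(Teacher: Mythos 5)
Your proposal is correct and takes essentially the same route as the paper: tensor the Euler sequence (and its symmetric/determinant powers) with $\calO_{K^v}^{\lan,s\mu}$ over $\calO_{\fl}$, read off the filtration on $V^{\bar\nu}\ox_C\calO_{K^v}^{\lan,s\mu}$ via Proposition \ref{prop:twist}, and observe that under the stated hypotheses exactly one graded piece carries infinitesimal character $\tilde{\chi}_\lambda$. Your version is a bit more explicit about the combinatorial check and the fact that the generalized eigenspace collapses to a genuine eigenspace, and you correctly use the ordinary $W$-action and $\Sym^{a-b}$ in places where the paper's proof text has minor slips, but the substance is the same argument.
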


Next we handle another case.

\begin{proposition}\label{prop:122}
Suppose $\mu$ is singular with $\lambda$ generic and dominant. Then $T_{\mu}^\lambda\calO_{K^v}^{\lan,\mu}$ fits into an exact sequence of $\GL_2(L)$-equivariant sheaves on $\fl$:
\[
    0\to \omega_{K^v}^{\mu-s\cdot\lambda,\lan,s\cdot\lambda}(\langle\varrho,(\mu-s\cdot \lambda)\rangle)\to T_{\mu}^\lambda\calO_{K^v}^{\lan,\mu}\to \omega_{K^v}^{\mu-\lambda,\lan,\lambda}(\langle\varrho,\mu-\lambda\rangle)\to 0.
\]
\begin{proof}
This also follows by a direct computation on the flag variety. For example, we have an exact sequence of locally free $\calO_{\fl}$-modules: 
\[
    0\to \omega_{\fl}^{(0,1)}\to V^{(1,0)}\ox_C\calO_{\fl}\to \omega_{\fl}^{(1,0)}\to 0.
\]
Twisting the above exact sequence of $\calO_{\fl}$-modules by $\omega_{\fl}^{(-1,0)}$, one gets an exact sequence of locally free $\calO_{\fl}$-modules:
\[
    0\to \omega_{\fl}^{(-1,1)}\to V^{(1,0)}\ox_C\omega_{\fl}^{(-1,0)}\to \calO_{\fl}\to 0,
\]
which is also $\GL_2(L)$-equivariant. The line bundle $\omega_{\fl}^{(-1,1)}$ is naturally identified with $\Omega_{\fl}^1$. Tensoring with $\calO_{K^v}^{\lan,(0,0)}$ over $\calO_{\fl}$, one get an exact sequence 
\[
    0\to \calO_{K^v}^{\lan,(0,0)}\ox_{\calO_{\fl}}\Omega_{\fl}^1\to V^{(1,0)}\ox_C\omega_{K^v}^{(1,0),\lan,(-1,0)}(1)\to \calO_{K^v}^{\lan,(0,0)}\to 0.
\]
The action of the center $\calZ(\frg_\iota)$ on $\calO_{K^v}^{\lan,(0,0)}\ox_{\calO_{\fl}}\Omega_{\fl}^1$ and  $\calO_{K^v}^{\lan,(0,0)}$ are both given by the character $\tilde{\chi}_{(0,0)}$. From this we deduce that $T_{(-1,0)}^{(0,0)}\omega_{K^v}^{(1,0),\lan,(-1,0)}(1)$ fits into an exact sequence of $\GL_2(L)$-equivariant sheaves:
\[
    0\to \calO_{K^v}^{\lan,(0,0)}\ox_{\calO_{\fl}}\Omega_{\fl}^1\to T_{(-1,0)}^{(0,0)}\omega_{K^v}^{(1,0),\lan,(-1,0)}(1)\to \calO_{K^v}^{\lan,(0,0)}\to 0.
\]

For the general case, one also tensoring $\calO_{K^v}^{\lan,\mu}$ with $V^{\bar\nu}$ over $C$ so that to get a filtration on $V^{\bar\nu}\ox_C\calO_{K^v}^{\lan,\mu}$, just as in the proof of Proposition \ref{prop:translationOsmoxOfl1}. The condition on $\mu$ and $\lambda$ implies that there are exactly two graded pieces has infinitesimal character $\tilde{\chi}_\lambda$. From this we deduce the result.
\end{proof}
\end{proposition}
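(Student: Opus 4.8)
The statement concerns the translation $T_\mu^\lambda \calO_{K^v}^{\lan,s\mu}$ when $\mu$ is singular and $\lambda$ is generic dominant, and asserts that it sits in a short exact sequence whose outer terms are the two expected twists of $\calO_{K^v}^{\lan,(-,-)}$. Since translation is defined section-by-section over sufficiently small affinoid $U\subset\fl$ and all objects involved are $\GL_2(L)$-equivariant sheaves, it suffices to realize the translation functor as an operation on $\calO_{\fl}$-modules, compute on the flag variety, and then feed the result back through the tensor identity in Proposition \ref{prop:twist}. So the plan is: first reduce to the basic case $\mu=(-1,0)$, $\lambda=(0,0)$ (all other generic dominant $\lambda$ and singular $\mu$ differ from this by applying $\Sym^a\ox{\det}^b$ and extracting an infinitesimal character isotypic piece, exactly as in the proof of Proposition \ref{prop:translationOsmoxOfl1}); then produce the relevant filtration on $V^{\bar\nu}\ox_C\calO_{K^v}^{\lan,s\mu}$; then identify the two graded pieces carrying the infinitesimal character $\tilde\chi_\lambda$ and check there are exactly two; and finally rewrite those graded pieces as the asserted twists.

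\textbf{The key computation.} In the basic case, start from the $\GL_2(L)$-equivariant Euler-type exact sequence of locally free $\calO_{\fl}$-modules $0\to\omega_{\fl}^{(0,1)}\to V^{(1,0)}_\iota\ox_C\calO_{\fl}\to\omega_{\fl}^{(1,0)}\to 0$, twist it by $\omega_{\fl}^{(-1,0)}$ to obtain $0\to\Omega_{\fl}^1\to V^{(1,0)}_\iota\ox_C\omega_{\fl}^{(-1,0)}\to\calO_{\fl}\to 0$ (using $\omega_{\fl}^{(-1,1)}\isom\Omega_{\fl}^1$ and $\det\ox_C\calO\isom\omega_{\fl}^{(1,1)}$ from the flag-variety discussion), and then tensor over $\calO_{\fl}$ with $\calO_{K^v}^{\lan,(0,0)}$, which is flat over $\calO_{\fl}$ in the relevant sense so exactness is preserved. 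Using Proposition \ref{prop:twist}(ii) the middle term becomes $V^{(1,0)}_\iota\ox_C\omega_{K^v}^{(-1,0),\lan,(0,-1)}(1)$; the sub and quotient have infinitesimal character $\tilde\chi_{(0,0)}$. Since $\bar\nu=(1,0)$ here and $V^{(1,0)}_\iota\ox_C(-)$ has exactly two Jordan--H\"older constituents in its induced filtration both of which land in the $\tilde\chi_{(0,0)}$-isotypic part, one reads off $T_{(-1,0)}^{(0,0)}\omega_{K^v}^{(-1,0),\lan,(0,-1)}(1)$ as the extension $0\to\calO_{K^v}^{\lan,(0,0)}\ox_{\calO_{\fl}}\Omega_{\fl}^1\to T_{(-1,0)}^{(0,0)}\omega_{K^v}^{(-1,0),\lan,(0,-1)}(1)\to\calO_{K^v}^{\lan,(0,0)}\to 0$. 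Translating back (re-applying Proposition \ref{prop:twist} and renaming the Tate twists and weights via the pairing $\langle\varrho,\mu-\lambda\rangle$) gives the stated sequence in the general case.

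\textbf{Main obstacle.} The genuinely delicate point is the bookkeeping of which graded pieces of $V^{\bar\nu}\ox_C\calO_{K^v}^{\lan,s\mu}$ carry the infinitesimal character $\tilde\chi_\lambda$, and verifying that there are \emph{exactly} two when $\mu$ is singular (versus exactly one in the cases of Proposition \ref{prop:translationOsmoxOfl1}). This is where the hypothesis ``$\mu$ singular, $\lambda$ generic dominant'' is really used: the dot-action orbit of $\lambda-\mu$ has a dominant representative $\bar\nu$ of a definite size, and the constituents of $V^{\bar\nu}$ restricted (via the filtration coming from $\omega_{\fl}^{(0,1)}\subset V^{(1,0)}_\iota\ox_C\calO_{\fl}$) have $\theta_{\frh}$-weights shifted by a range of integers; counting how many of the resulting shifts $s\mu+(\text{weight})$ lie in the $\tilde\chi_\lambda$-isotypic locus is a short but error-prone linkage computation. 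Once that count is pinned down — and one checks the extension is non-split by, e.g., noting that the $\theta_{\frh}$-action distinguishes the two pieces and the middle object is not a direct sum as a $\GL_2(L)$-representation, which can be verified locally near a point of $\fl$ where $x$ has no pole exactly as in the proofs of Theorems \ref{thm:d} and \ref{thm:dbar} — the rest is formal. The flatness/exactness claim for $-\ox_{\calO_{\fl}}\calO_{K^v}^{\lan,(0,0)}$ should also be justified; it follows since we are tensoring with a locally free $\calO_{\fl}$-module, so exactness is automatic.
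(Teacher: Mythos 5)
Your proposal is correct and follows essentially the same route as the paper: the base case is handled via the same twisted Euler sequence $0\to\Omega_{\fl}^1\to V_\iota^{(1,0)}\ox_C\omega_{\fl}^{(-1,0)}\to\calO_{\fl}\to 0$, tensored with $\calO_{K^v}^{\lan,(0,0)}$ over $\calO_{\fl}$ (exactness justified by local freeness), and the general case is reduced to the same filtration on $V^{\bar\nu}\ox_C\calO_{K^v}^{\lan,s\mu}$ with exactly two graded pieces carrying $\tilde\chi_\lambda$. Your extra remarks on non-splitness are not needed for this proposition as stated (the non-split structure is established separately in Proposition \ref{prop:connectionmap1}), but they do no harm.
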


\begin{proposition}\label{prop:connectionmap1}
Suppose $\mu$ is singular with $\lambda$ generic and dominant. The action of the Casimir operator $\Omega \in\calZ(\frg_\iota)$ on $T_{\mu}^{\lambda}\calO_{K^v}^{\lan,\mu}$ is not-semisimple, and induces a $\GL_2(L)$-equivariant map 
\[
    \omega_{K^v}^{\mu-\lambda,\lan,\lambda}(\langle\varrho,\mu-\lambda\rangle)\to \omega_{K^v}^{\mu-s\cdot\lambda,\lan,s\cdot\lambda}(\langle\varrho,(\mu-s\cdot \lambda)\rangle)
\]
which is essentially given by differentiate sections along $\fl$, and is twist of $\bar d$. This is the map $\bar d$ constructed in Theorem \ref{thm:dbar} up to twist.
\begin{proof}

We can use the similar calculation as in the proof of \cite[Proposition 2.3.3]{WConDrinfeld}.

\end{proof}
\end{proposition}

As $T_\lambda^\mu$ is biadjoint to $T_\mu^{\lambda}$, it is natural to study the associate unit and counit maps.
\begin{proposition}
Let $\mu,\lambda\in\Lambda$. Suppose that $\mu$ and $\lambda$ are either both generic or both singular. Then the natural maps $\calO_{K^v}^{\lan,\mu}\to T_{\lambda}^\mu T_{\mu}^\lambda\calO_{K^v}^{\lan,\mu}$ and $T_{\lambda}^\mu T_{\mu}^\lambda\calO_{K^v}^{\lan,\mu}\to \calO_{K^v}^{\lan,\mu}$ are isomorphisms of $\GL_2(L)$-equvariant sheaves.
\begin{proof}
As $\calO_{K^v}^{\lan,\mu}\isom\calO_{K^v}^{\iota\-\lan,\mu}\hat\ox_{\calO_{K^v}^{\sm}}\calO_{K^v}^{\iota^c\-\lan}$, and $T_{\mu}^\lambda(\calO_{K^v}^{\lan,\mu})\isom T_\mu^\lambda(\calO_{K^v}^{\iota\-\lan,\mu})\hat\ox_{\calO_{K^v}^{\sm}}\calO_{K^v}^{\iota^c\-\lan}$, we can use \cite[Theorem 3.2.1]{JLS22} which says that the composition $T_\mu^\lambda\comp T_\lambda^\mu$ is isomorphic to the identity functor. One can also compute the result directly.
\end{proof}
\end{proposition}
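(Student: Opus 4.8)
The plan is to reduce the statement to the analogous statement about the sheaves $\calO_{K^v}^{\iota\-\lan,s\mu}$ of locally $\iota$-analytic sections, for which the corresponding fact for $\GL_2(\bbQ_p)$-type situations is recorded in \cite[Theorem 3.2.1]{JLS22}. As indicated in the statement, the key structural input is the tensor product decomposition from Proposition \ref{prop:tensordecomposition} specialized to a fixed infinitesimal character:
\[
    \calO_{K^v}^{\lan,s\mu}\isom \calO_{K^v}^{\iota\-\lan,s\mu}\hat\ox_{\calO_{K^v}^{\sm}}\calO_{K^v}^{\iota^c\-\lan},
\]
which follows from Corollary \ref{cor:hSen} (the $\theta_{\frh}$-action is trivial on $\calO_{K^v}^{\iota^c\-\lan}$) together with Proposition \ref{prop:tensordecomposition}. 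The first step is to check that the translation functor $T_\mu^\lambda$ is compatible with this decomposition, i.e.\ that
\[
    T_\mu^\lambda(\calO_{K^v}^{\lan,s\mu})\isom T_\mu^\lambda(\calO_{K^v}^{\iota\-\lan,s\mu})\hat\ox_{\calO_{K^v}^{\sm}}\calO_{K^v}^{\iota^c\-\lan}.
\]
This is essentially formal: the $\frg_\iota$-action, hence the action of $\calZ(\frg_\iota)$, on the completed tensor product is induced entirely from the first factor (since $\frg_\iota$ acts trivially on $\calO_{K^v}^{\iota^c\-\lan}$ by definition of locally $\iota^c$-analytic sections), the algebraic representation $V^{\bar\nu}$ appearing in the definition of $T_\mu^\lambda$ is finite-dimensional over $C$ so tensoring with it commutes with $\hat\ox$, and taking the generalized $\tilde\chi_\lambda$-eigenspace commutes with $\hat\ox_{\calO_{K^v}^{\sm}}\calO_{K^v}^{\iota^c\-\lan}$ because the latter operation is exact on LB-spaces (the transition maps being compact and injective, cf.\ the open mapping theorem \cite[Theorem 1.1.17]{Eme17} invoked in the proof of Proposition \ref{prop:tensordecomposition}) and preserves generalized eigenspaces since the operators $z,\Omega$ act only through the first factor.

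The second step is the core: one shows the unit $\calO_{K^v}^{\iota\-\lan,s\mu}\to T_\lambda^\mu T_\mu^\lambda\calO_{K^v}^{\iota\-\lan,s\mu}$ and counit $T_\lambda^\mu T_\mu^\lambda\calO_{K^v}^{\iota\-\lan,s\mu}\to \calO_{K^v}^{\iota\-\lan,s\mu}$ are isomorphisms of $\GL_2(L)$-equivariant sheaves on $\fl$. Here one works section-wise: for $U\in\ffrb$ sufficiently small, $\calO_{K^v}^{\iota\-\lan,s\mu}(U)$ is a locally $\iota$-analytic $\GL_2(L)$-representation with generalized infinitesimal character $\tilde\chi_\mu$, and \cite[Theorem 3.2.1]{JLS22} applies to each such representation once we know that both $\mu$ and $\lambda$ lie in the same facet-type class (both generic, or both singular). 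The point of this hypothesis is precisely that, in the Weyl-group combinatorics for $\GL_2$, the composition $T_\mu^\lambda\comp T_\lambda^\mu$ on the category $\Rep^{\iota\-\lan}_{\tilde\chi_\mu}(G)$ is then isomorphic to the identity functor; when one weight is singular and the other generic this fails (instead one gets a non-split self-extension, exactly as in Propositions \ref{prop:122} and \ref{prop:connectionmap1}), which is why the hypothesis is needed. One checks the unit/counit maps are the ones induced by the section-wise biadjunction, and since $T_\mu^\lambda,T_\lambda^\mu$ are exact, these sheaf maps are isomorphisms if and only if they are isomorphisms on every $U\in\ffrb$. Alternatively, as the statement remarks, a direct computation on the flag variety using the Euler exact sequence and its symmetric powers — as in the proofs of Propositions \ref{prop:translationOsmoxOfl1} and \ref{prop:122} — gives the result: when $\mu,\lambda$ are both generic or both singular, filtering $V^{\bar\nu}\ox_C\calO_{K^v}^{\lan,s\mu}$ shows exactly one graded piece survives in the relevant infinitesimal-character eigenspace, and iterating $T_\lambda^\mu\comp T_\mu^\lambda$ returns $\calO_{K^v}^{\lan,s\mu}$ on the nose.

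Finally, one combines the two steps: applying $T_\lambda^\mu$ to the isomorphism from Step 1 and using Step 1 again (now for $T_\lambda^\mu$ in place of $T_\mu^\lambda$, which is legitimate since $T_\lambda^\mu$ is likewise defined via tensoring with a finite-dimensional representation and projecting to an eigenspace), we get
\[
    T_\lambda^\mu T_\mu^\lambda(\calO_{K^v}^{\lan,s\mu})\isom \bigl(T_\lambda^\mu T_\mu^\lambda \calO_{K^v}^{\iota\-\lan,s\mu}\bigr)\hat\ox_{\calO_{K^v}^{\sm}}\calO_{K^v}^{\iota^c\-\lan}\isom \calO_{K^v}^{\iota\-\lan,s\mu}\hat\ox_{\calO_{K^v}^{\sm}}\calO_{K^v}^{\iota^c\-\lan}\isom \calO_{K^v}^{\lan,s\mu},
\]
with the composite being the unit (resp.\ counit) map, as one checks by naturality. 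I expect the main obstacle to be the bookkeeping in Step 1, namely verifying carefully that forming generalized eigenspaces for $\calZ(\frg_\iota)$ is compatible with the completed tensor product $\hat\ox_{\calO_{K^v}^{\sm}}\calO_{K^v}^{\iota^c\-\lan}$ — one must make sure the completion does not introduce spurious generalized eigenvectors, which uses the explicit LB-space structure (density of the locally algebraic part, Corollary \ref{cor:Oiotalachi} and Proposition \ref{prop:Oetala}) and continuity of the $\calZ(\frg_\iota)$-action. The section-wise input from \cite{JLS22} and the flag-variety computations are by contrast routine adaptations of what has already been done for $\GL_2(\bbQ_p)$ and in Propositions \ref{prop:translationOsmoxOfl1}--\ref{prop:connectionmap1}.
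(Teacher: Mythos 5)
Your proposal follows the same route as the paper's proof: use the tensor decomposition $\calO_{K^v}^{\lan,s\mu}\isom\calO_{K^v}^{\iota\-\lan,s\mu}\hat\ox_{\calO_{K^v}^{\sm}}\calO_{K^v}^{\iota^c\-\lan}$, observe that $T_\mu^\lambda$ is computed on the first factor since $\frg_\iota$ acts trivially on the second, and then invoke \cite[Theorem 3.2.1]{JLS22} (with the direct flag-variety computation as an alternative). The extra bookkeeping you flag in Step 1 about compatibility of generalized $\calZ(\frg_\iota)$-eigenspaces with the completed tensor product is exactly the gap the paper's terse phrase ``and $T_\mu^\lambda(\calO_{K^v}^{\lan,s\mu})\isom T_\mu^\lambda(\calO_{K^v}^{\iota\-\lan,s\mu})\hat\ox_{\calO_{K^v}^{\sm}}\calO_{K^v}^{\iota^c\-\lan}$'' glosses over, and your treatment of it is correct.
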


\begin{proposition}\label{prop:wallcrossingofOsmoxOfl}
Suppose that $\mu$ is generic and $\lambda$ is singular. Then:
\begin{enumerate}[(i)]
    \item $T_{\lambda}^\mu T_{\mu}^\lambda\calO_{K^v}^{\lan,\mu}$ is an extension of $\calO_{K^v}^{\lan,\mu}$ by $\calO_{K^v}^{\lan,\mu}\ox_{\calO_{\fl}}(\Omega_{\fl}^1)^{\ox k+1}$ with $k=a-b$ if $\mu=(a,b)$.
    \item The natural map $\calO_{K^v}^{\lan,\mu}\to T_{\lambda}^\mu T_{\mu}^\lambda\calO_{K^v}^{\lan,\mu}$ is given by the composition $\bar d:\calO_{K^v}^{\lan,\mu}\to \calO_{K^v}^{\lan,\mu}\ox_{\calO_{\fl}}(\Omega_{\fl}^1)^{\ox k+1}$ and the natural inclusion $\calO_{K^v}^{\lan,\mu}\ox_{\calO_{\fl}}(\Omega_{\fl}^1)^{\ox k}\inj T_{\lambda}^\mu T_{\mu}^\lambda\calO_{K^v}^{\lan,\mu}$.
    \item The natural map $T_{\lambda}^\mu T_{\mu}^\lambda\calO_{K^v}^{\lan,\mu}\to \calO_{K^v}^{\lan,\mu}$ is surjective and induces an isomorphism $(T_{\lambda}^\mu T_{\mu}^\lambda\calO_{K^v}^{\lan,\mu})_{\chi_{\mu}}\to \calO_{K^v}^{\lan,\mu}$ where $(T_{\lambda}^\mu T_{\mu}^\lambda\calO_{K^v}^{\lan,\mu})_{\tilde \chi_{\mu}}$ denote the co-semisimple part of the infinitesimal action. 
    \item The infinitesimal semisimple part $(T_{\lambda}^\mu T_{\mu}^\lambda\calO_{K^v}^{\lan,\mu})^{\tilde \chi_{\mu}}$ is an extension of $\ker \bar d$ by $\calO_{K^v}^{\lan,\mu}\ox_{\calO_{\fl}}(\Omega_{\fl}^1)^{\ox k+1}$.
\end{enumerate}
\begin{proof}
For simplicity, we only calculate $T_{(-1,0)}^{(0,0)}T_{(0,0)}^{(-1,0)}\calO_{K^v}^{\lan,(0,0)}$ and the general case follows a similar pattern. This is also a special case of Proposition \ref{prop:122}. Firstly, $T_{(0,0)}^{(-1,0)}\calO_{K^v}^{\lan,(0,0)}\isom (\calO_{K^v}^{\lan,(0,0)}\ox_C V^{(0,-1)})\{\calZ(\frg_\iota)=\tilde{\chi}_{(-1,0)}\}=\omega_{K^v}^{(1,0),\lan,(-1,0)}(1)$. Translating back, we have
\[
    T_{(-1,0)}^{(0,0)}T_{(0,0)}^{(-1,0)}\calO_{K^v}^{\lan,(0,0)}=(\omega_{K^v}^{(1,0),\lan,(-1,0)}(1))\{\calZ(\frg_\iota)=\tilde{\chi}_{(0,0)}\},
\]
which is an extension of $\calO^{\lan,(0,0)}_{K^v}\ox_{\calO_{\fl}}\Omega_{\fl}^1$ by $\calO_{K^v}^{\lan,(0,0)}$, and is obtained by tensoring $\omega_{K^v}^{(1,0),\lan,(-1,0)}(1)$ with the exact sequence 
\[
    0\to \omega_{\fl}^{(0,1)}\to V^{(1,0)}\ox_C\calO_{\fl}\to \omega_{\fl}^{(1,0)}\to 0
\]
over $\calO_{\fl}$. 

The rest of the properties of the natural maps given by wall-crossing follow by a direct computation on flag variety. One can use the similar proof as in \cite[\S 2.3]{WConDrinfeld}.
\end{proof}
\end{proposition}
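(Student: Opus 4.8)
The plan is to reduce everything to a direct computation on the flag variety $\fl = \bbP^1$, exactly as in the proof of Proposition \ref{prop:122}, and to invoke the structural results on the sheaves $\calO_{K^v}^{\lan,(a,b)}$ established in \S \ref{locdescr} together with the translation formalism from \cite{JLS22}. As indicated in the statement, it suffices to treat the base case $\mu = (-1,0)$, $\lambda = (0,0)$ (so $k = a - b = 1$), since the general case differs only by taking symmetric powers of the Euler sequence $0 \to \omega_{\fl}^{(0,1)} \to V^{(1,0)}_\iota \ox_C \calO_{\fl} \to \omega_{\fl}^{(1,0)} \to 0$ and extracting the two graded pieces with infinitesimal character $\tilde\chi_\mu$. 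First I would record, from the computation in Proposition \ref{prop:122} (applied with $\lambda,\mu$ swapped), that $T_{(0,0)}^{(-1,0)}\calO_{K^v}^{\lan,(0,0)} \isom \omega_{K^v}^{(-1,0),\lan,(0,-1)}(1)$, which is a clean single graded piece because $(-1,0)$ is singular — no, wait: here $\mu=(0,0)$ is \emph{generic} and $\lambda=(-1,0)$ is the singular weight, so the translation onto the wall is a single piece by Proposition \ref{prop:translationOsmoxOfl1}. Then translating back out, $T_{(-1,0)}^{(0,0)}T_{(0,0)}^{(-1,0)}\calO_{K^v}^{\lan,(0,0)} = (\omega_{K^v}^{(-1,0),\lan,(0,-1)}(1) \ox_C V^{(1,0)}_\iota)\{\calZ(\frg_\iota) = \tilde\chi_{(0,0)}\}$, and one identifies this with the sheaf obtained by tensoring $\omega_{K^v}^{(-1,0),\lan,(0,-1)}$ over $\calO_{\fl}$ with the Euler sequence; since both graded pieces $\calO_{K^v}^{\lan,(0,0)}$ and $\calO_{K^v}^{\lan,(0,0)}\ox_{\calO_{\fl}}\Omega^1_{\fl}$ carry infinitesimal character $\tilde\chi_{(0,0)}$, the whole extension survives. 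This gives part (i).

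For parts (ii)--(iii), I would analyze the unit and counit maps of the biadjunction $(T_{(0,0)}^{(-1,0)}, T_{(-1,0)}^{(0,0)})$ concretely. The counit $T_{(-1,0)}^{(0,0)}T_{(0,0)}^{(-1,0)}\calO_{K^v}^{\lan,(0,0)} \to \calO_{K^v}^{\lan,(0,0)}$ is, by functoriality of the construction on each affinoid $U$, the map induced by $V^{(1,0)}_\iota \ox V^{(0,-1)}_\iota \surj C$ (the contraction), so it is the projection onto the top graded quotient $\omega_{K^v}^{(1,0),\lan,(0,1)}(-1)\ox \cdots$ — unraveling the twists this is exactly the quotient piece $\calO_{K^v}^{\lan,(0,0)}$, hence surjective; that it is an isomorphism on the co-semisimple part of the $\calZ(\frg_\iota)$-action is then formal from the fact that the subsheaf $\calO_{K^v}^{\lan,(0,0)}\ox_{\calO_{\fl}}(\Omega^1_{\fl})^{\ox 2}$ lies in the image of $\Omega_{\mathrm{Cas}} - \tilde\chi_\mu(\Omega_{\mathrm{Cas}})$ (this is precisely the non-semisimplicity of the Casimir established in Proposition \ref{prop:connectionmap1}). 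For the unit map $\calO_{K^v}^{\lan,(0,0)} \to T_{(-1,0)}^{(0,0)}T_{(0,0)}^{(-1,0)}\calO_{K^v}^{\lan,(0,0)}$, the target contains $\calO_{K^v}^{\lan,(0,0)}\ox_{\calO_{\fl}}(\Omega^1_{\fl})^{\ox 2}$ as a subsheaf (after the twist identification), and the unit lands in this subsheaf because the top quotient of the target has infinitesimal character $\tilde\chi_\mu$ while... more precisely, one checks the composite of the unit with the projection to the quotient is zero by a weight/Casimir argument; then the resulting map $\calO_{K^v}^{\lan,(0,0)} \to \calO_{K^v}^{\lan,(0,0)}\ox_{\calO_{\fl}}(\Omega^1_{\fl})^{\ox 2}$ is $\GL_2(L)$-equivariant and $C$-linear but not $\calO_{\fl}$-linear, so it is a differential operator of order $\le 2$; comparing it with $\bar d = \bar d^{2}$ (which by Theorem \ref{thm:dbar} is characterized up to scalar by being given by $(u^+)^{2}$ where $x$ has no poles and by its $\GL_2(L)$-equivariance) forces it to equal $\bar d$ up to a nonzero constant, using the same local-power-series argument as in \cite[Proposition 2.3.3]{WConDrinfeld} referenced in Proposition \ref{prop:connectionmap1}.

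Part (iv) then follows by combining (ii) and (iii): the semisimple part $(T_{(-1,0)}^{(0,0)}T_{(0,0)}^{(-1,0)}\calO_{K^v}^{\lan,(0,0)})^{\tilde\chi_\mu}$ contains $\calO_{K^v}^{\lan,(0,0)}\ox_{\calO_{\fl}}(\Omega^1_{\fl})^{\ox 2}$ (which is already $\tilde\chi_\mu$-isotypic since the Casimir acts by a scalar there) and maps onto the kernel of the counit-induced map on the $\tilde\chi_\mu$-part, which by (iii) and Proposition \ref{prop:connectionmap1} is $\ker(\bar d : \calO_{K^v}^{\lan,(0,0)} \to \calO_{K^v}^{\lan,(0,0)}\ox_{\calO_{\fl}}(\Omega^1_{\fl})^{\ox 2})$; one checks this sequence $0 \to \calO_{K^v}^{\lan,(0,0)}\ox_{\calO_{\fl}}(\Omega^1_{\fl})^{\ox 2} \to (T_{(-1,0)}^{(0,0)}T_{(0,0)}^{(-1,0)}\calO_{K^v}^{\lan,(0,0)})^{\tilde\chi_\mu} \to \ker\bar d \to 0$ is exact by dimension-counting the graded pieces on $\fl$. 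I expect the main obstacle to be \emph{part (ii)}: pinning down that the unit map is genuinely $\bar d^{k+1}$ and not merely some differential operator with the same principal symbol requires matching it against the characterization of $\bar d^{k+1}$ in Theorem \ref{thm:dbar}, and the delicate point is verifying the normalization (the ``$(u^+)^{k+1}$ where $x$ has no poles'' condition) against the explicit Casimir-extension description — this is where one must do the genuine local computation on the standard affinoid chart of $\bbP^1$, as in \S 2.3 of \cite{WConDrinfeld}, rather than argue formally. Everything else is either a direct consequence of the flag-variety calculation in Proposition \ref{prop:122} or a formal property of translation functors from \cite{JLS22}.
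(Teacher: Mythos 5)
Your approach is essentially the same as the paper's: reduce to the base case $\mu=(0,0)$, $\lambda=(-1,0)$, compute the first translation via Proposition~\ref{prop:translationOsmoxOfl1}, translate back and tensor with the Euler sequence to get part~(i), and then identify the unit/counit maps against the Casimir-extension picture and the uniqueness characterization of $\bar d$ in Theorem~\ref{thm:dbar} to obtain parts~(ii)--(iv). Your expansion of~(ii)--(iv) fills in exactly what the paper defers to a citation, and the logic is sound.

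However there is a bookkeeping error that propagates through the proof. You initially set ``$\mu=(-1,0)$, $\lambda=(0,0)$ (so $k=a-b=1$)'', importing the labeling from Proposition~\ref{prop:122}, where the roles of singular and generic are swapped. You correctly catch the $\mu/\lambda$ mislabeling mid-sentence (the generic weight is $\mu=(0,0)$, the singular one is $\lambda=(-1,0)$), but you never correct the value of $k$: for $\mu=(0,0)$ one has $k=a-b=0$, so $k+1=1$, and every occurrence of $(\Omega^1_{\fl})^{\ox 2}$ in parts~(ii)--(iv) should read $\Omega^1_{\fl}$, and $\bar d^{2}$/$(u^+)^{2}$ should be $\bar d^{1}=\bar d$/$u^+$. (The original figure ``$k=a-b=1$'' is also internally inconsistent: $(-1)-0=-1$, not $1$.) This does not break the structure of the argument --- it works verbatim for the correct power --- but as written the proof of the base case is about the wrong degree.

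Two smaller points. In part~(ii), having shown the unit lands in the subsheaf $\calO_{K^v}^{\lan,s\mu}\ox_{\calO_{\fl}}\Omega^1_{\fl}$, you should say a word about why the unit is nonzero (e.g.\ locally it is $x\mapsto\sum_i e_i^*\ox e_i\ox x$ on a trivializing open, hence injective), since the characterization ``$\bar d$ up to a \emph{nonzero} scalar'' requires it. And in part~(iv) the exactness of $0\to\calO\ox\Omega^1_{\fl}\to(\Theta_s\calO)^{\tilde\chi_\mu}\to\ker\bar d\to 0$ follows cleanly from part~(ii) together with the surjectivity of $\bar d$ onto $\calO\ox\Omega^1_{\fl}$ (Theorem~\ref{thm:dbar}), rather than a ``dimension count on graded pieces'', which is a slightly misleading phrase for sheaves of infinite rank.
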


In the setting of translation functors in the BGG category $\mathcal{O}$, for $\lambda$ dominant regular and $\mu$ singular, $T_{\mu}^{\lambda}T_{\lambda}^{\mu}$ is called the wall-crossing functor and denoted by $\Theta_s$. See \cite[Chapter 7]{Hum94}. We sometimes also denote $\Theta_s(-)$ as the wall-crossing functor for locally analytic representations. For example, $\Theta_s(\mathcal{O}_{K^v}^{\lan,(0,0)})$ indicates $T_{(-1,0)}^{(0,0)}T_{(0,0)}^{(-1,0)}\mathcal{O}_{K^v}^{\lan,(0,0)}$.

As translation functors are exact functors, we can show that it commutes with taking cohomology.
\begin{proposition}\label{prop:translationcommuteswithcohomology}
Let $\mu,\lambda\in\Lambda$ be roots. Then $T_{\mu}^{\lambda}H^i(\fl,\calO_{K^v}^{\lan,\mu})\isom H^i(\fl,T_{\mu}^{\lambda}\calO_{K^v}^{\lan,\mu})$ for $i\ge 0$.
\begin{proof}
By Lemma \ref{Cech} and similar results in \cite[Lemma 5.1.2(1)]{Pan22}, we see $H^i(\fl,\calO_{K^v}^{\lan,\mu})$ can be computed using \v{C}ech complex. As $T_{\mu}^\lambda(-)$ is an exact functor, from this we deduce 
\begin{align*}
    T_{\mu}^{\lambda}H^i(\fl,\calO_{K^v}^{\lan,\mu})\isom H^i(\fl,T_{\mu}^{\lambda}\calO_{K^v}^{\lan,\mu})
\end{align*}
for $i\ge 0$.
\end{proof}
\end{proposition}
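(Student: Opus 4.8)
The plan is to reduce the statement to the computability of $H^i(\fl,\calO_{K^v}^{\lan,s\mu})$ via \v{C}ech cohomology with respect to a nice cofinal family of coverings, and then use exactness of the translation functor termwise. First I would fix a sufficiently small open affinoid subset $U\in\ffrb$ together with a finite cover $\ffru\subset\ffrb$ of $U$; since $\ffrb$ is a basis of $\fl$ stable under finite intersections (Theorem \ref{thm:piHT}), these provide enough coverings to compute sheaf cohomology on $\fl$. The key input is that the augmented \v{C}ech complex $\calO_{K^v}^{\lan,s\mu}(U)\to \check C(\ffru,\calO_{K^v}^{\lan,s\mu})$ is acyclic, which follows from the acyclicity of $\calO_{K^v}(U)\to \check C(\ffru,\calO_{K^v})$ (affinoid perfectoid preimages, cf.\ the proof of Lemma \ref{Cech}) together with the fact that passing to the subsheaf $\calO_{K^v}^{\lan,s\mu}$ — via the functor $\VB$, Proposition \ref{prop:RVBiotala} and the decomposition in Proposition \ref{prop:tensordecomposition} — preserves exactness of these complexes. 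This gives a natural isomorphism $H^i(\fl,\calO_{K^v}^{\lan,s\mu})\isom \check H^i(\{\ffru\},\calO_{K^v}^{\lan,s\mu})$.

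Next I would apply the translation functor $T_\mu^\lambda$ to this \v{C}ech complex. The point is that $T_\mu^\lambda$ is an exact functor on $\Rep^{\iota\-\lan}_{\tilde\chi_\mu}(G)$ by \cite[Lemma 2.4.6, Theorem 2.4.7]{JLS22}, and it is defined sectionwise on the sheaf $\calO_{K^v}^{\lan,s\mu}$ (using the $\frg_\iota$-action on sections over each $U\in\ffrb$). Since $T_\mu^\lambda$ is exact and commutes with finite direct sums, it commutes with the formation of the \v{C}ech complex $\check C(\ffru,\calO_{K^v}^{\lan,s\mu})$ and with taking cohomology of that complex. Concretely, $\check C^j(\ffru, T_\mu^\lambda\calO_{K^v}^{\lan,s\mu})\isom T_\mu^\lambda\check C^j(\ffru,\calO_{K^v}^{\lan,s\mu})$ termwise, and exactness then yields $\check H^i(T_\mu^\lambda\check C)\isom T_\mu^\lambda\check H^i(\check C)$. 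Combining this with the two \v{C}ech-to-sheaf-cohomology identifications (one for $\calO_{K^v}^{\lan,s\mu}$, one for $T_\mu^\lambda\calO_{K^v}^{\lan,s\mu}$, the latter again justified because $T_\mu^\lambda\calO_{K^v}^{\lan,s\mu}$ is a sheaf built sectionwise from a sheaf whose higher \v{C}ech cohomology on $\ffrb$-coverings vanishes) gives the desired isomorphism $T_\mu^\lambda H^i(\fl,\calO_{K^v}^{\lan,s\mu})\isom H^i(\fl,T_\mu^\lambda\calO_{K^v}^{\lan,s\mu})$ for all $i\ge 0$.

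The main obstacle I anticipate is verifying that the higher \v{C}ech cohomology vanishes for the relevant coverings after applying $T_\mu^\lambda$, i.e.\ that $T_\mu^\lambda\calO_{K^v}^{\lan,s\mu}$ is still acyclic on $\ffrb$-coverings so that its \v{C}ech cohomology genuinely computes its sheaf cohomology. This is not automatic from exactness of $T_\mu^\lambda$ alone; one needs that $T_\mu^\lambda\calO_{K^v}^{\lan,s\mu}(U)\to\check C(\ffru, T_\mu^\lambda\calO_{K^v}^{\lan,s\mu})$ is exact for a cofinal system of $U$ and $\ffru$. Here I would use that $T_\mu^\lambda\calO_{K^v}^{\lan,s\mu}$ is, by the explicit computations in Propositions \ref{prop:translationOsmoxOfl1}, \ref{prop:122} and \ref{prop:wallcrossingofOsmoxOfl}, a successive extension of sheaves of the form $\omega_{K^v}^{(a',b'),\lan,(a,b)}(\ast)\isom \calO_{K^v}^{\lan,(a,b)}\ox_{\calO_{K^v}^{\sm}}\omega_{K^v}^{(a',b'),\sm}$ (Proposition \ref{prop:twist}), each of which is a twist of $\calO_{K^v}^{\lan}$ by an invertible $\calO_{K^v}^{\sm}$-module; since tensoring by an invertible smooth sheaf and taking extensions both preserve acyclicity of the \v{C}ech complex on $\ffrb$-coverings (as in Lemma \ref{Cech}), the vanishing follows, and a five-lemma / spectral sequence argument on the extensions propagates the isomorphism to $T_\mu^\lambda\calO_{K^v}^{\lan,s\mu}$ itself.
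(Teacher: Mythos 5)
Your proposal is correct and follows essentially the same route as the paper: compute $H^i$ via \v{C}ech complexes for $\ffrb$-coverings (Lemma \ref{Cech}), observe that $T_\mu^\lambda$ is exact and acts sectionwise, and conclude. The paper's proof is just a one-line version of this.

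One remark on your ``main obstacle'' paragraph: you say the vanishing of higher \v{C}ech cohomology of $T_\mu^\lambda\calO_{K^v}^{\lan,s\mu}$ on $\ffrb$-coverings is ``not automatic from exactness of $T_\mu^\lambda$ alone,'' and you go on to supply a detour through the explicit structural descriptions in Propositions \ref{prop:translationOsmoxOfl1}, \ref{prop:122}, \ref{prop:wallcrossingofOsmoxOfl} together with a five-lemma argument. This detour is not needed, and the premise is slightly off: since $T_\mu^\lambda$ is exact, commutes with the finite products appearing in a \v{C}ech complex, and is defined sectionwise on $\ffrb$-opens (which is all one needs, as $\ffrb$ is closed under finite intersections), applying $T_\mu^\lambda$ termwise to the acyclic augmented complex $0\to\calO_{K^v}^{\lan,s\mu}(U)\to\check C^\bullet(\ffru,\calO_{K^v}^{\lan,s\mu})$ immediately yields the acyclic augmented complex $0\to(T_\mu^\lambda\calO_{K^v}^{\lan,s\mu})(U)\to\check C^\bullet(\ffru,T_\mu^\lambda\calO_{K^v}^{\lan,s\mu})$. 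So the vanishing for the translated sheaf \emph{is} automatic, and the whole argument reduces to exactness plus the two applications of the \v{C}ech-to-derived-functor identification, exactly as the paper says. Your version is correct but carries extra weight; the key fact you identified as needing a separate check in fact falls out of the same exactness you already invoked.
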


Recall that we have defined differential operators on $\calO_{K^v}^{\lan,(0,-k)}$ for $k\ge 0$, which are given by
\begin{align*}
    d:\calO_{K^p}^{\lan,(0,-k)}\to \calO_{K^p}^{\lan,(0,-k)}\ox _{\calO^{\sm}_{K^p}}(\Omega^{1,\sm}_{K^p})^{\ox k+1},\\
    \bar d:\calO_{K^p}^{\lan,(0,-k)}\to \calO_{K^p}^{\lan,(0,-k)}\ox _{\calO_{\fl}}(\Omega^{1}_{\fl})^{\ox k+1}
\end{align*}
for $k\in\bbZ_{\ge 0}$, such that $d$ is $\calO_{\fl}$-linear and $\bar d$ is $\calO_{K^p}^{\sm}$-linear. Moreover, they are $\GL_2(L)$-equivariant. Using results on translations of $\calO_{K^v}^{\lan,\mu}$, we can compute the translations of these differential operators. For simplicity, we study the case where $k=0$. The general case can be proved in a similar method. 

\begin{proposition}\label{prop:d-1}
For $k\in\bbZ_{\ge -1}$, the translation $T_{(0,0)}^{(k,0)}d$ is given by 
\[
    T_{(0,0)}^{(k,0)}d:\omega_{K^v}^{(-k,0),\lan,(k,0)}(-k)\to  \omega_{K^v}^{(-k,0),\lan,(k,0)}\ox_{\calO_{K^v}^{\sm}}\Omega_{K^v}^{1,\sm}(-k),
\]
which is a twist of $d$ by $\omega_{\fl}^{(k,0)}$.
\begin{proof}
As $\frg_\iota$ acts trivially on $\Omega_{K^v}^{1,\sm}$, the functor $-\ox_{\calO_{K^v}^{\sm}}\Omega_{K^v}^{1,\sm}$ commutes with translation functors. Hence the proposition follows from Proposition \ref{prop:translationOsmoxOfl1}.
\end{proof}
\end{proposition}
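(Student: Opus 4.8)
\textbf{Proof proposal for Proposition \ref{prop:d-1}.}

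The plan is to reduce the computation of $T_{(0,0)}^{(k,0)}d$ to the already-established translation of the sheaf $\calO_{K^v}^{\lan,(0,0)}$ and then to use that the differential operator $d$ is $\calO_{\fl}$-linear together with the fact that the codomain twist factor $\Omega_{K^v}^{1,\sm}$ carries the trivial $\frg_\iota$-action. First I would record the source and target of $d$ (up to twist by determinants) as the $\GL_2(L)$-equivariant, $\calO_{\fl}$-linear map
\[
    d:\calO_{K^v}^{\lan,(0,0)}\to \calO_{K^v}^{\lan,(0,0)}\ox_{\calO_{K^v}^{\sm}}\Omega_{K^v}^{1,\sm},
\]
and observe that both sides have generalized infinitesimal character $\tilde\chi_{(0,0)}$, so it makes sense to apply $T_{(0,0)}^{(k,0)}$ to it. The key formal input is that, since $\frg_\iota$ acts trivially on $\Omega_{K^v}^{1,\sm}$, tensoring with $\Omega_{K^v}^{1,\sm}$ over $\calO_{K^v}^{\sm}$ commutes with the translation functor $T_{(0,0)}^{(k,0)}$; this is just the observation that $V^{\bar\nu}\ox_C(M\ox_{\calO_{K^v}^{\sm}}\Omega_{K^v}^{1,\sm})\isom (V^{\bar\nu}\ox_C M)\ox_{\calO_{K^v}^{\sm}}\Omega_{K^v}^{1,\sm}$ as $\frg_\iota$-representations, and extracting the $\tilde\chi_{(k,0)}$-generalized eigenspace commutes with this identification.

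Next I would invoke Proposition \ref{prop:translationOsmoxOfl1} with $\mu=(0,0)$, $\lambda=(k,0)$: these are in the first listed situation when $k\ge 1$ (both generic and dominant, since $s(0,0)=(0,0)$ is singular — wait, one should instead take $\mu$ in the generic case) — more carefully, for $k\ge 1$ one has $\lambda=(k,0)$ generic dominant and $\mu=(0,0)$ singular, which is the second bullet of Proposition \ref{prop:translationOsmoxOfl1}, giving $T_{(0,0)}^{(k,0)}\calO_{K^v}^{\lan,(0,0)}\isom\omega_{K^v}^{(k,0),\lan,s(k,0)}(\langle\varrho,-(k,0)\rangle)=\omega_{K^v}^{(k,0),\lan,s(k,0)}(-k)$, and similarly the codomain becomes $\omega_{K^v}^{(k,0),\lan,s(k,0)}\ox_{\calO_{K^v}^{\sm}}\Omega_{K^v}^{1,\sm}(-k)$ by the commutation just established. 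For $k=0$ the statement is trivially $d$ itself, and for $k=-1$ one is in the situation $\mu=(0,0)$ singular, $\lambda=(-1,0)$ generic antidominant, so the analogous clause of Proposition \ref{prop:translationOsmoxOfl1} (or a direct Euler-sequence computation on $\fl$ as in its proof) again identifies $T_{(0,0)}^{(-1,0)}\calO_{K^v}^{\lan,(0,0)}\isom\omega_{K^v}^{(-1,0),\lan,s(-1,0)}(1)$, consistent with the formula $(-k)=(1)$. Applying $T_{(0,0)}^{(k,0)}$ to the morphism $d$ then produces a $\GL_2(L)$-equivariant map between precisely these sheaves, and by functoriality of translation and the $\calO_{\fl}$-linearity of $d$ one checks it agrees with the twist $d\ox\id_{\omega_{\fl}^{(k,0)}}$: on the locally algebraic part it is the Gauss--Manin connection twisted by the trivial-connection line bundle $\omega_{\fl}^{(k,0)}$ pulled back along $\pi_{\HT}$ (which is $\calO_{\fl}$-linear), and by the density statements (Corollary \ref{cor:Oiotalachi}, Proposition \ref{prop:Oetala}) and continuity this characterizes the operator.

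The only point requiring genuine care — the main obstacle — is matching the twists and Tate twists precisely: one must verify that the identification of $\omega_{K^v}^{(k,0),\lan,s(k,0)}(-k)\ox_{\calO_{K^v}^{\sm}}\Omega_{K^v}^{1,\sm}$ with the target of $T_{(0,0)}^{(k,0)}d$ is not merely abstract but is the canonical one induced by the inclusion $\omega_{\fl}^{(k,0)}\hookrightarrow V^{(k,0)}_\iota\ox_C\calO_{\fl}$ used to define the translation, so that the resulting operator is literally $d$ twisted by $\omega_{\fl}^{(k,0)}$ rather than some automorphism-composite of it. This is handled exactly as in the $k=0$ base case together with the explicit filtration computation in the proof of Proposition \ref{prop:translationOsmoxOfl1}: since $d$ is $\calO_{\fl}$-linear, $d$ commutes with the filtration on $V^{(k,0)}_\iota\ox_C\calO_{K^v}^{\lan,(0,0)}$ induced by $\omega_{\fl}^{(0,1)}\subset V^{(1,0)}_\iota\ox_C\calO_{\fl}$, hence descends to the single relevant graded piece compatibly with $d$, giving the claimed identification. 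The general $k\in\bbZ_{\ge 1}$ case introduces no new difficulty beyond bookkeeping of symmetric powers and determinants; the $k=-1$ case is the degenerate Euler-sequence computation already carried out in Proposition \ref{prop:122}.
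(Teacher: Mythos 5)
Your approach is the same as the paper's: observe that $\frg_\iota$ acts trivially on $\Omega_{K^v}^{1,\sm}$, so $-\ox_{\calO_{K^v}^{\sm}}\Omega_{K^v}^{1,\sm}$ commutes with the translation functor, and then reduce everything to $T_{(0,0)}^{(k,0)}\calO_{K^v}^{\lan,(0,0)}$ via Proposition~\ref{prop:translationOsmoxOfl1}. However, you have inverted ``singular'' and ``generic'' throughout: in this paper a weight is singular iff it is fixed by the \emph{dot} action $s\cdot(a,b)=(b-1,a+1)$, not by the plain reflection $s(a,b)=(b,a)$. Thus $(0,0)$ is \emph{generic} (since $s\cdot(0,0)=(-1,1)\neq(0,0)$) and $(-1,0)$ is \emph{singular} (since $s\cdot(-1,0)=(-1,0)$), exactly the opposite of what you assert.

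Concretely: for $k\ge 1$, both $\mu=(0,0)$ and $\lambda=(k,0)$ are generic dominant, so the \emph{first} bullet of Proposition~\ref{prop:translationOsmoxOfl1} applies, not the second; and for $k=-1$, $\mu=(0,0)$ is generic while $\lambda=(-1,0)$ is singular, so the \emph{second} bullet applies. Your slip is harmless here only because both bullets yield the identical formula $\omega_{K^v}^{\lambda-\mu,\lan,s\lambda}(\langle\varrho,\mu-\lambda\rangle)$. But the error is not a purely cosmetic one: if your classification ($\mu$ singular, $\lambda$ generic) were correct, you would be in the regime of Proposition~\ref{prop:122}, where $T_{\mu}^{\lambda}\calO_{K^v}^{\lan,s\mu}$ is only a two-step extension, not a single twisted sheaf, and the proof as you wrote it would not go through. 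Your closing paragraph on pinning down canonicity of the identification (so the translated operator really is $d$ twisted by $\omega_{\fl}^{(k,0)}$, not an automorphism-composite) is a sensible elaboration of a point the paper's one-line proof leaves implicit.
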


\begin{proposition}
For $k\ge 0$, the natural map $d\to T_{(k,0)}^{(0,0)}T_{(0,0)}^{(k,0)}d$ is an isomorphism. For $k=-1$, the natural map $d\to T_{(-1,0)}^{(0,0)}T_{(0,0)}^{(-1,0)}d=:d_{-1,1}$ is given by
\begin{center}
\begin{tikzpicture}[descr/.style={fill=white,inner sep=1.5pt}]
    \matrix (m) [
        matrix of math nodes,
        row sep=2.5em,
        column sep=2.5em,
        text height=1.5ex, 
        text depth=0.25ex
    ]
    { \calO_{K^v}^{\lan,(0,0)}&\calO_{K^v}^{\lan,(0,0)}\ox_{\calO_{K^v}^{\sm}} \Omega^{1,\sm}_{K^v}  \\
    \Theta_s(\calO_{K^v}^{\lan,(0,0)})&\Theta_s(\calO_{K^v}^{\lan,(0,0)}\ox_{\calO_{K^v}^{\sm}} \Omega^{1,\sm}_{K^v}) \\
    };
    \path[->,font=\scriptsize]
    (m-1-1) edge node[auto] {$d$} (m-1-2)
    (m-2-1) edge node[auto] {$d_{-1,1}$} (m-2-2)
    ;
    \path[->,font=\scriptsize]
    (m-1-1) edge node[right] {$\mathrm{incl}\comp \bar d$} (m-2-1)
    (m-1-2) edge node[right] {$\mathrm{incl}\comp \bar d$} (m-2-2)
    ;
\end{tikzpicture}
\end{center}
where the vertical maps are twists of $\bar d$ composed with the natural inclusion. The natural map $d_{-1,1}\to d$ is given by the natural projection, and is surjective.
\begin{proof}
This follows by the functoriality of the translation functor, and the computation on the wall-crossing of $\calO_{K^v}^{\lan,\mu}$, see Proposition \ref{prop:wallcrossingofOsmoxOfl}.
\end{proof}
\end{proposition}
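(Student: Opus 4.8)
The statement has two parts: for $k \geq 0$ the unit map $d \to T_{(k,0)}^{(0,0)} T_{(0,0)}^{(k,0)} d$ is an isomorphism; for $k = -1$ the composite wall-crossing $d_{-1,1} := T_{(-1,0)}^{(0,0)} T_{(0,0)}^{(-1,0)} d$ has the asserted description as a morphism of two-term complexes, with the natural maps $d \to d_{-1,1}$ and $d_{-1,1} \to d$ as stated. Throughout, recall that $d$ is being regarded as a morphism in the category of (complexes of) $\GL_2(L)$-equivariant sheaves on $\fl$ carrying $\frg_\iota$-actions, so translation functors act termwise and the required statements follow from the corresponding statements about the source sheaf $\calO_{K^v}^{\lan,(0,0)}$ and target sheaf $\calO_{K^v}^{\lan,(0,0)} \ox_{\calO_{K^v}^{\sm}} \Omega^{1,\sm}_{K^v}$, together with the compatibility of the differential with translation.

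The plan is as follows. First I would treat the case $k \geq 0$. Here $(0,0)$ and $(k,0)$ are both either generic (if $k \geq 1$) or both singular (if $k=0$, trivially), so Proposition \ref{prop:wallcrossingofOsmoxOfl} does not apply and instead the relevant fact is that $T_{(k,0)}^{(0,0)} T_{(0,0)}^{(k,0)}$ restricted to these weights is (naturally isomorphic to) the identity — this is the sheaf-level avatar of \cite[Theorem 3.2.1]{JLS22}, which was already invoked in the proposition preceding Proposition \ref{prop:wallcrossingofOsmoxOfl} to handle the unit/counit maps for $\calO_{K^v}^{\lan,s\mu}$. Applying that proposition termwise to the source and target of $d$, the unit map $d \to T_{(k,0)}^{(0,0)} T_{(0,0)}^{(k,0)} d$ is identified with the identity transformation. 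The only thing to check is that the two componentwise isomorphisms are compatible with the differential $d$ itself; this is automatic because all the natural transformations involved ($\frg_\iota$-equivariant unit maps of adjunctions) are functorial, hence commute with any $\GL_2(L)$- and $\frg_\iota$-equivariant morphism, and $d$ is such a morphism by Theorem \ref{thm:d}.

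For $k = -1$, the weight $(0,0)$ is generic and $(-1,0) = (0,0) - \alpha$ is singular (it is fixed by the dot action), so this is genuinely the wall-crossing situation $\Theta_s$. I would apply Proposition \ref{prop:wallcrossingofOsmoxOfl} to $\mu = (0,0)$ (so $k_{\mu} = 0$ in the notation there) once for the source sheaf $\calO_{K^v}^{\lan,(0,0)}$ and once for the target sheaf $\calO_{K^v}^{\lan,(0,0)} \ox_{\calO_{K^v}^{\sm}} \Omega^{1,\sm}_{K^v}$ (the latter being allowed since $\frg_\iota$ acts trivially on $\Omega^{1,\sm}_{K^v}$, so translation functors commute with $-\ox_{\calO_{K^v}^{\sm}}\Omega^{1,\sm}_{K^v}$, exactly as in the proof of Proposition \ref{prop:d-1}). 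Parts (i)–(iii) of that proposition give: $\Theta_s(\calO_{K^v}^{\lan,(0,0)})$ is an extension of $\calO_{K^v}^{\lan,(0,0)}$ by $\calO_{K^v}^{\lan,(0,0)} \ox_{\calO_{\fl}} \Omega^1_{\fl}$, the unit map $\calO_{K^v}^{\lan,(0,0)} \to \Theta_s(\calO_{K^v}^{\lan,(0,0)})$ is $\bar d$ followed by the inclusion of the sub, and the counit map is the natural surjection onto $\calO_{K^v}^{\lan,(0,0)}$. Applying the exact functor $\Theta_s$ to the morphism $d$ and using the naturality of the unit/counit transformations then produces the claimed commutative square relating $d$ and $d_{-1,1}$, with vertical arrows $\mathrm{incl}\comp\bar d$, and shows $d_{-1,1}\to d$ is the termwise surjection. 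The main obstacle — and the only point requiring real care — is identifying the differential $d_{-1,1}$ on the wall-crossed complex precisely, i.e. checking that $\Theta_s(d)$ acts on the bottom row compatibly with the top row via these vertical $\bar d$-maps; this amounts to tracking how $\Theta_s$ interacts with the $\calO_{\fl}$-linearity of $d$ versus the $\calO_{K^v}^{\sm}$-linearity of $\bar d$, which I would verify by the local computation on the flag variety in the style of \cite[\S 2.3]{WConDrinfeld}, reducing everything to the explicit power-series expansions of Corollary \ref{cor:Oiotalachi} on a standard affinoid $U \in \ffrb$ where $e_1$ does not vanish.
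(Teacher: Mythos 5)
Your proposal is correct and follows essentially the same route as the paper's one-line proof: apply the earlier results on translation of $\calO_{K^v}^{\lan,s\mu}$ termwise to the source and target of $d$ (which is legitimate since $\frg_\iota$ acts trivially on $\Omega^{1,\sm}_{K^v}$), then invoke naturality of the unit/counit. Two small remarks. First, a minor slip: for $k=0$ you classify $(0,0)$ as singular, but $s\cdot(0,0)=(-1,1)\neq(0,0)$, so $(0,0)$ is generic; this does not affect the argument since the case $k=0$ is trivially the identity functor, and the ``both generic'' branch of the preceding proposition applies anyway. Second, the ``main obstacle'' you flag at the end is actually no obstacle: the commutativity of the displayed square is an automatic consequence of the naturality of the unit transformation $\eta:\mathrm{id}\Rightarrow T_{(-1,0)}^{(0,0)}T_{(0,0)}^{(-1,0)}$, and Proposition~\ref{prop:wallcrossingofOsmoxOfl}(ii) already identifies $\eta$ with $\mathrm{incl}\circ\bar d$ on each term; no local computation on the flag variety is needed to make the square commute, only to identify the vertical maps, which is what the cited proposition provides.
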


\begin{proposition}
For $k\in\bbZ_{\ge 0}$, the translation $T_{(0,0)}^{(k,0)}\bar d$ is given by 
\[
    T_{(0,0)}^{(k,0)}\bar d:\omega_{K^v}^{(-k,0),\lan,(k,0)}(-k)\to \omega_{K^v}^{(1,-k-1),\lan,(-1,k+1)}(1),
\]
which is up to non-zero scalar given by the differential operator
\[
    \bar d:\calO_{K^v}^{\lan,(k,0)}\to \calO_{K^v}^{\lan,(k,0)}\ox_{\calO_{\fl}}(\Omega_{\fl}^1)^{\ox k+1}
\]
twisted by $\omega_{K^v}^{(-k,0),\sm}(-k)$. The natural map $\bar d\to T_{(k,0)}^{(0,0)} T_{(0,0)}^{(k,0)}\bar d$ is an isomorphism.
\begin{proof}
By Proposition \ref{prop:translationOsmoxOfl1}, we can compute the translation of the source and the target, so that it remains to work out $T_{(0,0)}^{(k,0)}\bar d$, which we can use a similar method in Proposition \ref{prop:connectionmap1}.
\end{proof}
\end{proposition}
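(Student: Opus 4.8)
The statement to prove is the last \textbf{Proposition}, which computes the translation $T_{(0,0)}^{(k,0)}\bar d$ and asserts that $\bar d \to T_{(k,0)}^{(0,0)}T_{(0,0)}^{(k,0)}\bar d$ is an isomorphism. The plan is to reduce to the identification of the source and target sheaves, then to verify that the translated differential operator agrees up to scalar with the twisted $\bar d$ by characterizing it via its defining properties.

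First I would apply Proposition \ref{prop:translationOsmoxOfl1} to both the source and target of $\bar d$. The source is $\calO_{K^v}^{\lan,s(k,0)} = \calO_{K^v}^{\lan,(0,k)}$, wait---one needs to be careful with the indexing; here $\bar d$ is the operator $\calO_{K^v}^{\lan,(-k,0)}\to \calO_{K^v}^{\lan,(-k,0)}\ox_{\calO_{\fl}}(\Omega^1_{\fl})^{\ox k+1}$ in the normalization of Theorem \ref{thm:dbar}, and via the identification $(\Omega^1_{\fl})^{\ox k+1}\ox_{\calO_{\fl}}\calO_{K^v}^{\lan,(-k,0)}\ox_{\calO_{K^v}^{\sm}}(\Omega^{1,\sm}_{K^v})^{\ox k+1}\isom \calO_{K^v}^{\lan,(1,-k-1)}(k+1)$. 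The key input is that $\Omega^1_{\fl}\isom \omega_{\fl}^{(-1,1)}$ is a line bundle and $(\Omega^1_{\fl})^{\ox k+1}\isom \omega_{\fl}^{(-k-1,k+1)}$, so twisting $\bar d$ by $\omega_{K^v}^{(k,0),\sm}(-k)$ (which carries trivial $\frg_\iota$-action) permutes the $\theta_{\frh}$-weights in a controlled way; combined with Proposition \ref{prop:twist} this pins down both sheaves as the asserted $\omega$-twisted locally analytic sheaves with the right infinitesimal characters $\tilde\chi_{(k,0)}$. Since the functor $-\ox_{\calO_{K^v}^{\sm}}\omega_{K^v}^{(k,0),\sm}$ commutes with translation functors (as $\frg_\iota$ acts trivially on $\omega_{K^v}^{(k,0),\sm}$, exactly as in the proof of Proposition \ref{prop:d-1}), and $V^{\bar\nu}\ox_C(-)$ applied to $\bar d$ produces a filtered object with exactly one graded piece in the relevant generalized eigenspace (because both $(0,0)$ and $(k,0)$ are generic dominant, so we are in the first case of Proposition \ref{prop:translationOsmoxOfl1}), the translation $T_{(0,0)}^{(k,0)}\bar d$ is identified with a single nonzero map between the graded pieces.

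Next I would argue that this map is, up to nonzero scalar, the twist of $\bar d$. The cleanest route is the uniqueness clause of Theorem \ref{thm:dbar}: $\bar d^{k+1}$ is characterized up to scalar among continuous operators $\calO_{K^v}^{\lan,(-k,0)}\to \calO_{K^v}^{\lan,(-k,0)}\ox_{\calO_{\fl}}(\Omega^1_{\fl})^{\ox k+1}$ by being $\calO_{K^v}^{\iota^c\-\lan}$-linear, acting as $c(u^+)^{k+1}$ on sections where $x$ has no poles, and commuting with $\GL_2(L)$ and prime-to-$p$ Hecke actions. I would check that the twisted translation $T_{(0,0)}^{(k,0)}\bar d$ satisfies these three properties. $\GL_2(L)$- and Hecke-equivariance are automatic since translation functors are built from tensoring with algebraic representations (equipped with trivial Hecke action) and taking generalized eigenspaces, which are functorial; $\calO_{K^v}^{\iota^c\-\lan}$-linearity follows from the decomposition $\calO_{K^v}^{\lan,s\mu}\isom \calO_{K^v}^{\iota\-\lan,s\mu}\hat\ox_{\calO_{K^v}^{\sm}}\calO_{K^v}^{\iota^c\-\lan}$ (Corollary \ref{cor:infdecomp} together with Proposition \ref{prop:tensordecomposition}) and the fact that $\frg_\iota$ acts only through the $\iota$-factor, so translation is $\calO_{K^v}^{\iota^c\-\lan}$-linear; the $(u^+)^{k+1}$-identity can be read off from a local computation on the flag variety, as in the proofs of Proposition \ref{prop:connectionmap1} and the twisted statements above, using the Euler sequence on $\fl$ and the explicit local expansion of $\calO_{K^v}^{\iota\-\lan,\chi}$ from Corollary \ref{cor:Oiotalachi}. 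Since both operators are nonzero (the graded map is an isomorphism onto a nonzero subquotient because $(u^+)^{k+1}$ is nonzero, e.g. by Theorem \ref{thm:dbar} surjectivity), the scalar is nonzero, giving the claimed identification.

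For the final assertion that $\bar d \to T_{(k,0)}^{(0,0)}T_{(0,0)}^{(k,0)}\bar d$ is an isomorphism, I would invoke the biadjointness and the fact that $T_{(k,0)}^{(0,0)}\comp T_{(0,0)}^{(k,0)}$ is isomorphic to the identity functor on the relevant category when $(0,0)$ and $(k,0)$ are both generic, by \cite[Theorem 3.2.1]{JLS22}; this is exactly the situation used in the proposition just above (both weights generic dominant), so the unit/counit maps are isomorphisms on both the source and the target of $\bar d$, and by naturality the induced map of operators is an isomorphism. The main obstacle I anticipate is the bookkeeping in the local flag-variety computation identifying $T_{(0,0)}^{(k,0)}\bar d$ with $(u^+)^{k+1}$ up to scalar: one must track the Hodge--Tate twists $(-k)$ versus $(1)$, the $\theta_{\frh}$-weights, and the interaction of the symmetric-power filtration on $V^{\bar\nu}\ox_C \calO_{K^v}^{\lan,(-k,0)}$ with the operator $\bar d$, all of which is routine but error-prone; the conceptual work is minimal once one commits to the uniqueness characterization from Theorem \ref{thm:dbar}.
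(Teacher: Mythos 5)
Your proposal is correct and follows essentially the paper's route: identify source and target via Proposition \ref{prop:translationOsmoxOfl1}, then pin down the translated operator by a flag-variety computation, and conclude the unit/counit isomorphisms from the generic-weight case of biadjointness \cite[Theorem 3.2.1]{JLS22}. Your reorganization of the middle step around the uniqueness characterization from Theorem \ref{thm:dbar} — verifying $\GL_2(L)$- and Hecke-equivariance, $\calO_{K^v}^{\iota^c\-\lan}$-linearity from the tensor decomposition, and the $(u^+)^{k+1}$-identity locally — is a slightly cleaner packaging of the same local computation that the paper invokes by pointing to ``a similar method in Proposition \ref{prop:connectionmap1}''; both reduce to the same Euler-sequence calculation on the flag variety, so there is no substantive difference.
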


\begin{proposition}\label{prop:dbar-1}
For $k=-1$, the translation $T_{(0,0)}^{(-1,0)}\bar d$ is given by the identity map on $\omega_{K^v}^{(1,0),\lan,(-1,0)}(1)$. Moreover, $\bar d_{-1,1}:=T_{(-1,0)}^{(0,0)}T_{(0,0)}^{(-1,0)}\bar d$ is the identity map. The natural map $\bar d\to \bar d_{-1,1}$ is given by 
\begin{center}
\begin{tikzpicture}[descr/.style={fill=white,inner sep=1.5pt}]
    \matrix (m) [
        matrix of math nodes,
        row sep=2.5em,
        column sep=2.5em,
        text height=1.5ex, 
        text depth=0.25ex
    ]
    { \calO^{\lan,(0,0)}_{K^v}&\calO^{\lan,(0,0)}\ox_{\calO_{\fl}} \Omega^{1}_{\fl}  \\
    \Theta_s(\calO_{K^v}^{\lan,(0,0)})&\Theta_s(\calO_{K^v}^{\lan,(0,0)}) \\
    };
    \path[->,font=\scriptsize]
    (m-1-1) edge node[auto] {$\bar d$} (m-1-2)
    (m-2-1) edge node[auto] {$\id$} (m-2-2)
    ;
    \path[->,font=\scriptsize]
    (m-1-1) edge node[right] {$\mathrm{incl}\comp \bar d$} (m-2-1)
    (m-1-2) edge node[right] {$\mathrm{incl}$} (m-2-2)
    ;
\end{tikzpicture}
\end{center}
and the natural map $\bar d_{-1,1}\to \bar d$ is given by 
\begin{center}
\begin{tikzpicture}[descr/.style={fill=white,inner sep=1.5pt}]
    \matrix (m) [
        matrix of math nodes,
        row sep=2.5em,
        column sep=2.5em,
        text height=1.5ex, 
        text depth=0.25ex
    ]
    { \Theta_s(\calO_{K^v}^{\lan,(0,0)})&\Theta_s(\calO_{K^v}^{\lan,(0,0)})  \\
    \calO^{\lan,(0,0)}_{K^v}&\calO^{\lan,(0,0)}_{K^v}\ox_{\calO_{\fl}} \Omega^{1}_{\fl} \\
    };
    \path[->,font=\scriptsize]
    (m-1-1) edge node[auto] {$\bar d_{-1,1}=\id$} (m-1-2)
    (m-2-1) edge node[auto] {$\bar d$} (m-2-2)
    ;
    \path[->,font=\scriptsize]
    (m-1-1) edge node[right] {$\mathrm{pr}$} (m-2-1)
    (m-1-2) edge node[right] {$\bar d\comp \pr$} (m-2-2)
    ;
\end{tikzpicture}.
\end{center}
\begin{proof}
Using Proposition \ref{prop:translationOsmoxOfl1}, it remains to identify the natural maps induced by wall-crossing. The key point is to prove that the translation $T_{(0,0)}^{(-1,0)}$ of $d_{\fl}:\calO_{\fl}\to \Omega^1_{\fl}$ is an isomorphism. See \cite[Lemma 2.3.6]{WConDrinfeld}.
\end{proof}
\end{proposition}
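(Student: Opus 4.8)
\textbf{Proof strategy for Proposition \ref{prop:dbar-1}.}
The plan is to follow the same template already used in the proofs of Propositions \ref{prop:d-1} and \ref{prop:dbar-1}'s siblings: compute the translation of the source and target of $\bar d$ using Proposition \ref{prop:translationOsmoxOfl1}, and then pin down the induced differential by reducing everything to the flag variety, where the wall-crossing maps are concrete. Concretely, I would first record that $\bar d$ for $k=-1$ is the map $\calO_{K^v}^{\lan,(0,0)}\to\calO_{K^v}^{\lan,(0,0)}\ox_{\calO_{\fl}}\Omega^1_{\fl}$, which after twisting by $\omega_{\fl}^{(-1,0)}$ is exactly the differential on $\calO_{K^v}^{\lan,s(-1,0)}$-type objects with $k+1=0$. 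Because $\Omega^1_{\fl}=\omega_{\fl}^{(-1,1)}$ and the relevant graded pieces collapse, the target $\omega_{K^v}^{(-1,0),\lan,s(-1,0)}(1)$ equals the source. By Proposition \ref{prop:translationOsmoxOfl1} (the generic-to-generic case for $(0,0)\mapsto(-1,0)$, both antidominant up to the dot shift), both the domain $\calO_{K^v}^{\lan,(0,0)}$ translates to $\omega_{K^v}^{(-1,0),\lan,s(-1,0)}(1)$ and the codomain $\calO_{K^v}^{\lan,(0,0)}\ox_{\calO_{\fl}}\Omega^1_{\fl}$ translates to the same sheaf; hence $T_{(0,0)}^{(-1,0)}\bar d$ is an endomorphism of $\omega_{K^v}^{(-1,0),\lan,s(-1,0)}(1)$, and I would argue it is the identity (up to the harmless scalar in the normalization of $\bar d$), since translating an operator whose ``leading term'' is $(u^+)^{0}=\id$ along a wall-free path cannot change it. Since $(-1,0)$ is the singular weight here, translating back via $T_{(-1,0)}^{(0,0)}$ applied to the identity map yields again the identity map $\bar d_{-1,1}$ between the two copies of $\Theta_s(\calO_{K^v}^{\lan,(0,0)})$.

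For the two commutative squares describing the unit and counit maps, I would invoke functoriality of translation together with the explicit descriptions of $\calO_{K^v}^{\lan,(0,0)}\to T_{(-1,0)}^{(0,0)}T_{(0,0)}^{(-1,0)}\calO_{K^v}^{\lan,(0,0)}=\Theta_s(\calO_{K^v}^{\lan,(0,0)})$ and of $\Theta_s(\calO_{K^v}^{\lan,(0,0)})\to\calO_{K^v}^{\lan,(0,0)}$ from Proposition \ref{prop:wallcrossingofOsmoxOfl}: the unit is $\mathrm{incl}\comp\bar d$ and the counit is the projection $\mathrm{pr}$. Naturality of these maps with respect to the morphism $\bar d\colon\calO_{K^v}^{\lan,(0,0)}\to\calO_{K^v}^{\lan,(0,0)}\ox_{\calO_{\fl}}\Omega^1_{\fl}$ gives the two diagrams: the top square commutes because the unit map for the target $\calO_{K^v}^{\lan,(0,0)}\ox_{\calO_{\fl}}\Omega^1_{\fl}$ is just the inclusion $\mathrm{incl}$ (there is no further $\bar d$ to apply, since this object already lives in degree $k+1=0$ after the twist), and the bottom square commutes because the counit for $\calO_{K^v}^{\lan,(0,0)}\ox_{\calO_{\fl}}\Omega^1_{\fl}$ is $\bar d\comp\pr$ by the same analysis. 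The only genuinely new input needed, as in the cited proof of \cite[Lemma 2.3.6]{WConDrinfeld}, is the claim that $T_{(0,0)}^{(-1,0)}$ applied to the de Rham differential $d_{\fl}\colon\calO_{\fl}\to\Omega^1_{\fl}$ is an isomorphism; I would prove this by the local computation on $\fl\isom\bbP^1$, where $d_{\fl}$ in the affine chart is $f\mapsto df$ and translating to the singular weight turns it into the isomorphism induced by $\frn^0_\iota$ acting between the two relevant line bundles.

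The main obstacle is the normalization/scalar bookkeeping: one must check that the identity-map assertions hold \emph{on the nose} (up to the fixed $\bbQ^\times$-scalar in the definition of $\bar d$) rather than merely up to an unspecified nonzero constant, and this requires tracing the identifications $\Omega^1_{\fl}=\omega_{\fl}^{(-1,1)}$, $\Omega^{1,\sm}_{K^v}=\omega_{K^v}^{(1,-1),\sm}$, and the Tate twists through the translation functor carefully. A secondary point is to make sure the path $(0,0)\leftrightarrow(-1,0)$ is treated with the correct (anti)dominance conventions so that Proposition \ref{prop:translationOsmoxOfl1} (and not Proposition \ref{prop:122}) applies to the relevant objects; here it is essential that for $k=-1$ the ``de Rham complex'' degree $k+1$ is zero, so that no nontrivial graded piece of a symmetric power intervenes and the translation stays in the simple regime. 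Once these identifications are fixed, the commutativity of the two squares is formal from naturality, and the proof concludes by citing \cite[Lemma 2.3.6]{WConDrinfeld} for the one substantive local computation.
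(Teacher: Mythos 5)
Your proposal follows essentially the same route as the paper: translate source and target via Proposition \ref{prop:translationOsmoxOfl1}, observe that they coincide, and reduce the identification of the resulting endomorphism (and of the unit/counit squares) to the fact that $T_{(0,0)}^{(-1,0)}d_{\fl}$ is an isomorphism, cited from \cite[Lemma 2.3.6]{WConDrinfeld}. That is exactly what the paper does.

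One thing to fix before committing this to paper: the intermediate heuristic ``translating an operator whose leading term is $(u^+)^0=\id$ along a wall-free path cannot change it'' is wrong on both counts. The $\bar d$ being translated here is $\bar d^1$ on $\calO_{K^v}^{\lan,(0,0)}$ (the $k=0$ member of the family from Theorem \ref{thm:dbar}), whose leading term is $(u^+)^1$, not the identity — the label ``$k=-1$'' in the proposition names the target of the translation functor $T_{(0,0)}^{(k,0)}$, not a new degenerate $\bar d$. And the path from $(0,0)$ to $(-1,0)$ is precisely a path \emph{onto} the wall, since $(-1,0)$ is $s$-fixed; that is the whole point of the wall-crossing analysis. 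This heuristic is not load-bearing (you correctly supply the actual argument, namely $T_{(0,0)}^{(-1,0)}d_{\fl}$ being an isomorphism, at the end), but as stated it would invite a referee to doubt the rest; I would simply delete it and let the cited lemma plus the $\calO_{K^v}^{\iota^c\text{-}\lan}$-linearity of $\bar d$ carry the argument.
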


Finally, as the intertwining operator $I$ is given by the composition $\bar d'\comp d=d'\comp \bar d$ (Definition \ref{def:twistofd}), we can compute the translation of the intertwining operator using the results above.
\begin{corollary}
Let $I:\calO_{K^v}^{\lan,(0,0)}\to \calO_{K^v}^{\lan,(-1,1)}(1)$ be the intertwining operator. 
\begin{enumerate}[(i)]
    \item For $k\in\bbZ_{\ge 0}$, the natural map $I\to T_{(k,0)}^{(0,0)}T_{(0,0)}^{(k,0)}I$ is an isomorphism.
    \item In the singular case, let $I_{-1,1}:=T_{(-1,0)}^{(0,0)}T_{(0,0)}^{(-1,0)}I$. Then $I_{-1,1}\isom d_{-1,1}$.
\end{enumerate}
\end{corollary}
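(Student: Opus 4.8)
The statement is the computation of how translation functors (in particular the wall-crossing functor $\Theta_s$ in the singular case) act on the intertwining operator $I\colon\calO_{K^v}^{\lan,(0,0)}\to\calO_{K^v}^{\lan,(1,-1)}(1)$. The plan is to reduce everything to the already-established computations of the translations of the two building blocks $d$ and $\bar d$, using that $I=\bar d'\comp d=d'\comp\bar d$ by Definition \ref{def:twistofd} (with $d'$, $\bar d'$ the $(\Omega^1_{\fl})^{\ox k+1}$-, resp.\ $(\Omega^{1,\sm}_{K^v})^{\ox k+1}$-twists of $d$, $\bar d$), and that translation functors are exact and monoidal in the relevant sense, so they commute with the twisting operations by $\calO_{\fl}$-modules (resp.\ $\calO_{K^v}^{\sm}$-modules) that are trivial for the $\frg_\iota$-action. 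Concretely: $\frg_\iota$ acts trivially on $(\Omega^{1,\sm}_{K^v})^{\ox k+1}$, so $-\ox_{\calO_{K^v}^{\sm}}(\Omega^{1,\sm}_{K^v})^{\ox k+1}$ commutes with $T^\lambda_\mu$; the twists by line bundles $\omega_{\fl}^{(a,b)}$ pulled from $\fl$ are handled by Proposition \ref{prop:translationOsmoxOfl1} and \ref{prop:122}.

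First I would treat the regular case (i): for $k\in\bbZ_{\ge 0}$, one already knows by the earlier propositions that the natural unit/counit maps $d\to T^{(0,0)}_{(k,0)}T^{(k,0)}_{(0,0)}d$ and $\bar d\to T^{(0,0)}_{(k,0)}T^{(k,0)}_{(0,0)}\bar d$ are isomorphisms, and similarly for their codomains (which are twists of $\calO_{K^v}^{\lan,s\mu}$ by $\frg_\iota$-trivial line bundles), hence for $d'$, $\bar d'$ as well. Since $T^{(0,0)}_{(k,0)}T^{(k,0)}_{(0,0)}$ is an exact functor compatible with composition, applying it to the factorization $I=\bar d'\comp d$ yields a commutative square identifying $I$ with $T^{(0,0)}_{(k,0)}T^{(k,0)}_{(0,0)}I$ via the unit maps; functoriality gives that this identification is $\GL_2(L)$-, $\Gal_L$- and $\bbT^S$-equivariant. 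This part is essentially formal once the block computations are in hand.

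For the singular case (ii) with $k=-1$, the point is that $\bar d_{-1,1}:=T^{(0,0)}_{(-1,0)}T^{(-1,0)}_{(0,0)}\bar d$ is the identity map (Proposition \ref{prop:dbar-1}), because $T^{(-1,0)}_{(0,0)}\bar d$ is the identity and translating back preserves this. Therefore in the composition $I_{-1,1}=T^{(0,0)}_{(-1,0)}T^{(-1,0)}_{(0,0)}(\bar d'\comp d)=\bar d'_{-1,1}\comp d_{-1,1}$, the first factor $\bar d'_{-1,1}$ is (a twist of) the identity, so $I_{-1,1}$ is identified with $d_{-1,1}:=T^{(0,0)}_{(-1,0)}T^{(-1,0)}_{(0,0)}d$ described explicitly in Proposition \ref{prop:dbar-1} (the map $\mathrm{incl}\comp\bar d$ on the nose between $\Theta_s(\calO_{K^v}^{\lan,(0,0)})$ and $\Theta_s(\calO_{K^v}^{\lan,(0,0)}\ox_{\calO_{K^v}^{\sm}}\Omega^{1,\sm}_{K^v})$). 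The only care needed is to check that the two factorizations $\bar d'\comp d$ and $d'\comp\bar d$ of $I$ remain compatible after applying the wall-crossing functor, i.e.\ that the commutative square of differential operators displayed before Definition \ref{def:twistofd} is preserved by $\Theta_s$; this follows again from exactness of $\Theta_s$ and the already-computed values of $\Theta_s$ on each of $d$, $\bar d$, $d'$, $\bar d'$, together with the identifications of the (twisted) source and target sheaves from Proposition \ref{prop:translationOsmoxOfl1}.

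I expect the main obstacle to be purely bookkeeping rather than conceptual: keeping track of the various Tate twists and $\omega_{\fl}^{(a,b)}$- versus $\omega_{K^v}^{(a,b),\sm}$-twists so that the source and target of $I_{-1,1}$ are correctly identified with $\Theta_s(\calO_{K^v}^{\lan,(0,0)})$ and $\Theta_s(\calO_{K^v}^{\lan,(0,0)}\ox_{\calO_{K^v}^{\sm}}\Omega^{1,\sm}_{K^v})$, and verifying that the isomorphism $I_{-1,1}\isom d_{-1,1}$ is compatible with the natural maps $I\to I_{-1,1}$, $I_{-1,1}\to I$ coming from the unit and counit. Everything here is a direct consequence of the block-level statements (Propositions \ref{prop:d-1}--\ref{prop:dbar-1}) and no new local computation on $\fl$ is required.
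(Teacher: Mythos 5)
Your proposal is correct and takes essentially the same approach the paper has in mind (the paper states the corollary without an explicit proof, remarking only that it ``can be computed using the results above'' since $I=\bar d'\comp d=d'\comp\bar d$). You correctly reduce to the block computations in the preceding propositions: for $k\ge 0$ both $d$ and $\bar d$ (and hence their $\frg_\iota$-trivial twists $d'$, $\bar d'$) satisfy $d\aisom T^{(0,0)}_{(k,0)}T^{(k,0)}_{(0,0)}d$, giving (i) by exactness and compatibility with composition; for $k=-1$ the key input is that $\bar d_{-1,1}$ is the identity, so in the factorization $I_{-1,1}=\bar d'_{-1,1}\comp d_{-1,1}$ the first factor disappears, giving (ii). The observation that the twisting operations commute with $T^\lambda_\mu$ because $\frg_\iota$ acts trivially on $\Omega^{1,\sm}_{K^v}$ (and the $\omega_{\fl}$-twists are handled by the translation propositions) is exactly the bookkeeping point the paper leaves implicit.
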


\begin{remark}
We may also consider translations of the subsheaf $\calO_{K^v}^{\iota\-\lan,\iota^c\-\lalg,(a,b)}\subset \calO_{K^v}^{\lan,(a,b)}$ consisting of sections which are algebraic outside the embedding $\iota$. The result is essentially the same as translations of $\calO_{K^v}^{\lan,(a,b)}$ since the translation we considered before only concerns about the locally $\iota$-analytic part. Translations of $\calO_{K^v}^{\iota\-\lan,\iota^c\-\lalg,(a,b)}$ will be useful later when we consider translations of certain de Rham cohomology groups defined using the differential operators we constructed before.
\end{remark}

\subsection{Cohomology of the intertwining operator III - Wall-crossing}\label{ss:wc}
In this section, we use our results on wall-crossing of the intertwining operator to compute wall-crossing of some de Rham cohomology groups defined using the intertwining operator. For simplicity and in order to get meaningful cohomology groups, we will focus on the restriction of $I$ on the subspace $\calO_{K^v}^{\iota\-\lan,(0,0)}$:
\begin{align*}
    I:\calO_{K^v}^{\iota\-\lan,(0,0)}\to \calO_{K^v}^{\iota\-\lan,(-1,1)}(1).
\end{align*}
We note that all results in this section can be naturally generalized to general weights.

We give a brief summary on the cohomology of the intertwining operator $I$ on $\calO_{K^v}^{\iota\-\lan,(0,0)}$. On the sheaf level, we have:
$$
\begin{tikzcd}
    &                                                 & 0 \arrow[d]                                                 & 0 \arrow[d]                                   &                                                    &   \\
    &                                                 & \ker \bar d \arrow[d] \arrow[r, "\theta"]                   & \ker \bar d' \arrow[d]                        &                                                    &   \\
0 \arrow[r] & \ker d \arrow[d, "\bar d|_{\ker d}"'] \arrow[r] & \calO \arrow[r, "d"] \arrow[d, "\bar d"'] \arrow[rd, "I" description] & \calO\ox\Omega \arrow[d, "\bar d'"] \arrow[r] & \coker d \arrow[r] \arrow[d, "\bar d|_{\coker d}"] & 0 \\
0 \arrow[r] & \ker d' \arrow[r]                               & \calO\ox\bar\Omega \arrow[r, "d'"'] \arrow[d]               & \calO^s \arrow[d] \arrow[r]                   & \coker d' \arrow[r]                                & 0 \\
    &                                                 & 0                                                           & 0                                             &                                                    &  
\end{tikzcd}
$$
where each term in the above diagram is given by:
\begin{itemize}
    \item $\calO=\calO_{K^v}^{\iota\-\lan,(0,0)}$.
    \item $\calO\ox\Omega=\calO_{K^v}^{\iota\-\lan,(0,0)}\ox_{\calO_{ K^v}^{\sm}}\Omega_{K^v}^{1,\sm}$.
    \item $\calO\ox\bar\Omega=\calO_{K^v}^{\iota\-\lan,(0,0)}\ox_{\calO_{\fl}}\Omega^1_{\fl}$.
    \item $\calO^s=\calO_{K^v}^{\iota\-\lan,(-1,1)}(1)$.
\end{itemize}
Also, the cohomologies of these differential operators are given as follows.
\begin{itemize}
    \item $\ker\bar d=\calO_{K^v}^{\sm}$. Here, $\calO_{K^v}^{\sm}=\pi_{\HT,*}(\dlim_{K_v}\pi_{K_v}^{-1}\calO_{\calX_{K^vK_v}})$ with $\pi_{K_v}:\calX_{K^v}\to \calX_{K^vK_v}$ the natural projection.
    \item $\ker \bar d'=\Omega_{K^v}^{1,\sm}$. Here, $\Omega_{K^v}^{1,\sm}=\pi_{\HT,*}(\dlim_{K_v}\pi_{K_v}^{-1}\Omega^1_{\calX_{K^vK_v}})$.
    \item $\coker d=i_*(i^*\calO_{\fl}\ox_C\calH^1_{\ord}( K^v,0))$, where $i:\bbP^1(L)\inj \fl$.
    \item $\coker d'=i_*(i^*\Omega_{\fl}^1\ox_C\calH^1_{\ord}( K^v,0))$.
    \item $\ker d=(\calA^{K^v}_{\bar G, 0}\ox_C j_!\bigoplus_{i\in\bbZ}\calO_{\Dr}^{\sm})^{D_L^\times}\oplus M_0(K^v)$, where $j:\Omega_L\inj\fl$, and $\calO_{\Dr}^{\sm}=\dlim_n\pi_{n,*}\pi_n^*\calO_{\fl}|_{\Omega}$ with $\pi_n:\calM_{\Dr,n}^{(0)}\ov{\pi_{\Dr,\GM}}\to \Omega$. 
    \item $\ker d'=(\calA^{K^v}_{\bar G, 0}\ox_C\bigoplus_{i\in\bbZ}\Omega_{\Dr}^{1,\sm})^{D_L^\times}$, where $\Omega_{\Dr}^{1,\sm}=\dlim_n\pi_{n,*}\pi_n^*\Omega^1_{\fl}|_{\Omega}$.
\end{itemize}

We also define some de Rham cohomology groups associated to these differential operators.
\begin{itemize}
    \item Define $\DR^{\sm}$ to be the two-term complex $\ker \bar d\ov{d}\to \ker \bar d'$, and $\bbH^*(\DR^{\sm})$ the hypercohomology of $\DR^{\sm}$.
    \item Define $\DR$ to be the two-term complex $\calO\ov{d}\to \calO\ox\Omega$, and $\bbH^*(\DR)$ the hypercohomology of $\DR$.
    \item Define $\DR'$ to be the two-term complex $\calO\ox\bar\Omega\ov{d'}\to \calO^s$, and $\bbH^*(\DR')$ the hypercohomology of $\DR'$.
\end{itemize}
Let $\bbT^S$ be the abstract spherical Hecke algebra defined before. All the above constructions are equivariant with respect to the Hecke action of $\bbT^S$. Let $\sigma_0^{K^v}$ be the set of Hecke eigenvalues appears in $\bbH^i(\DR^{\sm})$ where $i=0,1,2$. Let $\lambda\in \sigma_0^{K^v}$ be a Hecke eigenvalue. Let $\pi$ be the automorphic representation of $G(\bbA)$ over $C$ such that the associated system of Hecke eigenvalues is $\lambda$, and $\pi_f$ is the restriction of $\pi$ to $G(\bbA_f)$. Suppose $\bbH^1(\DR^{\sm})[\lambda]=(\pi_f^v)^{K^v}\ox_C\pi_v ^{\oplus 2}$, and $\tau_v $ the smooth irreducible representation of $D_L^\times$ which is the Jacquet--Langlands transfer of $\pi_v $. (If $\pi_v$ is a principal series then we put $\tau_v=0$.) Define $\text{SS}$ to be the map $\bar d$ on $H^1(\ker d)$, and $\text{ORD}$ to be the map $\bar d$ on $H^0(\coker d)$. For $\lambda\in\sigma_0^{K^v}$ such that $\pi_v$ is generic, we have the following result 
\begin{itemize}
    \item $\ker \mathrm{SS}[\lambda]=(\pi_f^v)^{K^v}\ox_C(\dlim_n H^1_{\dR,c}(\calM_{\Dr,n}^{(0)})\ox_C\tau_v )^{\calO_{D_L}^\times}$. Here $H^i_{\dR,c}(\calM_{\Dr,n}^{(0)})$ is the compactly supported de Rham cohomology of $\calM_{\Dr,n}^{(0)}$.
    \item $\coker \mathrm{SS}[\lambda]=(\pi_f^v)^{K^v}\ox_C(\dlim H^2_{\dR,c}(\calM_{\Dr,n}^{(0)})\ox_C\tau_v )^{\calO_{D_L}^\times}$.
    \item $H^1(\ker d)[\lambda]=(\pi_f^v)^{K^v}\ox_C(\dlim_n H^1_c(\calM_{\Dr,n}^{(0)},\calO_{\calM_{\Dr,n}^{(0)}})\ox_C\tau_v )^{\calO_{D_L}^\times}$. Here $H^1_c(\calM_{\Dr,n}^{(0)},\calO_{\calM_{\Dr,n}^{(0)}})$ is the first compactly supported cohomology group of $\calO_{\calM_{\Dr,n}^{(0)}}$ on $\calM_{\Dr,n}^{(0)}$.
    \item $H^1(\ker d')[\lambda]=(\pi_f^v)^{K^v}\ox_C(\dlim_n H^1_c(\calM_{\Dr,n}^{(0)},\Omega^1_{\calM_{\Dr,n}^{(0)}})\ox_C\tau_v )^{\calO_{D_L}^\times}$.
    \item $\ker\mathrm{ORD}[\lambda]=(\pi_f^v)^{K^v}\ox_C (\Ind^{\GL_2(L)}_{\bar B(L)}H^1_{\rig}(\Ig_{K^v})[\lambda])^\infty$. By Theorem \ref{thm:H0Ig}, we know $H^1_{\rig}(\Ig_{K^v})[\lambda]^{\ss}\isom J_{\bar N(L)}(\pi_v )^{\ss}$ where $J_{\bar N(L)}(-)$ is the Jacquet functor with respect to $\bar B(L)$.
    \item $H^0(\coker d)[\lambda]=(\pi_f^v)^{K^v}\ox_C(\Ind^{\GL_2(L)}_{\bar B(L)}H^1_{\rig}(\Ig_{K^v})[\lambda])^{\iota\-\lan}$.
    \item $H^0(\coker d')[\lambda]=(\pi_f^v)^{K^v}\ox_C(\Ind^{\GL_2(L)}_{\bar B(L)}H^1_{\rig}(\Ig_{K^v})[\lambda]\cdot z_{1,\iota}^{-1}z_{2,\iota})^{\iota\-\lan}$, where $z_{1,\iota}^{-1}z_{2,\iota}:\bar B(L)\to C,\left( \begin{matrix} a&0\\c&d \end{matrix} \right)\mapsto \iota(a)^{-1}\iota(d)$.  
\end{itemize}

We can calculate these de Rham cohomology groups, in terms of some classical objects. 
\begin{theorem}\label{thm:pvc}
Let $\lambda\in \sigma_0^{K^v}$ be a Hecke eigenvalue such that $\pi_v$ is generic. We have a morphism of exact sequences 
\begin{center}
\begin{tikzpicture}[descr/.style={fill=white,inner sep=1.5pt}]
    \matrix (m) [
        matrix of math nodes,
        row sep=2.5em,
        column sep=2.5em,
        text height=1.5ex, 
        text depth=0.25ex
    ]
    { 0 & H^1(\ker d)[\lambda] & \bbH^1(\DR)[\lambda] & H^0(\coker d)[\lambda] & 0  \\
      0 & H^1(\ker d')[\lambda] & \bbH^1(\DR')[\lambda] & \coker \mathrm{SS}[\lambda] & 0 \\
    };
    \path[->,font=\scriptsize]
    (m-1-1) edge (m-1-2)
    (m-1-2) edge (m-1-3)
    (m-1-3) edge (m-1-4)
    (m-1-4) edge (m-1-5)
    (m-2-1) edge (m-2-2)
    (m-2-2) edge (m-2-3)
    (m-2-3) edge (m-2-4)
    (m-2-4) edge (m-2-5)
    ;
    \path[->,font=\scriptsize]
    (m-1-2) edge node[right] {$\SS$} (m-2-2)
    (m-1-3) edge node[right] {$\bar d$} (m-2-3)
    (m-1-4) edge node[right] {$\ORD$} (m-2-4)
    ;
\end{tikzpicture}
\end{center}
which is equivariant for the $\GL_2(L)$-action, $\Gal(\bar L/L)$-action, and the Hecke action. Taking the associated long exact sequence, we get an exact sequence 
\[
    0\to \ker\SS[\lambda]\to \bbH^1(\DR^{\sm})[\lambda]\to \ker \ORD[\lambda]\to \coker \SS[\lambda]\to 0.
\] 
\end{theorem}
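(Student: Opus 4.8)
\textbf{Proof plan for Theorem \ref{thm:pvc}.}

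The plan is to mimic the proof of Lemma \ref{lem:pvc000} (in the locally algebraic setting) and of Theorem \ref{thm:kerI2}, now working with the locally $\iota$-analytic sheaves $\calO=\calO_{K^v}^{\iota\-\lan,(0,0)}$ and the de Rham complexes $\DR$, $\DR'$, $\DR^{\sm}$ summarized above. First I would build the morphism of short exact sequences in the statement. By Proposition \ref{prop:DRcoh}(iii)--(v), applied in the present weight-zero, locally $\iota$-analytic setting (so that the vanishing inputs are exactly those of Lemma \ref{lem:v}), there is a short exact sequence $0\to H^1(\ker d)\to \bbH^1(\DR)\to H^0(\coker d)\to 0$ coming from the triangle $\ker d\to \DR\to \coker d[-1]\to$. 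Similarly Proposition \ref{prop:DRcohprime} gives $0\to H^1(\ker d')\to \bbH^1(\DR')\to H^0(\coker d')\to 0$. The vertical map $\bar d:\DR\to\DR'$ (the second column of the big diagram) induces, after passing to cohomology, the maps $\SS=\bar d|_{H^1(\ker d)}$, $\bar d:\bbH^1(\DR)\to\bbH^1(\DR')$, and $\ORD=\bar d|_{H^0(\coker d)}$, which commute with the horizontal maps by functoriality of the connecting maps. All of these are $\GL_2(L)$-, $\Gal_L$- and $\bbT^S$-equivariant, since every sheaf and map in the big diagram carries such actions. To identify the right-hand vertical target as $\coker\SS$ rather than $H^0(\coker d')$, I would restrict to the generalized Hecke eigenspace $\tilde{[\lambda]}$ with $\pi_w$ generic: here $H^0_{\rig}(\Ig_{K^v})\tilde{[\lambda]}=0$ by Theorem \ref{thm:H0Ig}, so $\ORD$ is injective on the $[\lambda]$-part after noting the remaining argument is exactly the snake-lemma chase already carried out in Lemma \ref{lem:pvc000} — namely $\bar d:H^0(i_*i^*\ker d)\to H^0(i_*i^*\ker d')$ is surjective, forcing $\coker(H^1(\ker d)\to H^1(\ker d'))$ to agree with $\coker(H^1(j_!\ker d|_\Omega)\to H^1(j_!\ker d'|_\Omega))=\coker\SS$, and $H^0(\coker d)\to H^0(\coker d')$ is surjective. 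After semisimplicity of the Hecke action (which holds for $\pi_w$ generic, cf. the corollary after Theorem \ref{thm:kerI2} together with Theorem \ref{thm:H0Ig} and Proposition \ref{prop:semisimplespecial}), the generalized eigenspace coincides with the eigenspace, so one gets the displayed diagram over $[\lambda]$.

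Second, from this morphism of short exact sequences I would apply the snake lemma. The kernel of the left vertical map is $\ker\SS[\lambda]$; the kernel of the middle vertical map $\bar d:\bbH^1(\DR)[\lambda]\to\bbH^1(\DR')[\lambda]$ is precisely $\bbH^1(\DR^{\sm})[\lambda]$, because $\DR^{\sm}=[\ker\bar d\to\ker\bar d']$ is the kernel complex of $\bar d:\DR\to\DR'$ (both $\bar d$ and $\bar d'$ are surjective with kernels $\ker\bar d=\calO_{K^v}^{\sm}$ and $\ker\bar d'=\Omega_{K^v}^{1,\sm}$ respectively, by Theorem \ref{thm:dbar}), together with the computation of $\bbH^1$ via Proposition \ref{prop:DRcohpppp}; the kernel of the right vertical map is $\ker\ORD[\lambda]$. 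On the cokernel side, the left cokernel is $\coker\SS[\lambda]$, and the right cokernel vanishes since $\ORD:H^0(\coker d)[\lambda]\to H^0(\coker d')[\lambda]$ is surjective (again by the explicit description as induced representations of $B(L)$-modules and surjectivity of the differential operator on the flag variety, exactly as in Lemma \ref{lem:pvc000}). The snake sequence then reads $0\to\ker\SS[\lambda]\to\bbH^1(\DR^{\sm})[\lambda]\to\ker\ORD[\lambda]\to\coker\SS[\lambda]\to 0$, which is the claimed exact sequence; one must also check the connecting map is the natural one, i.e. is compatible with all the equivariance structures, which is automatic from the snake lemma construction.

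Third, I would record the identifications of the individual terms that make this sequence ``classical'': $\ker\SS[\lambda]\isom (\pi_f^v)^{K^v}\ox_C(\dlim_n H^1_{\dR,c}(\calM_{\Dr,n}^{(0)})\ox_C\tau_w)^{\calO_{D_L}^\times}$, $\bbH^1(\DR^{\sm})[\lambda]\isom(\pi_f^v)^{K^v}\ox_C\pi_w^{\oplus 2}$, $\ker\ORD[\lambda]\isom(\pi_f^v)^{K^v}\ox_C(\Ind_{B(L)}^{\GL_2(L)}H^1_{\rig}(\Ig_{K^v})[\lambda])^\infty$, $\coker\SS[\lambda]\isom(\pi_f^v)^{K^v}\ox_C(\dlim_n H^2_{\dR,c}(\calM_{\Dr,n}^{(0)})\ox_C\tau_w)^{\calO_{D_L}^\times}$, all of which are read off from the descriptions of $\ker d$, $\ker d'$, $\coker d$, $\coker d'$ via the $p$-adic uniformization isomorphism $\calO_{K^v}|_\Omega\isom\bigoplus_{i\in I}\calO_{\LT}$ and its Drinfeld-side counterpart (Theorem \ref{thm:SWduality}, \S\ref{padicuniformization}) together with the Eichler--Shimura/Jacquet--Langlands input, combined with Proposition \ref{prop:dord} on the ordinary locus and Theorem \ref{thm:H0Ig} identifying $H^1_{\rig}(\Ig_{K^v})[\lambda]$ with $J_{N(L)}(\pi_w)$ up to semisimplification.

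The main obstacle I anticipate is not the formal snake-lemma chase but the bookkeeping needed to guarantee the identification of the right-hand cokernel term as $\coker\SS$ rather than as a possibly larger space, and to guarantee that passing to the (generalized) Hecke eigenspace $[\lambda]$ is harmless: this requires the genericity of $\pi_w$, the vanishing $H^0_{\rig}(\Ig_{K^v})\tilde{[\lambda]}=0$ of Theorem \ref{thm:H0Ig}, and the semisimplicity of the $\bbT^S$-action on $\bbH^1(\DR)\tilde{[\lambda]}$ (including the delicate special-series case handled by Proposition \ref{prop:semisimplespecial}); one must take $\DR,\DR',\DR^{\sm}$ here to be the weight-zero analogues of the complexes studied in \S\ref{cohint}, and check that all the vanishing statements of Lemma \ref{lem:v} go through unchanged (they do, since the essential inputs — Serre duality on $\calX_{K^vK_v}$ and on $\calM_{\Dr,n}$, Stein-ness of Drinfeld curves, and the determinant-factoring of the $\GL_2(L)$-action on $H^0$ of the completed cohomology — are all available here). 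Once these are in place, the exact sequence follows immediately by taking the long exact sequence of the morphism of short exact sequences.
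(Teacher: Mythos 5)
Your overall strategy — run the snake lemma on the morphism from the $\DR$-row to the $\DR'$-row of short exact sequences, then use the various vanishings at the $[\lambda]$-isotypic piece for $\pi_w$ generic — is the same one the paper uses (implicitly, by reference to the argument of Lemma~\ref{lem:pvc000} and Theorem~\ref{thm:kerI2}). However, two steps do not hold up.

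First, your sentence ``$H^0_{\rig}(\Ig_{K^v})\tilde{[\lambda]}=0$ \dots\ so $\ORD$ is injective on the $[\lambda]$-part'' is false. Theorem~\ref{thm:H0Ig} gives the vanishing of $H^0_{\rig}$, not of $H^1_{\rig}$, and $\ker\ORD[\lambda]$ is explicitly computed in the paper as $(\pi_f^v)^{K^v}\ox_C(\Ind^{\GL_2(L)}_{B(L)}H^1_{\rig}(\Ig_{K^v})[\lambda])^\infty$, which is non-zero whenever $\pi_w$ has a non-trivial Jacquet module (special or principal series case). What is true — and what Lemma~\ref{lem:pvc000} actually uses — is that $\ORD$ is \emph{surjective}, via the surjectivity of $\bar d$ on $H^0(i_*i^*\ker d)$. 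The identification of the right-hand term involves showing that $\coker\SS[\lambda]$ computes what it should (via the $\SS$-side snake chase of Lemma~\ref{lem:pvc000}), not injectivity of $\ORD$; as written, this step of the argument is muddled.

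Second, the snake lemma applied to the three vertical maps $(\SS,\bar d,\ORD)$ produces the six-term sequence
\[
0\to\ker\SS\to\ker\bar d\to\ker\ORD\to\coker\SS\to\coker\bar d\to\coker\ORD\to 0.
\]
You verify $\coker\ORD[\lambda]=0$ and then immediately write the four-term truncation $0\to\ker\SS\to\ker\bar d\to\ker\ORD\to\coker\SS\to 0$; this truncation also requires $\coker\bar d[\lambda]=0$. That vanishing is available (it is $\bbH^2(\DR^{\sm})[\lambda]=0$ for $\pi_w$ generic, read off from the weight-$0$ analogue of Proposition~\ref{prop:DRcohpppp} and Lemma~\ref{lem:v}), but you never invoke it. You do use Proposition~\ref{prop:DRcohpppp} to identify $\ker\bar d[\lambda]$ with $\bbH^1(\DR^{\sm})[\lambda]$ (which likewise uses $\bbH^0(\DR')[\lambda]=0$), so the same appeal would close this gap, but it must be stated. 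Without it the sequence you obtain only says $\ker\ORD[\lambda]\to\coker\SS[\lambda]$ has cokernel $\coker\bar d[\lambda]$, not that it is surjective.
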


The relations between $\ker I^1$ and the cohomologies above are given as follows.
\begin{theorem}\label{thm:kerI1Hodge}
$\ker I^1$ has a spectral decomposition with respect to the Hecke action, and the action is semisimple. Let $\lambda\in \sigma_0^{K^v}$ be a Hecke eigenvalue such that $\pi_v$ is generic. Then we have exact sequences:
\begin{align*}
    &0\to H^0(\ker \bar d')[\lambda]\to \bbH^1(\DR)[\lambda]\to \ker I^1[\lambda]\to 0\\
    &0\to H^1(\ker \bar d)[\lambda]\to \ker I^1[\lambda]\to \bbH^1(\DR')[\lambda]\to 0
\end{align*}
which are equivariant for the $\GL_2(L)$-action, $\Gal(\bar L/L)$-action, and the Hecke action.
\end{theorem}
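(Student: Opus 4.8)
\textbf{Proof proposal for Theorem \ref{thm:kerI1Hodge}.}
The plan is to run exactly the same diagram chase that produced Theorem \ref{thm:kerI1}, but now with the \emph{non}-localized complexes $\DR^{\sm},\DR,\DR'$ from \S\ref{ss:wc} in place of the locally $\iota^c$-algebraic complexes $\DR^{\lalg},\DR,\DR'$. The bookkeeping on the cohomology of the individual differential operators has already been recorded in the summary at the start of \S\ref{ss:wc}, together with Theorem \ref{thm:pvc}, so the only things that need to be set up are the two vanishing inputs that make the chase collapse: (1) that $H^0_{\rig}(\Ig_{K^v})\tilde{[\lambda]}=0$ whenever $\pi_w$ is generic, which is Theorem \ref{thm:H0Ig} applied in the weight $w=0$ case (the arguments there only used genericity of $\pi_w$, not the particular weight), and (2) semisimplicity of the Hecke action, which we get from the three generic cases exactly as in the $\iota^c$-algebraic setting (supercuspidal: action comes from automorphic forms on $\bar G$; principal series: from rigid cohomology of Igusa curves, semisimple by Theorem \ref{thm:H0Ig}; Steinberg: by the $\Hom_{\GL_2(L)}(\tilde I,\St_2^{\iota\-\an})=0$ computation, Proposition \ref{prop:semisimplespecial}, whose proof only involves locally $\iota$-analytic representations and so applies verbatim).

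First I would assemble the analogue of the big commutative diagram in the proof of Theorem \ref{thm:kerI1}: the $3\times 3$-type diagram built from the rows $0\to\ker d\to\calO\to\calO\ox\Omega\to\coker d\to 0$ and $0\to\ker d'\to\calO\ox\bar\Omega\to\calO^s\to\coker d'\to 0$ together with the vertical maps $\bar d$, using the hypercohomology exact sequences of $\DR,\DR',\bar\DR,\bar\DR',\DR^{\sm}$ that are the $w=0$ specializations of Propositions \ref{prop:DRcoh}, \ref{prop:DRcohprime}, \ref{prop:DSJIJ}, \ref{prop:DRcohpp}, \ref{prop:DRcohppp}, \ref{prop:DRcohpppp}; note that $\bbH^i(\bar\DR)=H^i(\ker\bar d)=H^i(\calO_{K^v}^{\sm})$ and $\bbH^i(\bar\DR')=H^i(\ker\bar d')=H^i(\Omega_{K^v}^{1,\sm})$ now, which is why $H^0(\ker\bar d')[\lambda]$ and $H^1(\ker\bar d)[\lambda]$ appear in the statement. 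Then, restricting to a generalized Hecke eigenspace at a generic $\lambda$ and invoking Theorem \ref{thm:H0Ig}(i) to kill $H^0_{\rig}(\Ig_{K^v})\tilde{[\lambda]}$, the connecting maps in the long exact sequence for $\DR^{\lalg}\to\DR\to\DR'\to$ (here in the smooth form $\DR^{\sm}\to\DR\to\DR'\to$, Proposition \ref{prop:DRcohpppp}) degenerate, yielding the two four-term exact sequences of the theorem; the semisimplicity statement then upgrades $\tilde{[\lambda]}$ to $[\lambda]$ and, since $\ker I^1$ is a quotient of $\bbH^1(\DR)$ and the latter is semisimple for the Hecke action, gives the spectral decomposition and semisimplicity of $\ker I^1$.

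The genuinely new point — and the place I would be most careful — is identifying $\bbH^1(\DR^{\sm})[\lambda]\isom(\pi_f^v)^{K^v}\ox_C\pi_w^{\oplus2}$ and, through Theorem \ref{thm:pvc}, the fact that the differential $\SS$ restricted to $H^0(i_*i^*\ker d)\to H^0(i_*i^*\ker d')$ is still surjective so that the snake lemma identifies $\coker\SS$ with the cokernel on the $j_!$-part; this is the analogue of Lemma \ref{lem:pvc000} and Theorem \ref{thm:kerI2} with trivial algebraic weight, and it is where the geometry of the Drinfeld and Igusa curves (compactly supported de Rham versus coherent cohomology, and the $B(L)$-module structure of $H^1_{\rig}(\Ig_{K^v})[\lambda]$ via Theorem \ref{thm:IgJacquet} and Theorem \ref{thm:H0Ig}) enters. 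Once that identification is in hand, the equivariance for $\GL_2(L)$, $\Gal(\bar L/L)$ and $\bbT^S$ is automatic because every map in the diagram was constructed equivariantly (Theorems \ref{thm:d}, \ref{thm:dbar}, the definition of $I$, and the Hecke-equivariance of all cohomology isomorphisms in \S\ref{uni}). I do not expect any difficulty beyond transcribing the generic-case analysis of \S\ref{cohint} to weight $0$; the main obstacle is purely organizational, namely keeping the smooth, $\iota$-analytic and full-analytic variants of $\DR$ straight and checking that the vanishing results of Lemma \ref{lem:v} hold with $\calO=\calO_{K^v}^{\iota\-\lan,(0,0)}$ in place of the $\iota^c$-algebraic sheaf.
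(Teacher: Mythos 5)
The proposal is correct and follows essentially the same approach the paper implicitly indicates: Theorem \ref{thm:kerI1Hodge} is stated in \S\ref{ss:wc} without a separate proof, immediately after a re-tabulation of the cohomology of $d,d',\bar d,\bar d'$ for $\calO=\calO_{K^v}^{\iota\text{-la},(0,0)}$, and the intended argument is exactly the one you describe—the weight-$0$, locally $\iota$-analytic transcription of the diagram chase that yields Theorem \ref{thm:kerI1}. Your identification of the key translation step ($\bbH^i(\bar\DR)=H^i(\ker\bar d)=H^i(\calO_{K^v}^{\sm})$ and $\bbH^i(\bar\DR')=H^i(\ker\bar d')=H^i(\Omega_{K^v}^{1,\sm})$, so that $\bbH^0(\bar\DR')=H^0(\ker\bar d')$ replaces the term written $H^0(\calO\ox\Omega)$ in Theorem \ref{thm:kerI1}, while $\bbH^1(\bar\DR)=H^1(\ker\bar d)$ is what the kernel in the second sequence actually is) is precisely the point the paper wants the reader to supply, and your observations that Theorem \ref{thm:H0Ig} and Proposition \ref{prop:semisimplespecial} apply unchanged because they never used the $\iota^c$-algebraic weight are also correct. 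One small imprecision: the two exhibited sequences are not themselves pieces of the long exact sequence for $\DR^{\sm}\to\DR\to\DR'\to$ of Proposition \ref{prop:DRcohpppp}; they come from analyzing $\ker I^1\subset H^1(\calO)$ via the two factorizations $I=\bar d'\comp d$ and $I=d'\comp\bar d$ together with the surjectivity of $d^1$ and $\bar d^1$ on $H^1$ (which uses $H^1(\coker d)=0$, Lemma \ref{lem:v}) and the vanishing of $H^1(\ker\bar d')[\lambda]$, $H^0(\calO\ox\bar\Omega)\tilde{[\lambda]}=\bbH^0(\DR')\tilde{[\lambda]}$ for generic $\lambda$—but that is a matter of exposition, and your plan has all the ingredients in hand.
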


First we compute the effect of the translation functor $T_{(0,0)}^{(-1,0)}$ on the de Rham cohomology groups.
\begin{proposition}
Applying $T_{(0,0)}^{(-1,0)}$ to $I:\calO\to \calO^s$, we get a commutative diagram with exact rows:
$$
\begin{tikzcd}
0 \arrow[r] & \ker d_{-1} \arrow[d, "\id"'] \arrow[r] & T_{(0,0)}^{(-1,0)}\calO \arrow[r, "d_{-1}"] \arrow[d, "\bar d_{-1}=\id"'] \arrow[rd, "I_{-1}"] & T_{(0,0)}^{(-1,0)}\calO\ox\Omega \arrow[d, "\bar d'_{-1}=\id"] \arrow[r] & \coker d_{-1} \arrow[r] \arrow[d, "\id"] & 0 \\
0 \arrow[r] & \ker d'_{-1} \arrow[r]                  & T_{(0,0)}^{(-1,0)}\calO\ox\bar\Omega \arrow[r, "d'_{-1}"']                                     & T_{(0,0)}^{(-1,0)}\calO^s \arrow[r]                                      & \coker d'_{-1} \arrow[r]                 & 0 
\end{tikzcd}
$$
which are equivariant for the $\GL_2(L)$-action, $\Gal(\bar L/L)$-action, and the Hecke action. Here,  
\begin{itemize}
    \item $T_{(0,0)}^{(-1,0)}\calO=\omega_{K^v}^{(1,0),\iota\-\lan,(-1,0)}(1)$.
    \item $T_{(0,0)}^{(-1,0)}\calO\ox\Omega=\omega_{K^v}^{(0,1),\iota\-\lan,(-1,0)}(1)$
    \item $\ker d_{-1}=j_!(\calA_{\bar G,0}^{K^v}\ox_C\bigoplus_{i\in\bbZ}\omega_{\Dr}^{(-1,0),\sm})^{D_L^\times}$ with $\omega_{\Dr}^{(-1,0),\sm}=\dlim_n\pi_{n,*}\pi_n^*j^*\omega^{(-1,0)}_{\fl}$.
    \item $\coker d_{-1}=i_*(i^*\omega_{\fl}^{(-1,0)}\ox_C \calH^1_{\ord}(K^v,0))$.
\end{itemize}
\begin{proof}
This follows from Proposition \ref{prop:d-1}, \ref{prop:dbar-1}, and note that $T_{(0,0)}^{(-1,0)}$ is an exact functor.
\end{proof}
\end{proposition}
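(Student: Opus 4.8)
The plan is to apply the translation functor $T_{(0,0)}^{(-1,0)}$ to the entire commutative diagram displayed in Section \ref{ss:wc} (the one containing $d$, $\bar d$, $\bar d'$, $d'$, and $I$ as its diagonal), exploiting the fact—established in Proposition \ref{prop:translationcommuteswithcohomology} and in the computations of \S \ref{ss:wc} on translations of $\calO_{K^v}^{\lan,s\mu}$—that $T_{(0,0)}^{(-1,0)}$ is an exact functor on $\Rep_{\tilde\chi_0}^{\iota\-\lan}(\GL_2(L))$ and hence commutes with taking kernels, cokernels, and cohomology sheaves. Exactness of the two rows in the target diagram is then immediate from exactness of the two rows in the source diagram (the rows $0\to\ker d\to \calO\ov{d}\to \calO\ox\Omega\to \coker d\to 0$ and its primed version).

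First I would identify each term of the translated diagram explicitly. For the sheaves appearing as $\calO$, $\calO\ox\Omega$, $\calO\ox\bar\Omega$, $\calO^s$, this is exactly the content of Proposition \ref{prop:translationOsmoxOfl1}, Proposition \ref{prop:d-1}, and Proposition \ref{prop:dbar-1}: we get $T_{(0,0)}^{(-1,0)}\calO = \omega_{K^v}^{(-1,0),\iota\-\lan,(0,-1)}(1)$, $T_{(0,0)}^{(-1,0)}(\calO\ox\Omega) = \omega_{K^v}^{(-1,0),\iota\-\lan,(0,-1)}(1)\ox_{\calO_{K^v}^{\sm}}\Omega_{K^v}^{1,\sm} = \omega_{K^v}^{(0,-1),\iota\-\lan,(0,-1)}(1)$ (using that $\frg_\iota$ acts trivially on $\Omega_{K^v}^{1,\sm}$, so the $-\ox_{\calO_{K^v}^{\sm}}\Omega_{K^v}^{1,\sm}$ functor commutes with translation), and similarly for the other two. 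The differential maps $d_{-1}=T_{(0,0)}^{(-1,0)}d$, $\bar d_{-1}=T_{(0,0)}^{(-1,0)}\bar d$, etc., are computed in Propositions \ref{prop:d-1} and \ref{prop:dbar-1}: crucially $\bar d_{-1}$ and $\bar d_{-1}'$ become identity maps (up to the natural identification of source and target), because $T_{(0,0)}^{(-1,0)}$ of $d_{\fl}\colon\calO_{\fl}\to\Omega^1_{\fl}$ is an isomorphism. Consequently $I_{-1}=\bar d_{-1}'\comp d_{-1}=d_{-1}$ up to that identification, which gives the diagonal map labelled $I_{-1}$ and explains why the outer vertical arrows can be taken to be $\id$.

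Next I would extract the explicit descriptions of $\ker d_{-1}$ and $\coker d_{-1}$ from the descriptions of $\ker d$ and $\coker d$ recalled at the start of \S \ref{ss:wc}. Since $\ker d$ contains the summand $(\calA^{K^v}_{\bar G,0}\ox_C j_!\bigoplus_{i\in\bbZ}\calO_{\Dr}^{\sm})^{D_L^\times}$ and $j_!$ (extension by zero from $\Omega$) commutes with the exact functor $T_{(0,0)}^{(-1,0)}$, and since translating $\calO_{\Dr}^{\sm}=\dlim_n\pi_{n,*}\pi_n^*\calO_{\fl}|_\Omega$ replaces $\calO_{\fl}$ by $\omega_{\fl}^{(-1,0)}$ (as $T_{(0,0)}^{(-1,0)}$ of $\calO_{\fl}$, thought of as a $\frg_\iota$-equivariant sheaf, is $\omega_{\fl}^{(-1,0)}$ up to twist — compare Proposition \ref{prop:translationOsmoxOfl1}), one gets $\ker d_{-1}=j_!(\calA_{\bar G,0}^{K^v}\ox_C\bigoplus_{i\in\bbZ}\omega_{\Dr}^{(-1,0),\sm})^{D_L^\times}$ with $\omega_{\Dr}^{(-1,0),\sm}=\dlim_n\pi_{n,*}\pi_n^*j^*\omega_{\fl}^{(-1,0)}$; note the summand $M_0(K^v)$ of $\ker d$ has infinitesimal character $\tilde\chi_0$ but its translation to weight $(-1,0)$ (a singular weight) vanishes, which is why it does not appear. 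Similarly $\coker d$, being $i_*(i^*\calO_{\fl}\ox_C\calH^1_{\ord}(K^v,0))$ and supported on $\bbP^1(L)$, translates to $i_*(i^*\omega_{\fl}^{(-1,0)}\ox_C\calH^1_{\ord}(K^v,0))$ (here $i^*$, $i_*$ commute with the exact translation functor).

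The main obstacle is bookkeeping rather than conceptual: one must carefully track all the $\theta_{\frh}$-weights, Tate twists $(1)$, and the determinant twists $\omega^{(a,b)}$ through every translation, and verify that the natural isomorphisms identifying $T_{(0,0)}^{(-1,0)}(\calO\ox\Omega)$ with $T_{(0,0)}^{(-1,0)}\calO\ox\Omega$ (and the primed variants) are $\GL_2(L)\times\Gal(\bar L/L)\times\bbT^S$-equivariant, so that the resulting commutative square genuinely carries all three actions. Equivariance for $\GL_2(L)$ and $\bbT^S$ is automatic since translation functors are $\GL_2(L)$-equivariant by construction and $\bbT^S$ commutes with $\GL_2(L)$; equivariance for $\Gal(\bar L/L)$ follows because the translation is defined using the $\iota$-algebraic representation $V^{\bar\nu}$ equipped (by convention in the section on translations) with the $C$-semilinear trivial Galois action, so the Galois action only sees the $\calO_{K^v}^{\lan,s\mu}$-factor and is preserved. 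Once the terms are identified and equivariance is checked, the diagram is just $T_{(0,0)}^{(-1,0)}$ applied to the diagram of \S \ref{ss:wc}, and its rows are exact because $T_{(0,0)}^{(-1,0)}$ is exact — this finishes the proof.
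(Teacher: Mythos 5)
Your proposal is correct and follows essentially the same route as the paper: apply the exact functor $T_{(0,0)}^{(-1,0)}$ termwise to the commutative diagram of \S\ref{ss:wc}, then read off the individual translated sheaves from Propositions \ref{prop:translationOsmoxOfl1}, \ref{prop:d-1} and \ref{prop:dbar-1}, with exactness of the rows and the identifications of $\ker d_{-1}$ and $\coker d_{-1}$ following because translation commutes with kernels, cokernels, $j_!$ and $i_*$. Your additional observation that the $M_0(K^v)$-summand of $\ker d$ dies under translation (since its $\frg_\iota$-action is trivial, so the tensor with $V^{(0,-1)}$ lands entirely in the $\tilde\chi_{(0,-1)}$-isotypic part, away from the singular character $\tilde\chi_{(-1,0)}$), and the explanation of equivariance via the convention that $V^{\bar\nu}$ carries trivial $\Gal(\bar L/L)$- and $\bbT^S$-actions, are exactly the points one must supply to flesh out the paper's terse justification.
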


The results on the sheaf cohomology enables us to compute the translation of the de Rham cohomologies.
\begin{corollary}\label{cor:transtowall}
There are natural isomorphisms 
\[
    T_{(0,0)}^{(-1,0)}\ker I^1=T_{(0,0)}^{(-1,0)}\bbH^1(\DR)=T_{(0,0)}^{(-1,0)}\bbH^1(\DR')
\]
and we have an exact sequence 
\[
    0\to T_{(0,0)}^{(-1,0)}H^1(\ker d)\to T_{(0,0)}^{(-1,0)}\ker I^1\to T_{(0,0)}^{(-1,0)}H^0(\coker d)\to 0,
\]
which is equivariant for the $\GL_2(L)$-action, $\Gal(\bar L/L)$-action, and the Hecke action. Moreover, for $\lambda\in \sigma_0^{K^v}$ such that $\pi_v $ is generic, we have 
\begin{itemize}
    \item $T_{(0,0)}^{(-1,0)}H^1(\ker d)[\lambda]=(\pi_f^v)^{K^v}\ox_C(H^1(j_!\omega_{\Dr}^{(-1,0),\sm})\ox_C\tau_v )^{\calO_{D_L}^\times}$.
    \item $T_{(0,0)}^{(-1,0)}H^0(\coker d)[\lambda]=(\pi_f^v)^{K^v}\ox_C(\Ind^{\GL_2(L)}_{\bar B(L)}H^1_{\rig}(\Ig_{K^v})[\lambda]\cdot z_{1,\iota}^{-1})^{\iota\-\lan}$.
\end{itemize}
\begin{proof}
By Proposition \ref{prop:dbar-1}, $\bar d_{-1}$ is the identity map. Hence the cohomology of $I_{-1}$ is the same as the cohomology of $d_{-1}$. From this one deduces the desired results.
\end{proof}
\end{corollary}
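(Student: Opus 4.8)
The plan is to deduce everything from the behavior of the differential operators under the wall-crossing translation $T_{(0,0)}^{(-1,0)}$, which we already understand completely from Propositions \ref{prop:d-1} and \ref{prop:dbar-1}. The starting observation is that $\bar d_{-1} = T_{(0,0)}^{(-1,0)}\bar d$ is the identity map (Proposition \ref{prop:dbar-1}), so in the translated $2\times 2$ diagram the vertical maps $\bar d_{-1}$, $\bar d'_{-1}$ are isomorphisms. Consequently the intertwining operator $I_{-1} = \bar d'_{-1}\comp d_{-1} = d'_{-1}\comp\bar d_{-1}$ has exactly the same kernel and cokernel data as $d_{-1}$ itself: applying $\bar d'_{-1}$ (an isomorphism) to $d_{-1}$ changes nothing cohomologically. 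This is the content of the first displayed isomorphism $T_{(0,0)}^{(-1,0)}\ker I^1 = T_{(0,0)}^{(-1,0)}\bbH^1(\DR) = T_{(0,0)}^{(-1,0)}\bbH^1(\DR')$.

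First I would record that $T_{(0,0)}^{(-1,0)}$ is exact (Lemma/Theorem \ref{thm:universalsimpson} context, \cite[Lemma 2.4.6]{JLS22}) and commutes with taking cohomology on $\fl$ by Proposition \ref{prop:translationcommuteswithcohomology}; this lets me pass freely between ``translate then take $\bbH^\ast$'' and ``take $\bbH^\ast$ then translate.'' Applying $T_{(0,0)}^{(-1,0)}$ to the big commutative diagram of sheaves (the one displayed just before this corollary) and using $\bar d_{-1}=\bar d'_{-1}=\id$, the four short exact sequences $0\to\ker d\to\calO\to\calO\ox\Omega\to\coker d\to 0$ and its primed analogue become, after translation, literally the \emph{same} short exact sequence up to the identity isomorphisms on $\calO\ox\bar\Omega$, $\calO^s$; hence $T_{(0,0)}^{(-1,0)}\ker d_{-1}=T_{(0,0)}^{(-1,0)}\ker d'_{-1}$ and $T_{(0,0)}^{(-1,0)}\coker d_{-1}=T_{(0,0)}^{(-1,0)}\coker d'_{-1}$. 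Then the long exact sequence attached to the two-term complex $\DR_{-1}$ (or equivalently to $I_{-1}$, since they agree), namely $0\to H^1(\ker d_{-1})\to \bbH^1(\DR_{-1})\to H^0(\coker d_{-1})\to 0$, together with the identifications $T_{(0,0)}^{(-1,0)}\bbH^1(\DR)\isom T_{(0,0)}^{(-1,0)}\ker I^1$ already obtained, yields the asserted exact sequence $0\to T_{(0,0)}^{(-1,0)}H^1(\ker d)\to T_{(0,0)}^{(-1,0)}\ker I^1\to T_{(0,0)}^{(-1,0)}H^0(\coker d)\to 0$. Equivariance for $\GL_2(L)$, $\Gal(\bar L/L)$, and $\bbT^S$ is inherited because all the maps in Proposition \ref{prop:d-1}, \ref{prop:dbar-1} and the translation functor are equivariant.

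For the explicit description of the two outer terms when $\pi_w$ is generic, I would feed the already-computed values $H^1(\ker d)[\lambda] = (\pi_f^v)^{K^v}\ox_C(\dlim_n H^1_c(\calM^{(0)}_{\Dr,n},\calO)\ox_C\tau_w)^{\calO_{D_L}^\times}$ and $H^0(\coker d)[\lambda] = (\pi_f^v)^{K^v}\ox_C(\Ind^{\GL_2(L)}_{B(L)}H^1_{\rig}(\Ig_{K^v})[\lambda])^{\iota\-\lan}$ into the translation functor. On the supersingular side, $T_{(0,0)}^{(-1,0)}$ just twists the coefficient line bundle from $\calO_{\Dr}^{\sm}$ to $\omega_{\Dr}^{(-1,0),\sm}$ (Proposition \ref{prop:d-1} computes $T_{(0,0)}^{(-1,0)}d$ as $d$ twisted by $\omega_{\fl}^{(-1,0)}$, and $j^\ast$ of this is $\omega_{\Dr}^{(-1,0),\sm}$ via $\pi_{\Dr,\GM}^\ast\omega_{\fl}^{(-1,0)}$), giving $(\pi_f^v)^{K^v}\ox_C(H^1(j_!\omega_{\Dr}^{(-1,0),\sm})\ox_C\tau_w)^{\calO_{D_L}^\times}$. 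On the ordinary side, the induction character on $B(L)$ is shifted by $z_{2,\iota}^{-1}$: this is because passing from weight $(0,0)$ to weight $(-1,0)$ twists the smooth geometric induction data $\calH^i_{\ord}$ by the appropriate character of $T(L)$, exactly as in Proposition \ref{prop:dord} with $(n_1,n_2)$ shifted. I expect the only genuine subtlety — and hence the main obstacle — to be bookkeeping the precise character twist ($z_{2,\iota}^{-1}$ versus $z_{1,\iota}z_{2,\iota}^{-1}$ etc.) consistently with the conventions in Proposition \ref{prop:dord} and Definition \ref{def:twistofd}; everything else is a formal consequence of exactness of $T_{(0,0)}^{(-1,0)}$ and the fact that $\bar d_{-1}=\id$.
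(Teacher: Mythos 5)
Your proposal is correct and follows essentially the same line as the paper's (very terse) proof: the whole point is that $\bar d_{-1}=T_{(0,0)}^{(-1,0)}\bar d$ is the identity (Proposition \ref{prop:dbar-1}), so after translation $I_{-1}$ and $d_{-1}$ have the same (co)kernel data, and one simply reads off the terms from the twist by $\omega_{\fl}^{(-1,0)}$. The extra scaffolding you supply — exactness of $T_{(0,0)}^{(-1,0)}$ plus Proposition \ref{prop:translationcommuteswithcohomology} to swap translation and cohomology, and the explicit shift of the inducing character to $z_{2,\iota}^{-1}$ via Proposition \ref{prop:dord} with $\chi=(0,-1)$ — is exactly the bookkeeping the paper leaves implicit.
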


\begin{remark}
From Corollary \ref{cor:transtowall} we see $T_{(0,0)}^{(-1,0)}\ker I^1[\lambda]$ only depends on the Weil--Deligne representation associated to $\pi_v$ via the local Langlands correspondence, which is a special case (locally $\sigma$-analytic part) of \cite[Conjecture 3.12]{Din24} in the $\GL_2(L)$-case.
\end{remark}

Now, we translate back from singular weight to generic weight, and see the relation between $\ker I^1$ and its wall-crossing. We first fix some notations. Let the subscript $(-)_{-1,1}$ denote the functor $T_{(-1,0)}^{(0,0)}T_{(0,0)}^{(-1,0)}(-)$. For example, $\DR_{-1,1}:=T_{(-1,0)}^{(0,0)}T_{(0,0)}^{(-1,0)}\DR$ is the two term complex 
\[
    \DR_{-1,1}= T_{(-1,0)}^{(0,0)}T_{(0,0)}^{(-1,0)}\calO\ov{d_{-1,1}}\to T_{(-1,0)}^{(0,0)}T_{(0,0)}^{(-1,0)}(\calO\ox\Omega).
\]
We first calculate the hypercohomology of the de Rham complex $\DR_{-1,1}$.
\begin{theorem}\label{thm:WCdeRham}
Let $\lambda\in \sigma_{0}^{K^v}$ such that $\pi_v $ is generic. There is a commutative diagram with exact rows and columns
\begin{center}
\begin{tikzpicture}[descr/.style={fill=white,inner sep=1.5pt}]
    \matrix (m) [
        matrix of math nodes,
        row sep=2.5em,
        column sep=2.5em,
        text height=1.5ex, 
        text depth=0.25ex
    ]
    {  & 0 & 0 & 0 &   \\
     0 & H^1(\ker d)[\lambda] & \bbH^1(\DR)[\lambda] & H^0(\coker d)[\lambda] & 0  \\
      0 & H^1(\ker d_{-1,1})[\lambda] & \bbH^1(\DR_{-1,1})[\lambda] & H^0(\coker d_{-1,1})[\lambda] & 0 \\
      0 & H^1(\ker d')[\lambda] & \bbH^1(\DR')[\lambda] & H^0(\coker d')[\lambda] & 0 \\
        & 0 & 0 & 0 &  \\
    };
    \path[->,font=\scriptsize]
    (m-2-1) edge   (m-2-2)
    (m-2-2) edge node[auto] {} (m-2-3)
    (m-2-3) edge node[auto] {} (m-2-4)
    (m-2-4) edge   (m-2-5)    
    (m-3-1) edge   (m-3-2)
    (m-3-2) edge node[auto] {} (m-3-3)
    (m-3-3) edge node[auto] {} (m-3-4)
    (m-3-4) edge   (m-3-5)    
    (m-4-1) edge   (m-4-2)
    (m-4-2) edge node[auto] {} (m-4-3)
    (m-4-3) edge node[auto] {} (m-4-4)
    (m-4-4) edge   (m-4-5)    
    (m-2-2) edge   (m-1-2)
    (m-3-2) edge node[auto] {} (m-2-2)
    (m-4-2) edge node[auto] {} (m-3-2)
    (m-5-2) edge   (m-4-2)
    (m-2-3) edge   (m-1-3)
    (m-3-3) edge node[auto] {} (m-2-3)
    (m-4-3) edge node[auto] {} (m-3-3)
    (m-5-3) edge   (m-4-3)
    (m-2-4) edge   (m-1-4)
    (m-3-4) edge node[auto] {} (m-2-4)
    (m-4-4) edge node[auto] {} (m-3-4)
    (m-5-4) edge   (m-4-4)
    ;
\end{tikzpicture}
\end{center}
which is equivariant for the $\GL_2(L)$-action, $\Gal(\bar L/L)$-action, and the Hecke action. In particular, the natural map $\bbH^1(\DR_{-1,1})[\lambda]\to \bbH^1(\DR)[\lambda]$ is surjective.
\begin{proof}
Let $\bbH^1(\DR_{-1,1})$ denote the hypercohomology of $\DR_{-1,1}$, which is also the wall-crossing of $\bbH^1(\DR)$. As the Hecke action on $\bbH^1(\DR)$ is semisimple, we see that the Hecke action on $\bbH^1(\DR_{-1,1})$ is also semisimple.

By our results on the wall-crossing of $\calO=\calO_{K^v}^{\iota\-\lan,(0,0)}$, see Proposition \ref{prop:wallcrossingofOsmoxOfl}, there is a short exact sequence 
\[
    0\to \DR'\to \DR_{-1,1}\to \DR\to 0.
\]
Taking hypercohomology, we get an exact sequence 
\[
    \bbH^1(\DR')\to \bbH^1(\DR_{-1,1})\to \bbH^1(\DR)\to 0,
\]
as $\bbH^2(\DR')=0$ by Proposition \ref{prop:DRcohprime}. Besides, for $\lambda\in \sigma_0^{K^v}$ such that $\pi_v $ is generic, we have $\bbH^0(\DR)[\lambda]=0$. Therefore, there is an exact sequence 
\[
    0\to \bbH^1(\DR')[\lambda]\to \bbH^1(\DR_{-1,1})[\lambda]\to \bbH^1(\DR)[\lambda]\to 0.
\]
Finally, it is easy to see that the wall-crossing functor is compatible with the filtration $H^1(\ker d)[\lambda]\subset \bbH^1(\DR)[\lambda]$.
\end{proof}
\end{theorem}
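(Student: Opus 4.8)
The plan is to deduce everything from the wall-crossing exact sequence of sheaves
\[
    0\to \DR'\to \DR_{-1,1}\to \DR\to 0
\]
on $\fl$, which is the hypercohomology-ready form of Proposition \ref{prop:wallcrossingofOsmoxOfl}. More precisely, Proposition \ref{prop:wallcrossingofOsmoxOfl}(i)--(iii) identifies $\Theta_s(\calO_{K^v}^{\iota\-\lan,(0,0)})=T_{(-1,0)}^{(0,0)}T_{(0,0)}^{(-1,0)}\calO_{K^v}^{\iota\-\lan,(0,0)}$ as an extension of $\calO=\calO_{K^v}^{\iota\-\lan,(0,0)}$ by $\calO\ox\bar\Omega=\calO_{K^v}^{\iota\-\lan,(0,0)}\ox_{\calO_{\fl}}\Omega^1_{\fl}$, with the sub-object map being (incl)$\comp\bar d$ and the quotient map the natural projection; and the translation of the operator $d$ is $d_{-1,1}$, which by the earlier propositions fits into compatible squares with $d$ and $\bar d$. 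Tensoring appropriately (or just running the same computation on the two-term complexes) yields the short exact sequence of complexes above. I would first carefully write this sequence out, checking that the maps $\DR'\to\DR_{-1,1}$ and $\DR_{-1,1}\to\DR$ are morphisms of complexes compatible with the $\GL_2(L)$-action, the $\Gal(\bar L/L)$-action (trivial semilinear on the algebraic pieces), and the $\bbT^S$-action, since all the constructions in \S\ref{intertw} and the translation functors commute with $\bbT^S$.

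Next I would take hypercohomology on $\fl$. By Proposition \ref{prop:DRcohprime}(v) we have $\bbH^2(\DR')=0$, so the long exact sequence of hypercohomology gives
\[
    0\to \bbH^0(\DR')\to\bbH^0(\DR_{-1,1})\to\bbH^0(\DR)\to\bbH^1(\DR')\to\bbH^1(\DR_{-1,1})\to\bbH^1(\DR)\to 0.
\]
To cut this down to the asserted three-term sequence I would localize at a Hecke eigenvalue $\lambda\in\sigma_0^{K^v}$ with $\pi_w$ generic and invoke $\bbH^0(\DR)[\lambda]=\bbH^0(\DR^{\sm})[\lambda]=0$ (this is the vanishing $H^0(\calO)=0$ for $k\ne 0$ from Proposition \ref{prop:DRcoh}, and in general when $w$ is not the highest weight of a one-dimensional algebraic representation; for $\lambda$ attached to an absolutely irreducible $\rho$ this holds), hence also $\bbH^0(\DR_{-1,1})[\lambda]=\bbH^0(\DR')[\lambda]=0$. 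This leaves exactly
\[
    0\to \bbH^1(\DR')[\lambda]\to\bbH^1(\DR_{-1,1})[\lambda]\to\bbH^1(\DR)[\lambda]\to 0,
\]
which is the surjectivity claim and the top-level exactness; semisimplicity of the $\bbT^S$-action on $\bbH^1(\DR_{-1,1})$ follows since it is a quotient (or extension) built from $\bbH^1(\DR),\bbH^1(\DR')$, on which the Hecke action is semisimple by the corollary following Theorem \ref{thm:kerI2} together with Proposition \ref{prop:semisimplespecial}. Since $\bbH^1(\DR_c)$ in the statement is by definition $\bbH^1$ of the quotient complex $\DR_c=\DR/\DR^{\sm}$, I would note $\DR'$ is quasi-isomorphic (Hecke- and $\GL_2(L)$-equivariantly) to $\DR_c$: both are cones computing the same cohomology via the identifications $\ker\bar d=\calO_{K^v}^{\sm}$, $\ker\bar d'=\Omega_{K^v}^{1,\sm}$, and $\DR_c=[\calO\ox\bar\Omega\to\calO^s]$ after the twist $\Omega^1_{\fl}\ox\Omega^{1,\sm}_{K^v}\isom(-)(1)$; alternatively just match the two short exact sequences of complexes. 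Finally, to produce the displayed two-dimensional commuting diagram with exact rows and columns, I would overlay the wall-crossing sequence with the $\bar d$-induced morphism of short exact sequences from Theorem \ref{thm:pvc} (equivalently Theorem \ref{thm:kerI2}), checking the filtrations $H^1(\ker d)[\lambda]\subset\bbH^1(\DR)[\lambda]$ etc. are preserved by the translation functor (it is exact, and the Beilinson--Bernstein-type filtrations come from the $j_!$-$i_*$ triangle on $\fl$, which $T_\mu^\lambda$ respects sheaf-locally).

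The main obstacle is the identification of the sub- and quotient-complex in $0\to\DR'\to\DR_{-1,1}\to\DR\to 0$, i.e.\ verifying at the level of two-term complexes (not just individual sheaves) that the translation $T_{(-1,0)}^{(0,0)}T_{(0,0)}^{(-1,0)}$ applied to the map $d$ really gives the asserted morphism $d_{-1,1}$ together with compatible squares against $\bar d$ and the inclusions/projections. This requires combining Proposition \ref{prop:wallcrossingofOsmoxOfl} (wall-crossing of $\calO$), Proposition \ref{prop:d-1}--\ref{prop:dbar-1} (translations of $d$ and $\bar d$ in the generic and singular cases) and the functoriality of the translation functor, and the one genuinely non-formal input is the claim that $T_{(0,0)}^{(-1,0)}$ sends $d_{\fl}\colon\calO_{\fl}\to\Omega^1_{\fl}$ to an isomorphism (cited to \cite[Lemma 2.3.6]{WConDrinfeld}), which pins down that the square for $\bar d$ degenerates so that $\DR_{-1,1}$ is the cone of $\DR'\to\DR[-1]$ rather than some other extension. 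Once that is in place, everything else is a diagram chase plus the vanishing $\bbH^0(\DR)[\lambda]=0$ and $\bbH^2(\DR')=0$, both already available.
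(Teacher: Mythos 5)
Your proposal follows essentially the same route as the paper's proof: use Proposition \ref{prop:wallcrossingofOsmoxOfl} to obtain the short exact sequence of complexes $0\to\DR'\to\DR_{-1,1}\to\DR\to 0$, take hypercohomology, kill the top and bottom of the long exact sequence via $\bbH^2(\DR')=0$ and $\bbH^0(\DR)[\lambda]=0$, and then note compatibility with the $j_!$/$i_*$ filtration. Your extra discussion of where the non-formal input sits (the degeneration of $\bar d$ under $T_{(0,0)}^{(-1,0)}$, pinned down by Proposition \ref{prop:dbar-1}) is a correct and useful amplification of what the paper leaves implicit when it says ``by our results on the wall-crossing of $\calO$.''

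One small flaw worth flagging: you justify semisimplicity of the $\bbT^S$-action on $\bbH^1(\DR_{-1,1})$ by saying it is an extension of $\bbH^1(\DR')$ by $\bbH^1(\DR)$, on both of which the Hecke action is semisimple. That argument is incomplete in general, since an extension of semisimple $\bbT^S$-modules need not be semisimple. The paper's reasoning avoids this: $\bbH^1(\DR_{-1,1})$ is the wall-crossing $\Theta_s\bbH^1(\DR) = T_{(-1,0)}^{(0,0)}T_{(0,0)}^{(-1,0)}\bbH^1(\DR)$, and the translation functors are defined by tensoring with a finite-dimensional $\GL_2$-representation on which $\bbT^S$ acts trivially and then passing to generalized $Z(\frg_\iota)$-eigenspaces — both operations commute with $\bbT^S$ and preserve semisimplicity of the Hecke action, so semisimplicity on $\bbH^1(\DR)$ transfers directly. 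You should replace the ``extension of semisimple modules'' justification with this functorial one; everything else in your proposal is sound.
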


\begin{remark}
Let $\lambda\in \sigma_0^{K^v}$ such that $\pi_v $ is generic. Let $\bbH^1(\DR_{-1,1})[\lambda]_{\tilde{\chi}_{(0,0)}}$ be the infinitesimal co-semisimple part of $\bbH^1(\DR_{-1,1})[\lambda]$. Then the natural map given by the wall-crossing induces an isomorphism 
\[
    \bbH^1(\DR_{-1,1})[\lambda]_{\tilde{\chi}_{(0,0)}}\to \bbH^1(\DR)[\lambda].
\]
Similarly, let $\bbH^1(\DR_{-1,1})[\lambda]^{\tilde{\chi}_{(0,0)}}$ denote the infinitesimal semisimple part, which is an extension of $\bbH^1(\DR^{\sm})[\lambda]$ by $\bbH^1(\DR')[\lambda]$. The natural map $\bbH^1(\DR)[\lambda]\to T_{(-1,0)}^{(0,0)}T_{(0,0)}^{(-1,0)}\bbH^1(\DR)[\lambda]$ is given by the composition of $\bar d:\bbH^1(\DR)[\lambda]\to \bbH^1(\DR')[\lambda]$ and the natural inclusion $\bbH^1(\DR')[\lambda]\subset \bbH^1(\DR_{-1,1})[\lambda]$. To see this, we may consider the morphism of exact sequences 
\begin{center}
\begin{tikzpicture}[descr/.style={fill=white,inner sep=1.5pt}]
    \matrix (m) [
        matrix of math nodes,
        row sep=2.5em,
        column sep=2.5em,
        text height=1.5ex, 
        text depth=0.25ex
    ]
    { 0 & \bbH^1(\DR')[\lambda] & \bbH^1(\DR_{-1,1})[\lambda] & \bbH^1(\DR)[\lambda] & 0  \\
    0 & \bbH^1(\DR')[\lambda] & \bbH^1(\DR_{-1,1})[\lambda] & \bbH^1(\DR)[\lambda] & 0  \\
    };
    \path[->,font=\scriptsize]
    (m-1-1) edge (m-1-2)
    (m-1-2) edge (m-1-3)
    (m-1-3) edge (m-1-4)
    (m-1-4) edge (m-1-5)
    (m-2-1) edge (m-2-2)
    (m-2-2) edge (m-2-3)
    (m-2-3) edge (m-2-4)
    (m-2-4) edge (m-2-5)
    ;
    \path[->,font=\scriptsize]
    (m-1-2) edge node[right] {$\Omega $} (m-2-2)
    (m-1-3) edge node[right] {$\Omega $} (m-2-3)
    (m-1-4) edge node[right] {$\Omega $} (m-2-4)
    ;
\end{tikzpicture}
\end{center}
where $\Omega \in\calZ(\frg_{\iota})$ is the Casimir operator. As $\bbH^1(\DR')[\lambda]$ and $\bbH^1(\DR)[\lambda]$ is killed by $\Omega $, this morphism of exact sequence induces a connection map $\bbH^1(\DR)[\lambda]\to \bbH^1(\DR')[\lambda]$. By Proposition \ref{prop:connectionmap1}, this connection map is given by $\bar d$. Therefore, $\bbH^1(\DR_{-1,1})[\Omega =0]$ is an extension of $\bbH^1(\DR^{\sm})[\lambda]$ by $\bbH^1(\DR')[\lambda]$, and $\bbH^1(\DR_{-1,1})[\lambda]/\Omega \bbH^1(\DR_{-1,1})[\lambda]\isom \bbH^1(\DR)[\lambda]$.
\end{remark}

\subsubsection*{Example: $\pi_v $ is a principal series}
Let $\lambda\in \sigma_0^{K^v}$ such that $\pi_v $ is generic. Assume that $\pi_v $ is a principal series. Recall:
\begin{itemize}
    \item $\bbH^1(\DR)[\lambda]=H^0(\coker d)[\lambda]=(\pi_f^v)^{K^v}\ox_C(\Ind^{\GL_2(L)}_{\bar B(L)}H^1_{\rig}(\Ig_{K^v})[\lambda])^{\iota\-\lan}$.
    \item $\bbH^1(\DR')[\lambda]=H^0(\coker d')[\lambda]=(\pi_f^v)^{K^v}\ox_C(\Ind^{\GL_2(L)}_{B(L)}H^1_{\rig}(\Ig_{K^v})[\lambda]z_{1,\iota}^{-1}z_{2,\iota})^{\iota\-\lan}$.
\end{itemize}
We study the wall-crossing of $\ker I^1[\lambda]$ in this case using Corollary \ref{cor:transtowall} and Theorem \ref{thm:WCdeRham}, which can also be directly computed, see \cite[Theorem 4.1.12]{JLS22}. Applying the translation functor $T_{(0,0)}^{(-1,0)}(-)$ to $\bbH^1(\DR)[\lambda]$, we get 
\begin{itemize}
    \item $\bbH^1(\DR_{-1})[\lambda]=(\pi_f^v)^{K^v}\ox_C(\Ind^{\GL_2(L)}_{\bar B(L)}H^1_{\rig}(\Ig_{K^v})[\lambda] z_{1,\iota}^{-1})^{\iota\-\lan}$.
\end{itemize}
Translation back using $T_{(-1,0)}^{(0,0)}(-)$, we get
\begin{align*}
    0\to (\pi_f^v)^{K^v}\ox_C (\Ind^{\GL_2(L)}_{\bar B(L)}H^1_{\rig}(\Ig_{K^v})[\lambda] z_{1,\iota}^{-1}z_{2,\iota})^{\iota\-\lan}\to \bbH^1(\DR_{-1,1})[\lambda]\to (\pi_f^v)^{K^v}\ox_C (\Ind^{\GL_2(L)}_{\bar B(L)}H^1_{\rig}(\Ig_{K^v})[\lambda])^{\iota\-\lan}\to 0.
\end{align*}

\subsubsection*{Example: $\pi_v $ is a special series}
After twisting by a smooth character, we assume that $\pi_v =\St_2^\infty$, the smooth Steinberg representation of $\GL_2(L)$. This representation is the cosocle of $(\Ind^{\GL_2(L)}_{B(L)}1)^\infty$.

We recall the computation of $\bbH^1(\DR)[\lambda]$. Set $I=(\Ind^{\GL_2(L)}_{B(L)}1)^{\iota\-\an}$, with cosocle $I_s=(\Ind^{\GL_2(L)}_{B(L)}z_{1,\iota}z_{2,\iota}^{-1})^{\iota\-\an}$. Let $\St_2^{\iota\-\an}:=I/1$ be the locally $\iota$-analytic Steinberg representation over $C$. We also define $\tilde{I}=(\Ind^{\GL_2(L)}_{B(L)}|\cdot|\ox |\cdot|^{-1})^{\iota\-\an}$, with cosocle $\tilde{I}_s=(\Ind^{\GL_2(L)}_{B(L)}z_{1,\iota}z_{2,\iota}^{-1}|\cdot|\ox |\cdot|^{-1})^{\iota\-\an}$. Using the theory of Orlik-Strauch representations \cite{OS15}, we have exact sequences 
\begin{align*}
    &0\to (\Ind^{\GL_2(L)}_{B(L)}1)^\infty\to I\to I_s\to 0,\\
    &0\to (\Ind^{\GL_2(L)}_{B(L)}|\cdot|\ox |\cdot|^{-1})^\infty\to \tilde{I}\to \tilde{I}_s\to 0
\end{align*}
of $\GL_2(L)$-representations. Then
\begin{center}
    \begin{tikzpicture}[descr/.style={fill=white,inner sep=1.5pt}]
        \matrix (m) [
            matrix of math nodes,
            row sep=2.5em,
            column sep=2.5em,
            text height=1.5ex, 
            text depth=0.25ex
        ]
        { 0 & H^1(\ker d)[\lambda] & \bbH^1(\DR)[\lambda] & H^0(\coker d)[\lambda] & 0  \\
          0 & H^1(\ker d')[\lambda] & \bbH^1(\DR')[\lambda] & \coker \mathrm{SS}[\lambda] & 0 \\
        };
        \path[->,font=\scriptsize]
        (m-1-1) edge (m-1-2)
        (m-1-2) edge (m-1-3)
        (m-1-3) edge (m-1-4)
        (m-1-4) edge (m-1-5)
        (m-2-1) edge (m-2-2)
        (m-2-2) edge (m-2-3)
        (m-2-3) edge (m-2-4)
        (m-2-4) edge (m-2-5)
        ;
        \path[->,font=\scriptsize]
        (m-1-2) edge node[right] {$\SS$} (m-2-2)
        (m-1-3) edge node[right] {$\bar d$} (m-2-3)
        (m-1-4) edge node[right] {$\ORD$} (m-2-4)
        ;
    \end{tikzpicture}
\end{center}
is given by tensoring $(\pi_f^v)^{K^v}$ with
\begin{center}
    \begin{tikzpicture}[descr/.style={fill=white,inner sep=1.5pt}]
        \matrix (m) [
            matrix of math nodes,
            row sep=2.5em,
            column sep=2.5em,
            text height=1.5ex, 
            text depth=0.25ex
        ]
        { 0 & \St_2^{\iota\-\an} & (\St_2^{\infty})^{\oplus 2}-I_s-1-\tilde{I}_s & \tilde{I} & 0  \\
          0 & I_s-1 & I_s-1-\tilde I_s & \coker \tilde I_s & 0 \\
        };
        \path[->,font=\scriptsize]
        (m-1-1) edge (m-1-2)
        (m-1-2) edge (m-1-3)
        (m-1-3) edge (m-1-4)
        (m-1-4) edge (m-1-5)
        (m-2-1) edge (m-2-2)
        (m-2-2) edge (m-2-3)
        (m-2-3) edge (m-2-4)
        (m-2-4) edge (m-2-5)
        ;
        \path[->,font=\scriptsize]
        (m-1-2) edge node[right] {$\SS$} (m-2-2)
        (m-1-3) edge node[right] {$\bar d$} (m-2-3)
        (m-1-4) edge node[right] {$\ORD$} (m-2-4)
        ;
    \end{tikzpicture}
\end{center}
and
\[
    0\to \ker\SS[\lambda]\to \bbH^1(\DR^{\sm})[\lambda]\to \ker \ORD[\lambda]\to \coker \SS[\lambda]\to 0
\] 
is given by tensoring $(\pi_f^v)^{K^v}$ with
\begin{align*}
    0\to \St_2^\infty\to (\St_2^\infty)^{\oplus 2}\to (\Ind^{\GL_2(L)}_{B(L)}|\cdot|\ox |\cdot|^{-1})\to 1\to 0.
\end{align*}

We determine the wall-crossing of $\bbH^1(\DR)[\lambda]$.
\begin{proposition}
\begin{enumerate}[(i)]
    \item $T_{(0,0)}^{(-1,0)}I_s=I_{-1}:=(\Ind^{\GL_2(L)}_{B(L)}z_{2,\iota}^{-1})^{\iota\-\an}$, and $T_{(0,0)}^{(-1,0)}\tilde{I}_s=\tilde{I}_{-1}:=(\Ind^{\GL_2(L)}_{B(L)}z_{2,\iota}^{-1}|\cdot|\ox |\cdot|^{-1})^{\iota\-\an}$.
    \item the exact sequence 
    \[
        0\to T_{(0,0)}^{(-1,0)}H^1(\ker d)[\lambda]\to T_{(0,0)}^{(-1,0)}\bbH^1(\DR_{-1})[\lambda]\to T_{(0,0)}^{(-1,0)}H^0(\coker d_{-1})[\lambda]\to 0
    \]
    is given by tensoring $(\pi_f^v)^{K^v}$ with
    \[
        0\to I_{-1}\to I_{-1}-\tilde{I}_{-1}\to \tilde{I}_{-1}\to 0,
    \]
    where $I_{-1}-\tilde{I}_{-1}$ is the unique non-split extension of $\tilde{I}_{-1}$ by $I_{-1}$.
    \item The wall-crossing $\bbH^1(\DR_{-1,1})[\lambda]=T_{(-1,0)}^{(0,0)}T_{(0,0)}^{(-1,0)}\bbH^1(\DR')[\lambda]$is given by tensoring $(\pi_f^v)^{K^v}$ with
    $$
    \begin{tikzcd}
        &                                                                       & \tilde{I}_s \arrow[r, no head]                                  & \St_2^\infty \arrow[r, no head] & 1 \arrow[r, no head] & \tilde{I}_s \\
    I_s \arrow[r, no head] & 1 \arrow[r, no head] \arrow[ru, no head]  & \St_2^\infty \arrow[r, no head]  & I_s \arrow[ru, no head]           &                      &          
    \end{tikzcd}
    $$
\end{enumerate}
\begin{proof}
As $I_s=\calF^{\GL_2(L)}_{B(L)}(M(-1,1),1)$ and $\tilde I_s=\calF^{\GL_2(L)}_{ B(L)}(M(-1,1),|\cdot|\ox |\cdot|^{-1})$, we may calculate the translation of $I,\tilde{I}$ using the compatibility between the Orlik-Strauch induction functor and the translation functor, see \cite[Theorem 4.1.12]{JLS22}. In particular, we have 
\begin{align*}
    I_{-1}&=\calF^{\GL_2(L)}_{B(L)}(M(-1,0),1)=(\Ind^{\GL_2(L)}_{B(L)}z_{2,\iota}^{-1})^{\iota\-\an},\\
    \tilde I_{-1}&=\calF^{\GL_2(L)}_{B(L)}(M(-1,0),|\cdot|\ox |\cdot|^{-1})=(\Ind^{\GL_2(L)}_{B(L)}z_{2,\iota}^{-1}|\cdot|\ox |\cdot|^{-1})^{\iota\-\an}.
\end{align*}
We may also compute these translations using the translation of the differential operators. Using \cite[Proposition 8.15, 8.18]{Koh11Coh} (and a little devissage on results on translation functors), the extensions between them are given by 
\begin{align*}
    \dim\Ext^1_{\PGL_2(L),\iota}({I}_{-1},I_{-1}) =2,\qquad &\dim\Ext^1_{\PGL_2(L),\iota}({I}_{-1},\tilde I_{-1}) =0,\\
    \dim\Ext^1_{\PGL_2(L),\iota}(\tilde{I}_{-1},I_{-1}) =1,\qquad &\dim\Ext^1_{\PGL_2(L),\iota}(\tilde{I}_{-1},\tilde I_{-1}) =2.
\end{align*}
It is easy to see that $\bbH^1(\DR_{-1,1})[\lambda]$ fits into a diagram with exact rows and columns:

\begin{center}

\begin{tikzpicture}[descr/.style={fill=white,inner sep=1.5pt}]
    \matrix (m) [
        matrix of math nodes,
        row sep=2.5em,
        column sep=2.5em,
        text height=1.5ex, 
        text depth=0.25ex
    ]
    {  & 0 & 0 & 0 &   \\
     0 & \St_2^\infty-I_s & (\St_2^\infty)^{\oplus 2}-I_s-1-\tilde{I}_s & \St_2^\infty-1-\tilde{I}_s & 0  \\
      0 & I_s-(1-\St_2^\infty)-I_s & \bbH^1(\DR_{-1,1})[\lambda] & \tilde{I}_s-(\St_2^\infty-1)-\tilde{I}_s & 0 \\
      0 & I_s-1 & I_s-1-\tilde{I}_s & \tilde{I}_s & 0 \\
        & 0 & 0 & 0 &  \\
    };
    \path[->,font=\scriptsize]
    (m-2-1) edge   (m-2-2)
    (m-2-2) edge node[auto] {} (m-2-3)
    (m-2-3) edge node[auto] {} (m-2-4)
    (m-2-4) edge   (m-2-5)    
    (m-3-1) edge   (m-3-2)
    (m-3-2) edge node[auto] {} (m-3-3)
    (m-3-3) edge node[auto] {} (m-3-4)
    (m-3-4) edge   (m-3-5)    
    (m-4-1) edge   (m-4-2)
    (m-4-2) edge node[auto] {} (m-4-3)
    (m-4-3) edge node[auto] {} (m-4-4)
    (m-4-4) edge   (m-4-5)    
    (m-2-2) edge   (m-1-2)
    (m-3-2) edge node[auto] {} (m-2-2)
    (m-4-2) edge node[auto] {} (m-3-2)
    (m-5-2) edge   (m-4-2)
    (m-2-3) edge   (m-1-3)
    (m-3-3) edge node[auto] {} (m-2-3)
    (m-4-3) edge node[auto] {} (m-3-3)
    (m-5-3) edge   (m-4-3)
    (m-2-4) edge   (m-1-4)
    (m-3-4) edge node[auto] {} (m-2-4)
    (m-4-4) edge node[auto] {} (m-3-4)
    (m-5-4) edge   (m-4-4)
    ;
\end{tikzpicture}.
\end{center}
The first column is given by the wall-crossing of $I_s$, and the third column is given by the wall-crossing of $\tilde{I}_s$. From this, it is easy to see that the extension $T_{(0,0)}^{(-1,0)}\bbH^1(\DR')[\lambda]=I_{-1}-\tilde{I}_{-1}$ is non-split. As $\Ext^1_{\PGL_2(L),\iota}(\tilde{I}_{-1},I_{-1})=1$, this is the unique non-split extension of $\tilde{I}_{-1}$ by $I_{-1}$.

Finally we determine the structure of $\bbH^1(\DR_{-1,1})[\lambda]$. We first analyze the structure of $\bbH^1(\DR)[\lambda]$ in more detail. We already know that up to multiplicities it is a non-split extension $[(\St_2^\infty-I_s)-(\St_2^\infty-1-\tilde{I}_s)]$, given by a non-zero element in the $1$-dimensional space $\Ext^1_{\PGL_2(L),\iota}(\St_2^\infty-1-\tilde{I}_s,\St_2^\infty-I_s)$ (unique up to scalar). It is a quotient of a $2$-dimensional vector space $\Ext^1_{\PGL_2(L),\iota}(1-\tilde{I}_s,\St_2^\infty-I_s)\aisom \Ext^1_{\PGL_2(L),\iota}(1,\St_2^\infty-I_s)$ by a $1$-dimensional sub $\Hom_{\GL_2(L)}(\St_2^\infty,\St_2^\infty-I_s)$. (Note that $\Hom_{\GL_2(L)}(\St_2^\infty-1-\tilde{I}_s,\St_2^\infty-I_s)=0$.) Actually the line given by $\Hom_{\GL_2(L)}(\St_2^\infty,\St_2^\infty-I_s)$ is exactly the same as the line $\Ext^1_{\PGL_2(L),\iota}(1,\St_2^\infty)$ in $\Ext^1_{\PGL_2(L),\iota}(1,\St_2^\infty-I_s)$. Hence we see that there is no extra extension between $1$ in $\St_2^\infty-1-\tilde{I}_s$ and $\St_2^\infty$ in $\St_2^\infty-I_s$. Besides, as $\Ext^1_{\PGL_2(L),\iota}(\tilde{I}_s,I_s)=0$ and $\Ext^1_{\PGL_2(L),\iota}(\tilde{I}_s,\St_2^\infty)=0$ by \cite[Proposition 8.15, 8.18]{Koh11Coh}, from the above diagram it is easy to determine the structure of the extension of $\bbH^1(\DR)[\lambda]$ by $\bbH^1(\DR')[\lambda]$ inside $\bbH^1(\DR_{-1,1})[\lambda]$.   
\end{proof}
\end{proposition}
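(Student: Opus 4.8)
The plan is to assemble the wall-crossing of $\bbH^1(\DR)[\lambda]$ from three ingredients that are already available: the translation formulas for the relevant locally $\iota$-analytic principal series, the explicit structure of $\bbH^1(\DR)[\lambda]$ and $\bbH^1(\DR')[\lambda]$ for special $\pi_w$ established earlier in this section, and the sheaf-level exact sequence $0\to\DR'\to\DR_{-1,1}\to\DR\to 0$ coming from Proposition \ref{prop:wallcrossingofOsmoxOfl}, whose cohomology is controlled by Theorem \ref{thm:WCdeRham}. For part (i) I would realize $I_s$ and $\tilde I_s$ as Orlik--Strauch representations $\calF^{\GL_2(L)}_{B(L)}(M(-1,1),\mathbf 1)$ and $\calF^{\GL_2(L)}_{B(L)}(M(-1,1),|\cdot|\ox|\cdot|^{-1})$, and invoke the compatibility of $\calF^{\GL_2(L)}_{B(L)}$ with translation functors \cite[Theorem 4.1.12]{JLS22}: translating from the generic weight $(-1,1)$ to the singular weight $(-1,0)$ replaces $M(-1,1)$ by $M(-1,0)$ and leaves the smooth factor unchanged, yielding $I_{-1}=(\Ind^{\GL_2(L)}_{B(L)}z_{2,\iota}^{-1})^{\iota\-\an}$ and $\tilde I_{-1}=(\Ind^{\GL_2(L)}_{B(L)}z_{2,\iota}^{-1}|\cdot|\ox|\cdot|^{-1})^{\iota\-\an}$. (Alternatively this can be read off from Proposition \ref{prop:d-1} and Proposition \ref{prop:dbar-1}, since $I_s$ and $\tilde I_s$ arise as sheaf cohomology of $d$ and $\bar d$, whose translations are computed there.)

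For part (ii) I would substitute the special-series identifications of $H^1(\ker d)[\lambda]$ and $H^0(\coker d)[\lambda]$ (which, after removing the factor $(\pi_f^v)^{K^v}$, are $\St_2^{\iota\-\an}$ and $\tilde I$) into the exact sequence of Corollary \ref{cor:transtowall}. Using part (i) together with the computation of the de Rham cohomology of Drinfeld curves — which identifies $T_{(0,0)}^{(-1,0)}H^1(\ker d)[\lambda]$ with $I_{-1}$, and $T_{(0,0)}^{(-1,0)}H^0(\coker d)[\lambda]$ with $\tilde I_{-1}$ via $H^1_{\rig}(\Ig_{K^v})[\lambda]^{\ss}=J_{N(L)}(\pi_w)^{\ss}=|\cdot|\ox|\cdot|^{-1}$ — this presents $T_{(0,0)}^{(-1,0)}\bbH^1(\DR_{-1})[\lambda]$ as an extension of $\tilde I_{-1}$ by $I_{-1}$. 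Non-splitness I would deduce from the non-splitness of $\bbH^1(\DR)[\lambda]$ established earlier, using exactness of the translation functor; since $\dim_C\Ext^1_{\PGL_2(L),\iota}(\tilde I_{-1},I_{-1})=1$ by \cite[Proposition 8.15, 8.18]{Koh11Coh}, ruling out the split case forces this to be the unique non-split extension.

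For part (iii) I would set up a $3\times 3$ diagram whose rows are the wall-crossings of the three rows of the commutative square relating $\bbH^1(\DR)[\lambda]$ and $\bbH^1(\DR')[\lambda]$ (as in Theorem \ref{thm:pvc}), and whose outer columns are the wall-crossings of $I_s$ and $\tilde I_s$ obtained above. Concretely, Theorem \ref{thm:WCdeRham} provides the short exact sequence $0\to\bbH^1(\DR')[\lambda]\to\bbH^1(\DR_{-1,1})[\lambda]\to\bbH^1(\DR)[\lambda]\to 0$, compatible with the filtrations $H^1(\ker d')[\lambda]\subset\bbH^1(\DR')[\lambda]$ and $H^1(\ker d)[\lambda]\subset\bbH^1(\DR)[\lambda]$; applying $T_{(-1,0)}^{(0,0)}$ to part (ii) gives a second presentation of $\bbH^1(\DR_{-1,1})[\lambda]$ as an extension of $T_{(-1,0)}^{(0,0)}\tilde I_{-1}$ by $T_{(-1,0)}^{(0,0)}I_{-1}$. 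Intersecting the data from these two presentations, and using the refined structure of $\bbH^1(\DR)[\lambda]$ recorded in this section — in particular that the sub-subquotient $\mathbf 1$ of the quotient does not extend the $\St_2^\infty$ of the sub, together with $\Ext^1_{\PGL_2(L),\iota}(\tilde I_s,I_s)=0=\Ext^1_{\PGL_2(L),\iota}(\tilde I_s,\St_2^\infty)$ from \cite[Proposition 8.15, 8.18]{Koh11Coh} — determines every gluing and yields the stated amalgam.

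The hard part is this last structural step. The general machinery (exactness of $T$, compatibility with Orlik--Strauch induction, Theorem \ref{thm:WCdeRham}) pins down the Jordan--Hölder constituents and the two ``visible'' filtrations of $\bbH^1(\DR_{-1,1})[\lambda]$, but showing there are no hidden extensions between a constituent of the sub $\bbH^1(\DR')[\lambda]$ and a constituent of the quotient $\bbH^1(\DR)[\lambda]$ requires the $\Ext^1$-vanishings above and careful bookkeeping of which one-dimensional $\Ext^1$-spaces are actually realized; this is the least formal and most error-prone part of the argument.
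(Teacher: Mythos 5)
Your proposal follows essentially the same route as the paper: part (i) by Orlik--Strauch compatibility with translation (or, equivalently, the sheaf-level translation of $d$ and $\bar d$), part (ii) by feeding the special-series identifications into Corollary~\ref{cor:transtowall}, and part (iii) by the $3\times 3$ diagram whose rows are the wall-crossings of $\bbH^1(\DR)[\lambda]$, $\bbH^1(\DR_{-1,1})[\lambda]$, $\bbH^1(\DR')[\lambda]$ and whose outer columns are $\Theta_s I_s$ and $\Theta_s\tilde I_s$, combined with the $\Ext^1$-vanishings $\Ext^1_{\PGL_2(L),\iota}(\tilde I_s,I_s)=\Ext^1_{\PGL_2(L),\iota}(\tilde I_s,\St_2^\infty)=0$.

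One wording in (ii) is, however, not a valid argument as stated: you say you would ``deduce [non-splitness] from the non-splitness of $\bbH^1(\DR)[\lambda]$ \dots\ using exactness of the translation functor.'' Exactness of $T_{(0,0)}^{(-1,0)}$ preserves short exact sequences but does \emph{not} preserve non-splitness --- translating a module to a wall can (and often does) kill extensions, and indeed the functor $T_{(0,0)}^{(-1,0)}$ annihilates the constituents $\St_2^\infty$ and $\mathbf 1$ entirely, so one cannot read non-splitness forward through $T$. The correct argument (which the paper carries out, and which you have available from your own $3\times 3$ diagram in (iii)) is to argue \emph{backwards}: if $T_{(0,0)}^{(-1,0)}\bbH^1(\DR)[\lambda]$ were split as $I_{-1}\oplus\tilde I_{-1}$, then translating back would give $\bbH^1(\DR_{-1,1})[\lambda]\cong\Theta_s I_s\oplus\Theta_s\tilde I_s$; comparing the filtration coming from $0\to\bbH^1(\DR')\to\bbH^1(\DR_{-1,1})\to\bbH^1(\DR)\to 0$ (equivalently, the co-semisimple part $\bbH^1(\DR_{-1,1})[\lambda]/\Omega\cong\bbH^1(\DR)[\lambda]$) with the socle filtration of $\Theta_s I_s\oplus\Theta_s\tilde I_s$ then gives a contradiction of Jordan--H\"older multiplicities. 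So the logical order is: set up the $3\times 3$ diagram first, and only then extract the non-splitness of the singular-weight extension; (ii) is really a consequence of (iii) rather than an input to it. Aside from that ordering/wording issue, your proposal matches the paper.
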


\begin{remark}
Although $\St_2^{\iota\-\an}-1\subset \ker I^1[\lambda]$ already carries Breuil's $\calL$-invariant \cite[Lemme 3.1.2]{Bre19}, one cannot recover the $\calL$-invariant from the natural map for wall-crossing of $\St_2^{\iota\-\an}-1$. Only by considering the wall-crossing of $\St_2^{\iota\-\an}-1-\tilde{I}_s$ can we recover the $\calL$-invariant inside $\St_2^{\iota\-\an}-1$.
\end{remark}

\subsubsection*{Example: $\pi_v $ is supercuspidal}
We assume that $\pi_v $ is supercuspidal. In this case, $J_{\bar N(L)}\pi_v =0$, and $\tau_v $ is not a character of $D_L^\times$. Hence 
\begin{itemize}
    \item $\bbH^1(\DR)[\lambda]=H^1(\ker d)[\lambda]=(\pi_f^v)^{K^v}\ox_C(\bigoplus_{i\in\bbZ} H^1(j_!\calO_{\Dr}^{\sm})\ox_C\tau_v )^{D_L^\times}=:\pi$.
    \item $\bbH^1(\DR')[\lambda]=H^1(\ker d')[\lambda]=(\pi_f^v)^{K^v}\ox_C(\bigoplus_{i\in\bbZ} H^1(j_!\Omega_{\Dr}^{1,\sm})\ox_C\tau_v )^{D_L^\times}=:\pi_c$.
    \item $\ker \SS[\lambda]=(\pi_f^v)^{K^v}\ox_C(\bigoplus_{i\in\bbZ} H^1_{\dR,c}(\calM_{\Dr,\infty})\ox_C\tau_v )^{D_L^\times}=(\pi_f^v)^{K^v}\ox_C\bbH^1(\DR^{\sm})[\lambda]$.
    \item $\coker\SS[\lambda]=0$.
\end{itemize}
Therefore, we have an exact sequence
\[
    0\to (\pi_f^v)^{K^v}\ox_C \pi_v \to \ker I^1[\lambda]\to (\pi_f^v)^{K^v}\ox_C \pi_c  \to 0.
\]
Using Corollary \ref{cor:transtowall} and Theorem \ref{thm:WCdeRham}, we can compute the wall-crossing of $\bbH^1(\DR)[\lambda]$.
\begin{proposition}
The wall-crossing of $\bbH^1(\DR)[\lambda]$ is isomorphic to the wall-crossing of $(\pi_f^v)^{K^v}\ox_C\pi_c$, and it fits into an exact sequence
\begin{align*}
    0\to (\pi_f^v)^{K^v}\ox_C \pi_c\to (\pi_f^v)^{K^v}\ox_C \Theta_s\pi_c\to (\pi_f^v)^{K^v}\ox_C \tilde \pi\to 0.
\end{align*}
\end{proposition}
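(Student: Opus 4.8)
The final statement is a formal consequence of Theorem~\ref{thm:WCdeRham} and Corollary~\ref{cor:transtowall} together with the supersingular descriptions of $\bbH^1(\DR)[\lambda]$ and $\bbH^1(\DR')[\lambda]$, so the plan is mainly to unwind these identifications in the supercuspidal case. First I would record the vanishing inputs: since $\pi_w$ is supercuspidal it is generic with $J_{N(L)}(\pi_w)=0$, so $H^1_{\rig}(\Ig_{K^v})[\lambda]^{\ss}=0$ by Theorem~\ref{thm:H0Ig}, whence $H^0(\coker d)[\lambda]=H^0(\coker d')[\lambda]=0$ and $\coker\SS[\lambda]=0$. Feeding this into Proposition~\ref{prop:DRcoh} and Proposition~\ref{prop:DRcohprime} collapses $\bbH^1(\DR)[\lambda]$ onto $H^1(\ker d)[\lambda]$ and $\bbH^1(\DR')[\lambda]$ onto $H^1(\ker d')[\lambda]$. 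Then, using the $p$-adic uniformization of the supersingular locus of \S\ref{padicuniformization} and the descriptions of $\ker d$, $\ker d'$ on $\Omega$ (Proposition~\ref{prop:donss}), together with the passage to the Jacquet--Langlands transfer $\tau_w$ exactly as in \S\ref{cuspidal}, I would identify $\bbH^1(\DR)[\lambda]\isom (\pi_f^v)^{K^v}\ox_C\tilde{\pi}$ and $\bbH^1(\DR')[\lambda]\isom (\pi_f^v)^{K^v}\ox_C\pi_c$ as $\GL_2(L)$-representations, where the $\GL_2(L)$-module structures on $\tilde{\pi}$ and $\pi_c$ are the ones built from the $\GL_2(L)^0$-action by compact induction in \S\ref{cuspidal}.

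Next I would invoke Corollary~\ref{cor:transtowall}, which in this case reads $T_{(0,0)}^{(-1,0)}\bbH^1(\DR)[\lambda]\isom T_{(0,0)}^{(-1,0)}\bbH^1(\DR')[\lambda]$: the ordinary contributions having vanished, $\bbH^1(\DR)[\lambda]=H^1(\ker d)[\lambda]$ and $\bbH^1(\DR')[\lambda]=H^1(\ker d')[\lambda]$, and because $\bar d_{-1,1}$ is the identity map (Proposition~\ref{prop:dbar-1}) the cohomology of the translated intertwining operator $I_{-1,1}$ agrees with that of $d_{-1,1}$, forcing the two translations to coincide. Applying $T_{(-1,0)}^{(0,0)}$ I then obtain $\Theta_s\bbH^1(\DR)[\lambda]\isom \Theta_s\bbH^1(\DR')[\lambda]$. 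Since $(\pi_f^v)^{K^v}$ is a finite-dimensional $C$-vector space on which $\frg_\iota$ (hence $\calZ(\frg_\iota)$) acts trivially, the translation and wall-crossing functors commute with $(\pi_f^v)^{K^v}\ox_C(-)$; combined with the previous paragraph this gives the first assertion $\Theta_s\bbH^1(\DR)[\lambda]\isom (\pi_f^v)^{K^v}\ox_C\Theta_s\pi_c$.

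Finally, Theorem~\ref{thm:WCdeRham} (applicable because $\pi_w$ is generic) yields the short exact sequence
\begin{align*}
0\to \bbH^1(\DR')[\lambda]\to \bbH^1(\DR_{-1,1})[\lambda]\to \bbH^1(\DR)[\lambda]\to 0,
\end{align*}
and $\bbH^1(\DR_{-1,1})[\lambda]=\Theta_s\bbH^1(\DR)[\lambda]$ by definition of $\Theta_s$. Substituting the three identifications just obtained turns this into
\begin{align*}
0\to (\pi_f^v)^{K^v}\ox_C\pi_c\to (\pi_f^v)^{K^v}\ox_C\Theta_s\pi_c\to (\pi_f^v)^{K^v}\ox_C\tilde{\pi}\to 0,
\end{align*}
which is the claimed exact sequence. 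The only non-formal ingredients are Theorem~\ref{thm:WCdeRham} and Corollary~\ref{cor:transtowall}, which are already in hand; the point that will need the most care is keeping track of the $\GL_2(L)$-action used to form $\Theta_s\pi_c$ and checking that every isomorphism above — in particular the uniformization isomorphism, which is a priori only $\GL_2(L)^0$-equivariant — is compatible with it, together with recording the easy but necessary fact that wall-crossing commutes with $(\pi_f^v)^{K^v}\ox_C(-)$.
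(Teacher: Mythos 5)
Your proof is correct and takes essentially the approach implicit in the paper's organization: this proposition is stated immediately after the supercuspidal identifications $\bbH^1(\DR)[\lambda]\isom(\pi_f^v)^{K^v}\ox_C\tilde\pi$ and $\bbH^1(\DR')[\lambda]\isom(\pi_f^v)^{K^v}\ox_C\pi_c$, and it follows by substituting these into the short exact sequence of Theorem~\ref{thm:WCdeRham} and into the isomorphism $T_{(0,0)}^{(-1,0)}\bbH^1(\DR)=T_{(0,0)}^{(-1,0)}\bbH^1(\DR')$ from Corollary~\ref{cor:transtowall}, exactly as you do. Two small remarks: your re-derivation of $T_{(0,0)}^{(-1,0)}\bbH^1(\DR)[\lambda]\isom T_{(0,0)}^{(-1,0)}\bbH^1(\DR')[\lambda]$ via the identity $\bar d_{-1,1}=\id$ is just reproving Corollary~\ref{cor:transtowall}, which already gives this directly (and passes through the $[\lambda]$-isotypic part since translation is exact and commutes with the Hecke decomposition); and the concern you flag at the end about the $\GL_2(L)$-action on $\tilde\pi$ and $\pi_c$ coming from compact induction from $(\GL_2(L)\times D_L^\times)^0$ is legitimate but is already handled in \S\ref{cuspidal}, where those actions are set up precisely so that the identifications with $\bbH^1(\DR)[\lambda]$ and $\bbH^1(\DR')[\lambda]$ are $\GL_2(L)$-equivariant.
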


\section{$p$-adic Hodge-theoretic interpretation of the intertwining operator}\label{padic}

In this section, we establish a $p$-adic Hodge-theoretic meaning of the intertwining operators $I_k$ defined in Section \ref{int}. Using this, we discuss some relations between de Rhamness at $p$ and classicality of a Galois representation.

\subsection{The Fontaine operator}\label{Fon}
In this subsection we recall the definition of the Fontaine operator and its relation with de Rham representations. The main reference is Section 6.1 of \cite{PanII}.

Let $K$ be a finite extension of $\Q_p$. For a Banach $C$-module $W$ with a continuous semilinear action of $G_K$, let $W^K\subset W$ be the subspace of $G_{K_\infty}$-fixed, $G_K$-analytic vectors in $W$. This is a $K$-Banach space and there is a natural map
$$\varphi^K_W:C\widehat{\otimes}_KW^K\ra W.$$

Now let $W$ be a flat Banach $B_{\text{dR}}^+/(t^{k+1})$-module equipped with a continuous semilinear action of $G_K$, and assume that $W$ is Hodge--Tate of weights  $0,k$, meaning that $W/tW$ is Hodge--Tate of weights $0,k$. Equip $W$ with the natural $t$-adic filtration and let $W_i=\text{gr}^iW=t^iW/t^{i+1}W$ ($0\leq i\leq k)$. As $W$ is flat over $B_{\text{dR}}^+/(t^{k+1})$, the subspace $t^iW\subset W$ is a closed subspace because it can be identified with the kernel of $W\xrightarrow{\times t^{k-i}}W$. Hence there is a natural isomorphism  $W_i\cong W_0(i)$. Let $W_0=W_{0,0}\oplus W_{0,-k}$ be the Hodge--Tate decomposition with the Sen operator acting by $0$ and $-k$ respectively. Then $W_i$ is also Hodge--Tate with $W_i=W_{i,0}\oplus W_{i,-k}$ such that $W_{i,j}\cong W_{0,j}(i)$ for $j\in \{0,-k\}$. By \cite[Lemma 6.1.11]{PanII} we get $W_0=C\widehat{\otimes}_KW_0^K$ and $\text{gr}^iW^K=W_i^K$. 

There is a natural action of $\text{Lie}(\Gal(K_\infty/K))$ on $W^K$. We denote by $\nabla\in \text{End}_K(W^K)$ the bounded operator defined by the action of $1\in \Z_p\cong \text{Lie}(\Gal(K_\infty/K))$. Let $E_0(W^K)$ be the generalized eigenspace associated with the eigenvalue $0$. Then there is an exact sequence
$$0\ra W_{k,-k}^K\ra E_0(W^K)\ra W^K_{0,0}\ra 0.$$
The action of $\nabla$ induces a $K$-linear map
$$N_W^K:W_{0,0}^K\ra W_{k,-k}^K\cong W_{0,-k}^K(k),$$
which can be extended to a $C$-linear map
$$N_W:W_{0,0}\ra W_{k,-k}\cong W_{0,-k}(k).$$
The map $N_W$ is called the Fontaine operator associated with $W$. We have the following result:
\begin{thm}\label{N=0}
	\textup{(\cite[Theorem 6.1.16]{PanII})} Let $V$ be a two-dimensional continuous representation of $G_{L}$ over $E$, a finite extension of $L$ containing all embeddings of $L$ into $\overline{\Q}_p$. Let $\sigma:L\hookrightarrow E$ be an embedding and  suppose that $V$ is $\sigma$-Hodge--Tate with $\sigma$-Hodge--Tate weights $0,k>0$. Let $
    W=V\otimes_{L,\sigma}B^+_{\text{dR}}/(t^{k+1})$. Then $N_W=0$ if and only if $V$ is a $\sigma$-de Rham representation.
\end{thm}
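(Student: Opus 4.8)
## Proof Proposal for Theorem \ref{N=0}

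The plan is to reduce the statement about $\sigma$-de Rhamness of the two-dimensional representation $V$ to a computation involving the $G_L$-analytic vectors of $W = V\otimes_{L,\sigma}B^+_{\dR}/(t^{k+1})$, and to identify the Fontaine operator $N_W$ with the obstruction to the Hodge--Tate filtration on $W$ being $G_L$-stable in the appropriate sense. First I would set up notation: write $W_0 = W/tW$, which is $\sigma$-Hodge--Tate of weights $0,k$ (this is exactly the hypothesis that $V$ is $\sigma$-Hodge--Tate), and decompose $W_0 = W_{0,0}\oplus W_{0,-k}$ according to the Sen operator eigenvalues, with $W_{i,j}\isom W_{0,j}(i)$ for the graded pieces $W_i = t^iW/t^{i+1}W$. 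By \cite[Lemma 6.1.11]{PanII} (cited as such in the excerpt), we have $W_0 = C\widehat\otimes_L W_0^L$ and $\gr^i W^L = W_i^L$, so the full filtration is visible after passing to $G_L$-analytic vectors, and the operator $\nabla$ coming from $\Lie(\Gal(L_\infty/L))$ acts on $W^L$ with the exact sequence
\[
    0\to W_{k,-k}^L\to E_0(W^L)\to W^L_{0,0}\to 0
\]
controlling the $0$-generalized-eigenspace; the induced map $N_W^L: W_{0,0}^L\to W_{k,-k}^L$ extends $C$-linearly to $N_W$.

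The core of the argument is the following equivalence, which I would prove in both directions. For the direction ``$V$ $\sigma$-de Rham $\Rightarrow$ $N_W = 0$'': if $V$ is $\sigma$-de Rham then $D_{\dR,\sigma}(V) = (B_{\dR}\otimes_{L,\sigma}V)^{G_L}$ is two-dimensional over $L$, hence $W^L = (W)^{G_L\-\mathrm{an},G_{L_\infty}\-\mathrm{fix}}$ already contains a full free $B^+_{\dR}/(t^{k+1})$-submodule of rank $2$ coming from $D_{\dR,\sigma}(V)/\Fil^{k+1}$ on which $\nabla$ acts by zero (the de Rham period module is $G_L$-fixed, a fortiori $G_{L_\infty}$-fixed and killed by $\nabla$). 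Then $E_0(W^L)$ is as large as possible, the extension above splits $\nabla$-equivariantly, and tracing through the definition gives $N_W^L = 0$, hence $N_W = 0$. For the converse ``$N_W = 0\Rightarrow V$ $\sigma$-de Rham'': if $N_W = 0$ then the extension $0\to W_{k,-k}^L\to E_0(W^L)\to W_{0,0}^L\to 0$ is $\nabla$-trivial on the nose, so $\dim_L E_0(W^L)\cdot$ (counting the contribution of all graded pieces) forces $W^L$ to be free of rank $2$ over $B^+_{\dR}/(t^{k+1})$ with trivial $\nabla$-action; untwisting the $B^+_{\dR}$-structure and using that $\dim_L(B_{\dR}\otimes_{L,\sigma}V)^{G_L} \le 2$ always, with equality detected precisely by the existence of such a free rank-$2$ $\nabla$-fixed submodule modulo $t^{k+1}$, we conclude $V$ is $\sigma$-de Rham. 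The passage between ``$\nabla$-fixed modulo $t^{k+1}$'' and ``genuinely de Rham'' uses that $k$ exceeds the gap between the two Hodge--Tate weights, so that $B^+_{\dR}/(t^{k+1})$ already sees the full filtration jump.

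I expect the main obstacle to be the bookkeeping in the converse direction: showing that $N_W = 0$ genuinely upgrades to $\dim_L D_{\dR,\sigma}(V) = 2$ rather than merely giving a partial lift. Concretely, one must verify that the $\nabla$-stable lift of $W_{0,0}^L$ into $E_0(W^L)$ provided by $N_W = 0$, together with the obvious $\nabla$-fixed part $W_{k,-k}^L$ already sitting inside $E_0(W^L)$, actually assembles into a $B^+_{\dR}/(t^{k+1})$-submodule of $W^L$ that is free of rank $2$ and on which $\nabla$ vanishes identically --- this requires controlling the interaction of the $t$-adic filtration with the $\nabla$-action across all intermediate graded pieces $W_i^L$ for $0 < i < k$, not just the top and bottom ones, and checking that no new $\nabla$-obstruction appears at intermediate levels (it does not, because all intermediate graded pieces $W_{i,0}^L(i)$ and $W_{i,-k}^L(i)$ are themselves $\nabla$-semisimple with the relevant eigenvalue structure, so the only possible ``jump'' is the one measured by $N_W$). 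Since this is precisely the content of \cite[Theorem 6.1.16]{PanII}, which the excerpt permits us to invoke, the write-up can either cite it directly or reproduce the filtration-chasing argument; I would do the latter only to the extent of making the identification of $N_W$ with the de Rham obstruction transparent, and cite \cite{PanII} for the remaining routine verifications.
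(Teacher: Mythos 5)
Your forward direction ($V$ $\sigma$-de Rham $\Rightarrow N_W=0$) is essentially the paper's: observe the natural map $(V\otimes_{L,\sigma}B_{\dR}^+)^{\Gal_L}\to W$ is injective because $(V\otimes_{L,\sigma}C(l))^{\Gal_L}=0$ for $l\ge k+1$, embed $D_{\dR,\sigma}(V)$ into the $2$-dimensional space $E_0(W^L)$, and conclude that if $\dim_E D_{\dR,\sigma}(V)=2$ the inclusion is an equality, so $\nabla$ vanishes on $E_0(W^L)$ and $N_W=0$.

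The converse direction has a genuine gap, and the specific claim you make along the way is incorrect. You assert that $N_W=0$ ``forces $W^L$ to be free of rank $2$ over $B^+_{\dR}/(t^{k+1})$ with trivial $\nabla$-action.'' Neither part of this is right: $W^L$ has $E$-dimension $2(k+1)$, not rank $2$, and $\nabla$ has eigenvalues ranging through $\{-k,\dots,0,\dots,k\}$ on $W^L$, so it is certainly not trivial there. Moreover $E_0(W^L)$ itself is not a $B^+_{\dR}/(t^{k+1})$-submodule of $W$: multiplying the degree-$0$ part by $t$ lands you in the $(-1)$-eigenspace of $\nabla$, which escapes $E_0(W^L)$. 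What $N_W=0$ actually gives you is that $E_0(W^L)$ is $\Gal_L$-fixed, hence $W^{\Gal_L}=E_0(W^L)$ is exactly $2$-dimensional — i.e.\ $V$ is ``de Rham modulo $t^{k+1}$.'' Passing from this truncated statement to genuine $\sigma$-de Rhamness is where the real content lies, and it does not follow from the heuristic that $k+1$ ``sees the full filtration jump''; you need to show the truncated period space lifts all the way up. The paper's argument for this is a short but essential Galois-cohomology computation: because $H^1(\Gal_L, V\otimes_{L,\sigma}C(l))=0$ for $l\ge k+1$, the natural maps $(V\otimes_{L,\sigma}B^+_{\dR}/(t^l))^{\Gal_L}\to W^{\Gal_L}$ are isomorphisms for every $l\ge k+1$, and passing to the inverse limit over $l$ gives $\dim_E D_{\dR,\sigma}(V)=2$. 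This $H^1$-vanishing step is absent from your sketch; deferring it to a citation of \cite{PanII} leaves the heart of the converse unaddressed.
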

\begin{proof}
    Recall that $V$ is $\sigma$-de Rham if and only if $\dim_E D_{\dR,\sigma}(V)=\dim_E V=2$. As $V$ is of $\sigma$-Hodge--Tate weights $0,k$,  $(V\ox_{L,\sigma} C(l))^{\mathrm{Gal}_L}=0$ when $l\ge k+1$. Thus the natural map $(V\ox_{L,\sigma}B_{\dR}^+)^{\mathrm{Gal}_L}\to V\ox_{L,\sigma}B_{\dR}^+/(t^{k+1})=:W$ is injective. Hence it induces an inclusion 
\[
    D_{\dR,\sigma}(V)\subset E_0(W^{L}).
\]
Note that $E_0(W^L)$ is a $2$-dimensional $E$-vector space. If $\dim_ED_{\dR,\sigma}(V)=2$, then this inclusion becomes an isomorphism, so that $N_{W}=0$. 

Conversely, if $N_{W}=0$, then $E_0(W^L)$ is fixed by $\mathrm{Gal}_L$. Since $H^1(\mathrm{Gal}_L, V\ox_{L,\sigma} C(l))=0$ when $l\geq k+1$, the natural map $V \otimes_{L,\sigma}(B^+_{\text{dR}}/(t^{l}))^{\mathrm{Gal}_L}\ra W^{\mathrm{Gal}_L}$ is an isomorphism when $l\geq k+1$. Passing to the limit over $l$ we get that $\dim_E D_{\dR,\sigma}(V)=2$.
\end{proof}

Now let $\mathbb{B}_{\text{dR}}^+=\pi_{\text{HT}*}\B^+_{\text{dR},\mathcal{X}_{K^v}}$ and $\mathbb{B}_{\text{dR},k}^+=\mathbb{B}_{\text{dR}}^+/(t^k)$. Let $U\in \mathfrak{B}$ be an affinoid open subset of $\mathscr{F}\ell$ with $V_\infty=\pi_{\text{HT}}^{-1}(U)=\text{Spa}(B,B^+)$. Let $V_0={V_{K_p}}$ for some sufficiently small open subgroup $K_p\subset \GL_2(L)$. Then $\B^+_{\text{dR}}/(t^k)(U)$ is naturally a flat $B^+_\text{dR}/(t^k)$-Banach module with the unit open ball given by the image of the natural $\mathbb{A}_{\text{inf},\X_{K^v}}(V_\infty)\ra \B^+_{\text{dR},\X_{K^v}}(V_\infty)/(t^k)$. It is clear from the construction that $\text{GL}_2(L)$ acts on $\B^+_{\text{dR},k}$. We denote by 
$$\B^{+,\text{la}}_{\text{dR},k}\subset\B^+_{\text{dR},k}$$
the subsheaf of locally analytic vectors. The Lie algebra $\text{Lie}(\text{GL}_2(L))=\mathfrak{gl}_2(L)$ acts naturally on $\B^{+,\text{la}}_{\text{dR},k}$. Let $Z=Z(U(\mathfrak{gl}_2(L)))$ be the center of the universal enveloping algebra of $\mathfrak{gl}_2(L)$ over $L$. Given an infinitesimal character $\tilde{\chi}:Z\ra L\subset B^+_{\text{dR}}$, we denote by
$$\B^{+,\text{la},\tilde{\chi}}_{\text{dR},k}\subset\B^{+,\text{la}}_{\text{dR},k}$$
the $\tilde{\chi}$-isotypic part. Then there is an induced decreasing filtration on $\B^{+,\text{la},\tilde{\chi}}_{\text{dR},k}$.

\begin{lem}\label{chicomH1}
	\textup{(\cite[Lemma 6.2.2]{PanII})} For $i=0,\dots,k-1$, the natural maps
	
	\textup{(1)} $\textup{gr}^i\B^{+,\textup{la}}_{\textup{dR},k}\ra \mathcal{O}^{\textup{la}}_{K^v}(i)$,
	
	\textup{(2)} $\textup{gr}^i\B^{+,\textup{la},\tilde{\chi}}_{\textup{dR},k}\ra \mathcal{O}^{\textup{la},\tilde{\chi}}_{K^v}(i)$
    
	are isomorphisms. In particular, $\B^{+,\textup{la}}_{\textup{dR},k}$ and $\B^{+,\textup{la},\tilde{\chi}}_{\textup{dR},k}$ are flat over $B^+_\textup{dR}/(t^k)$.
\end{lem}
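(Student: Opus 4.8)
The plan is to reduce both assertions to the analogous statement for the $\B_{\dR}^+$-Poincar\'e lemma on $\calX_{K^v}$ and to the computation of locally analytic vectors already available from the geometric Sen theory developed in Section \ref{uni}. Recall that $\gr^i \B_{\dR,\calX_{K^v}}^+ \isom \hat\calO_{\calX_{K^v}}(i)$ as pro-\'etale sheaves, by \cite[Corollary 6.13]{Sch13}; pushing forward along $\pi_{\HT}$ and using that $R\pi_{\HT,*}\calO_{\calX_{K^v}} = \calO_{K^v}$ (as in the proof of Theorem \ref{11}), we get $\gr^i \B_{\dR,k}^+ \isom \calO_{K^v}(i)$ for $0 \le i \le k-1$. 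The first claim then amounts to showing that the functor of taking locally analytic vectors is exact on the $t$-adic filtration of $\B_{\dR,k}^+$, i.e.\ that $\gr^i(\B_{\dR,k}^{+,\lan}) \to (\gr^i\B_{\dR,k}^+)^{\lan} = \calO_{K^v}^{\lan}(i)$ is an isomorphism. For this I would follow the strategy of \cite[Lemma 6.2.2]{PanII}: the obstruction to exactness is measured by $R^1\lan$ applied to the graded pieces, so one needs $R^1\Sigma\-\lan(\calO_{K^v}(i)) = 0$, or more precisely one argues locally on $U \in \ffrb$ and uses the explicit description of $\B_{\dR,k}^+(U)$ together with the decompletion results (Proposition \ref{prop:decomplete}) to lift locally analytic sections from the graded pieces; the Tate twist is harmless since $\Gal_L$ acts on $C(i)$ through the cyclotomic character, whose logarithm is locally analytic.

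For part (2), once (1) is established it suffices to observe that the $\tilde\chi$-isotypic decomposition commutes with the $t$-adic filtration. Indeed, $Z = Z(U(\gl_2(L)))$ acts on $\B_{\dR,k}^{+,\lan}$ through $\gl_2(L)$, hence preserves each $t^i\B_{\dR,k}^{+,\lan}$ (as $t$ is $\GL_2(L)$-invariant up to the cyclotomic character, which is central and so acts compatibly on all pieces), and therefore acts on each graded piece $\calO_{K^v}^{\lan}(i)$. Since $Z$ acts semisimply on $\calO_{K^v}^{\lan}$ after fixing an integral infinitesimal character — here one uses the decomposition machinery around Corollary \ref{cor:infdecomp} and the fact that $Z$ is generated by $z$ and the Casimir $\Omega$, whose actions are controlled by the horizontal action $\theta_{\frh}$ and the arithmetic Sen operator via Corollary \ref{cor:hSen} and Corollary \ref{cor:infSen} — taking the $\tilde\chi$-isotypic part is exact, so $\gr^i \B_{\dR,k}^{+,\lan,\tilde\chi} \isom (\gr^i\B_{\dR,k}^{+,\lan})^{\tilde\chi} = \calO_{K^v}^{\lan,\tilde\chi}(i)$.

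Finally, flatness over $B_{\dR}^+/(t^k)$ is a formal consequence: a filtered module over the local ring $B_{\dR}^+/(t^k)$ (whose maximal ideal is generated by the single element $t$, with $t^k = 0$) whose graded pieces are all free over the residue field $C$ — here $\gr^i \B_{\dR,k}^{+,\lan}$ is a $\calO_{K^v}^{\lan}(i)$, which is flat, indeed free Banach, over $C$ — and on which multiplication by $t$ induces isomorphisms $\gr^i \aisom \gr^{i+1}$ for $0 \le i \le k-2$ (which is exactly what the isomorphisms $\gr^i\B_{\dR,k}^{+,\lan} \isom \calO_{K^v}^{\lan}(i)$ give, since these are compatible with the $t$-action), is automatically flat. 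The same argument applies verbatim to the $\tilde\chi$-isotypic part.

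The main obstacle I expect is the verification underlying part (1): namely that passing to locally analytic vectors is exact along the $t$-adic filtration of $\B_{\dR,k}^+$. This is where one genuinely needs the decompletion/geometric Sen input — one cannot just cite formal properties of $R\lan$, because $\B_{\dR,k}^+(U)$ is a large non-admissible Banach representation and admissibility-type vanishing of $R^1\lan$ is not available; instead one must argue integrally and locally, exploiting that the transition maps in the relevant colimits are injective with dense image (Theorem \ref{thm:piHT}(iii)) and that the Sen operator acts the way Theorem \ref{thm:geomSen} predicts. Everything else is bookkeeping with the $\tilde\chi$-decomposition and elementary filtered-module flatness.
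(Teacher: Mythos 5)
Your treatment of part (1) and the flatness claim matches the paper's argument: the paper indeed reduces (1) to the la-acyclicity of $\calO_{K^v}(U)$ for $U\in\ffrb$ (your worry that one cannot ``just cite formal properties of $R\lan$'' is unfounded — the acyclicity is itself a prior input, and once it is known, (1) is entirely formal).

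The gap is in your argument for (2). You assert that ``$Z$ acts semisimply on $\calO_{K^v}^{\lan}$ after fixing an integral infinitesimal character'' and conclude that ``taking the $\tilde\chi$-isotypic part is exact.'' But semisimplicity of the $Z$-action on each graded piece $\calO_{K^v}^{\lan}(U)(i)$ does \emph{not} by itself give exactness of the $\tilde\chi$-isotypic functor on the filtered object $\B_{\dR,k}^{+,\lan}(U)$: the $t$-adic filtration is not a priori $Z$-equivariantly split, and exactness of $(-)^{\tilde\chi}$ on the short exact sequences $0\to t^{i+1}\B_{\dR,k}^{+,\lan}\to t^i\B_{\dR,k}^{+,\lan}\to\calO_{K^v}^{\lan}(U)(i)\to 0$ is equivalent to the vanishing of $\Ext^1_Z(\tilde\chi,-)$ on the subobjects, which is precisely the nontrivial content. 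Corollary \ref{cor:infdecomp}, which you cite, only decomposes a single isotypic piece $\calO_{K^v}^{\lan,\tilde\chi_k}$ into two $\theta_{\frh}$-weight spaces and says nothing about extensions between isotypic parts. The paper's actual argument is to show
\[
\Ext^i_Z\bigl(\tilde\chi,\ \calO^{\lan,\tilde\chi}_{K^v}(U)\bigr)=0 \quad\text{for } i\ge 1,
\]
which it does by observing that $Z$ acts through $S(\frh)$ via the horizontal action $\theta_{\frh}$, so one can compute $\Ext^i_{S(\frh)}$ directly; this reduces to a Koszul-type computation using the explicit local power-series description of $\calO_{K^v}^{\lan}(U)$ (Proposition \ref{prop:tensordecomposition} together with Theorem \ref{thm:Oiotalalocal}), as in \cite[Lemma 5.1.2(1)]{Pan22}. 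So the correct substitute for your ``semisimplicity'' assertion is this concrete $\Ext$-vanishing, and the proof is genuinely computational rather than formal.
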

\begin{proof}
    Both are local statements on $\mathscr{F}\ell$, so we can fix $U\in \mathfrak{B}$. The first part follows from the fact that $\B^{+}_{\textup{dR},k}$ is filtered by $\mathcal{O}_{K^v}(U)(i)$ and $\mathcal{O}_{K^v}(U)$ is la-acyclic.

    For the second part, it suffices to show that
    $$\text{Ext}^i_Z(\tilde{\chi},\mathcal{O}^{\textup{la},\tilde{\chi}}_{K^v}(U))=0\text{ for } i\geq 1.$$
    Recall that the action of $Z$ on $\mathcal{O}^{\textup{la},\tilde{\chi}}_{K^v}(U)$ factors through $S(\mathfrak{h})$, where $\mathfrak{h}=\begin{pmatrix}
        *&0\\0&*
    \end{pmatrix}\subset \mathfrak{gl}_2(C)$ acts via the horizontal action $\theta_\mathfrak{h}$. Thus after choosing a character $\chi:S(\mathfrak{h})\ra C$ extending $\tilde{\chi}$, it suffices to show $$\text{Ext}^i_{S(\mathfrak{h})}(\tilde{\chi},\mathcal{O}^{\textup{la},\tilde{\chi}}_{K^v}(U))=0\text{ for } i\geq 1.$$
    The case of modular curve is proved in \cite[Lemma 5.1.2(1)]{Pan22}, and here the computation is the same via Proposition \ref{prop:tensordecomposition} the local expansion in Theorem \ref{thm:Oiotalalocal}. 
\end{proof}

	By Harish-Chandra's isomorphism, $Z\otimes_{L,\iota}C=Z(U(\mathfrak{gl}_2(C))\cong S(\mathfrak{h})^W$, where $W$ denotes the Weyl group of $\mathfrak{gl}_2$ and acts on $S(\mathfrak{h})$ via the dot action. Consider
	$$\tilde{\chi}_k=\{(0,1-k),(-k,1)\}\subset \mathfrak{h}^*,$$
	the infinitesimal character of the $(k-1)$-th symmetric power of the dual of the $\iota$-standard representation. It follows from the relation between $\theta_\mathfrak{h}$ and the infinitesimal character that on $\mathcal{O}_{K^v}^{\text{la},\tilde{\chi}_k}$,
	$$(\theta_\mathfrak{h}(\begin{pmatrix}
	a&0\\0&d
\end{pmatrix})-(1-k)a)(\theta_\mathfrak{h}(\begin{pmatrix}
a&0\\0&d
\end{pmatrix})-(a-kd)=0.$$
	Hence we have a natural decomposition
	$$\mathcal{O}_{K^v}^{\text{la},\tilde{\chi}_k}=\mathcal{O}_{K^v}^{\text{la},(0,1-k)}\oplus\mathcal{O}_{K^v}^{\text{la},(-k,1)}$$
	and $\mathcal{O}_{K^v}^{\text{la},(0,1-k)}(U)$ (resp. $\mathcal{O}_{K^v}^{\text{la},(-k,1)}(U)$) is Hodge--Tate of weight $0$ (resp. $k$) for any $U\in \mathscr{F}\ell$. This shows that $\mathcal{O}_{K^v}^{\text{la},\tilde{\chi}_k}(U)$ is Hodge--Tate of weights $0,k$. Therefore we get the Fontaine operator
	$$\mathcal{O}_{K^v}^{\text{la},(0,1-k)}(U)\ra \mathcal{O}_{K^v}^{\text{la},(-k,1)}(U)(k)$$
    associated to the Banach $C$-module $\B^{+,\textup{la},\tilde{\chi}_k}_{\textup{dR},k}$.
	By the functorial property of the Fontaine operator, this defines a map of sheaves
	$$N_k:\mathcal{O}_{K^v}^{\text{la},(0,1-k)}\ra \mathcal{O}_{K^v}^{\text{la},(-k,1)}(k).$$

Now we can state the main theorem of this section, and the proof will be given in the next two subsections. Recall we have introduced an intertwining operator $I_{k-1}:\mathcal{O}_{K^v}^{\text{la},(0,1-k)}\ra \mathcal{O}_{K^v}^{\text{la},(-k,1)}(k)$.
	
	\begin{thm}\label{N=dd}
		 The Fontaine operator $N_k$ and intertwining operator $I_{k-1}$ satisfy:   $N_k=c_k I_{k-1}$ for some $c_k\in L^\times$.
	\end{thm}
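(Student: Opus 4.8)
The plan is to adapt the strategy of \cite[\S 6.2]{PanII}: reduce the asserted identity to a local statement on $\fl$, and compare the two operators through the structural de Rham period sheaf $\calO\bbB_{\dR}^+$ of the tower $\{\calX_{K^vK_v,\bar L}\}_{K_v}$. The genuinely new input over the modular curve case is a canonical, $\nabla$-compatible lifting of the locally $\iota^c$-analytic, $\Gal_L$-smooth sections of $\calO_{K^v}$ into $\calO\bbB_{\dR}^+$.

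First I would reduce to a local computation. Fix $U\in\ffrb$ sufficiently small with $\pi_{\HT}^{-1}(U)=\Spa(B,B^+)$. By Lemma \ref{chicomH1}, $\bbB_{\dR,k}^{+,\lan,\tilde{\chi}_k}(U)$ is flat over $B_{\dR}^+/(t^k)$, with $t$-adic graded pieces $\calO_{K^v}^{\lan,\tilde{\chi}_k}(i)\isom\calO_{K^v}^{\lan,(1-k,0)}(i)\oplus\calO_{K^v}^{\lan,(1,-k)}(i)$ for $0\le i\le k-1$, and $N_k|_U$ is by definition the Fontaine operator attached to this Banach $C$-module. Both $N_k$ and $I_{k-1}$ are morphisms of sheaves $\calO_{K^v}^{\lan,(1-k,0)}\to\calO_{K^v}^{\lan,(1,-k)}(k)$ equivariant for $\GL_2(L)$, $\Gal_L$, and $\bbT^S$. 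Since $d^k$ and $\bar d^k$ are continuous and restrict to the locally $\iota^c$-algebraic part (Theorems \ref{thm:d}, \ref{thm:dbar}), so does $I_{k-1}$; likewise $N_k$ is $\calO_{K^v}^{\iota^c\-\lan,\Gal_L\-\sm}$-linear (this will follow from the lifting constructed below). Hence, by continuity together with Proposition \ref{prop:tensordecomposition} and the local expansions of Theorem \ref{thm:Oetalalocal} and Theorem \ref{thm:Oiotalalocal}, it suffices to prove $N_k=c_k I_{k-1}$ on the $\calO_{\fl}$-submodule of $\calO_{K^v}^{\iota\-\lan,\iota^c\-\lalg,(1-k,0)}$ generated by $\Gal_L$-smooth sections; note that testing on locally $\iota$-algebraic sections alone is useless here, since those lie in $\ker I_{k-1}$.

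The key step is to construct a $\nabla$-compatible, $\Gal_L$-equivariant lift $\calO_{K^v}^{\iota^c\-\lan,\Gal_L\-\sm}\inj\calO\bbB_{\dR}^+$ extending the standard lift of $\calO_{K^v}^{\sm,\Gal_L\-\sm}$. Recall $\calO\bbB_{\dR}^+$ carries a connection $\nabla$ and a filtration, and the relative de Rham comparison theorem \cite{Sch13} identifies, for each $\eta\neq\iota$, the Gauss--Manin module $D_{K^v,\eta}^{\sm}\ox_{\calO_{K^v}^{\sm}}\calO\bbB_{\dR}^+$ with $V_\eta\ox_C\calO\bbB_{\dR}^+$, compatibly with $\nabla_{\calX}$ and $\nabla$ and with the Hodge filtrations; since the Hodge filtration on the $\eta$-part is trivial for $\eta\neq\iota$, this says $D_{K^v,\eta}^{\sm}\ox\calO\bbB_{\dR}^+$ has a $\nabla$-horizontal $\calO\bbB_{\dR}^+$-basis coming from the étale local system $V_\eta=(R^1\pi_*\hat C)^{+,1}_{v,\eta}$, which is trivialized on $\calX_{K^v}$. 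Combining this with the description of $\calO_{K^v}^{\eta\-\lan}$ from Theorem \ref{thm:Oetalalocal} — whose locally $\eta$-algebraic generators $e_i'$ lie in $V_\eta\ox_C D_{K^v,\eta}^{\sm,*}\ox_{\calO_{K^v}^{\sm}}\calO_{K^v}^{\sm}$ and hence admit canonical $\nabla$-horizontal lifts $\widetilde{e_i'}$ — and extending by the Leibniz rule for $\nabla$ and by continuity, one obtains the desired lifting; functoriality in $K_v$ and $\Gal_L$-equivariance follow from the corresponding properties of de Rham comparison, and the linearity of $N_k$ used above is then a formal consequence.

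Finally I would compare the two operators via this lift, following the bookkeeping of \cite[\S 6.2]{PanII}. Along the $\iota$-direction the situation is exactly the modular-curve one: by Theorem \ref{thm:FEHT}, $\nabla$ on $\gr^1\calO\bbB_{\dR}^+$ recovers (up to the explicit sign there) the Hodge--Tate/Euler sequence, so the $k$-fold iterate of $\nabla$ along the flag-variety direction reproduces $\bar d^k=c(u^+)^k$ (Theorem \ref{thm:dbar}), while $d^k$ is, under de Rham comparison, the Gauss--Manin part of $\nabla$ transverse to that direction — and since $\calX_{K^v}\to\calX_{K^vK_v}$ is a locally $\iota$-analytic covering (Remark after Theorem \ref{thm:geomSen}), the full $\nabla$ on the locally analytic vectors of $\calO\bbB_{\dR}^+$ modulo $t^k$ splits accordingly. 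Chasing a $\Gal_L$-smooth local section $s$ of $\calO_{K^v}^{\lan,(1-k,0)}(U)$: lift $s$ together with its $\iota^c$-analytic coefficients (via the lifting of the previous paragraph) to $\bbB_{\dR,k}^{+,\lan}(U)$ inside $\calO\bbB_{\dR}^+/(t^k)$, and observe that the $\nabla$-defect modulo the appropriate filtration that computes $N_k(s)$ equals $\bar d'^k\comp d^k(s)=I_{k-1}(s)$ (Definition \ref{def:twistofd}) up to the product of the explicit nonzero rational constants entering $\bar d^k$ and the Faltings identification of Theorem \ref{thm:FEHT}; this yields $N_k=c_k I_{k-1}$ with an explicit $c_k\in L^\times$. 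The main obstacle is the bookkeeping in this last step together with the precise control in the previous one: one must verify that the lifting of $\calO_{K^v}^{\iota^c\-\lan,\Gal_L\-\sm}$ is not merely canonical but exactly compatible with the operators $d^k$ and $\bar d^k$ of \S\ref{intertw}, so that the locally $\iota^c$-analytic coefficients ride along without contributing spurious terms to the $\nabla$-defect and the identity collapses to the locally $\iota$-analytic direction, where Pan's computation applies verbatim. Keeping the $\iota$-direction (modular-curve-like) and the $\iota^c$-directions (where the Hodge filtration degenerates and the $\calO\bbB_{\dR}^+$-lift is ``free'') cleanly separated throughout is the crux.
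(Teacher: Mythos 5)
Your proposal follows the paper's own strategy almost exactly: reduce to a local statement on $\fl$, construct a canonical lifting of $\calO_{K^v}^{\iota^c\-\lan,\Gal_L\-\sm}$ into $\calO\bbB_{\dR}^+$ via the relative de Rham comparison theorem for the universal abelian scheme (this is precisely Proposition \ref{sect} in the paper and is highlighted in the introduction as the key new input), then chase the Fontaine operator through the truncated Poincar\'e lemma sequence and compare with $d^k$ and $\bar d^k$ as in \cite[\S 6]{PanII}. One technical imprecision worth flagging: the lifts $\widetilde{e_i'}$ of the locally $\eta$-algebraic generators $e_i'\in V_\eta\ox_C D_{K^v,\eta}^{\sm,*}$ are \emph{not} $\nabla$-horizontal; rather, the lifting $s$ is a section of $\theta$ that intertwines the extended Gauss--Manin connection $d_\eta$ on $\calO_{K^v}^{\eta\-\lan,\Gal_L\-\sm}$ with $\nabla$ on $\calO\bbB_{\dR}^+$ (so $\nabla\circ s = s\circ d_\eta$, not $\nabla\circ s = 0$); since $\phi(v\ox 1)$ is $\nabla$-horizontal but the dual pairing against a section of $D_{K^v,\eta}^{\sm,*}$ reintroduces the Gauss--Manin derivative, what you get is $\nabla$-compatibility, not horizontality. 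This is exactly what the paper states after Proposition \ref{sect}, and it is essential — were the lift genuinely horizontal, the comparison $E_0(\nabla)\circ s_{k+1}=d^k$ would degenerate. Your subsequent mention of the Leibniz rule and of compatibility with $d^k$ suggests this was a slip of terminology rather than a real gap.
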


 \begin{remark}
     One can also state the theorem for the intertwining operator $I_{k-1}:\mathcal{O}_{K^v}^{\text{la},(n_1,n_2)}\ra \mathcal{O}_{K^v}^{\text{la},(n_2+1,n_1-1
     )}(k)$ where $n_2-n_1=k$. This only differs by a twist of $\text{det}^{n_2}$ on the $\text{GL}_2$ side and a Tate twist on the Galois side, so it is still true.
 \end{remark}

\subsection{A lifting of $\calO_{K^v}^{\iota^c\-\lan,\mathrm{Gal}_L\-\sm}$ into $\calO\bbB_{\dR}$}
Let $\calO\bbB_{\dR,\calX}^+$ be the structural positive de Rham sheaf on the pro-\'etale site of $\calX=\calX_{K^vK_v,L}$. We restrict it to the analytic site of $\calX_{K^v}$ and then pushforward to $\fl$ using $\pi_{\HT}$, which we denote by $\calO\bbB_{\dR}^+$. It is equipped with a natural map $\theta:\calO\bbB_{\dR}^+\to \calO_{K^v}$ such that $\calO\bbB_{\dR}^+$ is $\ker\theta$-adically complete. Let $\calO\bbB_{\dR}=\calO\bbB_{\dR}^+[\frac{1}{t}]$ with $t\in\bbB_{\dR}$ be the Fontaine's $p$-adic $2\pi i$. 

By the $p$-adic de Rham comparison theorem, there is a natural isomorphism 
\[
    V_{\eta}^{(a,b),\mathrm{Gal}_L\-\sm}\ox_{\bar L}\calO\bbB_{\dR}\isom D_{K^v,\eta}^{(-b,-a),\sm,\mathrm{Gal}_L\-\sm}\ox_{\calO_{K^v}^{\sm,\mathrm{Gal}_L\-\sm}}\calO\bbB_{\dR}
\]
for any $(a,b)\in\bbZ^2$ with $a\ge b$. Here $(-)^{\mathrm{Gal}_L\-\sm}$ means the smooth part under the $\Gal_L$-action. When $\eta\neq\iota$, the Hodge filtration on $D_{K^v,\eta}^{(a,b),\sm}$ is trivial, so that after taking the $\Fil^0$-part we get an isomorphism 
\[
    V_{\eta}^{(a,b),\mathrm{Gal}_L\-\sm}\ox_{\bar L}\calO\bbB_{\dR}^+\isom D_{K^v,\eta}^{(-b,-a),\sm,\mathrm{Gal}_L\-\sm}\ox_{\calO_{K^v}^{\sm,\mathrm{Gal}_L\-\sm}}\calO\bbB_{\dR}^+.
\]

\begin{proposition}\label{sect}
There exists a canonical map 
\[
    s:\calO^{\eta\-\lan,\mathrm{Gal}_L\-\sm}_{K^v}\to \calO\bbB_{\dR}^+
\]
whose composition with $\theta:\calO\bbB_{\dR}^+\to \calO_{{K^v}}$ is the natural inclusion. In particular, $s$ is injective.
\begin{proof}
We first construct the section of the map $\theta$ on $\calO^{\eta\-\lalg,\mathrm{Gal}_L\-\sm}_{K^v}$. By abuse of notation we also denote $V_{\eta}^{(a,b)}$ by its $\bar L$-structure. In particular, and $V_{\eta}^{(a,b)}\ox_{\bar L} D_{K^v,\eta}^{(a,b),\mathrm{Gal}_L\-\sm}$ is the $V_{\eta}^{(a,b)}$-isotypic part of $\calO_{K^v}^{\eta\-\lalg,\mathrm{Gal}_L\-\sm}$. Recall that we have a natural isomorphism 
\[
    V_{\eta}^{(a,b)}\ox_{\bar L}\calO\bbB_{\dR}^+\isom D_{K^v,\eta}^{(-b,-a),\mathrm{Gal}_L\-\sm}\ox_{\calO_{K^v}^{\sm,\mathrm{Gal}_L\-\sm}}\calO\bbB_{\dR}^+.
\]
For simplicity, let $V_{\eta}=V_{\eta}^{(a,b)}$, and $D_{\eta}^{\sm}=D_{K^v,\eta}^{(a,b),\sm}$. This induces the following map 
\begin{align*}
    V_\eta^*\ox_{\bar L} (D_\eta^{\sm,\mathrm{Gal}_L\-\sm})^*\to  V_\eta^*\ox _{\bar L}(D_{\eta}^{\sm, \mathrm{Gal}_L\-\sm})^*\ox _{\calO_{K^v}^{\sm,\mathrm{Gal}_L\-\sm}}\calO\bbB_{\dR}^+    \aisom V_\eta^*\ox _{\bar L}V_\eta \ox_{\bar L}\calO\bbB_{\dR}^+\ov{\ev}\to \calO\bbB_{\dR}^+\ov{\theta}\to \calO_{K^v},
\end{align*}
where $V_\eta^*$ is the dual representation of $V_\eta$, $\ev$ is induced by the natural evaluation map $V_\eta^*\ox V_\eta\to \bar L$. We claim that this map coincides with the natural inclusion $\calO_{K^v}^{V_{\eta}^{(a,b)}\-\lalg,\mathrm{Gal}_L\-\sm}\subset \calO_{K^v}$. Indeed, it is easy to check that this map induces an isomorphism after taking $V_{\eta}$-isotypic part. As $\calO_{K^v}^{\eta\-\lalg,\mathrm{Gal}_L\-\sm}$ decomposes as the direct sum of $V_{\eta}^{(a,b)}$-isotypic part, we get the desired map 
\[
    s:\calO_{K^v}^{\eta\-\lalg,\mathrm{Gal}_L\-\sm}\to \calO\bbB_{\dR}^+.
\]

Finally, we construct the section on $\calO_{K^v}^{\eta\-\lan,\mathrm{Gal}_L\-\sm}$. For $U\in\ffrb$ sufficiently small and $z\in \calO_{K^v}^{\eta\-\lan,\mathrm{Gal}_L\-\sm}(U)$, from Theorem \ref{thm:Oetalalocal} we know that there exists sections $e_1',e_2',e_3',e_4'\in \calO_{K^v}^{\lalg}$ for $i=1,\dots,4$, such that $z=\sum_{i,j,k,l}c_{ijkl}e_1'^ie_2'^je_3'^k e_4'^l$ where $c_{ijkl}\in \calO_{K_n}^{\mathrm{Gal}_L\-\sm}(U)$ for some sufficiently large $n$ and $c_{ijkl}\rightarrow 0$ when $i+j+k+l\rightarrow\infty$. Using similar results as in \cite[Lem 6.4.3]{PanII}, the element $s_k(z):=\sum_{i,j,k,l}c_{ijkl}s(e_1')^is(e_2')^js(e_3')^k s(e_4')^l$ converges in $\calO\bbB_{\dR}^+/\Fil^k$ for any $k\ge 0$. Let $s(z):=\lim_{k\ra \infty} s_k(z)\in\calO\bbB_{\dR}^+$, then this is a lifting of $z$. From the density of $\calO_{K^v}^{\eta\-\lalg,\mathrm{Gal}_L\-\sm}$ in $\calO_{K^v}^{\eta\-\lan,\mathrm{Gal}_L\-\sm}$ we know that $s$ is well defined, and gives a morphism of sheaves $s:\calO^{\eta\-\lan,\mathrm{Gal}_L\-\sm}_{K^v}\to \calO\bbB_{\dR}^+$ which is a section of $\theta$.

\end{proof}
\end{proposition}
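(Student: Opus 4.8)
The plan is to build $s$ first on the locally $\eta$-algebraic part, where the relative $p$-adic de Rham comparison theorem supplies an honest formula, and then to propagate it to all of $\calO_{K^v}^{\eta\-\lan,\mathrm{Gal}_L\-\sm}$ by a $\Fil$-adic approximation argument, using the density statement of Proposition \ref{prop:Oetala} together with the local power series description of Theorem \ref{thm:Oetalalocal}. Throughout one uses crucially that $\eta\neq\iota$: the Hodge filtration on $D_{K^v,\eta}^{(a,b),\sm}$ is trivial, so the comparison isomorphism lives at the $\Fil^0$-level, i.e.\ in $\calO\bbB_{\dR}^+$ rather than only in $\calO\bbB_{\dR}$.

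First I would treat the algebraic case. Fix $(a,b)\in\bbZ^2$ with $a\ge b$, let $W_\eta:=W_\eta^{(a,b)}$ be a $\bar L$-form of $V_\eta^{(a,b)}$, and write $D_\eta^{\sm}:=D_{K^v,\eta}^{(a,b),\sm}$. Since $\Fil^1 D_\eta^{\sm}=0$, the relative de Rham comparison theorem gives a $\mathrm{Gal}_L$-smooth, $\GL_2(L)$-equivariant isomorphism
\[
    W_\eta\ox_{\bar L}\calO\bbB_{\dR}^+ \;\isom\; D_\eta^{\sm,\mathrm{Gal}_L\-\sm}\ox_{\calO_{K^v}^{\sm,\mathrm{Gal}_L\-\sm}}\calO\bbB_{\dR}^+ .
\]
Tensoring the evident inclusion $D_\eta^{\sm,\mathrm{Gal}_L\-\sm}\inj D_\eta^{\sm,\mathrm{Gal}_L\-\sm}\ox\calO\bbB_{\dR}^+$ with the dual representation $W_\eta^{*}$, transporting through this isomorphism, contracting along the evaluation pairing $W_\eta^{*}\ox W_\eta\to\bar L$, and composing with $\theta$, one obtains a map on the $W_\eta$-isotypic piece of $\calO_{K^v}^{\eta\-\lalg,\mathrm{Gal}_L\-\sm}$ which, checked on isotypic components, is the natural inclusion into $\calO_{K^v}$. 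Summing over all weights (via the $V_\eta^{(a,b)}$-isotypic decomposition of $\calO_{K^v}^{\eta\-\lalg}$ recalled earlier) and observing compatibility with restriction maps produces a morphism of sheaves $s:\calO_{K^v}^{\eta\-\lalg,\mathrm{Gal}_L\-\sm}\to\calO\bbB_{\dR}^+$ that is a section of $\theta$.

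Next I would pass to the locally analytic part. For $U\in\ffrb$ sufficiently small and $z\in\calO_{K^v}^{\eta\-\lan,\mathrm{Gal}_L\-\sm}(U)$, Theorem \ref{thm:Oetalalocal} gives a presentation $z=\sum_{i,j,k,l}c_{ijkl}\,e_1'^ie_2'^je_3'^ke_4'^l$ with $e_1',\dots,e_4'\in\calO_{K^v}^{\eta\-\lalg}(U)$ of norm one and $c_{ijkl}\in\calO_{K_n}^{\mathrm{Gal}_L\-\sm}(U)$ tending to $0$. Setting $s_m(z):=\sum c_{ijkl}\,s(e_1')^is(e_2')^js(e_3')^ks(e_4')^l$, one shows---using the $\Fil$-adic analogue of \cite[Lemma 6.4.3]{PanII}, which bounds $s(e_i')-e_i'$ in $\Fil^1$ and controls multiplication by the $s(e_i')$ on the Banach $C$-modules $\calO\bbB_{\dR}^+/\Fil^m$---that the series defining $s_m(z)$ converges in $\calO\bbB_{\dR}^+/\Fil^m$ for every $m$; by $\Fil$-adic completeness of $\calO\bbB_{\dR}^+$ these are compatible and define $s(z):=\lim_m s_m(z)\in\calO\bbB_{\dR}^+$, a lift of $z$. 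Independence of the chosen presentation, and hence gluing to a sheaf morphism $s:\calO_{K^v}^{\eta\-\lan,\mathrm{Gal}_L\-\sm}\to\calO\bbB_{\dR}^+$ with $\theta\comp s$ the natural inclusion, follows from density of $\calO_{K^v}^{\eta\-\lalg,\mathrm{Gal}_L\-\sm}(U)$ in $\calO_{K^v}^{\eta\-\lan,\mathrm{Gal}_L\-\sm}(U)$ (Proposition \ref{prop:Oetala}); injectivity of $s$ is then automatic since $\theta\comp s$ is injective.

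I expect the main obstacle to be exactly the convergence and independence assertions in the last step: one must control the $\Fil$-adic valuation of $s(e_i')-e_i'$ and the operator norms of multiplication by the $s(e_i')$ on $\calO\bbB_{\dR}^+/\Fil^m$ (which is filtered by the twists $\calO_{K^v}(j)$, $0\le j<m$). Here one leans on the fact that each $e_i'$ lies in a finite-dimensional $\eta$-algebraic subrepresentation, so that the algebraic construction of Step 1 applies verbatim and is compatible with the integral structures; the argument of \cite[Lemma 6.4.3]{PanII} for the modular curve case should then adapt, but it requires careful bookkeeping of norms through the comparison isomorphism and the evaluation pairing.
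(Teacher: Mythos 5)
Your proposal is correct and follows essentially the same two-step approach as the paper: first construct $s$ on the locally $\eta$-algebraic part via the relative de Rham comparison isomorphism (using that $\eta\neq\iota$ makes the Hodge filtration on $D_{K^v,\eta}^{(a,b),\sm}$ trivial, so one stays in $\calO\bbB_{\dR}^+$), then extend by a $\Fil$-adic convergence argument through the local power series presentation of Theorem \ref{thm:Oetalalocal} and density from Proposition \ref{prop:Oetala}. The paper's proof is exactly this, citing the analogue of \cite[Lem 6.4.3]{PanII} for the convergence estimates just as you anticipate.
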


Recall that $\calO_{K^v}^{\eta\-\lalg}\isom \bigoplus_{(a,b)\in\bbZ^2,a\ge b}V_\eta^{(a,b)}\ox_C D_{K^v,\eta}^{(a,b),\sm}$. The Gauss--Manin connection on $D_{K^v,\eta}^{(a,b),\sm}$ induces a connection $d_{\eta}:\calO_{K^v}^{\eta\-\lalg}\to \calO_{K^v}^{\eta\-\lalg}\ox_{\calO_{K^v}^{\sm}}\Omega^{1,\sm}_{K^v}$.
\begin{proposition}
The connection $d_{\eta}$ on $\calO_{K^v}^{\eta\-\lalg}$ extends to a continuous map 
\[
    d_{\eta}:\calO_{K^v}^{\eta\-\lan}\to \calO_{K^v}^{\eta\-\lan}\ox_{\calO_{K^v}^{\sm}}\Omega^{1,\sm}_{K^v}.
\] 
and is compatible with the connection 
\[
    \calO\bbB_{\dR}^+\to \calO\bbB_{\dR}^+\ox_{\calO_{K^v}^{\sm,\mathrm{Gal}_L\-\sm}}\Omega_{K^v}^{1,\sm,\mathrm{Gal}_L\-\sm}
\]
via the section $s:\calO_{K^v}^{\eta\-\lan,\mathrm{Gal}_L\-\sm}\to \calO\bbB_{\dR}^+$.
\begin{proof}
The construction of $d_\eta$ on $\calO_{K^v}^{\eta\-\lan}$ is given in Proposition \ref{prop:deta}. As the connection on $\calO\bbB_{\dR}^+$ restricts to the connection on $\calO_{K^v}^{\eta\-\lalg,\mathrm{Gal}_L\-\sm}$, the result follows by taking limits since $\calO_{K^v}^{\eta\-\lalg}$ is dense in $\calO_{K^v}^{\eta\-\lan}$.
\end{proof}
\end{proposition}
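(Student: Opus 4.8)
The extension of $d_\eta$ to $\calO_{K^v}^{\eta\-\lan}$ together with its continuity is already available from Proposition \ref{prop:deta}, so the plan is to reduce the statement to that proposition plus the compatibility assertion, and then prove the latter. For the record: over a sufficiently small $U\in\ffrb$, Theorem \ref{thm:Oetalalocal} writes any $z\in\calO_{K^v}^{\eta\-\lan}(U)$ as a convergent series $z=\sum_{i,j,k,l}c_{ijkl}(e_1')^i(e_2')^j(e_3')^k(e_4')^l$ with $e_1',\dots,e_4'\in\calO_{K^v}^{\eta\-\lalg}(U)$ and $c_{ijkl}\in\calO_{K_n}(U)$, $c_{ijkl}\to0$, and $d_\eta(z)$ is the limit of $\nabla(\sum_{i+j+k+l\le N}c_{ijkl}(e_1')^i\cdots)$, convergent by the Leibniz rule and the continuity of $d\colon\calO_{K_n}(U)\to\Omega^{1,\sm}_{K_n}(U)$; independence of the presentation and continuity follow from the density of $\calO_{K^v}^{\eta\-\lalg}(U)$ in $\calO_{K^v}^{\eta\-\lan}(U)$ (Proposition \ref{prop:Oetala}). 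So the real content is the compatibility of $d_\eta$ with $\nabla_{\dR}$ on $\calO\bbB_{\dR}^+$ through the section $s$ of Proposition \ref{sect}.

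First I would check this on the locally algebraic, $\mathrm{Gal}_L$-smooth subsheaf. Recall that, on each isotypic component of $\calO_{K^v}^{\eta\-\lalg,\mathrm{Gal}_L\-\sm}$, the map $s$ was built in the proof of Proposition \ref{sect} from the relative $p$-adic de Rham comparison isomorphism
\[
    W_\eta^{(a,b)}\ox_{\bar L}\calO\bbB_{\dR}^+\;\isom\;D_{K^v,\eta}^{(a,b),\sm,\mathrm{Gal}_L\-\sm}\ox_{\calO_{K^v}^{\sm,\mathrm{Gal}_L\-\sm}}\calO\bbB_{\dR}^+
\]
followed by the evaluation pairing $(W_\eta^{(a,b)})^*\ox W_\eta^{(a,b)}\to\bar L$ and $\theta$. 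The key input is horizontality of this comparison isomorphism: on the left the connection is $\id\ox\nabla_{\dR}$, on the right it is $\nabla^{\mathrm{GM}}\ox\id+\id\ox\nabla_{\dR}$ (Gauss--Manin on $D_{K^v,\eta}^{(a,b),\sm}$ plus de Rham on $\calO\bbB_{\dR}^+$), and the isomorphism commutes with them; this is the relative de Rham comparison theorem \cite{Sch13} pushed forward along $\pi_{\HT}$ and restricted to $\mathrm{Gal}_L$-smooth vectors, the direct analogue of the computations of \cite[\S6.4]{PanII}. Since the evaluation map is flat for the trivial connection, unwinding the definition of $s$ then gives $\nabla_{\dR}\circ s=(s\ox\id)\circ d_\eta$ on each isotypic piece (with $d_\eta$ the Gauss--Manin connection there); summing over components yields the identity on all of $\calO_{K^v}^{\eta\-\lalg,\mathrm{Gal}_L\-\sm}$, both sides landing in $\calO\bbB_{\dR}^+\ox_{\calO_{K^v}^{\sm,\mathrm{Gal}_L\-\sm}}\Omega_{K^v}^{1,\sm,\mathrm{Gal}_L\-\sm}$.

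Then I would conclude by a limiting argument: $d_\eta$ and $s$ are continuous, $\nabla_{\dR}$ on $\calO\bbB_{\dR}^+$ is $\ker\theta$-adically continuous, and $\calO_{K^v}^{\eta\-\lalg,\mathrm{Gal}_L\-\sm}(U)$ is dense in $\calO_{K^v}^{\eta\-\lan,\mathrm{Gal}_L\-\sm}(U)$. For the density I would use Theorem \ref{thm:Oetalalocal} once more: a $\mathrm{Gal}_L$-smooth locally analytic section is such a power series in $e_1',\dots,e_4'$ with coefficients in $\calO_{K_n}^{\mathrm{Gal}_L\-\sm}(U)$, and each coefficient, being a smooth section whose values lie in a finite extension of $\bbQ_p$, is a limit of sections over finite subextensions and hence of $\mathrm{Gal}_L$-smooth locally algebraic ones; truncating then approximates $z$ inside $\calO_{K^v}^{\eta\-\lalg,\mathrm{Gal}_L\-\sm}(U)$. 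Therefore the identity $\nabla_{\dR}\circ s=(s\ox\id)\circ d_\eta$, established on the dense locally algebraic part, extends by continuity to $\calO_{K^v}^{\eta\-\lan,\mathrm{Gal}_L\-\sm}$, which is exactly the asserted compatibility. The hardest part will be pinning down horizontality of the relative de Rham comparison at infinite level after restriction to $\mathrm{Gal}_L$-smooth vectors, together with the sharpened density of $\mathrm{Gal}_L$-smooth locally algebraic sections inside $\mathrm{Gal}_L$-smooth locally analytic sections (a refinement of Proposition \ref{prop:Oetala}); both follow by adapting the $L=\bbQ_p$ arguments of \cite[\S6.4]{PanII}, but the bookkeeping needs care.
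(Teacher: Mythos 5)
Your proposal is correct and follows the same route as the paper: the extension and continuity are quoted from Proposition \ref{prop:deta}, the compatibility is checked on the locally algebraic $\mathrm{Gal}_L$-smooth part via horizontality of the relative de Rham comparison isomorphism underlying the construction of $s$, and the general case follows by density and continuity. You simply spell out (correctly, and in more detail than the paper) the horizontality step and the refinement that $\mathrm{Gal}_L$-smooth locally algebraic sections are dense in $\mathrm{Gal}_L$-smooth locally analytic ones, both of which the paper's one-line proof leaves implicit.
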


\subsection{Proof of Theorem \ref{N=dd}}
 Now we begin to prove Theorem \ref{N=dd}. The idea is to replace $\B^{+,\text{la}}_{\text{dR},k}$ by a resolution using the \textit{Poincar\'e lemma sequence}. We can define the sheaves $\mathcal{O}\B^+_{\text{dR}},\mathcal{O}\B^+_{\text{dR},k},\mathcal{O}\B^{+,\text{la}}_{\text{dR}},\mathcal{O}\B^{+,\text{la},\tilde{\chi}}_{\text{dR},k}$ on $\mathscr{F}\ell$ as before. Consider the following truncated Poincar\'e lemma sequence:
	
	$$\xymatrix{
	&\mathcal{O}^{\text{la},\tilde{\chi_k}}_{K^v}\\
	0 \ar[r] &\B^{+,\text{la},\tilde{\chi}_k}_{\text{dR},k+1}\ar[u]\ar[r]&\mathcal{O}\B^{+,\text{la},\tilde{\chi}_k}_{\text{dR},k+1}\ar[r]^{\nabla\qquad}\ar[ul]_{\theta}&\mathcal{O}\B^{+,\text{la},\tilde{\chi}_k}_{\text{dR},k}\otimes_{\mathcal{O}_{V_0}}\Omega^1_{V_0}\ar[r]&0\\
	0 \ar[r] &\text{gr}^k\B^{+,\text{la},\tilde{\chi}_k}_{\text{dR}}\ar[u]\ar[r]&\text{gr}^k\mathcal{O}\B^{+,\text{la},\tilde{\chi}_k}_{\text{dR}}\ar[r]\ar[u]&\text{gr}^{k-1}\mathcal{O}\B^{+,\text{la},\tilde{\chi}_k}_{\text{dR},k}\otimes_{\mathcal{O}_{V_0}}\Omega^1_{V_0}\ar[r]\ar[u]&0
	}$$
	Evaluating this diagram at $U\in \mathfrak{B}$, taking the subspace of $G_{K_\infty}$-fixed, $G_K$-analytic vectors, and taking the generalized eigenspace of $0$ with respect to the action of $1\in \Q_p\cong\text{Lie}(\Gal(K_\infty
	/K))$, we obtain the following diagram:
	
		$$\xymatrix{
		&\mathcal{O}^{\text{la},(0,1-k)}_{K^v}(U)^{G_K}\\
		0 \ar[r] &E_0(\B^{+,\text{la},\tilde{\chi}_k}_{\text{dR},k+1}(U)^K)\ar[u]\ar[r]&E_0(\mathcal{O}\B^{+,\text{la},\tilde{\chi}_k}_{\text{dR},k+1}(U)^K)\ar[r]^{E_0(\nabla)\qquad}\ar[ul]_{E_0(\theta)}&E_0(\mathcal{O}\B^{+,\text{la},\tilde{\chi}_k}_{\text{dR},k}(U)^K)\otimes_{\mathcal{O}_{V_0}}\Omega^1_{V_0}\ar[r]&0\\
		0 \ar[r] &\mathcal{O}^{\text{la},(-k,1)}_{K^v}(U)(k)^{G_K}\ar[u]\ar[r]&E_0(\text{gr}^k\mathcal{O}\B^{+,\text{la},\tilde{\chi}_k}_{\text{dR}}(U)^K)\ar[r]\ar[u]&\mathcal{O}^{\text{la},(0,1-k)}_{K^v}(U)^{G_K}\otimes_{\mathcal{O}_{V_0}}(\Omega^1_{V_0})^{\otimes k}\ar[r]\ar[u]&0
	}$$

    	As in \cite[Lemma 6.3.3]{PanII}, we want to construct a map
	$$s_{k+1}:\mathcal{O}^{\text{la},(0,1-k)}_{K^v}(U)^{G_K}\ra \mathcal{O}\B^{+,\text{la},\tilde{\chi}_k}_{\text{dR},k+1}(U)^{G_K}$$
	which  is a section of $E_0(\theta)$. On $\mathcal{O}^{\eta\text{-la}}_{K^v}(U)^{G_K}$ for $\eta\neq \iota$, this is already done in Proposition \ref{sect}. By the tensor decomposition in Proposition \ref{prop:tensordecomposition}, we can extend it to $\mathcal{O}^{\iota^c\text{-la}}_{K^v}(U)^{G_K}$. 
    Combining this and the strategy in Section 6.4 of \cite{PanII}, we can do the following construction.

    Recall that on $V_\infty$, we have the p-adic de Rham comparison isomorphism
	$$V_\iota\otimes_{\overline{L}}\mathcal{O}\B_{\text{dR}}(V_\infty)\cong D\otimes_{\mathcal{O}_{V_0}^{\mathrm{Gal}_L\text{-sm}}} \mathcal{O}\B_{\text{dR}}(V_\infty),$$   
	which preserves the filtration and connection, where $V_\iota$ is the standard $\iota$-representation of $\text{GL}_2(L)$ and $D$ is the associated filtered vector bundle with connection (i.e., $D$ is the $\iota$-part of the de Rham cohomology of the universal abelian scheme, and we denote $D^{(0,-1)}$ by $D$ before). The filtration gives
	$$V_\iota\otimes_{\overline{L}}t\mathcal{O}\B_{\text{dR}}^+(V_\infty)\subset D\otimes_{\mathcal{O}_{V_0}^{\mathrm{Gal}_L\text{-sm}}} \mathcal{O}\B_{\text{dR}}^+(V_\infty)\subset V_\iota\otimes_{\overline{L}}\mathcal{O}\B_{\text{dR}}^+(V_\infty).$$
	Thus there is a natural $\mathcal{O}_{V_0}$-linear, $G_0$-equivariant map
	$$l_1:D(V_0)\otimes_{\overline{L}}V^*_\iota\ra \mathcal{O}\B_{\text{dR}}^+(V_\infty)$$
	preserving the filtration and connection. Taking the composite with $\theta$ gives
	$$\theta\circ l_1:D(V_0)\otimes_{\overline{L}}V^*_\iota\ra \text{gr}^0D(V_0)\otimes_{\overline{L}}V^*_\iota\ra \mathcal{O}_{\mathcal{X}_{K^v}}(V_\infty).$$
	Note that this induces the Hodge--Tate filtration
	$$\text{gr}^0D(V_0)\cong \omega_+^{-1}\otimes \wedge^2 D(V_0)\hookrightarrow \mathcal{O}_{\mathcal{X}_{K^v}}(V_\infty) \otimes_{\overline{L}}V_\iota.$$
	Let $(1,0)^*,(0,1)^*\in V^*_\iota$ be the dual basis of $V_\iota=L^{\oplus 2}$. Then for $f\in D(V_0)$,
	\begin{gather*}
		\theta\circ l_1(f\otimes (1,0)^*)=\frac{\overline{f}e_2}{tc},\\
		\theta\circ l_1(f\otimes (0,1)^*)=-\frac{\overline{f}e_1}{tc},
	\end{gather*}
	where $\overline{f}\equiv f \mod \text{Fil}^1D(V_0)$, and $tc$ can be regarded as a Tate twist.

	Assume that $e_1$ is invertible on $V_\infty$. Fix a section $f_1\in D(V_0)(V_0)$  whose image in $\text{gr}^0 D(V_0)$ is a generator. Then $\theta\circ l_1(f_1\otimes (0,1)^*)\in \mathcal{O}_{K^v}(U)^\times$ and hence  $l_1(f_1\otimes (0,1)^*)$ is invertible in $\mathcal{O}\B_{\text{dR}}^+(U)$. Let
	$$\tilde{x}=-\frac{l_1(f_1\otimes (1,0)^*)}{l_1(f_1\otimes (0,1)^*)}\in \mathcal{O}\B_{\text{dR}}^+(U).$$ 
	Then $\theta(\tilde{x})=x$. It is clear that $G_K$ fixes $\tilde{x}$ and the Lie algebra $\mathfrak{gl}_2(L)$ acts on $\tilde{x}$ in the same way as $x$. Note that any element $f\in \mathcal{O}_{K^v}^{\text{la},(0,0)}(U)^{G_K}$ can be written as
	$$f=\sum_{i=0}^{+\infty}c_i(x-x_n)^i$$
	for some $n\geq 0$ and $c_i\in \mathcal{O}_{K^v}^{\iota^c\text{-la}}$ such that $c_ip^{(n-1)i}$ is uniformly bounded. Thus for any $l$, we can define
    $$\tilde{s}_l:\mathcal{O}_{K^v}^{\text{la},(0,0)}(U)^{G_K}\ra \mathcal{O}\B_{\text{dR},l}^{+,\text{la},\tilde{\chi}_1}(U)^{G_K}$$
    sending $f$ to $\sum_{i=0}^{+\infty}s(c_i)(\tilde{x}-x_n)^i$, which is well-defined by \cite[Lemma 6.4.3]{PanII}. In particular, letting $l=2$ we get the desired section $s_2$.
    
    For general $k\geq 1$, consider the map
	$$l_{k-1}: \text{Sym}^{k-1} D(V_0)\otimes_{\overline{L}} \text{Sym}^{k-1} V_\iota^*\ra \mathcal{O}\B^+_{\text{dR}}(U)$$
	induced by $l_1$. Using $\tilde{s}_{k+1}$, we can extend this to a map
	$$l'_{k-1}: \text{Sym}^{k-1} D(V_0)\otimes_{\mathcal{O}_{V_0}^{\mathrm{Gal}_L\text{-sm}}}\mathcal{O}_{K^v}^{\text{la},(0,0)}(U)^{G_K}\otimes_{\overline{L}} \text{Sym}^{k-1} V_\iota^*\ra \mathcal{O}\B^+_{\text{dR},k+1}(U).$$
    Note that the $\tilde{\chi}_{k}$-isotypic part of 
	$$\text{gr}^0\text{Sym}^{k-1} D(V_0)\otimes_{\mathcal{O}_{V_0}^{\mathrm{Gal}_L\text{-sm}}}\mathcal{O}_{K^v}^{\text{la},(0,0)}(U)^{G_K}\otimes_{\overline{L}} \text{Sym}^{k-1} V_\iota^*$$ is isomorphic to $\mathcal{O}_{K^v}^{\text{la},(0,1-k)}(U)^{G_K}$ via $\theta\circ l_{k-1}'$, and the Kodaira--Spencer isomorphism gives a natural section $\text{gr}^0\text{Sym}^{k-1} D(V_0)\ra \text{Sym}^{k-1} D(V_0)$. Then the composite 
    $$\mathcal{O}_{K^v}^{\text{la},(0,1-k)}(U)^{G_K}\xrightarrow{\psi_{k-1}}\text{Sym}^{k-1} D(V_0)\otimes_{\mathcal{O}_{V_0}^{\mathrm{Gal}_L\text{-sm}}}\mathcal{O}_{K^v}^{\text{la},(0,0)}(U)^{G_K}\otimes_{\overline{L}} \text{Sym}^{k-1} V^*_\iota\xrightarrow{l'_{k-1}}\mathcal{O}\B^+_{\text{dR},k+1}(U)$$
    gives the desired map $s_{k+1}:\mathcal{O}^{\text{la},(0,1-k)}_{K^v}(U)^{G_K}\ra \mathcal{O}\B^{+,\text{la},\tilde{\chi}_k}_{\text{dR},k+1}(U)^{G_K}$.

According to \cite[Lemma 6.3.3]{PanII}, $N_k=-N_k'\circ(E_0(\nabla)\circ s_{k+1})$ where $N_k':\mathcal{O}^{\text{la},(0,1-k)}_{K^v}(U)^{G_K}\otimes_{\mathcal{O}_{V_0}}(\Omega^1_{V_0})^{\otimes k}\ra \mathcal{O}^{\text{la},(-k,1)}_{K^v}(U)^{G_K}$ is obtained by applying $1\in \Z_p\cong \text{Lie}(\text{Gal}(K_\infty/K))$ to the middle term of the exact sequence
$$0\ra \mathcal{O}^{\text{la},(-k,1)}_{K^v}(U)(k)^{G_K}\ra E_0(\text{gr}^k\mathcal{O}\B^{+,\text{la},\tilde{\chi}_k}_{\text{dR},k+1}(U)^{G_K})\ra \mathcal{O}^{\text{la},(0,1-k)}_{K^v}(U)^{G_K}\otimes_{\mathcal{O}_{V_0}}(\Omega^1_{V_0})^{\otimes k}\ra 0.$$
Thus Theorem \ref{N=dd} follows from these two propositions:
 
\begin{prop}
    $E_0(\nabla)\circ s_{k+1}=d^k$, where both sides are viewed as maps $$\mathcal{O}^{\textup{la},(0,1-k)}_{K^v}(U)^{G_K}\ra \mathcal{O}^{\textup{la},(0,1-k)}_{K^v}(U)^{G_K}\otimes_{\mathcal{O}_{V_0}}(\Omega^1_{V_0})^{\otimes k}$$.
\end{prop}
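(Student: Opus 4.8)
The plan is to reduce the identity $E_0(\nabla)\circ s_{k+1}=d^k$ to a computation modulo $t$, and then to match the two sides on a dense subspace where everything is explicit. First I would invoke the analogue of \cite[Lemma 6.4.12]{PanII}: the natural reduction map $\mathcal{O}\mathbb{B}^{+,\mathrm{la},\tilde\chi_k}_{\mathrm{dR},k+1}(U)^{G_K}\to(\mathcal{O}\mathbb{B}^{+,\mathrm{la},\tilde\chi_k}_{\mathrm{dR},k+1}(U)/t)^{G_K}$ is an isomorphism (using Lemma \ref{chicomH1}, which gives the graded pieces, together with the vanishing of the relevant Galois cohomology in the generalized eigenspace $E_0$). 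This lets me replace $E_0(\nabla)\circ s_{k+1}$ by its reduction modulo $t$, which lands in $\mathrm{Fil}^{k-1}$ of the target because, as in \cite[Lemma 6.4.6]{PanII}, the section $\tilde{x}$ of the coordinate $x$ satisfies $\nabla(\tilde x)\in t\,\mathcal{O}\mathbb{B}^+_{\mathrm{dR}}(U)\otimes\Omega^1_{V_0}$ — so iterating, the connection applied to $s_{k+1}$ of a section of weight $(1-k,0)$ drops into $\mathrm{Fil}^{k-1}$, and under the identification $\mathrm{gr}^{k-1}\cong\mathcal{O}^{\mathrm{la},(1-k,0)}_{K^v}(\,k-1\,)$ from Lemma \ref{chicomH1} this becomes a map $\mathcal{O}^{\mathrm{la},(1-k,0)}_{K^v}(U)^{G_K}\to\mathcal{O}^{\mathrm{la},(1-k,0)}_{K^v}(U)^{G_K}\otimes(\Omega^1_{V_0})^{\otimes k}$.

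Next I would verify the identity on a dense subspace. Recall (Corollary \ref{cor:Oiotalachi}, Proposition \ref{prop:Oetala}, Proposition \ref{prop:tensordecomposition}) that $\mathcal{O}^{\mathrm{la},(1-k,0)}_{K^v}(U)^{G_K}$ has an explicit local expansion: sections of the form $f=\sum c_i(x-x_n)^i$ with $c_i$ built from $\mathcal{O}^{\iota^c\text{-la}}_{K^v}$, symmetric powers of $D$, and the Kodaira--Spencer section. The construction of $s_{k+1}$ factors through $\psi_{k-1}$ into $\mathrm{Sym}^{k-1}D(V_0)\otimes_{\mathcal{O}_{V_0}^{\mathrm{Gal}_L\text{-sm}}}\mathcal{O}^{\mathrm{la},(0,0)}_{K^v}(U)^{G_K}\otimes\mathrm{Sym}^{k-1}V_\iota^*$, followed by $l'_{k-1}$; and the de Rham comparison map $l_{k-1}$ is, by construction, horizontal, i.e. compatible with the Gauss--Manin connection on $\mathrm{Sym}^{k-1}D$ and the connection $\nabla$ on $\mathcal{O}\mathbb{B}^+_{\mathrm{dR}}$. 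So $\nabla\circ s_{k+1}$, read off through $l'_{k-1}$, is computed by the Gauss--Manin connection $\nabla_{k-1}$ on $\mathrm{Sym}^{k-1}D$ composed with the connection on $\mathcal{O}^{\mathrm{la},(0,0)}_{K^v}$ (which I already know equals $d^1$ by the $k=1$ case, \cite[Proposition 6.4.6]{PanII} and the discussion there). The content is then a commuting-square statement: the diagram
\[
\begin{CD}
\mathrm{Sym}^{k-1}D^{(0,0)}(V_0)\otimes\mathrm{Sym}^{k-1}V_\iota^* @>\nabla_{k-1}>> \mathrm{Sym}^{k-1}D(V_0)\otimes\Omega^1_{V_0}\otimes\mathrm{Sym}^{k-1}V_\iota^* \\
@Vl'VV @VV l'\otimes 1 V \\
\mathcal{O}\mathbb{B}^+_{\mathrm{dR},k+1}(U)/t @>\nabla\!\!\mod t>> \mathcal{O}\mathbb{B}^+_{\mathrm{dR},k+1}(U)/t\otimes\Omega^1_{V_0}
\end{CD}
\]
commutes (here $\mathrm{Sym}^{k-1}D^{(0,0)}$ denotes $\mathrm{Sym}^{k-1}D\otimes\mathcal{O}^{\mathrm{la},(0,0)}_{K^v}$). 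Precomposing this square with $\psi_{k-1}$ and projecting to the relevant graded piece yields $E_0(\nabla)\circ s_{k+1}=d^k$ by the very definition of $d^k$ in Theorem \ref{thm:d} (which was built from $d^1$, the Gauss--Manin connection on $D^{(k,0),\sm}$, the Hodge filtration, and the Kodaira--Spencer isomorphism). To finish, I would check the square commutes with multiplication by $x$ (equivalently $\tilde x$, using $\nabla\tilde x\in t(\cdots)$) and on the image of $\mathcal{O}^{\iota^c\text{-la}}_{K^v}$, where it is the relative Gauss--Manin connection, which is horizontal by the de Rham comparison; density then gives it everywhere.

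The main obstacle I anticipate is twofold. First, one must track carefully the Hodge and Kodaira--Spencer bookkeeping so that the graded piece $\mathrm{gr}^{k-1}$ of $\mathcal{O}\mathbb{B}^+_{\mathrm{dR},k+1}$, which a priori is $\mathcal{O}^{\mathrm{la},(1-k,0)}_{K^v}(k-1)$, is identified with the correct source of $d^k$ and not off by a power of $t$ (a Tate twist) or by the determinant/Serre twists appearing in Proposition \ref{prop:twist}; this is exactly where a nonzero constant $c_k\in L^\times$ (as in the statement of Theorem \ref{N=dd}) can and does enter, and keeping it merely a scalar rather than something depending on $U$ requires the horizontality of $l_{k-1}$ being $\mathcal{O}_{V_0}$-linear and $G_0$-equivariant. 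Second, the new ingredient relative to \cite{PanII} is the presence of the $\iota^c$-analytic directions: one needs that $s=s_{k+1}|_{\mathcal{O}^{\iota^c\text{-la}}_{K^v}}$ (constructed in Proposition \ref{sect} via the trivial Hodge filtration on $D_{K^v,\eta}$ for $\eta\neq\iota$) is genuinely compatible with $d_{\iota^c}$ and with $\nabla$ on $\mathcal{O}\mathbb{B}^+_{\mathrm{dR}}$ — this is the point where I would lean on the relative de Rham comparison theorem for the universal abelian scheme being functorial and horizontal, and on the density of the locally $\iota^c$-algebraic sections (Proposition \ref{prop:Oetala}) to extend the identity from the algebraic sections, where it is a formal consequence of the comparison isomorphism, to all locally $\iota^c$-analytic sections by continuity.
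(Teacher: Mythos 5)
Your proposal is correct and follows essentially the same route as the paper's: reduce modulo $t$ using the isomorphism $\mathcal{O}\mathbb{B}^{+,\mathrm{la},\tilde{\chi}_k}_{\mathrm{dR},k+1}(U)^{G_K}\xrightarrow{\sim}(\mathcal{O}\mathbb{B}^{+,\mathrm{la},\tilde{\chi}_k}_{\mathrm{dR},k+1}(U)/(t))^{G_K}$ (the Hodge--Tate weight argument, i.e.\ the analogue of Lemma~6.4.12 of \cite{PanII}), and then verify the commutativity of the square comparing the Gauss--Manin connection on $\mathrm{Sym}^{k-1}D^{(0,0)}(V_0)\otimes\mathrm{Sym}^{k-1}V_\iota^*$ with $\nabla\bmod t$ on $\mathcal{O}\mathbb{B}^{+}_{\mathrm{dR},k+1}(U)/(t)$ via $l'_{k-1}$, precomposed with $\psi_{k-1}$ — exactly the paper's commutative diagram. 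One minor correction: you anticipate the constant $c_k$ of Theorem~\ref{N=dd} entering here, but this proposition is an exact equality; $c_k$ appears only in the subsequent proposition $N_k'=c_k\,\overline{d}^k$.
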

\begin{proof}
    The proof parallels \cite[Proposition 6.3.13]{PanII}. Roughly speaking, let $DV_k:=\text{Sym}^{k-1} D(V_0)\otimes_{\mathcal{O}_{V_0}^{\mathrm{Gal}_L\text{-sm}}}\mathcal{O}_{K^v}^{\text{la},(0,0)}(U)^{G_K}\otimes_{\overline{L}} \text{Sym}^{k-1} V^*_\iota$, then we have the commutative diagram
    $$\xymatrix{
        \mathcal{O}^{\text{la},(0,1-k)}_{K^v}(U)^{G_K}\ar[r]^{\qquad\psi_{k-1}}\ar[dr]_{s_{k+1}\mod t} &DV_k \ar[r]^{\nabla\qquad}\ar[d]^{l_{k-1}'\mod t} &DV_k\otimes \Omega_{V_0}^1\ar[d]^{l_{k-1}'\otimes 1\mod t}\\
        &\mathcal{O}\B^{+}_{\text{dR},k+1}(U)/(t)\ar[r]^{\nabla\mod t} &\mathcal{O}\B^{+}_{\text{dR},k}(U)/(t)\otimes \Omega_{V_0}^1.
    }$$
    Thus $\nabla\circ s_{k+1}=d^k\mod t$. By looking at the Hodge--Tate weight of each graded part we know that the natural map
    $$\mathcal{O}\B^{+,\text{la},\tilde{\chi}_k}_{\text{dR},k+1}(U)^{G_K}\ra (\mathcal{O}\B^{+,\text{la},\tilde{\chi}_k}_{\text{dR},k+1}(U)/(t))^{G_K}$$
    is an isomorphism. Therefore $E_0(\nabla)\circ s_{k+1}=d^k$.
    
\end{proof}

\begin{prop}
    $N_k'=c_k \overline{d}^k$ for some $c_k\in L^\times$, where both sides are viewed as maps $$\mathcal{O}^{\textup{la},(0,1-k)}_{K^v}(U)^{G_K}\otimes_{\mathcal{O}_{V_0}}(\Omega^1_{V_0})^{\otimes k}\ra \mathcal{O}^{\textup{la},(-k,1)}_{K^v}(U)^{G_K}.$$
\end{prop}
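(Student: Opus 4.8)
The plan is to pin down $N_k'$ by the uniqueness characterization of $\overline{d}^k$ recorded in Theorem \ref{thm:dbar}: it is enough to show that $N_k'$ is $\calO_{K^v}^{\iota^c\-\lan}$-linear (on the $G_K$-fixed sections over which it is defined), commutes with the $\GL_2(L)$-action and the Hecke action away from $p$, and that on an affinoid $U\in\ffrb$ on which $e_1$ is invertible it is given, on a generating section, by a nonzero $L$-rational multiple of $(u^+)^{k+1}$. The equivariance is immediate from the construction: the exact sequence
\[
    0\to \calO^{\lan,(1,-k)}_{K^v}(U)(k)^{G_K}\to E_0(\gr^k\calO\bbB^{+,\lan,\tilde\chi_k}_{\dR,k+1}(U)^{K})\to \calO^{\lan,(1-k,0)}_{K^v}(U)^{G_K}\ox_{\calO_{V_0}}(\Omega^1_{V_0})^{\ox k}\to 0
\]
and the section $s_{k+1}$ are assembled from the truncated Poincar\'e lemma sequence and the $p$-adic de Rham comparison isomorphism, all of which are $\GL_2(L)\times\Gal_L$- and Hecke-equivariant by Theorem \ref{thm:piHT}; since $N_k'$ is obtained by applying the arithmetic Sen operator $\nabla$ (the action of $1\in\Lie\Gal(K_\infty/K)$), which commutes with the geometric $\GL_2(L)$- and Hecke-actions, $N_k'$ inherits the equivariance.

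For the linearity one first observes that $N_k'$ is $\calO_{V_0}$-linear by the Leibniz rule, because $\nabla$ kills $\calO_{V_0}$ (which, after enlarging the base field $M$, is fixed by $\Gal(K_\infty/K)$). The new point compared with the $L=\bbQ_p$ case of \cite{PanII} is to upgrade this to $\calO^{\iota^c\-\lan}_{K^v}(U)^{G_K}$-linearity. This uses the ring section $s:\calO^{\iota^c\-\lan,\Gal_L\-\sm}_{K^v}\to\calO\bbB^+_{\dR}$ of $\theta$ built in Proposition \ref{sect}: its image consists of sections fixed by the arithmetic Sen operator, and the extension $l'_{k-1}$ entering the construction of $s_{k+1}$ is linear over $s(\calO^{\iota^c\-\lan,\Gal_L\-\sm}_{K^v})$; since $\calO^{\iota^c\-\lan}_{K^v}(U)^{G_K}$ is $\Gal_L$-smooth, $N_k'$ is linear over it. In particular $N_k'$ is annihilated by $\calO^{\iota\-\lalg,\iota^c\-\lan,(1-k,0)}_{K^v}$ and so is determined by its restriction to the $\iota$-analytic sections, just as $\overline{d}^k$ is.

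It then remains to compute the leading term on $\calO^{\iota\-\lan}_{K^v}$, which is the analogue of the explicit computation of \cite[\S 6.3--6.4]{PanII}: on a chart with coordinate $x=e_2/e_1$ and its canonical lift $\tilde x\in\calO\bbB^+_{\dR}(U)$ coming from the de Rham comparison for $V_\iota$, one evaluates $\nabla$ on $\gr^k\calO\bbB^{+,\lan,\tilde\chi_k}_{\dR}$, extracts the off-diagonal (weight $-k$ to weight $0$) component using the Faltings extension identification of Theorem \ref{thm:FEHT} together with the Kodaira--Spencer isomorphism, and finds that on $(dx)^{\ox k}$ it is multiplication by $(u^+)^{k+1}$ up to a constant; since every ingredient (the Faltings extension, the comparison isomorphisms, the $t$-adic filtration, the Kodaira--Spencer map) is defined over $L$ and the resulting constant is visibly nonzero, it lies in $L^\times$. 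By the uniqueness in Theorem \ref{thm:dbar} we conclude $N_k'=c_k\overline{d}^k$ for some $c_k\in L^\times$, matching (up to sign) the constant of Theorem \ref{N=dd} via $N_k=-N_k'\circ d^k$ and $I_{k-1}=\overline{d}'^k\circ d^k$. The main obstacle is this last step, the explicit identification of the off-diagonal arithmetic Sen operator on $\gr^k\calO\bbB^+_{\dR}$ with a power of $u^+$; the genuinely new work relative to \cite{PanII} is controlling the $\iota^c$-analytic directions, which is exactly what Proposition \ref{sect} makes possible.
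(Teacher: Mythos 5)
Your overall plan is sound and agrees with the paper's on the two preliminary reductions: you establish $\GL_2(L)$- and Hecke-equivariance of $N_k'$ from the $\Gal_L$-equivariance of the Poincar\'e lemma and comparison maps, and you establish $\calO^{\iota^c\-\lan}_{K^v}(U)^{G_K}$-linearity via the arithmetic Sen triviality of the lift $s$ from Proposition~\ref{sect}, which is exactly the new ingredient relative to \cite{PanII}. The paper's proof also begins with the $\iota^c$-linearity reduction, but then finishes the $\iota$-analytic part abstractly: it identifies $N_k'$ (up to scalar) as the nilpotent part of the $\calZ(U(\frg_\iota))$-action on the $G_K$-invariants, using Corollary~\ref{cor:infSen} to tie the arithmetic Sen operator to the Casimir, and then compares with $\bar d^k$ through the same infinitesimal-character picture. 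You instead propose to verify the three uniqueness conditions of Theorem~\ref{thm:dbar} directly by computing the off-diagonal Sen operator on $\gr^k\calO\bbB^+_{\dR}$ through the Faltings extension and Kodaira--Spencer identifications. Both routes work; yours is more explicit and hence carries more notational overhead on the leading-term step, while the paper's is more structural and delegates the computation to \cite[Prop.~6.3.14]{PanII}.

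One concrete slip: the relevant formula is for $\bar d^k$ acting on $\calO_{K^v}^{\lan,(1-k,0)}=\calO_{K^v}^{\lan,(-(k-1),0)}$, so condition~(ii) of Theorem~\ref{thm:dbar} (applied with the index shifted by one) gives $c\,(u^+)^{k}(s)\ox(dx)^{k}$, not $(u^+)^{k+1}$ and $(dx)^{k+1}$ as you wrote. The rest of the argument for $c_k\in L^\times$---that every ingredient (de Rham comparison, Faltings extension, $t$-adic filtration, Kodaira--Spencer) is defined over $L$---is correct and matches the paper's intent.
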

\begin{proof}
    As both maps are $\mathcal{O}^{\iota^c\text{-la},(0,1-k)}_{K^v}(U)^{G_K}$-linear, it suffices to prove it on $\mathcal{O}^{\iota\text{-la},(0,1-k)}_{K^v}(U)^{G_K}\otimes_{\mathcal{O}_{V_0}}(\Omega^1_{V_0})^{\otimes k}$. Then the proof is the same as \cite[Proposition 6.3.14]{PanII}. Roughly speaking, we can show that $N_k'$ can also be obtained by looking at the action of $Z$ on the $G_K$-invariant up to a non-zero constant, then we can relate it with $\overline{d}^k$.
\end{proof}

\subsection{De Rhamness and classicality}
	Using the results in the previous sections, we can prove the following theorem:
	\begin{thm}\label{main}
		Let $E$ be a finite extension of $\Q_p$  containing all embeddings of $L$ into $\overline{\Q}_p$ and let 
		$$\rho:\mathrm{Gal}_F\ra \mathrm{GL}_2(E)$$ be a two-dimensional continuous absolutely irreducible representation of $\mathrm{Gal}_F$. We denote by $\Pi(\rho):=\Hom_{E[\Gal_F]}(\rho,\tilde{H}^1(K^v,E))$, which is a unitary Banach representation of $\mathrm{GL}_2(L)$.
        Suppose that 
        
		$\mathrm{(1)}$ $\Pi(\rho)^{\sigma\-\mathrm{la},\sigma^c\text{-}\mathrm{lalg}}\neq 0$.
		
		$\mathrm{(2)}$ $\rho|_{\mathrm{Gal}_L}$ is $\sigma$-de Rham of $\sigma$-Hodge--Tate weights $0,k$ for some embedding $\sigma:L\hookrightarrow E$ and integer $k>0$. 
		Then $\rho$ arises from a cuspidal eigenform of weight $k+1$.
	\end{thm}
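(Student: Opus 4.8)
\emph{Proof plan.} The strategy is Pan's: realize the $\rho$-isotypic part of completed cohomology inside the cohomology of the sheaf $\calO_{K^v}^{\lan}$ on $\fl$, use the $\sigma$-de Rhamness hypothesis (2) to show the Fontaine operator vanishes on it, transport this via the identification $N_k=c_kI_{k-1}$ of Theorem~\ref{N=dd} to conclude the intertwining operator vanishes on it, and finally invoke the classicality of $\ker I$ (Corollary~\ref{classical}) to deduce that the Hecke eigensystem attached to $\rho$ is classical of the correct weight. First I would fix a $\bbQ_p$-embedding $\iota\colon L\to C$ and an embedding $u\colon E\to C$ with $u\comp\sigma=\iota$, and base-change along $u$; hypothesis (1) then says $\Hom_{\Gal_F}(\rho,\tilde H^1(K^v,C))^{\iota\-\lan,\iota^c\-\lalg}\neq 0$. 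By the Eichler--Shimura relation (\cite{BLR}, \cite{Che}) and irreducibility of $\rho$ one has $\tilde H^1(K^v,C)[\rho]\isom\rho\ox_C\Hom_{\Gal_F}(\rho,\tilde H^1(K^v,C))$, so this $\rho$-part is a subrepresentation of $\tilde H^1(K^v,C)^{\iota\-\lan}\isom H^1(\fl,\calO_{K^v}^{\iota\-\lan})$ (Theorem~\ref{11}, Theorem~\ref{thm:JlacommutesH1}). Hypothesis (2) fixes the infinitesimal character: by the relation between $\sigma$-Hodge--Tate weights and the infinitesimal character (via Corollary~\ref{cor:infSen}, or \cite{DPSinf} as recalled in the text), the $\rho$-part lies in $H^1(\fl,\calO_{K^v}^{\lan,\tilde\chi_k})$, hence in one of the summands $H^1(\fl,\calO_{K^v}^{\lan,(1-k,0)})$ or $H^1(\fl,\calO_{K^v}^{\lan,(1,-k)})(k)$; twisting by the section $\mathrm t\in\calO_{K^v}^{V_\iota^{(1,1)}\-\lalg}$ if necessary, we may assume it contributes nontrivially to $H^1(\fl,\calO_{K^v}^{\lan,(1-k,0)})$.

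Next I would run the Fontaine operator argument. Using the truncated Poincar\'e lemma sequence and Lemma~\ref{chicomH1}, the module $H^1(\fl,\B^{+,\lan,\tilde\chi_k}_{\dR,k+1})$ is flat over $B_{\dR}^+/(t^{k+1})$ and its $\Gal_L$-action, graded pieces, and Fontaine operator compute those of $\tilde H^1(K^v,C)^{\lan,\tilde\chi_k}$ with its $p$-adic Hodge structure. Passing to the $\rho$-isotypic part and using the $\Gal_F\times\GL_2(L)$-decomposition above, the induced $\Gal_L$-module is $(\rho|_{\Gal_L}\ox_{L,\sigma}B_{\dR}^+/(t^{k+1}))\ox_C\Hom_{\Gal_F}(\rho,\tilde H^1(K^v,C))$, so its Fontaine operator is $N_W\ox\id$ with $W=\rho|_{\Gal_L}\ox_{L,\sigma}B_{\dR}^+/(t^{k+1})$. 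Since $\rho|_{\Gal_L}$ is $\sigma$-de Rham of $\sigma$-Hodge--Tate weights $0,k$, Theorem~\ref{N=0} gives $N_W=0$. Therefore the sheaf-level Fontaine operator $N_k$, and hence by Theorem~\ref{N=dd} the intertwining operator $I_{k-1}$, annihilate the $\rho$-part of $H^1(\fl,\calO_{K^v}^{\lan,(1-k,0)})$; that is, the $\rho$-part lies in $\ker I_{k-1}^1$.

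Finally I would invoke the explicit computation of $\ker I$. By Corollary~\ref{classical} (which rests on Theorems~\ref{thm:kerI1} and~\ref{thm:kerI2}, hence on the classicality of the rigid cohomology of Igusa towers \cite{Joh16} and the Jacquet--Langlands correspondence), $\ker I^1$ admits a generalized Hecke eigenspace decomposition indexed by the finite set $\sigma_{sw^*}^{K^v}$ of \emph{classical} Hecke eigensystems. Thus the eigensystem $\lambda\colon\bbT^S\to C$ attached to $\rho$ lies in $\sigma_{sw^*}^{K^v}$, so it is realized by a classical automorphic representation $\pi$ of $G$; comparing the $\sigma$-Hodge--Tate weights $\{0,k\}$ with the archimedean weight shows $\pi$ has weight $k+1$, and the irreducibility of $\rho$ forces $\pi$ to be cuspidal (the Eisenstein and one-dimensional contributions give reducible Galois representations). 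Transferring back, $\rho$ arises from a cuspidal eigenform of weight $k+1$.

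\emph{Main obstacle.} The delicate point is the compatibility, after passing to $\rho$-isotypic components of $H^1$, between the geometric (sheaf-level) Fontaine operator $N_k$ on $\calO_{K^v}^{\lan,(1-k,0)}$ and the arithmetic Fontaine operator $N_W$ of the local Galois representation --- i.e.\ that $\sigma$-de Rhamness of $\rho|_{\Gal_L}$ genuinely kills the $\rho$-part of $H^1$. This requires the full $\B_{\dR}^+$-structure on locally analytic completed cohomology (flatness and the identification of graded pieces, Lemma~\ref{chicomH1}), compatibility of the lift $s$ of $\calO_{K^v}^{\iota^c\-\lan,\Gal_L\-\sm}$ into $\calO\bbB_{\dR}^+$ with the de Rham comparison, and the primitive comparison theorem; controlling these in families over $\fl$ and commuting the constructions with $H^1(\fl,-)$ is the technical heart. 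A secondary source of care is the bookkeeping in the computation of $\ker I$: matching the various parabolic inductions, Jacquet modules, and cohomology of Drinfeld and Igusa curves to a cuspidal eigenform of precisely weight $k+1$.
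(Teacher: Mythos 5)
Your proposal is correct and follows essentially the same route as the paper: identify the geometric Fontaine operator with the intertwining operator (Theorem~\ref{N=dd}), deduce from the $\sigma$-de Rhamness of $\rho|_{\Gal_L}$ that the $\lambda$-isotypic weight-$0$ part of $H^1(\fl,\calO_{K^v}^{\iota\text{-la},\iota^c\text{-lalg},\tilde\chi_k})$ is precisely $\ker I^1_{k-1}[\lambda]$ (this is the paper's Theorem~\ref{kerI}), and then invoke the classicality of $\ker I^1$ (Corollary~\ref{classical}) to conclude $\lambda$ is classical of the right weight.

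One inaccuracy worth flagging: the $\rho$-isotypic part of $H^1(\fl,\calO_{K^v}^{\iota\text{-la},\iota^c\text{-lalg},\tilde\chi_k})$ does not lie ``in one of the two summands'' $(1-k,0)$ or $(1,-k)$, and twisting by $\mathrm t$ (a determinant-type twist by $V_\iota^{(1,1)}$, which sends a weight $(a,b)$ to $(a+1,b+1)$) does not interchange them. Rather, the $\rho$-part has nonzero components in \emph{both} summands — they are the $\sigma$-Hodge--Tate weight-$0$ and weight-$k$ pieces — and the Fontaine operator $N_k$ maps the weight-$0$ piece to the weight-$k$ piece; hypothesis (1) enters to ensure the weight-$0$ component is nonzero. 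Your subsequent paragraph in fact runs the Fontaine-operator argument on the weight-$0$ piece as it should, so the misstatement is harmless, but the ``twist by $\mathrm t$'' remark should be dropped.
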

	
	To prove the theorem, first we recall the Hecke algebra. Let $S$ be a finite set of rational primes containing $p$ such that $K_l\subset G(\Q_l)$ is hyperspecial for all $l\notin S$. For any open compact subgroup $K_v\subset \text{GL}_2(L)$, define
	$$\mathbb{T}(K^vK_v)\subset \text{End}_{\Z_p}(H^1_{\text{\'et}}(\mathcal{Y}_{K^vK_v},\Z_p))$$
	be the subalgebra generated by Hecke operators $\mathcal{H}(G(\Q_l)//K_l)$ at places $l\notin S$. Now we define the Hecke algebra of tame level $K^v$ as
	$$\T:=\varprojlim_{K_v\subset \text{GL}_2(L)}\T(K^vK_v).$$
	It acts faithfully on $\tilde{H}^1(K^v,E)$ and commutes with the action of $\text{GL}_2(L)\times \mathrm{Gal}_F$.
    We denote by 
		$$\tilde{H}^1(K^v,E)[\rho]\subset \tilde{H}^1(K^v,E)$$ the image of the evaluation map $\rho\otimes_E \mathrm{Hom}_{E[\mathrm{Gal}_F]}(\rho, \tilde{H}^1(K^v,E))\ra \tilde{H}^1(K^v,E)$.
    By the Eichler--Shimura relation, there exists a homomorphism $\lambda:\T(K^v)\ra E$ such that
	$$\tilde{H}^1(K^v,E)[\lambda]=\tilde{H}^1(K^v,E)[\rho].$$ This follows from \cite{BLR} and the discussion in \cite[6.1.1]{Pan22} (we will discuss this in more detail in the proof of Theorem \ref{thm:admissible}). Here $\tilde{H}^1(K^v,E)[\rho]:=\text{Hom}_{E[\mathrm{Gal}_F]}(\rho, \tilde{H}^1(K^v,E)[\lambda])\otimes_E\rho$. Since $\rho$ is $\sigma$-de Rham of $\sigma$-Hodge--Tate weight $0,k$, there is a Hodge--Tate decomposition
	$$\tilde{H}^1(K^v,C\otimes_{L,\iota,\sigma}E)[\lambda]=\tilde{H}^1(K^v,E)[\rho]\widehat{\otimes}_{L,\sigma,\iota}C=\tilde{H}^1(K^v,C\otimes_{L,\iota,\sigma}E)[\lambda]_0\oplus \tilde{H}^1(K^v,C\otimes_{L,\iota,\sigma}E)[\lambda]_k,$$
	where $C\otimes_{L,\iota,\sigma}E$ means the tensor product with respect to the embeddings $\iota:L\hookrightarrow C$ and $ \sigma:L\hookrightarrow E$, and $\tilde{H}^1(K^v,C\otimes_{L,\iota,\sigma}E)[\lambda]_i$ denotes the Hodge--Tate weight $i$ part for $i=0,k$.
	
	\begin{thm}\label{kerI}
		Let $\rho$ be as in Theorem \ref{main}. Then there is a natural $\mathrm{GL}_2(L)$-equivariant isomorphism of $\lambda$-isotypic parts
		$$\tilde{H}^1(K^v,C\otimes_{L,\iota,\sigma}E)[\lambda]_0^{\iota\text{-}\mathrm{la},\iota^c\text{-}\mathrm{lalg}}\cong \ker I^1_{k-1}\otimes_{L,\iota,\sigma}E[\lambda].$$
    Here $I^1_{k-1}$ is the intertwining operator on the cohomology group.
	\end{thm}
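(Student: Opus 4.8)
The plan is to identify both sides of the claimed isomorphism with cohomology of sheaves on $\fl$ and then compare them using the machinery developed in Sections \ref{uni} and \ref{cohint}. First I would use Theorem \ref{11} and Theorem \ref{thm:JlalgcommutesH1} (together with the primitive comparison theorem) to realize the locally $\iota$-analytic, locally $\iota^c$-algebraic part of the completed cohomology as $H^1$ of the sheaf $\calO_{K^v}^{\iota\-\lan,\iota^c\-\lalg}$ on $\fl$. Taking the $\lambda$-isotypic part (using the Eichler--Shimura relation and the result of \cite{BLR} that $\tilde{H}^1(K^v,E)[\lambda]\isom \rho\ox_E\Hom_{\Gal_F}(\rho,\tilde{H}^1(K^v,E))$) and base changing to $C$ along $\iota,\sigma$, the Hodge--Tate decomposition of $\rho|_{\Gal_L}$ (valid since $\rho$ is $\sigma$-de Rham, hence $\sigma$-Hodge--Tate of weights $0,k$) induces a decomposition of this $H^1$ into weight-$0$ and weight-$k$ parts with respect to the $\theta_{\frh}$-action (via Corollary \ref{cor:hSen} and Corollary \ref{cor:infSen}). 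The weight-$0$ part corresponds exactly to $H^1(\fl,\calO_{K^v}^{\iota\-\lan,\iota^c\-\lalg,(1-k,0)})$ after the appropriate twist, which I can rewrite using the infinitesimal character decomposition $\calO_{K^v}^{\lan,\tilde\chi_k}=\calO_{K^v}^{\lan,(1-k,0)}\oplus\calO_{K^v}^{\lan,(1,-k)}$ from Corollary \ref{cor:infdecomp}.

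Second, I would use the $p$-adic Hodge-theoretic interpretation of the intertwining operator, Theorem \ref{N=dd}, which says $N_k=c_kI_{k-1}$ with $c_k\in L^\times$, together with Theorem \ref{N=0} (the Fontaine operator vanishes iff the representation is $\sigma$-de Rham). The point is that the Hodge--Tate weight-$0$ subspace $\tilde{H}^1(K^v,C\ox_{L,\iota,\sigma}E)[\lambda]_0$ is precisely the image of the weight-$0$-part of the la-vectors, and since $\rho|_{\Gal_L}$ is $\sigma$-de Rham, the Fontaine operator — hence $I_{k-1}$ — kills exactly this subspace at the level of sheaves restricted to the relevant locus. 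More precisely, the flatness of $\B^{+,\lan,\tilde\chi_k}_{\dR,k}$ (Lemma \ref{chicomH1}) and the fact that the weight-$0$ part is the $\sigma$-de Rham part means the $\lambda$-isotypic weight-$0$ cohomology lands in $\ker N_k=\ker I^1_{k-1}$. I would then argue that the natural map is an isomorphism by dimension count / exactness: the completed cohomology $\tilde{H}^1$ decomposes as weight-$0$ plus weight-$k$, the intertwining operator $I^1_{k-1}$ maps $H^1(\calO^{(1-k,0)})$ to $H^1(\calO^{(1,-k)})(k)$, and the de Rhamness forces the weight-$0$ part to coincide with the kernel.

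Third, to pin down the comparison I would invoke Theorem \ref{thm:JlacommutesH1} to commute taking locally $\iota$-analytic vectors with taking $H^1$ over $\fl$, and use the fact (from Theorem \ref{11} and the identification of completed cohomology with $H^1(\fl,\calO_{K^v})$) that the $\lambda$-isotypic part is cut out Hecke-equivariantly. The key input is that $I^1_{k-1}[\lambda]$ — being identified with the Fontaine operator $N_k$ on $\tilde{H}^1(K^v,C\ox_{L,\iota,\sigma}E)[\lambda]$ — has kernel exactly $\tilde{H}^1(K^v,C\ox_{L,\iota,\sigma}E)[\lambda]_0^{\iota\-\lan,\iota^c\-\lalg}$, since on the weight-$k$ part the operator is injective (equivalently, the cokernel of $E_0(\nabla)$ detects the weight-$k$ part faithfully by the exact sequences in Section \ref{Fon}). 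One must be careful that the $\iota^c$-algebraic part is preserved throughout, which follows from the compatibility of the section $s$ in Proposition \ref{sect} and of $I_{k-1}$ with locally $\iota^c$-algebraic sections.

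The main obstacle I expect is carefully matching the ``Galois-fixed'' picture of Section \ref{Fon} (where one works pointwise over $U\in\ffrb$ with $G_K$-analytic, $G_{K_\infty}$-fixed vectors and takes generalized eigenspaces of the arithmetic Sen operator) with the ``sheaf-theoretic'' picture on $\fl$ globalized over all of $\fl$ and all tame levels, and showing that the kernel of the globalized $I^1_{k-1}$ on $H^1$ really equals the Hodge--Tate weight-$0$ subspace of $\tilde{H}^1[\lambda]$ rather than something larger or smaller. Concretely, one needs that the functor ``$\lambda$-isotypic weight-$0$ part'' is exact enough that $\ker(I^1_{k-1}[\lambda])$ computed on $H^1(\fl,-)$ agrees with $\ker(N_k)$ computed via the Poincar\'e lemma resolution; this requires the vanishing/flatness statements of Lemma \ref{chicomH1} and a spectral sequence argument comparing the hypercohomology of the truncated Poincar\'e lemma complex with $H^1$ of the individual graded pieces $\calO_{K^v}^{\lan,(1-k,0)}$ and $\calO_{K^v}^{\lan,(1,-k)}(k)$. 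Once this comparison is in place, the isomorphism $\tilde{H}^1(K^v,C\ox_{L,\iota,\sigma}E)[\lambda]_0^{\iota\-\lan,\iota^c\-\lalg}\isom \ker I^1_{k-1}\ox_{L,\iota,\sigma}E[\lambda]$ follows formally.
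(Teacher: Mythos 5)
Your proposal follows essentially the same route as the paper's proof: realize the $\lambda$-isotypic, locally $\iota$-analytic, locally $\iota^c$-algebraic part of $\tilde{H}^1$ as cohomology of sheaves on $\fl$, decompose by Hodge--Tate weight using the horizontal Cartan action, identify the intertwining operator on the weight-$0$ piece with the Fontaine operator $N_k$ via Theorem \ref{N=dd}, and then use the $\sigma$-de Rhamness of $\rho_w$ (Theorem \ref{N=0}) to deduce that $I^1_{k-1}$ vanishes on the $\lambda$-isotypic weight-$0$ part.

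Three corrections to the execution, though none undermines the approach. First, once $I^1_{k-1}$ vanishes on $\tilde{H}^1(K^v,C\otimes_{L,\iota,\sigma}E)[\lambda]_0^{\iota\text{-la},\iota^c\text{-lalg}}$, no dimension count or exactness argument is needed: the source of $I^1_{k-1}$ restricted to the $\lambda$-isotypic part is already exactly this weight-$0$ subspace (recall $I_{k-1}$ is defined on $\calO_{K^v}^{\lan,(1-k,0)}$, the weight-$0$ summand of $\calO_{K^v}^{\lan,\tilde{\chi}_k}$), so its kernel is automatically the whole source. In particular your remark ``on the weight-$k$ part the operator is injective'' is a misstatement — $I^1_{k-1}$ does not act on the weight-$k$ part at all. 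Second, the bridge between the Galois-theoretic setting of Section \ref{Fon} (where $N_k$ is defined pointwise on $\mathfrak{B}$-opens) and the global sheaf-theoretic one that you flag as the ``main obstacle'' is supplied by \cite[Proposition 7.2.4]{PanII}, which gives a $\Gal_F$-equivariant isomorphism $\tilde{H}^1(K^v,B_{\dR,k}^+)\cong H^1(\fl,\bbB_{\dR,k}^+)$; combined with Lemma \ref{chicomH1} this lets one pass to $\mathrm{gr}^0$-pieces and identifies the $\lambda$-isotypic parts cleanly, without needing a separate spectral-sequence argument. Third, the correct reference for commuting $H^1$ with taking $\iota$-la, $\iota^c$-lalg vectors is Theorem \ref{thm:JlacommutesH1} (with $J=\{\iota\}$ and $V$ an $\iota^c$-algebraic representation), not Theorem \ref{thm:JlalgcommutesH1}.
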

	\begin{proof}
		The proof is similar to \cite[Theorem 7.2.2]{PanII}. First, by \cite[Proposition 7.2.4]{PanII}, there is a natural $\mathrm{Gal}_F$-equivariant isomorphism of $B_{\text{dR},k}^+$-modules
        $$\tilde{H}^1(K^v,B_{\text{dR},k}^+)\cong H^1(\mathscr{F}\ell,\mathbb{B}_{\text{dR},k}^+).$$
        (The proof there is quite general so can be applied in any case of Hodge--Tate period map considered in \cite{Sch15}.) 
        
        Since $\text{det} \rho$ has $\sigma$-Hodge--Tate weight $k$, the center $Cz$ of $\mathfrak{g}_\sigma$ acts on $\tilde{H}^1(K^v,E)[\lambda]$ via $z\mapsto -k+1$. Then by Corollary \ref{cor:infSen} the $\text{GL}_2(L)$-locally analytic vectors $\tilde{H}^1(K^v,E)^\text{la}[\lambda]$ have infinitesimal character $\tilde{\chi}_k$ of the infinitesimal character of $(k-1)$-th symmetric power of dual of the standard $\sigma$-representation. Thus
        \begin{align*}
            \tilde{H}^1(K^v,B^+_{\text{dR},k+1}\otimes_{L,\iota,\sigma}E)[\lambda]^{\iota\text{-la},\iota^c\text{-}\mathrm{lalg}}&\cong \tilde{H}^1(K^v,B^+_{\text{dR},k+1}\otimes_{L,\iota,\sigma}E)[\lambda]^{\iota\text{-la},\iota^c\text{-}\mathrm{lalg},\tilde{\chi}_k}\\
            &\cong H^1(\mathscr{F}\ell,\mathbb{B}_{\text{dR},k+1}^+)^{\iota\text{-la},\iota^c\text{-}\mathrm{lalg},\tilde{\chi}_k}\otimes_{L,\iota,\sigma}E[\lambda]\\
            &\cong H^1(\mathscr{F}\ell,\mathbb{B}_{\text{dR},k+1}^{+,\iota\text{-la},\iota^c\text{-}\mathrm{lalg},\tilde{\chi}_k})\otimes_{L,\iota,\sigma}E[\lambda],
        \end{align*}
        where the last isomorphism is obtained from Theorem \ref{thm:JlacommutesH1} and Lemma \ref{chicomH1}. Taking $\text{gr}^0$-part we get
        $$\tilde{H}^1(K^v,C\otimes_{L,\iota,\sigma}E)[\lambda]^{\iota\text{-la},\iota^c\text{-}\mathrm{lalg}}\cong H^1(\mathscr{F}\ell,\mathcal{O}_{K^v}^{\iota\text{-la},\iota^c\text{-}\mathrm{lalg},\tilde{\chi}_k})\otimes_{L,\iota,\sigma}E[\lambda].$$
        Recall that
        $$H^1(\mathscr{F}\ell,\mathcal{O}_{K^v}^{\iota\text{-la},\iota^c\text{-}\mathrm{lalg},\tilde{\chi}_k})\cong H^1(\mathscr{F}\ell,\mathcal{O}_{K^v}^{\iota\text{-la},\iota^c\text{-}\mathrm{lalg},(0,1-k)})\oplus H^1(\mathscr{F}\ell,\mathcal{O}_{K^v}^{\iota\text{-la},\iota^c\text{-}\mathrm{lalg},(-k,1)}),$$
        with Hodge--Tate weight $0,k$ respectively. Therefore the Fontaine operator defined in section \ref{Fon} gives a map
        $$N:\tilde{H}^1(K^v,C\otimes_{L,\iota,\sigma}E)[\lambda]_0^{\iota\text{-}\mathrm{la},\iota^c\text{-}\mathrm{lalg}}\ra \tilde{H}^1(K^v,C\otimes_{L,\iota,\sigma}E)[\lambda]_k^{\iota\text{-}\mathrm{la},\iota^c\text{-}\mathrm{lalg}}(k),$$
        and by Theorem \ref{N=dd}  it coincides with $I^1_{k-1}$ up to a unit under the above isomorphism. As $\rho$ is $\sigma$-de Rham, $N=0$. Thus $I^1_{k-1}=0$ on $\tilde{H}^1(K^v,C\otimes_{L,\iota,\sigma}E)[\lambda]_0^{\iota\text{-}\mathrm{la},\iota^c\text{-}\mathrm{lalg}}$.
	\end{proof}

    Finally we can prove our main theorem in this section:
    
	\textit{Proof of Theorem \ref{main}.} Note that
	$$\tilde{H}^1(K^v,C\otimes_{L,\iota,\sigma}E)[\lambda]_0^{\iota\text{-}\mathrm{la},\iota^c\text{-}\mathrm{lalg}}=(\tilde{H}^1(K^v,E)^{\sigma\text{-la},\sigma^c\text{-}\mathrm{lalg}}[\lambda]\otimes_{L,\sigma,\iota}C)_0,$$
	thus it is non-zero by condition (1) of $\rho$. By the above theorem $ \ker I^1_{k-1}\otimes_{L,\iota,\sigma}E[\lambda]\neq 0$. Hence by Corollary \ref{classical} we get $\lambda$ is classical, and $\rho$ arises from a cuspidal eigenform. \hfill$\square$
	~\\

Next, we can prove the following theorem:
\begin{thm}\label{thm:sigmaalgimpliessigmadeRham}
    Let $E$ be a finite extension of $L$  containing all embeddings of $L$ into $\overline{\Q}_p$ and 
		$$\rho:\mathrm{Gal}_F\ra \mathrm{GL}_2(E)$$ be a two-dimensional continuous absolutely irreducible representation of $\mathrm{Gal}_F$. Suppose that  $\tilde{H}^1(K^v,E)^{\sigma^c\textup{-la}, \sigma\textup{-lalg}}[\rho]\neq 0$. Then $\rho$ is $\sigma$-de Rham of different Hodge--Tate weight.
\end{thm}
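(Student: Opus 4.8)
I would deduce Theorem~\ref{thm:sigmaalgimpliessigmadeRham} from the $p$-adic Hodge-theoretic interpretation of the intertwining operator, in exactly the spirit of Theorem~\ref{2} in the introduction. The key point is that locally $\sigma$-algebraic vectors are annihilated by the differential operator $\overline{d}$ (since $\overline{d}$ has kernel $\calO_{K^v}^{\iota\-\lalg,\iota^c\-\lan,(-k,0)}$ by Theorem~\ref{thm:dbar}), hence are annihilated by the intertwining operator $I=\overline{d}'\comp d$, and therefore — via $N_k=c_kI_{k-1}$ from Theorem~\ref{N=dd} — the Fontaine operator vanishes on them. Then Theorem~\ref{N=0} forces $\sigma$-de Rhamness. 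I would organize the argument to mirror the proof of Theorem~\ref{kerI} but reversing the roles of $\iota$ and $\iota^c$.

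\textbf{First step: reduce to a statement about sheaf cohomology on $\fl$.} Fix an embedding $u:E\to C$ with $u\comp\sigma=\iota$, so that the hypothesis $\tilde{H}^1(K^v,E)^{\sigma^c\-\lan,\sigma\-\lalg}[\rho]\neq 0$ becomes, after base change to $C$, the statement that $\tilde{H}^1(K^v,C)^{\iota^c\-\lan,\iota\-\lalg}[\lambda]\neq 0$, where $\lambda:\bbT^S\to C$ is the Hecke eigensystem attached to $\rho$ via the Eichler--Shimura relation and \cite{BLR}. Here I need a version of Theorem~\ref{thm:JlacommutesH1} / Theorem~\ref{thm:JlalgcommutesH1} applicable to the locally $\iota$-algebraic, locally $\iota^c$-analytic part; since $\iota\in\{\iota\}$ and we are taking algebraic vectors only at $\iota$, the same primitive comparison argument identifies this with $H^1(\fl,\calO_{K^v}^{\iota^c\-\lan,V_\iota^{(a,b)}\-\lalg})[\lambda]$ for the appropriate algebraic weight, and the Sen-weight considerations show this lies inside the isotypic piece $\calO_{K^v}^{\lan,(0,0)}$ (up to a determinant twist) — indeed, locally $\iota$-algebraic vectors of a fixed weight have a definite $\theta_{\frh}$-weight.

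\textbf{Second step: apply the intertwining operator.} On $\calO_{K^v}^{\iota^c\-\lan,\iota\-\lalg,(-k,0)}$ the operator $\overline{d}^{k+1}$ is identically zero: by Theorem~\ref{thm:dbar} its kernel is precisely $\calO_{K^v}^{\iota\-\lalg,\iota^c\-\lan,(-k,0)}$, which contains the locally $\iota$-algebraic, locally $\iota^c$-analytic sections. Hence $I_{k-1}=\overline{d}'^{\,k}\comp d^{k}$ vanishes on this subsheaf, and therefore on its cohomology. Lifting to $\bbB_{\dR,k+1}^+$ exactly as in the proof of Theorem~\ref{kerI} (using Theorem~\ref{thm:FEHT}, Lemma~\ref{chicomH1}, and the identification of $\tilde{H}^1(K^v,\bbB_{\dR,k}^+)$ with $H^1(\fl,\bbB_{\dR,k}^+)$), I obtain that the Fontaine operator $N_k$ annihilates the relevant $\gr^0$-piece of $\tilde{H}^1(K^v,C\ox_{L,\iota,\sigma}E)[\lambda]^{\iota^c\-\lan,\iota\-\lalg}$ — here I use Theorem~\ref{N=dd} to identify $N_k$ with $c_kI_{k-1}$. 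Since this space is nonzero by hypothesis, and since the Fontaine operator for $\rho|_{\Gal_L}\ox_{L,\sigma}B_{\dR}^+/(t^{k+1})$ is exactly the one appearing here (the completed cohomology realizes $\rho$ together with its Hodge--Tate structure, cf. the primitive comparison and \cite{DPSinf} for the Hodge--Tate weights being $\{0,k\}$ at $\sigma$), Theorem~\ref{N=0} gives that $\rho|_{\Gal_L}$ is $\sigma$-de Rham with distinct $\sigma$-Hodge--Tate weights $0,k$.

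\textbf{Main obstacle.} The delicate point is that, a priori, I do not know the $\sigma$-Hodge--Tate weights of $\rho|_{\Gal_L}$ are of the form $\{0,k\}$ with $k>0$ before invoking $\sigma$-de Rhamness — the hypothesis only gives a nonzero locally $\sigma$-algebraic vector. I would address this by first observing that the existence of a nonzero locally $\sigma$-algebraic vector of some algebraic weight already pins down the $\sigma$-infinitesimal character (via Corollary~\ref{cor:infSen} relating $\Theta_{\Sen}$, the Casimir, and $z$), hence forces the $\sigma$-Hodge--Tate--Sen weights to be a pair of distinct integers; after a twist by an algebraic character of $\GL_2(L)$ and a corresponding Tate twist of $\rho$ (which does not affect $\sigma$-de Rhamness), I may normalize these to $\{0,k\}$ with $k>0$, at which point the argument above applies verbatim. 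The remaining bookkeeping — that the lifting/Fontaine-operator formalism of \S\ref{padic} is genuinely $\calO_{K^v}^{\iota^c\-\lan}$-linear and hence restricts compatibly to the locally $\iota$-algebraic part — follows since the sections $s$ of Proposition~\ref{sect} and the construction of $s_{k+1}$ were set up precisely to be $\calO_{K^v}^{\iota^c\-\lan,\Gal_L\-\sm}$-linear, so no new input is needed there.
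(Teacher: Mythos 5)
Your proposal is correct and follows the same core mechanism as the paper: $\overline{d}$ annihilates locally $\iota$-algebraic sections (kernel computation in Theorem~\ref{thm:dbar}), so the intertwining operator vanishes on the $\iota$-lalg, $\iota^c$-la part of the $\lambda$-isotypic cohomology; Theorem~\ref{N=dd} transports this to vanishing of the Fontaine operator; and Theorem~\ref{N=0} then delivers $\sigma$-de Rhamness, with non-injectivity of a map between one-dimensional spaces forcing it to be zero. The one genuine divergence is how you establish the prerequisite hypothesis of Theorem~\ref{N=0} — that $\rho_w$ is $\sigma$-Hodge--Tate of distinct weights $\{0,k\}$. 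The paper does this through Theorem~\ref{thm:JlalgcommutesH1}, which produces an explicit two-term Galois-equivariant exact sequence exhibiting the $\lambda$-isotypic $\iota$-lalg, $\iota^c$-la piece as precisely the Hodge--Tate weight $0$ part, with the quotient being weight $k$; this simultaneously proves $\sigma$-Hodge--Tate-ness and identifies the subspace on which the Fontaine operator will be shown to vanish. You instead propose to read off the Sen weights from the infinitesimal character via Corollary~\ref{cor:infSen}, invoke that the two candidate weights $a$ and $b-1$ coming out of that formula are always distinct when $a\geq b$, and then use that a $2\times 2$ Sen operator with distinct integer eigenvalues is automatically semisimple. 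This is logically sound and perhaps slightly more elementary, but it is less direct: you still need to identify the $\iota$-lalg piece with the weight-$0$ eigenspace in order to know that the subspace you are killing the Fontaine operator on is the right one, and Theorem~\ref{thm:JlalgcommutesH1} is doing exactly that bookkeeping. So in practice your argument would end up citing the same result, just a step later.
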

\begin{proof}
    As in the last section, there exists a homomorphism $\lambda:\T(K^v)\ra E$ such that
	$$\tilde{H}^1(K^v,E)[\lambda]=\tilde{H}^1(K^v,E)[\rho],$$
    where $\tilde{H}^1(K^v,E)[\lambda]$ is the $\lambda$-isotypic part. 
    After twisting by the cyclotomic character, we may assume that $\tilde{H}^1(K^v,E)^{\sigma^c\textup{-la}, V_\sigma^{(0,1-k)}\textup{-lalg}}[\lambda]\neq 0$ for some $k\geq 1$, where $V_\sigma^{(0,1-k)}$ is the dual of  $(k-1)$-th symmetric power of the $\sigma$-standard representation. As the center $Cz$ of $\mathfrak{gl}_{2,\sigma}$ acts on $\tilde{H}^1(K^v,E)^{\sigma^c\textup{-la}, V_\sigma^{(0,1-k)}\textup{-lalg}}[\lambda]$ via $z\mapsto -k+1$, $\textup{det}\rho$ has $\sigma$-Hodge--Tate weight $k$. 
    By Theorem \ref{thm:JlalgcommutesH1}, 
    $$\tilde{H}^1(K^v,E)^{\sigma^c\textup{-la}, V_\sigma^{(0,1-k)}\textup{-lalg}}[\lambda]\otimes_{L,\sigma,\iota}C\cong H^1(\fl,\calO_{K^v}^{\lan})^{\iota\-\lan, V_\iota^{(0,1-k)}\textup{-lalg}}\otimes_{L,\iota,\sigma}E[\lambda]$$
    is of Hodge--Tate weight $0,k$, and the natural inclusion
    $$H^1(\fl,\calO_{K^v}^{V_\iota^{(0,1-k)}\textup{-lalg},\iota^c\textup{-la}})\otimes_{L,\iota,\sigma}E[\lambda]\hookrightarrow H^1(\fl,\calO_{K^v}^{\lan})^{\iota\-\lan,V_\sigma^{(0,1-k)}\textup{-lalg}}\otimes_{L,\iota,\sigma}E[\lambda]$$
    coincides with the Hodge--Tate weight $0$ part. Thus $\rho$ is $\sigma$-Hodge--Tate of weight $0,k$. In particular, $H^1(\fl,\calO_{K^v}^{V_\iota^{(0,1-k)}\textup{-lalg},\iota^c\textup{-la}})\otimes_{L,\iota,\sigma}E[\lambda]$ is non-zero. By the same proof of Theorem $\ref{kerI}$, with $\iota^c$-locally algebraic vectors replaced by $\iota^c$-locally analytic vectors, we get that the Fontaine operator from
    $$\tilde{H}^1(K^v,C\otimes_{L,\iota,\sigma}E)[\lambda]_0^{\textup{la}}\cong H^1(\mathscr{F}\ell,\calO_{K^v}^{\mathrm{la},(0,-k+1)})\otimes_{L,\iota,\sigma}E[\lambda]$$
    to 
    $$\tilde{H}^1(K^v,C\otimes_{L,\iota,\sigma}E)[\lambda]_k^{\textup{la}}(k)\cong H^1(\mathscr{F}\ell,\calO_{K^v}^{\mathrm{la},(-k,1)})\otimes_{L,\iota,\sigma}E[\lambda](k)$$
    is (up to unit) induced by the intertwining operator $I_{k-1}:\calO_{K^v}^{\mathrm{la},(0,-k+1)}\ra \calO_{K^v}^{\mathrm{la},(-k,1)}(k)$. By Theorem \ref{thm:dbar} we know $\overline{d}$ is $0$ on $\calO_{K^v}^{\iota^c\textup{-la}, V_\iota^{(0,1-k)}\textup{-lalg}}$. Thus $I_{k-1}=0$ on $H^1(\fl,\calO_{K^v}^{\iota^c\textup{-la}, V_\iota^{(0,1-k)}\textup{-lalg}})\otimes_{L,\iota,\sigma}E[\lambda]$. Thus the $\sigma$-Fontaine operator associated to $\rho$ is not injective, hence must be zero. Therefore by Theorem  \ref{N=0} we get $\rho$ is $\sigma$-de Rham.
\end{proof}
\begin{remark}
    This theorem can be seen as a generalization of the theorem that ``$\tilde{H}^1(E)^{\mathrm{lalg}}[\rho]\neq 0$ implies $\rho$ is de Rham of different Hodge Tate weight" proved by Emerton in \cite{Eme06}.
\end{remark}

We also make the following conjecture. We are grateful to Yiwen Ding for insightful discussions on this conjecture.
\begin{conj}\label{conj:sigmaalgsigmadR}
    Let $E$ be a finite extension of $L$  containing all embeddings of $L$ into $\overline{\Q}_p$ and 
		$$\rho:\mathrm{Gal}_F\ra \mathrm{GL}_2(E)$$ be a two-dimensional continuous absolutely irreducible representation of $\mathrm{Gal}_F$. 
        Assume that $\Pi(\rho)\neq 0$. Then for any subset $J$ of embeddings of $L$ into $E$,  $\Pi(\rho)^{J^c\textup{-la}, J\textup{-lalg}}[\rho]\neq 0$ if and only if  $\rho$ is $J$-de Rham of different $J$-Hodge--Tate weight.
\end{conj}

\begin{remark}
   The above theorem proves the ``only if" part. If this is true, combining this with Theorem \ref{main} we can prove the classicality of every de Rham representation of different Hodge--Tate weights appearing in the completed cohomology. 
\end{remark}

\section{Applications to the $p$-adic local Langlands program for $\GL_2(L)$}\label{p-LLC}
In this section, we  discuss various applications to the $p$-adic local Langlands program for the group $\GL_2(L)$. Let $\rho:\Gal_F\to \GL_2(E)$ be a $2$-dimensional continuous $E$-linear absolutely irreducible representation. Its multiplicity space in the completed cohomology group 
\begin{align*}
    \Pi(\rho):=\Hom_{E[\Gal_F]}(\rho,\tilde{H}^1(K^v,E))
\end{align*}
is naturally a unitary Banach representation of $\GL_2(L)$. From $\rho$, we can associate a Hecke eigenvalue $\lambda$ using the Eichler--Shimura relation. In this final section, we will discuss the following applications:
\begin{itemize}
    \item When $\lambda$ is classical, we describe the locally $\sigma$-analytic vectors of the Hecke isotypic space purely in terms of the datum on the Galois side. From this we deduce $\Pi(\rho)^{\sigma\-\lan}$ only depends on the local Galois representation.
    \item We realize certain locally $\sigma$-analytic representations defined using coherent cohomology of Drinfeld tower in the completed cohomology group. From this we deduce the admissibility of such representations.
    \item Again when $\lambda$ is classical, such that the associated Weil--Deligne representation of $\rho|_{\Gal_L}$ is irreducible, we use the wall-crossing functor to handle the Breuil's locally analytic $\Ext^1$-conjecture in this case. We also show that there is a locally $\bbQ_p$-analytic subrepresentation of $\Pi(\rho)^{\lan}$, which we denoted by $\pi_1(\rho|_{\Gal_L})$ in Theorem \ref{thm:gal}, that completely determines $\rho|_{\Gal_L}$.
\end{itemize}

\subsection{Locality of $\Pi(\rho)^{\sigma\-\lan}$}
One of the main advantages of Pan's work is that it reveals the appearance of the Hodge filtration more transparently and geometrically \cite[Remark 5.6.12]{PanII}. Let $\lambda$ be a Hecke eigenvalue that appears in ${H}^1(K^v,E):=\dlim_{K_v}H^1_{\et}(\calX_{K^vK_v},E)$. From $\lambda$, we can construct a Galois representation $\rho:\Gal(\bar F/F)\to \GL_2(E)$, which is a $2$-dimensional continuous $E$-linear representation of $\Gal(\bar F/F)$, such that $\rho_w:=\rho|_{\Gal(\bar L/L)}$ is de Rham of parallel Hodge--Tate weights $\{0,1\}_{\sigma\in\Sigma}$. We assume $\rho$ is absolutely irreducible. Let $\tilde{H}^1(K^v,E)[\rho]$ be the image of the evaluation map of $\rho\ox_E\Hom_{E[\Gal(\bar F/F)]}(\rho,\tilde{H}^1(K^v,E))\to \tilde{H}^1(K^v,E)$.  Let $u:E\inj C$ be an embedding such that $\iota=u\comp \sigma$. By Theorem \ref{kerI}, there is a $\GL_2(L)$-equivariant isomorphism 
\[
    (\tilde{H}^1(K^v,E)^{\sigma\-\lan}[\rho]\ox_{E,u}C)[\Theta_{\Sen}=0]=\ker I^1[u\comp \lambda],
\]
where $I^1:H^1(\fl,\calO_{K^v}^{\iota\-\lan,(0,0)})\to H^1(\fl,\calO_{K^v}^{\iota\-\lan,(-1,1)}(1))$ is the intertwining operator introduced before, and $(-)[\Theta_{\Sen}=0]$ denotes the arithmetic Sen weight equal to $0$ part. Therefore, the study of the $\rho$-isotypic space in $\tilde H^1(K^v,E)^{\sigma\-\lan}$ is reduced to study the $\lambda$-isotypic space in $\ker I^1$. 

We note that $u\comp \lambda\in \sigma_0^{K^v}$, and for simplicity, we write $(-)[\lambda]$ for the Hecke isotypic space of $u\comp\lambda$. By Theorem \ref{thm:kerI1Hodge}, the representation $\ker I^1[\lambda]$ fits into two exact sequences
\begin{align*}
    0\to H^1(\fl,\calO_{K^v}^{\sm})[\lambda]\to \ker I^1[\lambda]\to \bbH^1(\DR')[\lambda]\to 0,\\
    0\to H^0(\fl,\Omega_{K^v}^{1,\sm})[\lambda]\to \bbH^1(\DR)[\lambda]\to \ker I^1[\lambda]\to 0,
\end{align*}
which are $\GL_2(L)$-equivariant. The first exact sequence gives the internal structure of $\ker I^1[\lambda]$, and the second exact sequence encodes the information of the Hodge filtration of $\rho_w$. Indeed, the image of $H^0(\Omega_{K^v}^{1,\sm})[\lambda]\to \bbH^1(\DR)[\lambda]$ lands in $\bbH^1(\DR^{\sm})[\lambda]\subset\bbH^1(\DR)[\lambda]$, and 
\[
    \bbH^1(\DR^{\sm})\isom \dlim_{K_v}H^1_{\dR}(\calX_{K^vK_v})
\]
is the inductive limit of de Rham cohomology of unitary Shimura curves of finite level. Therefore, by the $p$-adic de Rham comparison theorem \cite{Sch13}, we know that the position of the geometric Hodge filtration
\[
    H^0(\Omega_{K^v}^{1,\sm})[\lambda]\subset \bbH^1(\DR^{\sm})[\lambda]
\]
is exactly the position of the $\sigma$-Hodge filtration 
\[
    \Fil^1 D_{\dR,\sigma}(\rho_w)\ox_{E,u}C\subset D_{\dR,\sigma}(\rho_w)\ox_{E,u}C.
\]
We note that the above discussion has a natural generalization to arbitrary regular weights.

From our detailed study of the Hecke decomposition of the de Rham complex $\bbH^1(\DR)$ and $\ker I^1$, we can deduce the following locality result. Let $\rho$ be as in Theorem \ref{main}, with $\lambda:\bbT^S\to E$ the associated Hecke eigenvalue, such that $\pi_v$ is generic. Put $\rho_w:=\rho|_{\Gal(\bar F_w/F_w)}$. Fix $\sigma:L\inj E$, and let $\WD(\rho_w)$ be the underlying Weil--Deligne representation of $\rho_w$. Let $\HT(\rho_w)$ be the Hodge--Tate weights of $\rho_w$.
\begin{theorem}\label{thm:locality}
Let $\rho$ be as in Theorem \ref{main}, with $\rho_w:=\rho|_{\Gal(\bar F_w/F_w)}$. Then as $\GL_2(L)$-representations,
\begin{enumerate}[(i)]
    \item $\bbH^1(\DR)[u\comp \lambda]$ only depends on $\WD(\rho_w)$ and $\HT(\rho_w)$.
    \item $\ker I^1[u\comp \lambda]$ only depends on $\WD(\rho_w)$, $\HT(\rho_w)$, and the $\sigma$-Hodge filtration of $\rho_w$. Here $\sigma:L\to E$ is our fixed embedding to define $\WD(\rho_w)$. In particular, $\ker I^1[u\comp \lambda]$ only depends on the local Galois representation $\rho_w$.
\end{enumerate}
\begin{proof}
From our computations in Theorem \ref{thm:kerI2}, it is easy to see that when $\pi_v$ is a principal series or supercuspidal, then $\bbH^1(\DR)[\lambda]$ only depends on $\text{WD}(\rho_w)$ and $\HT(\rho_w)$. When $\pi_v$ is special, from our proof in \ref{universalSteinberg} we see $\bbH^1(\DR)[\lambda]$ is the unique non-split extension of $H^0(\coker d)[\lambda]$ by $H^1(\ker d)[\lambda]$. As $H^0(\coker d)[\lambda]$, $H^1(\ker d)[\lambda]$ only depends on $\WD(\rho_w)$ and $\HT(\rho_w)$, $\bbH^1(\DR)[\lambda]$ also only depends on $\WD(\rho_w)$ and $\HT(\rho_w)$. Finally, as $\ker I^1$ is the cokernel of the map $H^0(\calO\ox\Omega)[\lambda]\to H^1(\DR)[\lambda]$ which encodes the $\sigma$-Hodge filtration of $\rho_w$, we see $\ker I^1$ encodes the information about the $\sigma$-Hodge filtration of $\rho_w$.
\end{proof}
\end{theorem}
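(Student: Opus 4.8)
Looking at the statement of Theorem \ref{thm:locality}, the goal is to show that $\bbH^1(\DR)[u\comp\lambda]$ depends only on $\WD(\rho_w)$ and $\HT(\rho_w)$, and that $\ker I^1[u\comp\lambda]$ depends in addition only on the $\sigma$-Hodge filtration of $\rho_w$, hence only on $\rho_w$ itself. Let me sketch how I would organize the argument.

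\textbf{Proof proposal.}

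The plan is to run through the three cases for $\pi_w$ (principal series, special series, supercuspidal) and verify that each building block in the presentations of $\bbH^1(\DR)[\lambda]$ and $\ker I^1[\lambda]$ established in \S\ref{cohint} and \S\ref{ss:wc} is manifestly built out of local data. First I would recall the basic dictionary: to the Hecke eigenvalue $\lambda$ (equivalently the cuspidal automorphic representation $\pi$ of $G(\bbA)$, which exists by Theorem \ref{main} once conditions (1),(2) hold) one attaches the smooth local component $\pi_w$ of $\pi$ at $v$, and $\pi_w$ is determined up to isomorphism by $\WD(\rho_w)$ via the local Langlands correspondence; likewise its Jacquet--Langlands transfer $\tau_w$ (when $\pi_w$ is special or supercuspidal) is determined by $\pi_w$, hence by $\WD(\rho_w)$. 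The factor $(\pi_f^v)^{K^v}$ is a fixed finite-dimensional space (its isomorphism class as a bare vector space, which is all that matters for the $\GL_2(L)$-representation structure, is just a multiplicity), so the only genuinely variable inputs are $\pi_w$, the weight $w$ (which is recorded by $\HT(\rho_w)$), and for $\ker I^1$ the Hodge line inside the de Rham cohomology. The key geometric objects appearing — $H^1(j_!\calO_\Dr^{\sm})$, $H^1(j_!\Omega^{1,\sm}_\Dr)$, $H^q_{\dR,c}(\calM_{\Dr,n}^{(0)})$, $H^i_{\rig}(\Ig_{K^v})$, the locally $\iota$-analytic principal series $I_s,\tilde I,\tilde I_s$, etc. — are all defined from Drinfeld/Igusa towers and flag varieties that carry no dependence on $\rho$ beyond the weight, so the Hecke isotypic pieces in Theorem \ref{thm:kerI2} (and its smooth analogue Theorem \ref{thm:kerI1Hodge} when $w$ is parallel $\{0,1\}$) depend only on $\WD(\rho_w),\HT(\rho_w)$.

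For part (i) I would argue case by case. When $\pi_w$ is a principal series, Theorem \ref{thm:kerI2} gives $\bbH^1(\DR)[\lambda]\isom H^0(\coker d)[\lambda]\isom (\pi_f^v)^{K^v}\ox_C(\Ind_{B(L)}^{\GL_2(L)}H^1_{\rig}(\Ig_{K^v})[\lambda])^{\iota\-\lan}$, and by Theorem \ref{thm:H0Ig} the inner Jacquet-type module $H^1_{\rig}(\Ig_{K^v})[\lambda]^{\ss}\isom J_{N(L)}(\pi_w)^{\ss}$ is built from $\pi_w$ alone — so only $\WD(\rho_w)$ and $w$ enter. When $\pi_w$ is supercuspidal, the relevant extension is controlled by $\ker\SS[\lambda]$ and $H^1(\ker d)[\lambda]$, which by the calculations in \S\ref{cuspidal} are $(\pi_f^v)^{K^v}\ox_C(\bigoplus_i H^q(\dots)\ox_C\tau_w)^{D_L^\times}$; again purely local. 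When $\pi_w$ is special, after reducing to $\pi_w=\St_2^\infty$ by an unramified twist, the one-dimensionality $\dim_C\Ext^1_{\GL_2(L),\iota}(\tilde I,\St_2^{\iota\-\an})=1$ from Proposition \ref{universalSteinberg}, together with the semisimplicity of the Hecke action (Proposition \ref{prop:semisimplespecial}), forces $\bbH^1(\DR)[\lambda]$ to be the \emph{unique} non-split extension of $H^0(\coker d)[\lambda]$ by $H^1(\ker d)[\lambda]$ (up to the fixed multiplicity $\dim(\pi_f^v)^{K^v}$), and both sub and quotient depend only on $\WD(\rho_w),\HT(\rho_w)$; hence so does the extension. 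This is essentially the content of the argument sketched at the end of the excerpt, which I would spell out.

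For part (ii), I would observe that $\ker I^1[\lambda]$ is by construction the cokernel of the $\GL_2(L)$-equivariant map $H^0(\calO\ox\Omega)[\lambda]=H^0(\Omega^{1,\sm}_{K^v})[\lambda]\to\bbH^1(\DR)[\lambda]$ whose image lands in $\bbH^1(\DR^{\sm})[\lambda]\isom\dlim_{K_v}H^1_{\dR}(\calX_{K^vK_v})[\lambda]$, and that by the relative de Rham comparison isomorphism \cite{Sch13} (together with the discussion immediately preceding this theorem in the excerpt) the position of $H^0(\Omega^{1,\sm}_{K^v})[\lambda]$ inside $\bbH^1(\DR^{\sm})[\lambda]$ is precisely the position of $\Fil^1 D_{\dR,\sigma}(\rho_w)\ox_{E,u}C$ inside $D_{\dR,\sigma}(\rho_w)\ox_{E,u}C$. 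Since $\bbH^1(\DR)[\lambda]$ is already known (part (i)) to depend only on $\WD(\rho_w),\HT(\rho_w)$, and the embedding of the Hodge line is the only remaining datum, $\ker I^1[\lambda]$ depends only on $(\WD(\rho_w),\HT(\rho_w),\Fil^\bullet D_{\dR,\sigma}(\rho_w))$. Finally, invoking $p$-adic Hodge theory for $2$-dimensional representations (a de Rham $\rho_w$ with given Hodge--Tate weights, Weil--Deligne representation and $\sigma$-filtration is determined — this is where one uses that $\rho_w$ is de Rham, so it is determined by its filtered Weil--Deligne module, or by weak admissibility), the triple $(\WD(\rho_w),\HT(\rho_w),\Fil^\bullet)$ recovers $\rho_w$ up to isomorphism; hence $\ker I^1[\lambda]$, and via Theorem \ref{kerI} also $\Pi(\rho)^{\sigma\-\lan}\ox_{E,u}C$, depends only on $\rho_w$. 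A mild $E$-versus-$C$ descent remark handles passage from $\ker I^1[u\comp\lambda]$ back to $\Pi(\rho)^{\sigma\-\lan}$ over $E$, as in the proof of Theorem \ref{thm:admissible}.

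\textbf{Main obstacle.} The routine cases (principal series, supercuspidal) are direct bookkeeping with the formulas of Theorem \ref{thm:kerI2}. The delicate point is the \emph{special} case: one must show that the extension class of $\bbH^1(\DR)[\lambda]$ in $\Ext^1_{\GL_2(L),\iota}(\tilde I,\St_2^{\iota\-\an})$ is non-zero (not the split class), since only then is it uniquely determined by the sub and quotient. This requires the explicit $\Ext$-computation of Proposition \ref{universalSteinberg} ($\dim=1$) \emph{and} an argument that the class is non-trivial — e.g. by tracking it through the wall-crossing diagram of \S\ref{ss:wc} and using that $\St_2^{\iota\-\an}-1$ carries Breuil's $\mathcal{L}$-invariant, or directly from the non-splitness of $H^1(j_!\calO_{\Dr}^{\sm})\to H^1(j_!\Omega^{1,\sm}_{\Dr})$. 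Getting this non-vanishing cleanly, and likewise making precise that the ``$\sigma$-Hodge filtration'' is the \emph{only} extra parameter needed in part (ii) (i.e.\ that no further hidden dependence on $\rho$ creeps in through $(\pi_f^v)^{K^v}$ — which is handled by the standing ``no extra multiplicity'' assumption), is where the real work lies.
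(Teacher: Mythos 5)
Your proposal follows essentially the same route as the paper's proof: a case-by-case analysis on the type of $\pi_w$ using the explicit formulas of Theorem \ref{thm:kerI2}, reduction of the special-series case to uniqueness of the non-split extension (via Proposition \ref{universalSteinberg} and the corollary that follows it), and for part (ii) the identification of $\ker I^1[\lambda]$ as the cokernel of the map $H^0(\calO\ox\Omega)[\lambda]\to\bbH^1(\DR)[\lambda]$, whose image is pinned down by the $\sigma$-Hodge filtration via the de Rham comparison. The additional points you flag — that the ``no extra multiplicity'' assumption prevents hidden dependence through $(\pi_f^v)^{K^v}$, and that $(\WD,\HT,\Fil^\bullet)$ recovers $\rho_w$ by weak admissibility — are useful clarifications of steps the paper leaves implicit, but the underlying argument is the same.
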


\subsection{Admissibility of coherent cohomology of Drinfeld tower}
In this section, we want to show certain locally analytic representations cut out from the coherent cohomology of Drinfeld towers using $D_L^\times$-actions are admissible.
Recall that $\calM_{\Dr,n}$ is the Drinfeld curve over $C$ of level $n$, equipped with an action of $\GL_2(L)\times(\calO_{D_L}/p^n\calO_{D_L})^\times$. Let $\Sigma_{n}$ be the $E$-model of $\calM_{\Dr,n}/{\varpi}^{\bbZ}$ obtained via the Weil descent datum on $\calM_{\Dr,n}$, which is a Stein space over $E$ such that $L$ acts via $\sigma$. Here $\varpi$ acts on $\calM_{\Dr,n}$ via the identification of $L^\times$ with the center of $D_L^\times$. We have a de Rham complex 
\begin{align*}
    0\to H^0_{\dR}(\Sigma_{n})\to \calO(\Sigma_{n})\to \Omega^1(\Sigma_{n})\to H^1_{\dR}(\Sigma_{n})\to 0,
\end{align*}
which is an exact sequence of Fr\'echet spaces over $E$. Applying $-\hat\ox_{E,u} C$, one can recover the de Rham complex of $\calM_{\Dr,n}/{\varpi}^{\bbZ}$.

Let $\fl_E$ be the natural $\GL_2(L)$-equivariant $E$-model of $\fl$, and let $j_E:\Omega_E\subset \fl_E$ be the $E$-model of $\Omega$. Let $\pi_{n,E}:\Sigma_{n}\to \Omega_E$ be the $E$-model of the Gross--Hopkins period map $\pi_{\GM,n}:\calM_{\Dr,n}/{\varpi}^{\bbZ}\to \Omega$. From
\begin{align*}
    \calO(\Sigma_n)\hat\ox_{E,u}C\isom \calO(\calM_{\Dr,n}/{\varpi}^{\bbZ}),
\end{align*}
by Serre duality, we deduce a $\GL_2(L)\times D_L^\times$-equivariant isomorphism 
\begin{align*}
    H^1(j_{E,!}\pi_{n,E,*}\Omega^1_{\Sigma_n})\hat\ox_{E,u} C\isom H^1(j_!\pi_{\GM,n,*}\Omega^1_{\calM_{\Dr,n}/{\varpi}^{\bbZ}})
\end{align*}
where $j_E$ denotes the inclusion $\Omega_E\inj \fl_E$ and $j$ denotes the inclusion $\Omega\inj\fl$. Define
\begin{align*}
    \pi_c=(\tau_v \hat\ox_C H^1(j_!\pi_{\GM,*}\calO_{\calM_{\Dr,n}^{(0)}}))^{\calO_{D_L}^\times}
\end{align*}
where $\tau_v $ is a smooth irreducible representation of $D_L^\times$ and $n$ is a sufficiently large integer such that $1+\varpi^n\calO_{D_L}$ acts trivially on $\tau_v $. After twisting $\tau_v $ by an unramified character, we may assume that $\varpi$ acts trivially on $\tau_v $. See for example the treatment in \cite[5.1]{CDN20}. Then 
\begin{align*}
    \pi_c&=(\tau_v \hat\ox_C H^1(j_!\pi_{\GM,n,*}\calO_{\calM_{\Dr,n}})^{{\varpi}^{\bbZ}})^{D_L^\times}\\
    &\isom (\tau_v \hat\ox_C H^1(j_!\pi_{\GM,n,*}\calO_{\calM_{\Dr,n}/{\varpi}^{\bbZ}}))^{D_L^\times}
\end{align*}
as the cohomology of $j_!\pi_{\GM,n,*}\calO_{\calM_{\Dr,n}/{\varpi}^{\bbZ}}$ only concentrated on $H^1$. Recall in Definition \ref{def:AutoonD}, $\calA^{K^v}_{\bar G,0}$ is defined as the $C$-valued continuous functions on $\bar G(\bbQ)\bs\bar G(\bbA^\infty)/K^v$ and are smooth for the $D_L^\times$-action. Hence $\calA^{K^v}_{\bar G,0}$ has a natural $E$-structure by looking at the subspace where functions are valued in $E$. Using this we can give a $E$-structure of $\tau_v $ which we denote by $\tau_{w,E}$, and define
\begin{align*}
    {\pi}_{c,E}:=(\tau_{w,E}\hat\ox_E H^1(j_!\pi_{\GM,n,E,*}\Omega^1_{\Sigma_{n}}))^{D_L^\times},
\end{align*}
which is a locally $\sigma$-analytic representation of $\GL_2(L)$. By definition, we see 
\begin{align*}
    {\pi}_{c,E}\hat\ox_E C\isom \pi_c
\end{align*}
and $(\pi_c)^{\Gal(\bar L/E)}\isom \pi_{c,E}$. Similarly one can define $E$-structures $\pi_{v,E}$, $\tilde\pi_{E}$ to $\pi_v$, $\tilde{\pi}$. For notation of $\pi_v$, $\tilde{\pi}$, see subsection \ref{cuspidal}.


\begin{theorem}\label{thm:admissible}
Let $\rho$ be as in Theorem \ref{main}, such that $\rho|_{\Gal(\bar F_w/F_w)}$ is de Rham of parallel Hodge--Tate weights $0,1$, and $\WD(\rho|_{\Gal(\bar F_w/F_w)})$ is irreducible. Write $\Pi_E(\rho):=\Hom_{\Gal_F}(\rho,\tilde{H}^1(K^v,E))$, and we assume $\Pi_E(\rho)^{\sigma\-\lan}\neq 0$. Then
$\tilde\pi_{E}$ is an admissible representation of $\GL_2(L)$.
\end{theorem}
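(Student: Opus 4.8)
The plan is to deduce the admissibility of $\tilde\pi_E$ from the admissibility of the completed cohomology group $\tilde H^1(K^v,E)$, by carefully tracking the relevant $E$-structures. First I would recall the basic strategy from \cite[Remark 7.3.5]{PanII}: since $\Pi_E(\rho)$ is a closed subrepresentation of the admissible Banach representation $\tilde H^1(K^v,E)$ cut out by the absolutely irreducible Galois type $\rho$ via the Hecke eigenvalue $\lambda$, it is itself admissible. Consequently $\Pi_E(\rho)^{\sigma\-\lan}$ is an admissible locally $\sigma$-analytic representation of $\GL_2(L)$ over $E$. By Theorem \ref{kerI} (together with its consequences in the preceding subsection), after base change along $u:E\inj C$ with $\iota=u\comp\sigma$, the arithmetic Sen weight zero part of $\Pi_E(\rho)^{\sigma\-\lan}\hat\ox_{E,u}C$ is identified $\GL_2(L)$-equivariantly with $\ker I^1[u\comp\lambda]$. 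Since $\WD(\rho_w)$ is irreducible, $\pi_w$ is supercuspidal, and by the supercuspidal case analysis in Theorem \ref{thm:kerI2} (see also \S\ref{cuspidal}) there is a $\GL_2(L)$-equivariant exact sequence
\begin{align*}
    0\to (\pi_f^v)^{K^v}\ox_C\pi_w\to (\pi_f^v)^{K^v}\ox_C\tilde\pi\to \ker I^1[u\comp\lambda]\to 0.
\end{align*}

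Next I would set up the $E$-structures compatibly. Under our simplifying assumption that there is no extra multiplicity from the tame part, $(\pi_f^v)^{K^v}$ is one-dimensional (or a fixed finite-dimensional space defined over $E$), so it suffices to produce an $E$-structure on $\tilde\pi$ and check it is compatible with the $E$-structure $\Pi_E(\rho)^{\sigma\-\lan}$ coming from the completed cohomology. The key geometric input is the $p$-adic uniformization of the supersingular locus of the unitary Shimura curve by the Lubin--Tate tower (subsection \ref{padicuniformization}), combined with the Scholze--Weinstein duality $\calM_{\LT,\infty}\isom\calM_{\Dr,\infty}$ (Theorem \ref{thm:SWduality}) which matches $\pi_{\LT,\HT}$ with $\pi_{\Dr,\GM}$. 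Thus $\tilde\pi=(\tau_w\ox_C\dlim_n H^1(j_!\pi_{\Dr,\GM,n,*}\calO_{\calM_{\Dr,n}}))^{D_L^\times}$ can be rewritten using coherent cohomology of the Drinfeld curves. Now I would invoke the Weil descent datum on $\calM_{\Dr,n}$, giving the $E$-model $\Sigma_n$ of $\calM_{\Dr,n}/\varpi^{\bbZ}$ (a Stein space over $E$ on which $L$ acts via $\sigma$), and use Serre duality to get a $\GL_2(L)\times D_L^\times$-equivariant isomorphism $H^1(j_{E,!}\pi_{n,E,*}\Omega^1_{\Sigma_n})\hat\ox_{E,u}C\isom H^1(j_!\pi_{\Dr,\GM,n,*}\Omega^1_{\calM_{\Dr,n}/\varpi^{\bbZ}})$, and the dual statement with $\calO$ in place of $\Omega^1$. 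Combined with the $E$-structure $\tau_{w,E}$ on $\tau_w$ coming from the $E$-rational functions in $\calA^{K^v}_{\bar G,0}$ (Definition \ref{def:AutoonD}), this defines $\tilde\pi_E$ with $\tilde\pi_E\hat\ox_{E,u}C\isom\tilde\pi$.

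The crux of the argument is the compatibility of two a priori different $\Gal(\bar L/E)$-actions on $\tilde\pi$: the one coming from the descent-datum $E$-structure of the Drinfeld tower, and the one coming from the $E$-structure of the completed cohomology $\tilde H^1(K^v,E)$. I would argue that the $E$-structure of the Drinfeld tower is compatible with the $E$-structure of the Lubin--Tate tower (this is where the Weil descent data on the two towers must be matched, using that Scholze--Weinstein duality is defined over the respective base fields), and then that the $p$-adic uniformization identifies the $E$-structure of the Lubin--Tate tower with the $E$-structure on the supersingular locus of the unitary Shimura curve, hence with the restriction of the $E$-structure of $\tilde H^1(K^v,E)$. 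Granting this, the $\Gal(\bar L/E)$-action on $\tilde\pi\isom\tilde\pi_E\hat\ox_{E,u}C$ is $\GL_2(L)$-equivariant and matches the Galois action on the relevant subquotient of $\tilde H^1(K^v,E)^{\sigma\-\lan}\hat\ox_{E,u}C$. Taking $\Gal(\bar L/E)$-invariants, the exact sequence above descends to an exact sequence of locally $\sigma$-analytic $E$-representations $0\to \pi_{w,E}\to \tilde\pi_E\to \ker I^1[u\comp\lambda]^{\Gal(\bar L/E)}\to 0$; the quotient is a subquotient of the admissible $\Pi_E(\rho)^{\sigma\-\lan}$, hence admissible, and $\pi_{w,E}$ is a smooth admissible representation, so $\tilde\pi_E$ is admissible as an extension of two admissible representations (using that admissible locally analytic representations form an abelian category, stable under extensions). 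The main obstacle I anticipate is precisely making the descent-datum compatibilities rigorous — i.e.\ checking that the Weil descent datum defining $\Sigma_n$, the one on the Lubin--Tate side, and the $E$-structure coming from the (integral model of the) unitary Shimura curve all glue into a single $\Gal(\bar L/E)$-equivariant picture on the nose; this requires a careful bookkeeping of the various equivariances (for $\GL_2(L)$, $D_L^\times$, the Hecke algebra $\bbT^S$, and $\Gal(\bar L/E)$) and of the normalizations of the period maps, but no new ideas beyond those already in \cite{PanII}.
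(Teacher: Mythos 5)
Your proposal is correct and takes essentially the same approach as the paper's proof: deduce admissibility of $\Pi_E(\rho)^{\sigma\-\lan}$ from closedness inside the admissible $\tilde H^1(K^v,E)$, identify $\ker I^1[u\comp\lambda]$ with (the Sen-weight-zero part of) this locally analytic multiplicity space, track the $E$-structures via Scholze--Weinstein duality and Rapoport--Zink uniformization exactly as you describe, and conclude by taking $\Gal(\bar L/E)$-invariants of the exact sequence and invoking stability of admissibility under extensions. The only thing the paper spells out that you gesture at is the injectivity of $H^1(\Gal(\bar L/E),\pi_{w,E}\ox_E C)\to H^1(\Gal(\bar L/E),\tilde\pi_E\ox_E C)$ needed to descend the exact sequence, and the Chebotarev/Eichler--Shimura argument producing the initial isomorphism $\tilde H^1(K^v,E)[\lambda]\isom\rho\ox_E\Pi_E(\rho)$.
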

The idea is as follows: we have shown that $\pi_c$ appears in $\tilde{H}^1(K^v,C)[\lambda_u]^{\Theta_{\Sen}=0}$, but note the coefficient is over $C$. One can show that  $\tilde{H}^1(K^v,C)[\lambda_u]^{\Theta_{\Sen}=0}\isom (\rho\ox\Pi_E(\rho)^{\sigma\-\lan}\hat \ox_{E,u}C)^{\Theta_{\Sen}=0}$, and $(\rho\ox_{E,u}C)^{\Gal(\bar L/E)}\inj (\rho\ox_{E,u}C)^{\Theta_{\Sen}=0}$ will gives a canonical $E$-structure on $\pi_c$ which is compatible with the $E$-structure of the Shimura curve.
\begin{proof}
Let $\lambda$ be the associated Hecke eigenvalue such that
\begin{align*}
    \tilde{H}^1(K^v,E)[\lambda]\isom \tilde{H}^1(K^v,E)[\rho].
\end{align*}
Then 
\begin{align*}
    \tilde{H}^1(K^v,E)[\lambda]\isom \Pi_E(\rho)\ox_E\rho.
\end{align*}
In the modular curve case, this was discussed in \cite[6.1.1]{Pan22}. Indeed, suppose $\rho:\Gal_F\to \GL_2(E)$ factors through the quotient $G_{F,S}$, where $S$ is a finite set of primes of $F$ and $G_{F,S}$ is the Galois group of the maximal unramified extension of $F$ outside of $S$. Let $G_0\subset G_{F,S}$ be the subgroup generated by the Frobenius element in $G_{F,S}$. By the Eichler--Shimura relation \cite[Remarque 7.1.1(2)]{Din17}, applied to $\rho_0=\rho|_{G_0}:G_0\to \GL_2(E)$, we know $\tilde{H}^1(K^v,E)[\lambda]$ is $\rho_0$-isotypic by the main theorem of \cite{BLR}. More precisely, there is a $G_0$-equivariant isomorphism 
\begin{align*}
    \tilde{H}^1(K^v,E)[\lambda]\isom \rho^{\oplus I}
\end{align*}
for some set $I$. Let $M=E^{\oplus I}$, then $\rho^{\oplus I}\isom \rho\ox_E M$. Applying $\Hom_{G_0}(\rho_0,-)$ to the above isomorphism, we get 
\begin{align*}
    \Hom_{G_0}(\rho_0,\tilde{H}^1(K^v,E)[\lambda])\isom \Hom_{G_0}(\rho_0,\rho_0)\ox_E M.
\end{align*}
By the Chebotarev density theorem, $G_0$ is dense in $G_{F,S}$. As $\rho$ is irreducible and finite dimensional, $\Hom_{G_0}(\rho_0,\rho_0)=E$. Hence 
\begin{align*}
    M\isom \Hom_{G_0}(\rho_0,\tilde{H}^1(K^v,E)[\lambda]).
\end{align*}
As $G_{F,S}$ acts continuously on $\tilde{H}^1(K^v,E)$, we deduce 
\begin{align*}
    \Hom_{G_0}(\rho_0,\tilde{H}^1(K^v,E)[\lambda])\isom \Hom_{G_{F,S}}(\rho,\tilde{H}^1(K^v,E)[\lambda]).
\end{align*}
Again by the Eichler--Shimura relation, 
\begin{align*}
    \Hom_{G_{F,S}}(\rho,\tilde{H}^1(K^v,E)[\lambda])\isom \Hom_{G_{F,S}}(\rho,\tilde{H}^1(K^v,E)).
\end{align*}
From this we deduce an abstract $G_{F,S}$-module isomorphism
\begin{align*}
    \tilde{H}^1(K^v,E)[\lambda]\isom \rho\ox_E\Hom_{G_{F}}(\rho,\tilde{H}^1(K^v,E)).
\end{align*}
However, the above map is induced by the  evaluation map
\begin{align*}
    \rho\ox_E\Hom_{G_{F}}(\rho,\tilde{H}^1(K^v,E))\to \tilde{H}^1(K^v,E),
\end{align*}
then $\Hom_{G_{F}}(\rho,\tilde{H}^1(K^v,E))$ inherits the topology and the $\GL_2(L)$-action from $\tilde{H}^1(K^v,E)$, making this isomorphism continous and $\GL_2(L)$-equivariantly.

By Theorem \ref{11}, there is a $\GL_2(L)\times \Gal_L$-equivariant and Hecke equivariant isomorphism
\begin{align*}
    H^1(K^v,C)\isom H^1(\fl,\calO_{K^v}).
\end{align*}
Therefore, there is a $\GL_2(L)\times \Gal_L$-equivariant isomorphism
\[
    H^1(K^v,C)[\lambda_u]\isom H^1(\fl,\calO_{K^v})[\lambda_u]
\]
with $\lambda_u=u\comp \lambda$, where $u:E\inj C$ is a fixed embedding. From the proof of Theorem \ref{kerI} (take $k=0$), we get a $\GL_2(L)\times \Gal_L$-equivariant isomorphism
\[
    H^1(K^v,C)[\lambda_u]^{\iota\-\lan,\tilde{\chi}_0,\Theta_{\Sen}=0}=H^1(\fl,\calO_{K^v}^{\iota\-\lan,(0,0)})[\lambda_u],
\]
where $\Theta_{\Sen}$ denote the arithmetic Sen operator. In the isomorphism $\tilde{H}^1(K^v,E)[\lambda]\isom \rho\ox\Pi_E(\rho)$, the action of $\Gal(\bar L/E)$ on $\rho\ox\Pi_E(\rho)$ given by the usual action on $\rho$ and trivial action on $\Pi_E(\rho)$. By \cite[Theorem 1.4]{DPSinf}, $\Pi_E(\rho)^{\sigma\-\lan}$ has trivial infinitesimal character. Therefore, we get an isomorphism
\begin{align*}
    \tilde H^1(K^v,C)[\lambda_u]^{\iota\-\lan,\tilde{\chi}_0}\isom \Pi_E(\rho)^{\sigma\-\lan}\hat\ox_E \rho\ox_{E,u}C.
\end{align*}
We remark that the action of $\Gal(\bar L/E)$ on $\rho\ox_{E,u}C$ is the diagonal action. Therefore, 
\begin{align*}
     H^1(\fl,\calO_{K^v}^{\iota\-\lan,(0,0)})[\lambda_u]\isom \tilde H^1(K^v,C)[\lambda_u]^{\iota\-\lan,\tilde{\chi}_0,\Theta_{\Sen}=0}\isom  (\Pi_E(\rho)^{\sigma\-\lan}\hat\ox_E (\rho\ox_{E,u}C))^{\Theta_{\Sen}=0}.
\end{align*}
Here we use
\begin{align*}
    (\Pi_E(\rho)^{\sigma\-\lan}\hat\ox_E (\rho\ox_{E,u}C))^{\Theta_{\Sen}=0}\isom \Pi_E(\rho)^{\sigma\-\lan}\hat\ox_E (\rho\ox_{E,u}C)^{\Theta_{\Sen}=0},
\end{align*}
and it is proved in Lemma \ref{lem:111}. As $\rho_w$ is of $\sigma$-Hodge--Tate weight $0,1$, $(\rho\ox_{E,u}C)^{\Theta_{\Sen}=0}\isom C$. Besides, by Theorem \ref{kerI}, we know 
\begin{align*}
    H^1(\fl,\calO_{K^v}^{\iota\-\lan,(0,0)})[\lambda_u]\isom \ker I^1[\lambda_u].
\end{align*}
In the case $\WD(\rho|_{\Gal(\bar F_w/F_w)})$ is irreducible, by \S \ref{cuspidal}  there is an exact sequence 
\begin{align*}
    0\to \pi_v\to \tilde{\pi}\to \ker I^1[\lambda_u]\to 0.
\end{align*}
Hence we have a $\GL_2(L)$-equivariant map 
\begin{align}\label{ex}
    0\to \pi_{v,E}\ox_{E}C\to \tilde\pi_{E}\hat\ox_E C\to \Pi_E(\rho)^{\sigma\-\lan}\hat\ox_E (\rho\ox_{E,u}C)^{\Theta_{\Sen}=0}\to 0.
\end{align}
We note the first map $\pi_{v,E}\ox_{E}C\to \tilde\pi_{E}\hat\ox_E C$ comes from the $\sigma$-Hodge filtration of $\rho_w$ and in particular it is defined over $E$. Indeed, the image of $\pi_v$ in $\tilde{\pi}$ lands in $\tilde{\pi}^{\sm}\isom \pi_v^{\oplus 2}$, which comes from the map $H^0(\calO\ox\Omega)[\lambda]\to \bbH^1(\DR^{\lalg})[\lambda]$ in Theorem \ref{thm:kerI1}. By the $p$-adic de Rham comparison theorem, this map is exactly $\Fil^1D_{\dR,\sigma}(\rho_w)\ox_{E,u}\pi_v\to D_{\dR,\sigma}(\rho_w)\ox_{E,u}\pi_v$. I claim the exact sequence (\ref{ex}) is also equivariant for the $\Gal(\bar L/E)$-action, where the action of $\Gal(\bar L/E)$ on $\pi_{v,E}$ and $\tilde\pi_E$ is trivial. Indeed, the Galois action on $ H^1(\fl,\calO_{K^v})$ comes from the $E$-structure of unitary Shimura curves. From \cite[Theorem III]{RZ96}, we know the $p$-adic uniformization of the supersingular locus of the unitary Shimura curves is defined over $L$, hence the $E$-structure of the Lubin--Tate space $\calM_{\LT,\infty}^{(0)}$ is compatible with the $E$-structure of the unitary Shimura curve. More precisely, we have an isomorphism $\pi_{\HT}^{-1}(\Omega)\isom \sqcup_{i\in I}\calM_{\LT,\infty}^{(0)}$ where $I$ is a finite index set, which induces a $\Gal(\bar L/E)$-equivariant map
\begin{align*}
    H^1(\fl,\bigoplus_{i\in I}j_!\calO_{\LT})\to H^1(\fl,\calO_{K^v}).
\end{align*}
Here $j:\Omega\inj \fl$ and $\calO_{\LT}=\pi_{\LT,\HT,*}\calO_{\calM_{\LT,\infty}^{(0)}}$. Let $\breve \calM_{\LT,\infty}$ be the $\breve L_0$-model of $\calM_{\LT,\infty}$ and same for $\breve \calM_{\Dr,\infty}$. In \cite[Theorem 7.2.3]{SW13}, they show 
\begin{align*}
    \breve \calM_{\LT,\infty}\isom \breve \calM_{\Dr,\infty}.
\end{align*}
From this we deduce the $E$-structure of $\calM_{\LT,\infty}^{(0)}$ is compatible with the $E$-structure of $\calM_{\Dr,\infty}^{(0)}$. This means the isomorphism
\begin{align*}
    H^1(\fl,j_!\calO_{\LT})\isom H^1(\fl,j_!\calO_{\Dr})
\end{align*}
is $\Gal(\bar L/E)$-equivariant, where $\calO_{\Dr}=\pi_{\LT,\HT,*}\calO_{\calM_{\Dr,\infty}^{(0)}}$. Finally, the $E$-structure $\tilde\pi_{E}$ of $\tilde\pi$ and the $E$-structure $\pi_{v,E}$ of ${\pi}_v$ is obtained from the $E$-structure of $\calM_{\Dr,\infty}/\varpi^{\bbZ}$. This shows the exact sequence 
\begin{align*}
    0\to \pi_{v,E}\ox_{E}C\to \tilde\pi_{E}\hat\ox_E C\to \Pi_E(\rho)^{\sigma\-\lan}\hat\ox_E (\rho\ox_{E,u}C)^{\Theta_{\Sen}=0}\to 0
\end{align*}
is $\Gal(\bar L/E)$-equivariant. Now taking $\Gal(\bar L/E)$-invariants on both sides, we get an $E$-linear $\GL_2(L)$-equivariant exact sequence
\begin{align*}
    0\to \pi_{v,E}\to \tilde\pi_{E}\to \Pi_E(\rho)^{\sigma\-\lan}\hat\ox_E (\rho\ox_{E,u}C)^{\Gal(\bar L/E)}\to 0.
\end{align*}
Indeed, $H^1(\Gal(\bar L/E),\pi_{v,E}\ox_{E}C)\to H^1(\Gal(\bar L/E),\tilde \pi_{E}\ox_{E}C)$ is injective because it comes from the base change of the injection $\pi_{v,E}\inj\tilde\pi_E$. Finally, as $\Pi_E(\rho)$ is a closed subrepresentation of the completed cohomology group $\tilde{H}^1(K^v,E)$ which is admissible by \cite[Theorem 0.1(i)]{Eme06}, we know $\Pi_E(\rho)^{\sigma\-\lan}$ is admissible. As $\pi_{v,E}$ is also admissible, we deduce $\tilde{\pi}_E$ is admissible by \cite[Lemme 2.1.1]{Bre19}. 
\end{proof}

\begin{lemma}\label{lem:111}
Let $K$ be a finite extension of $\bbQ_p$, with $C=\hat{\bar K}$. Let $W$ be a finite-dimensional semi-linear $C$-representation of $\Gal(\bar K/K)$, and let $V$ be a $C$-Banach space, with trivial semi-linear $\Gal(\bar K/K)$-action. Then the Sen operator on $V\ox_C W$ is given by $1\ox\Theta_{W}$, where $\Theta_{W}$ is the Sen operator on $W$.
\begin{proof}
First we recall the definition of Sen operator on $C$-Banach spaces, following \cite[Definition 6.1.9]{PanII}. Define $K(\zeta_{p^\infty})=\cup_n K(\zeta_{p^n})$ where $\zeta_{p^n}$ is a primitive $p^n$-th root of unity. Let $K_\infty\subset K(\zeta_{p^\infty})$ be the maximal $\bbZ_p$-extension inside $K(\zeta_{p^\infty})$. Let $W^K\subset W$ be the subspace of $\Gal(\bar K/K_\infty)$-invarint, $\Gamma:=\Gal(K_\infty/K)$-analytic vectors in $W$. Then by classical Sen theory \cite[Th\'eor\`eme 3.4]{BC16}, we know $W^K\ox_K C\isom W$. Since the action of $\Gal(\bar K/K)$ on $V$ is trivial, after choosing an orthonormal basis of $V$ which is fixed under the $\Gal(K_\infty/K)$-action, we obtain $(V\hat\ox_C W)^K\hat\ox_{K} C\isom V^K\hat\ox_{K} W^K\hat\ox_{K}C\isom V\hat\ox_C W$. The Sen operator on $V\hat \ox_C W$ is defined as the action of $1\in\Lie \Gamma$ on $(V\ox_CW)^K$, and extend $C$-linearly. As $(V\ox_CW)^K\isom V\ox_{K} W^K$, we see for any $X\in \Lie\Gamma$, $X(v\ox w)=v\ox X(w)$ for any $v\in V$ and $w\in W^K$. This shows the Sen operator on $V\ox_C W$ is given by $1\ox\Theta_{W}$, where $\Theta_{W}$ is the Sen operator on $W$.
\end{proof}
\end{lemma}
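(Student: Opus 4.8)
The plan is to unwind the definition of the arithmetic Sen operator on Banach $C$-representations of $\Gal(\bar K/K)$ as set up in \cite[Definition 6.1.9]{PanII} (following Berger--Colmez), and to check directly that the Sen module of $V\hat\ox_C W$ splits compatibly with the tensor decomposition. First I would recall that, writing $K_\infty$ for the cyclotomic $\bbZ_p$-extension of $K$ and $\Gamma=\Gal(K_\infty/K)$, for any Banach $C$-representation $B$ of $\Gal_K$ admitting a Sen operator one sets $B^K$ to be the subspace of $\Gal(\bar K/K_\infty)$-invariant, $\Gamma$-analytic vectors; one has $B^K\hat\ox_K C\aisom B$, and $\Theta_B$ is by definition the $C$-linear extension of the action of the canonical generator $1\in\Lie\Gamma$ on $B^K$.

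Second, since $W$ is finite-dimensional over $C$, classical Sen theory \cite[Th\'eor\`eme 3.4]{BC16} applies: $W^K$ is a finite-dimensional $K$-vector space with $W^K\ox_K C\aisom W$, on which $\Gamma$ acts locally analytically, and $\Theta_W$ is recovered from the derived action of $1\in\Lie\Gamma$. Using this I would rewrite $V\hat\ox_C W\isom V\hat\ox_C(W^K\ox_K C)\isom V\ox_K W^K$; the crucial point is that because $W^K$ is \emph{finite}-dimensional over $K$, this last identification involves no new completion and is an isomorphism of Banach $C$-representations of $\Gal_K$, where $\Gal_K$ acts diagonally --- trivially on the $V$-factor and through the $\Gamma$-analytic, $\Gal(\bar K/K_\infty)$-fixed action on $W^K$.

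Third I would identify $(V\hat\ox_C W)^K$. Since $\Gal(\bar K/K_\infty)$ acts trivially on $V$ and fixes $W^K$, and since the orbit map of $\Gamma$ on $V\ox_K W^K$ is analytic (constant in the $V$-direction, finite-dimensional --- hence analytic --- in the $W^K$-direction), the subspace $V\ox_K W^K$ lies in the Sen module; as it already generates $V\hat\ox_C W$ over $C$, it is exactly the object computing $\Theta_{V\hat\ox_C W}$ (this also shows $V\hat\ox_C W$ admits a Sen operator in the first place). Finally the Leibniz rule for the $\Lie\Gamma$-action gives $1\cdot(v\ox w)=v\ox(1\cdot w)$ for $v\in V$, $w\in W^K$, so $\Theta_{V\hat\ox_C W}=1\ox\Theta_W$ after extending $C$-linearly, which is the assertion.

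I expect the only delicate point to be the handling of the decompletion of $V$: for a general Banach $C$-space one cannot invoke an orthonormal basis since $C$ is not spherically complete, so I would avoid that and argue instead --- as sketched above --- that the finite-dimensionality of $W$ reduces the whole Sen-module formalism for $V\hat\ox_C W$ to that for $W$ tensored over $K$ with the trivially acted-on space $V$, which sidesteps any orthonormalizability hypothesis on $V$. (Alternatively one could deduce the statement from the general compatibility of Sen operators with tensor products together with $\Theta_V=0$ for the trivial action, but the direct computation above is cleaner and self-contained.)
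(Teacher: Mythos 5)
Your instinct to worry about orthonormal bases over $C$ is well taken in general, but it is a red herring in this particular lemma: the hypothesis that the semilinear $\Gal(\bar K/K)$-action on $V$ is \emph{trivial} means (and in the paper's application, explicitly is taken to mean) that $V=V_0\hat\ox_K C$ for a $K$-Banach space $V_0$ on which $\Gal_K$ does not act, with $\Gal_K$ acting only through the $C$-factor. Since $K$ is discretely valued, $V_0$ does have an orthonormal $K$-basis, and its image gives an orthonormal $C$-basis of $V$ fixed by $\Gal_K$. So the paper's invocation of an orthonormal basis is legitimate and does not require spherical completeness of $C$.

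More importantly, your third paragraph contains a genuine gap. In your second paragraph you (correctly) identify $V\hat\ox_C W\isom V\ox_K W^K$ \emph{as $C$-Banach spaces}, i.e.\ $V\ox_K W^K$ is the whole space, not a decompleted piece of it. But in the third paragraph you then treat $V\ox_K W^K$ as though it were a candidate Sen module, asserting that ``$\Gal(\bar K/K_\infty)$ acts trivially on $V$'' and that ``the orbit map of $\Gamma$ on $V\ox_K W^K$ is analytic, constant in the $V$-direction.'' Neither is true for arbitrary $v\in V$: for a semilinear $C$-Banach representation, $H_K=\Gal(\bar K/K_\infty)$ does not fix $V$ --- it acts nontrivially through the $C$-coefficients --- and $\gamma(v\ox w)=\gamma(v)\ox\gamma(w)$ is not constant in $v$ unless $v$ lies in the $K$-structure $V_0\subset V$. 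The object you want is the proper subspace $V_0\ox_K W^K$ (equivalently $V^K\ox_K W^K$), on which $H_K$ does act trivially, $\Gamma$ does act analytically, and which does satisfy $(V_0\ox_K W^K)\hat\ox_K C\isom V\hat\ox_C W$ because $W^K$ is finite over $K$. Once you replace $V\ox_K W^K$ by $V_0\ox_K W^K$ throughout, your derivation of $\Theta_{V\hat\ox_C W}=1\ox\Theta_W$ via the Leibniz rule is correct, and the resulting argument is essentially the paper's (the paper itself slips into the same shorthand ``$(V\ox_C W)^K\isom V\ox_K W^K$'' in its last line, where $V^K\ox_K W^K$ is meant). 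Note also that the claim ``this also shows $V\hat\ox_C W$ admits a Sen operator in the first place'' only becomes meaningful after the correction, and at that point you are again implicitly using that $V_0$ has an orthonormal basis (or some equivalent) to see that $V_0\ox_K W^K$ really is the Sen module, so the alternative route does not in fact sidestep the orthonormalizability point so much as repackage it.
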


As translation functors preserve the admissibility \cite[Lemma 2.3.5]{JLH21}, we can deduce the admissibility for some equivariant line bundles on coverings of Drinfeld spaces. Let $\omega_{\fl_E}^{(a,b)}$ be the equivariant line bundle on $\fl_E$ associated to the character $\left(\begin{matrix} x&0\\y&z \end{matrix}\right)\mapsto \sigma(x)^a\sigma(z)^d$ with $x,y,z\in L$. Put $\omega_{\Sigma_n}^{(a,b)}:=\pi_{n,E}^*j_E^*\omega_{\fl}^{(a,b)}$.

\begin{corollary}
For any $(a,b)\in\bbZ^2$, the locally analytic $\GL_2(L)$-representation $H^1_c(\Sigma_n,\omega_{\Sigma_n}^{(a,b)})$ is admissible.
\begin{proof}
Since the $D_L^\times$-action on $H^1_c(\Sigma_n,\omega_{\Sigma_n}^{(a,b)})$ factors through the quotient $D_L^\times/1+\varpi^n\calO_{D_L}$ and the action of $\varpi$ on $H^1_c(\Sigma_n,\omega_{\Sigma_n}^{(a,b)})$ is trivial, we can decompose 
\begin{align*}
H^1_c(\Sigma_n,\omega_{\Sigma_n}^{(a,b)})\isom \bigoplus_{\sigma}\sigma\ox\Hom_{D_L^\times}(\sigma,H^1_c(\Sigma_n,\omega_{\Sigma_n}^{(a,b)})), 
\end{align*}
where $\sigma$ varies in the $E$-representations of $D_L^\times/1+\varpi^n\calO_{D_L}$ such that the action of $\varpi$ is trivial. There are only finitely many such $\sigma$. By Theorem \ref{thm:admissible}, $\Hom_{D_L^\times}(\sigma,H^1_c(\Sigma_n,\omega_{\Sigma_n}^{(0,0)}))$ is admissible. As $\Hom_{D_L^\times}(\sigma,H^1_c(\Sigma_n,\omega_{\Sigma_n}^{(-1,1)}))$ is a quotient of $\Hom_{D_L^\times}(\sigma,H^1_c(\Sigma_n,\omega_{\Sigma_n}^{(0,0)}))$, it is admissible. Then we may apply the translation functors to show $\Hom_{D_L^\times}(\sigma,H^1_c(\Sigma_n,\omega_{\Sigma_n}^{(a,b)}))$ is admissible for any $(a,b)\in\bbZ^2$. Finally, we deduce $H^1_c(\Sigma_n,\omega_{\Sigma_n}^{(a,b)})$ is admissible for any $(a,b)\in\bbZ^2$. 
\end{proof}
\end{corollary}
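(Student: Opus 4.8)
The plan is to reduce the statement to the admissibility result Theorem \ref{thm:admissible} by first separating off the multiplicity spaces for the (essentially finite) $D_L^\times$-action, and then moving the weight $(a,b)$ around by means of the differential operator $\bar d_{\Dr}$ and the translation functors of \S\ref{intertw}.

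First, since $\Sigma_n$ is the $E$-model of $\calM_{\Dr,n}/\varpi^{\bbZ}$, the central element $\varpi$ acts trivially on $\Sigma_n$ and $1+\varpi^n\calO_{D_L}$ acts trivially at level $n$, so the $D_L^\times$-action on $H^1_c(\Sigma_n,\omega_{\Sigma_n}^{(a,b)})$ factors through the finite group $D_L^\times/\langle 1+\varpi^n\calO_{D_L},\varpi\rangle$ and commutes with the $\GL_2(L)$-action. As $E$ is large enough to split the (finitely many) irreducible $E$-representations $\sigma$ of this finite group, one obtains a $\GL_2(L)$-equivariant decomposition
\[
    H^1_c(\Sigma_n,\omega_{\Sigma_n}^{(a,b)})\isom\bigoplus_{\sigma}\sigma\ox_E\Hom_{D_L^\times}\!\big(\sigma,H^1_c(\Sigma_n,\omega_{\Sigma_n}^{(a,b)})\big),
\]
so it suffices to prove that each multiplicity space $\Hom_{D_L^\times}(\sigma,H^1_c(\Sigma_n,\omega_{\Sigma_n}^{(a,b)}))$ is an admissible locally $\sigma$-analytic $\GL_2(L)$-representation.

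For the base case $(a,b)=(0,0)$, I would identify $\Hom_{D_L^\times}(\sigma,H^1_c(\Sigma_n,\calO_{\Sigma_n}))$ with the object $\tilde\pi_E$ built in the proof of Theorem \ref{thm:admissible}, taking $\tau_{w,E}$ to be (the contragredient of) $\sigma$: Serre duality on the Stein curve $\Sigma_n$ rewrites $H^1_c(\Sigma_n,\calO_{\Sigma_n})$ in terms of $H^1(\fl_E,j_{E,!}\pi_{n,E,*}\Omega^1_{\Sigma_n})$, which is exactly the sheaf-theoretic description used to define $\tilde\pi_E$, and the $p$-adic uniformization of the supersingular locus of the unitary Shimura curves (\S\ref{padicuniformization}) together with the computations of \S\ref{cohint} realizes it inside $\tilde H^1(K^v,E)^{\sigma\-\lan}$ for a suitably chosen $K^v$; then Theorem \ref{thm:admissible} gives admissibility. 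For $(a,b)=(1,-1)$, the surjectivity of $\bar d^{1}_{\Dr}$ from Proposition \ref{d4}, combined with the Stein vanishing of higher coherent cohomology on $\Sigma_n$ (so that only degree $1$ survives), exhibits $\Hom_{D_L^\times}(\sigma,H^1_c(\Sigma_n,\omega_{\Sigma_n}^{(1,-1)}))$ as a $\GL_2(L)$-equivariant quotient of $\Hom_{D_L^\times}(\sigma,H^1_c(\Sigma_n,\calO_{\Sigma_n}))$, hence admissible.

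Finally, for an arbitrary $(a,b)\in\bbZ^2$ I would invoke the translation functors. These are exact and preserve admissibility of locally $\sigma$-analytic representations, and they commute with the cohomology functors on $\fl_E$ by (the $E$-analogue of) Proposition \ref{prop:translationcommuteswithcohomology}; using the explicit translates of the sheaves $\calO_{\Dr}^{\lan,s\mu}$ and of $\bar d_{\Dr}$ computed in \S\ref{intertw}, one identifies $T_\mu^\lambda$ applied to the $(0,0)$- or $(1,-1)$-case with $\Hom_{D_L^\times}(\sigma,H^1_c(\Sigma_n,\omega_{\Sigma_n}^{(a,b)}))$ for a suitable pair $\mu,\lambda$, and when the target weight is singular the wall-crossing yields an extension of two admissible representations, which is still admissible. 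The step I expect to be the main obstacle is the base case: making precise the identification of $\Hom_{D_L^\times}(\sigma,H^1_c(\Sigma_n,\calO_{\Sigma_n}))$ with the globally defined $\tilde\pi_E$ for \emph{every} such $\sigma$, i.e.\ the global-to-local input that every irreducible $\sigma$ of the finite quotient of $D_L^\times$ is the Jacquet--Langlands transfer of a generic local component of a classical automorphic form on $G$ occurring in $\tilde H^1(K^v,E)$ for some sufficiently small $K^v$. Everything else is bookkeeping with exact functors, Serre duality on Stein curves, and the translation computations already carried out in the paper.
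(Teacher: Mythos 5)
Your proposal reproduces the paper's own proof: the finite-group decomposition of $H^1_c(\Sigma_n,\omega_{\Sigma_n}^{(a,b)})$ over irreducibles $\sigma$ of $D_L^\times/\langle 1+\varpi^n\calO_{D_L},\varpi\rangle$, the base case $(a,b)=(0,0)$ by citing Theorem~\ref{thm:admissible}, the $(1,-1)$-case exhibited as a $\GL_2(L)$-equivariant quotient of the $(0,0)$-case, and translation functors for general $(a,b)$. The obstacle you flag at the end is genuine but is shared with the paper's proof rather than being a defect of your proposal: Theorem~\ref{thm:admissible} only treats the $\tilde\pi_E$ built from a global $\rho$ with $\WD(\rho_w)$ \emph{irreducible}, so it covers a multiplicity space $\Hom_{D_L^\times}(\sigma,H^1_c(\Sigma_n,\calO_{\Sigma_n}))$ only for those higher-dimensional $\sigma$ that arise as $\tau_{w,E}$ for some cuspidal $\rho$ in $\tilde H^1(K^v,E)$, a global realization step that both arguments leave implicit. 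In particular, for $\sigma$ a \emph{character} of $D_L^\times$ the Jacquet--Langlands transfer is a twist of Steinberg, $\WD(\rho_w)$ is reducible, and Theorem~\ref{thm:admissible} does not apply; there the multiplicity space reduces by \'etale descent essentially to $H^1_c(\Omega_E,\calO_{\Omega_E})$ up to a finite cover, whose admissibility has to be supplied by a separate (classical) argument that neither the paper nor your proof cites. Modulo these shared elisions, your argument is correct and identical in route to the paper's.
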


\begin{remark}
Beyond $\GL_2(\bbQ_p)$, in \cite{Patel_Schmidt_Strauch_2019, ardakov2023globalsectionsequivariantline}, they also give some coadmissiblity results for global sections of some equivariant line bundles on Drinfeld’s first \'etale covering of the $p$-adic upper half plane, based on explicit calculations of $\wideparen\calD$-modules.
\end{remark}

\subsection{Weil--Deligne representations and the locally analytic  $\Ext^1$ group}

Let $\lambda$ be a classical Hecke eigenvalue such that it appears in ${H}^1(K^v,E):=\dlim_{K_v}H^1_{\et}(\calX_{K^vK_v},E)$, and $\rho:\Gal(\bar F/F)\to \GL_2(E)$ the associated Galois representation, which is a $2$-dimensional continuous $E$-linear representation of $\Gal(\bar F/F)$, such that $\rho_w:=\rho|_{\Gal(\bar L/L)}$ is de Rham of parallel Hodge--Tate weights $\{0,1\}_{\sigma\in\Sigma}$. We assume that $\rho$ is absolutely irreducible.
\begin{itemize}
    \item Fix an embedding $\sigma:L\inj E$, and let $r_{w,E}$ be the underlying Weil--Deligne representation of $\rho_w$. We assume $r_{w,E}$ is irreducible.
    \item Recall $D_{\dR}(\rho_w)$ is a locally free $E\ox_{\bbQ_p}L$-module of rank $2$, which decomposes as 
    \[
        D_{\dR}(\rho_w)=\bigoplus_{\sigma\in\Sigma}D_{\dR,\sigma}(\rho_w)
    \]
    where $D_{\dR,\sigma}(\rho_w):=D_{\dR}(\rho_w)\ox_{E\ox_{\bbQ_p}L}E\ox_{L,\sigma}L$. Besides, we have a $(\vphi,N,\Gal(\bar L/L))$-module $D_{\pst}(\rho_w)$ over $E\ox_{L_0}L_0^{\ur}$, and a choice of a branch of $p$-adic logarithmic on $L^\times$ gives an isomorphism $\bar L\ox_{L_0^{\ur}}D_{\pst}(\rho_w)\isom \bar L\ox_LD_{\dR}(\rho_w)$. Suppose that $D_{\pst}(\rho_w)\isom D_{\st,L'}(\rho_w)\ox_{L_0'}L_0^{\ur}$ such that $\Gal(\bar L/L')$ acts trivially on $D_{\st,L'}(\rho_w)$, then by \cite[2.2]{Bre19} we see that
    \[
        D_{\pst}(\rho_w)_\sigma:=(D_{\st,L'}(\rho_w)\ox_{L_0'}L')^{\Gal(L'/L)}\ox_{E\ox_{\bbQ_p}L}E\ox_{L,\sigma}L
    \]
    is a $2$-dimensional $E$-linear representation of $\WD_L$, and there is a natural $E$-linear isomorphism $D_{\pst}(\rho_w)_\sigma\isom D_{\dR,\sigma}(\rho_w)$.
    \item By Theorem \ref{thm:pvc}, from $\lambda$ we can associate some locally $\iota$-analytic representation $\pi_{v}$, $\tilde{\pi}$ and $\pi_c$, which only depends on $r_{w,E}$, and has a natural $E$-model over $\pi_{v,E}$, $\tilde{\pi}_E$ and $\pi_{c,E}$. See the proof of Theorem \ref{thm:admissible} for details.
\end{itemize}
The inclusion $H^0(\Omega_{K^v}^{1,\sm})[\lambda]\subset \bbH^1(\DR^{\sm})[\lambda]$ is defined over $E$ as it is given by the $\sigma$-Hodge filtration of $\rho_w$. Therefore, the quotient 
\begin{align*}
    \ker I^1[\lambda]\isom \bbH^1(\DR)[\lambda]/\im(H^0(\Omega_{K^v}^{1,\sm})[\lambda]\subset \bbH^1(\DR^{\sm})[\lambda])
\end{align*}
has a model over $E$, which we denote by $\ker I^1_E[\lambda]$. It is an extension of $\pi_{c,E}$ by $\pi_{v,E}$. Define 
\begin{align*}
    \Ext^1_{\GL_2(L),\sigma}(\pi_{c,E},\pi_{v,E}):=\Ext^1_{D_\sigma(G,E)}(\pi_{v,E}',\pi_{c,E}'),
\end{align*}
where $D_\sigma(G,E)$ is the algebra of $E$-valued locally $\sigma$-analytic distribution on $\GL_2(L)$, and $(-)'$ denote the strong dual. Note that by Theorem \ref{thm:admissible}, $\pi_{c,E}$ is admissible.

\begin{proposition}\label{prop:FilExt1}
There is an $E$-linear isomorphism 
\begin{align*}
    D_{\dR,\sigma}(\rho_w)\aisom \Ext^1_{\GL_2(L)}(\pi_{c,E},\pi_{v,E})
\end{align*}
such that $\Fil^1D_{\dR,\sigma}(\rho_w)$ is mapped to the $E$-line generated by the extension class $\ker I^1_E[\lambda]$.
\begin{proof}
First of all, there is an $E$-linear isomorphism 
\begin{align*}
    D_{\dR,\sigma}(\rho_w)\ox_{E,u}C\aisom \Hom_{\GL_2(L)}(\pi_{v},\bbH^1(\DR^{\sm})).
\end{align*}
As $\Hom_{\GL_2(L)}(\pi_v,\bbH^1(\DR')[\lambda])=0$ by \cite[Proposition 2.5.2]{WConDrinfeld}, we deduce 
\begin{align*}
    D_{\dR,\sigma}(\rho_w)\ox_{E,u}C\aisom \Hom_{\GL_2(L)}(\pi_{v},\bbH^1(\DR)).
\end{align*}
Under this isomorphism, $\Fil^1D_{\dR,\sigma}(\rho_w)\ox_{E,u}C$ is mapped to the $C$-line generated by the map $H^0(\Omega_{K^v}^{1,\sm})[\lambda]\subset \bbH^1(\DR)[\lambda]$. But as we explained above, this map induces an isomorphism of $E$-vector spaces
\begin{align*}
    D_{\dR,\sigma}(\rho_w)\aisom \Hom_{\GL_2(L)}(\pi_{v,E},\tilde{\pi}_E).
\end{align*}
Finally, using a similar proof of Theorem \cite[Theorem 2.5.10]{WConDrinfeld}, we see the map 
\begin{align*}
    \Hom_{\GL_2(L)}(\pi_{v,E},\tilde{\pi}_E)\aisom \Ext^1_{\GL_2(L)}(\pi_{c,E},\pi_{v,E}),\alpha\mapsto \coker\alpha
\end{align*}
defines an isomorphism of $E$-vector spaces. Composing the above map with $D_{\dR,\sigma}(\rho_w)\aisom \Hom_{\GL_2(L)}(\pi_{v,E},\tilde{\pi}_E)$ gives the desired isomorphism.
\end{proof}
\end{proposition}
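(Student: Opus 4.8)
The plan is to construct the isomorphism over $C$ first, identifying $D_{\dR,\sigma}(\rho_w)\ox_{E,u}C$ with a space of $\GL_2(L)$-equivariant homomorphisms into $\bbH^1(\DR)$, and then descend to $E$ and convert $\Hom$ into $\Ext^1$ via the cokernel construction. For the first stage, I would use that $\bbH^1(\DR^{\sm})\isom \dlim_{K_v}H^1_{\dR}(\calX_{K^vK_v})$ is the colimit of the de Rham cohomology of finite-level unitary Shimura curves, so that by the $p$-adic de Rham comparison theorem \cite{Sch13} the $\lambda$-isotypic part together with its geometric Hodge filtration $H^0(\Omega^{1,\sm}_{K^v})[\lambda]\subset \bbH^1(\DR^{\sm})[\lambda]$ recovers $D_{\dR,\sigma}(\rho_w)\ox_{E,u}C$ tensored with the smooth multiplicity space $\pi_w$ of $\rho$. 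Applying $\Hom_{\GL_2(L)}(\pi_w,-)$ (using the canonical $E$-model of $\pi_w$ and the running multiplicity-one hypothesis, so that $(\pi_f^v)^{K^v}$ is one-dimensional) yields a filtered isomorphism $D_{\dR,\sigma}(\rho_w)\ox_{E,u}C\aisom \Hom_{\GL_2(L)}(\pi_w,\bbH^1(\DR^{\sm}))[\lambda]$, under which $\Fil^1 D_{\dR,\sigma}(\rho_w)\ox C$ is the line spanned by the composite $\pi_w\cong H^0(\Omega^{1,\sm}_{K^v})[\lambda]\hookrightarrow\bbH^1(\DR^{\sm})[\lambda]$. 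Since $r_{w,E}$ is irreducible, $\pi_w$ is supercuspidal, hence $\bbH^1(\DR')[\lambda]$ has no subquotient isomorphic to $\pi_w$; invoking \cite[Proposition 2.5.2]{WConDrinfeld} gives $\Hom_{\GL_2(L)}(\pi_w,\bbH^1(\DR')[\lambda])=0$, and feeding this into the exact triangle $\DR^{\sm}\to\DR\to\DR_c$ (Proposition \ref{prop:DRcoh} and the subsequent hypercohomology sequences) upgrades the above to $D_{\dR,\sigma}(\rho_w)\ox_{E,u}C\aisom \Hom_{\GL_2(L)}(\pi_w,\bbH^1(\DR))[\lambda]=\Hom_{\GL_2(L)}(\pi_w,\tilde\pi)$, still carrying $\Fil^1\ox C$ to the line through $H^0(\Omega^{1,\sm}_{K^v})[\lambda]\subset\bbH^1(\DR)[\lambda]$.

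Next I would descend to $E$. The inclusion $H^0(\Omega^{1,\sm}_{K^v})[\lambda]\subset\bbH^1(\DR^{\sm})[\lambda]\subset\bbH^1(\DR)[\lambda]$ is, by the $p$-adic de Rham comparison theorem, precisely the position of $\Fil^1 D_{\dR,\sigma}(\rho_w)$ inside $D_{\dR,\sigma}(\rho_w)$ and is therefore $E$-rational; combined with the $E$-models $\pi_{w,E}$ and $\tilde\pi_E$ furnished by the $E$-structures of the Drinfeld curves $\calM_{\Dr,n}/\varpi^{\bbZ}$ as in the proof of Theorem \ref{thm:admissible} (compatible with the $E$-structure of the unitary Shimura curve via the supersingular $p$-adic uniformization), the $C$-linear isomorphism above descends to an $E$-linear filtered isomorphism $D_{\dR,\sigma}(\rho_w)\aisom\Hom_{\GL_2(L)}(\pi_{w,E},\tilde\pi_E)$.

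Finally I would pass from $\Hom$ to $\Ext^1$. Because $\pi_{w,E}$ is irreducible, any nonzero $\alpha\in\Hom_{\GL_2(L)}(\pi_{w,E},\tilde\pi_E)$ is injective; since $\tilde\pi_E$ is an extension of $\pi_{c,E}$ by $\pi_{w,E}$ with $\Hom_{\GL_2(L)}(\pi_{w,E},\pi_{c,E})=0$, the cokernel $\coker\alpha$ is canonically an extension of $\pi_{c,E}$ by $\pi_{w,E}$, defining a map $\alpha\mapsto\coker\alpha$ to $\Ext^1_{\GL_2(L)}(\pi_{c,E},\pi_{w,E})$. Following \cite[Theorem 2.5.10]{WConDrinfeld} I would show this map is an isomorphism: this uses admissibility of $\pi_{c,E}$ (Theorem \ref{thm:admissible}) together with the identification and vanishing of the relevant locally $\sigma$-analytic $\Hom$- and $\Ext$-groups of the irreducible constituents, so that the long exact $\Ext_{\GL_2(L)}(\pi_{c,E},-)$-sequence attached to $0\to\pi_{w,E}\to\tilde\pi_E\to\pi_{c,E}\to0$ has connecting map an isomorphism onto $\Ext^1_{\GL_2(L)}(\pi_{c,E},\pi_{w,E})$. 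Composing with the isomorphism of the previous paragraph gives the desired $E$-linear isomorphism $D_{\dR,\sigma}(\rho_w)\aisom\Ext^1_{\GL_2(L)}(\pi_{c,E},\pi_{w,E})$; since $\Fil^1 D_{\dR,\sigma}(\rho_w)$ corresponds to the embedding $\pi_{w,E}\hookrightarrow\tilde\pi_E$ whose cokernel is, by definition of $\ker I^1_E[\lambda]$, exactly $\ker I^1_E[\lambda]$, its image is the $E$-line generated by the extension class of $\ker I^1_E[\lambda]$.

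The step I expect to be the main obstacle is the last one: the identification $\Hom_{\GL_2(L)}(\pi_{w,E},\tilde\pi_E)\xrightarrow{\sim}\Ext^1_{\GL_2(L)}(\pi_{c,E},\pi_{w,E})$ is not formal, as it rests on genuine computations of locally $\sigma$-analytic extension groups between $\pi_{w,E}$ and the constituents of $\pi_{c,E}$ and $\tilde\pi_E$ — precisely the technical input provided by \cite{WConDrinfeld} — and transplanting those computations to the present setting, where these representations are cut out of the completed cohomology of unitary Shimura curves rather than directly from Drinfeld curves, requires knowing that the two realizations agree (essentially the supersingular $p$-adic uniformization used in Theorem \ref{thm:admissible}). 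A secondary, more clerical, difficulty is verifying that the $E$-structures on $\pi_w$, on $\tilde\pi$, and on the Hodge filtration are mutually compatible, so that the descent in the second paragraph is legitimate.
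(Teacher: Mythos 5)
Your proposal is correct and follows essentially the same route as the paper's proof: first identify $D_{\dR,\sigma}(\rho_w)\ox_{E,u}C$ with $\Hom_{\GL_2(L)}(\pi_w,\bbH^1(\DR^{\sm}))$ via the $p$-adic de Rham comparison theorem, upgrade to $\Hom_{\GL_2(L)}(\pi_w,\bbH^1(\DR))$ using the vanishing $\Hom_{\GL_2(L)}(\pi_w,\bbH^1(\DR')[\lambda])=0$ together with $\bbH^0(\DR')[\lambda]=0$, descend to the $E$-structures $\pi_{w,E}$ and $\tilde\pi_E$, and then invoke the cokernel map $\Hom_{\GL_2(L)}(\pi_{w,E},\tilde\pi_E)\aisom\Ext^1_{\GL_2(L)}(\pi_{c,E},\pi_{w,E})$ from the Drinfeld-tower computation. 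You fill in some details the paper leaves implicit (the well-definedness of $\alpha\mapsto\coker\alpha$ from irreducibility of $\pi_{w,E}$, the compatibility of $E$-structures with the uniformization), but the structure and the key inputs are the same.
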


We aim to refine Proposition \ref{prop:FilExt1}. To achieve this, we need Weil--Deligne representations to rigidify the isomorphism. Recall that $\calM_{\LT,n}$ is the Lubin--Tate curve of level $n$ over $C$. Let $\calM_{\LT,n}^{(0)}\subset \calM_{\LT,n}$ be the subspace consisting of deformations with fixed height of quasi-isogeny to be $0$. By \cite{CN20}, \cite[Remark 5.2]{CGN23}, we get a $(\vphi,N,\Gal(\bar L/L))$-module $H^1_{\text{HK},c}(\calM_{\LT,n}^{(0)})$ over $L_0^{\ur}$, namely the compact supported Hyodo--Kato cohomology of $\calM_{\LT,n}^{(0)}$. By functoriality of the Hyodo--Kato cohomology groups, $H^1_{\text{HK},c}(\calM_{\LT,n}^{(0)})$ also carries an action of $(\GL_2(L)\times D_L^\times)^0$, which is a subgroup of $\GL_2(L)\times D_L^\times$:
\begin{align*}
    (\GL_2(L)\times D_L^\times)^0=\{(g,\check g)\in \GL_2(L)\times D_L^\times:v_p(\det g)+v_p(\Nrd(\check{g}))=0\}.
\end{align*}
Let $\calX_{K^v,n}$ be the unitary Shimura curve with level $K^vK_n$, where $K_n=\ker(\GL_2(\calO_L)\to \GL_2(\calO_L/\varpi^n))$. Let $\calX_{K^v,n}^{\ss}\subset \calX_{K^v,n}$ be the supersingular locus, which is the image of $\pi_{\HT}^{-1}(\Omega)$ under the projection map $\calX_{K^v}\to \calX_{K^v,n}$ (\cite[Lemma 3.2.3]{JLH21} for example). By \cite[Theorem III]{RZ96}, there is a $\GL_2(L)^0$-equivariant isomorphism 
\begin{align*}
    \calX_{K^v,n}^{\ss}\isom (\calM_{\LT,n}^{(0)}\times X_{K^v})/\calO_{D_L}^\times.
\end{align*}
where $X_{K^v}$ is the supersingular Igusa set discussed in \S \ref{padicuniformization}. The $\calO_{D_L}^\times$-space $X_{K^v}$ is isomorphic to finitely many copies of $\calO_{D_L}^\times$ and $\calA^{K^v}_{\bar G,0}$ is exactly the space of $C$-valued $\calO_{D_L}^\times$-smooth functions on $X_{K^v}$. From the Hyodo--Kato isomorphism, we get an embedding $H^1_{\text{HK},c}(\calM_{\LT,n}^{(0)})\inj H^1_{\dR,c}(\calM_{\LT,n}^{(0)})$. By \cite[Th\'eor\`eme 4.1]{CDN20}, $H^1_{\dR,c}(\calM_{\LT,n}^{(0)})$ is a smooth $\calO_{D_L}^\times$-representation. Therefore, the $\calO_{D_L}^\times$-action on $H^1_{\text{HK},c}(\calM_{\LT,n}^{(0)})$ is also smooth. From this, we get a Hecke equivariant isomorphism by \'etale descent
\begin{align*}
    H^1_{\text{HK},c}(\calX_{K^v,n}^{\ss})\hat\ox_{L_0^{\ur}}C\isom (\calA^{K^v}_{\bar G,0}\hat\ox_{L_0^{\ur}} H^1_{\text{HK},c}(\calM_{\LT,n}^{(0)})))^{\calO_{D_L}^\times}.
\end{align*}
Let $\lambda\in\sigma_0^{K^v}$ such that $\pi_v$ is supercuspidal. Then the natural map 
\begin{align*}
    \dlim_n H^1_{\text{HK},c}(\calX_{K^v,n}^{\ss})[\lambda]\to \dlim_n H^1_{\text{HK}}(\calX_{K^v,n})[\lambda]
\end{align*}
is an isomorphism of $(\vphi,N,\Gal(\bar L/L))$-modules. Indeed, after we base change the above map to $C$, from the Hyodo--Kato isomorphism we get a morphism 
\begin{align*}
    \dlim_n H^1_{\dR,c}(\calX_{K^v,n}^{\ss})[\lambda]\to \dlim_n H^1_{\dR}(\calX_{K^v,n})[\lambda]
\end{align*}
which is an isomorphism by Theorem \ref{thm:pvc}, combined with the isomorphism of the compactly supported de Rham cohomology of the Lubin--Tate tower and the Drinfeld tower \cite{CDN20}. In summary, we deduce 
\begin{align*}
    \dlim_n(\calA^{K^v}_{\bar G,0}[\lambda]\ox_{L_0^{\ur}} H^1_{\text{HK},c}(\calM_{\LT,n}^{(0)}))^{\calO_{D_L}^\times}\isom \dlim_n H^1_{\text{HK}}(\calX_{K^v,n})[\lambda].
\end{align*}
Recall $\rho$ is a 2-dimensional continuous absolutely irreducible $E$-linear representation of $\Gal(\bar F/F)$ associated to $\lambda$. Fix an embedding $\sigma:L\inj E$, and let $r_{w,E}$ be the underlying Weil--Deligne representation associated to $\rho|_{\Gal(\bar F_w/F_w)}$. Put $r_w:=r_{w,E}\ox_{E,u}C$. Define $H^1_{\text{HK},c}(\calM_{\LT,\infty}^{(0)}):=\dlim_n H^1_{\text{HK},c}(\calM_{\LT,n}^{(0)})$, and put $H^1_{\text{HK},c}(\calM_{\LT,\infty}):=\mathrm{ind}^{\GL_2(L)\times D_L^\times}_{(\GL_2(L)\times D_L^\times)^0}H^1_{\text{HK},c}(\calM_{\LT,\infty}^{(0)})$ where $\mathrm{ind}$ is the compact induction. By the semi-stable comparison theorem \cite{MR1705837} applied to $\dlim_{K_v}H^1_{\et}(\calX_{K^vK_v},E)$, we deduce $C\hat\ox_{L_0^{\ur}}\dlim_{K_v} H^1_{\text{HK}}(\calX_{K^vK_v})[\lambda]\isom r_w\ox_C\pi_v$. From this we see there is an isomorphism of $\GL_2(L)\times \WD_L$-representations
\[
    (\tau_v \hat\ox_{L_0^{\ur}}H^1_{\text{HK},c}(\calM_{\LT,\infty}))^{D_L^\times}\isom r_w\ox_C\pi_v 
\]
over $C$. Here $C\hat\ox_{L_0^{\ur}}H^1_{\text{HK},c}(\calM_{\LT,\infty})$ is equipped with the induced Weil--Deligne representation structure from the $(\vphi,N,\Gal(\bar L/L))$-action on $H^1_{\text{HK},c}(\calM_{\LT,\infty})$. In summary, we obtain the following result.
\begin{proposition}\label{prop:pvcw0}
The natural map 
\begin{align*}
    \dlim_n H^1_{\dR,c}(\calX_{K^v,n}^{\ss})[\lambda]\to \dlim_n H^1_{\dR}(\calX_{K^v,n})[\lambda]
\end{align*}
is an isomorphism. Moreover, there is an isomorphism of $\GL_2(L)\times\WD_L$-representations
\begin{align*}
    \dlim_n H^1_{\text{HK},c}(\calX_{K^v,n}^{\ss})[\lambda]\ox_{L_0^{\ur}} C\isom (\tau_v \hat\ox_{L_0^{\ur}}H^1_{\text{HK},c}(\calM_{\LT,\infty}))^{D_L^\times}\isom r_w\ox_C\pi_v.
\end{align*}
\end{proposition}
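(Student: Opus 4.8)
The plan is to follow the two-step strategy sketched just before the statement. \emph{Step 1: the first claim.} By the Hyodo--Kato isomorphism \cite{CN20, CGN23} applied to suitable semistable integral models of the supersingular locus $\calX_{K^v,n}^{\ss}$ and of $\calX_{K^v,n}$, there are $C$-linear, Hecke-equivariant isomorphisms $H^1_{\dR,c}(\calX_{K^v,n}^{\ss})\isom C\hat\ox_{L_0^{\ur}}H^1_{\text{HK},c}(\calX_{K^v,n}^{\ss})$ and $H^1_{\dR}(\calX_{K^v,n})\isom C\hat\ox_{L_0^{\ur}}H^1_{\text{HK}}(\calX_{K^v,n})$, compatible with the natural restriction map. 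Hence it suffices to show that $\dlim_n H^1_{\dR,c}(\calX_{K^v,n}^{\ss})[\lambda]\to \dlim_n H^1_{\dR}(\calX_{K^v,n})[\lambda]$ is an isomorphism after taking $\lambda$-isotypic parts and colimits. For this I would invoke Theorem \ref{thm:pvc}: since $\pi_w$ is supercuspidal we have $\coker\mathrm{SS}[\lambda]=0$, so the map $\mathrm{SS}$ is surjective, $\ker\mathrm{SS}[\lambda]\isom\bbH^1(\DR^{\sm})[\lambda]$ is the full inductive limit of de Rham cohomology, and by the description of $\ker\mathrm{SS}[\lambda]$ in terms of compactly supported de Rham cohomology of the Lubin--Tate tower combined with the isomorphism between compactly supported de Rham cohomology of Lubin--Tate towers and Drinfeld towers \cite{CDN20}, one obtains the desired isomorphism.

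\emph{Step 2: the first isomorphism in the second claim.} Here I would use the $p$-adic uniformization of the supersingular locus \cite[Theorem III]{RZ96} (cf.\ \S\ref{padicuniformization}), which gives a $\GL_2(L)$-equivariant isomorphism $\calX_{K^v,n}^{\ss}\isom (\calM_{\LT,n}^{(0)}\times X_{K^v})/\calO_{D_L}^\times$, with $X_{K^v}$ the supersingular Igusa set whose space of $C$-valued $\calO_{D_L}^\times$-smooth functions is $\calA_{\bar G,0}^{K^v}$. By functoriality of Hyodo--Kato cohomology and \'etale descent along the $\calO_{D_L}^\times$-cover, this yields a Hecke-equivariant isomorphism of $(\vphi,N,\Gal_L)$-modules $H^1_{\text{HK},c}(\calX_{K^v,n}^{\ss})\hat\ox_{L_0^{\ur}}C\isom (\calA_{\bar G,0}^{K^v}\hat\ox_{L_0^{\ur}}H^1_{\text{HK},c}(\calM_{\LT,n}^{(0)}))^{\calO_{D_L}^\times}$; here one uses that the $\calO_{D_L}^\times$-action on $H^1_{\text{HK},c}(\calM_{\LT,n}^{(0)})$ is smooth, which follows from the smoothness of $H^1_{\dR,c}(\calM_{\LT,n}^{(0)})$ \cite[Th\'eor\`eme 4.1]{CDN20} via the injection $H^1_{\text{HK},c}\hookrightarrow H^1_{\dR,c}$. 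Passing to $\lambda$-isotypic parts, taking colimits on $n$, cutting out the Jacquet--Langlands transfer $\tau_w$ from $\calA_{\bar G,0}^{K^v}$, and rewriting the compact induction as $H^1_{\text{HK},c}(\calM_{\LT,\infty})$ gives the first isomorphism.

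\emph{Step 3: the second isomorphism.} I would apply the semistable comparison theorem \cite{MR1705837} to the finite-level $p$-adic \'etale cohomology $\dlim_{K_v}H^1_{\et}(\calX_{K^vK_v},E)$ to obtain $C\hat\ox_{L_0^{\ur}}\dlim_{K_v}H^1_{\text{HK}}(\calX_{K^vK_v})[\lambda]\isom r_w\ox_C\pi_w$ as $\GL_2(L)\times\WD_L$-representations, the Weil--Deligne structure being induced by the $(\vphi,N,\Gal_L)$-action; combining this with Step 1 (which identifies $\dlim_n H^1_{\text{HK},c}(\calX_{K^v,n}^{\ss})[\lambda]$ with $\dlim_n H^1_{\text{HK}}(\calX_{K^v,n})[\lambda]$) and the description of the latter via Step 2 finishes the proof. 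The main obstacle I anticipate is keeping track of all the equivariances and normalizations simultaneously---in particular matching the Weil--Deligne representation structures on both sides, and reconciling the convention in the Rapoport--Zink uniformization (where $D_L^\times$ acts on $\calM_{\LT,\infty}$ via $g\mapsto\iota(g)^{-1}$) with the one used when passing from $H^1_{\text{HK},c}(\calM_{\LT,\infty}^{(0)})$ to its compact induction $H^1_{\text{HK},c}(\calM_{\LT,\infty})$, which forces one to dualize the algebraic part of the $D_L^\times$-action exactly as elsewhere in the paper.
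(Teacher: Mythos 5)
Your proposal matches the paper's argument in all essentials: reduce the de Rham isomorphism to Theorem \ref{thm:pvc} in the supercuspidal case (which forces $\ker\mathrm{ORD}[\lambda]=\coker\mathrm{SS}[\lambda]=0$, hence $\ker\mathrm{SS}[\lambda]\aisom\bbH^1(\DR^{\sm})[\lambda]$) together with the comparison of compactly supported de Rham cohomology of Lubin--Tate and Drinfeld towers from \cite{CDN20}; obtain the first isomorphism in the second claim from Rapoport--Zink uniformization with \'etale descent, using smoothness of the $\calO_{D_L}^\times$-action on $H^1_{\text{HK},c}(\calM_{\LT,n}^{(0)})$ via the embedding into $H^1_{\dR,c}$; and conclude with the semistable comparison theorem of \cite{MR1705837}. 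One small slip worth correcting: $\ker\mathrm{SS}[\lambda]$ is given in Theorem \ref{thm:pvc} by compactly supported de Rham cohomology of the \emph{Drinfeld} tower $\calM_{\Dr,n}^{(0)}$, not the Lubin--Tate tower, and it is the combination of the LT--Dr comparison with the uniformization of $\calX_{K^v,n}^{\ss}$ by $\calM_{\LT,n}^{(0)}$ that then identifies this with $\dlim_n H^1_{\dR,c}(\calX_{K^v,n}^{\ss})[\lambda]$.
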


\begin{theorem}\label{thm:MAIN}
There exists an isomorphism of $E$-vector spaces
\[
    r_{w,E}\aisom \Ext^1_{\GL_2(L)}(\pi_{c,E},\pi_{v,E})
\]
which only depends on a choice of $\sigma$, and $r_{w,E}$. For any $2$-dimensional continuous absolutely irreducible $E$-linear representation $\rho:\Gal(\bar F/F)\to \GL_2(E)$, associated to a Hecke eigenvalue appears in $\dlim_{K_v}H^1_{\et}(\calX_{K^vK_v},E)$, with the following condition:
\begin{itemize}
    \item $\rho_w:=\rho|_{\Gal(\bar L/L)}$ is de Rham of parallel Hodge--Tate weight $0,1$,
    \item the underlying Weil--Deligne representation of $\rho_w$ is isomorphic to $r_{w,E}$,
\end{itemize} 
under the natural map 
\[
    D_{\dR,\sigma}(\rho_w)\isom D_{\pst}(\rho_w)_\sigma\aisom r_{w,E}\aisom \Ext^1_{\GL_2(L)}(\pi_{c,E},\pi_{v,E}),
\]
The $\sigma$-Hodge filtration $\Fil^1D_{\dR,\sigma}(\rho_w)$ corresponds to the $E$-line generated by the extension class $\ker I^1_E[\lambda]$.
\begin{proof}
Consider the following diagram
$$
\scriptsize
\begin{tikzcd}
r_{w,E} \arrow[d, Rightarrow, no head] \arrow[r, "\sim"]                                                                           & D_{\pst}(\rho_w)_\sigma \arrow[d, Rightarrow, no head] \arrow[r, "\sim"] \arrow[rd, "(*)", phantom]               & {D_{\dR,\sigma}(\rho_w)} \arrow[d, Rightarrow, no head]                   \\
{\Hom(\pi_v ,(\tau_v \hat\ox_{L_0^{\ur}}  H^1_{\text{HK},c}(\calM_{\LT,\infty}))^{D_L^\times})^{\Gal(\bar L/E)}} \arrow[r, "\sim"] \arrow[d, "\iota_{\text{HK}}"]        & {\Hom(\pi_v ,C\hat\ox_{L_0^{\ur}}H^1_{\text{HK}}(\calX_{K^v,\infty})[\lambda])^{\Gal(\bar L/E)}} \arrow[r, "\iota_{\text{HK}}"] \arrow[d, "\iota_{\text{HK}}"] & {\Hom(\pi_v ,H^1_{\dR}(\calX_{K^v,\infty})[\lambda])^{\Gal(\bar L/E)}} \arrow[d,"\alpha"] \\
{\Hom(\pi_v ,(\tau_v \hat\ox_{C}  H^1_{\dR,c}(\calM_{\LT,\infty}))^{D_L^\times})^{\Gal(\bar L/E)}} \arrow[r, "\beta_{\LT}"] \arrow[d,"\iota_{\LT,\Dr}"]  & {\Hom(\pi_v ,H^1_{\dR}(\calX_{K^v,\infty})[\lambda])^{\Gal(\bar L/E)}} \arrow[r,"\alpha"]                                              & {\Ext^1_{\GL_2(L)}(\pi_{c,E},\pi_{v,E})}\\
{\Hom(\pi_v ,(\tau_v \hat\ox_{C}  H^1_{\dR,c}(\calM_{\Dr,\infty}))^{D_L^\times})^{\Gal(\bar L/E)}} \arrow[rru,"\alpha_{\Dr}"'] \arrow[ru,"\beta_{\Dr}"]                                      
\end{tikzcd}
$$
Here, $\Hom(-,-)$ means $\Hom_{\GL_2(L)}(-,-)$ for simplicity, and $\tau_v =\calA_{\bar G,0}^{K^v}[\lambda]$ is a smooth irreducible representation of $D_L^\times$ corresponds to $\pi_v $ via the Jacquet--Langlands correspondence. The arrows labeled by $\iota_{\text{HK}}$ are the Hyodo--Kato isomorphisms. The map $\alpha$ is constructed in Proposition \ref{prop:FilExt1}, and the map $\alpha_{\Dr}$ is constructed similarly as in the proof of Proposition \cite[Theorem 2.5.10]{WConDrinfeld}. The map $\iota_{\LT,\Dr}$ is induced by the isomorphism between compactly supported de Rham cohomology of Lubin--Tate towers and Drinfeld towers. Finally, the map $\beta_{\LT}$ is construct in Proposition \ref{prop:pvcw0}, and the map $\beta_{\Dr}$ is the map $\ker\SS[\lambda]\to \bbH^1(\DR^{\sm})[\lambda]$ in Theorem \ref{thm:pvc}. All arrows in the above diagram are isomorphisms.

We claim that the above diagram commutes up to a non-zero constant in $E^\times$. Granting this, we see that the map 
\[
    D_{\dR,\sigma}(\rho_w)\ov{\sim}\ot D_{\pst}(\rho_w)_\sigma\isom r_{w,E}
\]
composed with 
\begin{align*}
    r_{w,E}&=\Hom(\pi_v ,(\tau_v \hat\ox_{L_0^{\ur}}  H^1_{\text{HK},c}(\calM_{\LT,\infty}))^{D_L^\times})^{\Gal(\bar L/E)}\\
    &\ov{\iota_{\text{HK}}}\to \Hom(\pi_v ,(\tau_v \hat\ox_{C}  H^1_{\dR,c}(\calM_{\LT,\infty}))^{D_L^\times})^{\Gal(\bar L/E)}\\
    &\ov{\iota_{\LT,\Dr}}\to \Hom(\pi_v ,(\tau_v \hat\ox_{C}  H^1_{\dR,c}(\calM_{\Dr,\infty}))^{D_L^\times})^{\Gal(\bar L/E)}\\
    &\ov{\alpha_{\Dr}}\to  \Ext^1_{\GL_2(L)}(\pi_{c,E},\pi_{v,E})
\end{align*}
is the same as the map 
\[
    D_{\dR,\sigma}(\rho_w)=\Hom(\pi_v ,H^1_{\dR}(\calX_{K^v,\infty})[\lambda])^{\Gal(\bar L/E)}\ov{\alpha}\to \Ext^1_{\GL_2(L)}(\pi_{c,E},\pi_{v,E})
\]
up to constants, hence we can conclude using Proposition \ref{prop:FilExt1}.

We show this diagram commutes. The top left corner square commutes up to constants as the arrows are equivariant for the action of $\WD_L$. The square labeled by $(*)$ commutes by the $p$-adic semi-stable comparison theorem \cite{MR1705837} applied to $\dlim_n H^1_{\et}(\calX_{K^v,n},E)[\lambda]$. We note the natural map $D_{\pst}(\rho_w)_\sigma\aisom D_{\dR,\sigma}(\rho_w)$ depends on the choice of a branch of $p$-adic logarithm $\log_\varpi$ and same for the Hyodo--Kato isomorphism 
\[
    \Hom(\pi_v ,(\tau_v \hat\ox_{L_0^{\ur}}  H^1_{\text{HK},c}(\calM_{\LT,\infty}))^{D_L^\times})^{\Gal(\bar L/E)}\ov{\iota_{\text{HK}}}\to \Hom(\pi_v ,H^1_{\dR}(\calX_{K^v,\infty})[\lambda])^{\Gal(\bar L/E)}.
\]
Here $\varpi$ is a uniformizer of $L$ we fixed throughout this paper and $\log_{\varpi}$ is a branch of $p$-adic logarithm such that $\log_\varpi(\varpi)=0$. Next, by functoriality of Hyodo--Kato isomorphism, the rest two square commutes. For the commutativity of the triangle with one edge labeled by $\iota_{\LT,\Dr}$, this follows from the explicit construction of $\iota_{\LT,\Dr}$ (see \cite{su2025rhamcohomologylubintatespaces} for more details) and $\beta_{\LT}$, $\beta_{\Dr}$. Finally, from the construction of $\alpha$, $\alpha_{\Dr}$, we see $\alpha_{\Dr}=\alpha\comp \beta_{\Dr}$.

Finally we note that by unraveling the definition of 
\[
    r_{w,E}\aisom \Ext^1_{\GL_2(L)}(\pi_{c,E},\pi_{v,E}),
\]
we see it only depends on $r_{w,E}$ and a choice of $\sigma$.
\end{proof}
\end{theorem}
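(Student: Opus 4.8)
\textbf{Proof proposal for Theorem \ref{thm:MAIN}.}

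The plan is to build the isomorphism $r_{w,E}\aisom \Ext^1_{\GL_2(L)}(\pi_{c,E},\pi_{w,E})$ by passing through $D_{\pst}(\rho_w)_\sigma$ and $D_{\dR,\sigma}(\rho_w)$, and then assemble the large commutative diagram displayed in the statement, checking commutativity of each of its constituent pieces up to a nonzero scalar in $E^\times$. The first step is to set up the three isomorphisms along the top of the diagram: $r_{w,E}\isom D_{\pst}(\rho_w)_\sigma$ comes from the description of $\sigma$-parts of $(\vphi,N,\Gal(\bar L/L))$-modules in \cite[2.2]{Bre19}; $D_{\pst}(\rho_w)_\sigma\isom D_{\dR,\sigma}(\rho_w)$ is the usual isomorphism depending on the branch $\log_\varpi$; and the square labeled $(*)$ commutes by the $p$-adic semistable comparison theorem \cite{MR1705837} applied to $\dlim_{K_v}H^1_{\et}(\calX_{K^vK_v},E)[\lambda]$, which identifies $D_{\pst}$ of the Galois representation with the Hyodo--Kato cohomology equipped with its $(\vphi,N)$-structure. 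One must be careful to use the \emph{same} branch $\log_\varpi$ on both sides (in the $D_{\pst}\to D_{\dR}$ comparison and in the Hyodo--Kato isomorphism $\iota_{\text{HK}}$), so that the dependence on this choice cancels.

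Next I would establish the $\WD_L$-equivariant isomorphism $\Hom_{\GL_2(L)}(\pi_w,(\tau_w\hat\ox_{L_0^{\ur}}H^1_{\text{HK},c}(\calM_{\LT,\infty}))^{D_L^\times})\isom r_{w,E}$ (after $-\ox_{E,u}C$, i.e.\ Proposition \ref{prop:pvcw0}), together with its $\Gal(\bar L/E)$-invariant $E$-model, using the $p$-adic uniformization of the supersingular locus \cite[Theorem III]{RZ96}, \'etale descent of Hyodo--Kato cohomology, and the smoothness of the $\calO_{D_L}^\times$-action on $H^1_{\dR,c}(\calM_{\LT,n}^{(0)})$ \cite[Th\'eor\`eme 4.1]{CDN20}. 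The identification $\dlim_n H^1_{\text{HK},c}(\calX_{K^v,n}^{\ss})[\lambda]\isom \dlim_n H^1_{\text{HK}}(\calX_{K^v,n})[\lambda]$ follows by base changing to $C$, applying the Hyodo--Kato isomorphism, and invoking Theorem \ref{thm:pvc} (the $\SS$-part computation) combined with the Lubin--Tate/Drinfeld comparison of compactly supported de Rham cohomology \cite{CDN20}; here one uses that $\pi_w$ is supercuspidal so $\coker\SS[\lambda]=0$. The maps $\iota_{\text{HK}}$ (the three vertical Hyodo--Kato arrows) commute with each other by functoriality of the Hyodo--Kato isomorphism, and the triangle involving $\iota_{\LT,\Dr}$ commutes by the explicit construction of the isomorphism between de Rham cohomology of Lubin--Tate and Drinfeld towers (cf.\ \cite{su2025rhamcohomologylubintatespaces}) together with the definitions of $\beta_{\LT}$ and $\beta_{\Dr}$. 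Finally, $\alpha_{\Dr}=\alpha\comp\beta_{\Dr}$ is immediate from the constructions of $\alpha$ (Proposition \ref{prop:FilExt1}) and $\alpha_{\Dr}$ (modelled on \cite[Theorem 2.5.10]{WConDrinfeld}), where $\alpha$ sends $\beta:\pi_{w,E}\hookrightarrow \tilde\pi_E$ to $\coker\beta$; one checks each $\Hom$ or $\Ext$ is one-dimensional so these are genuinely isomorphisms, using \cite[Proposition 2.5.2]{WConDrinfeld} to kill $\Hom_{\GL_2(L)}(\pi_w,\bbH^1(\DR')[\lambda])$.

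With the diagram shown to commute up to $E^\times$-scalars, the composite $D_{\dR,\sigma}(\rho_w)\ov{\sim}\ot D_{\pst}(\rho_w)_\sigma\isom r_{w,E}\to \Ext^1_{\GL_2(L)}(\pi_{c,E},\pi_{w,E})$ along the left-and-bottom route agrees, up to a constant, with the map $\alpha$ along the right route; so Proposition \ref{prop:FilExt1} — which already says $\Fil^1 D_{\dR,\sigma}(\rho_w)$ maps under $\alpha$ to the $E$-line generated by $\ker I^1_E[\lambda]$ — transports to the desired statement for the composite $r_{w,E}\isom \Ext^1$. Unwinding the definitions then shows the final isomorphism depends only on $r_{w,E}$ and the choice of $\sigma$, not on $\rho$ itself, which is the locality assertion. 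The main obstacle, I expect, is the careful bookkeeping of the $\Gal(\bar L/E)$-descent data and the $\log_\varpi$-dependence throughout the diagram: one must verify that the $E$-structures coming from the Weil descent datum on $\calM_{\Dr,n}/\varpi^{\bbZ}$, from the $L$-model of $\calX_{K^vK_v}$, and from the Hyodo--Kato theory are all mutually compatible (this is exactly the compatibility of $E$-structures between the Lubin--Tate tower, the Drinfeld tower, and the unitary Shimura curve invoked in the proof of Theorem \ref{thm:admissible}), and that the scalar ambiguities genuinely lie in $E^\times$ rather than merely in $C^\times$. The rest is a diagram chase once each square is individually understood.
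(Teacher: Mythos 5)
Your proposal reproduces the paper's argument in essentially the same form: the same big commutative diagram through $D_{\pst}$, $D_{\dR}$, the Hyodo--Kato isomorphisms, the Lubin--Tate/Drinfeld comparison, and the maps $\alpha$, $\alpha_{\Dr}$, $\beta_{\LT}$, $\beta_{\Dr}$, with the same use of the semistable comparison theorem for the square $(*)$, the same care about the branch $\log_\varpi$, and the same conclusion via Proposition \ref{prop:FilExt1}. This matches the paper's proof both in structure and in the individual commutativity checks.
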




\begin{remark}
The only place we use the assumption that $\rho_w$ is of parallel Hodge--Tate weight $0,1$ is to show the commutativity of the square $(*)$, where  we use the semi-stable (actually crystalline) comparison theorems for trivial coefficients. But this should not be a crucial assumption.
\end{remark}



Now we construct certain locally $\bbQ_p$-analytic representations in the completed cohomology group of unitary Shimura curves, such that they completely determine the original $p$-adic Galois representation if $\pi_v $ is supercuspidal. We rename $\ker I^1_E[\lambda]$ by $\pi_\sigma(\rho_w)$. Set 
\[
    \pi_1(\rho_w):=\bigoplus_{\pi_{v,E},\sigma\in\Sigma}\pi_\sigma(\rho_w),
\]
which is isomorphic to a locally $\bbQ_p$-analytic representation of $\GL_2(L)$ inside $\tilde{H}^1(K^v,E)^{\lan}[\lambda]$. The following theorem proves a special case of the conjecture (GAL) in \cite[2.3]{Bre19}.
\begin{theorem}\label{thm:gal}
Let $\rho:\Gal(\bar F/F)\to \GL_2(E)$ be as in Theorem \ref{thm:MAIN}. Then the continuous $E$-linear isomorphism class of the locally $\bbQ_p$-analytic $\GL_2(L)$-representation $\pi_1(\rho_w)$ depends only on $\rho_w$ and completely determines $\rho_w$.
\begin{proof}
By Theorem \ref{thm:locality}, we know $\pi_1(\rho_w)$ only depends on $\rho_w$. Next, by taking the locally $\sigma$-analytic subrepresentation of $\pi_1(\rho_w)$, we can recover $\pi_\sigma(\rho_w)$ for each $\sigma\in\Sigma$. This process also respects the $E$-linear automorphisms of $\pi_1(\rho_w)$. By \cite[Proposition 2.6.3]{WConDrinfeld}, together with Theorem \ref{thm:MAIN}, we know from the continuous isomorphism class of the $\GL_2(L)$-representation $\pi_\sigma(\rho_w)$ over $E$, we get a $E$-line inside $\Ext^1_{\GL_2(L)}(\pi_{c,E},\pi_{v,E})$, which corresponds to the $\sigma$-Hodge filtration inside $D_{\dR,\sigma}(\rho_w)$ via the isomorphism $D_{\dR,\sigma}(\rho_w)\aisom r_{w,E}\aisom \Ext^1_{\GL_2(L)}(\pi_{c,E},\pi_{v,E})$. This shows that we can recover the $\sigma$-Hodge filtration of $\rho_w$ for each $\sigma\in\Sigma$. As $r_{w,E}$ is irreducible (hence indecomposable), the Galois representation $\rho_w$ is completely determined by $r_{w,E}$ together with the $\sigma$-Hodge filtration of $\rho_w$ for each $\sigma\in\Sigma$ from the discussion in \cite[\S 3.1]{WConDrinfeld}. Therefore, the $E$-linear isomorphism class of the $\GL_2(L)$ representation $\pi_1(\rho_w)$ completely determines $\rho_w$.
\end{proof}
\end{theorem}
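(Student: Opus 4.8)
The plan is to prove Theorem \ref{thm:gal} by combining the locality result (Theorem \ref{thm:locality}), the $\Ext^1$-identification (Theorem \ref{thm:MAIN}), and the reconstruction of $\rho_w$ from its Weil--Deligne representation together with all the Hodge filtrations, in the style of \cite[\S 2.6, \S 3.1]{WConDrinfeld}.

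First I would record that $\pi_1(\rho_w)$ only depends on $\rho_w$. Indeed, by definition $\pi_1(\rho_w)=\bigoplus_{\pi_{w,E},\sigma\in\Sigma}\pi_\sigma(\rho_w)$ with $\pi_\sigma(\rho_w)=\ker I^1_E[\lambda]$, and Theorem \ref{thm:locality}(ii) says each $\pi_\sigma(\rho_w)$ depends only on the local Galois representation $\rho_w$. So the first claim of the theorem is formal given Theorem \ref{thm:locality}.

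Next I would argue that the continuous $E$-linear isomorphism class of $\pi_1(\rho_w)$, as a locally $\bbQ_p$-analytic $\GL_2(L)$-representation, determines each $\pi_\sigma(\rho_w)$ up to $E$-linear isomorphism. The key point is that the locally $\sigma$-analytic vectors $(-)^{\sigma\-\lan}$ of $\pi_1(\rho_w)$ recover $\pi_\sigma(\rho_w)$: by the definition of locally $J$-analytic vectors in \S \ref{ladef} and the fact that $\pi_\sigma(\rho_w)$ is genuinely locally $\sigma$-analytic while $\pi_{\sigma'}(\rho_w)$ for $\sigma'\neq\sigma$ is locally $\sigma'$-analytic (hence has no nonzero locally $\sigma$-analytic vectors, as the $\frg_{\Sigma\setminus\sigma}$-action on it is not locally finite in the $\sigma$-direction), the functor $(-)^{\sigma\-\lan}$ applied to the direct sum picks out exactly the $\sigma$-summand. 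This construction is manifestly natural, so any $E$-linear $\GL_2(L)$-automorphism of $\pi_1(\rho_w)$ restricts to automorphisms of each $\pi_\sigma(\rho_w)$; hence the isomorphism class of $\pi_1(\rho_w)$ pins down the isomorphism class of each $\pi_\sigma(\rho_w)$.

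Then for each $\sigma$ I would invoke \cite[Proposition 2.6.3]{WConDrinfeld} together with Theorem \ref{thm:MAIN}: from the $E$-linear isomorphism class of $\pi_\sigma(\rho_w)=\ker I^1_E[\lambda]$, viewed as an extension of $\pi_{c,E}$ by $\pi_{w,E}$, one recovers the $E$-line in $\Ext^1_{\GL_2(L)}(\pi_{c,E},\pi_{w,E})$ spanned by this extension class, and under the isomorphism $D_{\dR,\sigma}(\rho_w)\aisom D_{\pst}(\rho_w)_\sigma\isom r_{w,E}\aisom \Ext^1_{\GL_2(L)}(\pi_{c,E},\pi_{w,E})$ of Theorem \ref{thm:MAIN} this $E$-line is exactly $\Fil^1 D_{\dR,\sigma}(\rho_w)$. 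Doing this for every $\sigma\in\Sigma$ recovers $r_{w,E}$ together with the full $\sigma$-Hodge filtration of $\rho_w$ for each $\sigma$. Finally, since $r_{w,E}$ is irreducible (hence indecomposable) by hypothesis, the discussion in \cite[\S 3.1]{WConDrinfeld} shows that the pair (Weil--Deligne representation $r_{w,E}$, collection of Hodge filtrations $(\Fil^\bullet D_{\dR,\sigma}(\rho_w))_{\sigma\in\Sigma}$) determines the de Rham representation $\rho_w$ up to isomorphism. This completes the proof.

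The main obstacle I expect is the step isolating $\pi_\sigma(\rho_w)$ from $\pi_1(\rho_w)$ in a way that is intrinsic to the $E$-linear $\GL_2(L)$-representation structure: one must be careful that $(-)^{\sigma\-\lan}$ really is a functor invariant under $E$-linear automorphisms (it is, being defined purely from the $\GL_2(L)$-action and the resulting $\frg\ox_{\bbQ_p}E$-action) and that the summands $\pi_{\sigma'}(\rho_w)$, $\sigma'\neq\sigma$, contribute nothing to the $\sigma$-analytic vectors — this is where the direct sum being over \emph{distinct} embeddings, and the non-overlapping of the locally $\sigma$-analytic and locally $\sigma'$-analytic categories for $\sigma\neq\sigma'$, is used. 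Everything else is a bookkeeping assembly of results already established: Theorem \ref{thm:locality}, Theorem \ref{thm:MAIN}, and the cited lemmas from \cite{WConDrinfeld}.
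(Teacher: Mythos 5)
Your plan follows the paper's own outline — Theorem \ref{thm:locality} for independence, extract $\pi_\sigma(\rho_w)$ via $(-)^{\sigma\-\lan}$, then apply Theorem \ref{thm:MAIN} and \cite[\S 3.1]{WConDrinfeld} — but the extraction step contains a concrete error. You assert that $\pi_{\sigma'}(\rho_w)$ for $\sigma'\neq\sigma$ ``has no nonzero locally $\sigma$-analytic vectors.'' This is false: $\pi_{\sigma'}(\rho_w)=\ker I^1_E[\lambda]$ is an extension of $\pi_{c,E}$ by $\pi_{w,E}$, and $\pi_{w,E}$ is a \emph{smooth} representation. Smooth vectors are locally $J$-analytic for every subset $J\subseteq\Sigma$ (the orbit map is locally constant, hence trivially $J$-analytic on small enough $G_n$), so in fact $\pi_{\sigma'}(\rho_w)^{\sigma\-\lan}=\pi_{\sigma'}(\rho_w)^{\sm}=\pi_{w,E}\neq 0$. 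You are also misreading the notation $\bigoplus_{\pi_{w,E},\sigma\in\Sigma}$: as the paper explains in the introduction, this denotes the \emph{amalgamated sum} of the $\pi_\sigma(\rho_w)$ over their common smooth subobject $\pi_{w,E}$, not an ordinary direct sum. If $\pi_1(\rho_w)$ were the plain direct sum, its locally $\sigma$-analytic vectors would be $\pi_\sigma(\rho_w)\oplus\pi_{w,E}^{\oplus(|\Sigma|-1)}$, which is not what is needed.

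The amalgamated-sum structure is precisely what rescues the extraction: the copies of $\pi_{w,E}$ inside the various $\pi_{\sigma'}(\rho_w)$ are all identified, so $\pi_1(\rho_w)$ sits in an exact sequence $0\to\pi_\sigma(\rho_w)\to\pi_1(\rho_w)\to\bigoplus_{\sigma'\neq\sigma}\pi_{c,E}^{(\sigma')}\to 0$, and each $\pi_{c,E}^{(\sigma')}$ has no smooth vectors (here irreducibility of $r_{w,E}$ enters, as $\pi_{c,E}^{(\sigma')}$ is supercuspidal-type). Taking $\frg_{\Sigma\setminus\{\sigma\}}$-invariants is left exact and kills $\pi_{c,E}^{(\sigma')}$ for $\sigma'\neq\sigma$, giving $\pi_1(\rho_w)^{\sigma\-\lan}=\pi_\sigma(\rho_w)$. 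The rest of your argument — naturality of $(-)^{\sigma\-\lan}$ under $E$-linear automorphisms, the appeal to Theorem \ref{thm:MAIN} and \cite[Proposition 2.6.3]{WConDrinfeld} to recover the $\sigma$-Hodge filtration, and the reconstruction of $\rho_w$ from $(r_{w,E},(\Fil^\bullet D_{\dR,\sigma}(\rho_w))_\sigma)$ via \cite[\S 3.1]{WConDrinfeld} — agrees with the paper's proof and is fine.
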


\bibliographystyle{amsalpha}
\bibliography{main.bib}

\providecommand{\bysame}{\leavevmode\hbox to3em{\hrulefill}\thinspace}
\providecommand{\MR}{\relax\ifhmode\unskip\space\fi MR }
\providecommand{\MRhref}[2]{%
  \href{http://www.ams.org/mathscinet-getitem?mr=#1}{#2}
}
\providecommand{\href}[2]{#2}
\begin{thebibliography}{BCGP25}

\bibitem[AW23]{ardakov2023globalsectionsequivariantline}
Konstantin Ardakov and Simon Wadsley, \emph{{Global sections of equivariant
  line bundles on the $p$-adic upper half plane}}, 2023.

\bibitem[BC16]{BC16}
Laurent Berger and Pierre Colmez, \emph{Th\'{e}orie de {S}en et vecteurs
  localement analytiques}, Ann. Sci. \'{E}c. Norm. Sup\'{e}r. (4) \textbf{49}
  (2016), no.~4, 947--970. \MR{3552018}

\bibitem[BCGP25]{boxer2025modularitytheoremsabeliansurfaces}
George Boxer, Frank Calegari, Toby Gee, and Vincent Pilloni, \emph{{Modularity
  theorems for abelian surfaces}}, 2025.

\bibitem[BLR91]{BLR}
Nigel Boston, Hendrik~W. Lenstra, Jr., and Kenneth~A. Ribet, \emph{Quotients of
  group rings arising from two-dimensional representations}, C. R. Acad. Sci.
  Paris S\'er. I Math. \textbf{312} (1991), no.~4, 323--328. \MR{1094193}

\bibitem[BQ24]{BQ24}
Christophe Breuil and Zicheng Qian, \emph{{Splitting and expliciting the de
  Rham complex of the Drinfeld space}}, 2024.

\bibitem[Bre10]{breuil2010emerging}
Christophe Breuil, \emph{{The emerging p-adic Langlands programme}},
  Proceedings of the international congress of mathematicians, vol.~2, 2010,
  pp.~203--230.

\bibitem[Bre19]{Bre19}
\bysame, \emph{{$\rm Ext^1$} localement analytique et compatibilit\'{e}
  local-global}, Amer. J. Math. \textbf{141} (2019), no.~3, 611--703.
  \MR{3956517}

\bibitem[Cam22]{RC1}
J.~E.~Rodríguez Camargo, \emph{{Geometric Sen theory over rigid analytic
  spaces}}, 2022.

\bibitem[Cam24]{camargo2024locallyanalyticcompletedcohomology}
\bysame, \emph{{Locally analytic completed cohomology}}, 2024.

\bibitem[Car83]{Car83}
Henri Carayol, \emph{Sur la mauvaise r\'{e}duction des courbes de {S}himura},
  C. R. Acad. Sci. Paris S\'{e}r. I Math. \textbf{296} (1983), no.~13,
  557--560. \MR{703228}

\bibitem[Car86]{carayol86}
\bysame, \emph{{Sur les repr{\'e}sentations $ l $-adiques associ{\'e}es aux
  formes modulaires de Hilbert}}, Annales scientifiques de l'{\'E}cole Normale
  Sup{\'e}rieure, vol.~19, 1986, pp.~409--468.

\bibitem[Car94]{carayol1994formes}
\bysame, \emph{{Formes modulaires et repr{\'e}sentations galoisiennes {\`a}
  valeurs dans un anneau local complet}}, Contemporary Mathematics (1994),
  213--237.

\bibitem[CDN20]{CDN20}
Pierre Colmez, Gabriel Dospinescu, and Wies\l~awa Nizio\l, \emph{Cohomologie
  {$p$}-adique de la tour de {D}rinfeld: le cas de la dimension 1}, J. Amer.
  Math. Soc. \textbf{33} (2020), no.~2, 311--362. \MR{4073863}

\bibitem[CGN23]{CGN23}
Pierre Colmez, Sally Gilles, and Wiesława Nizioł, \emph{{Arithmetic duality
  for $p$-adic pro-\'etale cohomology of analytic curves}}, 2023.

\bibitem[Che14]{Che}
Ga\"etan Chenevier, \emph{{The {$p$}-adic analytic space of pseudocharacters of
  a profinite group and pseudorepresentations over arbitrary rings}},
  Automorphic forms and {G}alois representations. {V}ol. 1, London Math. Soc.
  Lecture Note Ser., vol. 414, Cambridge Univ. Press, Cambridge, 2014,
  pp.~221--285. \MR{3444227}

\bibitem[CN20]{CN20}
Pierre Colmez and Wies\l~awa Nizio\l, \emph{On {$p$}-adic comparison theorems
  for rigid analytic varieties, {I}}, M\"{u}nster J. Math. \textbf{13} (2020),
  no.~2, 445--507. \MR{4130689}

\bibitem[Col10]{colmez2010representations}
Pierre Colmez, \emph{{Repr{\'e}sentations de $\mathrm{GL}_2 (\mathbb{Q}_p)$ et
  ($\vphi, \Gamma$)-modules}}, Ast{\'e}risque \textbf{330} (2010), no.~281,
  509.

\bibitem[Din14]{Din14}
Yiwen Ding, \emph{{On some partially de Rham Galois representations}}.

\bibitem[Din17]{Din17}
Yiwen Ding, \emph{Formes modulaires {$p$}-adiques sur les courbes de {S}himura
  unitaires et compatibilit\'{e} local-global}, M\'{e}m. Soc. Math. Fr. (N.S.)
  (2017), no.~155, viii+245. \MR{3802968}

\bibitem[Din22]{Din22}
\bysame, \emph{Locally analytic {$\mathrm{Ext}^1$} for
  {$\mathrm{GL}_2(\mathbb{Q}_p)$} in de rham non-trianguline case}, Represent.
  Theory \textbf{26} (2022), 122--133. \MR{4390997}

\bibitem[Din24]{Din24}
Yiwen Ding, \emph{{Towards the wall-crossing of Locally $\mathbb{Q}_p$-analytic
  representations of $\mathrm{GL}_n(K)$ for a $p$-adic field $K$}}, 2024.

\bibitem[DLLZ18]{DLLZ}
Hansheng Diao, Kai-Wen Lan, Ruochuan Liu, and Xinwen Zhu, \emph{Logarithmic
  riemann-hilbert correspondences for rigid varieties}, arXiv preprint
  arXiv:1803.05786 (2018).

\bibitem[DPS20]{DPSinf}
Gabriel Dospinescu, Vytautas Paškūnas, and Benjamin Schraen,
  \emph{{Infinitesimal characters in arithmetic families}}, 2020.

\bibitem[Dri76]{Dr76}
V.~G. Drinfel'd, \emph{Coverings of {$p$}-adic symmetric domains}, Funkcional.
  Anal. i Prilo\v{z}en. \textbf{10} (1976), no.~2, 29--40. \MR{0422290}

\bibitem[Eme06]{Eme06}
Matthew Emerton, \emph{On the interpolation of systems of eigenvalues attached
  to automorphic {H}ecke eigenforms}, Invent. Math. \textbf{164} (2006), no.~1,
  1--84. \MR{2207783}

\bibitem[Eme11]{emerton2011local}
\bysame, \emph{{Local-global compatibility in the p-adic Langlands programme
  for $\mathrm{GL}_{2/\mathbb{Q}}$}}, preprint \textbf{3} (2011), 4.

\bibitem[Eme17]{Eme17}
\bysame, \emph{Locally analytic vectors in representations of locally
  {$p$}-adic analytic groups}, Mem. Amer. Math. Soc. \textbf{248} (2017),
  no.~1175, iv+158. \MR{3685952}

\bibitem[Fon04]{fontaine2004arithmetique}
Jean-Marc Fontaine, \emph{Arithm{\'e}tique des repr{\'e}sentations galoisiennes
  p-adiques}, Ast{\'e}risque \textbf{295} (2004), 1--115.

\bibitem[GK19]{MR3989256}
Wushi Goldring and Jean-Stefan Koskivirta, \emph{Strata {H}asse invariants,
  {H}ecke algebras and {G}alois representations}, Invent. Math. \textbf{217}
  (2019), no.~3, 887--984. \MR{3989256}

\bibitem[Gru66]{gruson1966theorie}
Laurent Gruson, \emph{{Th{\'e}orie de Fredholm $ p $-adique}}, Bulletin de la
  Soci{\'e}t{\'e} Math{\'e}matique de France \textbf{94} (1966), 67--95.

\bibitem[HG94]{GH94}
M.~J. Hopkins and B.~H. Gross, \emph{{Equivariant vector bundles on the
  {L}ubin-{T}ate moduli space}}, Topology and representation theory
  ({E}vanston, {IL}, 1992), Contemp. Math., vol. 158, Amer. Math. Soc.,
  Providence, RI, 1994, pp.~23--88. \MR{1263712}

\bibitem[HT01]{HT01}
Michael Harris and Richard Taylor, \emph{The geometry and cohomology of some
  simple {S}himura varieties}, Annals of Mathematics Studies, vol. 151,
  Princeton University Press, Princeton, NJ, 2001, With an appendix by Vladimir
  G. Berkovich. \MR{1876802}

\bibitem[Hum08]{Hum94}
James~E. Humphreys, \emph{Representations of semisimple {L}ie algebras in the
  {BGG} category {$\mathcal{O}$}}, Graduate Studies in Mathematics, vol.~94,
  American Mathematical Society, Providence, RI, 2008. \MR{2428237}

\bibitem[JLH21]{JLH21}
Christian Johansson, Judith Ludwig, and David Hansen, \emph{A quotient of the
  {L}ubin-{T}ate tower {II}}, Math. Ann. \textbf{380} (2021), no.~1-2, 43--89.
  \MR{4263678}

\bibitem[JLS22]{JLS22}
Akash Jena, Aranya Lahiri, and Matthias Strauch, \emph{{Translation functors
  for locally analytic representations}}, 2022.

\bibitem[Joh16]{Joh16}
Christian Johansson, \emph{A trace formula approach to control theorems for
  overconvergent automorphic forms}, Manuscripta Math. \textbf{151} (2016),
  no.~1-2, 19--48. \MR{3532235}

\bibitem[Kas04]{Kas04}
Payman~L. Kassaei, \emph{{$\mathscr P$}-adic modular forms over {S}himura
  curves over totally real fields}, Compos. Math. \textbf{140} (2004), no.~2,
  359--395. \MR{2027194}

\bibitem[Ked06]{Kedlayafinitenessrigidcohomology}
Kiran~S. Kedlaya, \emph{Finiteness of rigid cohomology with coefficients}, Duke
  Math. J. \textbf{134} (2006), no.~1, 15--97. \MR{2239343}

\bibitem[Koh11a]{Koh11}
Jan Kohlhaase, \emph{The cohomology of locally analytic representations}, J.
  Reine Angew. Math. \textbf{651} (2011), 187--240. \MR{2774315}

\bibitem[Koh11b]{Koh11Coh}
\bysame, \emph{The cohomology of locally analytic representations}, J. Reine
  Angew. Math. \textbf{651} (2011), 187--240. \MR{2774315}

\bibitem[NY14]{NY14}
James Newton and Teruyoshi Yoshida, \emph{{Shimura curves, the Drinfeld curve
  and Serre weights}}.

\bibitem[OS15]{OS15}
Sascha Orlik and Matthias Strauch, \emph{On {J}ordan-{H}\"{o}lder series of
  some locally analytic representations}, J. Amer. Math. Soc. \textbf{28}
  (2015), no.~1, 99--157. \MR{3264764}

\bibitem[Pan22a]{Pan22}
Lue Pan, \emph{{On locally analytic vectors of the completed cohomology of
  modular curves}}, Forum Math. Pi \textbf{10} (2022), Paper No. e7, 82.
  \MR{4390302}

\bibitem[Pan22b]{PanII}
\bysame, \emph{{On locally analytic vectors of the completed cohomology of
  modular curves II}}, arXiv preprint arXiv:2209.06366 (2022).

\bibitem[Pil22]{Pil22}
Vincent Pilloni, \emph{{Faisceaux equivariants sur $\mathbb{P}^1$ et faisceaux
  automorphes}}, 2022.

\bibitem[PSS19]{Patel_Schmidt_Strauch_2019}
Deepam Patel, Tobias Schmidt, and Matthias Strauch, \emph{Locally analytic
  representations of $\mathrm{GL}(2,{L})$ via semistable models of
  $\mathbb{P}^{1}$}, Journal of the Institute of Mathematics of Jussieu
  \textbf{18} (2019), no.~1, 125–187.

\bibitem[RZ96]{RZ96}
M.~Rapoport and Th. Zink, \emph{Period spaces for {$p$}-divisible groups},
  Annals of Mathematics Studies, vol. 141, Princeton University Press,
  Princeton, NJ, 1996. \MR{1393439}

\bibitem[Sch02]{Sch02}
Peter Schneider, \emph{Nonarchimedean functional analysis}, Springer Monographs
  in Mathematics, Springer-Verlag, Berlin, 2002. \MR{1869547}

\bibitem[Sch08]{MR2375595}
Tobias Schmidt, \emph{Auslander regularity of {$p$}-adic distribution
  algebras}, Represent. Theory \textbf{12} (2008), 37--57. \MR{2375595}

\bibitem[Sch11a]{Sch11}
Peter Schneider, \emph{$p$-adic lie groups}, Grundlehren der mathematischen
  Wissenschaften [Fundamental Principles of Mathematical Sciences], vol. 344,
  Springer, Heidelberg, 2011. \MR{2810332}

\bibitem[Sch11b]{Schr11}
Benjamin Schraen, \emph{Repr\'{e}sentations localement analytiques de {${\rm
  GL}_3(\Bbb Q_p)$}}, Ann. Sci. \'{E}c. Norm. Sup\'{e}r. (4) \textbf{44}
  (2011), no.~1, 43--145. \MR{2760195}

\bibitem[Sch13]{Sch13}
Peter Scholze, \emph{{p-adic Hodge theory for rigid-analytic varieties}}, Forum
  of Mathematics, Pi, vol.~1, Cambridge University Press, 2013, p.~e1.

\bibitem[Sch15]{Sch15}
\bysame, \emph{On torsion in the cohomology of locally symmetric varieties},
  Ann. of Math. (2) \textbf{182} (2015), no.~3, 945--1066. \MR{3418533}

\bibitem[Ser06]{Ser64}
Jean-Pierre Serre, \emph{Lie algebras and {L}ie groups}, Lecture Notes in
  Mathematics, vol. 1500, Springer-Verlag, Berlin, 2006, 1964 lectures given at
  Harvard University, Corrected fifth printing of the second (1992) edition.
  \MR{2179691}

\bibitem[ST03]{ST03}
Peter Schneider and Jeremy Teitelbaum, \emph{Algebras of {$p$}-adic
  distributions and admissible representations}, Invent. Math. \textbf{153}
  (2003), no.~1, 145--196. \MR{1990669}

\bibitem[ST05]{MR2133762}
\bysame, \emph{Duality for admissible locally analytic representations},
  Represent. Theory \textbf{9} (2005), 297--326. \MR{2133762}

\bibitem[Su25a]{su2025rhamcohomologylubintatespaces}
Benchao Su, \emph{{De Rham cohomology of Lubin-Tate spaces and Drinfeld
  spaces}}, 2025.

\bibitem[Su25b]{WConDrinfeld}
\bysame, \emph{{On the locally analytic $\mathrm{Ext}^1$-conjecture in the
  $\mathrm{GL}_2(L)$ case}}, 2025.

\bibitem[SW13]{SW13}
Peter Scholze and Jared Weinstein, \emph{Moduli of {$p$}-divisible groups},
  Camb. J. Math. \textbf{1} (2013), no.~2, 145--237. \MR{3272049}

\bibitem[Tsu99]{MR1705837}
Takeshi Tsuji, \emph{{$p$}-adic \'etale cohomology and crystalline cohomology
  in the semi-stable reduction case}, Invent. Math. \textbf{137} (1999), no.~2,
  233--411. \MR{1705837}

\bibitem[Wei16]{Wei16}
Jared Weinstein, \emph{Semistable models for modular curves of arbitrary
  level}, Invent. Math. \textbf{205} (2016), no.~2, 459--526. \MR{3529120}

\end{thebibliography}

\end{document}